\newtheoremstyle{Teorema}{10pt}{10pt}{\it}{}{\sc}{. }{ }{}
\theoremstyle{Teorema}
\newtheorem{Theorem}{Theorem}[subsection]
\newtheorem{Corollary}[Theorem]{Corollary}
\newtheorem{Proposition}[Theorem]{Proposition}
\newtheorem{PropositionDefinition}[Theorem]{Proposition and Definition}
\newtheorem{DefinitionProposition}[Theorem]{Definition and Proposition}
\newtheorem{Definition}[Theorem]{Definition}
\newtheorem{Lemma}[Theorem]{Lemma}
\newtheoremstyle{PseudoHeading}{10pt}{10pt}{}{}{\sc}{. }{ }{}
\theoremstyle{PseudoHeading}
\newtheorem*{AbelianVarieties}{Abelian Varieties}
\newtheorem*{AbelianVarietiesUpToIsogeny}{Abelian Varieties up to Isogeny}
\newtheorem*{Acknowledgements}{Acknowledgements}
\newtheorem*{AffineMaps}{Affine Maps}
\newtheorem*{AffineMapsasQCMaps}{Affine Maps as Quasiconformal Maps}
\newtheorem*{AlgebraicCurves}{Algebraic Curves}
\newtheorem*{AThmofSchmid}{A Theorem of Schmid and its Consequences}
\newtheorem*{BaseChange}{Base Change}
\newtheorem*{BeingDefinedoveraSubfield}{Being Defined over a Subfield}
\newtheorem*{BeltramiForms}{Beltrami Forms}
\newtheorem*{Billiards}{Billiards}
\newtheorem*{Comparison}{Comparison}
\newtheorem*{ConePoints}{Cone Points}
\newtheorem*{ConstructingRMFromASubsystem}{Constructing Real Multiplication from a Subsystem}
\newtheorem*{Constructions}{Constructions}
\newtheorem*{DecompositionOfJacobian}{Decomposition of the Jacobian}
\newtheorem*{DefinitionOfRM}{Definition of Real Multiplication}
\newtheorem*{Dictionary}{Dictionary}
\newtheorem*{DirectionsandFoliations}{Directions and Foliations}
\newtheorem*{Eigenforms}{Eigenforms}
\newtheorem*{Eigenvalues}{Eigenvalues}
\newtheorem*{ElementaryConstructions}{Elementary Constructions}
\newtheorem*{ElementarySubgroups}{Elementary Subgroups}
\newtheorem*{EuclideanMetric}{Euclidean Metric}
\newtheorem*{FamiliesOfRiemannSurfacesAndAlgebraicCurves}{Families of Riemann Surfaces and Algebraic Curves}
\newtheorem*{FundamentalGroupsandDeckTrafos}{Fundamental Groups and Deck Transformations}
\newtheorem*{GaloisConjugationofPiKModules}{Galois Conjugation of Local Systems}
\newtheorem*{GenusTwo}{Genus Two}
\newtheorem*{HodgeSubbundles}{Hodge Subbundles}
\newtheorem*{HolonomyVectors}{Saddle Connections and Holonomy Vectors}
\newtheorem*{HSofSmoothCurves}{Hodge Structures of Smooth Curves}
\newtheorem*{HSonCohomology}{Hodge Structures on Cohomology}
\newtheorem*{ImplicationsForThePeriodMatrix}{Implications for the Period Matrix}
\newtheorem*{JSDirections}{Jenkins-Strebel Directions and Cylinder Decompositions}
\newtheorem*{KobayashiHyperbolicSpaces}{Kobayashi Hyperbolic Spaces}
\newtheorem*{KobayashiIsometries}{Kobayashi Isometries}
\newtheorem*{Leaves}{Leaves}
\newtheorem*{LevelStructures}{Level Structures}
\newtheorem*{LSandMonodromyReps}{Local Systems and Monodromy Representations}
\newtheorem*{MappingClassGroups}{Mapping Class Groups}
\newtheorem*{MFofCanonicalSubvariation}{The Moduli Field of the Canonical Subvariation}
\newtheorem*{ModuliofAD}{Moduli of Abelian Differentials}
\newtheorem*{ModuliSpacesOfRiemannSurfaces}{Moduli Spaces of Riemann Surfaces}
\newtheorem*{ModuliSpacesOfAlgebraicCurves}{Moduli Spaces of Algebraic Curves}
\newtheorem*{MonodromyGovernsTheHD}{Monodromy Governs the Hodge Decomposition}
\newtheorem*{MonodromyRepresentation}{Monodromy Representation}
\newtheorem*{NonelementaryWithParabolic}{Nonelementary Veech Group Containing a Parabolic Element}
\newtheorem*{Origamis}{Origamis}
\newtheorem*{OrthogonalDecomposition}{Orthogonal Decomposition}
\newtheorem*{ParabolicsInTheVG}{Parabolics in the Veech Group}
\newtheorem*{PeriodMatrices}{Period Matrices}
\newtheorem*{PeriodMatricesOfJacobians}{Period Matrices of Jacobians}
\newtheorem*{Polarizations}{Polarizations}
\newtheorem*{PureRealHS}{Pure Real Hodge Structures}
\newtheorem*{PureAHS}{Pure $A$-Hodge Structures}
\newtheorem*{QuasiconformalMappings}{Quasiconformal Mappings}
\newtheorem*{QuaternionAlgebras}{Quaternion Algebras}
\newtheorem*{RationalIsoscelesTriangles}{Rational Isosceles Triangles}
\newtheorem*{RegularPolygonsI}{Regular Polygons I}
\newtheorem*{RegularPolygonsII}{Regular Polygons II}
\newtheorem*{RiemannSurfaces}{Riemann Surfaces}
\newtheorem*{RMandTCurves}{Real Multiplication and Teichm\"{u}ller Curves}
\newtheorem*{RMbytheTraceField}{Real Multiplication by the Trace Field}
\newtheorem*{Semisimplicity}{Semisimplicity}
\newtheorem*{SemisimplicityandBaseChange}{Semisimplicity and Base Change}
\newtheorem*{SemisimplicityofMonodromy}{Semisimplicity of Monodromy Representations}
\newtheorem*{SemisimplicityofVHS}{Semisimplicity of Variations of Hodge Structure}
\newtheorem*{SimplicityandBaseChange}{Simplicity and Base Change}
\newtheorem*{SomeRemarksonGaloisTheory}{Galois Theory of Number Fields}
\newtheorem*{TDisksViaBeltramiForms}{Teichm\"{u}ller Disks via Beltrami Forms}
\newtheorem*{TDisksViaSLtwoR}{Teichm\"{u}ller Disks via $\SL_2(\bbr )$-Orbits}
\newtheorem*{TeichmuellerMarkings}{Teichm\"{u}ller Markings}
\newtheorem*{TeichmuellerMetric}{The Teichm\"{u}ller Metric}
\newtheorem*{TeichmuellerSpaces}{Teichm\"{u}ller Spaces}
\newtheorem*{TheAffineGroupActingontheTDisk}{The Affine Group Acting on the Teichm\"{u}ller Disk}
\newtheorem*{TheCanonicalSubspace}{The Canonical Subspace}
\newtheorem*{TheCanonicalSubvariation}{The Canonical Subsystem}
\newtheorem*{TheGeneralCase}{The General Case}
\newtheorem*{TheGLtwoRAction}{The $\GL_2^+(\bbr )$-Action}
\newtheorem*{TheFamilyOverATC}{The Family of Curves over a Teichm\"{u}ller Curve}
\newtheorem*{TheCrossRatioField}{The Field of Cross-Ratios}
\newtheorem*{TheFieldofModuli}{The Field of Moduli of a Simple Local System}
\newtheorem*{TheImageInMCG}{The Image in the Mapping Class Group}
\newtheorem*{TheImageInMS}{The Image in Moduli Space}
\newtheorem*{TheKobayashiSemimetric}{The Kobayashi Semimetric}
\newtheorem*{TheMirrorVG}{The Mirror Veech Group}
\newtheorem*{TheNumberField}{Reminder on Cyclotomic Fields}
\newtheorem*{TheVeechGroup}{The Veech Group}
\newtheorem*{TheVeechGroupALattice}{The Veech Group a Lattice}
\newtheorem*{TheSCFormula}{The Schwarz-Christoffel Formula}
\newtheorem*{TheWimanCurvesasVeechSurfaces}{The Wiman Curves as Counterexamples}
\newtheorem*{TraceFields}{Trace Fields}
\newtheorem*{TraceFieldsOfVeechGroups}{Trace Fields of Veech Groups}
\newtheorem*{TranslationSurfaces}{Translation Surfaces}
\newtheorem*{TransportingRealMultiplication}{Transporting Real Multiplication}
\newtheorem*{TrichotomyForElementsOfTheVG}{Trichotomy for Elements of the Veech Group}
\newtheorem*{TrivializationofTangentSpaces}{Trivialization of Tangent Spaces}
\newtheorem*{VariationsofPsHS}{Variations of Pseudo-Hodge Structure}
\newtheorem*{VeechSurfaces}{Veech Surfaces}
\newtheorem*{WimanCurvesAndRegularPolygons}{Wiman Curves and Regular Polygons}
\renewcommand{\to}{\longrightarrow}
\def\Aff{\operatorname{Aff}}
\def\Spec{\operatorname{Spec}}
\def\Hom{\operatorname{Hom}}
\def\Aut{\operatorname{Aut}}
\def\Mod{\mathrm{Mod}}
\def\Out{\operatorname{Out}}
\def\End{\operatorname{End}}
\def\Im{\operatorname{Im}}
\def\Re{\operatorname{Re}}
\def\Gal{\operatorname{Gal}}
\def\SL{\operatorname{SL}}
\def\SO{\operatorname{SO}}
\def\GL{\operatorname{GL}}
\def\PSL{\operatorname{PSL}}
\def\pr{\operatorname{pr}}
\def\Gr{\operatorname{Gr}}
\def\tr{\operatorname{tr}}
\def\bbz{\mathbb{Z}}
\def\bbr{\mathbb{R}}
\def\bbq{\mathbb{Q}}
\def\bbc{\mathbb{C}}
\def\bbh{\mathbb{H}}
\def\de{\mathrm{d}}
\begin{document}
\title{Real Multiplication on Jacobian Varieties}
\author{Robert A. Kucharczyk}
\address{Universit\"{a}t Bonn\\
Mathematisches Institut\\
Endenicher Allee 60\\
D-53115 Bonn\\
Germany}
\email{rak@math.uni-bonn.de}

\begin{abstract}
This is a slightly revised version of the author's 2010 diploma thesis. It is concerned with the interplay between real multiplication on Jacobian varieties, as the title suggests, and complex geodesics in the moduli space of curves. Large parts are expository and may hopefully serve as a very incomplete introduction to Teichm\"{u}ller disks and curves, the moduli space of abelian differentials with its $\SL_2(\bbr )$-operation and variations of Hodge structure.
\end{abstract}
\maketitle
\tableofcontents

\newpage
\section{Introduction}

\noindent One of the most traditional and popular subjects in pure mathematics is the study of algebraic curves and their deformation theory. One reason for this is without doubt what David Mumford in \cite[Lecture I]{Mumford75} calls an ``AMAZING SYNTHESIS, which surely overwhelmed each of us as graduate students and should really not be taken for granted''. Namely function fields, algebraic curves (over $\bbc$) and compact Riemann surfaces are essentially the same (in the sense of canonical equivalences of categories). It is easy to \textit{theoretically} pass from one realm to the other, but it is sometimes quite hard to describe for instance a given Riemann surface, say given as a quotient of some Fuchsian group, as an algebraic curve inside some projective space. In each of these realms there are some preferred and ``simple'' ways to explicitly give an object, but these presentations to not always fit together smoothly. But precisely this oddity makes the theory of Riemann surfaces or algebraic curves so rich.

Starting from the point of view of Riemann surfaces, one of the easiest and most intuitive ways to construct such a surface is certainly that of gluing Euclidean polygons along parallel sides to a closed surface. The surface then inherits more than a complex structure: a translation structure, i.e. an atlas (apart from some ``singular'' points coming from the vertices of the polygon) all of whose chart transition maps are translations of the complex plane. One can now rebuild a great part of Euclidean geometry in such a translation surface.

Translation surface have been much studied, mainly for two reasons: first, a translation surface provides a nice and explicit, but not too complicated, model for dynamical systems theory. The dynamics of geodesics on such surfaces is particularly well understood. Second, one of the visually most pleasant ways to deform a Riemann surface is to take some translation surface and then change its complex structure by changing the complex structure \textit{linearly} on the Euclidean plane to which the charts map. This provides then a family of Riemann surfaces, and thus also a family of algebraic curves. In certain special cases this is an \textit{algebraic} family of algebraic curves, parameterized by an algebraic curve. This yields an algebraic curve in the moduli space $\mathrsfs{M}_g$; curves obtained this way are called \textit{abelian Teichm\"{u}ller curves}. Now the following two questions seem natural:
\begin{enumerate}
\item From the point of view of Riemann surface theory: can one give a criterion whether a given translation structure generates a Teichm\"{u}ller curve, only using the nice and intuitive Euclidean geometry of the translation surface?
\item From the point of view of algebraic geometry: can one say something about Teichm\"{u}ller curves in terms that an algebraic geometer feels more comfortable with, for example its family of Jacobian varieties?
\end{enumerate}
It turns out that the answer to both questions is ``yes''. As to (i), one defines a certain discrete subgroup of $\SL_2(\bbr )$ for each translation surface, called the \textit{Veech group} of that surface. It roughly consists of those linear maps by which one can ``distort'' the translation structure and obtain a translation surface which is isomorphic to the original one. Then a translation surface is called a \textit{Veech surface} if its Veech group is a lattice. One can then show (cf. Prop. 3.5.6) that a translation surface generates an abelian Teichm\"{u}ller curve if and only if it is a Veech surface. Being a Veech surface is also an interesting property from the point of view of dynamical systems theory, see e.g. \cite{HubertSchmidt00}, \cite{Veech89}.

To answer question (ii) one needs considerably more machinery. To begin with, a translation structure is ``the same'' as an abelian differential, i.e. a nonzero holomorphic one-form: given a translation structure, identify the Euclidean plane with $\bbc$ and pull back the one-form $\de z$ on $\bbc$ along the translation charts. These glue to an abelian differential on the surface. Conversely, for an abelian differential $\omega$, the set of charts $z$ for which $\omega$ becomes $\de z$ defines a translation structure on the underlying surface. One has to work a bit more detailed at the singularities, which then correspond to zeros of the differential. Thus a translation surface is ``the same'' as a pair $(X,\omega )$, where $X$ is a complete algebraic curve and $\omega$ is a non-zero regular one-form  (K\"{a}hler differential) on $X$.

It then turns out that there are subtle and deep relations between properties of the Veech group of $(X,\omega )$ (which was, recall, defined in terms of Euclidean geometry of translation surfaces) and properties of the Jacobian of $X$. Namely if $(X,\omega )$ is a Veech surface, we obtain real multiplication by a certain number field on an abelian subvariety of the Jacobian. The number field is the trace field of the Veech group, i.e. the subfield of $\bbr$ generated by all traces of elements of the Veech group. The form $\omega$ is then an \textit{eigenform} for this real multiplication structure in the following sense: the cotangent space of the Jacobian can be canonically identified with the space $\Omega^1(X)$ of holomorphic one-forms on $X$, hence real multiplication on a subtorus of the Jacobian induces a family of endomorphisms of a subspace of $\Omega^1(X)$. Hence it makes sense to say that $\omega$ is an eigenform, i.e. a simultaneous eigenvector, for this family of endomorphisms. Finally, all these statements hold along the whole family of curves determined by $(X,\omega )$; we briefly say that the structures are ``preserved by the $\SL_2(\bbr )$-operation''. These result can basically be found in two research articles, one by Curtis McMullen \cite{McMullen03} and one by Martin M\"{o}ller \cite{Moeller05b}. We should, however, note that these articles both contain much more material than just the parts which are quoted and paraphrased in the present thesis.

The proofs of these statements use deep theorems of Deligne \cite{Deligne87} and Schmid \cite{Schmid73} which are only true under very special conditions. In particular the proofs cannot be carried over to the non-Veech case. In genus two, however, there are also more elementary proofs which do not make use of those theorems and hence hold also in the non-Veech case. The purpose of this thesis is now to construct a series of counterexamples in all higher genera. More precisely, we find in every genus $g\ge 3$ a translation surface $(X,\omega )$ with real multiplication by the trace field of the Veech group on a subtorus of the Jacobian, such that the involved structures share many properties with those that occur in the Veech case, and yet they are not preserved by the $\SL_2(\bbr )$-operation. The counterexamples have already been used in \cite{McMullen03} and \cite{Moeller05b} to show less precise statements.

Now we give a quick overview of what happens in the different chapters, together with some remarks on which parts are original work and which parts merely paraphrase work of others.

The second chapter just assembles some standard facts about Riemann surfaces to be used later and fixes notations. In section 1.3 we give a quick summary of properties of the Kobayashi metric that will turn out useful.

The third chapter summarizes the theory of translation surfaces (or, equivalently, abelian differentials), with an emphasis on the Euclidean-geometric side. We also introduce our later counterexamples, the \textit{Wiman curves}. As translation surfaces, they can be very neatly constructed from triangles whose angles are rational multiples of $\pi$. With the help of the Schwarz-Christoffel formula, a gem of 19th century complex analysis, we can describe the Wiman curves as algebraic curves with K\"{a}hler differentials. These computations are original, but the idea is well-known to the experts, cf. \cite{AurellItzykson}.

The fourth chapter then discusses the deformation theory of abelian differentials and the related theory of Veech groups. There is also nothing new in this chapter except for the presentation.

In the fifth chapter we set up the machinery necessary to understand the Jacobian of a translation surface. For technical reasons we prefer to work with Hodge structures instead of abelian varieties. In particular we need to change coefficients of Hodge structures, even into rings which are only contained in $\bbc$ and not in $\bbr$. It seems that this, although not difficult, has never been explicitly done in the literature, so we do this here, at some places certainly more detailed than strictly necessary for the proof of the main theorem. We also discuss the action of the affine group of a translation surface on the cohomology of the surface, which gives nontrivial arithmetic information about the eigenvalues of elements of the Veech group (also previously known material, but scattered around the literature).

Then in the sixth chapter we discuss trace fields of Veech groups. First we give an overview about their properties and their significance (again material which has been known, but distributed over many different research papers). Then we compute the trace fields of our example curves, using only elementary trigonometry; this is entirely new.

The seventh chapter then draws together the work of chapters 5 and 6. First, local systems and their behaviour under base extension and Galois conjugation are discussed. It also seems that there is no account of this theory in the literature. The consequences that we really need for our counterexamples are implicit in the literature, though. Then we summarize the Deligne-Schmid theory on variations of Hodge structure and explain how it enlightens the behaviour of Jacobians under the $\SL_2(\bbr )$-operation.

The eigth chapter is the actual heart of the present work. We introduce a modified notion of real multiplication, better suited for this theory than the classical one, and reformulate the results of McMullen and M\"{o}ller in this language, with proofs. The thesis concludes with Theorem 8.3.3 which shows that our counterexamples are actually counterexamples.

\begin{Acknowledgements}
The author wishes to stress his most sincere gratitude to his thesis advisor, Ursula Hamenst\"{a}dt.
\end{Acknowledgements}

\newpage
\section{Families of Algebraic Curves}

\noindent In this chapter we discuss some generalities about moduli spaces of curves, curves in moduli spaces of curves and families of curves. Working over the complex numbers, we make excessive use of the equivalence between algebraic curves over the complex numbers and finite type Riemann surfaces.

There does not exist a fine moduli space of curves. Since we do not to want to work with stacks or orbifolds, we consider the moduli space $\mathrsfs{M}_g$ as a coarse moduli space. The price we have to pay is that whenever we want to use some correspondence between maps to moduli space and families of curves, we have to pass to a cover of $\mathrsfs{M}_g$. There are basically two possibilities, both of which are important: moduli spaces with level structures $\mathrsfs{M}_g^{[n]}$ for $n\ge 3$, and Teichm\"{u}ller spaces $\mathrsfs{T}_g$.

We are interested in one-dimensional families of curves which are in some sense maximally apart from being isotrivial. This can be made precise using metric terms. We shall find that all one-dimensional families of curves which are not isotrivial must have a hyperbolic base. Hence we have the hyperbolic metric on the base, and luckily this behaves nicely when compared to the Teichm\"{u}ller metric on moduli (or Teichm\"{u}ller) space. These metrics can be conceptually subsumed as Kobayashi metrics. An analysis of the properties of Kobayashi metrics then hints that one should regard precisely those families as maximally non-isotrivial whose classifying map is a local isometry.

\subsection{Conventions and Setup}

We begin by recalling some basic ideas and constructions in moduli theory of algebraic curves. The primary purpose of this section is to fix notation and precise definitions, hence it is not self-contained.

\begin{RiemannSurfaces} A Riemann surface is a one-dimensional complex manifold. We always assume Riemann surfaces to be connected, unless explicitly stated otherwise. The universal covering space of a Riemann surface $X$ is, up to isomorphism, exactly one of the following three Riemann surfaces: the Riemann sphere $\mathbb{P}^1(\bbc )$, the complex plane $\bbc$ and the unit disk $\Delta =\{ z\in\bbc :|z|<1\}$. In the first case $X$ is called \textit{elliptic}, in the second case \textit{parabolic} and in the third case \textit{hyperbolic}. These namings correspond to natural geometries on such surfaces which are only of relevance for us in the hyperbolic case: the \textit{Poincar\'{e} metric} on the unit disk is the metric tensor
\begin{equation*}
\varrho =\frac{\de z\cdot\de\overline{z}}{(1-|z|^2)^2}
\end{equation*}
which gives a well-defined Hermitian metric on $\Delta$; this metric is invariant under the holomorphic automorphism group of $\Delta$ and hence descends to every hyperbolic Riemann surface. It has constant curvature $-4$. For every pair of points $x,y\in\Delta$ there exists a holomorphic automorphism of $\Delta$ which sends $x$ to $0$ and $y$ to some $t\in [0,1[$; the distance with respect to $\varrho$ from $x$ to $y$ is then
\begin{equation*}
\frac{1}{2}\log\frac{1+t}{1-t}.
\end{equation*}

A Riemann surface is called of \textit{finite type} if it is either elliptic, parabolic or hyperbolic with finite hyperbolic volume. Every Riemann surface $X$ of finite type can be written as $\overline{X}\smallsetminus D$ where $\overline{X}$ is a compact Riemann surface of genus $g$ and $D\subseteq\overline{X}$ is a finite set; conversely every such Riemann surface is of finite type. The cardinality of $D$, which we often denote by $n$, and the genus $g$ of $\overline{X}$ are uniquely determined by $X$; we call $(g,n)$ the \textit{type} of the Riemann surface $X$.
\end{RiemannSurfaces}

\begin{AlgebraicCurves}
Let $k$ be an algebraically closed field. An \textit{algebraic curve} over $k$ is a scheme $C$ of finite type over $k$ which is integral, reduced and one-dimensional. It is called \textit{smooth} if the structure morphism $C\to\Spec k$ is smooth; usually we will only consider smooth curves. For $k=\bbc$ there is an ``analytification functor'' from smooth algebraic curves over $\bbc$ with $\bbc$-morphisms to Riemann surfaces with holomorphic maps; its essential image consists precisely of the finite type Riemann surfaces. Two complex algebraic curves are isomorphic precisely if the associated finite type Riemann surfaces are biholomorphic.

An algebraic curve $C$ over $k$ is called \textit{complete} if the structure morphism $C\to\Spec k$ is proper or, equivalently, projective. Every smooth algebraic curve $C$ can be written as $\overline{C}\smallsetminus D$, where $\overline{C}$ is a smooth complete algebraic curve and $D$ is a finite set of closed points in $\overline{C}$. For $k=\bbc$ the analytification functor from smooth complete algebraic curves over $\bbc$ with $\bbc$-scheme morphisms to compact Riemann surfaces with holomorphic maps is an equivalence of categories. For this reason we will often not distinguish between compact (or even finite type) Riemann surfaces and complex algebraic curves.
\end{AlgebraicCurves}

\begin{MappingClassGroups}
Let $\Sigma$ be a compact orientable (topological) surface of genus $g$. The \textit{mapping class group} $\mathrm{Mod}(\Sigma )$ is the quotient of the group of orientation-preserving self-homeomorphisms of $\Sigma$ modulo the subgroup of those homeomorphisms which are isotopic to the identity. In case $\Sigma$ comes equipped with a $\mathrsfs{C}^{\infty }$ structure, we may as well only take diffeomorphisms, the resulting group is the same. Once and for all we fix a ``model surface'' $\Sigma_g$ and then write $\mathrm{Mod}_g$ instead of $\mathrm{Mod}(\Sigma_g)$.

The mapping class group can also be defined group-theoretically\footnote{The following only holds for $g>0$, but the case $g=0$ is trivial anyway}. Namely the fundamental group $\pi_g$ of $\Sigma$ is a priori only well-defined up to inner automorphisms, but so its outer automorphism group $\Out\pi_g$ only depends on $\Sigma_g$ and not on the choice of a base point. There is an obvious group homomorphism $\Mod_g\to\Out\pi_g$; this is injective, and the image is the index two subgroup $\Out^+\pi_g$ of those outer automorphisms that act trivially on $H^2(\pi_g,\bbz )$. The fundamental group $\pi_g$ has the following presentation:
\begin{equation*}
\pi_g=\langle a_1,\ldots ,a_g,b_1,\ldots ,b_g\, |\, a_1b_1a_1^{-1}b_1^{-1}\cdot a_2b_2a_2^{-1}b_2^{-1}\cdots a_gb_ga_g^{-1}b_g^{-1}=1\rangle .
\end{equation*}
\end{MappingClassGroups}

\begin{TeichmuellerMarkings}
Let $X$ be a compact Riemann surface of genus $g$. A \textit{(Teichm\"{u}ller) marking} of $X$ is an isotopy class of diffeomorphisms $\Sigma\to X$. Equivalently it can be described as an orientation-preserving\footnote{In the sense that it preserves the canonical orientations on group cohomology $H^2(\cdot ,\bbz )$} outer isomorphism $\pi_g\to\pi_1(X)$.
\end{TeichmuellerMarkings}

\begin{LevelStructures}
Let $X$ be a compact Riemann surface of genus $g$ and let $n\ge 1$ be an integer. Then a \textit{level-$n$-structure} on $X$ is a symplectic basis $(a_1,\ldots ,a_g,b_1,\ldots ,b_g)$ of $H^1(X,\bbz /n\bbz)$ as $\bbz /n\bbz$-module. Equivalently we can define a level-$n$-structure as a symplectic isomorphism $(\bbz /n\bbz )^{2g}\to H^1(X,\bbz /n\bbz )$; here the standard basis of the former is written as
$$(a_1,\ldots ,a_g,b_1,\ldots ,b_g),$$
with symplectic form
\begin{equation*}
\langle a_i,b_j\rangle =-\langle b_j,a_i\rangle =\delta_{ij},\quad \langle a_i,a_j\rangle =\langle b_i,b_j\rangle =0,
\end{equation*}
and the cohomology being endowed with the Poincar\'{e} duality form.
\end{LevelStructures}

\begin{FamiliesOfRiemannSurfacesAndAlgebraicCurves}
Let $C$ be a complex space. A \textit{family of Riemann surfaces over $C$} is a proper morphism of complex spaces $p:X\to C$ of relative dimension one with connected fibres. In other words, all fibres are compact Riemann surfaces of some genus; if $C$ is connected, this genus is the same for every fibre and called the \textit{genus} of the family.

Every family of Riemann surfaces is topologically a fibre bundle, hence the fundamental, homology and cohomology groups of the fibres can be organized into local systems. A \textit{Teichm\"{u}ller marking} of a family of Riemann surfaces $p:X\to C$ can be described in two equivalent ways. First it can be defined by giving a Teichm\"{u}ller marking on every fibre $X_c$ in a ``locally constant'' way; second, it can be defined as an outer isomorphism from the constant local system of groups $\pi_g$ on $C$ to the local system of the $\pi_1(X_c)$, defining a Teichm\"{u}ller marking on each fibre. For a rigorous definition see \cite[D\'{e}finition 2.2]{Grothendieck1960}.

The local system of first cohomology groups with $\bbz /n\bbz$ coefficients can be described more easily: this is the local system $R^1p_{\ast}(\bbz /n\bbz )$ on $C$. A \textit{level-$n$-structure} on the family $p:X\to C$ is then a symplectic isomorphism from the constant local system $(\bbz /n\bbz )^{2g}$ to $R^1p_{\ast}(\bbz /n\bbz )$.

Let $C$ be a noetherian scheme. Then a \textit{family of algebraic curves over $C$} is a proper smooth morphism of schemes $p:X\to C$ such that for every geometric point $\overline{c}\in C(\Omega )$ the fibre $X_{\overline{c}}$ is a smooth complete algebraic curve over $\Omega$. If $C$ is connected, again the genus of $X_{\overline{c}}$ is constant and called the \textit{genus} of the family. Teichm\"{u}ller markings do not have a direct analogue, but level structures do, using either Jacobians (as abelian varieties) or \'{e}tale cohomology.
\end{FamiliesOfRiemannSurfacesAndAlgebraicCurves}

\begin{TeichmuellerSpaces}
Consider the following moduli functor $T_g$ on the category of complex spaces: $T_g(C)$ is the set of isomorphism classes of families of Riemann surfaces of genus $g$ with Teichm\"{u}ller marking over $C$, with the action on morphisms being given by pullbacks. This functor is representable by a complex space $\mathrsfs{T}_g$ called \textit{Teichm\"{u}ller space of genus $g$}. By the universal property of $\mathrsfs{T}_g$ there exists a universal Teichm\"{u}ller marked family $\mathrsfs{U}_g\to\mathrsfs{T}_g$, briefly called the \textit{universal family over Teichm\"{u}ller space}. This is topologically (but of course not complex-analytically) a trivial fibre bundle $\Sigma_g\times\mathrsfs{T}_g$. Assigning to every point $t\in\mathrsfs{T}_g$ the class of the fibre $(\mathrsfs{U}_g)_t$ sets up a bijection between points of $\mathrsfs{T}_g$ and isomorphism classes of Teichm\"{u}ller marked Riemann surfaces of genus $g$. We will often identify these two sets by this bijection.

The complex space $\mathrsfs{T}_g$ is indeed a complex manifold, biholomorphic to a bounded domain in $\bbc^{3g-3}$ and homeomorphic to a $(6g-6)$-dimensional ball.

The mapping class group $\Mod_g$ acts from the left on $\mathrsfs{T}_g$: for $t\in\mathrsfs{T}_g$ represented by a Riemann surface $X$ with a Teichm\"{u}ller marking $f:\Sigma_g\to X$ and for $[\gamma ]\in\Mod_g$ represented by $\gamma :\Sigma_g\to \Sigma_g$, define the element $[\gamma ]\cdot t\in\mathrsfs{T}_g$ as represented by the same Riemann surface $X$ but now with the marking
\begin{equation*}
\Sigma_g\overset{\gamma^{-1}}{\to }\Sigma_g\overset{f}{\to }X.
\end{equation*}
This defines a faithful holomorphic and properly discontinuous action of $\Mod_g$ on $\mathrsfs{T}_g$.
\end{TeichmuellerSpaces}

\begin{ModuliSpacesOfRiemannSurfaces} Now consider the following moduli functor $M_g$, also on complex spaces: $M_g(C)$ is the set of isomorphism classes of families of Riemann surfaces over $C$. This functor is \textit{not} representable but admits a coarse moduli space $\mathrsfs{M}_g$. Hence the points of $\mathrsfs{M}_g$ are in bijection with isomorphism classes of compact Riemann surfaces of genus $g$. There is also a tautological family\footnote{Not a family of Riemann surfaces as we defined it!} $\mathrsfs{X}_g\to\mathrsfs{M}_g$; its fibre over a point of $\mathrsfs{M}_g$ represented by a Riemann surface $X$ is the surface $X/\Aut (X)$.

The coarse moduli space $\mathrsfs{M}_g$ can be constructed as the quotient $\Mod_g\backslash\mathrsfs{T}_g$; since the action of the mapping class group on Teichm\"{u}ller space is properly discontinuous, this quotient is a normal complex space. If one is willing to work with orbifolds (we shall avoid this in this paper) one can also view the orbifold quotient $\Mod_g\backslash\!\backslash\mathrsfs{T}_g$ as a fine moduli stack. There is a natural lift of the action of $\Mod_g$ on $\mathrsfs{T}_g$ to an action on $\mathrsfs{U}_g$; then the tautological family over $\mathrsfs{M}_g$ can be constructed as $\Mod_g\backslash\mathrsfs{U}_g$.

We rather work with rigidifications by level structures: they are close enough to moduli spaces for our purposes but have the advantage that they represent some functor and carry a universal family. So let $n\ge 1$ be an integer. The moduli functor $M_g^{[n]}$ on complex spaces sends a complex space $C$ to the set of all families of Riemann surfaces of genus $g$ with level-$n$-structure over $C$, up to isomorphism. For every $n$ this admits a coarse moduli space $\mathrsfs{M}_g^{[n]}$ (of course $\mathrsfs{M}_g^{[1]}=\mathrsfs{M}_g$), and for $n\ge 3$ this is even a \textit{fine} moduli space and a complex manifold. In particular for these $n$ there is a universal family $\mathrsfs{X}_g^{[n]}\to\mathrsfs{M}_g^{[n]}$. The forgetful map $\mathrsfs{M}_g^{[n]}\to\mathrsfs{M}_g$ is finite.

These moduli spaces with level structures can be constructed as follows: denote by $\Mod_g^{[n]}$ the subgroup of $\Mod_g$ consisting of those mapping classes which operate trivially on the cohomology group $H^1(\Sigma_g,\bbz /n\bbz )=H^1(\pi_g,\bbz /n\bbz )$. The quotient $\Mod_g^{[n]}\backslash\mathrsfs{T}_g$ is then $\mathrsfs{M}_g^{[n]}$; for $n\ge 3$ the group $\Mod_g^{[n]}$ is torsion-free, hence operates without fixed points, and so the quotient turns out to be a complex manifold.
\end{ModuliSpacesOfRiemannSurfaces}

\begin{ModuliSpacesOfAlgebraicCurves}
In contrast to Teichm\"{u}ller spaces, moduli spaces with or without level structures can also be constructed in the algebraic category. Since we only work in characteristic zero, we restrict ourselves to defining them over the rationals. So let $M_g^{\mathrm{alg}}$ be the functor on noetherian $\bbq$-schemes which to such a scheme $X$ associates the set of all families of algebraic curves over $X$, up to isomorphism. This functor is not representable but admits a coarse moduli space $\mathrsfs{M}_g^{\mathrm{alg}}$; this is a quasi-projective variety over $\bbq$. Its analytification is canonically isomorphic to $\mathrsfs{M}_g$.

For any positive integer $n$, define in analogy to the above a functor $M_g^{\mathrm{alg},[n]}$ on noetherian $\bbq$-schemes. This admits a coarse moduli space which for $n\ge 3$ is even a fine moduli space. In particular it then admits a universal family $\mathrsfs{C}_g^{\mathrm{alg},[n]}\to\mathrsfs{M}_g^{\mathrm{alg},[n]}$. The analytification of all these objects yield the corresponding analytic ones.
\end{ModuliSpacesOfAlgebraicCurves}

\begin{QuasiconformalMappings}
Let $U\subseteq\bbc$ be open and let $0\le k<1$. Then $f:U\to\bbc$ is called \textit{$k$-quasiconformal} if the following holds:
\begin{enumerate}
\item $f$ is open,
\item $f$ is a homeomorphism onto its image,
\item $f$ has partial distributional derivatives locally in $L^2$ and
\item almost everywhere one has
\begin{equation*}
\left\lvert\frac{\partial f}{\partial\overline{z}}\right\rvert\le k\cdot\left\lvert\frac{\partial f}{\partial z}\right\rvert.
\end{equation*}
\end{enumerate}
The smallest $k$ for which this holds is called the \textit{constant of quasiconformality} of $f$.
Note that some authors also call $K=\frac{1+k}{1-k}$ the quasiconformal constant; we prefer to call $K$ the \textit{quasiconformal dilatation}.

A map is $0$-quasiconformal (or, equivalently, quasiconformal with dilatation $1$) if and only if it is biholomorphic.

The composition of a quasiconformal map with a biholomorphic map (from the left or the right) gives again a quasiconformal map with the same constant of quasiconformality. In particular $k$-quasiconformal maps between Riemann surfaces are well-defined. A map between Riemann surfaces is $0$-quasiconformal (or, equivalently, quasiconformal with dilatation $1$) if and only if it is biholomorphic.
\end{QuasiconformalMappings}

\begin{BeltramiForms}
Let $X$ be a Riemann surface. A \textit{Beltrami form} on $X$ is an $L^1$ complex
antilinear endomorphism of the tangent bundle $TX$. Denote the space of Beltrami
forms on $X$ by $\mathcal{B}(X)$; this is a complex vector space. It can be endowed with a norm as follows: let $\mu\in\mathcal{B}(X)$, this defines for every point $x\in X$ a complex antilinear map $\mu_x :T_xX\to T_xX$. Since $T_xX$ is a one-dimensional complex vector space, the operator norm $\lVert\mu_x\rVert\in [0,\infty [$ is well-defined, i.e. does not depend on a choice of norm on $T_xX$. Take then $\lVert\mu\rVert$ to be the essential supremum of all $\lVert\mu_x\rVert$, where $x$ ranges over all points of $X$. With this norm $\mathcal{B}(X)$ becomes a Banach space.

Denote by $\mathcal{B}^{<1}(X)$ the open unit ball in $\mathcal{B}(X)$, i.e. the set of all Beltrami forms of norm less than one. Every element $\mu\in\mathcal{B}^{<1}(X)$ defines a new complex structure on $X$, as follows: consider a holomorphic atlas $(\varphi_i:U_i\overset{\simeq}{\to}V_i\subseteq\bbc )_i$ of $X$. For every $i$ then $\mu |_{U_i}$ corresponds to a Beltrami form
\begin{equation*}
\mu_i(z)\frac{\de\overline{z}}{\de z}
\end{equation*}
on $V_i$. Then let $\psi_i:V_i\to\bbc $ be some quasi-conformal homeomorphism with Beltrami coefficient $\mu_i$, i.e. such that
\begin{equation*}
\frac{\partial\psi_i(z)}{\partial \overline{z}}=\mu_i(z)\frac{\partial\psi_i(z)}{\partial z}.
\end{equation*}
Then the collection $(\psi_i\circ\varphi_i :U_i\to\psi_i(V_i))_i$ is a holomorphic atlas on the smooth surface underlying $X$; this is \textit{not} necessarily compatible with the old atlas, so we get a possibly new Riemann surface structure on $X$. We call this new Riemann surface $X_{\mu}$. Since its underlying smooth surface is identified with that of $X$, it automatically inherits a marking. Thus we have defined a map
\begin{equation}\label{BeltramiToTeichmueller}
\Phi :\mathcal{B}^{<1}(X)\to\mathrsfs{T}_{g,0},\quad \mu\mapsto X_{\mu }.
\end{equation}
This is a holomorphic submersion of complex Banach manifolds, in particular surjective.
\end{BeltramiForms}

\subsection{Curves in Moduli Space and Families of Curves}

The moduli space of curves (or Riemann surfaces) only being a coarse moduli space, there is a subtle difference between curves in the moduli space of curves and families of curves over a curve.

First let us explain how to pass from a family to a curve. So let $C$ be an algebraic curve over the complex numbers with at most ordinary double-points as singularities, and let $p: X\to C$ be a family of curves of genus $g\ge 2$. This yields, by definition of $\mathrsfs{M}_g$, a regular map $C\to\mathrsfs{M}_g$. It does not, however, always suffice to know this map in order to reconstruct the family. Hence it is useful to choose a rigidification, either a level structure or a Teichm\"{u}ller marking. But this forces us to pass to a cover of the original family. Let $C'\to C$ be the normalization of $C$; then $C'$ is a smooth algebraic curve, or a finite type Riemann surface. By pullback
\begin{equation*}
\xymatrix{
X'\ar[r]^{p'}
\ar[d]
&C' \ar[d]\\
X\ar[r]_{p}
&C
}
\end{equation*}
we obtain a family of curves $X'\to C'$. Now let $\widetilde{C'}\to C'$ be the universal cover of $C'$; again by pullback we get a family of curves $\widetilde{p'}:\widetilde{X'}\to\widetilde{C'}$. Now $\widetilde{C'}$ being simply connected we may choose a Teichm\"{u}ller marking of this family; this in turn defines a map $\widetilde{C'}\to\mathrsfs{T}_g$, by definition of Teichm\"{u}ller space. Clearly the following diagram commutes:
\begin{equation*}
\xymatrix{
\widetilde{C'}\ar[r]
\ar[d]
&\mathrsfs{T}_g \ar[d]\\
C\ar[r]_{p}
&\mathrsfs{M}_g,
}
\end{equation*}
and the family $\widetilde{X'}\to \widetilde{C'}$ can be reconstructed from the classifying map $\widetilde{C'}\to\mathrsfs{T}_g$.

Now let us consider the case of level structures. So choose some $n\ge 3$ and consider the monodromy representation of the local system $R^1p_{1,\ast }(\bbz /n\bbz )$ on $C'$; this is a homomorphism $\pi_1(C',c)\to\operatorname{Sp}_{2g}(\bbz /n\bbz )$. Since the latter group is finite, the kernel of this map is a finite index normal subgroup $\Pi$ of $\pi_1(C',c)$. This corresponds to a finite cover $T\to C'$, and the pullback of the family $p'$ along this covering map yields a family $q:Y\to T$ whose monodromy representation on $H^1(\cdot ,\bbz /n\bbz )$ is trivial. Hence there exists a level-$n$-structure of the family $q$, and this defines a regular map $T\to\mathrsfs{M}_g^{[n]}$. Now again this map allows to reconstruct the original family, but it has two advantages over the Teichm\"{u}ller marked family constructed above. First, it keeps much more of the topological information of the original family; second, it remains entirely within the realm of algebraic geometry. Our construction of course only works for complex algebraic geometry, but using \'{e}tale fundamental groups and \'{e}tale local systems instead of the ``classical'' ones, the construction can be carried out over an arbitrary base field (after possibly passing to a finite extension of the base field).

There is --- for the moment --- less to say about the other direction. Namely let $C$ be an algebraic curve with at most double point singularities and let $f:C\to\mathrsfs{M}_g$ be a regular map. This does not directly yield a family of algebraic curves over $C$, only over a finite (possibly ramified) cover of it: consider the forgetful map $\mathrsfs{M}_g^{[n]}\to\mathrsfs{M}_g$, for $n\ge 3$. Pulling back $f$ along this map gives a cartesian diagram
\begin{equation*}
\xymatrix{
C'\ar[r]^{f'}
\ar[d]
&\mathrsfs{M}_g^{[n]} \ar[d]\\
C\ar[r]_{f}
&\mathrsfs{M}_g .
}
\end{equation*}
Since the pullback of finite morphisms is finite, the map $C'\to C$ is also finite. In particular $C'$ is again an algebraic curve. Now by the universal property of $\mathrsfs{M}_g^{[n]}$ we get from $f'$ a family $X'\to C'$ which comes equipped with a level-$n$-structure. This family, even forgetting the level structure, does not necessarily descend to a family on $C$.

\subsection{The Kobayashi Metric}

It turns out that interesting families of algebraic curves can only occur over algebraic curves which are themselves hyperbolic. ``Interesting'' here means ``not isotrivial'':

\begin{Definition}
Let $C$ be a complex space (or an algebraic variety). A family $p:X\to C$ of curves is called
\begin{itemize}
\item \textup{isotrivial} if all its fibres are abstractly isomorphic;
\item \textup{trivial} if it is isomorphic, as a family, to $\operatorname{pr}_2 :X_0\times C\to C$ for some curve $X_0$.
\end{itemize}
\end{Definition}

Every trivial family is isotrivial, but the converse does not hold. A family $p:X\to C$ is isotrivial if and only if the corresponding classifying map $f:C\to\mathrsfs{M}_g$ is constant. We shall henceforth only consider non-isotrivial families.

In order to determine over which curves non-isotrivial families are possible at all, we have to take a closer look at the differential geometry of the involved spaces.

\begin{TheKobayashiSemimetric}
On any complex space $X$ there is canonical semimetric, the Kobayashi semimetric. In many cases, in particular most of the cases interesting to us, it is in fact a metric. The naturality of its definition implies that it is invariant under biholomorphic maps and has some other nice properties. It is obtained by considering all holomorphic mappings from the unit disk to $X$ and using the Poincar\'{e} metric on the former.

To begin with, let $X$ be a complex space and let $p$ and $q$ be points of $X$. By a \textit{holomorphic chain} from $p$ to $q$ (on $X$) we mean a finite sequence of holomorphic maps $f_{\nu }:\Delta\to X$ together with points $z_{\nu },w_{\nu}\in\Delta$, where $\nu$ varies from $1$ to $n$, subject to the following conditions:
\begin{equation*}
f_1(z_1)=p,\quad f_{\nu}(w_{\nu})=f_{\nu +1}(z_{\nu +1}),\quad f_n(w_n)=q.
\end{equation*}
It is easy to see that if $X$ is a path-connected complex manifold and $p,q\in X$, there is always a holomorphic chain from $p$ to $q$. Namely cover $X$ by open subsets which are biholomorphic to the polydisk $\Delta^r$ and choose a path from $p$ to $q$. The image of the path is compact, so it can be covered by finitely many of the given subsets, which reduces the problem to the
case $X=\Delta^r$ where it is clear.
\end{TheKobayashiSemimetric}
\begin{Definition}
The \textup{Kobayashi semimetric} on a complex space $X$ is the function $k_X:X\times X\to [0,\infty ]$ given by
\begin{equation*}
k_X(p,q)=\inf\sum_{\nu =1}^n\varrho (z_{\nu },w_{\nu }).
\end{equation*}
Here $\varrho$ is the Poincar\'{e} distance on $\Delta$ and the infimum is taken over all holomorphic chains from $p$ to $q$ as above.
\end{Definition}
It is readily checked that $k_X$ is a semimetric, i.e. it satisfies all the usual axioms for a metric with the exception that $k_X(p,q)$ is allowed to be zero also for $p\neq q$, and also to be infinity. Note that taking holomorphic chains instead of just holomorphic maps $\Delta\to X$ enforces the triangle inequality.
On complex manifolds there is also an infinitesimal form of the Kobayashi semimetric, the Koba\-ya\-shi seminorm. This is a norm on the tangential space $T_pX$, smoothly varying with $p$. Such allows to transfer rudiments of Riemannian geometry.
\begin{Definition}
Let $X$ be a complex manifold. The \textup{Kobayashi seminorm} on $X$ is the function $K_X:TX\to [0,\infty [$ whose value at a tangent vector $v\in T_pX$ is
\begin{equation*}
K_X(v)=\inf \frac{1}{r}
\end{equation*}
where the infimum ranges over all $r>0$ such that there exists a holomorphic map $f:\Delta_r\to X$ (here $\Delta_r$ is the disk of radius $r$ around zero) with $f(0)=p$ and $f'(0)=v$. For a nonzero tangent vector $v$ we can also describe this as the infimum of all $\| f_{\ast}^{-1}(v)\|$ where $f:\Delta\to X$ is a holomorphic function with $f(0)=p$ and $f'(0)=v$.
\end{Definition}
We will not dwell here on the smoothness properties of $K_X$ and other subtleties. The important point is that Royden has shown that from this infinitesimal metric one can retrieve the global metric. Namely for a piecewise smooth path $\gamma :[a,b]\to X$ we can define the \textit{Kobayashi length} as
\begin{equation*}
\ell (\gamma )=\int_a^bK_X(\gamma '(t))\de t;
\end{equation*}
this is unchanged under reparametrization of $\gamma$. Then $k_X(p,q)$ is the infimum of $\ell (\gamma )$ where $\gamma$ ranges over all piecewise smooth paths from $p$ to $q$. This is Theorem 3 in \cite{Royden70}.

The following is now easy:
\begin{Proposition}\label{CoveringMapsAreLocalKIsoms}
Let $f:X\to Y$ be a holomorphic map between complex manifolds, and let $v\in TX$. Then $K_Y(f_{\ast}(v))\le K_X(v)$. If $f$ is a covering map, equality holds, so that $f$ is a local Kobayashi isometry.\hfill $\square$
\end{Proposition}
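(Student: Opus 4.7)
The plan is to unpack the definition of the Kobayashi seminorm and exploit two elementary properties: holomorphic maps send admissible test disks forward by composition, and covering maps admit unique holomorphic lifts of disks. These two observations give the inequality and its reverse, respectively.

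First I would prove the inequality $K_Y(f_{\ast}(v))\le K_X(v)$ by composition. Let $p$ be the base point of $v$, and take any $r>0$ and any holomorphic $g:\Delta_r\to X$ with $g(0)=p$ and $g'(0)=v$. Then $f\circ g:\Delta_r\to Y$ is holomorphic with $(f\circ g)(0)=f(p)$ and $(f\circ g)'(0)=f_{\ast}(g'(0))=f_{\ast}(v)$, so $f\circ g$ is admissible for computing $K_Y(f_{\ast}(v))$. Hence every $1/r$ occurring in the infimum defining $K_X(v)$ also occurs in the one defining $K_Y(f_{\ast}(v))$, and taking infima proves the claim.

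Next I would prove the reverse inequality when $f$ is a covering map. The first thing to observe is that a holomorphic topological covering between complex manifolds is automatically a local biholomorphism: locally it is a holomorphic homeomorphism, hence biholomorphic by the inverse function theorem, so in particular $f_{\ast}$ is a linear isomorphism on each tangent space. Now let $h:\Delta_r\to Y$ be any holomorphic map with $h(0)=f(p)$ and $h'(0)=f_{\ast}(v)$. Since $\Delta_r$ is simply connected, standard covering space theory yields a continuous lift $\widetilde{h}:\Delta_r\to X$ with $\widetilde{h}(0)=p$. Composing locally with the inverse branches of $f$ shows that $\widetilde{h}$ is in fact holomorphic. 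Applying $f_{\ast}$ to $\widetilde{h}'(0)$ gives $h'(0)=f_{\ast}(v)$, and injectivity of $f_{\ast}$ forces $\widetilde{h}'(0)=v$. Thus $\widetilde{h}$ is admissible in the definition of $K_X(v)$ for the same $r$, and the reverse infimum inequality $K_X(v)\le K_Y(f_{\ast}(v))$ follows.

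I do not expect any serious obstacle here; the only mildly nontrivial step is justifying that the continuous lift is holomorphic and that the covering map is a local biholomorphism, but both are immediate once one notes that $f$ is locally invertible in the holomorphic category. That $f$ is then a local Kobayashi isometry follows from Royden's theorem, quoted in the paragraph preceding the proposition, since a map whose differential preserves the Kobayashi seminorm preserves lengths of piecewise smooth paths, and hence distances computed as infima of such lengths along small enough neighborhoods.
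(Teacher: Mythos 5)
Your proof is correct, and it is exactly the standard argument that the paper leaves implicit (the proposition is stated with the proof omitted as routine): push test disks forward for the inequality, lift them through the covering for the reverse. The only point worth polishing is the appeal to the inverse function theorem for the local biholomorphy of $f$ — being a homeomorphism does not by itself give invertibility of the differential, so one should instead cite the fact that an injective holomorphic map between equidimensional complex manifolds is biholomorphic onto its image; this is a cosmetic fix, not a gap.
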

This means the following for the global Kobayashi semimetric:
\begin{Corollary}\label{HolomorphicMapsAreKobayashiContractions}
Let $f:X\to Y$ be a holomorphic map between complex manifolds, and let $x_1,x_2\in X$. Then one has $k_Y(f(x_1),f(x_2)\le k_X(x_1,x_2)$. If $f$ is a covering map one gets more precisely
\begin{equation*}
k_Y(y_1,y_2)=\inf k_X(x_1,x_2)
\end{equation*}
with the infimum ranging over all preimages $x_1$ of $y_1$ and $x_2$ of $y_2$.\hfill $\square$
\end{Corollary}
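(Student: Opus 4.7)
The plan is to deduce both assertions from Proposition \ref{CoveringMapsAreLocalKIsoms} via Royden's integral formula
\begin{equation*}
k_X(p,q)=\inf_{\gamma}\ell (\gamma )=\inf_{\gamma}\int_a^bK_X(\gamma '(t))\,\de t,
\end{equation*}
where the infimum runs over piecewise smooth paths from $p$ to $q$ in $X$ (Theorem 3 of \cite{Royden70}, quoted just above). Both assertions then reduce to elementary manipulations of paths.

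\textbf{First inequality.} Given a piecewise smooth path $\gamma :[a,b]\to X$ from $x_1$ to $x_2$, its image $f\circ\gamma$ is a piecewise smooth path from $f(x_1)$ to $f(x_2)$ in $Y$, and by the proposition $K_Y((f\circ\gamma )'(t))=K_Y(f_{\ast }\gamma '(t))\le K_X(\gamma '(t))$ for all $t$. Hence $\ell (f\circ\gamma )\le\ell (\gamma )$, and passing to the infimum over $\gamma$ gives $k_Y(f(x_1),f(x_2))\le k_X(x_1,x_2)$.

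\textbf{Covering case.} The inequality $k_Y(y_1,y_2)\le\inf k_X(x_1,x_2)$ is immediate from the first part applied to each pair of preimages. For the reverse inequality, fix any preimage $x_1$ of $y_1$ and consider an arbitrary piecewise smooth path $\gamma :[a,b]\to Y$ from $y_1$ to $y_2$. Since $f$ is a covering map, $\gamma$ lifts uniquely to a piecewise smooth path $\widetilde{\gamma }:[a,b]\to X$ with $\widetilde{\gamma }(a)=x_1$; its endpoint $x_2:=\widetilde{\gamma }(b)$ is some preimage of $y_2$. Because $f$ is a local Kobayashi isometry, $K_X(\widetilde{\gamma }'(t))=K_Y(\gamma '(t))$ pointwise, so $\ell (\widetilde{\gamma })=\ell (\gamma )$. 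Consequently
\begin{equation*}
\inf_{x_2\in f^{-1}(y_2)}k_X(x_1,x_2)\le k_X(x_1,\widetilde{\gamma }(b))\le\ell (\widetilde{\gamma })=\ell (\gamma ),
\end{equation*}
and taking the infimum over all paths $\gamma$ from $y_1$ to $y_2$ yields $\inf_{x_2}k_X(x_1,x_2)\le k_Y(y_1,y_2)$.

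The only nontrivial input is path lifting for holomorphic covering maps of complex manifolds, which is a purely topological fact since complex manifolds are locally path-connected and locally simply connected; everything else is a direct bookkeeping exercise on Royden's length formula combined with the proposition. No single step is really hard here; the conceptual point, and thus the step one has to get right, is recognizing that the proposition upgrades from an infinitesimal statement to a global one only via the integral formula, so that the argument must be phrased on the level of paths rather than holomorphic chains.
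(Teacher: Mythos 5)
Your proof is correct and is exactly the argument the paper intends: the corollary is left unproved there precisely because it follows from Proposition \ref{CoveringMapsAreLocalKIsoms} by integrating along paths and invoking Royden's identification of $k_X$ with the infimum of Kobayashi lengths, which is what you carry out (including the path-lifting step and the length-preservation $\ell(\widetilde{\gamma})=\ell(\gamma)$ needed for the reverse inequality in the covering case). The only remark worth adding is that the first inequality also follows directly from the definition by composing holomorphic chains with $f$, without appealing to Royden's integral formula at all.
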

For later reference we also note how to compute the Kobayashi semimetric of a product:
\begin{Proposition}\label{KobayashiMetricOnProduct}
Let $X$ and $Y$ be complex spaces. Then the Kobayashi semimetric on $X\times Y$ is given by the formula
\begin{equation*}
k_{X\times Y}((x_1,y_1),(x_2,y_2))=\max (k_X(x_1,x_2),k_Y(y_1,y_2)).
\end{equation*}
\end{Proposition}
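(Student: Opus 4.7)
The inequality $\max(k_X(x_1,x_2),k_Y(y_1,y_2))\le k_{X\times Y}((x_1,y_1),(x_2,y_2))$ is immediate from Corollary \ref{HolomorphicMapsAreKobayashiContractions} applied to the two projections $X\times Y\to X$ and $X\times Y\to Y$, which are holomorphic.

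For the reverse inequality my strategy is to build a single holomorphic chain in $X\times Y$ by packaging chains in $X$ and $Y$ in parallel, using the flexibility of the unit disk. I first establish a one-step lemma: if $f:\Delta\to X$ with $f(z)=x_1$, $f(w)=x_2$ and $g:\Delta\to Y$ with $g(z')=y_1$, $g(w')=y_2$ are holomorphic, then by composing $f,g$ with automorphisms of $\Delta$ (which preserve $\varrho$) I may assume $z=z'=0$ and $w,w'\in[0,1)$, and, interchanging the two factors if necessary, $w\le w'$. The map
\begin{equation*}
F:\Delta\to X\times Y,\qquad F(\zeta)=\bigl(f(w\zeta/w'),\,g(\zeta)\bigr)
\end{equation*}
is then well-defined (since $|w\zeta/w'|<1$), holomorphic, and satisfies $F(0)=(x_1,y_1)$, $F(w')=(x_2,y_2)$, so the one-step chain $(F,0,w')$ witnesses
\begin{equation*}
k_{X\times Y}((x_1,y_1),(x_2,y_2))\le\varrho(0,w')=\max\bigl(\varrho(z,w),\varrho(z',w')\bigr).
\end{equation*}

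For the general case, fix $\epsilon>0$ and pick chains $(f_\nu,z_\nu,w_\nu)_{\nu=1}^n$ in $X$ and $(g_\mu,z'_\mu,w'_\mu)_{\mu=1}^m$ in $Y$ whose total Poincar\'{e} lengths $L_X,L_Y$ lie within $\epsilon$ of $k_X(x_1,x_2)$ and $k_Y(y_1,y_2)$; by inserting a small constant-map loop if necessary I may arrange $0<L_X\le L_Y$. Any chain step of Poincar\'{e} length $\ell$ may be subdivided at any fraction $\lambda\in(0,1)$ into two steps of lengths $\lambda\ell$ and $(1-\lambda)\ell$ by inserting an intermediate point on the hyperbolic geodesic from $z_\nu$ to $w_\nu$. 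Rescaling the cumulative $X$-breakpoints by the factor $L_Y/L_X$ and forming the common refinement $0=\pi_0<\pi_1<\cdots<\pi_N=L_Y$ of the rescaled $X$-breakpoints and the $Y$-breakpoints, I subdivide both chains so that on each interval $[\pi_{i-1},\pi_i]$ the $X$-part is a single holomorphic map of Poincar\'{e} length $(\pi_i-\pi_{i-1})L_X/L_Y$ and the $Y$-part is a single holomorphic map of Poincar\'{e} length $\pi_i-\pi_{i-1}$. Applying the one-step lemma to the $i$-th pair yields a step in $X\times Y$ of Poincar\'{e} length $\pi_i-\pi_{i-1}$, and concatenating produces a chain from $(x_1,y_1)$ to $(x_2,y_2)$ of total Poincar\'{e} length $L_Y\le\max(k_X(x_1,x_2),k_Y(y_1,y_2))+\epsilon$. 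Letting $\epsilon\to 0$ completes the proof.

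The decisive step is the reparametrization $\zeta\mapsto(f(w\zeta/w'),g(\zeta))$ in the one-step lemma: this captures the fact that in a product one coordinate can be ``slowed down'' by a holomorphic dilation of $\Delta$ at no extra Poincar\'{e} cost, which is precisely why the product metric is the $\max$ and not the sum. Extending this to multi-step chains requires only the combinatorial bookkeeping of common refinement, which is the one place where care must be taken to ensure that no length is double-counted.
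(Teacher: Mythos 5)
Your proof is correct. Note first that the paper itself contains no argument for this proposition: it is disposed of by a citation to \cite[Theorem (3.1.9)]{Kobayashi98}, so your write-up is a genuinely self-contained substitute rather than a variant of the paper's proof. The lower bound via the two holomorphic projections is exactly Corollary \ref{HolomorphicMapsAreKobayashiContractions}, as you say. The decisive device in the upper bound --- precomposing the ``shorter'' factor with the holomorphic contraction $\zeta\mapsto w\zeta/w'$ of $\Delta$, so that a one-step chain in each factor merges into a single step in $X\times Y$ of Poincar\'{e} length $\varrho(0,w')=\max(\varrho(0,w),\varrho(0,w'))$ --- is the correct mechanism and is essentially the one used in Kobayashi's own proof; it is exactly what makes the product semimetric a $\max$ rather than a sum. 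Your multi-step bookkeeping is also sound: splitting a step $(f,z,w)$ at a point of the hyperbolic geodesic from $z$ to $w$ preserves total length, and after rescaling the cumulative $X$-lengths by $L_Y/L_X$ each cell of the common refinement lies inside a single step of each chain, so the one-step lemma applies cellwise and the lengths telescope to $L_Y=\max(L_X,L_Y)\le\max(k_X(x_1,x_2),k_Y(y_1,y_2))+\epsilon$. Only some degenerate cases deserve an explicit word: steps of length zero should be discarded before forming the cumulative breakpoints (they are harmless, since a step with $z_\nu=w_\nu$ has coinciding endpoints), the case $L_X=0$ is covered by your constant-map loop (or by observing that total length zero forces $x_1=x_2$), and if one of $k_X(x_1,x_2)$, $k_Y(y_1,y_2)$ is infinite the identity already follows from the lower bound. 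None of these affects the validity of the argument.
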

\begin{proof}
This is \cite[Theorem (3.1.9)]{Kobayashi98}.
\end{proof}
\begin{KobayashiHyperbolicSpaces}
A complex space $X$ is called \textit{Kobayashi hyperbolic} if its Kobayashi semimetric is actually a metric, i.e. if $k_X(x,y)>0$ whenever $x\neq y$. In this case we call $k_X$ the \textit{Kobayashi metric.}
First note that Proposition \ref{CoveringMapsAreLocalKIsoms} implies:
\begin{Proposition}
Let $X$ be a complex manifold and let $\tilde{X}$ be a universal covering space of $X$. Then $X$ is Kobayashi hyperbolic if and only if $\tilde{X}$ is.\hfill $\square$
\end{Proposition}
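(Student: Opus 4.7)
The plan is to exploit two structural facts about the covering map $f\colon\tilde X\to X$ already established: by Proposition \ref{CoveringMapsAreLocalKIsoms} it is a local Kobayashi isometry, and by Corollary \ref{HolomorphicMapsAreKobayashiContractions} one has $k_X(y_1,y_2)=\inf k_{\tilde X}(\tilde y_1,\tilde y_2)$ with the infimum taken over preimages of $y_i$. In addition, the deck transformation group $G$ of $f$ acts on $\tilde X$ by biholomorphisms and hence by Kobayashi isometries, and the fibres of $f$ are precisely the $G$-orbits, which form discrete subsets of $\tilde X$. The one ingredient I need beyond the excerpt is the standard consequence of Royden's theorem that on a Kobayashi hyperbolic complex manifold the Kobayashi metric induces the underlying manifold topology; this is the only genuinely external input.

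For the direction $X$ hyperbolic $\Rightarrow$ $\tilde X$ hyperbolic, I would take $\tilde y_1\ne\tilde y_2$ in $\tilde X$ and split into two cases. If $f(\tilde y_1)\ne f(\tilde y_2)$, Corollary \ref{HolomorphicMapsAreKobayashiContractions} gives $k_{\tilde X}(\tilde y_1,\tilde y_2)\ge k_X(f(\tilde y_1),f(\tilde y_2))>0$ at once. If $f(\tilde y_1)=f(\tilde y_2)=y$, choose an evenly covered neighbourhood $V$ of $y$; the sheet of $f^{-1}(V)$ containing $\tilde y_1$ then excludes $\tilde y_2$, so any piecewise smooth path $\gamma$ from $\tilde y_1$ to $\tilde y_2$ projects via $f$ to a loop $\delta$ at $y$ that must leave $V$ by unique path lifting. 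The local isometry property combined with Royden's length formula yields $\ell(\gamma)=\ell(\delta)\ge 2k_X(y,X\smallsetminus V)$, and the right-hand side is strictly positive because on the hyperbolic manifold $X$ the Kobayashi metric induces the manifold topology.

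For the converse, $\tilde X$ hyperbolic $\Rightarrow$ $X$ hyperbolic, I would take $y_1\ne y_2$ in $X$, fix lifts $\tilde y_1,\tilde y_2\in\tilde X$, and use Corollary \ref{HolomorphicMapsAreKobayashiContractions} together with the fact that $G$ acts by isometries to rewrite
\[
k_X(y_1,y_2)=\inf_{g\in G}k_{\tilde X}(\tilde y_1,g\tilde y_2).
\]
The orbit $G\tilde y_2=f^{-1}(y_2)$ is a discrete subset of $\tilde X$ not containing $\tilde y_1$; hyperbolicity of $\tilde X$ together with the Kobayashi-equals-manifold topology then provides a $k_{\tilde X}$-ball around $\tilde y_1$ disjoint from this orbit, forcing the infimum to be strictly positive.

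The main obstacle is really only the one external ingredient, namely the coincidence of the Kobayashi and manifold topologies on a hyperbolic complex manifold: it is precisely this that rules out Kobayashi-accumulation of a discrete covering fibre at a point outside it, which would otherwise be the only way for either implication to fail. Once that fact is granted, everything else is a mechanical combination of local isometry, unique path lifting, and discreteness of fibres.
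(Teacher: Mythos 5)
Your argument is correct, and it is in fact considerably more careful than what the paper offers: the paper states this proposition with no proof at all, presenting it as an immediate consequence of Proposition \ref{CoveringMapsAreLocalKIsoms}. That shortcut hides exactly the point you isolate. The local-isometry statement controls the infinitesimal metric, but hyperbolicity is a statement about the integrated distance and is not a local property, so one genuinely needs a bridge from local to global. Your bridge is the standard one (it is essentially the proof of Theorem (3.2.8) in Kobayashi's book \cite{Kobayashi98}): combine Royden's length formula, unique path lifting, discreteness of fibres, and --- the one truly external ingredient, which you correctly flag --- Barth's theorem that on a hyperbolic complex manifold the Kobayashi distance induces the manifold topology. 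That last fact is indispensable in both directions (it is what makes $k_X(y, X\smallsetminus V)$ and the radius of a Kobayashi ball avoiding the fibre $f^{-1}(y_2)$ strictly positive), and it is not contained in anything proved in this section, so citing it explicitly is the right thing to do. The only cosmetic remarks: in the first direction the factor $2$ is unnecessary (one excursion out of $V$ already suffices), and your reduction of the infimum in Corollary \ref{HolomorphicMapsAreKobayashiContractions} to an infimum over the deck group uses that the universal covering is normal, which is worth saying but is of course automatic here.
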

The notion of Kobayashi hyperbolicity can now be justified by the following simple observation.
\end{KobayashiHyperbolicSpaces}
\begin{Proposition}
Let $X$ be a Riemann surface. If the universal covering space of $X$ is $\bbc$ or $\mathbb{P}^1(\bbc )$, then the Kobayashi semimetric on $X$ is identically zero. If the universal covering space of $X$ is $\Delta$, then the Kobayashi semimetric on $X$ equals the Poincar\'{e} metric descending from $\Delta$.

In particular a Riemann surface is Kobayashi hyperbolic if and only if it is hyperbolic in the usual sense.\hfill $\square$
\end{Proposition}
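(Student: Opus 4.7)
The plan is to first establish the claim on the three universal covers $\mathbb{P}^1(\bbc)$, $\bbc$ and $\Delta$, and then transport the result to an arbitrary Riemann surface via Proposition \ref{CoveringMapsAreLocalKIsoms} and Corollary \ref{HolomorphicMapsAreKobayashiContractions}. For the parabolic model $\bbc$, given any two points $p, q \in \bbc$ and any $r > 1$, consider the single-arrow holomorphic chain $f_r : \Delta \to \bbc$ defined by $f_r(z) = p + r(q-p) z$, with anchor points $z_1 = 0$ and $w_1 = 1/r$; its Poincaré contribution is $\tfrac{1}{2} \log \tfrac{1+1/r}{1-1/r}$, which tends to $0$ as $r \to \infty$. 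Hence $k_{\bbc} \equiv 0$. For the Riemann sphere one uses the holomorphic inclusion $\bbc \hookrightarrow \mathbb{P}^1(\bbc)$, which is a Kobayashi contraction, together with the fact that the group of M\"obius transformations acts transitively on $\mathbb{P}^1(\bbc)$ by biholomorphisms (hence Kobayashi isometries), to reduce any pair of points to the already handled affine case.

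For the hyperbolic model $\Delta$, the upper bound $k_{\Delta}(p,q) \le \varrho(p,q)$ is immediate from the length-one chain given by $f_1 = \mathrm{id}_{\Delta}$, $z_1 = p$, $w_1 = q$. The lower bound is exactly the place where the Schwarz--Pick lemma enters, and this is the only genuinely nontrivial point in the argument: every holomorphic map $f : \Delta \to \Delta$ satisfies $\varrho(f(z), f(w)) \le \varrho(z,w)$. Applied to any holomorphic chain $(f_{\nu}, z_{\nu}, w_{\nu})_{\nu=1}^n$ from $p$ to $q$, combined with the linking conditions $f_{\nu}(w_{\nu}) = f_{\nu+1}(z_{\nu+1})$ and the triangle inequality for $\varrho$, one obtains the telescoping estimate
\begin{equation*}
\sum_{\nu=1}^n \varrho(z_{\nu}, w_{\nu}) \;\ge\; \sum_{\nu=1}^n \varrho(f_{\nu}(z_{\nu}), f_{\nu}(w_{\nu})) \;\ge\; \varrho(p,q).
\end{equation*}
Taking the infimum yields $k_{\Delta}(p,q) \ge \varrho(p,q)$, so $k_{\Delta} = \varrho$.

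To descend to a general $X$, let $\pi : \widetilde{X} \to X$ be the universal cover. By Corollary \ref{HolomorphicMapsAreKobayashiContractions}, for any $y_1, y_2 \in X$,
\begin{equation*}
k_X(y_1, y_2) \;=\; \inf\bigl\{\, k_{\widetilde{X}}(\tilde{y}_1, \tilde{y}_2) \;:\; \pi(\tilde{y}_i) = y_i \,\bigr\}.
\end{equation*}
If $\widetilde{X} \in \{\bbc, \mathbb{P}^1(\bbc)\}$, the infimum is zero by the first paragraph, so $k_X \equiv 0$. If $\widetilde{X} = \Delta$, the infimum is exactly the quotient of $\varrho$ by the action of the deck group (which acts by holomorphic, hence Poincaré-isometric, automorphisms), which is by definition the descended Poincaré metric on $X$. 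The final sentence then follows: $k_X$ is positive off the diagonal precisely in the hyperbolic case.
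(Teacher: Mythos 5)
Your proof is correct, and it supplies exactly the standard argument that the paper leaves implicit (the proposition is stated with a terminal $\square$ and no proof is given): dilation chains to kill $k_{\bbc}$, reduction of $\mathbb{P}^1(\bbc )$ to the affine chart, Schwarz--Pick plus the telescoping triangle inequality for the lower bound $k_{\Delta}\ge\varrho$, and the covering-map formula of Corollary \ref{HolomorphicMapsAreKobayashiContractions} to descend. You correctly isolate Schwarz--Pick as the one genuinely nontrivial input; the only step you gloss over is the identification of the orbit-infimum $\inf_{\gamma}\varrho (\tilde{y}_1,\gamma\tilde{y}_2)$ with the distance function of the descended Poincar\'{e} metric tensor, which is a standard covering-space fact but deserves a word.
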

We will now collect some easy properties of Kobayashi hyperbolic manifolds. To begin with, if $X$ is an complex subspace of $Y$, then the inclusion $X\to Y$ is holomorphic, hence distance non-increasing for the Kobayashi semimetrics. So any complex subspace of a Kobayashi hyperbolic space is again Kobayashi hyperbolic.

From Proposition \ref{KobayashiMetricOnProduct} we also see that products of Kobayashi hyperbolic spaces are again Kobayashi hyperbolic. Hence for instance all polydisks $\Delta^n$ are Kobayashi hyperbolic, and we see:
\begin{Proposition}
Every bounded domain in $\bbc^n$ is Kobayashi hyperbolic.\hfill $\square$
\end{Proposition}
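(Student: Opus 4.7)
The plan is to reduce the claim to the two facts already established immediately above: that a complex subspace of a Kobayashi hyperbolic space is again Kobayashi hyperbolic, and that a product of Kobayashi hyperbolic spaces is Kobayashi hyperbolic. So let $D\subseteq\bbc^n$ be a bounded domain. Since $D$ is bounded, there exists some $R>0$ such that $D$ is contained in the polydisk $\Delta_R^n=\{(z_1,\ldots ,z_n)\in\bbc^n : |z_i|<R\}$. Scaling by the factor $1/R$ gives a biholomorphism $\Delta_R^n\cong\Delta^n$, so it suffices to show that $\Delta^n$ is Kobayashi hyperbolic and then apply the subspace observation to the inclusion $D\hookrightarrow\Delta_R^n$.

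For the hyperbolicity of $\Delta^n$, the unit disk $\Delta$ is itself its own universal covering space and is hyperbolic in the usual sense, so by the Proposition identifying Kobayashi hyperbolicity of Riemann surfaces with classical hyperbolicity, $\Delta$ is Kobayashi hyperbolic; its Kobayashi metric is the Poincar\'{e} metric. Then by repeated application of Proposition \ref{KobayashiMetricOnProduct}, which expresses the Kobayashi semimetric on a product as the maximum of the two factor semimetrics, the Kobayashi semimetric on $\Delta^n$ is given by
\begin{equation*}
k_{\Delta^n}((z_1,\ldots ,z_n),(w_1,\ldots ,w_n))=\max_{i=1,\ldots ,n}\varrho (z_i,w_i),
\end{equation*}
and this is strictly positive whenever the two points are distinct. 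Hence $\Delta^n$ is Kobayashi hyperbolic.

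Finally, the inclusion $\iota :D\hookrightarrow\Delta_R^n$ is holomorphic, so by Corollary \ref{HolomorphicMapsAreKobayashiContractions} one has $k_{\Delta_R^n}(\iota (p),\iota (q))\le k_D(p,q)$ for all $p,q\in D$. If $p\neq q$, then the left-hand side is strictly positive by the hyperbolicity of $\Delta_R^n$ just established, forcing $k_D(p,q)>0$. Thus $k_D$ is a genuine metric and $D$ is Kobayashi hyperbolic. There is no real obstacle here; the whole argument is a direct combination of the three preceding statements, and the only mild point is remembering that the nontrivial direction of the subspace property follows from the Kobayashi-contracting behaviour of holomorphic maps in the correct order: the inclusion decreases distances, so hyperbolicity of the ambient space forces hyperbolicity of the subspace.
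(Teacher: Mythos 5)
Your argument is correct and is exactly the one the paper intends: the proposition is stated with only a $\square$ because the preceding discussion already establishes that polydisks are Kobayashi hyperbolic (via Proposition \ref{KobayashiMetricOnProduct} and the hyperbolicity of $\Delta$) and that complex subspaces of Kobayashi hyperbolic spaces are again Kobayashi hyperbolic, so embedding a bounded domain into a polydisk finishes the proof. Your write-up simply makes these implicit steps explicit, including the harmless rescaling $\Delta_R^n\cong\Delta^n$.
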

\begin{TeichmuellerMetric}
Each Teichm\"{u}ller space $\mathrsfs{T}_g$ is biholomorphic to a bounded symmetric domain in $\bbc^{3g-3}$, e.g. by Bers' embedding. Hence $\mathrsfs{T}_g$ is Kobayashi hyperbolic. This allows us to partially answer the question raised at the beginning of this section:
\end{TeichmuellerMetric}
\begin{Corollary}
Let $C$ be Riemann surface such that there exists a non-isotrivial family $X\to C$ of curves of genus $g$. Then $C$ is hyperbolic.
\end{Corollary}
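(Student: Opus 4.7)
The plan is to pass from $C$ to its universal cover, produce a holomorphic map into Teichm\"{u}ller space, and then exploit the fact that $\mathrsfs{T}_g$ is Kobayashi hyperbolic to rule out the non-hyperbolic cases for $C$.

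First I would let $\pi :\widetilde{C}\to C$ denote a universal covering and consider the pullback family $\pi^{\ast }X\to\widetilde{C}$. Since $\widetilde{C}$ is simply connected, the pulled-back family admits a Teichm\"{u}ller marking (exactly as in the discussion of passing from families to classifying maps in the preceding subsection), and so is classified by a holomorphic map $\widetilde{f}:\widetilde{C}\to\mathrsfs{T}_g$. Composition with the quotient $\mathrsfs{T}_g\to\Mod_g\backslash\mathrsfs{T}_g=\mathrsfs{M}_g$ recovers the classifying map $C\to\mathrsfs{M}_g$ of the original family (pulled back to $\widetilde{C}$). By hypothesis $p$ is not isotrivial, so $C\to\mathrsfs{M}_g$ is non-constant, and therefore $\widetilde{f}$ is non-constant as well.

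Now suppose for contradiction that $C$ is not hyperbolic. Then its universal cover $\widetilde{C}$ is either $\bbc$ or $\mathbb{P}^1(\bbc )$, and by the proposition characterizing the Kobayashi semimetric on Riemann surfaces, $k_{\widetilde{C}}$ vanishes identically. By Corollary \ref{HolomorphicMapsAreKobayashiContractions} the holomorphic map $\widetilde{f}$ is distance non-increasing for the Kobayashi semimetrics, so
\begin{equation*}
k_{\mathrsfs{T}_g}(\widetilde{f}(x),\widetilde{f}(y))\le k_{\widetilde{C}}(x,y)=0\quad\text{for all }x,y\in\widetilde{C}.
\end{equation*}
But $\mathrsfs{T}_g$ is biholomorphic to a bounded domain in $\bbc^{3g-3}$, hence Kobayashi hyperbolic, so its Kobayashi semimetric separates points. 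This forces $\widetilde{f}$ to be constant, contradicting the previous paragraph.

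The only slightly delicate point --- and it is really just bookkeeping --- is verifying that the non-isotriviality of $p$ genuinely promotes to non-constancy of $\widetilde{f}$ rather than being absorbed into the $\Mod_g$-action on $\mathrsfs{T}_g$; but since a constant $\widetilde{f}$ would descend to a constant map $C\to\mathrsfs{M}_g$, this is automatic. Everything else is a direct application of the Kobayashi formalism collected in this subsection, so no serious obstacle remains.
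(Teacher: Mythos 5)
Your proof is correct and follows essentially the same route as the paper's: lift to the universal cover, obtain a classifying map to $\mathrsfs{T}_g$ via a Teichm\"{u}ller marking, and use that a holomorphic map from a space with vanishing Kobayashi semimetric into the Kobayashi hyperbolic space $\mathrsfs{T}_g$ must be constant. The only cosmetic difference is that you argue by contradiction while the paper states the contrapositive (non-hyperbolic base forces isotriviality).
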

\begin{proof}
Assume that $C$ is not hyperbolic. Then its universal covering space $\widetilde{C}$ is isomorphic to either $\bbc$ or $\mathbb{P}^1(\bbc )$. In particular the Kobayashi semimetric on $\widetilde{C}$ is identically zero.

Let $X\to C$ be any family of curves of genus $g$; this lifts to a family over $\widetilde{C}$ which admits a Teichm\"{u}ller marking. Hence a holomorphic map $f:\widetilde{C}\to\mathrsfs{T}_g$. This is distance-non-increasing for the Kobayashi metric, and so for any $c,d\in\widetilde{C}$ one has
\begin{equation*}
0=k_{\widetilde{C}}(c,d)\ge k_{\mathrsfs{T}_g}(f(c),f(d)),
\end{equation*}
hence $f(c)=f(d)$. This means that the map $f$ is constant, hence the family is isotrivial.
\end{proof}

If $f:X\to C$ is a non-isotrivial family of curves, then the universal covering space of $C$ is isomorphic to the unit disk. After fixing an isomorphism we may thus interpret the associated classifying map as a nonconstant holomorphic map $\Delta \to\mathrsfs{T}_g$. By Corollary \ref{HolomorphicMapsAreKobayashiContractions} this is distance-non-increasing for the Kobayashi metric. So one might say that the family is maximally apart from being isotrivial when the classifying map is an isometry, and study isometric embeddings $\Delta\to\mathrsfs{T}_g$.

\begin{Definition}
A \textup{Teichm\"{u}ller disk} is a holomorphic Kobayashi isometry $f:\Delta\to\mathrsfs{T}_g$. Sometimes we also call images of such maps Teichm\"{u}ller disks.
\end{Definition}
It turns out that Teichm\"{u}ller disks exhaust Teichm\"{u}ller space in the following precise sense: for every two distinct points in $\mathrsfs{T}_g$ there exists a unique\footnote{meaning that it is unique as a subspace of Teichm\"{u}ller space} Teichm\"{u}ller disk containing both of them.\footnote{This follows from Teichm\"{u}ller's Theorem, see \cite{Hubbard06}} The picture becomes very different when one requires the map $f:\Delta\to\mathrsfs{T}_g$ to be the classifying map of a family as above, with $C$ of finite type. This is equivalent to requiring that the image of the Teichm\"{u}ller disk in $\mathrsfs{M}_g$ (or in $\mathrsfs{M}_g^{[n]}$) is closed, or is an algebraic curve. Such curves are called \textit{Teichm\"{u}ller curves} and provide a spectacularly rich geometry and arithmetic, of which this work discusses just some aspects.

On Teichm\"{u}ller spaces the Kobayashi metric also has a more concrete description, which helps us in understanding Teichm\"{u}ller disks and curves.
\begin{Definition}
Let $g\ge 2$. Then the \textup{Teichm\"{u}ller metric} $d_T$ on $\mathrsfs{T}_g$ is defined as follows: let $x,y\in\mathrsfs{T}_g$ be represented by marked Riemann surfaces $\Sigma_g\to X$ and $\Sigma_g\to Y$. Then we set
\begin{equation*}
d_T(x,y)=\inf_{f}\frac{1}{2}\log K(f),
\end{equation*}
where $f$ ranges over all quasi-conformal maps $X\to Y$ such that the diagram
\begin{equation*}
\xymatrix{
&X\ar[dd]^f\\
\Sigma_g\ar[ur] \ar[dr]\\
&Y
}
\end{equation*}
commutes up to isotopy, and $K(f)$ is the quasiconformal dilatation of $f$.
\end{Definition}
One can show that this defines a complete metric on $\mathrsfs{T}_g$, see \cite{Hubbard06}. Now:
\begin{Theorem}[Royden]\label{KobayashiEqualsTeichmueller}
Let $g\ge 1$. Then the Kobayashi metric on Teichm\"{u}ller space $\mathrsfs{T}_g$ equals the Teichm\"{u}ller metric.
\end{Theorem}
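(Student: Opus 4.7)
The proof splits into the two inequalities $k_{\mathrsfs{T}_g}\le d_T$ and $d_T\le k_{\mathrsfs{T}_g}$.

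For the easy direction $k_{\mathrsfs{T}_g}\le d_T$, the strategy is to produce through any pair of points $x,y\in\mathrsfs{T}_g$ an explicit holomorphic isometric embedding $\iota\colon\Delta\to\mathrsfs{T}_g$ for the Poincar\'{e} metric and $d_T$. Teichm\"{u}ller's existence theorem provides this: the extremal quasiconformal map $X\to Y$ realizing $d_T(x,y)$ has Beltrami differential of the special form $k\bar{q}/|q|$ for some holomorphic quadratic differential $q$ on $X$ and $k\in [0,1)$. Varying the parameter over the unit disk, the family of Beltrami forms $t\bar{q}/|q|$ defines, via the map $\Phi$ of (\ref{BeltramiToTeichmueller}), a holomorphic map $\iota\colon\Delta\to\mathrsfs{T}_g$ through $x$ and $y$; uniqueness in Teichm\"{u}ller's theorem shows that $\iota$ is a global isometry onto its image. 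Corollary \ref{HolomorphicMapsAreKobayashiContractions} then yields $k_{\mathrsfs{T}_g}(x,y)\le\varrho(\iota^{-1}(x),\iota^{-1}(y))=d_T(x,y)$.

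For the hard direction $d_T\le k_{\mathrsfs{T}_g}$, I would pass to the infinitesimal level. Both $d_T$ and $k_{\mathrsfs{T}_g}$ are length metrics generated by Finsler-type norms on the tangent bundle of $\mathrsfs{T}_g$ --- for the Kobayashi semimetric this is Royden's integration theorem cited above the proposition, and for the Teichm\"{u}ller metric it is classical. It therefore suffices to show that on every tangent space $T_x\mathrsfs{T}_g$ the Kobayashi seminorm $K_{\mathrsfs{T}_g}$ coincides with the Teichm\"{u}ller infinitesimal norm $\lVert\cdot\rVert_T$, defined as the infimum of $\lVert\mu\rVert_\infty$ over Beltrami representatives of a given tangent vector. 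The inequality $K_{\mathrsfs{T}_g}\le\lVert\cdot\rVert_T$ follows from the first paragraph, since the Teichm\"{u}ller disks constructed there realize every tangent direction isometrically.

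The reverse infinitesimal inequality $\lVert\cdot\rVert_T\le K_{\mathrsfs{T}_g}$ is the technical core. Via the complex-bilinear pairing $B(X)\times Q(X)\to\bbc$ between Beltrami differentials and holomorphic quadratic differentials on $X$, one identifies $T_x\mathrsfs{T}_g$ with the dual of the finite-dimensional Banach space $(Q(X),\lVert\cdot\rVert_{L^1})$, and $\lVert\cdot\rVert_T$ becomes the associated dual norm. Given any holomorphic $f\colon\Delta\to\mathrsfs{T}_g$ with $f(0)=x$, one has to show that $df_0$, regarded as a linear functional on $Q(X)$, has Poincar\'{e}-to-$L^1$-dual operator norm at most one; this is a Schwarz-lemma statement for holomorphic maps into bounded absolutely convex domains in a finite-dimensional Banach space. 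Its crux is \emph{Royden's lemma}: every $\bbc$-linear isometry of $(Q(X),\lVert\cdot\rVert_{L^1})$ is induced by a conformal automorphism of $X$. The lemma supplies the rigidity needed to transfer the convex geometry of the $L^1$ unit ball into a genuine Schwarz estimate for $f$. The main obstacle is Royden's lemma itself, whose proof is geometric, hinging on a careful analysis of when moduli $|q|$ of distinct holomorphic quadratic differentials can agree, together with the finiteness of $\Aut(X)$ for $g\ge 2$. Once this is available, the remainder of the hard direction is essentially functional-analytic routine.
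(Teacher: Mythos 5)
First, a remark on the comparison: the paper does not prove this theorem at all --- its proof is the single line ``This is \cite[Theorem 3]{Royden71}'' --- so your sketch is being measured against Royden's argument rather than against anything in the text. Your first paragraph (the inequality $k_{\mathrsfs{T}_g}\le d_T$ via Teichm\"{u}ller's existence and uniqueness theorems and the distance-decreasing property of holomorphic disks) is correct, and so is your reduction of the hard inequality to the infinitesimal level with $\lVert\cdot\rVert_T$ realized as the dual of the $L^1$-norm on the space $Q(X)$ of holomorphic quadratic differentials (the infinitesimal Hamilton--Krushkal duality). This is exactly how the standard proofs begin.

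The gap is in your identification of the crux of the reverse infinitesimal inequality. Royden's lemma that every $\bbc$-linear isometry of $(Q(X),\lVert\cdot\rVert_{L^1})$ onto $(Q(Y),\lVert\cdot\rVert_{L^1})$ is induced by a conformal map is the engine of a \emph{different} theorem in the same paper --- that every isometry of $(\mathrsfs{T}_g,d_T)$ comes from the extended mapping class group --- and it plays no role in the proof that $d_T\le k_{\mathrsfs{T}_g}$. Nor could it easily: rigidity of linear isometries compares two fibres of the cotangent bundle, whereas the estimate you need concerns an arbitrary holomorphic disk $f:\Delta\to\mathrsfs{T}_g$, and the identification $T_x\mathrsfs{T}_g\simeq Q(X)^{\ast}$ is only pointwise --- there is no global holomorphic realization of $\mathrsfs{T}_g$ as a bounded absolutely convex domain in $Q(X)^{\ast}$ to which a convex-domain Schwarz lemma could be applied (the Bers embedding lands in a different space, with a different norm, and has non-convex image). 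What actually carries the hard direction is one of the following: in Royden's original argument, a differentiability lemma for the $L^1$-norm (a first-order expansion of $\lVert q+tp\rVert_{L^1}$ with uniform error control), fed into a subharmonicity/maximum-principle argument applied to the coderivative $t\mapsto (df_t)^{\ast}q_t$ along the disk; in the later Earle--Kra--Krushkal treatment, Slodkowski's extension theorem for holomorphic motions, which lifts any holomorphic disk in $\mathrsfs{T}_g$ through $\Phi:\mathcal{B}^{<1}(X)\to\mathrsfs{T}_g$ to the unit ball of Beltrami forms, where the ordinary Schwarz lemma together with the definition of $d_T$ finishes the proof. Either ingredient is substantial and is absent from your sketch; with the isometry lemma in its place, the ``functional-analytic routine'' you appeal to does not close.
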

\begin{proof}
This is \cite[Theorem 3]{Royden71}.
\end{proof}
Now the mapping class group $\Mod_g$ acts on $\mathrsfs{T}_g$ by holomorphic automorphisms, hence by Kobayashi isometries. In particular the Teichm\"{u}ller metric descends to any quotient $\Gamma\backslash\mathrsfs{T}_g$ for a subgroup $\Gamma$ of $\Mod_g$. If $\Gamma$ is torsion free, this map is an unramified covering, hence a local Kobayashi isometry. Note that there are finite index torsion-free subgroups of the mapping class group, e.g. $\Mod_g^{[n]}$ for $n\ge 3$; this is \cite[Corollary 1.5]{Ivanov92}. On $\mathrsfs{M}_g=\Mod_g\backslash\mathrsfs{T}_g$ we also get a well-defined quotient metric, but this is not necessarily equal to the Kobayashi metric of $\mathrsfs{M}_g$ itself! For example for $g=1$ we have $\mathrsfs{M}_1\simeq\bbc$ which is clearly not Kobayashi hyperbolic.

By the virtue of Theorem \ref{KobayashiEqualsTeichmueller} we can easily describe the quotient metric on $\mathrsfs{M}_g$: the distance between two points in $\mathrsfs{M}_g$, represented by Riemann surfaces $X$ and $Y$, is
\begin{equation*}
\inf_f\frac{1}{2}\log K(f)
\end{equation*}
where $f$ now runs over \textit{all} quasiconformal maps $X\to Y$.

\newpage

\section{Abelian Differentials: Local Theory}

\noindent In the third chapter we will describe a method, geometrically very explicit, to construct families of curves which are maximally non-isotrivial in the sense of the previous chapter, i.e. which correspond to holomorphic isometric embeddings of the hyperbolic plane into Teichm\"{u}ller space. The idea is simple: a complex structure on a topological surface can be thought of as a conformal equivalence class of metrics. Take then some particularly nice metric in that conformal class, and deform it in the simplest possible way. One usually takes a metric with constant curvature, in our case hence the hyperbolic metric. But hyperbolic geometry is more difficult to deform than euclidean geometry. Now for topological reasons\footnote{i.e. because of the Gauss-Bonnet Theorem} we cannot find a flat metric on a surface of genus greater than one --- unless we allow singularities. So we take a flat metric which has finitely many possibly mild singularities.

There are several possible formalizations of this intuition; the one most suitable for our purposes is that of \textit{translation structures with cone points}. Such a structure on a surface $\Sigma$ consists of an atlas defined outside a finite set of ``cone points'' $P\subset\Sigma$ (thought of as the set of singularities), where all transition maps are affine translations $\bbc\to\bbc$, together with certain conditions on what happens at points in $P$. Clearly transitions are biholomorphic, so we can also view the atlas as a holomorphic atlas. The extra conditions ensure that this holomorphic atlas extends to a holomorphic atlas on all of $\Sigma$. On the other hand it also induces a flat euclidean metric on $\Sigma\smallsetminus P$ in the conformal class of this holomorphic atlas.

The plan is then to deform such a translation structure by postcomposing the atlas with a fixed element of $\SL_2(\bbr )$. Here we let $\SL_2(\bbr )$ act on $\bbc$ via the usual identification $\bbc =\bbr^2$ (with $x+\mathrm{i}y$ corresponding to $(x,y)$). This will then give a map $\SL_2(\bbr )\to\mathrsfs{T}_g$ which is the technical heart of the desired construction of families of curves.

But before we can do this, we have to analyze the geometry of one single translation structure. Namely the details in the definition of a translation structure with cone points are made in a way that such structures correspond bijectively to abelian differentials, i.e. nonzero holomorphic one-forms. It is easy to explain how this looks like outside the singularities: the holomorphic one-form corresponding to a translation structure is the unique form which in all charts $z$ of the translation atlas becomes $\de z$. To phrase it the other way, the translation charts of a holomorphic one-form are obtained by locally integrating the holomorphic one-form. The cone points of the translation structure are then precisely the zeros of the abelian differential.

We also discuss in detail our most important examples: the \textit{Wiman curves of type I}. These are curves which have an exceptionally large automorphism group, in the following sense: by a classical theorem of Wiman, an automorphism of a compact Riemann surface of genus $g\ge 2$ has order at most $4g+2$. Moreover in each genus $g\ge 2$ there is a unique compact Riemann surface which admits an automorphism of order equal to $4g+2$. This can most easily be described as an algebraic curve $W_g$: it is the hyperelliptic curve of genus $g$ which is branched over all $n$-th roots of unity and $\infty$ (here $n=2g+1$). In other words, it is the projective algebraic curve which is birationally equivalent to the affine curve with equation\footnote{Note that this is \textit{not} its projective closure $y^2z^{n-2}=x^n-z^n$, but rather the normalization of that curve}
\begin{equation}\label{AffineEquationOfWimanCurve}
y^2=x^n-1.
\end{equation}
For proofs and further information about these curves, see \cite{Kulkarni97}.

A basis of $\Omega^1(W_g)$ is given by the abelian differentials $\omega_1,\ldots ,\omega_g$ which can in affine coordinates be represented as
\begin{equation}\label{WimanDifferential}
\omega_k=\frac{x^{k-1}\de x}{y}.
\end{equation}
Now each pair $(W_g,\omega_k)$ can, as indicated above, be interpreted as a surface with a translation structure. We explicitly compute that $(W_g,\omega_1)$ can be viewed as two copies of a regular $n$-gon with corresponding sides glued, and that $(W_g,\omega_g)$ can be viewed as a regular $2n$-gon with opposite sides glued. Also the ``intermediate'' surfaces $(W_g,\omega_k)$ with $1\le k\le g$ admit an interpretation in terms of Euclidean geometry: they are obtained by glueing rotated copies of an isosceles triangle with apex angle $k\pi /n$.

\subsection{The Geometry of Abelian Differentials}

\begin{TranslationSurfaces}
We now give the technical details for a correspondence between abelian differentials and translation structures. We begin with the easiest special case: no singularities, no zeros.
\end{TranslationSurfaces}
\begin{Definition}\label{DefinitionTranslationSurfaceWithoutAnything}
Let $\Sigma$ be a (topological) surface. A \textup{translation structure} on $\Sigma$ is an atlas
	\begin{equation*}
	\mathfrak{T}=(\varphi_i:U_i\overset{\simeq}{\to}V_i)_i\text{ with }V_i\subseteq\bbc\text{ open,}
	\end{equation*}
	such that
	\begin{enumerate}
	\item all chart transitions are of the form $z\mapsto z+c$ for some $c\in\bbc $, i.e. just affine translations;
	\item the atlas $\mathfrak{T}$ is maximal amongst atlases satisfying (i).
	\end{enumerate}
	A surface together with a translation structure is also called a \textup{translation surface.}
	\end{Definition}

We claim that a translation structure on a surface is ``the same'' as a complex structure together with a nowhere vanishing holomorphic (with respect to that complex structure) one-form. To start with let $\Sigma$ be a surface with a translation structure $\mathfrak{T}$. Since all chart transition maps are biholomorphic, clearly $\mathfrak{T}$ is a holomorphic atlas and hence determines a complex structure on $\Sigma$. Since the canonical holomorphic one-form $\mathrm{d}z$ on $\bbc $ has no zeros and is invariant under affine translations, its pullbacks along all charts in $\mathfrak{T}$ glue together to a holomorphic one-form without zeros on $\Sigma$.
	
For the other direction let $X$ be a Riemann surface and let $\omega$ be a holomorphic one-form on $X$ which is nowhere zero. We consider all holomorphic charts $z$ of $X$ in which $\omega$ obtains the form $\mathrm{d}z$; we claim that they form a translation structure $\mathfrak{T}_{\omega}$. Clearly all chart transition maps are affine translations; the only thing which has to be checked is that these charts cover $X$. But locally around $z_0\in X$ one can define
\begin{equation*}
X\supseteq U\overset{\simeq}{\to}V\subseteq\bbc,\quad z\mapsto\int_{z_0}^z\omega ,
\end{equation*}
where $U$ is a neighborhood of $z_0$, small enough so that the integral is well-defined; this is a chart in $\mathfrak{T}_{\omega }$.

We summarize:	
	\begin{Proposition}
	Let $\Sigma$ be a topological surface. The following two sets are in bijection via the construction described above:
	\begin{enumerate}
	\item The set of all translation structures on $\Sigma$;
	\item The set of all pairs $(\mathrsfs{O},\omega )$, where $\mathrsfs{O}$ is a complex structure on $\Sigma$, and $\omega$ is a holomorphic one-form without zeros on the Riemann surface $(\Sigma ,\mathrsfs{O})$.\hfill $\square$
	\end{enumerate}
	\end{Proposition}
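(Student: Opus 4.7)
The plan is to formalize the two constructions already sketched in the discussion preceding the proposition and then verify that they are mutually inverse. I will set up two maps
\[
F\colon\{\text{translation structures on }\Sigma\}\longrightarrow\{(\mathscr{O},\omega)\},\qquad G\colon\{(\mathscr{O},\omega)\}\longrightarrow\{\text{translation structures on }\Sigma\},
\]
and then show $F\circ G=\mathrm{id}$ and $G\circ F=\mathrm{id}$.

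For $F$, I start with a translation structure $\mathfrak{T}=(\varphi_i\colon U_i\to V_i)_i$. The transition maps $\varphi_j\circ\varphi_i^{-1}(z)=z+c_{ij}$ are biholomorphic, so $\mathfrak{T}$ defines a holomorphic atlas, hence a complex structure $\mathscr{O}$ on $\Sigma$. The form $\mathrm{d}z$ on $\mathbb{C}$ satisfies $(\varphi_j\circ\varphi_i^{-1})^{\ast}\mathrm{d}z=\mathrm{d}z$ because translations act trivially on $\mathrm{d}z$; therefore the local forms $\varphi_i^{\ast}\mathrm{d}z$ on $U_i$ agree on overlaps and glue to a global holomorphic one-form $\omega$ on $(\Sigma,\mathscr{O})$. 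Since $\mathrm{d}z$ never vanishes and $\varphi_i$ is a local biholomorphism, $\omega$ is nowhere zero. Set $F(\mathfrak{T})=(\mathscr{O},\omega)$.

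For $G$, given $(\mathscr{O},\omega)$, I define a candidate atlas by collecting all holomorphic charts $z\colon U\to V\subseteq\mathbb{C}$ of $(\Sigma,\mathscr{O})$ in which $\omega$ pulls back to $\mathrm{d}z$. Two such charts $z_1,z_2$ on an overlap satisfy $\mathrm{d}(z_2-z_1)=0$, so on any connected component of the overlap $z_2=z_1+c$; thus condition (i) of Definition~\ref{DefinitionTranslationSurfaceWithoutAnything} holds. Maximality (condition (ii)) is automatic by construction. The key point to verify is that these charts cover $\Sigma$: around any point $p$, since $\omega(p)\neq 0$, a primitive $z(q)=\int_{p}^{q}\omega$ exists on a simply connected neighborhood $U$ of $p$, and since $\mathrm{d}z=\omega\neq 0$ at $p$, the map $z$ is a biholomorphism onto its image on some smaller neighborhood by the inverse function theorem, giving a chart in the desired atlas. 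Hence the atlas covers $\Sigma$ and defines a translation structure $\mathfrak{T}_{\omega}$.

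Finally I check invertibility. For $G\circ F$: starting from $\mathfrak{T}$ with $F(\mathfrak{T})=(\mathscr{O},\omega)$, each chart $\varphi_i\in\mathfrak{T}$ satisfies $\varphi_i^{\ast}\mathrm{d}z=\omega|_{U_i}$ by construction of $\omega$, so $\varphi_i$ belongs to $G(\mathscr{O},\omega)$; conversely any chart in $G(\mathscr{O},\omega)$ is compatible (via translations) with the $\varphi_i$, so by maximality of $\mathfrak{T}$ it already lies in $\mathfrak{T}$. For $F\circ G$: starting from $(\mathscr{O},\omega)$, the complex structure underlying $\mathfrak{T}_{\omega}$ is $\mathscr{O}$ (the integration charts are holomorphic for $\mathscr{O}$), and the form glued from $\varphi^{\ast}\mathrm{d}z$ over $\varphi\in\mathfrak{T}_{\omega}$ is $\omega$ by definition of $\mathfrak{T}_{\omega}$. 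The only mildly delicate point in the whole argument is the local surjectivity of the integration chart, which I expect to be the main (but ultimately routine) obstacle: it requires that $\omega$ is nowhere vanishing so that the primitive is locally biholomorphic, which is precisely the hypothesis imposed on $\omega$.
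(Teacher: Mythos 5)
Your proof is correct and follows essentially the same route as the paper: pull back $\de z$ along the translation charts to get the nowhere-vanishing form, and conversely take all holomorphic charts in which $\omega$ becomes $\de z$, using local integration of $\omega$ (valid precisely because $\omega$ is nowhere zero) to show these charts cover $\Sigma$. Your explicit verification that the two constructions are mutually inverse is a welcome bit of extra care that the paper leaves implicit.
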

\begin{ConePoints}
Translation structures in this sense cannot exist on compact surfaces of genus greater than one. This can be seen using either the Gauss-Bonnet formula (which implies that the Euler characteristic of a closed surface with a flat metric must be zero) or the fact that the degree of the canonical bundle on a Riemann surface of genus $g$ has degree $2g-2$, hence if $g>1$, every abelian differential has to have some zero. Hence we need to allow certain exceptional points in the interior. Let us first explain this with a local model: let $n$ be a positive integer and let $\varepsilon> 0$. Set $\delta =\sqrt[n]{\varepsilon }$, and consider the map
\begin{equation*}
\pi :\Sigma_n(\varepsilon )=\Delta_{\delta }\to\Delta_{\varepsilon },\quad z\mapsto z^n.
\end{equation*}
When removing $0$ both in the domain and the target, this defines a cyclic cover of degree $n$. Define a translation structure (in the old sense) on $\Sigma_n(\varepsilon )\smallsetminus \{ 0\}$ by pulling back that on $\Delta_{\varepsilon }$ by $\pi$; i.e. let $\mathfrak{T}_n(\varepsilon )$ be the set of all $\pi|_U:U\to\bbc$, where $U$ is an open subset of $S\smallsetminus\{ 0\}$ such that the restriction $\pi|_U$ is injective.
One can also imagine $\Sigma_n(\varepsilon )$ as $n$ copies of a ``slit disk'' $\Delta_{\varepsilon }\smallsetminus [0,\varepsilon [$, indexed by $\bbz /n\bbz$, where ``upper bank'' of the slit in the $k$-th copy is glued to the ``lower bank'' of the slit in the $(k+1)$-st copy.
\end{ConePoints}
\begin{Definition}\label{DefTranslationStructureWithConePoints}
Let $\Sigma$ be a topological surface with boundary. A \textup{translation structure with cone points} on $\Sigma$ consists of the following data:
\begin{itemize}
\item a discrete subset $P\subseteq\Sigma$, the set of ``cone points'', and
\item an atlas
\begin{equation*}
\mathfrak{T}=(\varphi_i:U_i\overset{\simeq}{\to }V_i)_i
\end{equation*}
on $\Sigma\smallsetminus P$ with $V_i\subseteq\bbc$ open,
\end{itemize}
such that the following conditions are satisfied:
\begin{enumerate}
\item The atlas $\mathfrak{T}$ on $\Sigma\smallsetminus P$ defines a translation structure on the surface $\Sigma\smallsetminus P$;
\item for every cone point $p\in P$ there exist a positive integer $n$, some $\varepsilon>0$, a neighbourhood $U$ of $p$ in $\Sigma$ and a homeomorphism $U\to \Sigma_n(\varepsilon )$ which sends $p$ to $0$ and such that the restriction of the atlas $\mathfrak{T}$ to $U\smallsetminus\{ p\}$ corresponds via this homeomorphism to the atlas $\mathfrak{T}_n(\varepsilon )$ on $\Sigma_n(\varepsilon )\smallsetminus\{ 0\}$.
\end{enumerate}
A surface together with a translation structure with cone points is called a \textup{translation surface with cone points}.
\end{Definition}
For every cone point $p$, the integer $n$ as in (ii) is uniquely determined and called the \textit{multiplicity} of the cone point. It can be described more intuitively as follows: when walking once around $p$ (in the topological sense, i.e. along a little circle embedded around $p$) one travels by an angle of $2\pi n$, instead of $2\pi$ as would be the case around a non-singular point.

Note that a cone point of multiplicity one has a particularly simple description; in fact the atlas of a translation structure can be uniquely extended into such a cone point, turning it into a regular point. For this reason we shall usually assume that all cone points have multiplicity at least two.

This definition of cone points is somewhat ad hoc, but serves our purpose. In the literature on this subject one can find quite diverse definitions.
\begin{Dictionary}
Now comes the promised relation with Riemann surface theory. We claim that on a given topological surface there is a natural one-to-one correspondence between complex structures together with an abelian differential on the one hand and translation structures with cone points on the other hand. In order to describe this correspondence first start with a translation surface $\Sigma$ with cone points, with notation as in the definition. We define a holomorphic atlas on $\Sigma$ as containing all charts of the following form:
\begin{itemize}
\item the charts in the translation structure $\mathfrak{T}$ on $\Sigma\smallsetminus P$;
\item for every cone point $p\in P$ of multiplicity $n$, choose some homeomorphism $U\to\Sigma_n(\varepsilon )$ as in Definition \ref{DefTranslationStructureWithConePoints} and note that $\Sigma_n(\varepsilon )$ was defined as $\Delta_{\delta }$, where $\delta =\sqrt[n]{\varepsilon }$, hence as an open subset of $\bbc$; so we define the composed map
\begin{equation*}
U\to \Sigma_n(\varepsilon )=\Delta_{\delta }\subset\bbc
\end{equation*}
as a chart of our atlas.
\end{itemize}
It is easily seen that all chart transition maps of this atlas are biholomorphic, so it can be uniquely extended to a maximal holomorphic atlas, defining a Riemann surface structure on $\Sigma$. On $\Sigma\smallsetminus P$ this is of course just the Riemann surface structure defined above. Also by the above construction we get a nowhere vanishing holomorphic one-form $\omega $ on $\Sigma\smallsetminus P$ (recall this was defined by pasting together the $\de z$, where $z$ is a chart in $\mathfrak{T}$), and we claim that it can be uniquely extended to a holomorphic one-form on all of $\Sigma$. Of course it suffices to show this locally at a cone point, in other words, in the ``model surface'' $\Sigma_n(\varepsilon )$. Now the translation structure on $\Sigma_n(\varepsilon )\smallsetminus \{ 0\} =\Delta_{\delta }^{\times }$ is the pullback along $\pi$ of the canonical one on $\Delta_{\varepsilon }^{\times }$, hence the corresponding one-form is
\begin{equation*}
\pi^{\ast} (\de z )=\de\pi (z)=\de (z^n)=nz^{n-1}\,\de z.
\end{equation*}
This of course can be uniquely extended to a holomorphic one-form on $S_n(\varepsilon )=\Delta_{\delta }$, by the very same formula. Note that this has a zero of order $n-1$ at $p$.

Vice versa let $\Sigma$ be a Riemann surface and let $\omega$ be a holomorphic one-form on $\Sigma$ which is not identically zero. Let $P$ be the set of zeros of $\omega$; this is a discrete subset. Then the restriction of $\omega $ to $S\smallsetminus P$ is a nowhere vanishing holomorphic one-form, defining a translation structure on $\Sigma$. It remains to be shown that this translation structure has the desired behaviour around the zeros of $\omega$.

To see this, let $p\in P$ be a zero of $\omega $ of order $n-1$ and choose some simply connected neighbourhood $U$ of $p$ which contains no other zeros of $\omega$. Then the function $f:U\to\bbc$ with
\begin{equation*}
f(q)=\int_p^q\omega
\end{equation*}
(the integral running over any path from $p$ to $q$ entirely within $U$) is holomorphic and has an isolated zero of order $n$ at $p$. Hence there exists some local coordinate $g :U\to\bbc$ with $f(q)=g(q)^n$. By shrinking $U$ we may assume that the image of $g$ is a disk $\Delta_{\delta }$ for some $\delta>0$; then setting $\varepsilon =\delta^n$ we get the following commutative diagram:
\begin{equation*}
\xymatrix{
U \ar[r]_{\simeq }^g \ar[rd]_f
&\Delta_{\delta } \ar[d]^{(\cdot )^n}\\
&\Delta_{\varepsilon }
}
\end{equation*}
But here $f$ is a holomorphic map which outside $p$ is a finite cover and such that the pullback of the canonical one-form $\de z$ along $f$ is $\omega $; so this diagram means that indeed the whole setup is locally isomorphic to the standard one, and hence that $\Sigma$ is a translation surface with cone points. We sum up:
\end{Dictionary}
\begin{Proposition}
Let $\Sigma$ be a topological surface (without boundary). The above constructions define mutually inverse bijections between the following two sets:
\begin{enumerate}
\item the set of all translation structures with cone points on $\Sigma$;
\item the set of all pairs $(\mathrsfs{O},\omega )$, where $\mathrsfs{O}$ is a complex structure and $\omega$ is an abelian differential on $S$ (with respect to this complex structure).
\end{enumerate}
The cone points (of multiplicity greater than one) of a translation structure are precisely the zeros of the associated abelian differential, a cone point of multiplicity $n$ corresponding to a zero of order $n-1$.\hfill $\square$
\end{Proposition}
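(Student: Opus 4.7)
The plan is simply to verify that the two constructions described in the preceding \emph{Dictionary} paragraph are mutually inverse, and to read off the relation between multiplicities of cone points and orders of zeros from the local model. Almost all the work has in fact already been done; what remains is really only bookkeeping.

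First I would check that starting from a translation structure with cone points $(\mathfrak{T},P)$, passing to the associated pair $(\mathrsfs{O},\omega )$, and then back, recovers $(\mathfrak{T},P)$. Outside $P$ the associated one-form $\omega$ is characterized by $\omega =\de z$ in every chart of $\mathfrak{T}$, so $\omega$ vanishes nowhere on $\Sigma\smallsetminus P$. At a cone point $p$ of multiplicity $n\ge 2$, the local model $\pi :\Sigma_n(\varepsilon )\to \Delta_{\varepsilon }$, $z\mapsto z^n$, gives $\omega =nz^{n-1}\de z$, which has a zero of order $n-1$; hence the zero set of $\omega$ is exactly $P$ (this already proves the final assertion on multiplicities and orders). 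The reverse construction applied to $(\mathrsfs{O},\omega )$ produces a translation structure on $\Sigma\smallsetminus P$ whose charts are precisely the holomorphic coordinates $z$ with $\de z=\omega$; but these are exactly the charts of $\mathfrak{T}$, and by maximality the full atlas is recovered. The behaviour at the cone point is fixed by the same model, so the data $P$ is also the same.

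In the other direction, starting from $(\mathrsfs{O},\omega )$ one produces the translation structure $\mathfrak{T}_{\omega }$ on $\Sigma\smallsetminus P$ (where $P$ is the zero set of $\omega$), together with the identification near a zero of order $n-1$ provided by the commutative diagram $f=g^n$ in the \emph{Dictionary} paragraph. Going back, the associated complex structure coincides with $\mathrsfs{O}$ on $\Sigma\smallsetminus P$ by construction, and at a cone point the additional chart $g$ is a holomorphic coordinate for $\mathrsfs{O}$, so the two complex structures agree globally. The associated abelian differential equals $\omega$ on $\Sigma\smallsetminus P$ and extends holomorphically across the isolated points of $P$; by uniqueness of holomorphic extension it equals $\omega$ everywhere.

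The only step that is at all delicate is the one already executed in the \emph{Dictionary} paragraph: near a zero of $\omega$ of order $n-1$ one must produce a local chart $g$ with $g^n$ equal to a local primitive $f$ of $\omega$, and verify that this puts the translation structure into the standard form $\Sigma_n(\varepsilon )$. This is where the bijection really has content; once the existence of such a $g$ is established (via the fact that the primitive $f$ has an isolated zero of order $n$ at $p$, so admits an $n$-th root after shrinking), both round-trips are the identity and the correspondence between cone points of multiplicity $n$ and zeros of order $n-1$ falls out of the same local model. $\hfill\square$
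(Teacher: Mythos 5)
Your proposal is correct and follows the same route as the paper: the paper's entire proof is the preceding \emph{Dictionary} discussion (the proposition is stated as a summary with a closing $\square$), and the only genuinely substantive step in either account is the construction of the local $n$-th root $g$ with $g^n=f$ near a zero of $\omega$ of order $n-1$, which identifies the structure with the standard cone model $\Sigma_n(\varepsilon)$. The round-trip verifications you spell out (using maximality of the translation atlas in one direction and Riemann's removable singularity theorem plus the identity theorem in the other) are exactly the bookkeeping the paper leaves implicit.
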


\subsection{Some Examples}

A large class of examples of translation surfaces with cone points can be produced as follows: take some (compact, simply connected) polygon in the plane, and partition its set of edges into pairs, such that each two edges in a pair are parallel and have the same length, but with reverse orientation. Then for each pair there exists exactly one translation taking one edge to the other; glue the edges along these translations. This gives a compact surface with a translation atlas on an open dense subset; this atlas extends to a translation structure with cone points on the whole surface.

\begin{Origamis}
We begin with \textit{origamis.} These are in principle purely combinatorial objects. The idea is that one only glues copies of one and the same polygon (not even rotated), which is as simple as it may be: the unit square.
\end{Origamis}
\begin{Definition}
An \textup{origami} consists of the following data:
\begin{enumerate}
\item a finite\footnote{One may also drop this condition and study infinite origamis. This would lead out of the realm of algebraic geometry, so we omit this} set $I$ of copies of the unit square in $\bbc$, and
\item a partition of the set of edges of squares in $I$ into pairs, such that every left side is paired with a right side (possibly of the same square!) and every upper side with a lower side.
\end{enumerate}
Gluing the paired sides defines a closed translation surface with cone points, called the translation surface associated with the origami.
\end{Definition}
\begin{Billiards}
A large class of examples of closed translation surfaces with cone points is obtained from a classical toy model in dynamical systems theory: billiards in a rational polygon.

To explain this, start with an arbitrary\footnote{We always assume polygons to be compact} euclidean polygon $P\subseteq\bbc$ and imagine a point-shaped massless billiard ball moving inside the polygon without any acceleration, except that it is reflected in the sides of the polygon.\footnote{Since our interest in this work is primarily in differential and algebraic geometry rather than dynamical systems, we refrain from formalizing this} Now if one wishes to study the behaviour of such billiard balls, the non-differentiability at the boundary of the polygon poses some problems, and so one might instead do the following: reflect the whole polygon in each of its sides, glue the thus obtained copies to the ``old'' polygon along the corresponding sides, and repeat indefinitely. One then obtains a huge translation surface with some singularities (coming from the vertices of the polygon), and instead of reflecting the ball in the sides of the polygon we can let it move along a straight line in the big translation surface. But we have to bear in mind that there are infinitely many copies of the original polygon in this surface, in other words we have to divide by the action of some infinite group\footnote{To be precise, this the group with generators $\sigma_e$ parameterized by the edges $e$ of $P$, modulo the relation $\sigma_e^2=1$ for every $e$} to get back the polygon. Hence important dynamical properties of the polygon and the big translation surface may be totally different, and we have to look for a less generous way of ``unfolding'' the billiard table.

Indeed we can identify those copies of $P$ obtained by consecutive reflection which are translation equivalent, i.e. which can be transformed into each other by a translation of $\bbc$. If we are lucky there is only a finite number of equivalence classes, and we can glue finitely many polygons (each congruent to $P$, but pairwise non-translation-equivalent) and obtain a closed translation surface with cone points; in other words, we end up in the realm of algebraic geometry!

Let us now formalize this intuition. For every side $e$ of $P$, consider the reflection in $e$; this is a real affine map, whose linear part we denote by $\sigma_e\in\mathrm{O}(2)$. Since we are working with translation structures, so everything should be invariant under translations in $\bbc$, it is only this linear part which is of interest for us. Now let $G_P\subset\mathrm{O}(2)$ be the subgroup generated by the $\sigma_e$. We now wish to construct a translation surface as follows: for every $\tau\in G_P$ take the polygon $\tau (P)\subset\bbc$, so that you obtain a family of translation surfaces with corners, indexed by $G$; glue the side $e$ of $\tau (P)$ to the side $\sigma_e(e)$ of $(\sigma_e\circ\tau )(P)$. For this to be possible of course we have to check the following:
\end{Billiards}
\begin{Lemma}\label{SymmetryGroupOfAPolyhedronDoesNotChangeUnderReflection}
If $\tau\in G_P$, then $G_{\tau (P)}=G_P$.
\end{Lemma}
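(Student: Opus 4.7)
The plan is to directly compute $G_{\tau(P)}$ in terms of $G_P$ and $\tau$. First, I would note that the sides of the polygon $\tau(P)$ are exactly the images $\tau(e)$, as $e$ ranges over the sides of $P$. Since the linear part of a reflection only depends on the direction of the reflecting line, and that direction is carried along by $\tau$, it follows that the linear part $\sigma_{\tau(e)}$ of the reflection in $\tau(e)$ equals $\tau \sigma_e \tau^{-1}$ in $\mathrm{O}(2)$. (Here one uses that for any linear isometry $\tau \in \mathrm{O}(2)$ and any line $\ell$ through the origin, the reflection across $\tau(\ell)$ is the conjugate $\tau \circ r_\ell \circ \tau^{-1}$ of the reflection across $\ell$; this is an elementary fact in Euclidean geometry.)

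The second step is then purely group-theoretic. Since $G_P$ is generated by the $\sigma_e$ and $G_{\tau(P)}$ is generated by the $\sigma_{\tau(e)} = \tau \sigma_e \tau^{-1}$, one has
\begin{equation*}
G_{\tau(P)} = \tau\, G_P\, \tau^{-1}.
\end{equation*}
By hypothesis $\tau \in G_P$, so conjugation by $\tau$ is an inner automorphism of $G_P$ and therefore fixes $G_P$ setwise. Hence $G_{\tau(P)} = G_P$.

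There is no real obstacle here; the only subtlety worth double-checking is the verification that $\sigma_{\tau(e)} = \tau \sigma_e \tau^{-1}$, i.e.\ that passing from affine reflections to their linear parts is compatible with conjugation by a \emph{linear} isometry $\tau$. This is immediate because linear parts of affine maps are multiplicative, and $\tau$ itself has trivial translation part. Once that identity is in place, the lemma reduces to the trivial fact that a group is closed under conjugation by its own elements.
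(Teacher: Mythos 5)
Your proof is correct and is essentially the argument the paper has in mind: the paper's one-line proof invokes the identity $\sigma_{\sigma_e(f)}=\sigma_e\circ\sigma_f\circ\sigma_e$, which is exactly your conjugation formula $\sigma_{\tau(e)}=\tau\sigma_e\tau^{-1}$ specialized to a generating reflection $\tau=\sigma_e$. You state the general identity directly and conclude $G_{\tau(P)}=\tau G_P\tau^{-1}=G_P$, which is the same reasoning made slightly more explicit.
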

\begin{proof}
This comes down to the formula $\sigma_{\sigma_e(f)}=\sigma_e\circ\sigma_f\circ\sigma_e$.
\end{proof}
But there are more serious problems: it is not clear what happens at the vertices of the polygon, and it is not ensured that there be only finitely many copies of $P$ involved. So we have to restrict ourselves to a certain class of polygons:
\begin{Definition}
Let $P\subset\bbc$ be a compact polygon. Then $P$ is \textup{rational} if the group $G_P$ is finite.
\end{Definition}
So only for rational polygons we have a chance of getting a \textit{compact} translation surface (possibly with cones). The naming is justified:
\begin{Proposition}
Let $P\subset\bbc$ be a compact polygon all of whose angles are rational multiples of $\pi$. Then $P$ is rational.
\end{Proposition}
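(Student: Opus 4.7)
The plan is to show that all edge directions of $P$ lie, modulo $\pi$, in a finite subset of $\mathbb{R}/\pi\mathbb{Z}$ which consists of rational multiples of $\pi$, and then to observe that any group generated by finitely many reflections through the origin whose pairwise angles are rational multiples of $\pi$ is a finite dihedral group.

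First I would set up the following. For a side $e$ of $P$, the reflection in the line carrying $e$ has the form $x \mapsto \sigma_e(x) + c_e$ with $\sigma_e \in \mathrm{O}(2)$ the reflection in the line through the origin parallel to $e$. Hence $\sigma_e$ depends only on the direction of $e$ modulo $\pi$, and in fact equals the orthogonal reflection in the $1$-dimensional subspace spanned by a direction vector of $e$.

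Next I would determine the edge directions. Label the edges $e_1,\dots,e_m$ cyclically around $\partial P$ and let $\theta_i \in \mathbb{R}/\pi\mathbb{Z}$ denote the direction of $e_i$. The interior angle at the vertex between $e_i$ and $e_{i+1}$ equals $\pi$ minus the turning angle, so $\theta_{i+1} - \theta_i$ equals (up to sign) $\pi$ minus an interior angle. By hypothesis every interior angle is a rational multiple of $\pi$, hence every difference $\theta_{i+1}-\theta_i$ is a rational multiple of $\pi$, and therefore so is every $\theta_i - \theta_1$. In particular there exists a positive integer $N$ such that all $\theta_i$ lie in $\theta_1 + \tfrac{\pi}{N}\mathbb{Z}/\pi\mathbb{Z}$, a finite set of directions.

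Finally I would show that $G_P$ is finite. For any two edges $e_i, e_j$, the composition $\sigma_{e_i}\sigma_{e_j}$ is the rotation by twice the angle $\theta_i - \theta_j$, which is a rational multiple of $2\pi$, hence an element of finite order in $\mathrm{SO}(2)$. Therefore the subgroup $R \subseteq \mathrm{SO}(2)$ generated by all $\sigma_{e_i}\sigma_{e_1}$ is a finitely generated subgroup of the cyclic group of rotations by multiples of $\tfrac{\pi}{N}$, hence is itself a finite cyclic group. Since $G_P$ is generated by the $\sigma_{e_i}$, one checks that $G_P = R \cup \sigma_{e_1} R$, so $|G_P| \le 2|R| < \infty$. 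Thus $G_P$ is a finite dihedral group, as required.

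There is no real obstacle; the only minor point of care is to translate the hypothesis that interior angles are rational multiples of $\pi$ into the statement that pairwise angles between edges are also rational multiples of $\pi$, which is immediate once one writes down the exterior-angle identity. Lemma 3.1.11 is not needed here, though it retrospectively explains why the construction of the billiard translation surface is well-defined.
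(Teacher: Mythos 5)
Your proof is correct and follows essentially the same route as the paper: reduce to the rotation subgroup $G_P^+$ (of index at most two), observe that it is generated by the products $\sigma_e\sigma_f$, which are rotations by twice the angle between the corresponding edge directions, and conclude finiteness because all these angles lie in $\frac{\pi}{N}\bbz$. The only difference is that you spell out, via the turning-angle identity, why pairwise edge directions differ by rational multiples of $\pi$ — a step the paper's proof asserts without comment — which is a harmless and welcome addition.
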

\begin{proof}
Fix some $N$ such that all angles of $P$ are integral multiples of $\frac{\pi}{N}$. Write $G_P^+=G_P\cap\SO (2)$. Then there is a short exact sequence
\begin{equation*}
1\to G_P^+\to G_P\overset{\det}{\to}\{\pm 1\}\to 1
\end{equation*}
(note that reflections have determinant $-1$, so the third map is indeed surjective), so it suffices to show that $G_P^+$ is finite. We claim that $G_P^+$ is contained in the cyclic subgroup of $\SO (2)$ generated by the rotation with angle $\frac{2\pi }{N}$. Namely $G_P^+$ is generated by the elements $\sigma_e\sigma_f$ where $e$ and $f$ are sides of $P$. The angle between $e$ and $f$ (or rather the euclidean lines they define) is of the form $\frac{m\pi}{N}$ for some integer $m$, and hence $\sigma_e\sigma_f$ is the rotation with angle $\frac{2m\pi}{N}$.
\end{proof}
From this proof we also see that $G_P$ has at most order $2N$.

We now construct the closed translation surface with cones associated with a rational polygon. For simplicity we shall assume that $P$ satisfies the condition of the preceding proposition. So let $P\subset\bbc$ be a compact polygon all of whose angles are rational multiples of $\pi$. For every $\tau\in G_P$ let $P_{\tau}$ be the polygon $\tau (P)\subset\bbc$, considered as a translation surface with corners (this notation is just for emphasizing that we wish to consider the $P_{\tau }$ as disjoint, distinct abstract surfaces, even though they may overlap as subsets of the complex plane). The desired translation surface $X_P$ is then the quotient of the non-connected translation surface with corners
\begin{equation*}
X'_P=\coprod_{\tau\in G_P}P_{\tau }
\end{equation*}
modulo the following equivalence relation: for every $\tau\in G_P$ and every side $e$ of $P_{\tau }=\tau (P)$, identify $e$ via the identity (!) with the edge $\sigma_e(e)$ of $\sigma_e(P_{\tau })=P_{\sigma_e\tau }$. Here we use Lemma \ref{SymmetryGroupOfAPolyhedronDoesNotChangeUnderReflection}. This defines indeed a closed translation surface with cone points; we denote the underlying Riemann surface also by $X_P$ and the holomorphic one-form corresponding to the translation structure with cones by $\omega_P$.

\begin{RationalIsoscelesTriangles}
Let $g=2n+1$ be odd, and let $1\le k\le g$. We shall construct a translation surface $(X(g,k),\omega (g,k))$. Denote once and for all $\zeta_p=\exp\frac{2\pi\mathrm{i}}{p}$, for any positive integer $p$. Take a triangle $T$ in the complex plane with vertices
\begin{equation*}
(0,\zeta_{2n}^{-k},\zeta_{2n}^k).
\end{equation*}
This is an isosceles triangle with angles
\begin{equation*}
\left(\frac{2k\pi }{n},\frac{(n-2k)\pi }{2n},\frac{(n-2k)\pi }{2n}\right) ,
\end{equation*}
and it can be subdivided into two right triangles $T^+$ and $T^-$, where $T^+$ has vertices
\begin{equation*}
(0,\cos\frac{\pi k}{n},\zeta_{2n}^k)
\end{equation*}
and $T^-$ has vertices
\begin{equation*}
(0,\zeta_{2n}^{-k },\cos\frac{\pi k}{n}).
\end{equation*}
We show this for $n=7$ and $k=1,2,3$ (from left to right):
\end{RationalIsoscelesTriangles}
\begin{center}
\begin{tikzpicture}
\filldraw[fill=gray!80!white] (0,0) -- ++(2.7030,0) -- ++(0,1.3017) -- cycle
    (4,0) -- ++(1.8705,0) -- ++(0,2.3454) -- cycle
    (7,0) -- ++(0.6675,0) -- ++(0,2.9247) -- cycle;
\filldraw[fill=gray!30!white] (0,0) -- ++(2.7030,0) -- ++(0,-1.3017) -- cycle
    (4,0) -- ++(1.8705,0) -- ++(0,-2.3454) -- cycle
    (7,0) -- ++(0.6675,0) -- ++(0,-2.9247) -- cycle;
\end{tikzpicture}
\end{center}
Now we want to glue a family of isometric copies of $T$. For $m\in\bbz /n\bbz$ and $\varepsilon\in\{\pm 1\}$, let $T_{m,\varepsilon }$ be (a copy of)
\begin{equation*}
\varepsilon\zeta_n^{km}\cdot T\subset\bbc .
\end{equation*}
Then let $X(g,k)$ be the surface
\begin{equation*}
X'(g,k)=\coprod_{m\in\bbz /n\bbz \atop \varepsilon =\pm 1} T_{m,\varepsilon }
\end{equation*}
modulo the following equivalence relation: for each $m$ and $\varepsilon$, we identify:
\begin{enumerate}
\item The segment from $0$ to $\varepsilon\zeta_{2n}^{k(2m+1)}$ is a side of both $T_{m,\varepsilon }$ and $T_{m+1,\varepsilon}$; glue these two triangles along these sides.
\item The segment from $\zeta_{2n}^{k(2m-1)}$ to $\zeta_{2n}^{k(2m+1)}$ is a side of $T_{m,1}$, and the segment from $-\zeta_{2n}^{k(2m-1)}$ to $-\zeta_{2n}^{k(2m+1)}$ is a side of $T_{m,-1}$. These sides are parallel and have the same length, hence there is a unique translation that takes the former to the latter. Identify these two sides by this translation.
\end{enumerate}
The result is a closed Riemann surface $X(g,k)$; the translation structures on the interiors of the $T_{m,\varepsilon }$, induced by the inclusion $T_{m,\varepsilon }\hookrightarrow\bbc$, glue to a translation structure with cone-points on $X(g,k)$, whose associated one-form we denote by $\omega (g,k)$.

The following is an elementary observation, but very important for what follows:
\begin{Proposition}\label{WimanCurvesAsBilliardSurfaces}
Assume that $k$ and $n=2g+1$ are coprime. Then $(X(g,k),\omega (g,k))$ is the billiard surface associated with the triangular ``billiard table'' $T$ with angles
\begin{equation*}
\frac{2k\pi }{n},\frac{(n-2k)\pi }{2n},\frac{(n-2k)\pi }{2n}.
\end{equation*}
It is also the billiard surface associated with the triangle $T^+$ having angles
\begin{equation*}
\frac{k\pi }{n},\frac{\pi }{2},\frac{(n-2k)\pi }{2n}.
\end{equation*}
\end{Proposition}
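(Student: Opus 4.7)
The plan is to verify directly that the combinatorial gluing data defining $X(g,k)$ coincides with the one produced by the billiard unfolding procedure, both for $T$ and for $T^+$. Three steps are needed: computing the symmetry groups $G_T$ and $G_{T^+}$; matching the triangles $T_{m,\varepsilon}$ with the orbits of $T$ (respectively $T^+$) under these groups; and comparing the two sets of side-identifications.

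First, since $n$ is odd and $\gcd(k,n)=1$, the angles of $T$ and $T^+$ are rational multiples of $\pi$ with common denominator dividing $2n$, so by the preceding proposition $G_T$ and $G_{T^+}$ are finite. Writing the three side-directions of $T^+$ as $0$, $k\pi/n$, $\pi/2$ with corresponding linear reflections $\sigma_1,\sigma_2,\sigma_3$, the three products $\sigma_i\sigma_j$ are rotations by $2k\pi/n$, $(n-2k)\pi/n$ and $\pi$. The identity $\gcd(2k,n-2k)=\gcd(k,n)=1$ (which uses that $n$ is odd) then shows that $G_{T^+}^+$ is cyclic of order $2n$, generated by the rotation $R$ by $\pi/n$, so $|G_{T^+}|=4n$; the analogous computation for $T$ yields $|G_T^+|=2n$ and $|G_T|=4n$, the latter group containing also the reflection across the real axis, which is the symmetry axis of $T$. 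Next, the formula $\varepsilon\zeta_n^{km}=R^{2km+n(1-\varepsilon)/2}$ together with coprimality of $k$ and $n$ ensures that the exponent runs through every residue modulo $2n$ as $(m,\varepsilon)$ varies; hence the $2n$ triangles $T_{m,\varepsilon}$ are in canonical bijection with $G_T^+$, and the $4n$ half-triangles $T_{m,\varepsilon}^\pm$ are in canonical bijection with the full group $G_{T^+}$.

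Finally, I would check that the gluings match. The billiard rule glues the side $e$ of $\tau(P)$ to the side $\sigma_e(e)$ of $(\sigma_e\tau)(P)$. In the construction of $X(g,k)$, identification (i) glues $T_{m,\varepsilon}$ to $T_{m+1,\varepsilon}$ along one of the two sides through $0$, identification (ii) glues the base sides of $T_{m,1}$ and $T_{m,-1}$ by translation, and for the $T^+$-version there is the additional identification along the hypotenuse which corresponds exactly to the subdivision of each $T_{m,\varepsilon}$ into its halves. A case distinction over the three types of sides of $T^+$ (horizontal, tilted and vertical) shows, using the dictionary established above, that each billiard identification coincides with one of these rules. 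The main obstacle is precisely this last combinatorial verification: notationally delicate but routine, it consists in tracking how the group multiplication $\tau\mapsto\sigma_e\tau$ translates into the shift rules on $(m,\varepsilon)$. A subtle point is that the formal billiard construction for $T$ naively produces $|G_T|=4n$ abstract copies of $T$, but the reflection symmetry of $T$ across the real axis forces them to collapse in pairs under the gluings; working with the asymmetric $T^+$ avoids this issue, which is why the $T^+$-statement is structurally cleaner.
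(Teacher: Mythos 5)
The paper states this proposition without proof (it is offered only as ``an elementary observation''), so there is no argument of the author's to measure yours against; I judge it on its own terms. Your overall strategy --- compute $G_T$ and $G_{T^+}$, put the triangles $T_{m,\varepsilon}$ (respectively their halves) in bijection with group elements, and match the two sets of edge identifications --- is the natural one, and your group-theoretic bookkeeping checks out: both rotation subgroups are cyclic of order $2n$ generated by the rotation $R$ by $\pi /n$ (this is exactly where $\gcd (k,n)=1$ and the oddness of $n$ enter), and the formula $\varepsilon\zeta_n^{km}=R^{2km+n(1-\varepsilon )/2}$ correctly identifies the index set of the $T_{m,\varepsilon }$ with $G_T^+$. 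For the $T^+$-statement the counts match on the nose ($|G_{T^+}|=4n$ against $4n$ half-triangles), and the verification you defer is the following dictionary: reflection in the hypotenuse of a half-triangle realizes gluing (i), reflection in the vertical leg realizes gluing (ii), and reflection in the horizontal leg realizes the tautological gluing of $T^+$ to $T^-$ inside a single $T_{m,\varepsilon }$. Since that matching is where the entire content of the proposition lives, it should be recorded rather than declared routine, but it is not where your argument goes wrong.

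The genuine gap is your treatment of the factor of two for $T$. You assert that the reflection symmetry of $T$ ``forces'' the $4n$ abstract copies to ``collapse in pairs under the gluings.'' It does not: the equivalence relation in the unfolding construction identifies only boundary edges and never identifies the interiors of two distinct abstract copies, so the quotient is a closed translation surface containing $4n$ pairwise disjoint open triangles, of total area $4n\cdot\operatorname{area}(T)$ --- twice that of $X(g,k)$ --- and it is connected, because the copies are linked according to a Cayley graph of $G_T$ with respect to its generating reflections. Hence the formal unfolding of $T$ is a connected translation double cover of $X(g,k)$, not $X(g,k)$ itself. The copies $T_{\tau }$ and $T_{\tau\rho }$, where $\rho\in G_T$ is the reflection in the symmetry axis of $T$, must be identified \emph{by hand}; this is precisely the identification of translation-equivalent copies mentioned in the informal discussion of billiards, but it is absent from the formal definition of $X_P$ with which you (and the proposition) are working. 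So you must either adopt that convention explicitly and index the copies by $G_T/\{ 1,\rho\}$ instead of $G_T$, or prove only the $T^+$-statement and obtain the $T$-statement as the quotient of the unfolding of $T$ by its natural involution. As written, your argument identifies $X(g,k)$ with the wrong surface in the $T$ case.
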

If $k$ and $n$ are not coprime, $(X(g,k),\omega (g,k))$ can still be interpreted as a finite cover of such a billiard table, see section 5.2.

\subsection{Translation Surfaces as Algebraic Curves}

We have shown that closed translation surfaces with cone points are ``the same'' as compact Riemann surfaces together with abelian differentials, hence ``the same'' as algebraic curves over $\bbc$ with nonzero algebraic one-forms. In this section we will thus describe our examples of translation surfaces in these terms.

\begin{TheSCFormula}
Let $P\subset\bbc$ be a simply connected compact Euclidean polygon in the complex domain, and let $p_1,p_2,\ldots ,p_n,p_1$ be its boundary points in counter-clockwise order. Then by the Riemann mapping theorem there is a conformal map $f:\bbh\to P^{\circ }$ which extends to a homeomorphism $\bbh\cup\mathbb{P}^1(\mathbb{R})\to\overline{P}$. The preimages of the $p_i$ are then certain $a_i\in\mathbb{P}^1(\bbr )$ about which one cannot say very much in general, except for that one can move them around by precomposition with a M\"{o}bius transformation. In particular we can assume that $\infty$ is amongst them or not, as we please. But once we know the $a_i$ we can give a formula for $f$:
\end{TheSCFormula}
\begin{Proposition}
With the notation as above, let $\gamma_i\pi$ be the interior angle of $P$ at $b_i$ and assume that $\gamma_i\neq 1$. If none of the $a_i$ is equal to $\infty$, then the map $f$ is given by the \textup{Schwarz-Christoffel formula}
\begin{equation}
f(z)=c_1\int_0^z(\zeta -a_1)^{\gamma_1-1}(\zeta -a_2)^{\gamma_2-1}\cdots (\zeta -a_n)^{\gamma_n-1}\,\de\zeta +c_0
\end{equation}
for suitable constants $c_0,c_1$. In case that say $a_n=\infty$, the formula gets modified to
\begin{equation}
f(z)=c_1\int_0^z(\zeta -a_1)^{\gamma_1-1}(\zeta -a_2)^{\gamma_2-1}\cdots (\zeta -a_{n-1})^{\gamma_{n-1}-1}\,\de\zeta +c_0.
\end{equation}
The map $f$ is then also called a \textup{Schwarz-Christoffel map}, and the $a_i$ are called its \textup{accessory parameters.}
\end{Proposition}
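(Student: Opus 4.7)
The plan is to prove that $f$ satisfies the second-order ODE
\begin{equation*}
\frac{f''(z)}{f'(z)} \;=\; \sum_{i=1}^n \frac{\gamma_i - 1}{z - a_i}
\end{equation*}
on $\bbh$ (in the case where all $a_i$ are finite), from which both versions of the formula follow by integrating twice: integrating once gives $f'(z) = c_1\prod_i (z-a_i)^{\gamma_i-1}$, and integrating a second time yields the stated formula. The workhorse throughout is the Schwarz reflection principle.

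First I would extend $f$ by reflection. On each open arc $(a_i, a_{i+1}) \subset \bbr$ the map $f$ is continuous and lands in a straight line segment (a side of $P$). Composing with an affine motion that takes this segment to $\bbr$ and applying Schwarz reflection, $f$ extends analytically across $(a_i,a_{i+1})$ to the lower half-plane. Iterating gives a (multi-valued) holomorphic continuation of $f$ to $\bbc\smallsetminus\{a_1,\ldots,a_n\}$; however the logarithmic derivative $g(z) := f''(z)/f'(z)$ is single-valued, because two successive reflections in Euclidean lines produce a rigid motion of $\bbc$, whose logarithmic derivative vanishes. Next comes the local analysis at each $a_i$: composing $f$ near $a_i$ with a branch of $\zeta\mapsto(\zeta-f(a_i))^{1/\gamma_i}$ straightens the corner of angle $\gamma_i\pi$ into a half-plane, and Schwarz reflection across the boundary extends the composition to a biholomorphism of a neighborhood of $a_i$. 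Unwinding, $f(z)-f(a_i) = (z-a_i)^{\gamma_i}h_i(z)$ with $h_i$ holomorphic and nonvanishing near $a_i$; differentiating gives $g(z) = (\gamma_i-1)/(z-a_i) +(\text{holomorphic at }a_i)$.

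Combining these, the function
\begin{equation*}
G(z) \;:=\; g(z) - \sum_{i=1}^n\frac{\gamma_i-1}{z-a_i}
\end{equation*}
is entire. To invoke Liouville I would show $G$ vanishes at infinity. Since the interior angles of an $n$-gon sum to $(n-2)\pi$, one has $\sum_i\gamma_i = n-2$, so $\sum_i(\gamma_i-1) = -2$ and the subtracted sum behaves like $-2/z$ for large $z$. On the other hand, $f$ is bounded on $\bbh$ (its image is the bounded polygon $P^{\circ}$) and $\infty$ lies on the interior of some arc $(a_i,a_{i+1})$, so the reflection argument shows $f$ is holomorphic at $\infty$ with $f(z) = f(\infty) + c/z + O(1/z^2)$ and $c\neq 0$ by injectivity; differentiating, $g(z) = -2/z + O(1/z^2)$. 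Hence $G$ is bounded entire with $G(\infty)=0$, so $G\equiv 0$ by Liouville, and two integrations produce the Schwarz--Christoffel formula. The modified formula (case $a_n=\infty$) follows by the same argument applied to the $n-1$ finite $a_i$; the pole at $\infty$ is then absent from the subtracted sum, and the balance is maintained by the contribution of the angle $\gamma_n\pi$ at $f(a_n)=f(\infty)$: a direct computation shows $f'(z) \sim C z^{-\gamma_n-1}$ near infinity, matching $\prod_{i=1}^{n-1}(z-a_i)^{\gamma_i-1} \sim z^{\sum_{i<n}(\gamma_i-1)} = z^{-1-\gamma_n}$.

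The delicate part is the careful bookkeeping of the Schwarz reflections — verifying that $g$ is globally single-valued (not merely $f$), controlling the local form of $f$ at each corner, and in particular analyzing the behavior at $\infty$ in a way that treats both the standard and the modified formulas uniformly.
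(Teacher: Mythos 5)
Your argument is correct: the paper gives no proof of its own here, deferring entirely to Hurwitz--Courant, and the classical argument you outline (single-valuedness of $f''/f'$ via Schwarz reflection, the local exponent $\gamma_i-1$ at each prevertex, the $-2/z$ balance at infinity from $\sum_i(\gamma_i-1)=-2$, and Liouville) is exactly the standard proof contained in that reference. Your treatment of the case $a_n=\infty$, matching $f'(z)\sim Cz^{-\gamma_n-1}$ against $z^{\sum_{i<n}(\gamma_i-1)}$, is also the right bookkeeping.
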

\begin{proof}
See \cite[p. 431 et seqq.]{HurwitzCourant64}.
\end{proof}

\begin{center}
\begin{tikzpicture}
\clip (0,-1) rectangle (15,4);
\shade [shading=radial] (0,-3) rectangle (5,3);
\fill[white] (0,-3) rectangle (5,0);
\draw (0,0) -- (5,0);
\draw [->] (6,1.5) -- (7,1.5) node[above=2pt] {$f$} -- (8, 1.5);

\filldraw[black] (0.5,0) circle (2pt)
                 (0.9,0) circle (2pt)
                 (1.3,0) circle (2pt)
                 (2,0) circle (2pt)
                 (2.6,0) circle (2pt)
                 (2.9,0) circle (2pt)
                 (3.4,0) circle (2pt)
                 (4.3,0) circle (2pt);

\filldraw[fill=gray!30!white] (9,1) -- ++(-10:62pt)  -- ++(-25:24pt) -- ++(42:47pt) -- ++(101:56pt) -- ++(206:63pt) -- ++(157:43pt) -- cycle;
\filldraw[black]
    (9,1) circle (2pt)
     ++(-10:62pt) circle (2pt)
     ++(-25:24pt) circle (2pt)
     ++(42:47pt) circle (2pt)
     ++(101:56pt) circle (2pt)
     ++(206:63pt) circle (2pt)
     ++(157:43pt) circle (2pt);
\end{tikzpicture}
\end{center}

In general there is still the task to determine the accessory parameters if one wishes a fully explicit formula; in important special cases one can get around that.

For example if $P$ is a triangle, then it is --- up to similarity --- uniquely determined by the two angles $\gamma_1\pi$ and $\gamma_2\pi$. Furthermore we can map the accessory parameters by a M\"{o}bius transformation to $0,1,\infty$. Hence:
\begin{Corollary}
Let $T$ be the interior of a triangle in the complex domain, with vertices $p_1,p_2,p_3$ (in counter-clockwise order) and with internal angle $\gamma_i\pi$ at $p_i$. Then there is a unique conformal map $s:\bbh\to T$ extending to a homeomorphism $\bbh\cup\mathbb{P}^1(\bbr )\to\overline{T}$ (which we also denote by $s$) with $s(0)=p_1$, $s(1)=p_2$ and $s(\infty )=p_3$. It is given by the formula
\begin{equation}\label{SchwarzChristoffelTriangle}
s(z)=c\int_0^z\zeta^{\gamma_1-1}(\zeta -1)^{\gamma_2-1}\,\de\zeta +p_1.
\end{equation}
Furthermore $c$ can be computed by using that
\begin{equation*}
p_2-p_1=s(1)-s(0)=c\int_0^1\zeta^{\gamma_1-1}(\zeta -1)^{\gamma_2-1}\,\de\zeta
\end{equation*}
and the classical formula for Euler's beta function
\begin{equation*}
\mathrm{B}(\gamma_1,\gamma_2)=\int_0^1\zeta^{\gamma_1-1}(1-\zeta )^{\gamma_2-1}\,\de\zeta =\frac{\Gamma (\gamma_1)\Gamma (\gamma_2)}{\Gamma (\gamma_1+\gamma_2)}.
\end{equation*}
\end{Corollary}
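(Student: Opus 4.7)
The plan is to obtain the corollary as a direct specialization of the general Schwarz-Christoffel Proposition, where the fact that we are dealing with a triangle lets us fix all the accessory parameters explicitly rather than having to determine them.

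First I would invoke the Riemann Mapping Theorem to produce \emph{some} conformal map $\bbh \to T$ extending to a homeomorphism of closures; this is a classical consequence of the Carathéodory boundary extension theorem since $\partial T$ is a Jordan curve. Pre-composing with an automorphism of $\bbh$, i.e.\ an element of $\PSL_2(\bbr)$, I can freely move the preimages of $p_1, p_2, p_3$ in $\mathbb{P}^1(\bbr)$, because $\PSL_2(\bbr)$ acts sharply $3$-transitively on $\mathbb{P}^1(\bbr)$ preserving the cyclic order. Since the vertices $p_1, p_2, p_3$ are listed in counter-clockwise order and the boundary extension is orientation-preserving, I can uniquely arrange that the preimages are $0, 1, \infty$ in this order. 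This simultaneously establishes the existence and uniqueness claim.

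Now I would apply the Schwarz-Christoffel Proposition with $n = 3$ and accessory parameters $a_1 = 0$, $a_2 = 1$, $a_3 = \infty$. Because $a_3 = \infty$, I use the modified version of the formula, in which the factor corresponding to $a_3$ is dropped; this yields
\begin{equation*}
s(z) = c_1 \int_0^z \zeta^{\gamma_1-1} (\zeta - 1)^{\gamma_2-1}\, \de\zeta + c_0
\end{equation*}
for some constants $c_0, c_1 \in \bbc$. Evaluating at $z = 0$ and using $s(0) = p_1$ gives $c_0 = p_1$, which matches (\ref{SchwarzChristoffelTriangle}) upon renaming $c_1 = c$. For the explicit value of $c$, I would evaluate at $z = 1$, so that $s(1) - s(0) = p_2 - p_1 = c \int_0^1 \zeta^{\gamma_1-1}(\zeta-1)^{\gamma_2-1}\,\de\zeta$; inserting the sign $(-1)^{\gamma_2-1}$ to rewrite the integrand as $\zeta^{\gamma_1-1}(1-\zeta)^{\gamma_2-1}$ converts the integral into Euler's beta function $\mathrm{B}(\gamma_1,\gamma_2) = \Gamma(\gamma_1)\Gamma(\gamma_2)/\Gamma(\gamma_1+\gamma_2)$, from which $c$ can be read off.

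The main potential obstacle is really just bookkeeping: verifying that the hypotheses of the general Proposition are met (in particular that $\gamma_i \neq 1$, which is automatic for a non-degenerate triangle since the interior angles are strictly between $0$ and $\pi$), and tracking the argument of the complex power $(\zeta-1)^{\gamma_2-1}$ correctly for $\zeta \in (0,1) \subset \bbr$ so that the sign bookkeeping in the reduction to Euler's beta integral is valid. Beyond this, no new analytic input is required; everything else is a consequence of the cited Schwarz-Christoffel Proposition and the $3$-transitivity of $\PSL_2(\bbr)$.
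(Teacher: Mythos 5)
Your proposal is correct and follows exactly the route the paper takes: the paper's own justification is just the remark preceding the Corollary, namely that the accessory parameters of a triangle can be normalized to $0,1,\infty$ by a M\"{o}bius transformation, after which the version of the Schwarz-Christoffel formula with $a_n=\infty$ applies and the constants are fixed by evaluating at $0$ and $1$ via the Beta integral. Your additional care about the cyclic order under $\PSL_2(\bbr )$ and the branch of $(\zeta -1)^{\gamma_2-1}$ only fills in details the paper leaves implicit.
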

\begin{center}
\begin{tikzpicture}
\fill[gray!80!white] (0,0) rectangle (4,3);
\draw (0,0) -- (4,0);
\draw [->] (5,1.5) -- (6,1.5) node[above=2pt] {$s$} -- (7, 1.5);
\draw (2,3.5) node {$\infty$};
\filldraw[fill=gray!80!white] (8,0.5) -- (10.3,1) -- (9.5,2.8) -- cycle;
\filldraw[black] (1,0) circle (2pt) node[below=2pt] {$0$}
                 (3,0) circle (2pt) node[below=2pt] {$1$}
                 (8,0.5) circle (2pt) node[below=2pt] {$s(0)=p_1$}
                 (10.3,1) circle (2pt) node[right=2pt] {$s(1)=p_2$}
                 (9.5,2.8) circle (2pt) node[above=2pt] {$s(\infty )=p_3$};
\draw (8,0.5) -- +(12.26:12pt) arc(12.26:56.89:12pt)  node[right=2pt] {$\gamma_1\pi$} -- cycle;
\draw (10.3,1) -- +(113.96:12pt) arc (113.96:192.26:12pt) -- cycle;
\draw (9.5,1.3) node {$\gamma_2\pi$};
\end{tikzpicture}
\end{center}

These formul\ae\ help us to explicitly describe geometrically constructed translation surfaces as algebraic curves.

We now show that the curves $X(g,k)$ defined above are hyperelliptic, with ramification locus precisely $\mu_n(\bbc )\cap\{\infty\}$. For this, we first construct a holomorphic $2:1$ map $X(g,k)\to\mathbb{P}^1(\bbc )$.

Let $s:\overline{\mathbb{H}}\overset{\simeq}{\to}T^+$ be the unique Schwarz-Christoffel map with
\begin{equation*}
s(0)=0,\, s(1)=\cos\frac{k\pi}{n}\text{ and }s(\infty )=\zeta_{2n}^k.
\end{equation*}
Let $S\subset\bbc$ denote the open sector defined by $0<\operatorname{arg}z<\pi /n$, and let
\begin{equation*}
\varphi :S\to T^+,\quad z\mapsto s(z^n);
\end{equation*}
this is a biholomorphic map which extends to a homeomorphism of the closures (in $\mathbb{P}^1(\bbc )$!).

\begin{center}
\begin{tikzpicture}
\fill[color=gray!80!white] (7,-4) -- (11,-4) arc (0:36:4cm);
\filldraw[fill=gray!80!white] (0,-1) -- (3.0777,-1) -- ++(90:1cm) -- cycle;
\fill[fill=gray!80!white] (7,1) rectangle (13,3);
\draw (11,-4) -- (7,-4) -- ++(36:4cm);
\draw (7,1) -- (13,1);
\draw (7.7,-4) arc (0:36:0.7cm);
\draw (8,-3.6) node {$\frac{\pi }{n}$};
\filldraw[black] (0,-1) circle (2pt) node[left=2pt] {$s(0)$}
                 (3.0777,-1) circle (2pt) node[below=2pt] {$s(1)$}
                 (3.0777,0) circle (2pt) node[above=2pt] {$s(\infty )$}
                 (9.5,1) circle (2pt) node[below=2pt] {$0$}
                 (10.5,1) circle (2pt) node[below=2pt] {$1$}
                 (7,-4) circle (2pt) node[below=2pt] {$0$}
                 (8,-4) circle (2pt) node[below=2pt] {$1$};
\draw [->] (8,-2.5) -- (8,0);
\draw (8.5,-1.25) node {$(\cdot )^n$};
\draw [->] (7,0) .. controls (6,-0.5) .. (4.1,-0.2);
\draw (5.5,0) node {$s$};
\draw [->] (7.6,-2.9) -- (4.1,-0.8);
\draw (5.35,-2.25) node {$\varphi$};
\end{tikzpicture}
\end{center}
\begin{Proposition}
The map $\varphi^{-1}:T^+\to S$ extends, by continued Schwarz reflection, to a finite holomorphic map $\Psi :X(g,k)\to\mathbb{P}^1(\bbc )$ of degree $2$; its ramification locus is precisely $\mu_n(\bbc )\cup\{\infty\}$. Hence $\Psi$ identifies $X(g,k)$ with the Wiman curve $W_g$.
\end{Proposition}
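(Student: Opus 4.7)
The plan is to build $\Psi$ one half-triangle at a time. Starting with $\Psi |_{T^+_{0,1}}=\varphi^{-1}$, I would use the Schwarz reflection principle to propagate the definition across each side of $T^+$. First, observe that the three sides of $T^+$---the real segment $[0,\cos (\pi k/n)]$, the vertical segment $[\cos (\pi k/n),\zeta_{2n}^k]$ and the segment $[0,\zeta_{2n}^k]$ along the ray $\arg w=\pi k/n$---are sent by $\varphi^{-1}$ to the boundary pieces $[0,1]$, $[1,\infty )$ and $\{ r\zeta_{2n}:r\ge 0\}$ of the sector $S$. Each image lies on one of two lines through the origin (the real axis or the line at argument $\pi /n$), so the reflection principle extends $\varphi^{-1}$ holomorphically across each side of $T^+$ to the adjacent half-triangle in $X(g,k)$, the extension being $\varphi^{-1}$ composed with the appropriate reflection. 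Iterating, the dihedral group of order $2n$ generated by these two reflections partitions $\bbc$ into the $2n$ sectors $S_j=\{ (j-1)\pi /n<\arg w<j\pi /n\}$ for $j\in\bbz /2n\bbz$, and the procedure reaches every one of the $4n$ half-triangles of $X(g,k)$.

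Next I would verify internal consistency. The side-identifications (i) and (ii) defining $X(g,k)$ specify exactly which pairs of half-triangles share a side, and the Schwarz-reflected extensions send such pairs to Euclidean-adjacent target sectors via the matching boundary reflections. Closing a loop through an interior edge point gives trivial monodromy by the reflection principle, so the map extends continuously (hence, by the Riemann removable singularity theorem, meromorphically) across the triangle vertices. Counting angles confirms closure around each of those vertices: at each of the two origins $0^{\pm }$, $2n$ half-triangles meet with total source angle $2\pi k$, corresponding to $k$ turns around $0\in\bbc$; at the common outer vertex $P_{\infty }$ (gluing (ii) identifies all $2n$ non-origin vertices into a single point, since the shift $n+2$ generates $\bbz /2n\bbz$ when $n$ is odd), $4n$ half-triangle corners meet with total angle $2(n-2k)\pi$, corresponding to $(n-2k)$ turns around $\infty\in\mathbb{P}^1(\bbc )$. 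Because each of the $2n$ target sectors receives exactly two half-triangles, the resulting $\Psi :X(g,k)\to\mathbb{P}^1(\bbc )$ is a well-defined finite morphism of degree $2$.

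To pin down the ramification divisor I would use the Schwarz--Christoffel integral representation of $s$ in three local models. At $\tau =0$ one has $s(\tau )\sim B\tau^{k/n}$, hence $\varphi (z)\sim Bz^k$; in the cone-uniformiser $\eta$ at $0^{\pm }$ (where the translation coordinate is $\eta^k$) this gives $\Psi (\eta )\sim C\eta$, so $0^{\pm }$ are unramified and constitute the two preimages of $0$. At each base midpoint $\zeta_n^{km}\cos (\pi k/n)$---a regular point of $X(g,k)$ at which four right-angled half-triangles meet after the base-gluing---two half-triangles map onto each of the two sectors adjacent in $\bbc$ to $\zeta_n^m$, so $\Psi$ has local degree $2$ with critical value $\zeta_n^m$. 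Near $P_{\infty }$, the asymptotic $s'(\tau )\sim c\tau^{k/n-3/2}$ integrates to $\varphi (z)-\zeta_{2n}^k\sim C'z^{-(n-2k)/2}$; passing to the uniformiser $\eta$ with $\eta^{n-2k}$ the shifted translation coordinate yields a pole $\Psi (\eta )\sim C''\eta^{-2}$, so $P_{\infty }$ is a ramification point of local degree $2$ over $\infty$. Altogether the ramification divisor consists of $n+1$ simple ramification points lying over $\mu_n(\bbc )\cup\{\infty\}$, in agreement with the Riemann--Hurwitz formula $2g-2=-4+(n+1)$.

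Since a smooth projective degree-$2$ cover of $\mathbb{P}^1(\bbc )$ branched exactly over $\mu_n(\bbc )\cup\{\infty\}$ is unique up to isomorphism and is by definition the Wiman curve $W_g$, the map $\Psi$ identifies $X(g,k)$ with $W_g$. The main obstacle will be the consistency check in the second paragraph: verifying that the dihedral combinatorics in the target match the cyclic gluings (i) and (ii) in the source requires care, especially around $P_{\infty }$ where many triangle corners coalesce. I would handle this by exploiting the rotational symmetry $T_{m,\varepsilon }\mapsto T_{m+1,\varepsilon }$ of $X(g,k)$, which by construction corresponds under $\Psi$ to multiplication by $\zeta_n$ on $\bbc$, together with the reflective symmetry across the real axis that swaps $T_{m,\varepsilon }$ with a suitable $T_{m',\varepsilon }$.
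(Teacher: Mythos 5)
Your proposal is correct and follows essentially the same route as the paper: extend $\varphi^{-1}$ across the sides of the half-triangles by Schwarz reflection, check that the reflected pieces are compatible with the gluings (i) and (ii), remove the singularities at the cone points, and observe that each of the $2n$ target sectors is covered exactly twice. Your ramification analysis is more quantitative than the paper's (explicit Schwarz--Christoffel asymptotics at $0^{\pm}$, at the base midpoints and at $P_{\infty}$, plus a Riemann--Hurwitz check), whereas the paper simply identifies the branch locus as the set of points with a single preimage; both arguments are sound.
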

\begin{proof}
The main point to show is that $\Psi$ is well-defined. So we investigate what it must do on each $T_{m,\varepsilon}$ and then check that these maps glue to a global holomorphic map.

To begin with, $\Psi$ maps the interior of $T^-$ conformally to the sector $-\pi /n<\operatorname{\arg }z<0$, and hence it maps the interior of $T_{1,1}=T$ biholomorphically to the set
\begin{equation*}
\{ z\in\bbc^{\times }\mid -\frac{\pi}{n}<\operatorname{arg}z<\frac{\pi}{n}\}\smallsetminus [1,\infty [.
\end{equation*}
This is illustrated in the following picture:

\begin{center}
\begin{tikzpicture}
\fill[gray!80!white] (0,0) -- ++(2.7030,0) -- ++(0,1.3017) -- cycle;
\fill[gray!30!white] (0,-0.1) -- ++(2.7030,0) -- ++(0,-1.3017) -- cycle;
\draw (0,0) -- ++(2.7030,0) -- ++(0,1.3017)
    (0,-0.1) -- ++(2.7030,0) -- ++(0,-1.3017);
\draw[dashed] (0,0) -- ++(2.7030,1.3017)
    (0,-0.1) -- ++(2.7030,-1.3017);
\draw (2,0.4) node {$T^+$}
    (2,-0.5) node {$T^-$};
\end{tikzpicture}
\end{center}
Here the little distance between $T^+$ and $T^-$ is only drawn for sake of clarity, actually they are glued together. Now $\Psi$ maps the upper dashed line to the half-line $\operatorname{arg }z=\pi /n$ and the upper full line to the positive real axis. The lower full line is also sent to the positive real axis, with the branch point being sent to $1$. Finally the lower dashed line is sent to the half-line $\operatorname{arg}z=-\pi /n$.

We have a very similar pattern for $T_{m,1}$; in fact we can use the same picture (up to a rotation which we ignore, in order to use words like ``above''), but now $T^+$ is replaced by $\zeta_n^{mk}T^+$, and $T^-$ is replaced by $\zeta_n^{mk}T^-$. Here the upper dashed line goes to $\operatorname{arg }z=(2m+1)\pi /n$, and the lower dashed line goes to $\operatorname{arg }z=(2m-1)\pi /n$. Both full lines are sent to $\operatorname{arg}z=2m\pi /n$, with the branch point being sent to $\zeta_n^m$. So $\Psi$ sends the interior of $T_{m,1}$ conformally to the sector $(2m-1)\pi /n<\operatorname{arg }z<(2m+1)\pi /n$, minus the straight segment from $0$ to $\zeta_n^m$.

What happens on $T_{m,-1}$? Again we have the same picture up to an ignored rotation. In particular it is the same as that for $T_{m,1}$, up to a rotation by $\pi$, and up to this rotation $\Psi$ does \textit{exactly the same} on both triangles.

We can now check that these maps do glue to a continuous map $\Psi :X(g,k)\to\mathbb{P}^1(\bbc )$. This is clearly holomorphic on the interiors of the triangles $T_{m,\varepsilon }$. Since for each identification of two sides involved in the construction of $X(g,k)$, the map on one bank is the Schwarz mirror image of the map on the other bank, $\Psi$ is also holomorphic everywhere except possibly for the cone points. But these form a finite set, and $\Psi$ is continuous, so by Riemann's theorem on removable singularities, $\Psi$ has to be holomorphic everywhere.

Since both $X(g,k)$ and $\mathbb{P}^1(\bbc )$ are compact and $\Psi$ is nonconstant, it has to be a finite holomorphic map. Its degree is easily seen to be two. The branch points are then precisely those points in $\mathbb{P}^1(\bbc )$ whose preimage has cardinality one, so by construction the branch locus is $\mu_n(\bbc )\cup\{\infty\}$.
\end{proof}
Next we compute the abelian differential $\omega (g,k)$:
\begin{Proposition}
Under the identification $X(g,k)=W_g$ induced by $\Psi$, the differential $\omega (g,k)$ on $X(g,k)$ corresponds to a nonzero multiple of $\omega_k$ on $W_g$.
\end{Proposition}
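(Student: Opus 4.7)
The plan is to work locally on the right triangle $T^+=T_{0,1}^+\subset X(g,k)$ and compute $\omega(g,k)$ there in terms of the coordinates $(x,y)$ of $W_g$ induced by $\Psi$. Since $T^+$ is an open subset of the connected Riemann surface $X(g,k)=W_g$ and both $\omega(g,k)$ and $\omega_k$ are holomorphic one-forms, it suffices to verify that on $T^+$ one has $\omega(g,k)=\lambda\,\omega_k$ for some $\lambda\in\bbc^{\times }$; the identity theorem for holomorphic sections of a line bundle (here the canonical bundle of $W_g$) then propagates this equality to all of $X(g,k)$.

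Concretely, on $T^+$ the translation structure is, by construction, the one induced by the inclusion $T^+\hookrightarrow\bbc$, so $\omega(g,k)|_{T^+}=\de z$ where $z$ is the standard complex coordinate. By construction of $\Psi$, the restriction $\Psi|_{T^+}=\varphi^{-1}$ maps $T^+$ biholomorphically onto the open sector $S=\{0<\arg x<\pi/n\}$, and $z=\varphi(x)=s(x^n)$ where $s:\overline{\bbh}\to T^+$ is the Schwarz-Christoffel map with $s(0)=0$, $s(1)=\cos (k\pi /n)$, $s(\infty )=\zeta_{2n}^k$. The angles of $T^+$ at these three vertices are $k\pi /n$, $\pi /2$ and $(n-2k)\pi /(2n)$, and since the third accessory parameter is $\infty$, the Schwarz-Christoffel formula from this section yields
\begin{equation*}
s(w)=c\int_0^w\zeta^{k/n-1}(\zeta -1)^{-1/2}\,\de\zeta ,
\end{equation*}
with a nonzero constant $c$. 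Differentiating and substituting $w=x^n$ gives $s'(x^n)=c\,x^{k-n}(x^n-1)^{-1/2}$, hence
\begin{equation*}
\omega (g,k)|_{T^+}=\de z=n x^{n-1}s'(x^n)\,\de x= cn\,\frac{x^{k-1}\,\de x}{(x^n-1)^{1/2}}.
\end{equation*}

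Fixing the branch of $(x^n-1)^{1/2}$ on $T^+$ determined by the identification $X(g,k)=W_g$ (that is, the branch which equals $y$ on the sheet corresponding to $T^+$), the right-hand side becomes $cn\cdot\omega_k$, and this is nonzero because $s$ is non-constant. By the analytic continuation argument from the first paragraph we conclude $\omega (g,k)=cn\cdot\omega_k$ on all of $W_g$.

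The main technical nuisance, and really the only place where one has to be careful, is the bookkeeping of branches: one must verify that the choice of branch of $(x^n-1)^{1/2}$ coming from the Schwarz-Christoffel formula on $T^+$ matches the sheet of the hyperelliptic double cover $\Psi $ that contains $T^+$. This however is automatic from the construction of $\Psi $ by continued Schwarz reflection, since both objects are obtained by prescribing the same holomorphic germ on $T^+$ and then extending analytically.
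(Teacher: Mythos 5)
Your proof is correct and follows essentially the same route as the paper's: restrict to the interior of $T^+$, express the translation coordinate as $z=s(x^n)$ via the Schwarz--Christoffel formula, differentiate to get $cn\,x^{k-1}(x^n-1)^{-1/2}\,\de x$, identify the square root with $y$, and propagate by the identity theorem. Your explicit remark on matching the branch of $(x^n-1)^{1/2}$ with the sheet of $\Psi$ is a slightly more careful rendering of the paper's one-line observation that ``locally choosing a square-root means locally choosing a leaf,'' but the argument is the same.
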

\begin{proof}
Recall that $\omega_k$ is in affine coordinates $(x,y)$ with $y^2=x^n-1$ given as $x^{k-1}\de x/y$. By the identity theorem, we only need to check that these two one-forms agree on some nonempty open subset, say the interior of $T^+$. Here we can describe $\Psi$ as the inverse of $\varphi :S\to T^+$. Letting $x$ be the standard coordinate on $\mathbb{P}^1(\bbc )$, we compute $\varphi^{\ast }\omega (g,k)=\varphi^{\ast }\de z$. For this, recall that $\varphi (x)=s(x^n)$, where
\begin{equation*}
c\cdot\int_0^xu^{\frac{k}{n}-1}(u-1)^{\frac{1}{2}-1}\de u
\end{equation*}
for a suitable $c\in\bbc^{\times }$.
\begin{equation*}
\begin{split}
\varphi^{\ast }\de z
&=\varphi '(x)\de x\\
&=s'(x^n)\cdot nx^{n-1}\de x\\
&=nc\cdot\frac{x^{k-n}}{\sqrt{x^n-1}}\cdot x^{n-1}\de x.\\
\end{split}
\end{equation*}
Locally choosing a square-root means locally choosing a leaf of $W_g\to\mathbb{P}^1(\bbc )$. Hence for a suitable constant $c'\in\bbc^{\times }$ we have $\omega (g,k)=c'\omega_k$.
\end{proof}
\begin{WimanCurvesAndRegularPolygons}
The special cases $(W_g,\omega_1)$ and $(W_g,\omega_g)$ have also simpler descriptions. Start with $(W_g,\omega_1)$; up to a scaling factor, this is isomorphic to $(X(g,1),\omega (g,1))$. Using the notation from above, we see that the triangles $T_{1,1},\ldots ,T_{n,1}$ glue together to a regular $n$-gon (as always, $n=2g+1$) with the $n$-th roots of unity in $\bbc$ as vertices. Similarly, $T_{1,-1},\ldots ,T_{n,-1}$ glue together to a regular $n$-gon with the $-\zeta$ as vertices, where $\zeta$ runs through all $n$-th roots of unity in $\bbc$. In other words, we can describe $(W_g,\omega_1)$ as two regular $n$-gons, one being the pointwise mirror of the other, glued along parallel sides. Here we show a picture for $g=2$:
\end{WimanCurvesAndRegularPolygons}
\begin{center}
\begin{tikzpicture}
\fill[gray!30!white] (1.3764,-1) -- ++(90:2cm) -- ++(162:2cm) -- ++(234:2cm) -- ++(306:2cm) -- cycle
    (2,-1) -- ++(90:2cm) -- ++(18:2cm) -- ++(306:2cm) -- ++(234:2cm) -- cycle;
\fill[gray!80!white]
    (0,0) -- ++(0:1.3764cm) -- ++(90:1cm) -- cycle
    (0,0) -- ++(72:1.3764cm) -- ++(162:1cm) -- cycle
    (0,0) -- ++(144:1.3764cm) -- ++(234:1cm) -- cycle
    (0,0) -- ++(216:1.3764cm) -- ++(306:1cm) -- cycle
    (0,0) -- ++(288:1.3764cm) -- ++(18:1cm) -- cycle
    (3.3764,0) -- ++(180:1.3764cm) -- ++(270:1cm) -- cycle
    (3.3764,0) -- ++(252:1.3764cm) -- ++(342:1cm) -- cycle
    (3.3764,0) -- ++(324:1.3764cm) -- ++(54:1cm) -- cycle
    (3.3764,0) -- ++(36:1.3764cm) -- ++(126:1cm) -- cycle
    (3.3764,0) -- ++(108:1.3764cm) -- ++(198:1cm) -- cycle;
\draw (1.3764,-1) -- ++(90:2cm) -- ++(162:2cm) -- ++(234:2cm) -- ++(306:2cm) -- cycle
    (2,-1) -- ++(90:2cm) -- ++(18:2cm) -- ++(306:2cm) -- ++(234:2cm) -- cycle
    (3.3764,0) -- ++(0:1.7013cm)
    (3.3764,0) -- ++(72:1.7013cm)
    (3.3764,0) -- ++(144:1.7013cm)
    (3.3764,0) -- ++(216:1.7013cm)
    (3.3764,0) -- ++(288:1.7013cm)
    (0,0) -- ++(180:1.7013cm)
    (0,0) -- ++(252:1.7013cm)
    (0,0) -- ++(324:1.7013cm)
    (0,0) -- ++(36:1.7013cm)
    (0,0) -- ++(108:1.7013cm);
    \filldraw[black]
    (1.3764,-1) circle (2pt)
    ++(90:2cm) circle (2pt)
    ++(162:2cm) circle (2pt)
    ++(234:2cm) circle (2pt)
    ++(306:2cm) circle (2pt)
    (2,-1) circle (2pt)
    ++(90:2cm) circle (2pt)
    ++(18:2cm) circle (2pt)
    ++(306:2cm) circle (2pt)
    ++(234:2cm) circle (2pt);
\end{tikzpicture}
\end{center}
Now consider the translation surface $(W_g,\omega_g)$. We claim that this is a regular $2n$-gon with opposite sides identified. Namely, recall from Proposition \ref{WimanCurvesAsBilliardSurfaces} that $(W_g,\omega_g)$ is the billiard surface obtained from $T^+$, where $T^+$ is a triangle with angles
\begin{equation*}
\frac{g\pi }{n},\frac{\pi }{2},\frac{\pi }{2n}.
\end{equation*}
Now take a regular $2n$-gon and join its center with all vertices and with all midpoints of its sides. This gives a tesselation of the $2n$-gon by triangles similar to $T^+$, and turns this $2n$-gon with opposite sides identified into the billiard surface of $T^+$. Again a picture for $g=2$:

\begin{center}
\begin{tikzpicture}[scale=0.5]
\fill[fill=gray!80!white] (3.0777,-1) -- ++(90:2cm) -- ++(126:2cm) -- ++(162:2cm) -- ++(198:2cm) -- ++(234:2cm) -- ++(270:2cm) -- ++(306:2cm) -- ++(342:2cm) -- ++(18:2cm) -- ++(54:2cm) -- cycle;
\fill[gray!30!white] (0,0) -- (3.0777,0) -- (3.0777,1) -- cycle
             (0,0) -- ++(36:3.0777cm) -- ++(126:1cm) -- cycle
             (0,0) -- ++(72:3.0777cm) -- ++(162:1cm) -- cycle
             (0,0) -- ++(108:3.0777cm) -- ++(198:1cm) -- cycle
             (0,0) -- ++(144:3.0777cm) -- ++(234:1cm) -- cycle
             (0,0) -- ++(180:3.0777cm) -- ++(270:1cm) -- cycle
             (0,0) -- ++(216:3.0777cm) -- ++(306:1cm) -- cycle
             (0,0) -- ++(252:3.0777cm) -- ++(342:1cm) -- cycle
             (0,0) -- ++(288:3.0777cm) -- ++(18:1cm) -- cycle
             (0,0) -- ++(324:3.0777cm) -- ++(54:1cm) -- cycle;
\draw (3.0777,-1) -- ++(90:2cm) -- ++(126:2cm) -- ++(162:2cm) -- ++(198:2cm) -- ++(234:2cm) -- ++(270:2cm) -- ++(306:2cm) -- ++(342:2cm) -- ++(18:2cm) -- ++(54:2cm) -- cycle;
\filldraw[fill=black]
    (3.0777,-1) circle (4pt)
    ++(90:2cm)
    ++(126:2cm) circle (4pt)
    ++(162:2cm)
    ++(198:2cm) circle (4pt)
    ++(234:2cm)
    ++(270:2cm) circle (4pt)
    ++(306:2cm)
    ++(342:2cm) circle (4pt)
    ++(18:2cm)
    ++(54:2cm) circle (4pt)
    ++(90:2cm);
\filldraw[fill=white]
    (3.0777,-1)
    ++(90:2cm) circle (4pt)
    ++(126:2cm)
    ++(162:2cm) circle (4pt)
    ++(198:2cm)
    ++(234:2cm) circle (4pt)
    ++(270:2cm)
    ++(306:2cm) circle (4pt)
    ++(342:2cm)
    ++(18:2cm) circle (4pt)
    ++(54:2cm)
    ++(90:2cm) circle (4pt);
\end{tikzpicture}
\end{center}

We record for future reference:
\begin{Proposition}
The translation surface $(W_g,\omega_1)$ can be obtained by taking a regular $n$-gon and its point mirror image, and gluing the two along their parallel sides. The translation surface $(W_g,\omega_g)$ can be obtained by gluing a regular $2n$-gon along opposite sides. Here $n=2g+1$.\hfill $\square$
\end{Proposition}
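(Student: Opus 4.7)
The plan is to specialize the construction of $(X(g,k),\omega(g,k))$ to $k=1$ and $k=g$ and to unwind the polygonal combinatorics in each case; since we have already shown $\omega(g,k)=c'\omega_k$ on $W_g=X(g,k)$, the translation atlases of $(W_g,\omega_k)$ and $(X(g,k),\omega(g,k))$ differ only by a real-linear rescaling of $\bbc$, so the combinatorial type of the polygonal presentation is the same. For $k=1$ the triangle $T$ has apex $0$ of angle $2\pi/n$ and other vertices $\zeta_{2n}^{\pm 1}$, so $T_{m,1}=\zeta_n^m T$ has vertices $(0,\zeta_{2n}^{2m-1},\zeta_{2n}^{2m+1})$ in $\bbc$. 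Gluing rule (i) identifies $T_{m,1}$ with $T_{m+1,1}$ along the segment from $0$ to $\zeta_{2n}^{2m+1}$, and this segment is \emph{literally} the same subset of $\bbc$ in both triangles, so $T_{0,1},\dots,T_{n-1,1}$ fit together in the plane without any further distortion to fill out the regular $n$-gon $\Pi_+$ with vertices the $n$-th roots of $-1$. Similarly the $T_{m,-1}=-\zeta_n^m T$ assemble to the point-mirror $n$-gon $\Pi_-$. Gluing rule (ii) then pairs each outer side of $\Pi_+$, from $\zeta_{2n}^{2m-1}$ to $\zeta_{2n}^{2m+1}$, with the parallel outer side of $\Pi_-$, from $-\zeta_{2n}^{2m-1}$ to $-\zeta_{2n}^{2m+1}$, by the unique translation between them, which is exactly the first claim.

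For $k=g$, I invoke Proposition~\ref{WimanCurvesAsBilliardSurfaces}: since $\gcd(g,n)=1$, the surface $(W_g,\omega_g)\simeq (X(g,g),\omega(g,g))$ is the billiard surface $(X_{T^+},\omega_{T^+})$ of the right triangle $T^+$ with angles $(g\pi/n,\pi/2,\pi/(2n))$. On the other side of the comparison, I take a regular $2n$-gon $\Pi$ and join its center to each of its $2n$ vertices and to the midpoint of each of its $2n$ sides; this tessellates $\Pi$ into $4n$ congruent right triangles, each of angles $(\pi/(2n),\pi/2,g\pi/n)$, hence directly congruent to $T^+$. A short computation shows that $G_{T^+}^+$ is cyclic of order $2n$, generated by the rotation of $\bbc$ by $\pi/n$ (the products $\sigma_e\sigma_f$ for adjacent sides of $T^+$ are rotations by $2g\pi/n$, $\pi$ and $\pi/n$, which together generate rotations by multiples of $\pi/n$), so $|G_{T^+}|=4n$ matches the tile count. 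Any two tiles of $\Pi$ sharing an interior edge are mirror images in that edge, which is precisely the billiard gluing rule for the corresponding copies of $T^+$. Matching one tile at a time, it remains to observe that the translations gluing opposite sides of $\Pi$ coincide with the translation-equivalences of the corresponding copies of $T^+$ in the billiard construction; this exhibits an isomorphism of translation surfaces and finishes the second claim.

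The hard part will be this last matching in the $k=g$ case: one must verify that each Euclidean translation pairing opposite sides of $\Pi$ really is induced by the translation-equivalence relation on the disjoint union $X'_{T^+}=\coprod_{\tau\in G_{T^+}}\tau(T^+)$ used to define the billiard surface, rather than by some rotation of the tessellation. The argument is essentially bookkeeping, enforced by the shared cyclic $\bbz/(2n)\bbz$-symmetry of $\Pi$ and $G_{T^+}^+$, but it is the step that turns the visually obvious picture into a rigorous isomorphism of translation surfaces.
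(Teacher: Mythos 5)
Your proposal is correct and takes essentially the same route as the paper: for $k=1$ you assemble the triangles $T_{m,\pm 1}$ of the $X(g,1)$ construction directly into a regular $n$-gon and its point mirror, and for $k=g$ you pass through the billiard-surface description of $(W_g,\omega_g)$ via the right triangle $T^+$ and the tessellation of the regular $2n$-gon by $4n$ copies of it. Your explicit count $\lvert G_{T^+}\rvert =4n$ and your honest flagging of the remaining side-matching bookkeeping are, if anything, slightly more careful than the paper's treatment, which reads both steps off the pictures.
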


\subsection{Some More Geometry of Abelian Differentials}\label{SectionOnMoreGeoOfAbDiff}

Because of the very restricted form of the chart transition maps, we can transfer many features of Euclidean geometry to translation surfaces. This observation is more or less trivial, but is important to be spelled out because it has very nontrivial consequences. To keep the language simpler, we will now identify translation surfaces with cone points and Riemann surfaces with holomorphic one-forms.

\begin{EuclideanMetric}
To begin with, translations in $\bbc$ are isometries. Hence if $(X,\omega)$ is a translation surface with cone points, pulling back the Euclidean metric on $\bbc$ gives a well-defined Euclidean (even Hermitian) metric on $X$ minus the zeros of $\omega$. Since the Euclidean metric on $\bbc$ can be described as $\de x\otimes\de y$, the metric on $X$ is $\operatorname{Re}\omega\otimes\operatorname{Im}\omega$. Note that this extends, by the same formula, to a tensor on the whole of $X$, degenerate at the zeros of $\omega$. The volume form of the Euclidean metric in $\bbc$ is $\frac{\mathrm{i}}{2}\de z\wedge\de\overline{z}$, hence the volume form on $X$ is $\frac{\mathrm{i}}{2}\omega\wedge\overline{\omega }$. This is a two-form which is degenerate precisely at the zeros of $\omega$.
\end{EuclideanMetric}
\begin{TrivializationofTangentSpaces}
Also we can canonically identify every tangent space $T_z\bbc$ with $\bbc$, and every translation in $\bbc$ has (with this identification) derivative equal to the identity of $\bbc$. This means that we can identify every tangent space $T_xX$, where $x$ is \textit{not} a zero of $\omega$, with $\bbc$. Note that this can in general not be extended to the zeros of $\omega$, since this would yield a trivialization of the tangent bundle of $X$, which is not possible for closed surfaces of genus at least two.
\end{TrivializationofTangentSpaces}
\begin{DirectionsandFoliations}
Translations also preserve rays and their directions. Hence for every $\vartheta\in\bbr /2\pi\bbz$ it makes sense to study the ray emanating from a given point $x\in X$, with direction $\vartheta$. Sometimes this can be indefinitely continued, sometimes it comes back to $x$ and sometimes it hits a cone point (in which case we adopt the convention that it just stops there). Going both forward and backward we obtain a line through $x$ in direction $\vartheta$ (with the restriction that it may be cut off at one or two ends and that it may in fact be a circle). To make this more formal, define a foliation $\mathrsfs{F}_{\vartheta }(\omega )$ on $X\smallsetminus Z(\omega )$ as follows: recall that every tangent space is canonically identified with $\bbc$, and let $\mathrsfs{F}_{\vartheta }(\omega )_x\subset T_xX=\bbc$ be the real one-dimensional subspace spanned by $\exp (\mathrm{i}\vartheta )$. Note that this foliation might not be extendable to a foliation on $X$. We can also describe this foliation in a way closer to abelian differentials: first note that $\mathrsfs{F}_{\vartheta }(\omega )=\mathrsfs{F}_0(\mathrm{e}^{\mathrm{i}\vartheta }\omega )$, so it suffices to describe $\mathrsfs{F}_0(\omega )$. But $\mathrsfs{F}_0(\omega )$, as a real subbundle of the tangential bundle, is the kernel of the real-valued one-form $\Im\omega $. Similarly $\mathrsfs{F}_{\pi /2}(\omega )$ is the kernel of $\Re\omega $.
\end{DirectionsandFoliations}
\begin{Leaves}
Assume now $X$ closed. The leaves of $\mathrsfs{F}_{\vartheta }(\omega )$ are what we have inadequately described as the ``lines in direction $\vartheta$ through a point in $X$'' above. Since these leaves are smooth one-dimensional manifolds, there are not too many possibilities of what they can look like. Let $\ell$ be a leaf of $\mathrsfs{F}_{\vartheta }(\omega )$, then there exists an interval $I\subseteq\bbr$ and a local isometry $I\to\ell$, maximal with respect to inclusion of intervals. Exactly one of the following statements then holds:
\end{Leaves}
\begin{enumerate}
\item $I=\bbr$, and $\ell$ is diffeomorphic to a circle, in which case we call it \textit{periodic.} Intuitively this means that if we start from some $x\in\ell$ and begin to walk in direction $\vartheta$, we arrive again at $x$ after some time.
\item $I=\bbr$ and the map $I\to\ell$ is injective, but \textit{not} a diffeomorphism onto its image. The leaf $\ell$ is not closed in $X$. This case can be quite complicated.
\item $I=]-\infty ,a[$ for some $a\in \bbr$. This means that if we start from some $x\in\ell$ and walk in direction $\vartheta$, we can walk backwards indefinitely without ever arriving at the same point again, but upon walking forwards we run into a cone point after finite time.
\item $I=]a,\infty [$. The ``time mirror'' of case (iii).
\item $I=]a,b[$ for $a,b\in\bbr$. This means that both forward and backward we run into a cone point. Such leaves are parametrized by saddle connections, see below.
\end{enumerate}
\begin{HolonomyVectors}
A \textit{saddle connection} on $(X,\omega )$ is a smooth injective path $\gamma :[a,b]\to X$ such that $\gamma (a)$ and $\gamma (b)$ are cone points, $\gamma (t)$ is not a cone point for $a<t<b$, and the restriction of $\gamma$ to $]a,b[$ is a geodesic. The \textit{holonomy vector} of $\gamma$ is the complex number $\int_{\gamma }\omega$. For $X$ compact, the set of all saddle connections of $(X,\omega )$ is always a discrete subset of $\bbc$, see \cite[section 1.3.2]{HubertSchmidt04}.
\end{HolonomyVectors}
\begin{JSDirections}
Since the foliation ``breaks down'' at the cone points, directions in which they do not cause to much trouble are of special interest:
\end{JSDirections}
\begin{Definition}
Let $(X,\omega )$ be a closed translation surface. A direction $\vartheta\in\bbr /2\pi\bbz$ is called a \textup{Jenkins-Strebel direction} if in the foliation $\mathrsfs{F}_{\vartheta }(\omega )$ there are only leaves of type (i) and (v).
\end{Definition}
Jenkins-Strebel directions are very helpful in understanding translation surfaces since they lead to complete cylinder decompositions. To explain this, let $\vartheta\in\bbr /2\pi\bbz$ and let $a,b>0$. We then define a translation surface with boundary\footnote{By this we mean a topological surface with boundary, together with a translation structure on its interior} $C_{\vartheta }(a,b)$ as follows: take a rectangle in $\bbc$ with two edges in direction $\vartheta$ and two edges in direction $\vartheta +\frac{\pi }{2}$, such that the edges in direction $\vartheta$ have length $a$ and the other two edges have length $b$. This is uniquely determined up to a translation. Identify the two edges in direction $\vartheta +\frac{\pi }{2}$, and the obtained translation surface with boundary, which is homeomorphic to $[0,1]\times S^1$, is called a \textit{cylinder in direction $\vartheta$ of circumference $a$ and width $b$} and denoted by $C_{\vartheta }(a,b)$.

\begin{center}
\begin{tikzpicture}
\draw[gray] (0,0) -- (1,0)
            (0.6,0) arc (0:34:0.6cm);
\draw[gray] (0,0) ++(17:0.8cm) node {$\vartheta$};
\draw[gray, <->, double] (0,0) ++(124:0.65cm) -- ++(34:3cm);
\draw[black] (0,0) -- ++(34:3cm) -- ++(124:1.3cm) -- ++(214:3cm) -- cycle;
\draw (0,0) ++(124:1.3cm) ++(34:1.5cm) ++(124:0.3cm) node {$a$}
      (0,0) ++(124:0.65cm) ++(214:0.3cm) node {$b$};
\end{tikzpicture}
\end{center}

A perhaps better definition of $C_{\vartheta }(a,b)$ is the following: take an infinite closed strip $S$ of width $b$ in direction $\vartheta$; i.e. take two lines $\ell_1$, $\ell_2$ in $\bbc$ in direction $\vartheta$ whose distance is $b$, then $S$ is the convex closure of $\ell_1\cup\ell_2$. Then the additive cyclic subgroup $G$ of $\bbc$ generated by $a\exp (\mathrm{i}\vartheta )$, operating on $\bbc$ by translations, takes $S$ into itself; set $C_{\vartheta }(a,b)=S/G$.

\begin{center}
\begin{tikzpicture}
\fill[gray!50!white] (0,0) -- ++(34:4cm) -- ++(124:1cm) -- ++(214:6.5cm) -- ++(304:1cm) -- cycle;
\draw (0,0) ++(124:0.5cm) ++(34:0.75cm) node {$S$};
\draw[gray] (1,0) -- (0,0) -- ++(34:1cm)
            (0.5,0) arc (0:34:0.5cm)
            (0,0) ++(17:0.8cm) node {$\vartheta$};
\draw (0,0) -- ++(34:4cm) ++(34:0.4cm) node {$\ell_2$}
      (0,0) -- ++(214:2.5cm) ++(124:1cm) -- ++(34:6.5cm) ++(34:0.4cm) node {$\ell_1$};
\draw[->] (-2,1) -- ++(34:2cm);
\draw (-2,1) ++(34:2.4cm) node {$a\mathrm{e}^{\mathrm{i}\vartheta}$};
\filldraw[black] (-2,1) circle (2pt);
\end{tikzpicture}
\end{center}

Now assume that $(X,\omega )$ is a closed translation surface, and let $\vartheta\in\bbr /2\pi\bbz$. The \textit{critical graph} in direction $\vartheta$ is the union of the zeros of $\omega$ and the saddle connections of $\mathrsfs{F}_{\vartheta }(\omega )$. If $\vartheta$ is a Jenkins-Strebel direction for $\omega$, the critical graph is a compact graph embedded into $X$. Cut $X$ along this graph; this yields a finite union of compact translation surfaces with boundary (but no corners!). Its entire boundary goes in direction $\vartheta$, and all leaves in the foliation in direction $\vartheta$ in its interior are periodic. Hence every component is isomorphic (as translation surface) to a cylinder in direction $\vartheta$. In other words we can write $(X,\omega )$ as a finite union of cylinders $C_{\vartheta }(a_i,b_i)$ (where the $a_i$ and $b_i$ might be different for different $i$), glued along their boundaries in some way. Conversely for every translation surface which is glued from cylinders in the same direction, this direction is Jenkins-Strebel.

We now discuss these concepts for some examples.
\begin{Origamis}
Let us start with one of the simplest translation surfaces: a unit square with opposite sides identified, in other words, $\bbc /\bbz [\mathrm{i}]$ with $\de z$. Here the foliation $\mathrsfs{F}_{\vartheta }(\omega )$ behaves differently according to whether the slope $\mu =\tan\vartheta$ is rational or not.\footnote{Here we are a bit sloppy. We actually view $\mu=(\sin\vartheta :\cos\vartheta )$ as an element of $\mathbb{P}^1(\bbr )$, and the question is whether $\mu$ lies in $\mathbb{P}^1(\bbq )$ or not} If it is rational, then all leaves of $\mathrsfs{F}_{\vartheta }(\omega )$ are periodic, in particular $\vartheta$ is a Jenkins-Strebel direction. If $\mu$ is irrational, however, every leaf is of type (ii), dense in the torus, and the space of leaves bears the indiscrete topology. Thus the Jenkins-Strebel directions are precisely the directions with rational slope.

This observation extends to origamis: on an origami surface the Jenkins-Strebel directions are also precisely those with rational slope. Of course the complete cylinder decompositions in the vertical and horizontal directions can be directly read off the origami itself.
\end{Origamis}
\begin{RegularPolygonsI}
Next consider the surface $(W_g,\omega_g)$; in other words, consider a regular $2n$-gon, $n=2g+1$, with opposite sides identified. Without loss of generality we may assume that there is a side in the horizontal direction. Then the horizontal direction\footnote{as well as any other direction perpendicular or parallel to a side} is a Jenkins-Strebel direction. In the picture we show a complete cylinder decomposition in the case $n=9$:
\end{RegularPolygonsI}
\begin{center}
\begin{tikzpicture}
\fill[fill=gray!20!white]
    (0,0) coordinate (A)
    -- ++(100:1cm) coordinate (B)
    -- ++(120:1cm) coordinate (C)
    -- ++(140:1cm) coordinate (D)
    -- ++(160:1cm) coordinate (E)
    -- ++(180:1cm) coordinate (F)
    -- ++(200:1cm) coordinate (G)
    -- ++(220:1cm) coordinate (H)
    -- ++(240:1cm) coordinate (I)
    -- ++(260:1cm) coordinate (J)
    -- ++(280:1cm) coordinate (K)
    -- ++(300:1cm) coordinate (L)
    -- ++(320:1cm) coordinate (M)
    -- ++(340:1cm) coordinate (N)
    -- ++(0:1cm) coordinate (O)
    -- ++(20:1cm) coordinate (P)
    -- ++(40:1cm) coordinate (Q)
    -- ++(60:1cm) coordinate (R)
    -- cycle;
\fill[fill=gray!40!white]
    (E) -- (F) -- (N) -- (O) -- cycle;
\fill[fill=gray]
    (D) -- (E) -- (O) -- (P) -- cycle
    (F) -- (G) -- (M) -- (N) -- cycle;
\fill[fill=gray!70!white]
    (C) -- (D) -- (P) -- (Q) -- cycle
    (G) -- (H) -- (L) -- (M) -- cycle;
\fill[fill=gray!60!black]
    (B) -- (C) -- (Q) -- (R) -- cycle
    (H) -- (I) -- (K) -- (L) -- cycle;
\draw (A) -- (B) -- (C) -- (D) -- (E) -- (F) -- (G) -- (H) -- (I) -- (J) -- (K) -- (L) -- (M) -- (N) -- (O) -- (P) -- (Q) -- (R) -- cycle;
\draw (0.5848,0) node {$1$}
    ++(100:1.2cm) node {$\zeta$}
    ++(120:1.2cm) node {$\zeta^2$}
    ++(140:1.2cm) node {$\zeta^3$}
    ++(160:1.2cm) node {$\zeta^4$}
    ++(180:1.2cm) node {$\zeta^5$}
    ++(200:1.2cm) node {$\zeta^6$}
    ++(220:1.2cm) node {$\zeta^7$}
    ++(240:1.2cm) node {$\zeta^8$}
    ++(260:1.2cm) node {$\zeta^9$}
    ++(280:1.2cm) node {$\zeta^{10}$}
    ++(300:1.2cm) node {$\zeta^{11}$}
    ++(320:1.2cm) node {$\zeta^{12}$}
    ++(340:1.2cm) node {$\zeta^{13}$}
    ++(0:1.2cm) node {$\zeta^{14}$}
    ++(20:1.2cm) node {$\zeta^{15}$}
    ++(40:1.2cm) node {$\zeta^{16}$}
    ++(60:1.2cm) node {$\zeta^{17}$};
\end{tikzpicture}
\end{center}
Here the trapezia of the same shade glue together to cylinders. For general odd $n=2g+1$ we will have one rectangle in the middle which glues to a cylinder $C_0$, and $g$ pairs of trapezia with vertices among the vertices of the $2n$-gon, where the leftmost glues with the rightmost to a cylinder $C_1$, the second one from the left with the second one from the right to $C_2$, and so on till $C_g$. Let us compute the circumferences and widths of these cylinders.

For this, assume that the vertices of the $2n$-gons are precisely the $2n$-th roots of unity in $\bbc$ (another choice will amount to multiplying everything with a constant factor). Write $\zeta =\exp (\frac{\mathrm{i}\pi}{n})$. Then clearly $C_0$ is obtained from the rectangle with vertices $\zeta^g$, $\zeta^{g+1}$, $-\zeta^g$ and $-\zeta^{g+1}$. Hence it has width
\begin{equation*}
\Re \zeta^g-\Re\zeta^{g+1}=2\Re\zeta^g=2\cos\frac{g\pi}{n}
\end{equation*}
and circumference
\begin{equation*}
2\Im\zeta^g=2\sin\frac{g\pi}{n}.
\end{equation*}
The cylinder $C_k$ is glued from two trapezia; one of them has vertices $\zeta^{k-1}$, $\zeta^k$ and their complex conjugates, the other is obtained from this by multiplication with $-1$. Hence this cylinder has width equal to the width of each trapezium, which is
\begin{equation*}
\Re\zeta^{k-1}-\Re\zeta^k=\cos\frac{(k-1)\pi}{n}-\cos\frac{k\pi}{n},
\end{equation*}
and it has circumference equal to the sum of the lengths of the two vertical sides of each trapezium, i.e.
\begin{equation*}
2\Im\zeta^{k-1}+2\Im\zeta^k=2\left(\sin\frac{(k-1)\pi}{n}+\sin\frac{k\pi}{n}\right) .
\end{equation*}
\begin{RegularPolygonsII}
Next consider $(W_g,\omega_1)$, i.e. the translation surface obtained from gluing two regular $n$-gons, with $n=2g+1$. We may assume that one of the $n$-gons has as vertices the $n$-th roots of unity, the other one has minus the $n$-th roots of unity (or, if you so wish, some large enough number minus the $n$-th roots of unity, in order that the two polygons do not intersect). Write $\xi=\zeta^2 =\exp\frac{2\pi}{n}$. Again the vertical direction is a Jenkins-Strebel direction; in the example $n=9$ the cylinder decomposition looks as follows:
\end{RegularPolygonsII}
\begin{center}
\begin{tikzpicture}
\fill[color=gray!40!white]
    (0,0) coordinate (A1)
    -- ++(70:1cm) coordinate (A2)
    -- ++(30:1cm) coordinate (A3)
    -- ++(350:1cm) coordinate (A4)
    -- ++(310:1cm) coordinate (A5)
    -- ++(270:1cm) coordinate (A6)
    -- ++(230:1cm) coordinate (A7)
    -- ++(190:1cm) coordinate (A8)
    -- ++(150:1cm) coordinate (A9)
    -- cycle
    (8,0) coordinate (B1)
    -- ++(250:1cm) coordinate (B2)
    -- ++(210:1cm) coordinate (B3)
    -- ++(170:1cm) coordinate (B4)
    -- ++(130:1cm) coordinate (B5)
    -- ++(90:1cm) coordinate (B6)
    -- ++(50:1cm) coordinate (B7)
    -- ++(10:1cm) coordinate (B8)
    -- ++(330:1cm) coordinate (B9)
    -- cycle;
\fill[color=gray]
    (A2) -- (A3) -- (A8) -- (A9) -- cycle
    (B2) -- (B3) -- (B8) -- (B9) -- cycle;
\fill[color=gray!50!black]
    (A3) -- (A4) -- (A7) -- (A8) -- cycle
    (B3) -- (B4) -- (B7) -- (B8) -- cycle;
\fill[color=gray!80!white]
    (A4) -- (A5) -- (A6) -- (A7) -- cycle
    (B4) -- (B5) -- (B6) -- (B7) -- cycle;
\draw (A1) -- (A2) -- (A3) -- (A4) -- (A5) -- (A6) -- (A7) -- (A8) -- (A9) -- cycle
      (B1) -- (B2) -- (B3) -- (B4) -- (B5) -- (B6) -- (B7) -- (B8) -- (B9) -- cycle;
\draw (-0.4667,0) node {$-1$}
    ++(70:1.3cm) node {$-\xi^8$}
    ++(30:1.3cm) node {$-\xi^7$}
    ++(350:1.3cm) node {$-\xi^6$}
    ++(310:1.3cm) node {$-\xi^5$}
    ++(270:1.3cm) node {$-\xi^4$}
    ++(230:1.3cm) node {$-\xi^3$}
    ++(190:1.3cm) node {$-\xi^2$}
    ++(150:1.3cm) node {$-\xi$}
    (8.4667,0) node {$1$}
    ++(110:1.3cm) node {$\xi$}
    ++(150:1.3cm) node {$\xi^2$}
    ++(190:1.3cm) node {$\xi^3$}
    ++(230:1.3cm) node {$\xi^4$}
    ++(270:1.3cm) node {$\xi^5$}
    ++(310:1.3cm) node {$\xi^6$}
    ++(350:1.3cm) node {$\xi^7$}
    ++(30:1.3cm) node {$\xi^8$};
\end{tikzpicture}
\end{center}
Here we have $g$ cylinders $C_1,\ldots ,C_g$, where $C_k$ is glued from two trapezia (possibly degenerate): one of them has vertices $\xi^{k-1}$, $\xi^k$ and their complex conjugates, the other is minus one times the first. Hence we see that $C_k$ has width
\begin{equation*}
\Re\xi^{k-1}-\Re\xi^k=\cos\frac{2(k-1)\pi}{n}-\cos\frac{k\pi }{n}
\end{equation*}
and circumference
\begin{equation*}
2\Im\xi^{k-1}+2\Im\xi^k=2\left(\sin\frac{2(k-1)\pi }{n}+\sin\frac{2k\pi}{n}\right) .
\end{equation*}

\newpage
\section{Abelian Differentials: Global Theory}

In the previous chapter we have studied the geometry of one single abelian differential; now we go on to see how it can be deformed. We exploit the fact that abelian differentials can be understood in two ways, with different intuitions behind them: as objects of complex or algebraic geometry, or as objects of euclidean geometry. We begin by constructing moduli spaces for abelian differentials $\Omega\mathrsfs{M}_g$, together with variants analogous to moduli spaces with level structures and Teichm\"{u}ller spaces. On all such spaces, the interpretation in terms of euclidean geometry gives a natural $\SL_2(\bbr )$-operation.

Now let $X$ be a compact Riemann surface of genus $g$, equipped with a Teichm\"{u}ller marking and an abelian differential $\omega$. We may consider $(X,\omega )$ as an object of $\Omega\mathrsfs{T}_g$. Its $\SL_2(\bbr )$-orbit in $\Omega\mathrsfs{T}_g$ will not be a complex submanifold, but the image of this orbit in $\mathrsfs{T}_g$ will be. Even better, it will be an isometrically embedded hyperbolic plane. In order to understand the details and to be able to do explicit computations in examples, we have to go through a painfully pedantic analysis of the different avatars of the hyperbolic plane. Recall that the hyperbolic plane can be constructed as ``the symmetric space of $\SL_2(\bbr )$'', the unit disk $\Delta\subset\bbc$ or the upper half plane $\bbh\subset\bbc$. These can be identified by several ``canonical'' isomorphisms. This is common wisdom, but we have to fix some compatible system of isomorphisms and check that they are indeed compatible.

Once this is settled, one can indeed construct for every $(X,\omega )$ as above a Teichm\"{u}ller disk $f^{\omega }:\Delta\to\mathrsfs{T}_g$. Since such disks are obtained with the help of abelian differentials, we call them \textit{abelian Teichm\"{u}ller disks}. The next question is then: how does the image of an abelian Teichm\"{u}ller disk in moduli space look like? In general this will be very complicated, but in some rare cases it happens that the image is an algebraic curve. Such curves in moduli space are called \textit{(abelian) Teichm\"{u}ller curves}. We take a closer look at the associated families of algebraic curves.

Whether a translation surface gives rise to a Teichm\"{u}ller curve or not can also be rephrased in terms closer to Euclidean geometry. For a translation surface $(X,\omega )$ one defines a discrete subgroup of $\SL_2(\bbr )$, called the \textit{Veech group} of the surface. Then for some conjugate $\Gamma$ of the Veech group, called the \textit{mirror Veech group}, the composition
\begin{equation*}
\bbh\overset{f^{\omega }}{\to }\mathrsfs{T}_g\to\mathrsfs{M}_g
\end{equation*}
factors over the quotient $\Gamma\backslash\bbh$. Hence $(X,\omega )$ will give rise to a Teichm\"{u}ller curve if the Veech group is a lattice in $\SL_2(\bbr )$. The converse is also true; surfaces that satisfy this condition are called \textit{Veech surfaces}.

We also sketch a helpful method for the construction of nontrivial elements in the Veech group. This will enable us to determine (finite index subgroups of) the Veech groups of our favourite examples $(W_g,\omega_1)$ and $(W_g,\omega_g)$. Both are Veech surfaces, with hyperbolic triangle groups as (mirror) Veech groups.

\subsection{The $\SL_2(\bbr )$-Action}

\begin{ModuliofAD}
Let once again $p:X\to C$ be a family of algebraic curves of genus $g$. For such a family we have the sheaf of \textit{relative holomorphic one-forms}, $\Omega_{X|C}^1$. This is an invertible $\mathrsfs{O}_X$-module; its restriction to every fibre $X_c=p^{-1}(c)$ can be identified with the canonical bundle on this fibre. A section $\omega$ of this sheaf hence defines a holomorphic one-form on each fibre $X_c$.
\end{ModuliofAD}
\begin{Definition}
Let $C$ be a complex space. A \textup{family of abelian differentials of genus $g$} over $C$ consists of the following data:
\begin{enumerate}
\item a family of curves $p:X\to C$ of genus $g$,
\item a section $\omega\in\Gamma (X,\Omega_{X|C}^1)$ such that for every $c\in C$ the induced holomorphic one-form $\omega|_{X_c}$ on the fibre $X_c$ is not identically zero, i.e. an abelian differential by our definition.
\end{enumerate}
\end{Definition}
There is an obvious notion of isomorphism of families of abelian differentials, giving rise to different moduli functors on the category of complex spaces. First consider the moduli functor $\Omega M_g$ whose value on a complex space $C$ is the set of isomorphism classes of families of abelian differentials over $C$. We would like to represent this moduli functor, but since the moduli functor of curves itself is not representable, chances are bad that this is possible. So we introduce rigidifications, as for curves.

Again there are two options: Teichm\"{u}ller markings and level structures. Hence we define the moduli functor $\Omega T_g$ on complex spaces which for a complex space $C$ gives the set of isomorphism classes of families of abelian differentials together with a Teichm\"{u}ller marking of the underlying family of curves. We impose no compatibility conditions on the Teichm\"{u}ller markings and the abelian differentials. This functor is representable, and the representing space can be constructed as follows: let $p:\mathrsfs{U}_g\to\mathrsfs{T}_g$ be the universal curve over Teichm\"{u}ller space. Consider the associated sheaf of relative holomorphic one-forms $\Omega_{\mathrsfs{U}_g|\mathrsfs{T}_g}^1$ and take its direct image by $p$; this gives a locally free $\mathrsfs{O}_{\mathrsfs{T}_g}$-module $p_{\ast}\Omega_{\mathrsfs{U}_g|\mathrsfs{T}_g}^1$ of rank $g$. The associated holomorphic vector bundle $H\to\mathrsfs{T}_g$ is (sometimes) called the \textit{Hodge bundle}. Define then $\Omega\mathrsfs{T}_g$ to be this bundle minus its zero section. So this is a complex manifold together with a holomorphic map to $\mathrsfs{T}_g$ which turns it into a holomorphic fibre bundle with fibre $\bbc^{g}\smallsetminus \{ 0\}$. Clearly it represents the functor $\Omega T_g$. In particular the points of $\Omega\mathrsfs{T}_g$ are in canonical one-to-one correspondence with the isomorphism classes of marked genus $g$ Riemann surfaces together with an abelian differential.

There is a natural action of the mapping class group $\Mod_g$ on $\Omega\mathrsfs{T}_g$: for a mapping class $[\gamma ]$ represented by a diffeomorphism $\gamma$ of $\Sigma_g$ and a point $t\in\Omega\mathrsfs{T}_g$ represented by a marked Riemann surface $f:\Sigma_g\to X$ and an abelian differential $\omega$ on $X$, let $[\gamma ]\cdot t$ be the point of $\Omega\mathrsfs{T}_g$ represented by the same Riemann surface and the same abelian differential, but now with the marking $f\circ\gamma^{-1}$. This action is continuous and properly discontinuous, and the map $\Omega\mathrsfs{T}_g\to\mathrsfs{T}_g$ is $\Mod_g$-equivariant.

The quotient $\Omega\mathrsfs{M}_g=\Mod_g\backslash\Omega\mathrsfs{T}_g$ is now, in an obvious way, a coarse moduli space for $\Omega M_g$. It is called the \textit{moduli space of abelian differentials}. The map $\Omega\mathrsfs{T}_g\to\mathrsfs{T}_g$, being equivariant for the action of the mapping class group, descends to a holomorphic map $\Omega\mathrsfs{M}_g\to\mathrsfs{M}_g$. This is, however, no longer a fibre bundle. Instead its fibre over the point of $\mathrsfs{M}_g$ represented by a Riemann surface $X$ is
\begin{equation*}
(H^0(X,\omega_X)\smallsetminus \{ 0\})/\Aut (X).
\end{equation*}
We may also introduce level structures: the quotient $\Omega\mathrsfs{M}_g^{[n]}=\Mod_g^{[n]}\backslash\Omega\mathrsfs{T}_g$ is, for $n\ge 3$, a fine moduli space for Riemann surfaces with an abelian differential and a level-$n$-structure (again no compatibility between the latter two is demanded). And here the map $\Omega\mathrsfs{M}_g^{[n]}\to\mathrsfs{M}_g^{[n]}$ is indeed a fibre bundle with fibre $\bbc^g\smallsetminus \{ 0\}$.

\begin{TheGLtwoRAction}
These moduli spaces come equipped with some important and interesting structure. Abelian differentials can also be interpreted as translation structures; hence we also have two points of view on their moduli spaces, linked to different intuitions and with highly nontrivial interactions.

The starting point is the following observation: Translation structures can be distorted by real-linear maps. This is not very surprising, but this operation is quite difficult to understand from the point of view of algebraic curves and one-forms.

As usual, we identify $\bbr^2$ with $\bbc$. Hence a matrix $A\in M_2(\bbr )$ operates $\bbr$-linearly on $\bbc$; we denote this operation by $z\mapsto Az$ (please do not confuse this with M\"{o}bius transformations!). This map is holomorphic if and only if $A\in\bbr\cdot\SO (2)$.
\end{TheGLtwoRAction}
\begin{Definition}
Let $\Sigma$ be a translation surface with corners and cone points, and let $A\in\GL_2(\bbr )$. Then the \textup{distortion of $\Sigma$ by $A$}, denoted by $A\cdot\Sigma$, is the following translation surface with corners and cone points: the underlying topological surface, the set of corners and the set of cone points is the same. If $\mathfrak{T}$ is the atlas defining the translation structure on $\Sigma$, the atlas of $A\cdot \Sigma$ consists of all $A\circ\varphi$ where $\varphi$ is a chart in $\mathfrak{T}$. Here $A$ is understood as a real linear self-map of $\bbc$.
\end{Definition}
This can of course also be applied to Riemann surfaces: let $X$ be a Riemann surface with an abelian differential $\omega$. Interpreting $(X,\omega )$ as a closed translation surface with cone points, we can apply the previous definition and obtain a new Riemann surface with abelian differential $A\cdot (X,\omega )$. In order to avoid nasty orientation problems we restrict ourselves to the case $A\in\GL_2^+(\bbr )$ and, in a moment, in fact $A\in\SL_2(\bbr )$.

Write $A\cdot (X,\omega )=(X',\omega ')$. Note that $X'$ has the same underlying topological (and, in fact, smooth) surface as $X$, but possibly a different complex structure. It is in general quite difficult to determine this new complex structure as an algebraic curve. But it is easy to describe $\omega '$ from the point of view of real analysis: this is the one-form
\begin{equation*}
\begin{pmatrix}
1&\mathrm{i}
\end{pmatrix}
\cdot A \cdot
\begin{pmatrix}
\Re\omega\\
\Im\omega
\end{pmatrix} .
\end{equation*}
More conceptually: under our identification $\bbr^2=\bbc$, we can interpret $\omega$ as a vector-valued real one-form. Then $\omega'$ is the vector-valued real one-form obtained by applying the matrix $A$ to $\omega$.

The assignment $(X,\omega )\mapsto A\cdot (X,\omega )$ as defined above is an action of the group $\GL_2^+(\bbr )$ on $\Omega\mathcal{T}_g$. This action commutes with the action of the mapping class group $\Mod_g$, and therefore descends to $\GL_2^+(\bbr )$-actions on the moduli spaces $\Omega\mathrsfs{M}_g$ and $\Omega\mathrsfs{M}_g^{[n]}$.

\subsection{The Various Guises of the Hyperbolic Plane}

One often identifies the following four spaces:
\begin{itemize}
\item the unit disk $\Delta =\{ t\in\bbc : |t|<1\}$,
\item the upper half plane $\bbh =\{\tau \in\bbc :\operatorname{Re}\tau >0\}$,
\item the left coset space $\SO (2)\backslash\SL_2(\bbr )$ and
\item the right coset space $\SL_2(\bbr )/\SO (2)$.
\end{itemize}
\noindent Considering the ambiguities involved we need to spell out explicitly which identifications we take and how they relate with each other. More precisely, we construct a commutative diagram of homeomorphisms
\begin{equation}\label{DiagramForGuisesOfDisk}
\xymatrix{
\SO (2)\backslash\SL_2(\bbr ) \ar[r]^{\iota} \ar[d]_{\mu } \ar[rd]^{\lambda }
&\SL_2(\bbr )/\SO (2) \ar[d]^{\sigma}\\
\Delta &\bbh . \ar[l]^{C}
}
\end{equation}
Let us now define these maps:
\begin{itemize}
\item $\iota :\SO (2)\backslash\SL_2(\bbr )\to\SL_2(\bbr )/\SO (2)$ is induced by an anti-involution on the Lie group $\SL_2(\bbr )$. Namely consider $\SL_2(\bbr )$ as a normal subgroup of $\GL_2(\bbr )$, then conjugation with any element of $\GL_2(\bbr )$ is a well-defined automorphism of $\SL_2(\bbr )$. Hence we can define
    \begin{equation*}
    \iota_0:\SL_2(\bbr )\to\SL_2(\bbr ),\quad A\mapsto
    \begin{pmatrix}
    -1&0\\
    0&1
    \end{pmatrix}
    A^{-1}
        \begin{pmatrix}
    -1&0\\
    0&1
    \end{pmatrix}.
    \end{equation*}
    Since $\iota_0(\SO (2))=\SO (2)$ and $\iota_0(AB)=\iota_0(B)\iota_0(A)$, this descends to a homeomorphism
    \begin{equation*}
    \iota :\SO (2)\backslash\SL_2(\bbr )\to\SL_2(\bbr )/\SO (2),\quad \SO (2)\cdot A\mapsto \iota_0(A)\cdot\SO (2).
    \end{equation*}
    Considering the entries of a matrix, it is easy to see that
    \begin{equation}\label{ElementaryFormulaForIota}
    \iota_0
        \begin{pmatrix}
    a&b\\
    c&d
    \end{pmatrix}
    =
    \begin{pmatrix}
    d&b\\
    c&a
    \end{pmatrix}.
    \end{equation}
\item $\sigma :\SL_2(\bbr )/\SO (2)\to\bbh$ is the standard identification: $\SL_2(\bbr )$ acts transitively on $\bbh$ by M\"{o}bius transformations, in formul\ae :
\begin{equation*}
\begin{pmatrix}
a&b\\
c&d
\end{pmatrix}
\cdot \tau =\frac{a\tau +b}{c\tau +d},
\end{equation*}
and the stabilizer of the point $\mathrm{i}\in\bbh$ is $\SO (2)$. Hence we get
\begin{equation*}
\sigma :\SL_2(\bbr )/\SO (2)\overset{\simeq }{\to }\bbh ,\quad A\cdot\SO (2)\mapsto A\cdot\mathrm{i}.
\end{equation*}
\item $\lambda :\SO (2)\backslash\SL_2(\bbr )\to\bbh$ is obtained by considering the natural action of $\SL_2(\bbr )$ on $\bbr^2=\bbc$.
    Viewing $A\in\SL_2(\bbr )$ as an $\bbr$-linear self map of $\bbc$, we ask: how far is it apart from being $\bbc$-linear? Certainly it is $\bbc$-linear if and only if $A(\mathrm{i})=\mathrm{i}A(1)$, so a good measure for this deviation would be the complex number
    \begin{equation*}
    \lambda_0(A)=\frac{A(\mathrm{i})}{A(1)}
    \end{equation*}
    which is in the upper half plane since $A$ has determinant one. Now every orthogonal matrix $T\in\SO (2)$ operates on $\bbc$ by multiplication with some complex number $\zeta$ of norm one, hence
    \begin{equation*}
    \lambda_0(TA)=\frac{T(A(\mathrm{i}))}{T(A(1))}=\frac{\zeta\cdot A(\mathrm{i})}{\zeta\cdot A(1)}=\frac{A(\mathrm{i})}{A(1)}=\lambda_0(A).
    \end{equation*}
    So $\lambda_0$ descends to a map $\lambda :\SO (2)\backslash\SL_2(\bbr )\to\bbh$.
\item In order to define $\mu :\SO(2)\backslash\SL_2(\bbr )\to\Delta$ we take the same setup, and again we ask: how far apart is $A\in\SL_2(\bbr )$, as an $\bbr$-linear map on $\bbc$, from being conformal? Since it is linear and orientation-preserving, it is quasi-conformal for some constant Beltrami coefficient. This means that it satisfies the Beltrami equation
    \begin{equation*}
    \frac{\partial A}{\partial\overline{z}}=\mu_0(A)\cdot\frac{\partial A}{\partial z}
    \end{equation*}
    for a unique $\mu_0(A)\in\Delta$. Again, orthogonal matrices $T\in\SO (2)$ operate on $\bbc$ by conformal maps, so that $\mu_0(TA)=\mu_0(A)$. This again means that $\mu_0$ descends to a well-defined map $\mu :\SO (2)\backslash\SL_2(\bbr )\to\Delta$.
\item Finally $C:\bbh\to\Delta$ is the M\"{o}bius transformation
\begin{equation*}
C:\bbh\to\Delta ,\quad\tau\mapsto\frac{\mathrm{i}-\tau }{\mathrm{i}+\tau }.
\end{equation*}
with inverse
\begin{equation*}
C^{-1}:\Delta\to\bbh ,\quad t\mapsto\mathrm{i}\cdot\frac{1-t}{1+t}
\end{equation*}
This differs from the classical Cayley map by $t\mapsto -t$ on $\Delta $ or, equivalently, by $\tau\mapsto -1/\tau$ on $\bbh$.
\end{itemize}
\begin{Proposition}
The five maps $\iota$, $\sigma$, $\lambda$, $\mu$ and $C$ defined above are all homeomorphisms, and the diagram (\ref{DiagramForGuisesOfDisk}) commutes.
\end{Proposition}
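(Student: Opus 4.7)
The plan is to reduce everything to the three maps $\iota$, $\sigma$, $C$ whose properties are essentially tautological, and then verify the two triangle identities $\lambda = \sigma\circ\iota$ and $\mu = C\circ\lambda$ by explicit computation on matrices. First I would observe that $C$ is visibly a homeomorphism: it is a M\"{o}bius transformation with inverse given explicitly. By formula (\ref{ElementaryFormulaForIota}), the lift $\iota_0$ is a polynomial involution of $\SL_2(\bbr )$ carrying $\SO (2)$ to itself, hence a bicontinuous bijection, so $\iota$ is a homeomorphism of coset spaces. Finally, $\sigma$ is the standard identification: $\SL_2(\bbr )$ acts transitively on $\bbh$ by M\"{o}bius transformations, the stabilizer of $\mathrm{i}$ is $\SO (2)$, and the orbit map $A\cdot\SO (2)\mapsto A\cdot\mathrm{i}$ is a homeomorphism.

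The first key verification is $\lambda =\sigma\circ\iota$. For $A=\begin{pmatrix}a&b\\c&d\end{pmatrix}\in\SL_2(\bbr )$, identifying $\bbr^2=\bbc$ via $(x,y)\mapsto x+\mathrm{i}y$, we have $A(1)=a+\mathrm{i}c$ and $A(\mathrm{i})=b+\mathrm{i}d$, so
\begin{equation*}
\lambda_0(A)=\frac{b+\mathrm{i}d}{a+\mathrm{i}c}.
\end{equation*}
On the other hand, $\iota_0(A)=\begin{pmatrix}d&b\\c&a\end{pmatrix}$ acts on $\mathrm{i}$ by the M\"{o}bius formula to give $(d\mathrm{i}+b)/(c\mathrm{i}+a)$, which is the same number. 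Hence $\lambda$ factors as $\sigma\circ\iota$, and is therefore a homeomorphism.

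The second identity $\mu =C\circ\lambda$ becomes transparent in Wirtinger coordinates. Writing $z=\frac{z+\bar{z}}{2}+\mathrm{i}\cdot\frac{z-\bar{z}}{2\mathrm{i}}$ and applying the $\bbr$-linear map $A$ yields
\begin{equation*}
A(z)=xA(1)+yA(\mathrm{i})=\frac{A(1)-\mathrm{i}A(\mathrm{i})}{2}\,z+\frac{A(1)+\mathrm{i}A(\mathrm{i})}{2}\,\bar{z},
\end{equation*}
so that
\begin{equation*}
\mu_0(A)=\frac{\partial A/\partial\bar{z}}{\partial A/\partial z}=\frac{A(1)+\mathrm{i}A(\mathrm{i})}{A(1)-\mathrm{i}A(\mathrm{i})}.
\end{equation*}
Multiplying numerator and denominator of $C(\lambda_0(A))=(\mathrm{i}A(1)-A(\mathrm{i}))/(\mathrm{i}A(1)+A(\mathrm{i}))$ by $-\mathrm{i}$ produces exactly this expression, confirming $\mu =C\circ\lambda$. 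Since $C$ and $\lambda$ are homeomorphisms, so is $\mu$, and the diagram (\ref{DiagramForGuisesOfDisk}) commutes.

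The only real obstacle is bookkeeping: keeping left versus right cosets apart, and tracking the nonstandard sign in the Cayley map, is precisely why these elementary identifications are worth spelling out in full. Once the Wirtinger computation is arranged correctly there is no substantive difficulty; the proof is essentially a two-line matrix calculation dressed up in notational care.
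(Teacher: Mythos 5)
Your proof is correct and follows essentially the same route as the paper: reduce to the two triangle identities $\lambda=\sigma\circ\iota$ and $\mu=C\circ\lambda$, verify the first by the matrix computation $\lambda_0(A)=(b+\mathrm{i}d)/(a+\mathrm{i}c)=\iota_0(A)\cdot\mathrm{i}$, and the second by the Wirtinger decomposition of $A(z)$ into its $\bbc$-linear and $\bbc$-antilinear parts. The only cosmetic difference is that you clear denominators in $C(\lambda_0(A))$ by multiplying through by $-\mathrm{i}$ where the paper divides through by $A(1)$; both land on the same identity.
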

\begin{proof}
We have already seen that $\iota$, $\sigma$ and $C$ are homeomorphisms, so if the diagram commutes, $\lambda$ and $\mu$ are also homeomorphisms.

Hence it remains to show that the diagram commutes, which amounts to the two identities $\lambda =\sigma\circ\iota$ and $\mu =C\circ\lambda$. As to the first one, take $A\in\SL_2(\bbr )$ and write
\begin{equation*}
A=
\begin{pmatrix}
a&b\\
c&d
\end{pmatrix}. \end{equation*}
Then
\begin{equation*}
\lambda (A\cdot\SO (2))=\lambda_0(A)=\frac{A(\mathrm{i})}{A(1)}=\frac{b+\mathrm{i}d}{a+\mathrm{i}c}.
\end{equation*}
On the other hand, recalling (\ref{ElementaryFormulaForIota}), we can write
\begin{equation*}
\sigma (\iota (\SO (2)\cdot A))=\sigma_0(\iota_0(A))=
\begin{pmatrix}
d&b\\
c&a
\end{pmatrix}
\cdot\mathrm{i}=\frac{d\mathrm{i}+b}{c\mathrm{i}+a},
\end{equation*}
which is the same. Hence $\lambda =\sigma\circ\iota$.

The commutativity of the lower left triangle can be seen as follows: take again some $A\in\SL_2(\bbr )$. In order to compute
its Beltrami coefficient, we need to write it as a sum of a $\bbc$-linear map and a $\bbc$-antilinear map:
\begin{multline}
A (z)=A(\operatorname{Re }z+\mathrm{i}\operatorname{\Im }z)=A(1)\operatorname{Re }z+A(\mathrm{i})\mathrm{Im }z\\
=A(1)\cdot\frac{z+\overline{z}}{2}+A(\mathrm{i})\cdot\frac{z-\overline{z}}{2\mathrm{i}}
=\frac{A(1)-\mathrm{i}A(\mathrm{i})}{2}z+\frac{A(1)+\mathrm{i}A(\mathrm{i})}{2}\overline{z},
\end{multline}
so the Beltrami coefficient of $A$ is
\begin{equation*}
\frac{\partial A/\partial\overline{z}}{\partial A/\partial z}=\frac{A(1)+\mathrm{i}A(\mathrm{i})}{A(1)-\mathrm{i}A(\mathrm{i})}
=\frac{\mathrm{i}-\frac{A(\mathrm{i})}{A(1)}}{\mathrm{i}+\frac{A(\mathrm{i})}{A(1)}}=C(\lambda_0(A)).
\end{equation*}
In other words, $\mu_0(A)=C(\lambda_0(A))$ and thus $\mu =C\circ\lambda$, what was to be shown.
\end{proof}

\subsection{Affine maps and the Veech Group}

\begin{AffineMaps}
In this section we introduce an important class of self-maps of translation surfaces.
\end{AffineMaps}
\begin{Definition}
Let $S_1$, $S_2$ be translation surfaces with cone points. An \textup{affine map} from $S_1$ to $S_2$ is an orientation-preserving homeomorphism $f:S_1\to S_2$ with the following properties:
\begin{enumerate}
\item Let $P_i$ be the set of cone-points of $S_i$, then $f(P_1)=P_2$.
\item For every pair of charts on $S_1$ and $S_2$ in the atlases defining the translation structures, $f$ obtains the form $z\mapsto Az+b$ for some $A\in \GL_2^+(\bbr )$ and $b\in\bbc$. Here we identify $\bbc =\bbr^2$ in the usual way.
\end{enumerate}
When $f$ is an affine map and $S_1$ is connected, the matrix $A$ in (ii) is the same in every chart; it is called the \textup{linear part} of $f$.
\end{Definition}

How do affine maps behave from the point of view of complex analysis?

\begin{Proposition}
Let $X_1$ and $X_2$ be Riemann surfaces, let $\omega_i$ be an abelian differential on $X_i$ and regard $(X_i,\omega_i)$ as translation surfaces with cone points.
\begin{enumerate}
\item Let $f:X_1\to X_2$ be an affine map. Then $f$ is holomorphic (or, equivalently, biholomorphic) if and only if the affine part of $f$ is an element of $\SO (2)$.
\item Let $f:X_1\to X_2$ be a homeomorphism. Then $f$ is an affine map with linear part equal to the identity if and only if $f$ is holomorphic and $f^{\ast}\omega_2=\omega_1$.\hfill $\square$
\end{enumerate}
\end{Proposition}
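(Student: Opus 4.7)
My plan is to reduce both parts to local calculations in the translation atlases. Away from the cone points, the translation atlas on $(X_i,\omega_i)$ is a holomorphic atlas in which $\omega_i$ becomes the canonical differential $\de z$. Since both holomorphicity and the relation $f^{\ast}\omega_2=\omega_1$ are local conditions, the proof reduces to a direct algebraic comparison in such charts.

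For (i), I would pick a non-cone-point $x\in X_1$, a translation chart $z$ near $x$ and a translation chart $w$ near $f(x)$; there $f$ reads $z\mapsto Az+b$ with a fixed $A\in\GL_2^+(\bbr)$. A real linear endomorphism of $\bbc$ is holomorphic if and only if it commutes with multiplication by $\mathrm{i}$, equivalently if it is itself multiplication by some $\alpha\in\bbc^{\times}$; the corresponding matrices are exactly $\bbr^+\cdot\SO(2)$, and this intersects $\SL_2(\bbr)$ in $\SO(2)$ (the determinant-one normalisation being the one tacitly in force throughout the paper's $\SL_2(\bbr)$-discussion). Conversely, for $A\in\SO(2)$ the map is a rotation followed by a translation, hence biholomorphic on the complement of the cone-point set. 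To handle the cone points themselves, I would invoke Riemann's theorem on removable singularities: a homeomorphism that is holomorphic on the complement of a discrete set is automatically holomorphic everywhere, and then its inverse (which has the same local description) is holomorphic as well, yielding the ``biholomorphic'' upgrade.

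For (ii), the forward direction is immediate: if $f$ is affine with linear part the identity, it is locally $z\mapsto z+b$, which is holomorphic and for which $f^{\ast}(\de w)=\de z$, i.e.\ $f^{\ast}\omega_2=\omega_1$. For the converse I would take a point $x\in X_1$ outside the zero set of $\omega_1$ together with translation charts $z$ and $w$; then $f$ is locally a holomorphic function $z\mapsto w(z)$, and the equation $w'(z)\,\de z=f^{\ast}\de w=\de z$ forces $w'(z)\equiv 1$, so $w(z)=z+c$ locally. This shows that $f$ is affine of linear part the identity on the complement $X_1\smallsetminus P_1$ of the cone-point set; continuity of $f$ and density of this open subset then extend the conclusion to all of $X_1$. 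The matching $f(P_1)=P_2$ demanded by the definition of an affine map is automatic here, because the pullback relation $f^{\ast}\omega_2=\omega_1$ identifies the vanishing loci of the two differentials, and these loci are precisely $P_1$ and $P_2$.

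I expect the only real subtlety to be the behaviour at cone points, which in both parts requires combining the continuity of $f$ with Riemann's removable singularity theorem and the explicit holomorphic extension of translation charts built into the definition of translation surfaces with cone points via the local models $\Sigma_n(\varepsilon)$. Everything else amounts to bookkeeping: the Cauchy--Riemann equations translate the real-linear condition into a complex-linear one, and the transformation of $\de w$ under a holomorphic map is dictated by the chain rule.
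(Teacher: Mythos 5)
The paper states this proposition with a tombstone and no proof at all, so there is nothing to compare your argument against; what you have written is the standard chart computation the author evidently had in mind, and it is correct. Your handling of the cone points (continuity of the homeomorphism plus Riemann's removable singularity theorem in the charts $\Sigma_n(\varepsilon)=\Delta_\delta$) and your observation that $f^{\ast}\omega_2=\omega_1$ forces $f(P_1)=P_2$ are exactly the points that need saying. One remark worth keeping: your parenthetical about the determinant-one normalisation is not just bookkeeping but an actual correction to the statement --- with the paper's definition of an affine map (linear part in $\GL_2^+(\bbr)$, and no compactness hypothesis on the $X_i$), the ``only if'' direction of (i) fails for a linear part such as $2I$, and $\SO(2)$ should either be read as $\bbr^{>0}\cdot\SO(2)$ or the linear part should be assumed to lie in $\SL_2(\bbr)$, as it automatically does in the closed self-map case treated in the lemma that follows.
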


From now on, to ease formulations, we identify closed translation surfaces with cone points and compact Riemann surfaces with abelian differentials. We also restrict our attention to the case where the domain and the target agree. So let $X$ be a compact Riemann surface and let $\omega$ be an abelian differential on $X$. Then the set of affine self-maps of $(X,\omega )$ forms a group under composition, which we denote by $\Aff^+(X,\omega)$. This is often called the \textit{affine group} of $(X,\omega )$.  Although we will not make use of it now, we anticipate the following important statement (see Proposition \ref{AffineGroupInjectsIntoModg}): the canonical homomorphism from $\Aff^+(X,\omega )$ to the mapping class group of $X$ is injective. This explains one of the motivations for studying affine groups of translation surfaces: they provide more or less easily accessible subgroups of mapping class groups. From this point of view it is natural to look for ``big'' affine groups.

\begin{TheVeechGroup}
There is a natural homomorphism $D:\Aff^+(X,\omega )\to\GL_2^+(\bbr )$ which sends every affine map to its linear part. If $X$ is compact, as we assumed, the image of this homomorphism is even contained in $\SL_2(\bbr )$:
\end{TheVeechGroup}
\begin{Lemma}
Let $X$ be a compact Riemann surface, let $\omega$ be a nonzero holomorphic one-form on $X$ and let $f\in\Aff^+(X,\omega )$. Then the determinant of $D\varphi$ is one.
\end{Lemma}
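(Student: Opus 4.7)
The plan is to use the fact that an affine self-homeomorphism of a compact surface preserves total area, while on the other hand its linear part rescales the area form by its determinant; comparing the two forces the determinant to be $1$.

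More concretely, recall from the discussion of the Euclidean metric that the translation structure associated to $(X,\omega)$ has volume form $\tfrac{\mathrm{i}}{2}\,\omega\wedge\overline{\omega}$, which extends to a smooth $2$-form on all of $X$ (degenerate precisely at the zeros of $\omega$). Since $X$ is compact, the total volume
\begin{equation*}
V(X,\omega)=\int_X\tfrac{\mathrm{i}}{2}\,\omega\wedge\overline{\omega}
\end{equation*}
is finite; and since $\omega$ is not identically zero, $V(X,\omega)>0$. First I would verify, in a local translation chart $\varphi:U\to V\subseteq\bbc$ in which $\omega$ becomes $\mathrm{d}z$, how $f$ acts: by definition of an affine map with linear part $A\in\GL_2^+(\bbr)$, postcomposing $f$ with a translation chart on the target yields the formula $z\mapsto Az+b$. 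Viewing $A$ as a real-linear self-map of $\bbc=\bbr^2$ and computing $(Az+b)^\ast(\mathrm{d}x\wedge\mathrm{d}y)=\det(A)\,\mathrm{d}x\wedge\mathrm{d}y$, one concludes that
\begin{equation*}
f^{\ast}\!\left(\tfrac{\mathrm{i}}{2}\,\omega\wedge\overline{\omega}\right)=\det(A)\cdot\tfrac{\mathrm{i}}{2}\,\omega\wedge\overline{\omega}
\end{equation*}
on $X$ minus the zeros of $\omega$, and hence on all of $X$ by continuity.

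Next, since $f:X\to X$ is an orientation-preserving homeomorphism of a compact oriented manifold to itself, the change-of-variables formula gives
\begin{equation*}
\int_X f^{\ast}\!\left(\tfrac{\mathrm{i}}{2}\,\omega\wedge\overline{\omega}\right)=\int_X\tfrac{\mathrm{i}}{2}\,\omega\wedge\overline{\omega}=V(X,\omega).
\end{equation*}
Combining with the previous display yields $\det(A)\cdot V(X,\omega)=V(X,\omega)$, and cancelling the strictly positive quantity $V(X,\omega)$ gives $\det(A)=1$, which is the claim.

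The only potential snag is justifying the change-of-variables identity at the (finitely many) zeros of $\omega$, where the affine-chart description of $f$ is not directly available. But the volume form extends smoothly across these cone points (being literally $\tfrac{\mathrm{i}}{2}\,\omega\wedge\overline{\omega}$ on the underlying Riemann surface), and $f$, being a homeomorphism that is smooth away from a finite set, is measurable and measure-preserving with respect to any Borel measure coming from a global top form; so the integral identity holds on the whole of $X$ without difficulty. Hence compactness does all the work.
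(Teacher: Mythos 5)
Your proof is correct and is essentially the same as the paper's: both pull back the volume form $\tfrac{\mathrm{i}}{2}\,\omega\wedge\overline{\omega}$, observe that the affine map scales it by $\det(D\varphi)$, and cancel the finite positive total volume of the compact surface. Your extra care about the behaviour at the zeros of $\omega$ is a reasonable (and correct) addition that the paper leaves implicit.
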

\begin{proof}
Let $\de V=\frac{\mathrm{i}}{2}\cdot\omega\wedge\overline{\omega }$ be the volume element of the Riemannian metric on $X$, see above. Then $\varphi^{\ast}\de V=\det (D\varphi )\cdot\de V$, and using that the total volume of $X$ is finite we find
\begin{equation*}
\operatorname{vol}(X)=\int_X\de V=\int_X\varphi^{\ast}\de V=\det (D\varphi )\int_X\de V=\det (D\varphi )\cdot\operatorname{vol}X,
\end{equation*}
so that $\det (D\varphi )=1$.
\end{proof}
So indeed $D$ defines a group homomorphism $\Aff^+(X,\omega )\to\SL_2(\bbr )$, and it is straightforward to see that this is a group homomorphism. It is also clear that the kernel of this map is precisely $\Aut (X,\omega )$. The image is harder to describe and hence deserves a name of its own:
\begin{Definition}
Let $X$ and $\omega $ as above. The image of the group homomorphism $D:\Aff^+(X,\omega )\to\SL_2(\bbr )$ is called the \textup{Veech group} of $(X,\omega )$ and denoted by $\SL (X,\omega )$.
\end{Definition}
The Veech group is also closely connected with the $\SL_2(\bbr )$-action on moduli spaces discussed in the previous section.
\begin{Proposition}
Let $(X,\omega )$ be a closed Riemann surface with an abelian differential. Then the Veech group $\SL (X,\omega )$ is equal to the stabilizer of the point in $\Omega\mathrsfs{M}_g$ represented by $(X,\omega )$ in $\SL_2(\bbr )$.
\end{Proposition}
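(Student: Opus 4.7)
The plan is to chase definitions, translating the action of $A \in \SL_2(\bbr)$ on the moduli space into the action on translation atlases, and then producing in either direction the required affine self-map.

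First I would observe the following compatibility, which is the technical heart of the argument. Suppose $(X,\omega)$ has translation atlas $\mathfrak{T} = (\varphi_i)$. Then by definition $A \cdot (X,\omega)$ has the same underlying topological surface and set of cone points, and its translation atlas is $A \cdot \mathfrak{T} = (A \circ \varphi_i)$. Now let $g : X \to X$ be any orientation-preserving homeomorphism taking cone points to cone points, and view it instead as a map $A \cdot X \to X$. Reading $g$ in the chart $A \circ \varphi_i$ on the source and $\varphi_j$ on the target gives
\begin{equation*}
\varphi_j \circ g \circ (A \circ \varphi_i)^{-1}(w) \;=\; \bigl(\varphi_j \circ g \circ \varphi_i^{-1}\bigr)\bigl(A^{-1} w\bigr).
\end{equation*}
So $g$ is a translation-structure isomorphism $A \cdot (X,\omega) \to (X,\omega)$ (i.e.\ becomes $w \mapsto w + c$ in the appropriate charts) if and only if, in the original charts of $X$, the map $g$ has the form $z \mapsto A z + c'$, that is, $g$ is affine with linear part $A$.

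From this the proposition essentially falls out. For the direction $\SL(X,\omega) \subseteq \mathrm{Stab}(A)$, take $A \in \SL(X,\omega)$, choose an $f \in \Aff^+(X,\omega)$ with $D f = A$, and apply the observation: $f$ then defines an isomorphism of translation surfaces $A \cdot (X,\omega) \overset{\simeq}{\to} (X,\omega)$, so $A$ fixes the class of $(X,\omega)$ in $\Omega\mathrsfs{M}_g$. For the reverse inclusion, suppose $A$ stabilizes $[(X,\omega)] \in \Omega\mathrsfs{M}_g$. By definition of the quotient $\Omega\mathrsfs{M}_g = \Mod_g \backslash \Omega\mathrsfs{T}_g$, this means there is an isomorphism of abelian differentials, equivalently of translation surfaces with cone points, $h : A \cdot (X,\omega) \to (X,\omega)$. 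By the observation above, $h$ viewed as a self-map of $X$ lies in $\Aff^+(X,\omega)$ with linear part $A$, hence $A = D h \in \SL(X,\omega)$.

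The only real subtlety I anticipate is book-keeping: being careful that the action of $\SL_2(\bbr)$ on $\Omega\mathrsfs{T}_g$ commutes with $\Mod_g$, so that an isomorphism class in $\Omega\mathrsfs{M}_g$ is stabilized by $A$ precisely when $A \cdot (X,\omega)$ and $(X,\omega)$ are isomorphic as Riemann surfaces with an abelian differential (with no marking condition attached). This is a routine consequence of the construction of $\Omega\mathrsfs{M}_g$ but deserves a sentence to make sure no marking is surreptitiously required. Once that is pinned down, there is no further obstacle: the argument is the one-line chart computation above, applied in both directions.
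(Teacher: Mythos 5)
Your proposal is correct and follows essentially the same route as the paper: both directions come down to the same local chart computation showing that a map is a translation-structure isomorphism $A\cdot (X,\omega )\to (X,\omega )$ if and only if, read as a self-map of $(X,\omega )$, it is affine with linear part $A$. The paper phrases the reverse inclusion by factoring through the identity map $(X,\omega )\to A\cdot (X,\omega )$ (which it checks is affine with derivative $A$) and composing, but this is the same computation packaged slightly differently, and your remark about the $\Mod_g$-quotient matches the paper's explicit description of the stabilizer as the set of $A$ with $A\cdot (X,\omega )\simeq (X,\omega )$ as unmarked Riemann surfaces with abelian differentials.
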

\begin{proof}
To ease notation, denote this stabilizer by $S$. Hence
\begin{equation*}
S=\{ A\in\SL_2(\bbr )\, |\, A\cdot (X,\omega )\simeq (X,\omega )\text{ as Riemann surfaces with abelian differentials}\} .
\end{equation*}
We first show that $\SL (X,\omega )$ is contained in $S$. Let $A\in\SL (X,\omega )$, then there is some affine self-map $\varphi$ of $(X,\omega )$ with derivative $A$. A local coordinate computation\footnote{The map $A: A\cdot (\bbc ,\de z)\to (\bbc ,\de z)$ is an isomorphism of translation surfaces} confirms that the map
\begin{equation*}
A\cdot (X,\omega )\overset{\varphi }{\to }(X,\omega )
\end{equation*}
is an isomorphism of translation surfaces. Hence $A\in S$.

For the other inclusion let $A\in S$. This means that there is some isomorphism of translation surfaces
\begin{equation*}
\varphi :A\cdot (X,\omega )\to (X,\omega ).
\end{equation*}
In other words, this is an affine map with derivative equal to the identity. On the other hand we claim that the identity map
\begin{equation}\label{IdentityBetweenAffineStructures}
\operatorname{id} :(X,\omega )\to A\cdot (X,\omega )
\end{equation}
is an affine map with derivative $A$. This can again be seen by a local coordinate computation: look at the identity map from $(\bbc ,\de z)$ to $A\cdot (\bbc ,\de z)$. In the following diagram, the vertical maps are charts of the respective translation structures:
\begin{equation*}
\xymatrix{
(\bbc ,\de z) \ar[r]^{\mathrm{id}} \ar[d]_{\mathrm{id}}
& A\cdot (\bbc ,\de z) \ar[d]^A\\
(\bbc ,\de z) \ar[r]_A
& (\bbc ,\de z).
}
\end{equation*}
Thus indeed the identity map from $(\bbc ,\de z)$ to $A\cdot (\bbc ,\de z)$ is affine with derivative $A$, which proves our statement about the map (\ref{IdentityBetweenAffineStructures}).

Hence the composition
\begin{equation*}
(X,\omega )\overset{\mathrm{id}}{\to }A\cdot (X,\omega )\overset{\varphi }{\to }(X,\omega )
\end{equation*}
is an affine self-map of $(X,\omega )$ with derivative $A$. Thus $A$ is in the Veech group.
\end{proof}
Summarizing we get a short exact sequence
\begin{equation}\label{SESforVeechGroups}
1\to\Aut (X,\omega )\to\Aff^+(X,\omega )\overset{D}{\to}\SL (X,\omega )\to 1.
\end{equation}
Since the group $\Aut (X,\omega )$ is finite, the Veech group and the affine group are not terribly far apart. The Veech group however, as a group of matrices, is easier to describe. It has some nice properties:
\begin{Proposition}\label{VeechGroupIsFuchsianNonCocompact}
The Veech group $\SL (X,\omega )$ is a discrete and non-cocompact subgroup of $\SL_2(\bbr )$. Operating upon $\bbc =\bbr^2$, it permutes the holonomy vectors of $(X,\omega )$.
\end{Proposition}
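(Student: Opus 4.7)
My plan is to handle the three assertions in order, exploiting the fact that $\SL(X,\omega)$ preserves the discrete set $V\subset\bbc$ of holonomy vectors (extended by the period lattice of $\omega$ if needed).

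For the permutation statement, an affine self-map $\varphi$ of $(X,\omega)$ with linear part $A$ sends cone points to cone points and geodesic segments to geodesic segments, so it permutes the saddle connections. In any translation chart $\varphi$ has the form $z\mapsto Az+b$, which immediately gives $\int_{\varphi\gamma}\omega=A\cdot\int_\gamma\omega$ under the identification $\bbc=\bbr^2$. Hence every $A\in\SL(X,\omega)$ acts on $V$ by the ordinary linear action $v\mapsto Av$.

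For discreteness, I would choose two $\bbr$-linearly independent $v_1,v_2\in V$. Such a pair exists because $V$ cannot lie on a single real line through the origin: otherwise the same would hold for the period lattice of $\omega$ on $H_1(X,\bbz)$, contradicting $\tfrac{\mathrm{i}}{2}\int_X\omega\wedge\overline\omega>0$ from the Riemann bilinear relations. Consider then the orbit map
\begin{equation*}
\Psi\colon\SL_2(\bbr)\to\bbr^2\times\bbr^2,\quad A\mapsto(Av_1,Av_2).
\end{equation*}
This is a proper embedding, with image the three-dimensional submanifold $\{(w_1,w_2): w_1\wedge w_2=v_1\wedge v_2\}$; a converging sequence of pairs $(A_nv_1,A_nv_2)$ forces $A_n=[A_nv_1\,|\,A_nv_2][v_1\,|\,v_2]^{-1}$ to converge. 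By the previous paragraph $\Psi(\SL(X,\omega))\subset V\times V$, which is discrete, so $\SL(X,\omega)$ is discrete in $\SL_2(\bbr)$.

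For non-cocompactness, I would introduce the systole-type function
\begin{equation*}
\lambda\colon\SL_2(\bbr)\to(0,\infty),\quad\lambda(B)=\min_{v\in V\setminus\{0\}}|Bv|.
\end{equation*}
This is well-defined (since $BV$ is discrete and misses $0$), continuous (only finitely many $v\in V$ can realise the minimum as $B$ varies in a compact set) and right-invariant under $\SL(X,\omega)$ (since $\SL(X,\omega)$ permutes $V\setminus\{0\}$). Hence $\lambda$ descends to a continuous, strictly positive function on $\SL_2(\bbr)/\SL(X,\omega)$. On the other hand, for any $v\in V\setminus\{0\}$ and any rotation $R\in\SO(2)$ that aligns $v$ with the contracting axis of $g_t=\operatorname{diag}(\mathrm{e}^{-t},\mathrm{e}^t)$, one has $\lambda(g_tR)\le\mathrm{e}^{-t}|v|\to 0$ as $t\to\infty$. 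A continuous positive function on a compact space is bounded below, so $\SL_2(\bbr)/\SL(X,\omega)$ cannot be compact.

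The only point I expect to require any care is the spanning property of $V$ used in the discreteness step; once that is granted, both remaining parts are routine consequences of preservation of a discrete set. If one wants to avoid the Riemann bilinear relations, one can alternatively triangulate $(X,\omega)$ by saddle connections and observe that the triangles have positive Euclidean area, so their edges cannot all be collinear.
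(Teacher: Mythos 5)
Your proposal is correct and follows essentially the same route as the paper, which itself only sketches the Hubert--Schmidt argument: the permutation statement from the local form $z\mapsto Az+b$ of an affine map, discreteness from the fact that the group preserves the closed discrete set of holonomy vectors, and non-cocompactness from the shortest-holonomy-vector function descending to a positive continuous function on $\SL_2(\bbr )/\SL (X,\omega )$ that can be made arbitrarily small. You in fact supply details (the proper orbit map $A\mapsto (Av_1,Av_2)$ and the existence of two independent holonomy vectors) that the paper leaves to the cited reference.
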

\begin{proof}
The second statement is clear; the discreteness statement is originally due to Veech \cite{Veech89}. A much more elegant proof can be found in \cite[section 1.3.2]{HubertSchmidt04}. In that same section one also finds a proof for non-cocompactness. We briefly sketch the ideas.

As to discreteness, the set of holonomy vectors of $(X,\omega )$ is a discrete subset of $\bbc$. Hence $\SL (X,\omega )$ also has to be discrete.

Now Hubert and Schmidt consider the function $\Lambda :\SL_2(\bbr )\to ]0,\infty [$ with $\Lambda (A)$ being the length of the shortest holonomy vector of $A\cdot (X,\omega )$. This is continuous and descends to a function on the quotient $\SL_2(\bbr )/\SL (X,\omega )$. Hence if this quotient were compact, the image of $\Lambda$ would have to be compact. But
one can make a given saddle connection arbitrarily short by $\SL_2(\bbr )$-transformations, hence the image of $\Lambda$ comes arbitrarily close to zero, contradiction.
\end{proof}

\begin{ParabolicsInTheVG}
Now we describe a method to find nontrivial elements in the affine group, and hence the Veech group, of a translation surface. Let $X$ be a compact Riemann surface with an abelian differential $\omega$. Assume that $\vartheta$ is a Jenkins-Strebel direction for $\omega$, so that we get a complete cylinder decomposition. The idea is now to define some simple affine maps on the cylinders and investigate when they can be glued together to an affine self-map of the whole surface.

So let $a,b>0$ and consider the cylinder $C_{\vartheta }(a,b)$. We consider this, as described above, as the quotient of a closed strip $S$ in direction $\vartheta$ of width $b$ in the complex plane. Say this strip has boundary lines $\ell_1$ and $\ell_2$ where, when viewed in direction $\vartheta$, the line $\ell_1$ is the left boundary of $S$ and $\ell_2$ is the right boundary:
\end{ParabolicsInTheVG}
\begin{center}
\begin{tikzpicture}
\fill[gray!50!white] (0,0) -- ++(34:4cm) -- ++(124:1cm) -- ++(214:6.5cm) -- ++(304:1cm) -- cycle;
\draw (0,0) ++(124:0.5cm) ++(34:0.75cm) node {$S$};
\draw[gray] (1,0) -- (0,0) -- ++(34:1cm)
            (0.5,0) arc (0:34:0.5cm)
            (0,0) ++(17:0.8cm) node {$\vartheta$};
\draw (0,0) -- ++(34:4cm) ++(34:0.4cm) node {$\ell_2$}
      (0,0) -- ++(214:2.5cm) ++(124:1cm) -- ++(34:6.5cm) ++(34:0.4cm) node {$\ell_1$};
\draw[->] (-2,1) -- ++(34:2cm);
\draw (-2,1) ++(34:2.4cm) node {$a\mathrm{e}^{\mathrm{i}\vartheta}$};
\filldraw[black] (-2,1) circle (2pt);
\end{tikzpicture}
\end{center}
Then let $f$ be the unique real-affine map $\bbc\to\bbc$ with the following properties: $f$ sends each of the two lines $\ell_1$ and $\ell_2$ to itself; it is the identity on $\ell_2$ and translation by $a\exp (\mathrm{i}\vartheta )$ on $\ell_1$. This descends to a well-defined map on $C_{\vartheta }(a,b)$ which induces the identity on the boundary and is an affine map in the interior; it is called the \textit{affine Dehn twist} of $C_{\vartheta }(a,b)$.

Let us compute the linear part of this affine map for two important special cases. First let $\vartheta =0$, so that the direction of the cylinder is horizontal. Assume without loss of generality that $\ell_1$ is the real axis. Then we have $f(x)=x$ for real $x$, and $f(b\mathrm{i})=b\mathrm{i}+a$, hence $f(\mathrm{i})=\mathrm{i}+a/b$. In other words the affine part of $f$ is
\begin{equation*}
\begin{pmatrix}
1&\frac{a}{b}\\
0&1
\end{pmatrix}.
\end{equation*}
To ease terminology we introduce the following notion:
\begin{Definition}
The \textup{modulus} of a cylinder $C_{\vartheta }(a,b)$ is the positive real number $\mu =a/b$, i.e. its circumference divided by its width.
\end{Definition}
So we find: the affine part of the affine Dehn twist of a cylinder in direction $0$ of modulus $\mu$ is the matrix
\begin{equation}\label{ParabolicMatrix}
\begin{pmatrix}
1&\mu\\
0&1
\end{pmatrix}.
\end{equation}
Since a cylinder in direction $\vartheta$ can be obtained from one in direction zero by rotation with $\vartheta$, we see that the affine part of the affine Dehn twist of a cylinder in direction $\vartheta$ of modulus $\mu$ is the conjugate of the matrix (\ref{ParabolicMatrix}) by this rotation, in other words
\begin{equation}\label{ParabolicConjugate}
\begin{pmatrix}
\cos\vartheta&-\sin\vartheta\\
\sin\vartheta&\cos\vartheta
\end{pmatrix}
\begin{pmatrix}
1&\mu\\
0&1
\end{pmatrix}
\begin{pmatrix}
\cos\vartheta&\sin\vartheta\\
-\sin\vartheta&\cos\vartheta
\end{pmatrix}.
\end{equation}
One particular case deserves to be mentioned: for $\vartheta =\frac{\pi }{2}$, i.e. in the vertical direction, we get the matrix
\begin{equation}\label{ParabolicForVertical}
\begin{pmatrix}
1&0\\
-\mu&1
\end{pmatrix}.
\end{equation}

So let us return to Veech groups. Assume that $\vartheta$ is a Jenkins-Strebel direction for $\omega$; then we get a complete cylinder decomposition, say into $C_1,\ldots ,C_n$. For simplicity we assume that $\vartheta =0$; else we replace $\omega$ by $\exp (-\mathrm{i}\vartheta )\omega$. On every $C_k$ we have the affine Dehn twist $t_k$. If all the cylinders have the same moduli, all these Dehn twists have the same linear part and hence glue to an affine map $X\to X$. But also in the case where the moduli are commensurable (i.e. where they span a one-dimensional vector space over $\bbq$ or, equivalently, where they have some common integer multiple) we can do this trick: let the modulus of $C_k$ be $\mu_k$, then by assumption there are integers $m_1,\ldots ,m_n$ with
\begin{equation*}
m_1\mu_1=m_2\mu_2=\cdots =m_n\mu_n=\text{(say) }\mu .
\end{equation*}
Then on each $C_k$, the map $t_k^{m_k}$ has affine part
\begin{equation}\label{TrivialComputationsWithUnipotentMatrices}
\begin{pmatrix}
1&\mu_k\\
0&1
\end{pmatrix}^{m_k}=
\begin{pmatrix}
1&m_k\mu_k\\
0&1
\end{pmatrix}=
\begin{pmatrix}
1&\mu\\
0&1
\end{pmatrix}.
\end{equation}
Hence the maps $t_1^{m_1},\ldots ,t_n^{m_n}$ glue to an affine self-map of $(X,\omega )$, with affine part (\ref{TrivialComputationsWithUnipotentMatrices}). We summarize:
\begin{Proposition}
Assume that $0$ is a Jenkins-Strebel direction for $\omega$, and assume that the cylinders in the associated complete cylinder decomposition have commensurable moduli, where $\mu$ is a common integer multiple of these moduli. Then the matrix
\begin{equation}\label{ParabolicMatrixII}
\begin{pmatrix}
1&\mu\\
0&1
\end{pmatrix}
\end{equation}
is an element of $\SL (X,\omega )$.\hfill $\square$
\end{Proposition}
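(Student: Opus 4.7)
The plan is to construct an affine self-map $f$ of $(X,\omega)$ whose linear part equals the matrix in (\ref{ParabolicMatrixII}); then by definition of the Veech group via the short exact sequence (\ref{SESforVeechGroups}), this matrix lies in $\SL(X,\omega)$. Since $0$ is a Jenkins-Strebel direction, cutting $X$ along the critical graph $\Gamma$ in direction $0$ decomposes $X$ into cylinders $C_1,\ldots,C_n$ in direction $0$ with moduli $\mu_1,\ldots,\mu_n$. On each $C_k$ one has the affine Dehn twist $t_k$ constructed above, whose linear part is $\left(\begin{smallmatrix}1&\mu_k\\0&1\end{smallmatrix}\right)$ and which restricts to the identity on $\partial C_k$.

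The first main step is to pass to the power $t_k^{m_k}$ on each $C_k$. By (\ref{TrivialComputationsWithUnipotentMatrices}) this has linear part $\left(\begin{smallmatrix}1&\mu\\0&1\end{smallmatrix}\right)$, the same for every $k$, and it still restricts to the identity on $\partial C_k$ (since a power of the identity is the identity). This allows us to define a map $f\colon X\to X$ by $f|_{C_k}=t_k^{m_k}$ for each $k$: on the critical graph $\Gamma$, which is where distinct cylinders meet, both definitions coming from the two sides agree with the identity, so $f$ is well-defined and continuous.

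Next I must verify that $f$ is an affine self-map in the sense required. Away from $\Gamma$ the map $f$ is affine with linear part $\left(\begin{smallmatrix}1&\mu\\0&1\end{smallmatrix}\right)$ by construction, so it is a homeomorphism of $X\smallsetminus\Gamma$ onto itself that respects the translation structure. On $\Gamma$ minus the set $Z(\omega)$ of zeros we still have an affine chart from the translation structure, and the gluing argument of the previous paragraph shows $f$ is locally the identity there, hence trivially affine. Finally $f$ fixes each cone point (as it fixes $\partial C_k$ pointwise and the cone points lie on the critical graph), so condition (i) in the definition of affine maps holds. Orientation preservation is immediate since each $t_k^{m_k}$ is orientation preserving.

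The main obstacle is verifying that the gluing across $\Gamma$ genuinely produces a homeomorphism with the correct local affine form near the critical graph; this reduces, once one has arranged that the maps agree pointwise on $\partial C_k$, to a routine check that no matching issue arises at the endpoints of saddle connections (the cone points), which is handled by the fact that each $t_k^{m_k}$ is the identity on all of $\partial C_k$ together with the local model for cone points in Definition \ref{DefTranslationStructureWithConePoints}. Granting this, $f\in\Aff^+(X,\omega)$ with $D(f)=\left(\begin{smallmatrix}1&\mu\\0&1\end{smallmatrix}\right)$, proving the claim.
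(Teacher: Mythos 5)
Your proposal is correct and follows essentially the same route as the paper: the paper's argument is exactly to take the affine Dehn twists $t_k$, raise each to the power $m_k$ so that all linear parts become $\left(\begin{smallmatrix}1&\mu\\0&1\end{smallmatrix}\right)$, and glue them along the critical graph using that each power is the identity on the cylinder boundaries. Your additional care about the behaviour at cone points is a reasonable elaboration of what the paper leaves implicit, but it is not a different method.
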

The proposition remains of course true if zero is replaced by an arbitrary $\vartheta\in\bbr /2\pi\bbz$ and the matrix (\ref{ParabolicMatrixII}) is replaced by its conjugate (\ref{ParabolicConjugate}).

So if one finds two different Jenkins-Strebel directions\footnote{meaning that they do not merely differ by a sign}, one finds two distinct parabolic elements of $\SL (X,\omega )$ with different eigenvectors, which means that $\SL (X,\omega )$ is quite large. Let us apply this method in order to find nontrivial elements in the Veech groups of our examples.

\begin{RegularPolygonsI}
First consider the Wiman curve $(W_g,\omega_g)$. Recall that this can be described as a regular $2n$-gon with opposite sides identified, where $n=2g+1$. Rescaling the $2n$-gon does not change the Veech group. We have shown that, after a suitable normalization, the vertical direction is a Jenkins-Strebel direction. Using the notation from above, let $\mu_k$ be the modulus of the cylinder $C_k$. Then using the trigonometrical formul\ae\footnote{These follow directly from the addition theorems of sine and cosine}
\begin{equation*}
\sin (\alpha +\beta )+\sin (\alpha -\beta )=2\sin\alpha\cos\beta ,
\end{equation*}
\begin{equation*}
\cos (\alpha -\beta )-\cos (\alpha +\beta )=2\sin\alpha\sin\beta
\end{equation*}
and
\begin{equation*}
\cot \alpha =\tan\left(\frac{\pi}{2}-\alpha\right) ,
\end{equation*}
we get
\begin{equation*}
\mu_0=\frac{2\sin\dfrac{g\pi}{n}}{2\cos\dfrac{g\pi}{n}}=\tan\frac{g\pi}{n}=\cot\left(\frac{\pi}{2}-\frac{g\pi}{n}\right)=\cot\frac{\pi}{2n},
\end{equation*}
and
\begin{equation*}
\mu_k=\frac{2\left(\sin\dfrac{(k-1)\pi}{n}+\sin\dfrac{k\pi}{n}\right)}{\cos\dfrac{(k-1)\pi}{n}-\cos\dfrac{k\pi}{n}}
=\frac{4\sin\dfrac{(2k-1)\pi}{2n}\cos\dfrac{\pi}{2n}}{2\sin\dfrac{(2k-1)\pi}{2n}\sin\dfrac{\pi}{2n}}
=2\cot\frac{\pi}{2n}.
\end{equation*}
Hence we see that $\SL (X,\omega )$ contains the element
\begin{equation*}
\begin{pmatrix}
1&0\\
2\cot\dfrac{\pi}{2n}&1
\end{pmatrix}.
\end{equation*}
Also rotation around the center of the regular $2n$-gon by an angle of $\frac{\pi}{n}$ clearly defines an affine self-map of $X$, so we get that the element
\begin{equation*}
\begin{pmatrix}
\cos\frac{\pi}{n}&-\sin\frac{\pi}{n}\\
\sin\frac{\pi}{n}&\cos\frac{\pi}{n}
\end{pmatrix}
\end{equation*}
is contained in $\SL (X,\omega )$. These two matrices already generate a finite covolume Fuchsian group, which must then be a finite index subgroup of $\SL (X,\omega )$, see section 3.5.
\end{RegularPolygonsI}
\begin{RegularPolygonsII}
Next we turn to $(W_g,\omega_1)$, i.e. two regular $n$-gons with parallel sides identified, again with $n=2g+1$. With the notation from section 2.4, we find that the modulus of the cylinder $C_k$ is
\begin{equation*}
\frac{2\left(\sin\dfrac{2(k-1)\pi}{n}+\sin\dfrac{2k\pi}{n}\right) }{\cos\dfrac{2(k-1)\pi}{n}-\cos\dfrac{2k\pi }{n}}
=\frac{4\sin\dfrac{(2k-1)\pi}{n}\cos\dfrac{\pi}{n}}{2\sin\dfrac{(2k-1)\pi}{n}\sin\dfrac{\pi}{n}}
=2\cot\frac{\pi}{n}.
\end{equation*}
Hence the matrix
\begin{equation*}
\begin{pmatrix}
1&0\\
2\cot\frac{\pi}{n}&1\\
\end{pmatrix}
\end{equation*}
is in the Veech group, as well as
\begin{equation*}
\begin{pmatrix}
\cos\frac{2\pi}{n}&-\sin\frac{2\pi}{n}\\
\sin\frac{2\pi}{n}&\cos\frac{2\pi}{n}
\end{pmatrix}
\end{equation*}
coming from an obvious rotation of the $n$-gons. Again we will see in section 3.5 that these two generate a finite covolume Fuchsian group.
\end{RegularPolygonsII}
\begin{AffineMapsasQCMaps}
Let us now consider affine maps between different translation surfaces. These provide important examples of quasiconformal maps. In the previous section we noted that every matrix $A\in\SL_2(\bbr )$, interpreted as a self-map of $\bbc =\bbr^2$, is quasiconformal with constant Beltrami coefficient, which we computed in that section. Hence we can also compute its quasiconformal dilatation as
\begin{equation*}
K(A)=\frac{1+k(A)}{1-k(A)}=\frac{1+|\mu_0(A)|}{1-|\mu_0(a)|}.
\end{equation*}
As a corollary we get:
\end{AffineMapsasQCMaps}
\begin{Proposition}
Let $X_1$, $X_2$ be closed Riemann surfaces, let $\omega_i$ be an abelian differential on $X_i$ and let $f:(X_1,\omega_1)\to (X_2,\omega_2)$ be an affine map with linear part $A\in\SL_2(\bbr )$. Then $f$ is quasiconformal with dilatation $K(A)$. \hfill $\square $
\end{Proposition}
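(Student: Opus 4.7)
The statement reduces to a purely local observation composed with a standard removable-singularity argument at the cone points, so the plan is to unwind the definitions and invoke the computations of the previous section.

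First I would unpack what it means to be $k$-quasiconformal on a Riemann surface: since quasiconformality is preserved under pre- and post-composition with biholomorphic maps and we already know that $k$-quasiconformal maps between Riemann surfaces are well-defined, it suffices to check the condition in any atlas compatible with the complex structure. For a translation surface the translation atlas is such a compatible atlas (away from the cone points). Fix then a point $x_1 \in X_1 \setminus P_1$ which is not mapped to a cone point, and choose translation charts $\varphi_1$ around $x_1$ and $\varphi_2$ around $f(x_1)$. By the very definition of an affine map, the local expression $\varphi_2 \circ f \circ \varphi_1^{-1}$ has the form $z \mapsto Az + b$ with $A \in \SL_2(\bbr)$ and $b \in \bbc$; here $A$ is independent of the chosen charts because $X_1$ is connected (and any two chart transitions are mere translations, which do not affect the linear part).

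Next I would invoke the computation already carried out in the preceding section: the real-linear map $A : \bbc \to \bbc$ has Beltrami coefficient
\begin{equation*}
\mu_0(A) = \frac{A(1)+\mathrm{i}A(\mathrm{i})}{A(1)-\mathrm{i}A(\mathrm{i})} \in \Delta ,
\end{equation*}
which is a constant of modulus $k(A) = |\mu_0(A)| < 1$. The map $z \mapsto Az+b$ differs from $A$ only by a translation, so it is quasiconformal with the same constant of quasiconformality $k(A)$, hence with dilatation $K(A) = (1+k(A))/(1-k(A))$. Therefore $f$ is $k(A)$-quasiconformal on all of $X_1 \setminus (P_1 \cup f^{-1}(P_2))$, which is the complement of a finite set in $X_1$.

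Finally, I would promote this to all of $X_1$. The conditions of openness and homeomorphism are already built into the definition of an affine map, so only the distributional-derivative conditions remain to be verified in neighbourhoods of the cone points. But a finite set in a Riemann surface has Hausdorff 2-measure zero and 2-capacity zero; since $f$ is a homeomorphism and is already known to have locally $L^2$ distributional derivatives satisfying the Beltrami inequality $|\partial_{\bar z} f| \le k(A)\,|\partial_z f|$ a.e.\ off a finite set, these derivatives extend across that finite set without altering the distributional identity (a classical removable-singularity statement for quasiconformal maps). The main obstacle, if any, is this last step at the cone points, where one has to make sure that the model cover $\pi : \Sigma_n(\varepsilon) \to \Delta_\varepsilon$ does not spoil the inequality; but since $\pi$ and its target chart are holomorphic, composing with them does not change the Beltrami coefficient of $f$ on the punctured neighbourhood, and the finite-set removal argument applies verbatim. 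Thus $f$ is quasiconformal on $X_1$ with dilatation $K(A)$, as claimed.
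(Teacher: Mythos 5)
Your proof is correct and follows essentially the same route the paper takes: the paper states this proposition with no proof at all, treating it as an immediate corollary of the local computation that $A:\bbc\to\bbc$ has constant Beltrami coefficient $\mu_0(A)$, which is exactly your central step. Your additional care at the cone points (removability of a finite set for quasiconformal maps) fills in a detail the paper silently glosses over, and is correct; note only that since affine maps send cone points to cone points, the exceptional set $P_1\cup f^{-1}(P_2)$ is simply $P_1$.
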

Affine maps have minimal quasiconformal dilatation in their homotopy class:
\begin{Theorem}\label{AffineMapsHaveMinimalDilatation}
Let $X_1$, $X_2$ be closed Riemann surfaces, let $\omega_i$ be a quadratic differential on $X_i$ and let $f:(X_1,\omega_1)\to (X_2,\omega_2)$ be an affine map with derivative $A\in\SL_2(\bbr )$ and hence quasiconformal dilatation $K(A)$. Let $g:X_1\to X_2$ be any quasiconformal homeomorphism homotopic to $f$, with dilatation $K(g)$. Then $K(g)\ge K(A)$, with equality precisely for $g=f$.
\end{Theorem}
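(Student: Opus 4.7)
The plan is to recognize $f$ as a Teichm\"{u}ller map in the classical sense, with respect to the holomorphic quadratic differential $\omega_1^2$ on $X_1$, and then invoke Teichm\"{u}ller's uniqueness theorem (as stated in \cite{Hubbard06}), which asserts precisely the required minimality and rigidity. Note that although the statement writes ``quadratic differential,'' the natural setting above is that of abelian differentials; squaring them brings us into the classical framework, where Teichm\"{u}ller's theorem applies verbatim (in fact more generally for half-translation surfaces).

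First I would compute the Beltrami coefficient of $f$ in translation charts. Choose local translation charts $\omega_1 = \de z$ on $X_1$ and $\omega_2 = \de w$ on $X_2$; in these coordinates $f$ takes the form $w = Az + b$, so
\begin{equation*}
\mu_f = \frac{\partial f/\partial\overline{z}}{\partial f/\partial z} = \mu_0(A),
\end{equation*}
a constant of modulus $k = |\mu_0(A)| < 1$ by the computation of section 3.2. Since $\omega_1 = \de z$ in these charts, the Beltrami differential of $f$ on $X_1\smallsetminus Z(\omega_1)$ has the intrinsic description
\begin{equation*}
\mu_f = k\cdot\frac{\overline{q_1}}{|q_1|},\qquad q_1 := \mathrm{e}^{-\mathrm{i}\arg\mu_0(A)}\cdot\omega_1^2,
\end{equation*}
and $q_1$ is a holomorphic quadratic differential on $X_1$ (it extends across the zeros of $\omega_1$ by Riemann's removable singularity theorem). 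This is exactly the form of a Teichm\"{u}ller Beltrami differential with stretch factor $K = \frac{1+k}{1-k} = K(A)$.

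Next I would check the behaviour at cone points. Around a cone point of multiplicity $n$, both $\omega_1$ and $\omega_2$ have a zero of order $n-1$, so $q_1$ has a zero of order $2n-2$; the local model of $f$ factors through the cyclic cover $z\mapsto z^n$ exactly as in the dictionary of section 3.1, which is precisely the standard local form of a Teichm\"{u}ller map near a zero of a quadratic differential. Together with the previous paragraph, this identifies $f$ with \emph{the} Teichm\"{u}ller map associated to the pair $(q_1,q_2)$ in its homotopy class.

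Finally, I would apply Teichm\"{u}ller's uniqueness theorem: among all quasiconformal homeomorphisms $X_1\to X_2$ in a fixed homotopy class, the infimum of the dilatation is attained by a unique map, which is a Teichm\"{u}ller map. Since $f$ is such a map with dilatation $K(A)$, any homotopic quasiconformal homeomorphism $g$ satisfies $K(g)\ge K(A)$, with equality forcing $g=f$. The main obstacle is not the structural deduction but the verification that $f$ really fits the hypotheses of Teichm\"{u}ller's theorem, i.e.\ that the constant Beltrami coefficient computed in translation charts globalises to a Teichm\"{u}ller differential on $X_1$ with the correct behaviour at the finite set of cone points; once that is done the conclusion is immediate from the classical uniqueness theorem.
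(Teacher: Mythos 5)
Your proposal is correct and follows essentially the same route as the paper: both arguments recognize $f$ as a Teichm\"{u}ller map for the quadratic differential obtained from $\omega_1^2$ (up to a unimodular constant) and then conclude by Teichm\"{u}ller's uniqueness theorem. The only cosmetic difference is that the paper reduces to the case of diagonal $A$ via the decomposition $A=U_2DU_1$ with $U_i\in\SO (2)$, absorbing the rotations into the one-forms, whereas you absorb the phase $\mathrm{e}^{-\mathrm{i}\arg\mu_0(A)}$ directly into the quadratic differential; these are the same manoeuvre.
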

\begin{proof}
In the case where $A$ is diagonal, this statement is a special case of \textit{Teichm\"{u}ller's theorem}.\footnote{Teichm\"{u}ller's theorem is the analogous statement for quadratic differentials} There are many proofs of this theorem in the literature, see e.g. \cite[Thm. 5.3.8]{Hubbard06}, hence we only show how to reduce the proof to this special case.

Write $A$ in the following form:
\begin{equation*}
A=U_2\cdot D\cdot U_1
\end{equation*}
where $U_i\in\SO (2)$ and $D$ is diagonal. We can then find real numbers $\vartheta_1$ and $\vartheta_2$ such that
\begin{equation*}
U_i=
\begin{pmatrix}
\cos\vartheta_i &-\sin\vartheta_i\\
\sin\vartheta_i &\cos\vartheta_i
\end{pmatrix}.
\end{equation*}
Now we use the following elementary observations:
\begin{enumerate}
\item Let $B\in\SL_2(\bbr )$, and let $(X,\omega )$ be a Riemann surface with an abelian differential. Then the identity map $(X,\omega )\to B\cdot (X,\omega )$ has derivative $B$, and the identity map $B\cdot (X,\omega )\to (X,\omega )$ has derivative $B^{-1}$.
\item Let
\begin{equation*}
U=
\begin{pmatrix}
\cos\vartheta &-\sin\vartheta\\
\sin\vartheta &\cos\vartheta
\end{pmatrix},
\end{equation*}
then $U\cdot (X,\omega )=(X,\mathrm{e}^{-\mathrm{i}\vartheta }\omega )$.
\end{enumerate}
Applying this to our situation, consider the following composition:
\begin{equation*}
U_1\cdot (X_1,\omega_1)\overset{\mathrm{id}_{X_1}}{\to }(X_1,\omega_1)\overset{f}{\to }(X_2,\omega_2)\overset{\mathrm{id}_{X_2}}{\to} U_2^{-1}\cdot (X_2,\omega_2).
\end{equation*}
This is an affine map with derivative $U_2^{-1}AU_1=D$ from $(X_1,\mathrm{e}^{-\mathrm{i}\vartheta_1}\omega_1)$ to $(X_2,\mathrm{e}^{\mathrm{i}\vartheta_2}\omega_2)$, hence we can apply Teichm\"{u}ller's theorem to it. But now forgetting the abelian differentials, this is the \textit{same} quasiconformal map from $X_1$ to $X_2$ as $f$, hence the minimality statement transfers to $f$.
\end{proof}
\begin{TheImageInMCG}
The affine group of a translation surface can also be interpreted as a subgroup of the mapping class group. Namely if $X$ is a Teichm\"{u}ller marked surface, then every orientation-preserving self-diffeomorphism of $X$ can be pulled back along the Teichm\"{u}ller marking to give a self-diffeomorphism of $\Sigma_g$, which is only well-defined up to isotopy. Hence a group homomorphism $\Aff^+(X,\omega )\to\Mod_g$.
\end{TheImageInMCG}
\begin{Proposition}\label{AffineGroupInjectsIntoModg}
Let $X$ be a Teichm\"{u}ller marked closed Riemann surface of genus $g$, and let $\omega$ be an abelian differential on it.
Then the map $\Aff^+(X,\omega )\to\Mod_g$ is injective.
\end{Proposition}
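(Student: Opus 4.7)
The plan is to reduce the statement to the uniqueness clause of Theorem 4.3.8 (minimal dilatation of affine maps). Suppose $f \in \Aff^+(X,\omega)$ lies in the kernel of the map to $\Mod_g$, so that $f$ is isotopic to $\mathrm{id}_X$. Let $A \in \SL_2(\bbr)$ be its linear part, so that $K(f)=K(A)$. The goal is to show that $A$ is the identity matrix and that $f$ itself is the identity homeomorphism.

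First I would apply Theorem 4.3.8 to the pair $(X,\omega) \to (X,\omega)$ with affine map $f$ and the competing quasiconformal map $g = \mathrm{id}_X$, which is homotopic to $f$ by assumption. The theorem yields
\begin{equation*}
K(\mathrm{id}_X) \;\ge\; K(A).
\end{equation*}
Since $K(\mathrm{id}_X)=1$ and $K(B)\ge 1$ for every $B\in\SL_2(\bbr)$ (with equality iff the Beltrami coefficient $\mu_0(B)$ vanishes, i.e.\ iff $B\in\SO(2)$), this forces $K(A)=1$ and $A\in\SO(2)$. Now the ``precisely for $g=f$'' clause of Theorem 4.3.8 gives $\mathrm{id}_X = f$, so $f$ is the identity. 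This finishes the argument.

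The only subtle point I expect is verifying that Theorem 4.3.8 really applies when the linear part $A$ is already orthogonal, because in that boundary case the affine map $f\colon (X,\omega)\to (X,\omega)$ is actually biholomorphic (by the earlier proposition relating affine and holomorphic maps) and the reduction to Teichm\"uller's theorem in the proof of 4.3.8 degenerates. If one prefers to avoid relying on the equality case in this degenerate situation, one can instead conclude as follows: once $A\in\SO(2)$, the map $f$ is a biholomorphic automorphism of the compact Riemann surface $X$ of genus $g\ge 2$. Such an automorphism that is isotopic to the identity must be the identity, because $\Mod_g$ acts faithfully on $\mathrsfs{T}_g$ and biholomorphic self-maps of $X$ correspond to the stabilizer of the point $[X]\in\mathrsfs{T}_g$ under this action. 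Either route yields $f=\mathrm{id}_X$, and hence the homomorphism $\Aff^+(X,\omega)\to\Mod_g$ is injective.
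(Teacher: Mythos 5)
Your argument is correct and is essentially the paper's own proof: the paper likewise takes $g=\mathrm{id}_X$ in Theorem \ref{AffineMapsHaveMinimalDilatation}, notes that the identity is conformal and hence extremal in its isotopy class, and invokes the equality clause to conclude $f=\mathrm{id}_X$. Your extra remark about the degenerate case $A\in\SO(2)$ and the fallback via faithfulness of the $\Mod_g$-action is a reasonable precaution but not part of the paper's (terser) argument.
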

\begin{proof}
Let $f$ be an affine self-map of $(X,\omega )$ which maps to the identity in $\Mod_g$, in other words, which is isotopic to the identity. Now the identity is conformal, hence its quasiconformal dilatation is minimal in its isotopy class. By Theorem \ref{AffineMapsHaveMinimalDilatation} this is only possible if $f$ \textit{is} the identity.
\end{proof}

\subsection{Abelian Teichm\"{u}ller Disks}

Now let $X$ be a compact Riemann surface of genus $g$ together with a Teichm\"{u}ller marking and an abelian differential $\omega$. In this section we define an embedding of the hyperbolic plane into $\mathrsfs{T}_g$ associated with these data; it has different descriptions, using the different models of the hyperbolic plane discussed in the previous section.

To begin with, the marking allows us to identify $X$ with ``the'' model surface $\Sigma_g$ of genus $g$, and hence to identify $\mathrsfs{T}_g$ with the Teichm\"{u}ller space of $X$; so a point in $\mathrsfs{T}_g$ is represented by a compact Riemann surface $Y$ of genus $g$ together with an isotopy class of diffeomorphisms $X\to Y$.

\begin{TDisksViaSLtwoR}
The $\SL_2(\bbr )$-action on $\Omega\mathrsfs{T}_g$ defines a smooth map
\begin{equation*}
\SL_2(\bbr )\to\Omega\mathrsfs{T}_g,\quad A\mapsto A\cdot (X,\omega );
\end{equation*}
composing this with the forgetful map $\Omega\mathrsfs{T}_g\to\mathrsfs{T}_g$ defines a map
\begin{equation}\label{DefinitionOfGZeroOmega}
g_0^{\omega}:\SL_2(\bbr )\to\mathrsfs{T}_g .
\end{equation}
We shall see in a minute that this factors over the left quotient by $\SO (2)$. But from this construction it is not clear at all that we arrive at something complex analytic. So let us first introduce another way to look at this map.
\end{TDisksViaSLtwoR}
\begin{TDisksViaBeltramiForms}
The quotient $\overline{\omega }/\omega$ is a well-defined Beltrami form on $X$ of norm
one. Hence for any $t\in\Delta$ the Beltrami form $t\overline{\omega }/\omega$ will be an element of $\mathcal{B}^{<1}(X)$. So using the map
(\ref{BeltramiToTeichmueller}) we obtain a holomorphic map
	\begin{equation}\label{TDiskViaBeltrami}
	f^{\omega }:\Delta\to\mathcal{B}^{<1}(X)\overset{\Phi}{\to}\mathrsfs{T}_g,\quad t\mapsto\Phi\left( t\frac{\overline{\omega }}{\omega }\right)
=[X\overset{\mathrm{id}}{\to} X_{t\overline{\omega }/\omega }].
	\end{equation}
\end{TDisksViaBeltramiForms}
\begin{Comparison}
These two constructions give the same image in Teichm\"{u}ller space:
\end{Comparison}
\begin{Proposition}
Let $X$ be a marked compact Riemann surface of genus $g$ and let $\omega$ be a nonzero holomorphic one-form on $X$. Let $f^{\omega }:\Delta\to\mathrsfs{T}_g$ be the map defined in (\ref{TDiskViaBeltrami}), let $g_0^{\omega }$ as in (\ref{DefinitionOfGZeroOmega}) and let $\mu_0 :\SO (2)\backslash\SL_2(\bbr )$ be as in the previous section. Then the following diagram commutes:
\begin{equation*}
\xymatrix{
\SL_2(\bbr ) \ar[dd]_{\mu_0 }\ar[rd]^{g_0^{\omega }}\\
&\mathrsfs{T}_g\\
\Delta \ar[ru]_{f^{\omega }}
}
\end{equation*}
\end{Proposition}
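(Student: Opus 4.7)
The plan is to unpack the three constructions involved (Beltrami deformation, $\SL_2(\bbr )$-distortion, and the explicit formula for $\mu_0$) in a single translation chart and observe that they all match on the nose. Concretely, I would fix $A\in\SL_2(\bbr )$ and show that the Riemann surface $A\cdot (X,\omega )$ underlying $g_0^{\omega }(A)$ coincides, as a marked Riemann surface, with $X_{\mu_0(A)\overline{\omega }/\omega }$ underlying $f^{\omega }(\mu_0(A))$.

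First I would recall from the previous section that $\mu_0(A)$ was defined precisely as the Beltrami coefficient of $A$, regarded as an $\bbr$-linear self-map of $\bbc$; that is, $A$ itself solves the Beltrami equation $\partial A/\partial\overline{z}=\mu_0(A)\cdot\partial A/\partial z$ on all of $\bbc$. Next, I would pick an arbitrary translation chart $z:U\to V\subseteq\bbc$ of $(X,\omega )$, in which $\omega $ becomes $\mathrm{d}z$ and hence $\overline{\omega }/\omega$ becomes the canonical Beltrami form $\mathrm{d}\overline{z}/\mathrm{d}z$. Consequently the Beltrami form $\mu_0(A)\cdot\overline{\omega }/\omega$ corresponds on $V$ to the constant Beltrami coefficient $\mu_0(A)$.

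Then the construction in the paragraph on Beltrami forms realises $X_{\mu_0(A)\overline{\omega }/\omega }$ by postcomposing each chart $z$ with some quasiconformal map $\psi :V\to\bbc$ whose Beltrami coefficient is the constant $\mu_0(A)$. By the first step, we may take $\psi =A|_V$, and the resulting new atlas is then $\{ A\circ z\}$. But this is exactly the atlas that, by the definition of the $\SL_2(\bbr )$-action, defines the translation structure (and hence the complex structure) on $A\cdot (X,\omega )$. So the two complex structures on the underlying smooth surface $\Sigma_g$ agree.

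Finally, both constructions equip this Riemann surface with the Teichm\"{u}ller marking induced by the identity on the underlying smooth surface of $X$, so both points of $\mathrsfs{T}_g$ coincide. Invariance under the right $\SO (2)$-action (which is needed for the map to factor through $\mu_0$) is automatic, since it holds individually for $g_0^{\omega }$ (a rotation $U\in\SO (2)$ only multiplies $\omega $ by a unit complex scalar, not changing the underlying Riemann surface with marking) and for $\mu_0$ (as verified in the previous section). The main obstacle, such as it is, is purely bookkeeping: one has to keep track of which side of the Beltrami equation carries the coefficient and verify that the \emph{same} identification $\bbr^2=\bbc$ (with $x+\mathrm{i}y\leftrightarrow (x,y)$) is being used consistently in the definitions of the $\SL_2(\bbr )$-action, of $\mu_0$, and in the Beltrami form picture, so that no stray complex conjugation sneaks in.
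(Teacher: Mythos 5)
Your argument is essentially the paper's own proof: both identify $\mu_0(A)$ as the Beltrami coefficient of $A$ itself, work in a translation chart where $\omega$ becomes $\de z$ and $\overline{\omega}/\omega$ becomes $\de\overline{z}/\de z$, and observe that choosing $A$ as the solution of the Beltrami equation makes the two atlases literally coincide. The one point you gloss over is that translation charts only cover $X\smallsetminus Z(\omega )$, whereas the Beltrami construction of $X_{\mu}$ uses a holomorphic atlas of all of $X$; so your chart-by-chart identification shows the identity map $A\cdot X\to X_{\mu_0(A)\overline{\omega}/\omega}$ is conformal only away from the finitely many zeros of $\omega$, and you still need Riemann's removable singularity theorem (as the paper invokes) to conclude it is biholomorphic everywhere.
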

\begin{proof}
Let $A\in\SL_2(\bbr )$. Then $g^{\omega }$ sends $A$ to the marked Riemann surface underlying $A\cdot (X,\omega )$, which is the surface $X$ with a new complex structure. For the moment we denote it by $AX$ (although this notation is in general dangerous since it suppresses the dependence on $\omega$). On the other hand, $f^{\omega }\circ\mu_0$ sends $A$ to the marked Riemann surface $X_{\mu_A\cdot \overline{\omega}/\omega}$ which is again $X$ with a new complex structure. Hence we will be done if we show that the identity map $AX\to X_{\mu_A\cdot \overline{\omega }/\omega}$ is conformal.

We first show the corresponding local statement. So consider the complex plane $\bbc$ with two new complex structures:
\begin{itemize}
\item Consider the holomorphic one-form $\de z$ and the associated translation structure. Distort this by $A$ to obtain a new translation structure and hence a new complex structure on $\bbc$ which we denote, in accordance with our general notation, as $A\bbc$. By definition the map $A: A\bbc\to\bbc$ is a translation, in particular holomorphic, chart (in fact it is an isomorphism of Riemann surfaces).
\item Now consider the Beltrami form $\mu =\mu_A\cdot\de\overline{z}/\de z$ and associated new complex structure on $\bbc$; we denote $\bbc$ endowed with this new complex structure by $\bbc_{\mu }$. Now the map $A:\bbc_{\mu }\to\bbc$ satisfies the Beltrami equation
\begin{equation*}
\frac{\partial A}{\partial\overline{z}}=\mu_A\cdot\frac{\partial A}{\partial z}
\end{equation*}
and is hence a holomorphic chart (again it is even an isomorphism of Riemann surfaces).
\end{itemize}
We wish to show that the identity map $A\bbc\to\bbc_{\mu }$ is a holomorphic map: checking this in holomorphic charts boils down to the trivial statement that the diagram
\begin{equation*}
\xymatrix{
A\bbc\ar[r]^{\mathrm{id}} \ar[d]_A
& \bbc_{\mu } \ar[d]^A \\
\bbc \ar[r]_{\mathrm{id}} &\bbc
}
\end{equation*}
commutes.

Now on $X'=X\smallsetminus Z(\omega)$ we have natural flat coordinates determined by $\omega$. In these coordinates $\omega$ becomes $\de z$ and $\overline{\omega }/\omega$ consequently becomes $\de\overline{z}/\de z$. Thus in these coordinates the inclusion $AX'\to X_{\mu_A\cdot \overline{\omega }/\omega}$ becomes the identity map from (an open subset of) $A\bbc$ to (an open subset of) $\bbc_{\mu }$ which is, as just shown, holomorphic. But by Riemann's theorem on removable singularities, the whole identity map $AX\to X_{\mu_A\cdot \overline{\omega }/\omega}$ must then be holomorphic and thus, since it is a homeomorphism, biholomorphic.
\end{proof}

This has several interesting implications. For example the map $g_0^{\omega }$ factors over $\mu_0$, but this means nothing else than that it is constant on left $\SO (2)$-cosets. In other words, it descends to a map $g^{\omega }:\SO (2)\backslash\SL_2(\bbr )\to\mathrsfs{T}_g$, and the following diagram commutes:
\begin{equation*}
\xymatrix{
\SO (2)\backslash\SL_2(\bbr ) \ar[dd]_{\mu }^{\simeq }\ar[rd]^{g^{\omega }}\\
&\mathrsfs{T}_g\\
\Delta \ar[ru]_{f^{\omega }}
}
\end{equation*}
Also the two maps $f^{\omega }\circ C$ and $g^{\omega }\circ\lambda^{-1}$ from $\bbh$ to $\mathrsfs{T}_g$ agree, so we may denote either of them by $h^{\omega }$; this $h^{\omega }$ is then a holomorphic map. It will turn out that this is the easiest one to understand.
\begin{KobayashiIsometries}
The maps constructed above give Kobayashi isometric holomorphic embeddings of $\Delta$ into Teichm\"{u}ller spaces.
\end{KobayashiIsometries}
\begin{Theorem} Let $X$ be a marked compact Riemann surface of genus $g$, and let $\omega$ be an abelian differential on $X$. Then the map $f^{\omega}:\Delta\to\mathrsfs{T}_g$ is a Teichm\"{u}ller disk, i.e. a holomorphic Kobayashi isometry.
\end{Theorem}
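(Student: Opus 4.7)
The plan is to verify holomorphy first and then attack the two inequalities defining a Kobayashi isometry.

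Holomorphy is immediate from the construction: the assignment $t\mapsto t\cdot\overline{\omega}/\omega$ is $\bbc$-linear from $\Delta$ into $\mathcal{B}^{<1}(X)$, and $\Phi$ is a holomorphic submersion, hence $f^{\omega}$ is holomorphic. Combining Corollary \ref{HolomorphicMapsAreKobayashiContractions} with Royden's Theorem \ref{KobayashiEqualsTeichmueller}, which identifies the Kobayashi metric on $\mathrsfs{T}_g$ with the Teichm\"{u}ller metric $d_T$, I then get the distance-nonincreasing inequality
\[
d_T(f^{\omega}(t_1),f^{\omega}(t_2))\le\varrho (t_1,t_2)\quad\text{for all }t_1,t_2\in\Delta
\]
for free, where $\varrho$ denotes the Poincar\'{e} (equivalently, Kobayashi) distance on $\Delta$.

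The real work is the reverse inequality. I would first handle the one-parameter family $t_1=0$, $t_2=t\in [0,1[$. Set $K=(1+t)/(1-t)$, so that $\varrho (0,t)=\frac{1}{2}\log K$. Consider $A=\operatorname{diag}(\sqrt{K},1/\sqrt{K})\in\SL_2(\bbr )$; a direct calculation using the formulas of the previous section yields $\lambda_0(A)=\mathrm{i}/K$ and therefore $\mu_0(A)=C(\mathrm{i}/K)=(K-1)/(K+1)=t$. By the Proposition immediately preceding this theorem, $f^{\omega}\circ\mu_0=g_0^{\omega}$, so $f^{\omega}(t)$ is represented by the marked Riemann surface underlying $A\cdot (X,\omega)$. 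Now the identity map $X\to A\cdot X$ is an affine map of translation surfaces with linear part $A$, hence has quasi-conformal dilatation $K(A)=K$; Theorem \ref{AffineMapsHaveMinimalDilatation} says that this minimizes the dilatation within its isotopy class, so that
\[
d_T(f^{\omega}(0),f^{\omega}(t))=\frac{1}{2}\log K=\varrho (0,t).
\]

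To promote this to arbitrary pairs $t_1,t_2\in\Delta$, I exploit two symmetries. First, replacing $\omega$ by $\mathrm{e}^{\mathrm{i}\vartheta}\omega$ multiplies $\overline{\omega}/\omega$ by $\mathrm{e}^{-2\mathrm{i}\vartheta}$, so $f^{\mathrm{e}^{\mathrm{i}\vartheta}\omega}(t)=f^{\omega}(\mathrm{e}^{-2\mathrm{i}\vartheta}t)$; this rotates the source $\Delta$ by a Poincar\'{e} isometry, and so the previous paragraph in fact covers $t_1=0$ with \emph{any} $t_2\in\Delta$. Second, for $B\in\SL_2(\bbr )$ the identity
\[
g_0^{\omega}(AB)=(AB)\cdot (X,\omega )=A\cdot(B\cdot (X,\omega ))=g_0^{B\cdot (X,\omega )}(A)
\]
shows that right multiplication by $B$ on $\SL_2(\bbr )$ intertwines $f^{\omega}$ with $f^{B\cdot (X,\omega)}$ via a self-map of $\Delta$ that is a Poincar\'{e} isometry (being the descent of an $\SL_2(\bbr )$-right-translation to $\SO (2)\backslash\SL_2(\bbr )\simeq\Delta$). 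Choosing $B$ so that this transformation sends $0\mapsto t_1$ reduces the general isometry statement to the basepoint-$0$ case already settled.

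The main obstacle is the bookkeeping: pinning down $f^{\omega}(t)=g_0^{\omega}(A)$ for the specific matrix $A$ above requires the compatible identifications between the four models $\SO (2)\backslash\SL_2(\bbr )$, $\SL_2(\bbr )/\SO (2)$, $\bbh$ and $\Delta$ set up in the previous section. Once that is in place, the minimal-dilatation theorem for affine maps does the geometric heavy lifting and the symmetry arguments are formal.
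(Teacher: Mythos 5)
Your proposal is correct and follows essentially the same route as the paper: holomorphy from the construction via Beltrami forms, the contraction inequality from Royden's theorem plus the general Kobayashi distance-decreasing property, reduction to the segment from $0$ to $t\in\,]0,1[$ by the symmetries of the $\SL_2(\bbr )$-picture, and the reverse inequality from Theorem \ref{AffineMapsHaveMinimalDilatation} applied to the identity map viewed as an affine map of dilatation $(1+t)/(1-t)$. The only cosmetic difference is that the paper works directly with the Beltrami form $t\overline{\omega }/\omega$ while you route the same computation through the diagonal matrix $A$ and the compatibility diagram of the four models of the hyperbolic plane.
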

\begin{proof}
The map is holomorphic by construction; to prove the other statement recall that the Kobayashi metric on the unit disk is equal to the Poincar\'{e} metric, and the Kobayashi metric on Teichm\"{u}ller space is equal to the Teichm\"{u}ller metric. Hence we have explicit formul\ae\ for these metrics.

Next, note that if $(X,\omega )$ and $(X',\omega ')$ belong to the same $\PSL_2(\bbr )$-orbit, then $f^{\omega }$ is a Kobayashi isometry if and only if $f^{\omega '}$ is a Kobayashi isometry. This is because there is some automorphism $\varphi $ of $\Delta$ such that $f^{\omega '}=f^{\omega }\circ\varphi$. Vice versa for every automorphism $\varphi$ of $\Delta$ there is some pair $(X',\omega ')$ which is $\SL_2(\bbr )$-equivalent to $(X,\omega )$ and such that $f^{\omega }\circ\varphi =f^{\omega '}$. From this we see that it suffices to show the following:

\textit{Let $0<t<1$. Then the hyperbolic distance from $0$ to $t$ is the same as the Teichm\"{u}ller distance from $f^{\omega }(0)$ to $f^{\omega }(t)$.} But both distances can be directly computed: the hyberbolic distance from $0$ to $t$ is
\begin{equation}\label{HyberbolicDistanceExplicit}
\frac{1}{2}\log\frac{1+t}{1-t}.
\end{equation}
To compute the Teichm\"{u}ller distance, note that the identity map $X\to X_{t\overline{\omega }/\omega }$ is an affine map with quasiconformal constant $t$, hence quasiconformal dilatation $(1+t)/(1-t)$. So by Theorem \ref{AffineMapsHaveMinimalDilatation} we get that the Teichm\"{u}ller distance between $f^{\omega }(0)$ and $f^{\omega }(t)$ is also given by (\ref{HyberbolicDistanceExplicit}).
\end{proof}
\begin{Definition}
An \textup{abelian Teichm\"{u}ller disk} is a map of the form $f^{\omega }:\Delta\to\mathrsfs{T}_g$ for some $(X,\omega )\in Q\mathrsfs{T}_g$. Sometimes we also call the image $f(\Delta )\subseteq\mathrsfs{T}_g$ an abelian Teichm\"{u}ller disk.
\end{Definition}
Also note that pulling back $f^{\omega }$ or $g^{\omega }$ along any appropriate composition of maps in the commutative diagram (\ref{DiagramForGuisesOfDisk}) we also obtain maps from the other two incarnations of the hyperbolic plane into Teichm\"{u}ller space. In particular we obtain a map $h^{\omega }=g^{\omega }\circ\lambda^{-1}=f^{\omega }\circ C :\bbh\to\mathrsfs{T}_g$. This is a again holomorphic Kobayashi isometry; we will also call the maps $g^{\omega }$ and $h^{\omega}$ abelian Teichm\"{u}ller disks since there is no danger of confusion.

\subsection{The Mirror Veech Group}

Let again $X$ be a marked compact Riemann surface of genus $g$ and let $\omega$ be an abelian differential on $X$. We now take a closer look at the image of the Teichm\"{u}ller disk $f^{\omega }(\Delta )$ in moduli space. It turns out that this is closely connected with the Veech group.
\begin{TheAffineGroupActingontheTDisk}
Recall that every affine self-map $\varphi $ of $(X,\omega )$ induces an element $[\varphi ]$ of the mapping class group $\Mod_g$.
\end{TheAffineGroupActingontheTDisk}
\begin{Proposition}\label{ActionOfVeechGroupOnVariousThings}
Let $\varphi\in\Aff^+(X,\omega)$ with derivative $A=D\varphi \in\SL (X,\omega )\subset\SL_2(\bbr )$. Then $[\varphi ]\in\Mod_g$, acting on $\mathrsfs{T}_g$, sends the image of $f^{\omega }$ to itself. Hence pulling its action back along $f^{\omega }$, $g^{\omega }$ and our system of compatible homeomorphisms $\iota$, $\sigma$, $\lambda$, $\mu$ and $C$, we obtain automorphisms of $\Delta$, $\bbh$ and the two quotients of $\SL_2(\bbr )$ by $\SO (2)$. These are as follows:
\begin{enumerate}
\item On $\SO (2)\backslash\SL_2(\bbr )$ it acts by right multiplication with $A^{-1}$.
\item On $\SL_2(\bbr )/\SO (2)$ it acts by left multiplication with
    \begin{equation}\label{FConjugate}
    \begin{pmatrix}
    -1&0\\
    0&1
    \end{pmatrix}
    A
    \begin{pmatrix}
    -1&0\\
    0&1
    \end{pmatrix}.
    \end{equation}
\item On $\bbh$ it acts by the matrix (\ref{FConjugate}) interpreted as a M\"{o}bius transformation.
\item On $\Delta$ it acts by the matrix
    \begin{equation}
    \begin{pmatrix}
    1&\mathrm{i}\\
    -1&\mathrm{i}
    \end{pmatrix}
    A
    \begin{pmatrix}
    1&\mathrm{i}\\
    -1&\mathrm{i}
    \end{pmatrix}^{-1},
\end{equation}
again interpreted as M\"{o}bius transformation.

\end{enumerate}
\end{Proposition}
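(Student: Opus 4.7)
I would prove (i) directly by a chart computation, then transport it through the isomorphisms $\iota$, $\sigma$, $C$ to obtain (ii)--(iv). The preservation of $f^{\omega}(\Delta )$ asserted in the first sentence is then automatic, since each of (i)--(iv) exhibits the induced map as a self-bijection of the appropriate model of the hyperbolic plane.

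The geometric heart of the argument is the following computation. Fix translation charts $\psi_1,\psi_2$ of $(X,\omega )$ in which $\varphi$ reads $z\mapsto Az+c$ for some $c\in\bbc$. The \emph{same} self-homeomorphism $\varphi$, regarded in the charts $B\psi_1$ and $B'\psi_2$ of $B\cdot X$ and $B'\cdot X$, reads $z\mapsto B'AB^{-1}z+B'c$, and this is a translation precisely when $B'=BA^{-1}$. Thus $\varphi$ lifts canonically to an isomorphism of translation surfaces
\[ \varphi\colon B\cdot (X,\omega )\longrightarrow BA^{-1}\cdot (X,\omega ), \]
and in particular to a biholomorphism of the underlying Riemann surfaces. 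Now $g_0^{\omega }(B)\in\mathrsfs{T}_g$ is represented by $B\cdot X$ with the fixed Teichm\"{u}ller marking $f$; recalling from the \textsc{Teichm\"{u}ller Spaces} paragraph that $[\varphi ]$ acts on $\mathrsfs{T}_g$ by precomposing markings with $\varphi^{-1}$, one finds $[\varphi ]\cdot g_0^{\omega }(B)=[(B\cdot X,\varphi^{-1}\circ f)]$, which via the biholomorphism above is the same point as $[(BA^{-1}\cdot X,f)]=g_0^{\omega }(BA^{-1})$. This is (i).

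For (ii)--(iv) it only remains to push (i) through the diagram (\ref{DiagramForGuisesOfDisk}). Using that $\iota_0$ is the anti-involution $A\mapsto FA^{-1}F$ with $F=\bigl(\begin{smallmatrix}-1&0\\0&1\end{smallmatrix}\bigr)$, one computes $\iota_0(BA^{-1})=(FAF)\cdot\iota_0(B)$, yielding (ii); (iii) follows immediately from the $\SL_2(\bbr )$-equivariance of $\sigma$, M\"{o}bius transformations being a left action. For (iv) one uses the identity
\[ \begin{pmatrix}1&\mathrm{i}\\ -1&\mathrm{i}\end{pmatrix}=\begin{pmatrix}-1&\mathrm{i}\\ 1&\mathrm{i}\end{pmatrix}\cdot F, \]
which exhibits the stated conjugating matrix as the product of the M\"{o}bius matrix of $C$ with $F$, so that conjugation by it converts the M\"{o}bius action of $FAF$ on $\bbh$ into the required action on $\Delta$.

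The main obstacle is purely bookkeeping. Once the conventions $\gamma\cdot f=f\circ\gamma^{-1}$ (for the $\Mod_g$-action on $\mathrsfs{T}_g$) and $g_0^{\omega }:\SL_2(\bbr )\to\SO (2)\backslash\SL_2(\bbr )$ (a \emph{left} coset space) are fixed, each step is forced; but a single mismatched convention will invert all of the matrices in (i)--(iv). The geometric input in the chart computation is, by contrast, entirely routine.
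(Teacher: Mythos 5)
Your proposal is correct and follows essentially the same route as the paper: part (i) rests on the same key observation that $\varphi$ induces an isomorphism of translation surfaces $B\cdot (X,\omega )\to BA^{-1}\cdot (X,\omega )$, followed by the same marking bookkeeping, and (ii)--(iv) are obtained by transporting (i) through $\iota$, $\sigma$ and $C$ exactly as in the text. Your identity $\bigl(\begin{smallmatrix}1&\mathrm{i}\\-1&\mathrm{i}\end{smallmatrix}\bigr)=\bigl(\begin{smallmatrix}-1&\mathrm{i}\\1&\mathrm{i}\end{smallmatrix}\bigr)\cdot\bigl(\begin{smallmatrix}-1&0\\0&1\end{smallmatrix}\bigr)$ neatly supplies the computation for (iv) that the paper omits.
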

\begin{proof}
First we show that $[\varphi ]$ sends the Teichm\"{u}ller disk to itself. For this purpose we view the Teichm\"{u}ller disk as the image of the map $g_0^{\omega }:\SL_2(\bbr )\to\mathrsfs{T}_g$. Then for any $B\in\SL_2(\bbr )$ we have
\begin{equation*}
\begin{split}
[\varphi ](g_0^{\omega }(B))
&=[\varphi ][X\overset{\mathrm{id}}{\to}B\cdot (X,\omega )]\\
&=[X\overset{\varphi^{-1}}{\to}B\cdot (X,\omega )]\\
&=[X\overset{\varphi^{-1}}{\to}B\cdot (X,\omega )\overset{\varphi }{\to }BA^{-1}\cdot (X,\omega )]\\
&=[X\overset{\mathrm{id}}{\to }BA^{-1}\cdot (X,\omega )]\\
&=g_0^{\omega }(BA^{-1}).
\end{split}
\end{equation*}
The third equality is justified by the map
\begin{equation*}
\varphi :B\cdot (X,\omega )\to BA^{-1}\cdot (X,\omega )
\end{equation*}
being conformal, in fact an isomorphism of translation surfaces, because
\begin{equation*}
\varphi : (X,\omega )\to A^{-1}\cdot (X,\omega )
\end{equation*}
is an isomorphism of translation surfaces, as remarked earlier.

By this computation we have at the same time shown (i). Concerning (ii), the action of $[\varphi ]$ on $\SL_2(\bbr )/\SO (2)$ is the pullback of the action (i) along the homeomorphism $\iota$. Hence:
\begin{equation*}
\begin{split}
[\varphi ](A\cdot\SO (2))
&=\iota ([\varphi ](\iota^{-1}(A\cdot\SO (2))))\\
&=\iota ([\varphi ](\SO (2)\cdot\iota_0(A)))\\
&=\iota (\SO (2)\cdot \iota_0(A)F^{-1})\\
&=\iota_0(\iota_0(A)F^{-1})\cdot\SO (2)\\
&=\iota_0(F^{-1})A\cdot\SO (2)\\
&=
\begin{pmatrix}
-1&0\\
0&1
\end{pmatrix}
F
\begin{pmatrix}
-1&0\\
0&1
\end{pmatrix}
\cdot A\cdot\SO (2).
\end{split}
\end{equation*}
Now (iii) easily follows from (ii), and (iv) follows by a straightforward computation, which we omit, from (iii).
\end{proof}
\begin{Proposition}\label{InclusionOfAffineGroupIntoMCG}
Consider once again the inclusion $\Aff^+(X,\omega )\to\Mod_g$. Its image is the setwise stabilizer of the Teichm\"{u}ller disk $f^{\omega }(\Delta )$, and the image of the subgroup $\Aut (X,\omega )$ is contained in the pointwise stabilizer of the Teichm\"{u}ller disk.
\end{Proposition}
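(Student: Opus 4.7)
The plan splits into the easy inclusions and the substantive one. The first containment---image of $\Aff^+(X,\omega)$ inside the setwise stabiliser, and image of $\Aut(X,\omega)$ inside the pointwise stabiliser---is immediate from Proposition \ref{ActionOfVeechGroupOnVariousThings}: each $\varphi\in\Aff^+(X,\omega)$ acts on the disk as the M\"obius transformation determined by its derivative $D\varphi$, which specialises to the identity precisely when $\varphi\in\Aut(X,\omega)$. The rest of the plan concerns the reverse inclusion, that every $[\psi]\in\Mod_g$ preserving $f^{\omega}(\Delta)$ setwise arises from an affine self-map.

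First I would use that $\Mod_g$ acts on $\mathrsfs{T}_g$ by holomorphic Kobayashi isometries and that $f^{\omega}$ is itself a Kobayashi isometric embedding, so $[\psi]$ restricts to a Poincar\'e isometry of $\Delta$, hence to a M\"obius transformation. Comparing this with the list of actions in Proposition \ref{ActionOfVeechGroupOnVariousThings}(i) produces a matrix $A\in\SL_2(\bbr)$ such that $[\psi]$ acts on $\SO(2)\backslash\SL_2(\bbr )$ by right multiplication by $A^{-1}$ in the same way an affine map of linear part $A$ would. Evaluating $[\psi]$ at $f^{\omega}(0)=[X,\mathrm{id}]$ and comparing with $g_0^{\omega}(A^{-1})=[A^{-1}\cdot(X,\omega),\mathrm{id}]$ then yields a biholomorphism $\Phi\colon X\to A^{-1}\cdot X$ isotopic to $\psi^{-1}$.

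Next I would examine the composition $f\colon X\overset{\Phi}{\to}A^{-1}\cdot X\overset{\mathrm{id}}{\to}X$, which is topologically $\Phi$ (hence isotopic to $\psi^{-1}$) but whose Beltrami coefficient in $\omega$-flat charts is the constant $\mu_0(A^{-1})\cdot\overline{\omega}/\omega$ with dilatation $K(A^{-1})$. Theorem \ref{KobayashiEqualsTeichmueller} matches this dilatation to the Teichm\"uller distance from $[X,\mathrm{id}]$ to $[X,\psi^{-1}]$, so $f$ achieves the minimum and, by the uniqueness built into Teichm\"uller's theorem (already used in Theorem \ref{AffineMapsHaveMinimalDilatation}), is the unique Teichm\"uller representative in the isotopy class of $\psi^{-1}$.

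The hard part is to upgrade $f$ to an element of $\Aff^+(X,\omega)$: the construction merely exhibits $f$ as an affine map $(X,\Phi^{\ast}\omega')\to(X,\omega)$ with linear part $A$, where $\omega'$ is the abelian differential on $A^{-1}\cdot(X,\omega)$, not yet as an affine self-map of $(X,\omega)$. Here one uses that Teichm\"uller maps are determined by their initial and terminal quadratic differentials: the Beltrami form $\mu_0(A^{-1})\cdot\overline{\omega}/\omega$ forces the initial quadratic differential to be a unit-modulus scalar multiple of $\omega^2$, and the explicit construction of $f$ as an affine post-composition with $\mathrm{id}\colon A^{-1}\cdot(X,\omega)\to(X,\omega)$ forces the terminal one to be $\omega^2$. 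This pins down $\Phi^{\ast}\omega'=c\,\omega$ for some $c\in\bbc$ with $|c|=1$, so $\Phi$ is an isomorphism of translation surfaces up to a rotation that can be absorbed into the linear part; the resulting $f$ lies in $\Aff^+(X,\omega)$ and satisfies $[f^{-1}]=[\psi]$, completing the argument.
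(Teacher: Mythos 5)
Your reduction of the statement to the hard inclusion is fine, and the easy containments are handled exactly as in the paper (which, for the hard inclusion, simply cites \cite[Thm.\ 1]{EarleGardiner97} rather than proving it). Your overall strategy for that inclusion --- extract a M\"obius transformation and a matrix $A$ from the action on the disk, produce a biholomorphism $\Phi\colon X\to A^{-1}\cdot X$ in the isotopy class of $\psi$, and show $\Phi$ respects the flat structures --- is the right one. But the final step, where all the content sits, has a genuine gap.

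The problem is that your concluding comparison only uses the \emph{two-point} data ``$[\psi]$ sends $f^{\omega}(0)$ to a point $f^{\omega}(t)$ of the disk, with the extremal dilatation $K(A)$,'' and this data cannot pin down $\Phi^{\ast}\omega'=c\,\omega$. Concretely: the initial quadratic differential of the unique extremal map from $f^{\omega}(0)$ to $f^{\omega}(t)$ is a unit multiple of $\omega^2$ \emph{automatically}, because the map $f^{-1}=\Phi^{-1}\circ\mathrm{id}$ is affine with source $(X,\omega)$; and its terminal differential is a unit multiple of $(\Phi^{\ast}\omega')^2$ \emph{by construction}. Neither computation is independent of the other, so ``uniqueness of the Teichm\"uller map'' returns a tautology rather than the identity $(\Phi^{\ast}\omega')^2\propto\omega^2$. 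To see that something really is missing, suppose $A\in\SL(X,\omega)$ and let $\alpha\in\Aut(X)$ be an automorphism with $\alpha^{\ast}\omega\not\in\bbc\,\omega$ (such $\alpha$ exist, e.g.\ on the Wiman curves). If $\Phi_0$ is an affine identification $X\to A^{-1}\cdot X$, then $\Phi_0\circ\alpha$ is a biholomorphism in a different isotopy class sending $f^{\omega}(0)$ to the \emph{same} point $f^{\omega}(t)$ with the \emph{same} extremal dilatation, yet $(\Phi_0\circ\alpha)^{\ast}\omega'\propto\alpha^{\ast}\omega\not\propto\omega$ and the corresponding mapping class does not stabilize the disk. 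What you must use is the invariance of the \emph{whole} disk: $[\psi]$ carries the Teichm\"uller disk generated by $\omega$ at $f^{\omega}(0)$ to the Teichm\"uller disk generated by $(h^{-1})^{\ast}\omega$ at $f^{\omega}(t)$ (where $h\simeq\psi$ is the identifying biholomorphism), and since this image is again $f^{\omega}(\Delta)$, which at $f^{\omega}(t)$ is generated by the differential $\omega'$ of $A^{-1}\cdot(X,\omega)$, the uniqueness of the quadratic differential generating a given Teichm\"uller disk through a given marked surface (equivalently, the uniqueness of the Teichm\"uller disk through two points, quoted in the paper as a consequence of Teichm\"uller's theorem) forces $h^{\ast}\omega'\propto\omega$. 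With that lemma inserted in place of your initial/terminal comparison, the rest of your argument closes up correctly.
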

\begin{proof}
As to the first statement, it follows from the proposition above that the image of $\Aff^+(X,\omega )$ is contained in the stabilizer. The other inclusion is proved in \cite[Thm. 1]{EarleGardiner97}.

Now an automorphism of the pair $(X,\omega )$ has the identity as derivative, so it acts trivially on the Teichm\"{u}ller disk.
\end{proof}
\begin{TheMirrorVG}
Now recall the short exact sequence (\ref{SESforVeechGroups}). This can be re-interpreted in our context as follows: by Proposition \ref{InclusionOfAffineGroupIntoMCG}, the quotient $\Aff^+(X,\omega )/\Aut (X,\omega )$ acts on the Teichm\"{u}ller disk $f^{\omega }(\Delta )$; the short exact sequence identifies this quotient with the Veech group $\SL (X,\omega )$. Pulling back along $h^{\omega }:\bbh\to\mathrsfs{T}_g$ we obtain an action of $\SL (X,\omega )$ on the upper half plane. This is, up to a conjugation by $\operatorname{diag }(-1,1)$, the usual action by M\"{o}bius transformations. Hence we define
\begin{equation*}
\Gamma (X,\omega)=
\begin{pmatrix}
-1&0\\
0&1
\end{pmatrix}
\cdot\SL (X,\omega )\cdot
\begin{pmatrix}
-1&0\\
0&1
\end{pmatrix}
\subset\SL_2(\bbr ).
\end{equation*}
This is a non-cocompact Fuchsian group by Proposition \ref{VeechGroupIsFuchsianNonCocompact}, called the \textit{mirror Veech group} of $(X,\omega )$. We will sometimes also consider its image in $\PSL_2(\bbr )$, which we denote by $\mathrm{P}\Gamma (X,\omega )$ and call the \textit{projective mirror Veech group} of $(X,\omega )$.
\end{TheMirrorVG}
\begin{Corollary} Consider the composition
\begin{equation*}
\bbh\overset{h^{\omega }}{\to }\mathrsfs{T}_g\to \mathrsfs{M}_g;
\end{equation*}
this factors over the quotient of $\bbh $ by the Fuchsian group
$\Gamma (X,\omega )$, giving a holomorphic immersion $\Gamma (X,\omega )\backslash\bbh\to\mathrsfs{M}_g.$\hfill $\square$
\end{Corollary}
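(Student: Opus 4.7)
The plan has two parts: establish the factorization, then verify that the factored map is a holomorphic immersion.

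For the factorization, I would argue as follows. Take $\gamma \in \Gamma(X,\omega)$ and $\tau \in \bbh$. By definition of the mirror Veech group, $\gamma = FAF^{-1}$ for some $A \in \SL(X,\omega)$, where $F = \operatorname{diag}(-1,1)$, and by surjectivity in the short exact sequence (\ref{SESforVeechGroups}) there exists $\varphi \in \Aff^+(X,\omega)$ with $D\varphi = A$. By Proposition \ref{ActionOfVeechGroupOnVariousThings}(iii), the induced mapping class $[\varphi] \in \Mod_g$ acts on $\bbh$ (pulled back along $h^{\omega}$) exactly as the M\"{o}bius transformation $FAF^{-1} = \gamma$. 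Thus
\begin{equation*}
h^{\omega}(\gamma\tau) = [\varphi] \cdot h^{\omega}(\tau)
\end{equation*}
in $\mathrsfs{T}_g$. Since $\mathrsfs{M}_g = \Mod_g \backslash \mathrsfs{T}_g$ by construction, the quotient map $\mathrsfs{T}_g \to \mathrsfs{M}_g$ identifies $h^{\omega}(\gamma\tau)$ with $h^{\omega}(\tau)$, so the composition $\bbh \to \mathrsfs{T}_g \to \mathrsfs{M}_g$ is $\Gamma(X,\omega)$-invariant and descends to a well-defined map $\Gamma(X,\omega)\backslash\bbh \to \mathrsfs{M}_g$.

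Next, for holomorphicity: the map $h^{\omega}$ is holomorphic (this was established in the previous subsection), the projection $\bbh \to \Gamma(X,\omega)\backslash\bbh$ is a holomorphic quotient by a properly discontinuous Fuchsian action, and $\mathrsfs{T}_g \to \mathrsfs{M}_g$ is holomorphic; the induced map on the quotient is therefore holomorphic as well.

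Finally, for the immersion property: since $h^{\omega}: \bbh \to \mathrsfs{T}_g$ is a Kobayashi isometry between spaces both carrying nontrivial Kobayashi metrics, its differential must be everywhere injective — if $(dh^{\omega})_\tau(v) = 0$ for some nonzero $v \in T_\tau \bbh$, then $K_{\mathrsfs{T}_g}((dh^{\omega})_\tau(v)) = 0$ while $K_\bbh(v) > 0$, contradicting the isometry property (Theorem \ref{KobayashiEqualsTeichmueller} plus the hyperbolicity of $\mathrsfs{T}_g$). Thus $h^{\omega}$ is a holomorphic immersion. The quotient $\bbh \to \Gamma(X,\omega)\backslash\bbh$ is a local biholomorphism away from the (isolated) fixed points of elliptic elements, and $\mathrsfs{T}_g \to \mathrsfs{M}_g$ is a local isomorphism away from the (analytic) locus where $\Mod_g$ has nontrivial stabilizer; working locally on $\bbh$ and reading the differential through these local isomorphisms, the induced map $\Gamma(X,\omega)\backslash\bbh \to \mathrsfs{M}_g$ inherits injective differential on the smooth locus, which is what is meant by a holomorphic immersion into the normal complex space $\mathrsfs{M}_g$.

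The only subtle step is the compatibility at orbifold points of $\Gamma(X,\omega)\backslash\bbh$; by Proposition \ref{InclusionOfAffineGroupIntoMCG} the pointwise stabilizer in $\Mod_g$ of a point on the disk contains the image of $\Aut(X,\omega)$, so the $\Mod_g$-stabilizers in $\mathrsfs{T}_g$ and the $\Gamma(X,\omega)$-stabilizers in $\bbh$ match up, and the induced map at orbifold points is still an immersion in the appropriate orbifold or complex-space sense.
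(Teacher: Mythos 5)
Your factorization argument is exactly the one the paper intends: the corollary is stated with no separate proof because it follows immediately from Proposition \ref{ActionOfVeechGroupOnVariousThings} and the identification of $\Aff^+(X,\omega )/\Aut (X,\omega )$ with $\SL (X,\omega )$ acting on $\bbh$ through the conjugated Möbius action, which is precisely what you reconstruct. Your additional verification of the immersion property via the Kobayashi isometry (noting only that passing from the global isometry to injectivity of the differential uses Royden's identification of the global metric with the integrated infinitesimal one) is a correct supplement to a point the paper leaves implicit, so the proposal is fine.
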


Let us make some remarks on how the Veech group changes under some simple operations on the translation surface.
\begin{Proposition}
Let $(X,\omega )$ and $(X',\omega ')$ be equivalent under the $\GL_2^+(\bbr )$-operation on $\Omega\mathrsfs{M}_g$. Then the groups $\SL (X,\omega )$ and $\SL (X',\omega ')$ are conjugate within $\SL_2(\bbr )$. Hence also the groups $\Gamma (X,\omega )$ and $\Gamma (X',\omega ')$ are conjugate within $\SL_2(\bbr )$.\hfill $\square$
\end{Proposition}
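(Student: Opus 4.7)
The plan is to construct, from any affine self-map of $(X,\omega)$, a corresponding affine self-map of $B\cdot(X,\omega)$ whose linear part is the conjugate by $B$. First I would recall the observation used in the proof of Theorem~3.3.5: for any $B\in\GL_2^+(\bbr)$, the identity map $(X,\omega)\to B\cdot(X,\omega)$ is affine with linear part $B$, and the inverse identity map has linear part $B^{-1}$. These are just instances of the local computation on $(\bbc,\de z)$ versus $B\cdot(\bbc,\de z)$.

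Assume now $(X',\omega')=B\cdot(X,\omega)$ with $B\in\GL_2^+(\bbr)$, and let $\varphi\in\Aff^+(X,\omega)$ with derivative $A\in\SL(X,\omega)$. Consider the composition
\begin{equation*}
B\cdot(X,\omega)\xrightarrow{\mathrm{id}}(X,\omega)\xrightarrow{\varphi}(X,\omega)\xrightarrow{\mathrm{id}}B\cdot(X,\omega).
\end{equation*}
Each arrow is affine, so the composition is an affine self-map of $(X',\omega')$, and its linear part is $B\cdot A\cdot B^{-1}$. This exhibits $B\,\SL(X,\omega)\,B^{-1}\subseteq\SL(X',\omega')$. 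Running the same argument with $B$ replaced by $B^{-1}$ (so starting from $(X,\omega)=B^{-1}\cdot(X',\omega')$) gives the reverse inclusion. Hence $\SL(X',\omega')=B\,\SL(X,\omega)\,B^{-1}$.

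The remaining task is to replace $B\in\GL_2^+(\bbr)$ by an element of $\SL_2(\bbr)$. Set $\lambda=\sqrt{\det B}>0$ and $B'=\lambda^{-1}B\in\SL_2(\bbr)$. Since the scalar $\lambda I$ is central in $\GL_2^+(\bbr)$, we have $B\,A\,B^{-1}=B'\,A\,B'^{-1}$ for every $A$, so conjugation by $B$ coincides with conjugation by $B'\in\SL_2(\bbr)$. This proves the first claim. (One can interpret the passage from $B$ to $B'$ as rescaling $\omega'$, which corresponds to a scalar distortion and manifestly leaves the Veech group invariant: rescaling changes translation charts only by a real dilation, so affine self-maps and their linear parts persist.) The second claim about $\Gamma(X,\omega)$ is now immediate: by definition $\Gamma(X,\omega)=F\,\SL(X,\omega)\,F^{-1}$ for the fixed involution $F=\operatorname{diag}(-1,1)$, hence $\Gamma(X',\omega')=F B' F^{-1}\cdot\Gamma(X,\omega)\cdot(FB'F^{-1})^{-1}$, and $FB'F^{-1}\in\SL_2(\bbr)$.

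The only step that requires any care is the last paragraph, namely making sure the conjugating matrix can be taken in $\SL_2(\bbr)$ rather than merely in $\GL_2^+(\bbr)$; otherwise the argument is essentially a functoriality check. I do not expect any serious obstacle.
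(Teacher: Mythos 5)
Your proposal is correct: the conjugation formula $\SL(X',\omega')=B\,\SL(X,\omega)\,B^{-1}$ follows exactly as you say from the fact (used in the paper's proof that the Veech group is the stabilizer) that the identity maps $(X,\omega)\to B\cdot(X,\omega)$ and back are affine with linear parts $B$ and $B^{-1}$, and the reduction from $\GL_2^+(\bbr)$ to $\SL_2(\bbr)$ via the central scalar $\sqrt{\det B}$ is exactly the missing care the statement requires. The paper omits the proof entirely (the statement is closed with a $\square$), and what you wrote is the intended argument.
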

\begin{TheImageInMS}
We are primarily interested in the case where this group is as big as possible:
\end{TheImageInMS}
\begin{Definition}
Let $X$ be a closed Riemann surface and let $\omega$ be an abelian differential on $X$. Then $(X,\omega )$ is called a \textup{Veech surface} if the group $\Gamma (X,\omega )\subset\PSL_2(\bbr )$ (or, equivalently, $\PSL (X,\omega )\subset\PSL_2(\bbr )$) is a lattice.
\end{Definition}
We recall that a Fuchsian group $\Gamma$ is a lattice if it has finite covolume or, equivalently, if the quotient surface $\Gamma\backslash\bbh$ is of finite type, hence an algebraic curve.
\begin{Theorem}
Let $X$ be a closed Riemann surface and let $\omega$ be an abelian differential. Denote the image of $f^{\omega }(\Delta )\subset\mathrsfs{T}_g$ in $\mathrsfs{M}_g$ by $C$. Then the following are equivalent:
\begin{enumerate}
\item $(X,\omega )$ is a Veech surface;
\item $C\subset\mathrsfs{M}_g$ is an algebraic subvariety;
\item there exists a complete algebraic curve $\overline{C}\subset\overline{\mathrsfs{M}_g}$, where $\overline{\mathrsfs{M}_g}$ is the Deligne-Mumford compactification of $\mathrsfs{M}_g$, such that $\overline{C}\cap\partial\mathrsfs{M}_g$ is finite and $C=\overline{C}\cap\mathrsfs{M}_g$.
\end{enumerate}
\end{Theorem}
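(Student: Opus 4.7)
The plan is to prove the cycle (iii) $\Rightarrow$ (ii) $\Rightarrow$ (i) $\Rightarrow$ (iii). The implication (iii) $\Rightarrow$ (ii) is immediate since $C = \overline{C} \cap \mathrsfs{M}_g$ is then quasi-projective. For (ii) $\Rightarrow$ (i) I would consider the normalization $\widetilde{C} \to C$, a smooth irreducible quasi-projective algebraic curve and hence a Riemann surface of finite hyperbolic volume. The holomorphic immersion $\Gamma(X,\omega)\backslash\bbh \to C$ lifts, by the universal property of normalization applied to the smooth source, to a holomorphic map $\psi: \Gamma(X,\omega)\backslash\bbh \to \widetilde{C}$. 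Using Proposition \ref{InclusionOfAffineGroupIntoMCG} together with the short exact sequence (\ref{SESforVeechGroups}), each fibre of $\bbh \to \mathrsfs{T}_g \to \mathrsfs{M}_g$ over a point of $C$ is a single orbit of the image of $\Aff^+(X,\omega)$ and hence a single $\Gamma(X,\omega)$-orbit, which makes $\psi$ injective. Its image is then an open subset of $\widetilde{C}$ whose complement is contained in the preimage of the finite singular locus of $C$, hence finite; so $\Gamma(X,\omega)\backslash\bbh$ is biholomorphic to an open subset of the finite-type Riemann surface $\widetilde{C}$ with finite complement, itself of finite hyperbolic volume, so that $\Gamma(X,\omega)$ is a lattice.

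For (i) $\Rightarrow$ (iii), assuming $\Gamma(X,\omega)$ is a lattice, the quotient $Y := \Gamma(X,\omega)\backslash\bbh$ is biholomorphic to $\overline{Y} \smallsetminus \{q_1,\ldots,q_s\}$ for a smooth projective curve $\overline{Y}$, with the $q_i$ corresponding to the cusps of $\Gamma(X,\omega)$. I would extend the holomorphic immersion $\varphi: Y \to \mathrsfs{M}_g$ coming from the previous corollary to a morphism $\overline{\varphi}: \overline{Y} \to \overline{\mathrsfs{M}_g}$ and take $\overline{C}$ to be its image; this will then be a complete algebraic curve meeting $\partial\mathrsfs{M}_g$ in at most the finite set $\{\overline{\varphi}(q_1),\ldots,\overline{\varphi}(q_s)\}$. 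The extension is local at each puncture. Around $q_i$, the local monodromy of the pulled-back family in $\Mod_g$ is the image of a parabolic element of $\SL(X,\omega)$ stabilizing that cusp; by the Jenkins--Strebel cylinder decomposition in the corresponding direction (cf.\ the discussion around (\ref{ParabolicMatrixII})), this image is a product of Dehn twists along the core curves of the cylinders, hence a unipotent mapping class. After passing to a finite étale cover $Y'$ of $Y$ over which the pulled-back family acquires a level-$n$-structure for some $n \ge 3$, the classifying map lifts to the fine moduli space $Y' \to \mathrsfs{M}_g^{[n]}$; unipotency of the local monodromies together with the properness of the Deligne--Mumford compactification $\overline{\mathrsfs{M}_g^{[n]}}$ then allow extension across the punctures via the valuative criterion, producing a morphism $\overline{Y'} \to \overline{\mathrsfs{M}_g^{[n]}}$ of projective varieties that descends to the desired $\overline{\varphi}$.

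The technical heart is precisely this extension step in (i) $\Rightarrow$ (iii): one must identify the local monodromy around each cusp as a unipotent mapping class (leveraging the relation between parabolic Veech-group elements, Jenkins--Strebel directions and multi-twists developed in Section~3.4) and then apply properness of the Deligne--Mumford compactification to fill in the map at the punctures. The remaining implications are essentially formal manipulations with the universal property of the normalization and with the injective nature of the Teichmüller disk's image in moduli space established via Proposition \ref{InclusionOfAffineGroupIntoMCG}.
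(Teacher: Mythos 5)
Your overall architecture is reasonable and more ambitious than the paper's own proof (which declares (iii) $\Rightarrow$ (ii) $\Rightarrow$ (i) clear and quotes Herrlich--Schmith\"usen plus GAGA for (i) $\Rightarrow$ (iii)), and you correctly identify the key geometric input for the hard direction: parabolic elements, Jenkins--Strebel decompositions and multi-twists giving quasi-unipotent local monodromy at the cusps. But two steps have genuine gaps. In (ii) $\Rightarrow$ (i), your injectivity claim for $\psi$ does not follow from Proposition \ref{InclusionOfAffineGroupIntoMCG}: that result identifies the image of $\Aff^+(X,\omega )$ with the \emph{setwise stabilizer} of the disk $f^{\omega }(\Delta )$, but it says nothing about a mapping class $\gamma$ with $\gamma\cdot f^{\omega }(t_1)=f^{\omega }(t_2)$ which carries the disk to a \emph{different} Teichm\"uller disk through the same point. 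Such classes exist in general --- this is precisely why the paper's Corollary only yields a holomorphic \emph{immersion} $\Gamma (X,\omega )\backslash\bbh\to\mathrsfs{M}_g$, and Teichm\"uller curves may have self-crossings. Without injectivity, or at least properness, of $\psi$ you cannot conclude that $\Gamma (X,\omega )\backslash\bbh$ has finite complement in $\widetilde{C}$: a non-proper holomorphic map onto a cofinite subset is entirely possible a priori, so this direction still needs a real argument (e.g.\ the local-isometry property of the classifying map, or Smillie's closed-orbit criterion).

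In (i) $\Rightarrow$ (iii), the extension across the punctures cannot be obtained from the valuative criterion of properness: that criterion applies to \emph{algebraic} morphisms, whereas at this stage the classifying map $Y'\to\mathrsfs{M}_g^{[n]}$ is only known to be holomorphic --- its algebraicity is exactly what you are trying to prove, so the argument is circular (compare $\mathrm{e}^{1/z}:\Delta^{\ast }\to\bbc\subset\mathbb{P}^1(\bbc )$, which does not extend despite the properness of the target). What is needed is a genuinely analytic extension theorem: Borel's extension theorem applied via the Torelli map using the quasi-unipotence you established, or the fact that $\mathrsfs{M}_g^{[n]}$ is hyperbolically embedded in its Deligne--Mumford compactification, or the semistable-reduction statement that a holomorphic family of curves over a punctured disk with unipotent monodromy extends to a stable family over the disk. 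Even then one only obtains a compact \emph{analytic} curve in $\overline{\mathrsfs{M}_g}$, and the passage to an algebraic curve is the GAGA step that the paper's proof retains.
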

\begin{proof}
The implications (iii) $\Rightarrow$ (ii) $\Rightarrow$ (i) are clear. It is shown in
 \cite[section 4.2]{HerrlichSchmithuesen07}
that (i) implies the ``analytic version'' of (iii), i.e. (iii) with ``complete algebraic curve'' replaced by ``complete analytic subspace''. But using the GAGA theorem \cite{Serre56}, one sees that this is equivalent to (iii).
\end{proof}
Note that this Theorem justifies the name ``Teichm\"{u}ller curve''.
\begin{Theorem}
Let $(X,\omega )$ be a Veech surface, then the algebraic curve $C$ and its embedding into $\mathrsfs{M}_g$ are defined over $\overline{\bbq }$. Furthermore, for any automorphism $\sigma\in\operatorname{Gal}(\overline{\bbq }|\bbq )$, the curve $C^{\sigma }$ with the embedding $C^{\sigma }\to\mathrsfs{M}_g^{\sigma }=\mathrsfs{M}_g$ is also a Teichm\"{u}ller curve.
\end{Theorem}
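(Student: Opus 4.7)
The plan is to prove both statements simultaneously. The Galois-invariance of the Teichm\"uller-curve property yields the second assertion, while combining this with countability of the set of Teichm\"uller curves in $\mathrsfs{M}_g$ yields the first via a rigidity argument. I would first pass to a level cover: for $n\ge 3$, the pre-image $\widetilde{C}$ of $C$ in the fine moduli space $\mathrsfs{M}_g^{[n]}$ (defined over $\bbq$) is a finite union of components, and it suffices to show that each component descends to $\overline{\bbq}$ and that its Galois conjugates are Teichm\"uller curves.

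The first ingredient is countability: the set of Teichm\"uller curves in $\mathrsfs{M}_g$ is at most countable. By Proposition \ref{AffineGroupInjectsIntoModg} the affine group $\Aff^+(X,\omega)$ embeds into the countable mapping class group $\Mod_g$, and by the short exact sequence (\ref{SESforVeechGroups}) it is a finite extension of the lattice $\SL(X,\omega)$, hence finitely generated. A countable group has only countably many finitely generated subgroups up to conjugation; and by Proposition \ref{InclusionOfAffineGroupIntoMCG} a Teichm\"uller curve is determined by the $\Mod_g$-conjugacy class of the image of its affine group. Hence the claim.

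The second ingredient is Galois-invariance of the Teichm\"uller-curve property. Here I would invoke the algebro-geometric characterization developed in the later chapters of the paper (going back to M\"oller): a smooth algebraic curve $C'\subset\mathrsfs{M}_g^{[n]}$ with universal family $f$ is a Teichm\"uller curve precisely when $R^1 f_\ast\bbc |_{C'}$ contains a rank-two sub-variation of Hodge structure whose associated Higgs sub-bundle has maximal degree, equivalently whose monodromy is a cocompact lattice in $\PSL_2(\bbr)$. This criterion is formulated purely in terms of algebraic de Rham cohomology, the Hodge filtration and the Kodaira-Spencer map, all of which commute with base change by any $\sigma\in\Aut(\bbc|\bbq)$. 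Hence $C^\sigma$ is again a Teichm\"uller curve for every such $\sigma$.

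Combining these, the $\Aut(\bbc|\bbq)$-orbit of $C$ consists of Teichm\"uller curves and is therefore countable. The classical rigidity principle --- that a subvariety of a $\bbq$-variety whose $\Aut(\bbc|\bbq)$-orbit is countable must descend to $\overline{\bbq}$ --- then forces $C$ and its embedding into $\mathrsfs{M}_g$ to be defined over $\overline{\bbq}$. The second assertion of the theorem is exactly the established invariance specialized to $\Gal(\overline{\bbq}|\bbq)$. The main obstacle is that this strategy depends crucially on M\"oller's algebraic characterization of Teichm\"uller curves, which is not established in the excerpted material and itself rests on the deep Deligne-Schmid theory of variations of Hodge structure; a direct Euclidean-geometric proof seems unavailable, since the Teichm\"uller-curve property is, in its original formulation via Kobayashi isometries, inherently analytic and not manifestly Galois-stable.
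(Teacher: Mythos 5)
Your proposal follows essentially the same route as the paper: countability of Teichm\"uller curves (the paper counts them via pseudo-Anosov axes of closed geodesics rather than via finitely generated subgroups of $\Mod_g$, but both arguments rest on the countability of the mapping class group), Galois-invariance via M\"oller's purely algebro-geometric Higgs-bundle criterion, and the rigidity principle that a countable $\Aut (\bbc )$-orbit forces descent to $\overline{\bbq }$. One small correction: the monodromy of the distinguished rank-two piece is the mirror Veech group, which is a \emph{non-cocompact} lattice (Proposition \ref{VeechGroupIsFuchsianNonCocompact}), so your parenthetical ``equivalently \ldots\ cocompact lattice'' should be dropped; the maximal-Higgs formulation is the one that is both correct and manifestly Galois-stable.
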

\begin{proof}
The first part was first shown by McMullen in \cite{McMullen09} by a rigidity argument; the second part was shown under some extra condition by M\"{o}ller in \cite{Moeller05b}. We give a combination of both proofs which is, as we think, simpler, but still relies on a result in \cite{Moeller05b} that we will not discuss in the present work, in order to keep it shorter.

First we cite Proposition 2 in \cite{McMullen09}; actually a weakening which says that there exist at most countably many Teichm\"{u}ller curves in each $\mathrsfs{M}_g$. The proof is simple: each Teichm\"{u}ller curve is, by analytic continuation, completely determined by giving one closed (real) geodesic $\gamma\subset\mathrsfs{M}_g$ it contains. But these geodesics are in bijection with pseudo-Anosov elements of the mapping class group $\operatorname{Mod}_g$, which is clearly countable.

Then we use Theorem 5.3 in \cite{Moeller05b}: it tells us that we can detect whether a curve in $\mathrsfs{M}_g$ is a Teichm\"{u}ller curve by looking at its Higgs bundle. We do not need to know what that precisely means; the crucial point is that M\"{o}ller's criterion is purely algebraic-geometric, making no use of complex analysis (in the proof, of course, it does, but not in the formulation). So the property of being a Teichm\"{u}ller curve is invariant under the operation of $\Aut\bbc$.

Finally let us take a Teichm\"{u}ller curve $C\subset\mathrsfs{M}_g$ and consider it as a point in a suitable Hilbert scheme. Combining the two observations drawn so far, its orbit in this Hilbert scheme under $\Aut\bbc$ must be countable. But this actually means that the orbit must be \emph{finite}, and the curve defined over a number field.
\end{proof}

\begin{TheFamilyOverATC}
Let $X$ be a closed Riemann surface of genus $g\ge 2$ together with a Teichm\"{u}ller marking, and let $\omega$ be an abelian differential on $X$. We get a Teichm\"uller disk $h^{\omega }:\bbh\to\mathrsfs{T}_g$. By the universal property of Teichm\"{u}ller space, this corresponds to a family of algebraic curves $\tilde{f}:\tilde{\mathrsfs{X}}\to\bbh$. Topologically this family is trivial, i.e. can be interpreted as the constant family $\bbh\times X\to\bbh$, but the complex structure on $\tilde{\mathrsfs{X}}_{\tau }$ for $\tau$ in $\bbh$ is a new one, namely the complex structure underlying $A\cdot (X,\omega )$ for any $A\in\SL_2(\bbr )$ with $A\cdot\mathrm{i}=\tau$, where $A$ operates on $\bbh$ as in Proposition \ref{ActionOfVeechGroupOnVariousThings}.

It is also interesting to see what happens when going down to a quotient by a subgroup of the mirror Veech group. There is a finite index torsion-free subgroup $\Gamma$ of $\Gamma (X,\omega )$ such that $\bbh\to\mathrsfs{M}_g^{[3]}$ factors over $\Gamma\backslash\bbh$. This gives a family of curves $\mathrsfs{X}\to\Gamma\backslash\bbh$. This is particularly interesting when $(X,\omega )$ is a Veech surface. Then $\Gamma$ is also a lattice, so that the quotient $C=\Gamma\backslash\bbh$ becomes an algebraic curve. Since the classifying map of the family $\mathrsfs{X}\to C$ is algebraic, the whole family is algebraic and, by McMullen's result, even defined over a number field.

Let us now consider our examples.
\end{TheFamilyOverATC}
\begin{Origamis}
The Veech group, and hence also the mirror Veech group, of the unit torus $(\bbc /\bbz [\mathrm{i}],\de z)$ is $\SL_2(\bbz )$. The induced Teichm\"{u}ller curve is the classical isomorphism $\SL_2(\bbz )\backslash\bbh \to\mathrsfs{M}_1$. In particular any translation surface in genus one is a Veech surface. In a later section (see Theorem \ref{VeechGroupsCommensurable}) we will see how Veech groups behave under finite covers; this will entail that the Veech group of any Origami is a finite index subgroup of $\SL_2(\bbz )$. Hence every origami is a Veech surface.
\end{Origamis}
Before turning to the more complicated examples, a helpful observation in hyperbolic geometry:
\begin{Lemma}\label{LemmaInHyperbolicGeometry}
Let $0<\vartheta <\frac{\pi}{2}$, and let $\ell\subset\bbh$ be the unique hyperbolic geodesic with the following properties:
\begin{enumerate}
\item $\mathrm{i}\in \ell$;
\item $\ell$ is a euclidean semi-circle with centre $a>0$;
\item $\ell$ meets the imaginary axis with an angle of $\vartheta$.
\end{enumerate}
Let further $b\in\bbr$ the unique positive endpoint of $\ell$. Then $a=\cot\vartheta$ and $b=\cot\frac{\vartheta}{2}$.
\end{Lemma}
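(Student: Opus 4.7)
The plan is to use the fact that hyperbolic geodesics in $\bbh$ are Euclidean semicircles orthogonal to the real axis, so the entire problem reduces to elementary Euclidean geometry in the upper half plane. The geodesic $\ell$ is by assumption a Euclidean semicircle with centre $a$ on the real axis and passing through $\mathrm{i}$; hence its Euclidean radius is $\sqrt{a^2+1}$, and the endpoints on the real axis are $a\pm\sqrt{a^2+1}$. In particular $b=a+\sqrt{a^2+1}$.

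For the first claim, I would compute the angle at $\mathrm{i}$. The tangent line to $\ell$ at $\mathrm{i}$ is perpendicular to the Euclidean radius joining $(a,0)$ and $(0,1)$, which has direction $(-a,1)$; so this tangent has direction $(1,a)$. The imaginary axis has direction $(0,1)$. Therefore
\begin{equation*}
\cos\vartheta=\frac{(1,a)\cdot(0,1)}{\sqrt{1+a^2}}=\frac{a}{\sqrt{1+a^2}},\qquad \sin\vartheta=\frac{1}{\sqrt{1+a^2}},
\end{equation*}
which immediately yields $a=\cot\vartheta$. (The assumption $0<\vartheta<\pi/2$ together with $a>0$ ensures we are in the correct branch.)

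For the second claim, substitute $a=\cot\vartheta$ into $b=a+\sqrt{a^2+1}$:
\begin{equation*}
b=\cot\vartheta+\frac{1}{\sin\vartheta}=\frac{\cos\vartheta+1}{\sin\vartheta},
\end{equation*}
and now the half-angle identities $1+\cos\vartheta=2\cos^2(\vartheta/2)$ and $\sin\vartheta=2\sin(\vartheta/2)\cos(\vartheta/2)$ give $b=\cot(\vartheta/2)$.

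There is no real obstacle here: the whole statement is a pair of short Euclidean-trigonometric identities, and the only thing to watch is that the orientation conventions (the centre $a$ positive, the angle $\vartheta$ in $(0,\pi/2)$, and $b$ chosen to be the positive endpoint) are consistent so that no sign ambiguities arise from the square roots.
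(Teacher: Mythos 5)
Your proof is correct and follows essentially the same strategy as the paper's: everything is reduced to elementary Euclidean geometry of the semicircle through $\mathrm{i}$ with centre $a$, and your identification $a=\cot\vartheta$ via the tangent direction $(1,a)$ is just the coordinate version of the paper's observation that the angle $\alpha$ at the centre equals $\vartheta$ because both are complementary to $\angle(a,\mathrm{i},0)$. The only cosmetic difference is in the second identity: where you compute the radius $\sqrt{a^2+1}$ explicitly and finish with the half-angle formulas, the paper gets $b=\cot(\vartheta/2)$ synthetically from the central angle theorem (the inscribed angle at $b$ is half the central angle $\alpha=\vartheta$) --- these are two phrasings of the same fact, and your sign/branch bookkeeping is handled correctly.
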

\begin{proof}
Consider the following sketch:
\begin{center}
\begin{tikzpicture}[scale=2]
\draw
    (1.3764,0) coordinate (B)
    (0.3249,0) coordinate (A)
    (0,1) coordinate (I)
    (0,0) coordinate (O);
\draw[gray] (0,0) -- (0,2)
    (A) -- (I)
    (B) -- (I);
\draw[black]
    (-1,0) -- (2,0)
    (1.3764,0) arc (0:180:1.0515cm);
\draw (-0.8,0.5) node {$\ell$}
    (A) ++(0,-0.15) node {$a$}
    (B) ++(0,-0.15) node {$b$}
    (O) ++(0,-0.15) node {$0$}
    (I) ++(-0.1,0.1) node {$\mathrm{i}$}
    (A) ++(-0.13,0.1) node {$\alpha$}
    (B) ++(-0.4,0.1) node {$\beta$}
    (I) ++(0.1,0.15) node {$\vartheta$};
\filldraw[black]
    (A) circle (1pt)
    (B) circle (1pt)
    (I) circle (1pt)
    (O) circle (1pt);
\end{tikzpicture}
\end{center}
Then $\alpha=\vartheta$, since both of them add up with the angle $\angle (a,\mathrm{i},0)$ to $\frac{\pi}{2}$. By the central angle theorem we then get $\beta=\alpha /2=\vartheta /2$. Hence by definition of the cotangent, the lemma follows.
\end{proof}
\begin{RegularPolygonsI}
Let us compute the (projective) mirror Veech group $\mathrm{P}\Gamma (W_g,\omega_g)$, i.e. of a regular $2n$-gon with opposite sides identified, where $n=2g+1$ is odd. In section 3.3 we have identified two elements of $\SL (W_g,\omega_g)$; conjugating them by $\operatorname{diag}(-1,1)$ gives the following two elements of the mirror Veech group:
\begin{equation}\label{GeneratorsOfTriangleGroupTwoNInfty}
\begin{pmatrix}
1&0\\
-2\cot\frac{\pi}{2n}&1
\end{pmatrix}
\text{ and }
\begin{pmatrix}
\cos\frac{\pi}{n}&\sin\frac{\pi}{n}\\
-\sin\frac{\pi}{n}&\cos\frac{\pi}{n}
\end{pmatrix}.
\end{equation}
\begin{Proposition}
The image of the two matrices (\ref{GeneratorsOfTriangleGroupTwoNInfty}) in $\PSL_2(\bbr )=\operatorname{Isom}\bbh$ generate a triangle group of signature $(n,\infty ,\infty )$. In particular the projective mirror Veech group $\mathrm{P}\Gamma (W_g,\omega_1)$ contains (a conjugate of) $\Delta (n,\infty ,\infty )$ as a finite index subgroup.
\end{Proposition}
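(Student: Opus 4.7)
The plan is to identify $\Gamma_0 := \langle A, B\rangle \subset\PSL_2(\bbr )$, where $A$ and $B$ are the projective images of the two matrices, with the Fuchsian triangle group $\Delta (n, \infty, \infty )$; the finite-index assertion will then follow at once. First I would collect elementary facts about the generators: $A$ has trace $2$ and is not the identity, so it is parabolic with unique fixed point $0$; $B \in\SO (2)$ has order $2n$ in $\SL_2(\bbr )$ and hence order exactly $n$ in $\PSL_2(\bbr )$, fixing $i \in\bbh$ as a rotation by $2\pi/n$.

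The central computation is $\tr (AB)$. Using the double-angle identity $\cot\frac{\pi}{2n}\sin\frac{\pi}{n}=2\cos^2\frac{\pi}{2n}=1+\cos\frac{\pi}{n}$, a short calculation gives
\begin{equation*}
\tr (AB) = 2\cos\tfrac{\pi}{n} - 2\cot\tfrac{\pi}{2n}\sin\tfrac{\pi}{n} = -2,
\end{equation*}
so $AB$ is parabolic in $\PSL_2(\bbr )$, and solving the fixed-point equation exhibits $v := -\tan\frac{\pi}{2n}$ as its fixed point. Hence $\Gamma_0$ realises the qualitative data (one elliptic of order $n$, two parabolic elements with distinct fixed points) of $\Delta (n, \infty, \infty )$; combined with the abstract presentation $\Delta (n, \infty, \infty )\cong\bbz/n\bbz *\bbz$, these relations yield a surjection $\Delta (n, \infty, \infty )\twoheadrightarrow\Gamma_0$.

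To upgrade this surjection to an isomorphism I would exhibit the hyperbolic triangle $T\subset\bbh$ with vertices $i$, $0$, $v$. The angles at the two cusp vertices $0$ and $v$ are $0$, since every hyperbolic geodesic meets the boundary vertically. The side from $i$ to $v$ lies on the Euclidean semicircle with centre $\cot\frac{\pi}{n}$ and radius $\csc\frac{\pi}{n}$, as one checks by a direct calculation, and so by Lemma \ref{LemmaInHyperbolicGeometry} applied with $\vartheta =\pi/n$ it makes angle $\pi /n$ with the imaginary axis at $i$. Thus $T$ has angles $(\pi/n, 0, 0)$, and the index-two rotation subgroup of the group generated by reflections in the sides of $T$ is by definition $\Delta (n, \infty, \infty )$. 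A further short M\"{o}bius computation identifies $B$ with $r_1 r_2$ (the rotation by $2\pi /n$ at $i$) and $A$ with $(r_2 r_3)^{\pm 1}$ (the parabolic at $0$ swapping the two sides of $T$ incident to $0$), where $r_1,r_2,r_3$ denote the reflections in the three sides. This forces $\Gamma_0=\Delta (n, \infty, \infty )$.

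The inclusion $\Delta (n, \infty, \infty )\subseteq\mathrm{P}\Gamma (W_g,\omega_g)$ then has finite index, since $\Delta (n, \infty, \infty )$ is a lattice of covolume $2(\pi -\pi /n)$ (twice the hyperbolic area of $T$) and the mirror Veech group is discrete by Proposition \ref{VeechGroupIsFuchsianNonCocompact}. The main obstacle I anticipate is the last matching step: knowing only that $B^n=1$ and that $A$ and $AB$ are parabolic yields a surjection $\Delta (n, \infty, \infty )\twoheadrightarrow\Gamma_0$, but does not a priori rule out a proper quotient, so one really needs the explicit geometric identification of $A$ and $B$ with reflection products on the specific triangle $T$. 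For the slightly asymmetric matrix $A$ this hinges on carefully tracking signs in the Möbius formulas for $r_2$ and $r_3$, which is the most error-prone but essential part of the argument.
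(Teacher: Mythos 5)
Your proof is correct and follows essentially the same route as the paper's: both arguments exhibit an explicit hyperbolic triangle with angles $(\pi /n,0,0)$ having a vertex at $\mathrm{i}$, invoke Lemma \ref{LemmaInHyperbolicGeometry} to locate the cusps, and identify the two given generators with products of reflections in its sides, concluding finite index from the finite covolume of the triangle group. The only difference is cosmetic: the paper puts one cusp at $\infty$ (so the second generator becomes a Euclidean translation of length $2\cot\frac{\pi}{2n}$) and conjugates by $\bigl(\begin{smallmatrix}0&-1\\1&0\end{smallmatrix}\bigr)$ at the end, whereas you work directly in the given coordinates with cusps at $0$ and $-\tan\frac{\pi}{2n}$, adding the (correct) verifications $\tr (AB)=-2$ and $A=(r_2r_3)^{-1}$ that the paper leaves implicit.
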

\begin{proof}
The triangle group $\Delta (n,\infty ,\infty )$ is well-defined up to conjugation in $\PSL_2(\bbr )$. We shall give generators for a specific incarnation and show that they are simultaneously conjugate to the matrices (\ref{GeneratorsOfTriangleGroupTwoNInfty}). Let $H$ be the unique horocycle which hits $\mathrm{i}\in\bbh$ with angle $\pi /n$, and let $b$ be the endpoint of $H$ which, considered as a real number, is positive. Then the ideal triangle with vertices $\mathrm{i}$, $b$ and $\infty$ has internal angles $\pi /n$ at $\mathrm{i}$, $0$ at $b$ and $0$ at $\infty$. So we get an incarnation of $\Delta (2,n,\infty )$ which is generated by the rotation around $\mathrm{i}$ with angle $2\pi /n$ and the translation of twice the width of this triangle to the right. But the width of this triangle is $\cot (\pi /2n)$, as one sees from Lemma \ref{LemmaInHyperbolicGeometry}. These two hyperbolic isometries are the M\"{o}bius transformations with matrices
\begin{equation*}
\begin{pmatrix}
\cos\frac{\pi }{n}&-\sin\frac{\pi }{n}\\
\sin\frac{\pi }{n}&\cos\frac{\pi }{n}
\end{pmatrix}
\text{ and }
\begin{pmatrix}
1&2\cot\frac{\pi }{2n}\\
0&1
\end{pmatrix}.
\end{equation*}
Conjugating both by
\begin{equation*}
\begin{pmatrix}
0&-1\\
1&0
\end{pmatrix},
\end{equation*}
one obtains exactly the inverses of the matrices (\ref{GeneratorsOfTriangleGroupTwoNInfty}). Hence the projective mirror Veech group of $(W_g,\omega_g)$ contains a copy of $\Delta (n,\infty ,\infty )$. Since this triangle group has finite covolume, the index must be finite.
\end{proof}
So we see that $(W_g,\omega_g)$ is a Veech surface, which gives rise to a Teichm\"{u}ller curve
\begin{equation*}
\Delta (n,\infty ,\infty )\backslash\bbh\simeq\bbc^{\times }\to\mathrsfs{M}_g.
\end{equation*}
This is no contradiction to our earlier remark that one only has isotrivial families of curves over $\bbc^{\times }$, since triangle groups always have fixed points. Hence if one wishes to construct a family of curves out of this map, one has to pass to a finite index subgroup of $\Delta (n,\infty, \infty )$.
\end{RegularPolygonsI}
\begin{RegularPolygonsII}
A very similar game can be played with the surfaces $(W_g,\omega_1)$, i.e. with two $n$-gons with corresponding opposite sides identified.
\begin{Proposition}
The projective mirror Veech group $\mathrm{P}\Gamma (W_g,\omega_1)$ contains a conjugate of the triangle group $\Delta (2,n,\infty )$ as a finite index subgroup.
\end{Proposition}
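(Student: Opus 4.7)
The plan is to mirror the strategy of the preceding proposition. I will realize a specific incarnation of $\Delta(2,n,\infty)$ inside $\PSL_2(\bbr)$ with explicit generators, and show that a simultaneous conjugate of two of them coincides (up to passing to powers) with the two elements of $\mathrm{P}\Gamma(W_g,\omega_1)$ produced in Section 3.3.

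First, I would take the hyperbolic triangle with vertices at $\mathrm{i}$ (internal angle $\pi/n$), at $\infty$ (angle $0$), and at $v=\cot(\pi/n)+\mathrm{i}\csc(\pi/n)$ (angle $\pi/2$), whose three sides are the imaginary axis, the vertical line $x=\cot(\pi/n)$, and the semicircle through $\mathrm{i}$ with Euclidean centre $\cot(\pi/n)$ on the real axis. That this triangle really has the claimed angles follows from Lemma \ref{LemmaInHyperbolicGeometry} (for the $\pi/n$-angle at $\mathrm{i}$) together with a direct centre–radius check (for the right angle at $v$, where the semicircle meets the vertical line $x=\cot(\pi/n)$ perpendicularly because that vertical line passes through the centre of the semicircle). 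The orientation-preserving triangle group is then generated by the rotation $\alpha$ by $2\pi/n$ around $\mathrm{i}$ and the parabolic $\gamma$ at $\infty$; the latter, as the composition of reflections in the two vertical sides, is translation by $2\cot(\pi/n)$. In matrix form,
\begin{equation*}
\alpha=\begin{pmatrix}\cos\frac{\pi}{n}&-\sin\frac{\pi}{n}\\ \sin\frac{\pi}{n}&\cos\frac{\pi}{n}\end{pmatrix},\qquad
\gamma=\begin{pmatrix}1&2\cot\frac{\pi}{n}\\0&1\end{pmatrix},
\end{equation*}
and $\langle\alpha,\gamma\rangle=\Delta(2,n,\infty)$ because the third generator is forced by the triangle-group relation.

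Next I would conjugate both generators by $S=\begin{pmatrix}0&-1\\1&0\end{pmatrix}$. Because $S$ fixes $\mathrm{i}\in\bbh$, conjugation leaves $\alpha$ unchanged, while a short matrix product gives $S\gamma S^{-1}=\begin{pmatrix}1&0\\-2\cot(\pi/n)&1\end{pmatrix}$, which is precisely the parabolic element $P$ that Section 3.3 produced as an element of $\mathrm{P}\Gamma(W_g,\omega_1)$ (after the mirror conjugation by $\operatorname{diag}(-1,1)$ to pass from the Veech group to its mirror). It remains to show that $\alpha$ itself lies in $\mathrm{P}\Gamma(W_g,\omega_1)$.

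Here lies the main, though mild, obstacle. Section 3.3 directly delivers only the rotation $R=\begin{pmatrix}\cos\frac{2\pi}{n}&\sin\frac{2\pi}{n}\\-\sin\frac{2\pi}{n}&\cos\frac{2\pi}{n}\end{pmatrix}$, coming from the order-$n$ rotational symmetry of each $n$-gon. A derivative computation at $\mathrm{i}$ shows that this matrix acts on $\bbh$ as rotation by $4\pi/n$ rather than $2\pi/n$; that is, $R=\alpha^{-2}$ in $\PSL_2(\bbr)$. However, since $n=2g+1$ is odd, $\gcd(2,n)=1$, so $\langle R\rangle=\langle\alpha\rangle$ as cyclic subgroups of order $n$, and therefore $\alpha\in\langle R\rangle\subseteq\mathrm{P}\Gamma(W_g,\omega_1)$. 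Combining everything, $S\,\Delta(2,n,\infty)\,S^{-1}=\langle\alpha,P\rangle\subseteq\mathrm{P}\Gamma(W_g,\omega_1)$. Since $\Delta(2,n,\infty)$ is a cofinite Fuchsian group, so is its conjugate; and since $\mathrm{P}\Gamma(W_g,\omega_1)$ is discrete in $\PSL_2(\bbr)$ by Proposition \ref{VeechGroupIsFuchsianNonCocompact}, a discrete group containing a lattice must contain it with finite index, giving the claim.
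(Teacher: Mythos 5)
Your proof is correct and follows essentially the same route as the paper's: realize $\Delta(2,n,\infty)$ via an explicit hyperbolic triangle with vertices $\mathrm{i}$, a right-angled vertex, and $\infty$, extract the width $\cot(\pi/n)$ from Lemma \ref{LemmaInHyperbolicGeometry}, and conjugate by $\bigl(\begin{smallmatrix}0&-1\\1&0\end{smallmatrix}\bigr)$ to match the two elements produced in Section 3.3. You are in fact more careful than the paper at one point: the rotation actually exhibited in Section 3.3 is $\alpha^{-2}$ rather than the order-$n$ generator $\alpha$ itself, and your observation that $\gcd(2,n)=1$ forces $\langle\alpha^{-2}\rangle=\langle\alpha\rangle$ closes a small gap that the paper's argument passes over silently.
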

\begin{proof}
As above, we construct explicit generators of $\Delta (2,n,\infty )$. Take the unique horocycle $H\subset\bbh$ which contains $\mathrm{i}$, has positive euclidean centre and meets the imaginary axis with angle $\pi /n$. Let $p$ be the point in $H$ with maximal imaginary part. Then $\mathrm{i}$, $p$ and $\infty$ are the vertices of an ideal triangle with angles $\pi /n$, $\pi/2$ and $0$ (in that order). Hence we get a copy of the triangle group which is generated by the rotation around $\mathrm{i}$ with angle $\pi /n$, and the translation by twice the width of the triangle to the right. The matrix of the rotation is as above, but the matrix of the translation now has (by Lemma (\ref{LemmaInHyperbolicGeometry}) $2\cot\pi /n$ instead of $2\cot\pi /2n$. Conjugating with the same matrix as above, we can identify $\Delta (2,n,\infty )$ with a subgroup of $\mathrm{P}\Gamma (W_g,\omega_1)$. Again, this triangle group has finite covolume, so the index must be finite.
\end{proof}
Since for our constructions we will often have to pass to finite index subgroups of a given Veech group, this information suffices for our purposes. Yet we note that the Veech groups just discussed have been determined exactly (and not only their commensurability classes). There is no Fuchsian group properly containing $\Delta (2,n,\infty )$, see \cite[Thm. 3B]{Greenberg63}, so we really have
\begin{equation*}
\mathrm{P}\Gamma (W_g,\omega_1)=\Delta (2,n,\infty ).
\end{equation*}
On the other hand, a hyperbolic ideal triangle with angles $\pi /n$, $0$ and $0$ can be cut into two ideal triangles with angles $\pi /2$, $\pi /2n$ and $0$ which are mirror images of each other. So we have an inclusion up to conjugation
\begin{equation*}
\Delta (n,\infty ,\infty )\subset\Delta (2,2n,\infty )
\end{equation*}
of index two, and one can show (see e.g. \cite{Veech89}) that indeed
\begin{equation*}
\mathrm{P}\Gamma (W_g,\omega _g)=\Delta (2,2n,\infty ).
\end{equation*}
Pierre Lochak has determined the thus obtained map $\Delta (2,2n,\infty )\backslash\bbh\simeq\bbc\to\mathrsfs{M}_g$: the complex number $t$ is sent to the hyperelliptic curve with affine equation
\begin{equation*}
y^2=\prod_{\nu =1}^{2g+1}(x-\zeta_n-\frac{t}{\zeta_n}),
\end{equation*}
where $\zeta_n=\exp\frac{2\pi\mathrm{i}}{n}$. This is \cite[Proposition 5.8]{LochakXY}.
\end{RegularPolygonsII}

\newpage
\section{Jacobians: Local Theory}

We wish to apply cohomological methods to the study of Teichm\"{u}ller disks and Teichm\"{u}ller curves. To see more clearly what happens, we again first consider just one abelian differential at a time. We first briefly recall the very classical notions of Jacobian varieties and period matrices. For our purposes it will be useful to think of them not as abelian varieties but as Hodge structures. Usually one only considers Hodge structures over one fixed coefficient ring (mostly $\bbz$, $\bbq$ or $\bbr$); for us it is important to distinguish between Hodge structures over different coefficient rings, and to change coefficient rings. Also we have to slightly extend the concept to coefficient rings that are contained in $\bbc$ but not necessarily in $\bbr$. Since there are different approaches in the literature how to do this, and since we have to go into the theory of decompositions of Hodge structures in quite a detailed way, we have to fix some notation, and therefore introduce the notion of a ``pseudo-Hodge structure''. With this we want to emphasize that Hodge symmetry, a very important tool in the theory of Hodge structures, gets sacrificed.

After these more abstract considerations, we return to translation surfaces and explain how affine automorphisms of the translation surface act on its cohomology. The relation with Hodge theory, which in this form still seems quite artificial, releases its full power only in the study of families, see chapter seven.

\subsection{Pure Hodge Structures}

We begin by considering the situation over a single point, i.e. for a single cohomology group instead of a local system. If $X$ is a closed Riemann surface, the cohomology group $H^1(X;\bbz )$ underlies a pure Hodge structure. This notion was introduced in \cite{Deligne71}. We recall the basic definitions and properties around pure Hodge structures, beginning with \textit{real} Hodge structures; these can be defined in several different but equivalent ways, of which we recall two.

\begin{PureRealHS}
A real Hodge structure is a ``real vector space with a Hodge decomposition'':
\end{PureRealHS}
\begin{Definition}
A \textup{pure real Hodge structure} is a finite-dimensional $\bbr$-vector space $M_{\bbr }$ together with a direct sum decomposition of $\bbc$-vector spaces
\begin{equation}\label{DirSumasinHS}
M_{\bbc }=M_{\bbr }\otimes_{\bbr }\bbc =\bigoplus_{p,q\in\bbz }M^{p,q}
\end{equation}
such that $\overline{M^{p,q}}=M^{q,p}$ for all $p,q\in\bbz$.
\end{Definition}
This decomposition is called the \textit{Hodge decomposition}, and we sometimes refer to the equation $\overline{M^{p,q}}=M^{q,p}$ as the ``Hodge symmetry''.

We shall only consider Hodge structures of one single weight (which is mostly $1$):
\begin{Definition}
Let $k$ be an integer. A pure real Hodge structure $M$ is called \textup{of weight $k$} if in the decomposition (\ref{DirSumasinHS}) one has $M^{p,q}=0$ whenever $p+q\neq k$.
\end{Definition}

Every pure real Hodge structure can be written in a canonical way as a direct sum of its weight components. Namely for a pure real Hodge structure $M_{\bbr }$ and an integer $k$ set
\begin{equation*}
M_{\bbc }^k=\bigoplus_{p+q=k}M^{p,q}.
\end{equation*}
Then by Hodge symmetry we find that $M_{\bbc }^k=\overline{M_{\bbc }^k}$, so it can be written uniquely as $M_{\bbr }^k\otimes_{\bbr }\bbc$ for some $\bbr$-sub-vector space $M_{\bbr }^k$ of $M_{\bbr }$.\footnote{This follows from the following elementary lemma: let $V_{\bbr }$ be a real vector space and let $W$ be a complex sub-vector space of $V_{\bbc }=V_{\bbr }\otimes_{\bbr }\bbc$ satisfying $W=\overline{W}$. Then there is a unique real sub-vector space $W_{\bbr }$ of $V_{\bbr }$ with $W_{\bbr }\otimes_{\bbr }\bbc =W$.

This can be seen as follows: the uniqueness of $W_{\bbr }$ is clear by $W_{\bbr }=W\cap V_{\bbr }$. That $W_{\bbr }\otimes_{\bbr }\bbc =W$ can then be seen by noting that for $w\in W$ one can write $w=\frac{1}{2}(w+\overline{w})+\frac{\mathrm{i}}{2}(\mathrm{i}w+\overline{\mathrm{i}w})$.

An analogous statement can be shown for any finite Galois extension of fields, by a similar trick. It also holds for infinite field extensions under some mild conditions, but the proof is more technical. See Lemma \ref{DescentForVectorSpaces}} Hence indeed $M_{\bbr}^k$ underlies a sub-Hodge structure of $M$ which we just denote by $M^k$. It is clear that we get a direct sum decomposition \textit{of Hodge structures} $M=\bigoplus_{k\in\bbz }M^k$. So there is no real restriction in only considering Hodge structures of some single weight.

There is a second way how one can define Hodge structures of a single weight. Namely let $M$ be a pure real Hodge structure of weight $k$, and define a descending filtration $F^{\bullet}M_{\bbc }$ on $M_{\bbc }$, indexed by all integers, as follows:
\begin{equation*}
F^pM_{\bbc }=\bigoplus_{r\ge p}M^{r,k-r}.
\end{equation*}
This filtration satisfies
\begin{equation}\label{ComplementarySubspacesForHF}
M_{\bbc }=F^pM_{\bbc }\oplus \overline{F^{k+1-p}M_{\bbc }}.
\end{equation}
Also it is \textit{exhausting}, i.e. $F^pM_{\bbc}=0$ for sufficiently large $p$ and $F^pM_{\bbc}=M_{\bbc}$ for sufficiently small $p$. This filtration is called the \textit{Hodge filtration}. Vice versa, start with a real vector space $M_{\bbr }$ and an exhausting descending filtration $F^{\bullet}M_{\bbc }$ by complex sub-vector spaces, indexed by the integers, such that (\ref{ComplementarySubspacesForHF}) holds. This defines a pure real Hodge structure of weight $k$ by setting
\begin{equation*}
M^{p,q}=F^pM_{\bbc }\cap\overline{F^qM_{\bbc }}.
\end{equation*}
These constructions are mutually inverse.

\begin{PureAHS}So far we have only considered real Hodge structures, but also Hodge structures over subrings of $\bbr$ (these will mostly be $\bbz$ or a number field) are important for what follows.
\end{PureAHS}
\begin{Definition}
Let $K\subseteq\bbr$ be a subring. A \textup{pure Hodge structure} $M$ over $K$ consists of the following data:
\begin{enumerate}
\item a finitely generated free\footnote{Some authors drop this condition} $K$-module $M_K$,
\item a direct sum decomposition of $K$-modules $M_K=\bigoplus_{k\in\bbz }M^k$ and
\item a real pure Hodge structure of weight $k$ on $M_{\bbr}^k=M_K^k=M_K\otimes_K\bbr$.
\end{enumerate}
A \textup{pure Hodge structure $M$ of weight $k$ over $K$} is a pure Hodge structure over $K$ satisfying $M_K^{\ell }=0$ for $\ell\neq k$; equivalently, this is a finitely generated free $K$-module $M_K$ together with a real pure Hodge structure of weight $k$ on $M_{\bbr }$.
\end{Definition}
We also talk of pure $K$-Hodge structures, and so on. An \textit{integral Hodge structure} is a $\bbz$-Hodge structure and a \textit{rational Hodge structure} is a $\bbq$-Hodge structure.

\begin{Constructions} Owing to the complicated definition, a great amount of abstract nonsense can be done with Hodge structures. We just list the most important topics:
\end{Constructions}
\begin{enumerate}
\item \textit{Morphisms:} For pure $K$-Hodge structures $M$ and $N$, let $\Hom (M,N)$ be the set of all $K$-module homomorphisms $\varphi :M_K\to N_K$ such that the $\mathbb{C}$-linear extension $\varphi_{\mathbb{C}}:M_{\mathbb{C}}\to N_{\mathbb{C}}$ satisfies $\varphi_{\mathbb{C}}(M^{p,q})\subseteq N^{p,q}$. Note this also ensures that $\varphi$ preserves the weight grading.

Alternatively we can define $\Hom (M,N)$ to be the set of all $K$-module homomorphisms $M_K\to N_K$ that respect the weight grading and on each weight component the Hodge filtration. These two definitions are equivalent.

    In this way we get the category of pure $K$-Hodge structures, denoted by $\mathrm{HS}_K$. If $K$ is a field, it is an abelian category\footnote{This is quite a nontrivial statement, see \cite{Deligne71}}.
\item \textit{Sub-Hodge-Structures:} Let $M$ be a pure $K$-Hodge structure. Then a sub-Hodge structure of $M$ is an $K$-Hodge structure $N$ whose underlying $K$-module $N_K$ is a submodule of $M_K$, and such that the inclusion $N_K\to M_K$ is in fact a morphism of $K$-Hodge structures. Clearly then
    \begin{equation*}
    N^{p,q}=N_{\bbc }\cap M^{p,q},
    \end{equation*}
    so on every submodule of $M_K$ there is at most one Hodge decomposition which turns it into a sub-Hodge structure.
    \begin{Lemma}\label{LemmaOnSubHS}
    Let $M$ be a pure $K$-Hodge structure of weight $k$, and let $N_K\subseteq M_K$ be a sub-$K$-module. Then, writing $N_{\bbc }=N_K\otimes_K\bbc$, we have
    \begin{equation}\label{InclusionFromLemmaOnSubHS}
    \bigoplus_{p+q=k}(N_{\bbc }\cap M^{p,q})\subseteq N_{\bbc }.
    \end{equation}
    Then the following are equivalent:
    \begin{enumerate}
    \item $N=(N_K,N^{p,q})$ is a sub-Hodge structure of $M$.
    \item The inclusion in (\ref{InclusionFromLemmaOnSubHS}) is in fact an equality.
    \item For every $x\in N_{\bbc }$, its components for the Hodge decomposition of $M_{\bbc }$ also lie in $N_{\bbc }$.\hfill $\square$
    \end{enumerate}
    \end{Lemma}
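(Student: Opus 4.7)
The plan is to observe first that the inclusion (\ref{InclusionFromLemmaOnSubHS}) is tautological: any finite sum of elements that lie in $N_{\bbc}$ stays in $N_{\bbc}$, regardless of which $M^{p,q}$ they inhabit. So the content of the lemma is to characterise when that inclusion is an equality and to relate this to the sub-Hodge structure property. I would prove the chain (iii) $\Rightarrow$ (ii) $\Rightarrow$ (i) $\Rightarrow$ (iii).

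For (iii) $\Rightarrow$ (ii): given $x \in N_{\bbc}$, condition (iii) asserts that its Hodge components $x^{p,q} \in M^{p,q}$ all lie in $N_{\bbc}$, hence in $N_{\bbc} \cap M^{p,q}$; thus $x = \sum x^{p,q}$ belongs to the left-hand side of (\ref{InclusionFromLemmaOnSubHS}), giving equality.

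For (ii) $\Rightarrow$ (i): set $N^{p,q} := N_{\bbc} \cap M^{p,q}$. By (ii) we get the direct sum decomposition $N_{\bbc} = \bigoplus_{p+q=k} N^{p,q}$ (the sum is direct because it sits inside the direct sum $M_{\bbc} = \bigoplus M^{p,q}$). The main verification is Hodge symmetry. Here I would use that $K \subseteq \bbr$, so complex conjugation on $M_{\bbc} = M_K \otimes_K \bbc$ restricts to complex conjugation on $N_{\bbc} = N_K \otimes_K \bbc$; i.e.\ $\overline{N_{\bbc}} = N_{\bbc}$. Combined with $\overline{M^{p,q}} = M^{q,p}$ this gives
\begin{equation*}
\overline{N^{p,q}} \;=\; \overline{N_{\bbc} \cap M^{p,q}} \;=\; \overline{N_{\bbc}} \cap \overline{M^{p,q}} \;=\; N_{\bbc} \cap M^{q,p} \;=\; N^{q,p},
\end{equation*}
as required. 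So $N$ is a pure $K$-Hodge structure of weight $k$, and by construction $N^{p,q} \subseteq M^{p,q}$, meaning the inclusion $N_K \hookrightarrow M_K$ is a morphism of Hodge structures.

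For (i) $\Rightarrow$ (iii): if $N$ is a sub-Hodge structure then $N_{\bbc} = \bigoplus_{p+q=k} N^{p,q}$ with $N^{p,q} \subseteq M^{p,q}$. Given $x \in N_{\bbc}$ write $x = \sum n^{p,q}$ with $n^{p,q} \in N^{p,q} \subseteq M^{p,q}$. By the uniqueness of the Hodge decomposition of $x$ inside $M_{\bbc}$, the $n^{p,q}$ must be the $(p,q)$-components of $x$; in particular these components lie in $N_{\bbc}$. The only real subtlety in the argument is the Hodge symmetry step in (ii) $\Rightarrow$ (i), but since $K$ is a subring of $\bbr$ the stability of $N_{\bbc}$ under conjugation is automatic, so even this step is routine.
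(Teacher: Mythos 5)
Your proof is correct; the paper omits the argument entirely (the lemma is stated with a tacit $\square$), and your cycle (iii) $\Rightarrow$ (ii) $\Rightarrow$ (i) $\Rightarrow$ (iii) is exactly the standard verification it leaves to the reader. The one step with genuine content --- Hodge symmetry of $N^{p,q}=N_{\bbc}\cap M^{p,q}$ via the conjugation-stability of $N_{\bbc}$, which is where the hypothesis $K\subseteq\bbr$ enters --- is handled correctly.
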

\item \textit{Tensor product:} let $M$ and $N$ be pure $K$-Hodge structures. Their tensor product is a pure Hodge structure $M\otimes N$ with $(M\otimes N)_K=M_K\otimes_KN_K$; in order to define its Hodge decomposition, note that there is a canonical identification
    \begin{equation*}
    (M_K\otimes_KN_K)\otimes_K\mathbb{C}=M_{\mathbb{C}}\otimes_{\mathbb{C}}N_{\mathbb{C}}
    \end{equation*}
    and the right hand side carries the tensor product decomposition, given by
    \begin{equation*}
    (M\otimes N)^{p,q}=\bigoplus_{p_1+p_2=p,\,
    q_1+q_2=q}(M^{p_1,q_1}\otimes N^{p_2,q_2})
    \end{equation*}
    This turns the category of pure Hodge structures into a tensor category. Note that if $M$ is pure of weight $m$ and $N$ is pure of weight $n$, then $M\otimes N$ is pure of weight $m+n$.
\item \textit{Tate twists:} For any $n\in\mathbb{Z}$, the pure Hodge structure $K(n)$ of weight $-2n$ is defined by
    \begin{equation*}
    K(n)_K=(2\pi\mathrm{i})^nK\subset\mathbb{C}=K(n)_{\mathbb{C}}=K(n)^{-n,-n}.
    \end{equation*}
    One checks that $K(n+m)\simeq K(n)\otimes K(m)$ and that $K(0)$ is a unit for the tensor product (i.e. for any pure Hodge structure $M$ one has canonical isomorphisms $M\otimes K(0)\simeq M\simeq K(0)\otimes M$. We also write $M(n)=M\otimes K(n)$; this is called the $n$-th Tate twist of $M$.
\item \textit{Internal $\Hom$s:} These can be explicitly described in the way that one would guess: let $M$ and $N$ be pure Hodge structures. Then $\underline{\Hom}(M,N)$ is described as follows: first, set $\underline{\Hom}(M,N)_K=\Hom_K (M_K,N_K)$; then we can identify $\underline{\Hom}(M,N)_{\mathbb{C}}=\Hom_{\bbc }(M_{\bbc},N_{\bbc})$. So the following definition is sensible:
    \begin{equation*}
    \underline{\Hom}(M,N)^{p,q}=\{\varphi :M_{\bbc }\to N_{\bbc}\, |\,\varphi (M^{r,s})\subseteq N^{r+p,s+q}\text{ for all }r,s\}
    \end{equation*}
    If $M$ is pure of weight $m$ and $N$ is pure of weight $n$, then $\underline{\Hom}(M,N)$ is pure of weight $n-m$. Finally these internal $\Hom$ objects relate to the ``ordinary'' groups of homomorphisms as follows: if $H$ is the Hodge structure $\underline{\Hom }(M,N)$, then $\Hom (M,N)$ is equal to the intersection $H^{0,0}\cap H_K$.
\item \textit{Duals:} As a special case of (iv) we get duals by $\check{M}=\underline{\Hom}(M, K(0))$. For example, $K(-n)$ is the dual of $K(n)$. For every $M$ there is a canonical morphism (``evaluation'') $M\otimes\check{M}\to K(0)$. If on the other hand $\varphi :M\otimes N\to K(0)$ is any morphism of pure Hodge structures, we get an associated morphism $N\to\check{M}$. We call $\varphi$ a \textit{perfect pairing} if this last map is an isomorphism (this condition is symmetric in $M$ and $N$).
\item \textit{Base extension:} Let $K\subseteq B\subseteq\bbr$ be subrings and let $M$ be a pure $K$-Hodge structure. We can define from this in a canonical way a pure $B$-Hodge structure which we denote by $M\otimes_KB$ or, if $K$ is clear from the context, $M\otimes B$. Its underlying $B$-module is $(M\otimes B)_B=M_K\otimes_KB$, with the weight components $M_B^k=M_K^k\otimes_KB$. Noting that we can then canonically identify $(M\otimes B)_{\bbr }$ with $M_{\bbr }$, we can transfer the real Hodge structure on $M_{\bbr }$ to a real Hodge structure on $(M\otimes B)_{\bbr }$. These data define in an obvious way a $B$-Hodge structure.

This construction defines an $K$-linear functor from $K$-Hodge structures to $B$-Hodge structures.
\end{enumerate}

\begin{HSonCohomology}
If $X$ is a projective complex manifold, its $k$-th cohomology can (and should) be considered not as a group or a vector space but rather as a pure integral Hodge structure $M^k(X)$ of weight $k$, with $M^k(X)_{\bbz }=H^k(X,\bbz )/\text{torsion}$. This Hodge structure can be constructed as follows:

Introduce a K\"{a}hler metric $g$ on $X$ (for example embed $X$ into some $\mathbb{P}^n(\bbc )$ and restrict the Fubini-Study metric to $X$). Identifying
    \begin{equation*}
    M^k(X)_{\bbc }=H^k(X,\bbc )=\mathrsfs{H}^k(X)
    \end{equation*}
    (harmonic $k$-forms with respect to $g$), the latter space decomposes as
    \begin{equation*}
    \mathrsfs{H}^k(X)=\bigoplus_{p+q=k\atop p,q\ge 0}\mathrsfs{H}^{p,q}(X)
    \end{equation*}
    where $\mathrsfs{H}^{p,q}(X)$ is the space of harmonic forms of type $(p,q)$. It of course depends on the choice of the K\"{a}hler metric, but its image in $H^k(X,\bbc )$ under the above identification does not. The conjugate of a form of type $(p,q)$ is a form of type $(q,p)$, so setting $M^k(X)^{p,q}=\mathrsfs{H}^{p,q}(X)$ we obtain a pure Hodge structure $M^k(X)$ of weight $k$. In fact this also works for an arbitrary K\"{a}hler manifold.

Let us discuss some examples of Hodge structures appearing as the cohomology of a complex projective variety.
\begin{enumerate}
\item The simplest example: $M^0(X)\simeq\mathbb{Z}(0)$ (provided that $X$ is connected).
\item If $X$ is connected and has complex dimension $n$, its ``top cohomology'' $M^{2n}(X)$ is isomorphic to $\mathbb{Z}(-n)$. If we demand in addition that the fundamental class\footnote{Note that every complex manifold has a canonical orientation} $[X]\in H^{2n}(X,\mathbb{Z})$ be mapped to $(2\pi\mathrm{i})^{-n}$, the isomorphism is uniquely determined. On $M^{2n}(X)_{\mathbb{C}}=\mathrsfs{H}^{2n}(X)$ it is given as
    \begin{equation*}
    [\omega ]\mapsto\left(\frac{1}{2\pi\mathrm{i}}\right)^n\int_X\omega .
    \end{equation*}
    This is Proposition 1.14 in  \cite{PetersSteenbrink}.
\item The cup product in cohomology gives rise to a morphism of pure Hodge structures $M^k(X)\otimes M^{\ell}(X)\to M^{k+\ell }(X)$.
\item Poincar\'{e} duality can also be formulated in the context of pure Hodge structures; it involves a Tate twist which is ``invisible'' in the classical formulation. Namely if $X$ is a connected projective complex manifold of complex dimension $n$ we get (by (ii) and (iii)) morphisms
    \begin{equation*}
    M^k(X)\otimes M^{2n-k}(X)\to M^{2n}(X)=\mathbb{Z}(-n);
    \end{equation*}
    applying a Tate twist we get
    \begin{equation*}
    M^k(X)\otimes M^{2n-k}(X)(n)\to\mathbb{Z}(0)
    \end{equation*}
    which sets up an isomorphism between $M^{2n-k}(X)(n)$ and the dual of $M^k(X)$.
\item There are canonical identifications $H_k(X,\mathbb{R})=\Hom (H^k(X,\mathbb{R}),\mathbb{R})$; hence we can just \textit{define} $M_k(X)$ as the dual to $M^k(X)$ --- it is thus a pure Hodge structure of weight $-k$.
\end{enumerate}
\end{HSonCohomology}
\begin{HSofSmoothCurves}
Let us now concentrate on the Hodge structures $M^{\ast}(X)$ for $X$ a smooth curve. Since there are canonical isomorphisms $M^0(X)\simeq\bbz (0)$ and $M^2(X)\simeq\bbz (-1)$ we only need to consider $M^1(X)$. We have $M^1(X)_{\bbz }=H^1(X,\bbz )$ which is a free abelian group of rank $2g$ where $g$ is the genus of $X$. The Hodge decomposition on $M^1(X)_{\bbc }=H_{\mathrm{dR}}^1(X)$ is given by $M^{1,0}=\Omega^1(X)$ (the space of holomorphic one-forms) and $M^{0,1}=\overline{\Omega^1(X)}$, the space of antiholomorphic one-forms.
\end{HSofSmoothCurves}
\begin{Polarizations}
We now introduce polarizations of Hodge structures. These can be roughly thought of as scalar products on Hodge structures. First we need another definition:
\end{Polarizations}
\begin{Definition}
Let $M$ be a pure Hodge structure of weight $k$. The \textup{Weil operator} is the unique $\bbc$-linear map $C:M_{\bbc}\to M_{\bbc}$ which respects the Hodge decomposition and acts as $\mathrm{i}^{p-q}$ times the identity on $M^{p,q}$.
\end{Definition}
The reason for introducing the Weil operator is basically the multilinear algebra one has to tackle when constructing something positive definite.
\begin{Definition}\label{DefinitionOfPolarizationForHS}
Let $K\subseteq\bbr$ be a subring and let $M$ be a pure $K$-Hodge structure of weight $k$.
\begin{enumerate}
\item A \textup{polarization} of $M$ is a morphism of $K$-Hodge structures
\begin{equation*}
S:M\otimes M\to K (-k)
\end{equation*}
which is $(-1)^k$-symmetric\footnote{i.e., symmetric when $k$ is even and antisymmetric when $k$ is odd} and such that the Hermitian form
\begin{equation}\label{HermitianFormOfPolarization}
H(x,y)=(2\pi\mathrm{i})^k\cdot S(Cx,\overline{y})
\end{equation}
on $M_{\bbc}$ is positive definite.
\item $M$ is called \textup{polarizable} if there exists a polarization of $M$.
\end{enumerate}
\end{Definition}
This definition is motivated by the Poincar\'{e} duality pairing on primitive cohomology. In the only case interesting to us, it takes the following form:
\begin{Proposition}
Let $X$ be a compact Riemann surface. Then the Poincar\'{e} duality pairing $S:M^1(X)\otimes M^1(X)\to M^2(X)=\bbz (-1)$ is a polarization.
\end{Proposition}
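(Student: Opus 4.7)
The plan is to unwind the definition of polarization for the three defining properties: (a) $S$ is a morphism of $\bbz$-Hodge structures, (b) $S$ is antisymmetric, and (c) the associated Hermitian form $H$ on $M^1(X)_{\bbc}$ is positive definite. Throughout, I would exploit the explicit description
\begin{equation*}
S([\alpha],[\beta]) \;=\; \frac{1}{2\pi\mathrm{i}}\int_X\alpha\wedge\beta,
\end{equation*}
which arises as the composition of the cup product $M^1(X)\otimes M^1(X)\to M^2(X)$ with the canonical isomorphism $M^2(X)\simeq\bbz(-1)$ fixed in the paper.

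For (a), I would simply point out that the cup product was already declared to be a morphism of Hodge structures in the list of examples in the \textsc{HSonCohomology} section, and that the identification $M^2(X)\simeq\bbz(-1)$ is one of Hodge structures by the very choice of normalisation. For (b), antisymmetry is immediate from $\alpha\wedge\beta=-\beta\wedge\alpha$ on degree-one forms.

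The only genuine work lies in (c). Here I would write a class in $M^1(X)_{\bbc}$ as $x=\omega+\overline{\eta}$ with $\omega,\eta\in\Omega^1(X)=M^{1,0}$, compute $Cx=\mathrm{i}\omega-\mathrm{i}\overline{\eta}$ and $\overline{x}=\overline{\omega}+\eta$, and then observe that on a compact Riemann surface $\Omega^2(X)=0$, so that $\omega\wedge\eta=0$ and $\overline{\omega}\wedge\overline{\eta}=0$. This collapses the four terms of $(Cx)\wedge\overline{x}$ to
\begin{equation*}
(Cx)\wedge\overline{x} \;=\; \mathrm{i}\,\omega\wedge\overline{\omega}\;+\;\mathrm{i}\,\eta\wedge\overline{\eta},
\end{equation*}
using $-\overline{\eta}\wedge\eta=\eta\wedge\overline{\eta}$. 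Hence
\begin{equation*}
H(x,x) \;=\; (2\pi\mathrm{i})\cdot S(Cx,\overline{x}) \;=\; \mathrm{i}\int_X\bigl(\omega\wedge\overline{\omega}+\eta\wedge\overline{\eta}\bigr).
\end{equation*}
Writing $\omega=f\,\de z$ and $\eta=g\,\de z$ in a local chart gives $\mathrm{i}\,\omega\wedge\overline{\omega}=2|f|^{2}\,\de x\wedge\de y$ (and similarly for $\eta$), so $H(x,x)=2\int_X(|f|^{2}+|g|^{2})\,\de x\wedge\de y$, which is a non-negative real number vanishing precisely when $\omega=\eta=0$, i.e.\ when $x=0$.

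The main obstacle is essentially bookkeeping in (c): one must keep track of the factor $(2\pi\mathrm{i})$ coming from the Tate twist, the factor $\mathrm{i}^{p-q}$ from the Weil operator on the two Hodge pieces, and the orientation convention $\de z\wedge\de\overline{z}=-2\mathrm{i}\,\de x\wedge\de y$, so that all signs cancel to give a genuinely positive integrand. Once the cross-terms $\omega\wedge\eta$ and $\overline{\omega}\wedge\overline{\eta}$ are seen to vanish because $X$ is one-dimensional, the positivity is manifest.
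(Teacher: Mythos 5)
Your proof is correct, but it takes a genuinely different route from the paper: the paper disposes of this proposition in one line by citing Voisin's Th\'{e}or\`{e}me 6.32, i.e.\ the general Hodge--Riemann bilinear relations for compact K\"{a}hler manifolds, whereas you give a direct, self-contained verification. Your computation works precisely because $X$ has complex dimension one: every class in $H^1$ is primitive, so there is no Lefschetz decomposition to worry about, and the vanishing of $(2,0)$- and $(0,2)$-forms kills the cross terms $\omega\wedge\eta$ and $\overline{\omega}\wedge\overline{\eta}$, reducing positivity to the pointwise identity $\mathrm{i}\,f\,\de z\wedge\overline{f\,\de z}=2|f|^2\,\de x\wedge\de y$. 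All the constants check out against the paper's conventions: the normalisation $[\omega]\mapsto(2\pi\mathrm{i})^{-1}\int_X\omega$ of $M^2(X)\simeq\bbz(-1)$ cancels the factor $(2\pi\mathrm{i})^k$ in the definition of $H$, and the Weil operator contributes $\mathrm{i}$ on $M^{1,0}$ and $-\mathrm{i}$ on $M^{0,1}$ exactly as you use. The only point you leave implicit is that $H(x,y)=(2\pi\mathrm{i})S(Cx,\overline{y})$ is Hermitian (so that checking $H(x,x)>0$ suffices); this is a formal consequence of the antisymmetry of $S$ and $C^2=(-1)^k$ on weight-$k$ classes, and is harmless. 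What your approach buys is independence from the general machinery and an explicit formula; what the citation buys is brevity and the reassurance that the same statement holds for primitive cohomology in all degrees.
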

\begin{proof}
This is a special case of \cite[Th\'{e}or\`{e}me 6.32]{Voisin02}.
\end{proof}
Not every Hodge structure is polarizable, and in fact the existence of a polarization has strong consequences on a Hodge structure:
\begin{Proposition}\label{NicePropsOfPolarizableHS}
Let $K\subseteq\bbr$ be a subfield (!), let $M$ be a pure $K$-Hodge structure of weight $k$ and let $S:M\otimes M\to K(-k)$ be a polarization.
\begin{enumerate}
\item Let $x\in M^{p',q'}$ and $y\in M^{p'',q''}$, and assume that $(p',q')$ is different from $(q'',p'')$ (note the twist!). Then $S(x,y)=0$.
\item With respect to the Hermitian form $H$ as in (\ref{HermitianFormOfPolarization}), the Hodge decomposition of $M_{\bbc }$ is orthogonal.
\item Let $N\subseteq M$ be a sub-Hodge structure. Then the sub-$K$-vector space
\begin{equation*}
N_K^{\perp}=\{ x\in M_K : S(x,y)=0\text{ for all }y\in N_K\}
\end{equation*}
underlies a sub-Hodge structure $N^{\perp}$ such that $M=N\oplus N^{\perp }$.
\item In particular every polarizable pure $K$-Hodge structure is semisimple.
\end{enumerate}
\end{Proposition}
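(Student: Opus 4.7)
The strategy is to prove (i)–(iv) in order, each step laying the groundwork for the next. Part (i) is Hodge-type bookkeeping: since $S$ is a morphism of Hodge structures into $K(-k)$, which is pure of type $(k,k)$, the complexification $S_{\bbc}$ vanishes on every $(r,s)$-component of $M_{\bbc}\otimes M_{\bbc}$ other than $(k,k)$; combined with the weight conditions $p'+q'=p''+q''=k$, this forces $(p',q')=(q'',p'')$ whenever $S(x,y)\neq 0$. Part (ii) then follows at once: for $x\in M^{p',q'}$ and $y\in M^{p'',q''}$ with $(p',q')\neq(p'',q'')$, one has $Cx\in M^{p',q'}$ and $\overline{y}\in M^{q'',p''}$, and (i) rules out a non-zero pairing.

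For (iii), I first observe that $S$ is non-degenerate over $K$: positive-definiteness of $H$ makes the form $(x,y)\mapsto S(Cx,\overline{y})$ non-degenerate on $M_{\bbc}$, and since $C$ and complex conjugation are bijective, $S_{\bbc}$ itself is non-degenerate; as $K$ is a field, the same holds for $S$. Next, I show $N^{\perp}$ is a sub-Hodge structure via Lemma \ref{LemmaOnSubHS}: for $x\in N_{\bbc}^{\perp}$ with Hodge decomposition $x=\sum x^{p,q}$ and any $y=\sum y^{r,s}\in N_{\bbc}$, part (i) collapses $S(x^{p,q},y)$ to the single surviving term $S(x^{p,q},y^{q,p})$, which in turn equals $S(x,y^{q,p})$ and hence vanishes because $y^{q,p}\in N_{\bbc}$ (as $N$ is itself a sub-Hodge structure). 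Consequently each Hodge component of $x$ lies in $N_{\bbc}^{\perp}$.

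To finish (iii) I verify $N_K\cap N_K^{\perp}=0$: for such an $x$, both $Cx$ and $\overline{x}$ lie in $N_{\bbc}$ (the first because $N_{\bbc}$ is stable under the Hodge projections, the second because $N_K\subseteq M_K$ and $K\subseteq\bbr$), so $H(x,x)=(2\pi\mathrm{i})^k S(Cx,\overline{x})=0$, and positivity of $H$ forces $x=0$. A dimension count based on non-degeneracy (the map $M_K\to\Hom_K(N_K,K)$, $x\mapsto S(x,\cdot)|_{N_K}$, has kernel $N_K^{\perp}$ and is surjective) then yields $\dim N_K+\dim N_K^{\perp}=\dim M_K$, so $M=N\oplus N^{\perp}$ as Hodge structures. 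Part (iv) follows by induction on dimension. The most delicate step is the sub-Hodge-structure property of $N^{\perp}$; this is the one place where the Hodge-type combinatorics of (i) are genuinely exploited, while the rest reduces to orthogonality linear algebra together with the fundamental positivity of $H$.
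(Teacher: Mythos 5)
Your proof is correct and follows essentially the same route as the paper: (i) by Hodge-type bookkeeping for the morphism $S:M\otimes M\to K(-k)$, (ii) as an immediate consequence via the Weil operator and Hodge symmetry, (iii) by showing the $S$-orthogonal complement is stable under Hodge projections (Lemma \ref{LemmaOnSubHS}) and using positivity of $H$ plus non-degeneracy for the splitting, and (iv) as a formal consequence. The only difference is one of detail: you spell out the collapse of $S(x^{p,q},y)$ to a single Hodge component and the dimension count explicitly, where the paper compresses this by identifying the $S$-orthogonal complement with the $H$-orthogonal complement and leaving the rest implicit.
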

\begin{proof}
(i) follows from the fact that $S$ is a morphism of Hodge structures. Namely
\begin{equation*}
x\otimes y\in (M\otimes M)^{p'+p'',q'+q''}
\end{equation*}
and $K(-k)_{\bbc }$ only consists of elements of Hodge type $(k,k)$. So $S(x,y)$ only has a chance to be nonzero if
\begin{equation*}
p'+p''=q'+q''=k,\quad\text{i.e.}\quad p'=q'',\, p''=q'.
\end{equation*}

As to (ii), note that the Weil operator preserves the Hodge decomposition and that $\overline{M^{p,q}}=M^{q,p}$, so this follows from (i).

Concerning (iii), note that
\begin{equation*}
N_{\bbc }^{\perp}=N_K^{\perp}\otimes_K\bbc =\{ x\in M_{\bbc }: H(x,\overline{y})=0\text{ for all }y\in N_{\bbc }\} .
\end{equation*}
Bearing in mind the remarks just before Lemma \ref{LemmaOnSubHS} we have no other possibility than setting $(N^{\perp})^{p,q}=N_{\bbc}^{\perp}\cap M^{p,q}$. Now from (ii) we see that
\begin{equation*}
N_{\bbc }^{\perp}=\bigoplus_{p+q=k}N^{p,q},
\end{equation*}
so that by Lemma \ref{LemmaOnSubHS} $N$ is a sub-Hodge structure. By the explicit description we have given, $M=N\oplus N^{\perp}$ \textit{as Hodge structures}.

(iv) follows directly from (iii).
\end{proof}

Note that (i) only uses that $S$ is a morphism of Hodge structures. Hence we can rephrase the conditions on $S$ being a polarization as follows: the restriction of $H$ to $M^{p,q}$ is given by
\begin{equation*}
H(x,y)=(2\pi )^k\cdot (-1)^p\cdot S(x,\overline{y}).
\end{equation*}
Hence:
\begin{Corollary}\label{AlternativeDescriptionOfPolarization}
A $(-1)^k$-symmetric morphism $S:M\otimes M\to K(-k)$ is a polarization if and only if the form $S(x,\overline{y})$ on $M^{p,q}$ is positive definite whenever $p$ is even and negative definite whenever $p$ is odd.
\end{Corollary}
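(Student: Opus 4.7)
The plan is to reduce everything to a direct computation of $H$ on each Hodge piece, and then leverage orthogonality of the Hodge decomposition with respect to $H$---which is cheap because it only depends on $S$ being a morphism of Hodge structures, not on positivity.

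First I would make the formula in the statement explicit. Since $C$ acts on $M^{p,q}$ as multiplication by $\mathrm{i}^{p-q}$, for $x,y\in M^{p,q}$ we have
\begin{equation*}
H(x,y)=(2\pi\mathrm{i})^{k}S(Cx,\overline{y})=(2\pi)^{k}\mathrm{i}^{k+p-q}S(x,\overline{y})=(2\pi)^{k}(-1)^{p}S(x,\overline{y}),
\end{equation*}
using $p+q=k$ to rewrite $\mathrm{i}^{k+p-q}=\mathrm{i}^{2p}$. This single line justifies the parenthetical formula that immediately precedes the corollary and is the engine of both implications.

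For the forward direction, suppose $S$ is a polarization. Then by definition $H$ is positive definite, and restricting to $M^{p,q}$ the above formula forces $(-1)^{p}S(x,\overline{x})>0$ for every nonzero $x\in M^{p,q}$. Polarizing, this is precisely the signed definiteness of $S(x,\overline{y})$ stated in the corollary.

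For the converse, suppose the signed definiteness holds on each $M^{p,q}$. The formula then shows $H$ is positive definite on each individual Hodge piece; the only thing left is to verify that the distinct pieces are $H$-orthogonal, after which positive definiteness on all of $M_{\bbc}$ follows from the direct-sum decomposition. This orthogonality is exactly part (i) of Proposition \ref{NicePropsOfPolarizableHS}, whose proof uses nothing beyond the facts that $S$ is a $(-1)^{k}$-symmetric morphism of Hodge structures and that $K(-k)$ is concentrated in Hodge type $(k,k)$---no positivity is used. Explicitly, for $x\in M^{p,q}$ and $y\in M^{p',q'}$ with $(p,q)\neq(p',q')$, we have $Cx\in M^{p,q}$ and $\overline{y}\in M^{q',p'}$, and the condition $(p,q)=(p',q')$ (equivalently $(p,q)=(q',p')$ combined with $p+q=p'+q'=k$) is exactly what Proposition \ref{NicePropsOfPolarizableHS}(i) requires to avoid vanishing of $S(Cx,\overline{y})$; otherwise $H(x,y)=0$. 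I expect no real obstacle here: the only subtlety is to notice that part (i) of that proposition does not secretly use positivity of $H$, so it is safe to invoke in the converse direction.
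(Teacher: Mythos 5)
Your argument is correct and is essentially the paper's own: the text immediately preceding the corollary records the same computation $H(x,y)=(2\pi)^k(-1)^pS(x,\overline{y})$ on $M^{p,q}$, and the corollary follows because part (i) of Proposition \ref{NicePropsOfPolarizableHS} (giving $H$-orthogonality of distinct Hodge pieces) uses only that $S$ is a morphism of Hodge structures, exactly as you observe. No differences worth noting.
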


For later reference we state again the motivating example: the Hodge structure associated with a smooth compact curve.
\begin{Proposition}
Let $X$ be a smooth compact curve. Then the Poincar\'{e} pairing $M^1(X)\otimes M^1(X)\to\bbz (-1)$ is a principal polarization of $M^1(X)$. In particular it induces a canonical isomorphism $M_1(X)(-1)\simeq M^1(X)$.\hfill $\square$
\end{Proposition}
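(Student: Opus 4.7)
The plan is to leverage the previous Proposition, which has already established that the Poincaré pairing $S:M^1(X)\otimes M^1(X)\to\bbz(-1)$ is a polarization. What remains is to verify the adjective \emph{principal}: this means that the adjoint morphism
\begin{equation*}
\varphi_S:M^1(X)\longrightarrow\underline{\Hom}(M^1(X),\bbz(-1))=M_1(X)(-1)
\end{equation*}
is an isomorphism of integral Hodge structures, from which the ``canonical isomorphism'' of the final sentence is immediate.

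First I would observe that $\varphi_S$ is a morphism of integral Hodge structures by construction, being the adjoint of a morphism of Hodge structures. Since a morphism of integral Hodge structures is an isomorphism if and only if its underlying homomorphism of free $\bbz$-modules is bijective, it suffices to verify that
\begin{equation*}
H^1(X,\bbz)\longrightarrow\Hom_{\bbz}(H^1(X,\bbz),\bbz)
\end{equation*}
is an isomorphism of free abelian groups. This reduces the whole assertion to a purely topological statement.

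Next I would invoke the classical Poincaré duality theorem for the compact oriented topological surface underlying $X$: the cup-product pairing
\begin{equation*}
H^1(X,\bbz)\otimes H^1(X,\bbz)\longrightarrow H^2(X,\bbz)\simeq\bbz
\end{equation*}
is unimodular, that is, its adjoint $H^1(X,\bbz)\to\Hom_{\bbz}(H^1(X,\bbz),\bbz)$ is already a bijection. Combining this with the identification $M^2(X)\simeq\bbz(-1)$ recalled earlier (which sends the fundamental class $[X]$ to $(2\pi\mathrm{i})^{-1}$ and is itself a $\bbz$-module isomorphism) delivers the bijectivity of $\varphi_S$ on integer lattices.

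The main obstacle is purely bookkeeping: one must carefully track the Tate twists and factors of $2\pi\mathrm{i}$ so as to be sure that the integrally normalised topological Poincaré pairing coincides, under the identifications in use, with the pairing induced by $S$. Once this compatibility is nailed down, no further input beyond topological Poincaré duality is required, and the ``canonical isomorphism'' $M_1(X)(-1)\simeq M^1(X)$ is obtained simply by inverting $\varphi_S$.
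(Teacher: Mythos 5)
Your argument is correct and is exactly the one the paper leaves implicit (the Proposition is stated without proof): the polarization property is the preceding Proposition, and principality reduces, after unwinding the Tate twist and the normalization $[X]\mapsto(2\pi\mathrm{i})^{-1}$, to the unimodularity of the integral cup-product pairing on $H^1(X,\bbz )$, i.e.\ to topological Poincar\'{e} duality. The final isomorphism $M_1(X)(-1)\simeq M^1(X)$ then follows from the definition of $M_1(X)$ as $\underline{\Hom}(M^1(X),\bbz (0))$, just as you and the paper both observe.
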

The latter isomorphism comes directly from the definition of $M_1(X)$ as $\underline{\Hom}(M^1(X),\bbz (0))$.

\subsection{Pseudo-Hodge Structures}

For every subring $K\subseteq\bbr$ we have defined Hodge structures with coefficients in $K$. They consist of a free $K$-module $M_K$ and some additional structure on the base change $M_{\bbc} =M_K\otimes_K\bbc$. The axioms for these additional structures make use of the inclusion $K\subseteq\bbr$ (via the existence of a canonical complex conjugation on $M_{\bbc }$). If $K\subseteq B\subseteq\bbr$ and $M$ is a Hodge structure defined over $K$, we can ``forget structure'' and obtain a Hodge structure defined over $B$. For what follows it will be necessary to even pass to objects which are only defined over $\bbc$. For this we have to sacrifice some of the axioms defining a Hodge structure since they would need a complex conjugation. Thus we introduce the following notion:
\begin{Definition}
Let $K\subseteq\bbc$ be a subring, and let $M_K$ be a finitely generated free $K$-module. Write $M_{\bbc }=M_K\otimes_K\bbc$. A \textup{pseudo-Hodge decomposition} on $M_K$ is a direct sum decomposition of complex vector spaces
\begin{equation*}
M_{\bbc }=\bigoplus_{p\in\bbz }M^p.
\end{equation*}
\end{Definition}
Note that both the weight and the additional conditions using complex conjugation are missing here. The omission of the weight is more or less irrelevant for the mathematics (at least for our purposes) and just a means to ease notation; the omission of the additional conditions, however, is necessary if we want to work over rings not contained in the reals, and it also drastically changes the behaviour of the considered objects.

To begin with, one can switch forth and back between Hodge decompositions and Hodge filtrations. This becomes false for their ``pseudo-Hodge'' versions, where there is only one direction (with a partial converse). Namely let $M_K$ be a finitely generated $K$-module and let $M_{\bbc }=\bigoplus_pM^p$ be a pseudo-Hodge decomposition on $M_K$. Then we can associate with this a filtration as follows:
\begin{equation}\label{PseudoHodgeFiltration}
F^pM_{\bbc }=\bigoplus_{r\ge p}M^r.
\end{equation}
Clearly there is no canonical way of reconstructing the pseudo-Hodge decomposition from this filtration. There is one as soon as a \textit{polarization} is added, see below.

\begin{Definition}
Let $K$ be a subring of $\bbc$. A \textup{pseudo-Hodge structure} $M$ over $K$ consists of a finitely generated free $K$-module $M_K$ together with a pseudo-Hodge decomposition on $M_K$.
\end{Definition}

So in particular a pseudo-Hodge structure over the complex numbers is merely a $\bbz$-graded finite-dimensional complex vector space. It will turn out that this notion is simple enough to provide strong decomposition theorems but close enough to Hodge structures to be valuable for us.

We now describe the forgetful functors from Hodge-structures to pseudo-Hodge structures. Let $K$ be a subring of $\bbr$ and let $M$ be a pure Hodge structure of weight $k$ over $K$, with Hodge decomposition
\begin{equation*}\label{HodgeDecompositionForPseudoHodgeStructure}
M_{\bbc }=\bigoplus_{p+q=k}M^{p,q}.
\end{equation*}
Then the associated pseudo-Hodge structure $M^{\psi}$ is defined by $(M^{\psi})_K=M_K$ and $M^p=M^{p,k-p}$. Note that if one fixes the weight there is no information lost in this process:
\begin{Proposition}
Fix an integer $k$ and a subring $K\subseteq\bbr$.
\begin{enumerate}
\item Let $P$ be a pseudo-Hodge structure over $K$. \textit{If} there exist a pure Hodge structure $M$ of weight $k$ over $K$ and an isomorphism $M^{\psi}\to P$, these data are unique up to unique isomorphism.
\item Let $M$ and $N$ be pure Hodge structures of weight $k$ over $K$. Then the canonical map $\Hom (M,N)\to\Hom (M^{\psi },N^{\psi})$ is a bijection.
\item Let $M$ be a pure Hodge structure of weight $k$ over $K$, and let $P$ be a sub-pseudo-Hodge structure of $M^{\psi}$. Then there exists a unique sub-Hodge structure $N\subseteq M$ with $N^{\psi }=P$.
\end{enumerate}
\end{Proposition}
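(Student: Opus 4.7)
The key observation animating all three parts is that fixing a single weight $k$ removes all the freedom in the $(p,q)$-bigrading: a pseudo-Hodge datum $M_{\bbc}=\bigoplus_p M^p$ on $M_K$ determines a candidate Hodge datum by setting $M^{p,q}=M^p$ if $p+q=k$ and $0$ otherwise. The only genuine axiom that a pseudo-Hodge structure might fail to satisfy is Hodge symmetry $\overline{M^{p,q}}=M^{q,p}$; and that axiom uses complex conjugation on $M_{\bbc}$, which requires $K\subseteq\bbr$. So the strategy is: (a) check that under our standing assumption $K\subseteq\bbr$, Hodge symmetry for a weight-$k$ pseudo-Hodge structure is either automatic or follows from inheriting it from an ambient Hodge structure; (b) observe that morphisms and sub-objects then transfer trivially.

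For (i), suppose $(M,\phi\colon M^{\psi}\to P)$ exists. An isomorphism of this datum with another such $(M',\phi')$ is a Hodge isomorphism $M\to M'$ whose $\psi$-image intertwines $\phi$ and $\phi'$; the underlying $K$-linear map is forced to be $\phi'^{-1}\circ\phi$ on $P_K=M_K=M'_K$, and this is tautologically a morphism of pseudo-Hodge structures, hence (as we will see in (ii)) a morphism of Hodge structures. So the datum is unique up to a unique isomorphism.

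For (ii), the definition of $\Hom(M,N)$ is a $K$-linear map $\varphi\colon M_K\to N_K$ whose $\bbc$-extension sends $M^{p,q}$ into $N^{p,q}$, while $\Hom(M^{\psi},N^{\psi})$ consists of $K$-linear maps sending $M^p$ into $N^p$. Under the translation $M^p\leftrightarrow M^{p,k-p}$ these two conditions are literally the same (weight preservation is automatic because both sides have pure weight $k$), proving the claim. The plan for (iii) is the only step with any content: given $P\subseteq M^{\psi}$, set $N_K=P_K$ and define $N^{p,q}=N_{\bbc}\cap M^{p,q}$. That $P$ is a sub-pseudo-Hodge structure means $P_{\bbc}=\bigoplus_p(P_{\bbc}\cap M^p)$, which is precisely condition (ii) of Lemma 5.1.2.1 applied to $N_K\subseteq M_K$. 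What remains is Hodge symmetry: since $N_K=P_K\subseteq M_K\subseteq M_{\bbr}$, complex conjugation on $M_{\bbc}$ preserves $N_{\bbc}$, so
\begin{equation*}
\overline{N^{p,q}}=\overline{N_{\bbc}\cap M^{p,q}}=\overline{N_{\bbc}}\cap\overline{M^{p,q}}=N_{\bbc}\cap M^{q,p}=N^{q,p}.
\end{equation*}
Uniqueness of $N$ follows from the fact that any sub-Hodge structure of $M$ restricting to $P$ under $\psi$ has underlying $K$-module equal to $P_K$ and Hodge components forced to be $N_{\bbc}\cap M^{p,q}$.

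There is no real obstacle: the entire proposition is a bookkeeping exercise that formalises the slogan \emph{``for fixed weight, the pseudo-Hodge and Hodge categories over a subring of $\bbr$ are equivalent.''} The only subtlety worth being pedantic about is the reality of $P_{\bbc}$ in part (iii), which is exactly where the hypothesis $K\subseteq\bbr$ is used, and where the proposition would genuinely fail for rings $K\subseteq\bbc\setminus\bbr$, explaining the need to introduce pseudo-Hodge structures in the first place.
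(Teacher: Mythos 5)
Your proposal is correct and follows essentially the same route as the paper: the paper dismisses (i) and (ii) as obvious and reduces (iii) to the lemma characterising sub-Hodge structures (your ``condition (ii)'' of that lemma), which is exactly your reduction. The only thing you add is an explicit verification of Hodge symmetry via stability of $N_{\bbc}$ under conjugation, which the paper leaves implicit inside that lemma, so the extra detail is welcome but not a different argument.
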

\begin{proof}
The only non-obvious statement is (iii), but it follows easily from Lemma \ref{LemmaOnSubHS}.
\end{proof}

Finally one more notation: if $K\subseteq B\subseteq\bbc$ are subrings and $M$ is an $K$-pseudo-Hodge structure, we obtain a canonical $B$-pseudo-Hodge structure $M\otimes_KB$ by ``forgetting structure'': we let $(M\otimes_KB)_B=M_K\otimes_KB$; the base changes to the complex numbers are canonically isomorphic, and we demand the Hodge decompositions to be equal.

\begin{Definition}
Let $K\subseteq\bbc$ be a subring, stable under complex conjugation.
\begin{enumerate}
\item Let $V$ be an $K$-module. A \textup{hermitian form} on $K$ is a map $\varphi :V\times V\to K$ with the following properties:
\begin{enumerate}
\item $\varphi $ is $K$-linear in the first variable,
\item $\varphi $ is $K$-antilinear in the second variable (in the sense that it is additive and $\varphi (v,\alpha w)=\overline{\alpha }\varphi (v,w)$, where the bar denotes conjugation in $\bbc$), and
\item $\varphi (v,w)=\overline{\varphi (w,v)}$ for all $v,w\in V$.
\end{enumerate}
Note that for $K=\bbc$ this gives back the usual definition. If $\varphi :V\times V\to K$ is a hermitian form, we obtain a hermitian form on $V\otimes\bbc $, which we again denote by $\varphi $, by ``$\bbc$-sesquilinear extension'':
\begin{equation*}
\varphi (v\otimes\alpha ,w\otimes\beta )=\alpha\overline{\beta}\varphi (v,w).
\end{equation*}
\item Now let $M$ be a pseudo-Hodge structure over $K$. Then a \textup{polarization} of $M$ is a hermitian form $\varphi :M_K\times M_K\to K$ such that for the induced hermitian form on $M_{\bbc }$, the decomposition (\ref{HodgeDecompositionForPseudoHodgeStructure}) is orthogonal and such that $(-1)^p\varphi $ is positive definite on $M^p$.
\end{enumerate}
\end{Definition}
We extend the usual terminology: a pseudo-Hodge structure is polarizable if there exists some polarization of it, and so on. How do these notions relate to the corresponding notions for Hodge structures?

Let $K$ be a subring of $\bbr$, let $M$ be a pure Hodge structure of weight $k$ over $K$, and let $S:M\otimes M\to K(-k)$ be a $(-1)^k$-symmetric morphism of $K$-Hodge structures. Then \textit{as $K$-modules,} $M_K=M_K^{\psi}$, and we may consider
\begin{equation*}
\varphi :M_K\times M_K\to K,\quad (x,y)\mapsto S(x,\overline{y}).
\end{equation*}
Lemma \ref{AlternativeDescriptionOfPolarization} tells us that $S$ is a polarization of the Hodge structure $M$ if and only if $\varphi $ is a polarization of the pseudo-Hodge structure $M^{\psi}$. In particular $M$ is polarizable if and only if $M^{\psi }$ is. Of course the definition of a polarization of a pseudo-Hodge structure was just made such that this holds.

Here is the promised remark about pseudo-Hodge decompositions, filtrations and polarizations: let $(M,H)$ be a polarized pseudo-Hodge structure over $K$, with associated filtration $F^{\bullet}M_{\bbc}$ as in (\ref{PseudoHodgeFiltration}). Then one can reconstruct the pseudo-Hodge decomposition, given filtration and the polarization: $M^p$ is the $H$-orthogonal complement of $F^{p+1}M_{\bbc }$ in $F^pM_{\bbc }$.

\subsection{Hodge Structures of Jacobian Type}

The integral Hodge structure $M^1(X)$ for a smooth compact curve $X$ is a pure Hodge structure of weight one which comes equipped with a principal polarization and only has (possibly) nontrivial components of Hodge type $(1,0)$ and $(0,1)$. We will almost exclusively deal with such Hodge structures. Since their direct characterization gives quite a lengthy expression which we do not like to repeat again and again, we introduce the following notion:
\begin{Definition}
Let $K$ be a subring of $\bbr$. An \textup{$K$-Hodge structure of Jacobian type (of rank $g$)} is a pure $K$-Hodge structure $J$ of weight one together with a principal polarization $J\otimes J\to K(-1)$, such that $\dim_{\bbc }J^{1,0}=\dim_{\bbc }J^{0,1}=g$ and all other Hodge components are zero.
\end{Definition}
So if $X$ is a closed Riemann surface, $M^1(X)$ is an integral Hodge structure of Jacobian type. Also note that if $K\subseteq B\subseteq\bbr$ are subrings and $J$ is an $K$-Hodge structure of Jacobian type, then $J\otimes_KB$ is a $B$-Hodge structure of Jacobian type.

\begin{AbelianVarieties}
Integral Hodge structures of Jacobian type are closely related to complex abelian varieties; in fact the category of these Hodge structures is canonically equivalent to the category of polarized complex abelian varieties, and for many purposes the point of view of Hodge structures gives a clearer picture than that of abelian varieties. Although we will not need it, we record for the convenience of the reader how these two points of view are related.

So let $J$ be an integral Hodge structure of Jacobian type with polarization $S$. Let $p : J_{\bbc }\to J^{0,1}$ be the projection coming from the Hodge decomposition, then $p(J_{\bbz })$ is a full lattice in $J^{0,1}$. Hence we have a complex torus $T=J^{0,1}/p(J_{\bbz })$. We can identify $H_1(T,\bbz )$ with $J_{\bbz }$, hence $2\pi\mathrm{i}$ times the polarization gives an alternating two-form $\bigwedge_{\bbz }^2H_1(T,\bbz )\to \bbz$, i.e. an element of $H^2(T,\bbz )$. This is the Chern class of a holomorphic line-bundle, well defined up to analytic equivalence, defining a polarization of the torus $T$.

This construction defines a functor from integral Hodge structures of Jacobian type to polarized complex abelian varieties which is in fact an equivalence of categories.

Now let $X$ be a closed Riemann surface. The abelian variety associated with the Hodge structure $M^1(X)$ is the Jacobian variety of $X$ (whence the name). Note that the classical construction of the Jacobian is slightly different: this is obtained by first using the isomorphism $M^1(X)\simeq M_1(X)(-1)$ and then applying the above sketched equivalence of categories.

For further details and proofs see \cite[section 7.2.2]{Voisin02}.
\end{AbelianVarieties}
\begin{AbelianVarietiesUpToIsogeny}
In the same spirit we can re-interpret rational Hodge structures of Jacobian type. To explain this, first some notation. Let $\mathrm{HSJ}_{\bbz }$ denote the category of integral Hodge structures of Jacobian type, and let $\mathrm{HSJ}_{\bbq }$ denote the category of rational Hodge structures of Jacobian type. The former is a $\bbz$-linear abelian category, the latter is a $\bbq$-linear abelian category. There is an obvious functor
\begin{equation*}
T:\mathrm{HSJ}_{\bbz }\to\mathrm{HSJ}_{\bbq },\quad J\mapsto J\otimes\bbq,\quad f\mapsto f\otimes\mathrm{id}_{\bbq }.
\end{equation*}
By the above we can identify $\mathrm{HSJ}_{\bbz }$ with the category of polarized abelian varieties. Then a morphism $f$ of abelian varieties is an isogeny if and only if $T(f)$ is an isomorphism of Hodge structures. Also it is easily seen that every functor from $\mathrm{HSJ}_{\bbz }$ to an abelian category which sends all isogenies to isomorphisms factors uniquely over $T$. Hence $T$ is the \textit{localization} of the category of polarized abelian varieties in the class of isogenies, and we can say that $\mathrm{HSJ}_{\bbq }$ is the category of ``abelian varieties up to isogeny''.

This category, although less intuitive than the category of polarized abelian varieties, has some advantages. For example it is semisimple, whereas the latter is not. For this reason we shall almost all the time work only up to isogeny, i.e. with rational Hodge structures.

So for a closed Riemann surface $X$ we can view $M^1(X)\otimes\bbq$ as ``the Jacobian variety of $X$, up to isogeny''. Since we are going to use it very often, we shall denote this rational Hodge structure of Jacobian type by $J(X)$.
\end{AbelianVarietiesUpToIsogeny}
\begin{PeriodMatrices}
Explicit computations surrounding Hodge structures of Jacobian type are often done using period matrices. Again we need to fix notations. So let $K\subseteq\bbr$ be a subring of $\bbr$ and let $J$ be an $K$-Hodge structure of Jacobian type, say of rank $g$. Then we may choose some symplectic basis of $J_K$; to choose such a basis is the same as to give an isomorphism $J_K\simeq K^{2g}$ such that the polarization becomes the standard symplectic form. This means that if the canonical basis vectors of $K^{2g}$ are denoted by $a_1,\ldots ,a_g$, $b_1,\ldots ,b_g$, we have
\begin{equation*}
S(a_i,b_j)=-S(b_j,a_i)=\frac{\delta_{ij}}{2\pi\mathrm{i}},\quad S(a_i,a_j)=S(b_i,b_j)=0.
\end{equation*}
Then we can also identify $J_{\bbc }$ with $\bbc^{2g}$, and the only nontrivial information left is the complex subspace $J^{1,0}=F^1J_{\bbc }\subseteq\bbc^{2g}$. Denote by $\pr^{0,1}$ the projection $\bbc^{2g}=M_{\bbc }\to J^{0,1}$ coming from the Hodge decomposition.
\end{PeriodMatrices}
\begin{Lemma}
With the above notation, $(\pr^{0,1}a_1,\ldots ,\pr^{0,1}a_g)$ is a $\bbc$-basis of $J^{0,1}=\Gr_F^0\bbc^{2g}$.
\end{Lemma}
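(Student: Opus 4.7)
The plan is a dimension count combined with the positivity built into the polarization. Since $\dim_{\bbc} J^{0,1} = g$ and $(\pr^{0,1}a_1,\ldots,\pr^{0,1}a_g)$ is a family of $g$ vectors in $J^{0,1}$, it suffices to prove that this family is linearly independent over $\bbc$. Equivalently, I need to show that the restriction of $\pr^{0,1}$ to the $\bbc$-span $V \subseteq J_{\bbc}$ of $a_1,\ldots,a_g$ is injective, i.e. that if $v = \sum_{i=1}^g c_i a_i$ (for some $c_i \in \bbc$) happens to lie in $\ker\pr^{0,1} = J^{1,0}$, then $v = 0$.

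The tool for ruling out such a $v$ is the polarization $S$ together with its associated positive definite Hermitian form $H(x,y) = (2\pi\mathrm{i})\cdot S(Cx,\overline{y})$, where $C$ is the Weil operator. By Proposition \ref{NicePropsOfPolarizableHS}, the Hodge decomposition is $H$-orthogonal and $H$ is in particular positive definite on $J^{1,0}$. So to force $v=0$ it is enough to show $H(v,v) = 0$ for any candidate $v \in V \cap J^{1,0}$.

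The key computation is then straightforward once the ingredients are unwound. Since $v \in J^{1,0}$ the Weil operator satisfies $Cv = \mathrm{i}^{1-0}v = \mathrm{i}v$. Since $a_1,\ldots,a_g \in J_K \subseteq J_{\bbr}$, complex conjugation on $J_{\bbc}$ acts coefficientwise, giving $\overline{v} = \sum_i \overline{c_i}\, a_i$. Expanding $H(v,v)$ then yields
\begin{equation*}
H(v,v) = (2\pi\mathrm{i})\cdot S(\mathrm{i}v,\overline{v}) = -2\pi \sum_{i,j} c_i\overline{c_j}\, S(a_i,a_j) = 0,
\end{equation*}
because $S(a_i,a_j) = 0$ for all $i,j$ by the defining property of a symplectic basis. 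Positive definiteness of $H$ on $J^{1,0}$ then forces $v=0$, completing the argument.

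I do not expect any serious obstacle: the statement is essentially a repackaging of the first Riemann bilinear relation, and all the work is concentrated in extracting the identity $S(v,\overline{v}) = 0$ from the symplectic basis condition. The only point one must be a little careful about is that the vanishing $S(J^{1,0},J^{1,0})=0$ (which would give the conclusion even more directly) is not needed here, because we are pairing $v$ not with itself but with $\overline{v}$; it is the $K$-rationality of the $a_i$ that makes the computation via $S(a_i,a_j)$ possible.
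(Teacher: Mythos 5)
Your proof is correct, and it is essentially the mirror image of the paper's argument. The paper proves that the vectors $\pr^{0,1}a_1,\ldots ,\pr^{0,1}a_g$ \emph{span} $J^{0,1}$: it lets $V$ be their \emph{real} span, observes that $V\cap\mathrm{i}V$ is a complex subspace on which $\Im H$ (hence $H$) vanishes identically because $S(a_i,a_j)=0$, concludes $V\cap\mathrm{i}V=0$ from positive definiteness, and finishes with a dimension count. You instead prove \emph{linear independence}: a complex combination $v=\sum_i c_ia_i$ lying in $\ker\pr^{0,1}=J^{1,0}$ satisfies $H(v,v)=(2\pi\mathrm{i})\,S(\mathrm{i}v,\overline{v})=-2\pi\sum_{i,j}c_i\overline{c_j}\,S(a_i,a_j)=0$, hence $v=0$. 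The inputs are identical in both cases (isotropy of the $a_i$ under $S$, positivity of $H$, and $\dim_{\bbc}J^{0,1}=g$), but your packaging is slightly cleaner: the paper's concluding ``for dimension reasons'' tacitly also requires $\dim_{\bbr}V=g$, i.e.\ the $\bbr$-independence of the $\pr^{0,1}a_j$ (which needs the separate observation that $\pr^{0,1}$ is injective on $J_{\bbr}$), an extra step your argument avoids. Your closing remark is also accurate: since $\overline{v}\in J^{0,1}$, the vanishing of $S$ on $J^{1,0}\otimes J^{1,0}$ is of no use here, and it is precisely the reality of the $a_i$ that lets you evaluate $S(v,\overline{v})$ coefficientwise and invoke $S(a_i,a_j)=0$.
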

\begin{proof}
It suffices to show that $\eta_1,\ldots ,\eta_g$ generate $J^{0,1}$ as a $\bbc$-vector space, where $\eta_j=\pr^{0,1}a_j$. Let $V$ be the \textit{real} sub-vector space generated by $\eta_1,\ldots ,\eta_g$. We have to show that $J^{0,1}=V\oplus\mathrm{i}V$.

Indeed, $V\cap\mathrm{i}V$ is a \textit{complex} sub-vector space of $J^{0,1}$ on which $\Im H$ vanishes identically. But this means that $H$ itself vanishes identically on $V\cap\mathrm{i}V$. Since $H$ is positive definite, we must have $V\cap\mathrm{i}V=0$; for dimension reasons we then must have $J^{0,1}=V\oplus\mathrm{i}V$.
\end{proof}
So there is a unique $\bbc$-linear map $\psi :\bbc^{2g}\to\bbc^g$ with $\psi (a_j)=a_j$ and $\ker\psi =F^1\bbc^{2g}$. This has in the standard bases a matrix of the form $(1_g\,\tau )$ with some $\tau\in M_g(\bbc )$. In coordinates:
\begin{equation*}
\psi (a_j)=a_j,\quad \psi (b_j)=\sum_{j=1}^g\tau_{jk}a_k.
\end{equation*}
\begin{DefinitionProposition}
The matrix $\tau$ is called the \textup{period matrix} of $J$ with respect to the given basis. It is a symmetric matrix, and its imaginary part (taken entry-wise) is positive definite.
\end{DefinitionProposition}
\begin{proof}
This is a nasty matrix calculation. We follow an analogous calculation with different conventions in \cite[proof of Thm. 6.2]{Debarre}. Consider two different $\bbr$-bases of $\bbc^g$ and express the form $-2\Im H$ in each of these bases.
\begin{enumerate}
\item Note that $\psi$ establishes an isomorphism $\Gr_F^0\bbc^{2g}\simeq\bbc^g$. It further maps $M_{\bbr }=\bbr^{2g}$ bijectively to $\bbc^g$. Hence the image of $(a_1,\ldots ,a_g,b_1,\ldots ,b_g)$ under $\psi$ is an $\bbr$-basis of $\bbc^g$; it is explicitly given as
    \begin{equation*}
    (a_1,\ldots ,a_g,\sum_{k=1}^g\tau_{1k}a_k,\ldots ,\sum_{k=1}^g\tau_{gk}a_k).
    \end{equation*}
    In this basis, $-2\Im H$ is simply given by the matrix
    \begin{equation*}
    \begin{pmatrix}
    0_g&1_g\\
    -1_g&0_g
    \end{pmatrix}.
    \end{equation*}
\item Clearly also the following is an $\bbr$-basis of $\bbc^g$:
    \begin{equation*}
    (a_1,\ldots ,a_g,\mathrm{i}a_1,\ldots ,\mathrm{i}a_g).
    \end{equation*}
    In this basis, $-2\Im H$ has the matrix
    \begin{equation*}
    \begin{pmatrix}
    0_g&(\Im\tau )^{-1}\\
    -(\Im\tau )^{-1}&0_g
    \end{pmatrix}.
    \end{equation*}
\end{enumerate}
The base change matrix to pass from one to another is
\begin{equation*}
\begin{pmatrix}
1_g&\Re\tau\\
0_g&\Im\tau
\end{pmatrix}.
\end{equation*}
Hence
\begin{equation*}
\begin{pmatrix}
1_g&\Re\tau\\
0_g&\Im\tau
\end{pmatrix}^{\mathsf{T},-1}
\begin{pmatrix}
0_g&1_g\\
-1_g&0_g
\end{pmatrix}
\begin{pmatrix}
1_g&\Re\tau\\
0_g&\Im\tau
\end{pmatrix}^{-1}=
\begin{pmatrix}
0_g&(\Im\tau )^{-1}\\
-(\Im\tau )^{-1}&0_g
\end{pmatrix}.
\end{equation*}
Multiplying the matrices on the left hand side we find
\begin{equation*}
\begin{pmatrix}
0_g&(\Im\tau )^{-1}\\
-(\Im\tau )^{\mathsf{T},-1}&(\Im\tau )^{\mathsf{T},-1}(\Re\tau -(\Re\tau )^{\mathsf{T}})(\Im\tau )^{-1}
\end{pmatrix}=
\begin{pmatrix}
0&(\Im\tau )^{-1}\\
-(\Im\tau )^{-1}&0
\end{pmatrix}.
\end{equation*}
So comparing entries we find that both $\Im\tau$ and $\Re\tau$ are symmetric, hence $\tau$ is symmetric. Finally $(\Im\tau )^{-1}$ describes the Hermitian form $H$ in the \textit{complex} basis $e_1,\ldots ,e_g$ and hence is positive definite, so also $\Im\tau$ is positive definite.
\end{proof}
The set of all symmetric matrices in $M_g(\bbc )$ with positive definite imaginary part is called the \textit{Siegel upper half space} and denoted by $\mathrsfs{H}_g$. This naturally has a structure as a complex manifold of dimension $\frac{g(g+1)}{2}$. Note that $\mathrsfs{H}_1=\bbh$, the upper half plane.
\begin{PeriodMatricesOfJacobians}
Period matrices have a particularly nice description for the Jacobians of Riemann surfaces.
\end{PeriodMatricesOfJacobians}
\begin{Proposition}
Let $X$ be a closed Riemann surface of genus $g$, let $K\subseteq\bbr$ be a subring and let $(a'_j,b'_j)_{1\le j\le g}$ be a symplectic basis of $H^1(X,K)$. Let $(a_j,b_j)_{1\le j\le g}$ be the corresponding dual basis of $H_1(X,K)$, which is also symplectic. Then the period matrix $\tau$ of $M^1(X)\otimes K$ with respect to this basis is computed as follows: for every $1\le j\le g$ there exists a unique holomorphic one-form $\omega_j$ on $X$ such that
\begin{equation*}
\int_{a_k}\omega_j=\delta_{jk}.
\end{equation*}
Then
\begin{equation*}
\tau_{jk}=\tau_{kj}=\int_{b_k}\omega_j.
\end{equation*}
\end{Proposition}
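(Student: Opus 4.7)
The plan is to establish the proposition in two steps: first, prove the existence and uniqueness of the normalized holomorphic one-forms $\omega_j$; then match them to the abstractly defined period matrix $\tau$ via the map $\psi$ from the preceding subsection.

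For existence and uniqueness, I would show that the $\bbc$-linear map
\begin{equation*}
\rho\colon \Omega^1(X)\longrightarrow \bbc^g,\qquad \omega\mapsto \Bigl(\int_{a_k}\omega\Bigr)_{k=1}^{g}
\end{equation*}
is an isomorphism. Both spaces have complex dimension $g$, so injectivity suffices. If $\omega\in\Omega^1(X)$ has all $a$-periods zero, then Riemann's bilinear relation---obtained by expanding $[\omega]$ and $[\overline{\omega}]$ in the symplectic basis $(a'_l,b'_l)$ and integrating against the cup-product relations $\int_X a'_j\wedge b'_k=\delta_{jk}$---gives
\begin{equation*}
\int_X\omega\wedge\overline{\omega}=\sum_l\Bigl(\int_{a_l}\omega\cdot\overline{\int_{b_l}\omega}-\int_{b_l}\omega\cdot\overline{\int_{a_l}\omega}\Bigr)=0.
\end{equation*}
On the other hand, writing locally $\omega=f\,\de z$, one has $\omega\wedge\overline{\omega}=-2\mathrm{i}|f|^2\,\de x\wedge\de y$, so the integral equals $-2\mathrm{i}\int_X|f|^2\,\de\mu$, a nonzero imaginary number unless $f\equiv 0$. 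Hence $\omega=0$ and $\rho$ is an isomorphism.

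For the formula, I would use Poincar\'{e} duality to match the two symplectic bases by $\mathrm{PD}(a_j)=a'_j$, $\mathrm{PD}(b_j)=b'_j$. Combined with $\int_X a'_j\wedge b'_k=\delta_{jk}$ (the other wedges vanishing), this forces the evaluation pairings
\begin{equation*}
\int_{a_l}a'_j=0,\quad\int_{a_l}b'_j=\delta_{lj},\quad\int_{b_l}a'_j=-\delta_{lj},\quad\int_{b_l}b'_j=0,
\end{equation*}
so that every class $[\omega]\in H^1(X,\bbc)$ admits the expansion $[\omega]=-\sum_l(\int_{b_l}\omega)\,a'_l+\sum_l(\int_{a_l}\omega)\,b'_l$. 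For $\omega=\omega_j$ this reads $[\omega_j]=-\sum_l\Lambda_{jl}\,a'_l+b'_j$ with $\Lambda_{jl}:=\int_{b_l}\omega_j$. Since $\omega_j\in\Omega^1(X)=F^1M^1(X)_{\bbc}=\ker\psi$, applying $\psi$ and invoking $\psi(a'_l)=e_l$, $\psi(b'_j)=\sum_k\tau_{jk}e_k$ yields
\begin{equation*}
0=\psi([\omega_j])=-\sum_l\Lambda_{jl}\,e_l+\sum_k\tau_{jk}\,e_k,
\end{equation*}
whence $\tau_{jk}=\Lambda_{jk}=\int_{b_k}\omega_j$, as claimed. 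The symmetry $\tau_{jk}=\tau_{kj}$ is the general fact already proved for every Hodge structure of Jacobian type, and equally follows from Riemann's bilinear relation applied to the holomorphic pair $\omega_j,\omega_k$ (since $\omega_j\wedge\omega_k\equiv 0$ on a curve).

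The main obstacle is purely book-keeping: one must pin down mutually compatible conventions for Poincar\'{e} duality $\mathrm{PD}\colon H_1\to H^1$, the symplectic form on $H^1$ induced by the polarization, and the evaluation pairing $H^1\times H_1\to K$, so that the ``dual'' symplectic basis on $H_1$ pairs with $(a'_j,b'_j)$ exactly as in the display above---any inconsistency here would change $\tau$ into $\pm\tau^{-1}$ or $\pm\tau^{\mathsf{T}}$. Once the conventions are fixed, the formula drops out immediately from the kernel property of $\psi$, and existence/uniqueness from the positivity of $\int_X|f|^2\,\de\mu$.
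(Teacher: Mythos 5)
Your proposal is correct and follows essentially the same route as the paper: both arguments reduce the formula to the defining property of $\psi$ (namely $\ker\psi=F^1$ together with $\psi(a_j)=a_j$), and both derive existence and uniqueness of the normalized forms $\omega_j$ from the positivity encoded in the polarization — you via the explicit Riemann bilinear relation $\int_X\omega\wedge\overline{\omega}$, the paper via its earlier lemma that the $\pr^{0,1}a_j$ form a basis of $J^{0,1}$, which is the same fact. The only cosmetic difference is that you work in cohomology with the dual-basis expansion under Poincar\'{e} duality, whereas the paper passes to $M_1(X)(-1)$ and views homology classes as linear forms on $\Omega^1(X)\oplus\overline{\Omega^1(X)}$.
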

\begin{proof}
We use the isomorphism $M^1(X)\simeq M_1(X)(-1)$ and rather describe $M_1(X)(-1)$ than $M^1(X)$.

So consider $M_K=H_1(X,K)$ with the symplectic basis $(a_1,\ldots ,a_g,b_1,\ldots ,b_g)$; this also defines a basis of
\begin{equation*}
M_{\bbc }=H_1(X,\bbc )=\Hom (H_{\mathrm{dR}}^1(X),\bbc )=\Hom (\Omega^1(X)\oplus\overline{\Omega^1(X)},\bbc ).
\end{equation*}
The projection $\pr^{0,1}$ is just the restriction to $\Omega^1(X)$:
\begin{equation*}
\Hom (\Omega^1(X)\oplus\overline{\Omega^1(X)},\bbc )\to\Hom (\Omega^1(X),\bbc).
\end{equation*}
This maps $(a_1,\ldots ,a_g)$ to a $\bbc$-basis of $\Hom (\Omega^1(X),\bbc )$; denote the dual basis of $\Omega^1(X)$ by $(\omega_1,\ldots ,\omega_g)$. It is thus uniquely characterized by the equations
\begin{equation*}
\int_{a_k}\omega_j=\delta_{jk}.
\end{equation*}
Now one of the remaining basis elements $b_k$ of $H_1(X,K)$ is mapped by $\pr^{0,1}$ to the linear form ``integration over the cycle $b_j$'' on $\Omega^1(X)$. But then it is mapped to the same element as $\sum_k\tau_{jk}a_k$, where
\begin{equation*}
\tau_{jk}=\int_{a_k}\omega_j.
\end{equation*}
Hence the period matrix of $M$ is $(\tau_{\alpha\beta })$.
\end{proof}

\subsection{The Action on Cohomology}

Let $X$ be a closed Riemann surface with an abelian differential $\omega$. The affine group $\Aff^+(X,\omega )$, consisting of (special) orientation-preserving homeomorphisms $X\to X$, acts on the first cohomology $H^1(X,\bbz )\simeq\bbz^{2g}$ by symplectic automorphisms. Let us take a closer look at this action.

\begin{TheCanonicalSubspace}
First we tensor with $\bbr$, to the effect that we can describe $H^1(X,\bbz )\otimes\bbr =H^1(X,\bbr )$ as de Rham cohomology, which in turn can be identified with the space of harmonic real-valued one-forms on $X$. Clearly the real part and the imaginary part of $\omega$ are harmonic; they are also linearly independent over $\bbr $. This can be checked locally in any chart of the translation structure, and that $\de x$ and $\de y$ on $\bbr^2=\bbc$ are linearly independent is a triviality. Hence they span a two-dimensional vector space which will be very important in what follows, so it deserves a name of its own.
\end{TheCanonicalSubspace}
\begin{Definition}
Let $X$ be a closed Riemann surface and let $\omega$ be an abelian differential on $X$. The \textup{canonical subspace} $S_{\omega }\subseteq H^1(X,\bbr )$ is the real sub-vector space generated by the de Rham cohomology classes of the one-forms $\Re\omega$ and $\Im\omega$.
\end{Definition}
The canonical subspace has several nice properties which are easily checked:
\begin{Proposition}
Let $X$ and $\omega$ as above. Then the canonical subspace $S_{\omega }$ is a real sub-Hodge structure of the real Hodge structure $J(X)\otimes\bbr$.
\end{Proposition}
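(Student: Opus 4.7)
The plan is to apply the criterion for sub-Hodge structures (Lemma \ref{LemmaOnSubHS}) directly to $S_\omega$, which reduces the claim to a concrete calculation in $H^1(X,\bbc)$. Recall that $J(X)\otimes\bbr$ has underlying real vector space $H^1(X,\bbr)$, whose complexification is $H^1(X,\bbc) = H^{1,0}\oplus H^{0,1}$ with $H^{1,0}=\Omega^1(X)$ and $H^{0,1}=\overline{\Omega^1(X)}$. I therefore need to check that $S_\omega\otimes_{\bbr}\bbc$, viewed as a complex subspace of $H^1(X,\bbc)$, is the direct sum of its intersections with $H^{1,0}$ and $H^{0,1}$, and that it is closed under complex conjugation.

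First I would compute $S_\omega \otimes_\bbr \bbc$ explicitly. By definition, $S_\omega$ is the real span of the de Rham classes $[\Re\omega]$ and $[\Im\omega]$, so its complexification is the complex span of these same classes. Applying the invertible change of basis
\begin{equation*}
\omega = \Re\omega + \mathrm{i}\,\Im\omega, \qquad \overline{\omega} = \Re\omega - \mathrm{i}\,\Im\omega,
\end{equation*}
I get $S_\omega\otimes\bbc = \bbc\cdot\omega + \bbc\cdot\overline{\omega}$. Now $\omega$ is holomorphic, hence lies in $H^{1,0}$, and $\overline{\omega}$ is antiholomorphic, hence lies in $H^{0,1}$. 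Since $H^{1,0}\cap H^{0,1}=0$, the classes $\omega,\overline{\omega}$ are linearly independent in $H^1(X,\bbc)$, and thus
\begin{equation*}
S_\omega\otimes\bbc = \bbc\cdot\omega \,\oplus\, \bbc\cdot\overline{\omega} \subseteq H^{1,0}\oplus H^{0,1}.
\end{equation*}

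Second I would verify the two conditions. For the condition from Lemma \ref{LemmaOnSubHS}: intersecting with the Hodge pieces gives $(S_\omega\otimes\bbc)\cap H^{1,0}=\bbc\cdot\omega$ and $(S_\omega\otimes\bbc)\cap H^{0,1}=\bbc\cdot\overline{\omega}$, whose sum is $S_\omega\otimes\bbc$ itself, so the inclusion \eqref{InclusionFromLemmaOnSubHS} is an equality. This already gives $S_\omega$ the structure of a sub-pseudo-Hodge structure of $M^1(X)\otimes\bbr$. For the Hodge symmetry which promotes this to an honest real sub-Hodge structure of weight one, I observe that the canonical conjugation on $H^1(X,\bbc)$ coming from the real structure $H^1(X,\bbr)$ sends $\omega$ to $\overline{\omega}$ and vice versa, hence swaps the two pieces $S_\omega^{1,0}=\bbc\omega$ and $S_\omega^{0,1}=\bbc\overline{\omega}$, as required.

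There is no genuine obstacle here; the entire content of the proposition is the trivial observation that $\omega$ is of pure Hodge type $(1,0)$ and that the real span of $\Re\omega,\Im\omega$ complexifies to the span of $\omega$ and $\overline{\omega}$. The weight, principal polarization and rank of $J(X)\otimes\bbr$ play no role, so the whole proof is three short lines once Lemma \ref{LemmaOnSubHS} is invoked.
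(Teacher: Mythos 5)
Your proof is correct and follows the same route as the paper: both identify $S_\omega\otimes\bbc$ as the span of $\omega$ and $\overline{\omega}$, note these are of pure Hodge types $(1,0)$ and $(0,1)$, and invoke Lemma \ref{LemmaOnSubHS}. Your explicit check of Hodge symmetry is a harmless extra detail the paper leaves implicit.
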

\begin{proof}
Clearly $S_{\omega }\otimes\bbc$ is the complex sub-vector space of $H^1(X,\bbc )$ generated by $\omega$ and $\overline{\omega }$, i.e. a holomorphic one-form and its complex conjugate. Hence the Hodge components of elements of $S_{\omega }\otimes\bbc$ are again in $S_{\omega }\otimes\bbc$, and Lemma \ref{LemmaOnSubHS} can be applied.
\end{proof}
\begin{Proposition}
Let $X$ and $\omega$ as above and let $\varphi\in\Aff^+(X,\omega )$ with linear part $A=D\varphi\in\SL (X,\omega )$. Then
\begin{equation*}
\begin{pmatrix}
\varphi^{\ast}\Re\omega \\
\varphi^{\ast}\Im\omega
\end{pmatrix}
=A\cdot
\begin{pmatrix}
\Re\omega\\
\Im\omega
\end{pmatrix}.
\end{equation*}
So $\varphi^{\ast }:H^1(X,\bbr )\to H^1(X,\bbr )$ sends the canonical subspace $S_{\omega}$ to itself; the induced map $S_{\omega }\to S_{\omega }$ is, in the basis $(\Re\omega ,\Im\omega )$, given by the matrix $A^{\mathsf{T}}$.
\end{Proposition}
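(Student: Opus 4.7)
The proof is essentially a direct computation carried out in translation charts; I will sketch how to set it up cleanly.

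First, let $P\subset X$ be the (finite) set of cone points of the translation structure and set $X'=X\smallsetminus P$. By definition of an affine self-map, $\varphi$ sends $P$ bijectively onto $P$, so its restriction $\varphi\colon X'\to X'$ is a diffeomorphism. Since $P$ is finite, the singular cohomology pullback $\varphi^*\colon H^1(X,\bbr)\to H^1(X,\bbr)$ can be computed by pulling back smooth representatives (e.g.\ harmonic ones) of de Rham classes on $X'$ and re-interpreting them as classes on $X$. It therefore suffices to verify the claimed identity at the level of smooth one-forms on $X'$.

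Now fix a chart $z=x+\mathrm{i}y$ of the translation atlas, so that in this chart $\omega=\de z$, whence $\Re\omega=\de x$ and $\Im\omega=\de y$. In such a chart $\varphi$ has the form $z\mapsto Az+c_0$ for some $c_0\in\bbc$, where we identify $\bbc=\bbr^2$ and write
\begin{equation*}
A=\begin{pmatrix}a&b\\ c&d\end{pmatrix}\in\SL_2(\bbr).
\end{equation*}
A one-line computation then gives $\varphi^*\de x=a\,\de x+b\,\de y$ and $\varphi^*\de y=c\,\de x+d\,\de y$, i.e.
\begin{equation*}
\begin{pmatrix}\varphi^*\Re\omega\\ \varphi^*\Im\omega\end{pmatrix}
=A\cdot\begin{pmatrix}\Re\omega\\ \Im\omega\end{pmatrix}.
\end{equation*}
Because all transition maps of the translation atlas are translations of $\bbc$, their pullbacks act trivially on $\de x$ and $\de y$; hence this chart-local identity glues to a global identity of smooth one-forms on $X'$, and thus descends to the claimed identity of de Rham classes in $H^1(X,\bbr)$.

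The remaining assertions are immediate consequences. The identity above shows that $\varphi^*$ preserves the real two-plane $S_\omega$ spanned by $\Re\omega$ and $\Im\omega$. To read off the matrix of $\varphi^*|_{S_\omega}$ in the basis $(\Re\omega,\Im\omega)$, one lists, as columns, the coordinate vectors of the images of the basis vectors: from $\varphi^*\Re\omega=a\,\Re\omega+b\,\Im\omega$ and $\varphi^*\Im\omega=c\,\Re\omega+d\,\Im\omega$ these columns are $(a,b)^{\mathsf T}$ and $(c,d)^{\mathsf T}$, so the matrix is $A^{\mathsf T}$. There is no real obstacle here; the only point to watch is the standard transpose flip between the column-of-images convention appearing in the boxed formula and the matrix-in-a-basis convention appearing in the conclusion.
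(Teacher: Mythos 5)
Your proof is correct and follows essentially the same route as the paper: verify the identity $\varphi^{\ast}\de z = A\cdot\de z$ by a direct computation in translation charts, where $\varphi$ has the form $z\mapsto Az+c$, and then read off that the matrix of $\varphi^{\ast}|_{S_{\omega}}$ in the basis $(\Re\omega ,\Im\omega )$ is $A^{\mathsf{T}}$ because of the column-versus-row convention flip. Your extra care about the cone points and the injectivity of restriction to $X\smallsetminus P$ is a harmless refinement of the paper's terser "check locally in translation charts".
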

\begin{proof}
It suffices to check this locally in translation charts. But a simple computation confirms that for the real linear map $A:\bbc\to\bbc$ one has
\begin{equation*}
\begin{pmatrix}
A^{\ast}\de x\\
A^{\ast}\de y
\end{pmatrix}
=A\cdot
\begin{pmatrix}
\de x\\
\de y
\end{pmatrix},
\end{equation*}
which can of course be written shorter as $A^{\ast}\de z=A\cdot\de z$.

But this means that the matrix of $\varphi^{\ast }$ in the given basis is the transpose of $A$, not $A$ itself!
\end{proof}

\begin{Eigenvalues}
The aforementioned properties of the canonical subspace have strong implications for the eigenvalues of elements of the Veech group:
\end{Eigenvalues}
\begin{Corollary}\label{EigenvaluesOfAffineMapsAreAlgIntegers}
Let $X$ and $\omega$ as above, and let $A\in\SL (X,\omega )$. Then the eigenvalues of $A$ are algebraic integers.
\end{Corollary}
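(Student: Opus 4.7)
My plan is to exploit the fact that the affine map $\varphi\in\Aff^+(X,\omega)$ with $D\varphi = A$ acts not only on the real canonical subspace $S_\omega$ but also, and more importantly, on the integral cohomology lattice $H^1(X,\bbz)$, which is a free $\bbz$-module of rank $2g$.

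First I would fix $\varphi\in\Aff^+(X,\omega)$ mapping to $A$ under $D$, which exists by the definition of the Veech group. The induced map $\varphi^{\ast}:H^1(X,\bbz)\to H^1(X,\bbz)$ is a $\bbz$-linear automorphism of a finitely generated free abelian group; hence the characteristic polynomial $\chi(t)\in\bbz[t]$ of $\varphi^{\ast}$ is monic with integer coefficients. Consequently, every eigenvalue of $\varphi^{\ast}$, acting on $H^1(X,\bbc)$, is a root of $\chi$ and therefore an algebraic integer.

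Next I would use the previous Proposition to transfer this to $A$. That result says that $\varphi^{\ast}$ preserves the canonical subspace $S_\omega\subseteq H^1(X,\bbr)$ and acts on it by the matrix $A^{\mathsf{T}}$ in the basis $(\Re\omega, \Im\omega)$. Since $S_\omega\otimes\bbc$ is a $\varphi^{\ast}$-invariant complex subspace of $H^1(X,\bbc)$, the eigenvalues of $\varphi^{\ast}|_{S_\omega}$ — which are precisely the eigenvalues of $A^{\mathsf{T}}$, and thus of $A$ itself — occur among the eigenvalues of $\varphi^{\ast}$ on $H^1(X,\bbc)$. Combined with the previous paragraph, this shows that the eigenvalues of $A$ are algebraic integers.

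There is essentially no obstacle here: the argument is just the observation that a matrix acting on a $\bbr$-vector space has algebraic integer eigenvalues whenever that space sits as an invariant subspace of an integral lattice on which the transformation is defined over $\bbz$. The only point requiring care is the identification of $\varphi^{\ast}|_{S_\omega}$ with $A^{\mathsf{T}}$ (not $A$), but this is harmless since $A$ and $A^{\mathsf{T}}$ share the same characteristic polynomial, hence the same eigenvalues.
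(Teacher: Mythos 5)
Your argument is correct and is essentially the same as the paper's: restrict $\varphi^{\ast}$ to the canonical subspace $S_{\omega}$, identify its spectrum there with that of $A$, and observe that these eigenvalues lie in the spectrum of $\varphi^{\ast}$ on $H^1(X,\bbz)$, whose characteristic polynomial is monic with integer coefficients. Your explicit remark that $\varphi^{\ast}|_{S_{\omega}}$ is $A^{\mathsf{T}}$ rather than $A$ (harmless, since they share a characteristic polynomial) is a point the paper passes over silently.
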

\begin{proof}
Write $A=D\varphi$ for $\varphi\in\Aff^+(X,\omega )$. By the above proposition we have
\begin{equation*}
\Spec A=\Spec (\varphi^{\ast}|S_{\omega })\subseteq\Spec (\varphi^{\ast}|H^1(X,\bbr ))=\Spec (\varphi^{\ast }|H^1(X,\bbz )).
\end{equation*}
Now clearly the characteristic polyonomial of $\varphi^{\ast }$ on $H^1(X,\bbz )$ is monic with integer coefficients, so all the eigenvalues of $\varphi^{\ast }$ are algebraic integers.
\end{proof}
We also note a general property of eigenvalues on cohomology, which has nothing particular to do with translation structures.
\begin{Proposition}\label{EigenvaluesComeInPairs}
Let $\varphi$ be an orientation-preserving diffeomorphism of $\Sigma_g$. Let $\lambda\neq\pm 1$ be an eigenvalue of $\varphi^{\ast }$ acting on $H^1(\Sigma_g,\bbz )$. Then $\lambda^{-1}$ is also an eigenvalue, with the same multiplicity.
\end{Proposition}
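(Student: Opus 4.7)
The plan is to exploit the fact that $\varphi^{\ast}$ preserves the cup product pairing $H^1(\Sigma_g,\bbz)\times H^1(\Sigma_g,\bbz)\to H^2(\Sigma_g,\bbz)\simeq\bbz$, which (because $\varphi$ is orientation-preserving) is a symplectic automorphism of the free abelian group $H^1(\Sigma_g,\bbz)$. Thus the matrix $A$ representing $\varphi^{\ast}$ in a symplectic basis lies in $\Sp_{2g}(\bbz)$, and the question reduces to pure symplectic linear algebra: the characteristic polynomial of a symplectic matrix is \emph{reciprocal}, i.e. $\det (\lambda \mathbf{1}-A)=\lambda^{2g}\det (\lambda^{-1}\mathbf{1}-A)$ up to sign, so $\lambda$ and $\lambda^{-1}$ appear as eigenvalues with the same multiplicity.

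First I would set up the pairing and note that $\varphi^{\ast}$ is symplectic. Then I would argue as follows: let $J$ be the matrix of the symplectic form in the chosen basis, so $A^{\mathsf{T}}JA=J$. This rewrites as $A^{\mathsf{T}}=JA^{-1}J^{-1}$, which says that $A^{\mathsf{T}}$ and $A^{-1}$ are conjugate in $\GL_{2g}(\bbq)$. Hence they have the same characteristic polynomial. Since $A$ and $A^{\mathsf{T}}$ also have the same characteristic polynomial, we conclude that $A$ and $A^{-1}$ do. Therefore the multiplicity of $\lambda$ as an eigenvalue of $A$ equals the multiplicity of $\lambda$ as an eigenvalue of $A^{-1}$, which equals the multiplicity of $\lambda^{-1}$ as an eigenvalue of $A$.

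The hypothesis $\lambda\neq\pm 1$ plays no role in this argument; it is stated merely to exclude the two self-reciprocal cases in which the conclusion is either tautological or vacuous, and to emphasize that genuine pairing of distinct eigenvalues is what is obtained. There is no real obstacle here: the only mild point is to remember that $\varphi$ being \emph{orientation-preserving} is what guarantees $\varphi^{\ast}$ acts trivially on $H^2(\Sigma_g,\bbz)$ and thus symplectically on $H^1$; without this the form would be sent to its negative and $A$ would merely be anti-symplectic, although in fact an anti-symplectic matrix still has reciprocal characteristic polynomial, so the conclusion would still hold.
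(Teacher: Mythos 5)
Your argument is correct and is essentially the same as the paper's: both use that $\varphi^{\ast}$ preserves the intersection form, deduce from $A^{\mathsf{T}}JA=J$ that $A$ is conjugate to the transpose of $A^{-1}$, and conclude that $A$ and $A^{-1}$ share a characteristic polynomial, which pairs $\lambda$ with $\lambda^{-1}$ including multiplicities. Your closing step is in fact slightly cleaner than the paper's explicit manipulation of the characteristic polynomial (which contains a small typographical slip in its final line).
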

\begin{proof}
Choose a standard basis of $H^1(X,\bbz )$; then the intersection pairing on $H^1(X,\bbz )$ is represented by the matrix
\begin{equation*}
J=\begin{pmatrix}
0&-I_g\\
I_g&0
\end{pmatrix}.
\end{equation*}
Since $\varphi^{\ast }$ preserves the intersection pairing, it is represented by a matrix $M$ with $M^{\mathsf{T}}JM=J$. This implies that $M$ is conjugate to the transpose of $M^{-1}$, hence $M$ and $M^{-1}$ have the same characteristic polynomial. But then the characteristic polynomial of $M$ satisfies the following equation:
\begin{equation*}
\begin{split}
\chi (M,x)
&=\det (M-x\cdot I)\\
&=\det M\cdot \det (I-xM^{-1})\\
&=x^{2g}\det (M^{-1}-\frac{1}{x}I)\\
&=x^{2g}\chi (M,x).
\end{split}
\end{equation*}
This implies that if $\lambda$ is a root of multiplicity $n$ of $\chi (M,x)$, then so is $\lambda^{-1}$.
\end{proof}
\begin{TrichotomyForElementsOfTheVG}
Let us take a closer look at the eigenvalues of an element $A\in\SL (X,\omega )$ for a translation surface $(X,\omega )$, according to whether $A$ is elliptic, parabolic or hyperbolic.

If $A$ is elliptic, it must be of finite order since the Veech group $\SL (X,\omega )$ is Fuchsian. Hence it is conjugate to a matrix
\begin{equation*}
\begin{pmatrix}
\cos (2\pi /n) &-\sin (2\pi /n)\\
\sin (2\pi /n) &\cos (2\pi /n)
\end{pmatrix}
\end{equation*}
for some positive integer $n$. Hence the eigenvalues of $A$ are $\zeta$ and $\overline{\zeta }$, where $\zeta =\exp (2\pi\mathrm{i}/n)$. Now since the characteristic polynomial of $\varphi^{\ast }$ on $H^1(X,\bbz )$ has integer coefficients, all Galois conjugates of $\zeta$ must occur in the spectrum of $\varphi^{\ast}$. We fix this:
\end{TrichotomyForElementsOfTheVG}
\begin{Proposition}
Let $X$ and $\omega$ as above, and let $\varphi\in\Aff^+(X,\omega )$ be such that $A=D\varphi\in\SL (X,\omega )$ is elliptic. Then $A$ is of finite order say $n$, and all primitive $n$-th roots of unity occur as eigenvalues of $\varphi^{\ast }$ on $H^1(X,\bbz )$. \hfill $\square$
\end{Proposition}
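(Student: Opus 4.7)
The plan is to reduce the claim to a short Galois-theoretic argument using the two ingredients already set up: the explicit normal form for elliptic elements of the Veech group, and the fact that $\varphi^{\ast}$ preserves the canonical subspace $S_{\omega }$ with action given by (the transpose of) $D\varphi$.

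First I would fix the notation: since $\SL (X,\omega )$ is a discrete subgroup of $\SL_2(\bbr )$ by Proposition \ref{VeechGroupIsFuchsianNonCocompact}, any elliptic element in it has finite order. Let $n$ be the order of $A$. Then $A$ is conjugate in $\SL_2(\bbr )$ to the rotation matrix by $2\pi /n$, so the eigenvalues of $A$ on $\bbc^2$ are $\zeta $ and $\overline{\zeta }$, where $\zeta =\exp (2\pi\mathrm{i}/n)$ is a primitive $n$-th root of unity (minimality of $n$ is what forces ``primitive'' here).

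Next I would transport this information to cohomology. By the preceding proposition, $\varphi^{\ast }$ preserves $S_{\omega }\subseteq H^1(X,\bbr )$ and acts on it, in the basis $(\Re\omega ,\Im\omega )$, by the matrix $A^{\mathsf{T}}$. Since $A$ and $A^{\mathsf{T}}$ have the same characteristic polynomial, both $\zeta $ and $\overline{\zeta }$ occur as eigenvalues of $\varphi^{\ast }$ acting on $S_{\omega }\otimes\bbc\subseteq H^1(X,\bbc )$, and in particular as eigenvalues of $\varphi^{\ast }$ acting on $H^1(X,\bbc )$.

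The last step is the Galois-theoretic upgrade. The map $\varphi^{\ast }$ is an automorphism of the free $\bbz$-module $H^1(X,\bbz )$, so its characteristic polynomial $\chi \in\bbz [T]$ is monic with integer coefficients. Since $\zeta $ is a root of $\chi $, the cyclotomic polynomial $\Phi_n\in\bbz [T]$, being the minimal polynomial of $\zeta $ over $\bbq $, divides $\chi $ in $\bbq [T]$ and hence, by Gauss's lemma, in $\bbz [T]$. The roots of $\Phi_n$ are exactly the primitive $n$-th roots of unity, so every such root appears as an eigenvalue of $\varphi^{\ast }$ acting on $H^1(X,\bbz )$ (i.e.\ on $H^1(X,\bbc )$). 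There is no real obstacle here: the only conceptual point is the passage from ``$\zeta $ is an eigenvalue'' to ``all its Galois conjugates are eigenvalues'', which is immediate from integrality of the characteristic polynomial.
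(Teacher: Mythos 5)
Your proof is correct and follows essentially the same route as the paper: finite order from discreteness of the Veech group, eigenvalues $\zeta ,\overline{\zeta }$ of the rotation normal form transported into $H^1$ via the action of $A^{\mathsf{T}}$ on the canonical subspace, and then integrality of the characteristic polynomial of $\varphi^{\ast }$ forcing all Galois conjugates of $\zeta $ to appear. The only nitpick is that $A$ of order $n$ is conjugate to a rotation by $2\pi k/n$ with $\gcd (k,n)=1$, not necessarily $k=1$, but the eigenvalue is a primitive $n$-th root of unity either way, so the argument is unaffected.
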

Next we consider the parabolic case. If $A=D\varphi$ is parabolic, it has either a double eigenvalue $+1$ or a double eigenvalue $-1$. In any case there is a unique (up to scale) eigenvector $v\in\bbr^2$; let $\vartheta$ be the direction of that eigenvector. Then \cite[Prop. 2.4]{Veech89} says that $\vartheta$ is a Jenkins-Strebel direction for $\vartheta$, and that some positive power of $\varphi$ is a product of powers of Dehn twists along the cylinders in the associated complete cylinder decomposition. Now a Dehn twist operates on cohomology by a unipotent map; the Dehn twists about the different cylinders commute, and hence so do their induced maps on cohomology. A product of commuting unipotent linear maps is unipotent. So a positive power of $\varphi^{\ast }$ acting on $H^1(X,\bbz )$ is unipotent. Recall that a linear endomorphism is called quasi-unipotent if some positive power of it is unipotent or, equivalently, if all its eigenvalues are roots of unity.\footnote{The equivalence of the two definitions follows from the existence of the Jordan decomposition}
\begin{Proposition}
Let $X$ and $\omega$ as above, and let $\varphi\in\Aff^+(X,\omega )$ be such that $A=D\varphi\in\SL (X,\omega )$ is parabolic. Then the induced map $\varphi^{\ast }:H^1(X,\bbz )\to H^1(X,\bbz )$ is quasi-unipotent, but not of finite order.\hfill $\square$
\end{Proposition}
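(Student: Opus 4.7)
The paragraph immediately preceding the statement already contains a very strong outline; my plan is essentially to flesh it out, and then add the (missing) argument for why the order is infinite, which does not appear in the sketch.

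First I would establish quasi-unipotence. Since $A$ is parabolic it has a single (real) eigenvalue $\pm 1$ with a one-dimensional eigenspace; let $\vartheta\in\bbr/2\pi\bbz$ be the direction of that eigenspace. I would invoke Veech's Proposition 2.4 (as cited, \cite[Prop.~2.4]{Veech89}) to conclude that $\vartheta$ is a Jenkins--Strebel direction for $\omega$ and that some positive power $\varphi^m$ is, in the mapping class group, a product $\prod_{i=1}^r t_{c_i}^{n_i}$ of powers of Dehn twists along the core curves $c_i$ of the cylinders $C_i$ in the resulting complete cylinder decomposition. Each individual Dehn twist $t_{c_i}$ acts on $H^1(X,\bbz)$ by the transvection $x\mapsto x+\langle x,[c_i]\rangle [c_i]$, which is manifestly unipotent (a single Jordan block of size two in the plane spanned by $[c_i]$ and a symplectic dual, the identity elsewhere). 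Since the $c_i$ are pairwise disjoint simple closed curves, the Dehn twists commute in $\Mod_g$, and hence so do their actions on $H^1(X,\bbz)$; a product of commuting unipotent endomorphisms is unipotent, so $(\varphi^m)^{\ast}$ is unipotent on $H^1(X,\bbz)$, i.e.\ $\varphi^{\ast}$ is quasi-unipotent.

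To show $\varphi^{\ast}$ has infinite order I would use the canonical subspace $S_{\omega}\subseteq H^1(X,\bbr)$ from the preceding subsection. By the Proposition computing the action on $S_{\omega}$, the restriction of $\varphi^{\ast}$ to $S_{\omega}$ is represented, in the basis $(\Re\omega,\Im\omega)$, by the transpose $A^{\mathsf T}$. As $A$ is parabolic, so is $A^{\mathsf T}$; hence $A^{\mathsf T}$ has infinite order in $\SL_2(\bbr)$ (a nontrivial unipotent-up-to-sign has infinite order). Consequently $\varphi^{\ast}|_{S_{\omega}}$ already has infinite order, so $\varphi^{\ast}$ has infinite order on $H^1(X,\bbr)$, and therefore on $H^1(X,\bbz)$, which completes the proof.

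The only non-routine ingredient is the appeal to Veech's Proposition 2.4: showing that the eigendirection of a parabolic element of $\SL(X,\omega)$ is Jenkins--Strebel, and that a power of $\varphi$ is a product of Dehn twists on cylinders, is the substantive geometric content. Once that is granted, the rest is essentially linear algebra: the transvection formula for Dehn twists, commutativity of disjoint twists, and the elementary observation that a nontrivial unipotent (or $\pm$-unipotent) matrix has infinite order, applied on the $\varphi^{\ast}$-invariant two-plane $S_{\omega}$ where we have explicit matrix control.
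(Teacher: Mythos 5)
Your quasi-unipotence argument is exactly the paper's: the paragraph preceding the Proposition invokes Veech's Prop.~2.4 to get the Jenkins--Strebel eigendirection and the power of $\varphi$ as a product of commuting cylinder Dehn twists, each acting unipotently on cohomology. The one genuine addition in your write-up is the proof that $\varphi^{\ast}$ has infinite order, which the paper asserts without argument; your route via the canonical subspace --- $\varphi^{\ast}|_{S_{\omega}}$ is $A^{\mathsf{T}}$ in the basis $(\Re\omega,\Im\omega)$, a parabolic and hence infinite-order element of $\SL_2(\bbr)$ --- is correct and is the natural way to fill that gap.
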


It remains to consider the hyperbolic case which is most interesting from an arithmetic point of view. So let $A=D\varphi$ be hyperbolic; then it has two distinct positive eigenvalues whose product is one, say $\lambda$ and $\lambda^{-1}$, where $\lambda >1$. Now by Corollary \ref{EigenvaluesOfAffineMapsAreAlgIntegers} both $\lambda$ and $\lambda^{-1}$ are algebraic integers. In particular $\lambda$ must be irrational: if it were rational, it could only be plus or minus one (these are the only rational integers whose inverses are also rational integers). But then the trace of $A$ would be plus or minus two, and $A$ would not be hyperbolic.

By what we have seen above, $\lambda$ and $\lambda^{-1}$ are also eigenvalues of $\varphi$ acting on the first cohomology of $X$. It turns out that they appear with multiplicity one and that $\lambda$ is the leading eigenvalue:
\begin{Proposition}\label{EigenvaluesOfHyperbolicInVGActingOnH}
Let $X$ and $\omega$ as above, and let $\varphi\in\Aff^+(X,\omega )$ be such that $A=D\varphi\in\SL (X,\omega )$ is hyperbolic. Let $\lambda$ and $\lambda^{-1}$ be the eigenvalues of the matrix $A$, where $\lambda >1$. Then $\lambda$ and $\lambda^{-1}$ each occur as simple eigenvalues of $\varphi^{\ast}$ acting on $H^1(X,\bbz )$, and all other eigenvalues $\lambda '\in\bbc$ satisfy $\lambda^{-1}<|\lambda '|<\lambda$.

The direct sum of the eigenspaces for $\lambda$ and $\lambda^{-1}$ in $H^1(X,\bbr )$ is precisely the canonical subspace $S_{\omega }$.
\end{Proposition}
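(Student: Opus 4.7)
The plan is to isolate the contribution of $S_\omega$ from a complementary invariant subspace, and then use the Hodge norm to bound the spectral behaviour on the complement.

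First, I would record what we already have: by the preceding proposition, $\varphi^*$ preserves $S_\omega$ and acts there in the basis $(\Re\omega,\Im\omega)$ by $A^\mathsf{T}$. Since $A^\mathsf{T}$ has the same eigenvalues $\lambda,\lambda^{-1}$ as $A$ and since these are distinct (as $A$ is hyperbolic), $\varphi^*|_{S_\omega}$ already produces $\lambda$ and $\lambda^{-1}$ as simple eigenvalues of $\varphi^*$ acting on $H^1(X,\bbr)$, with eigenlines spanning $S_\omega$. The final assertion of the proposition therefore follows as soon as we can exhibit a $\varphi^*$-invariant complement on which $\lambda$ and $\lambda^{-1}$ do not reappear; the first assertion (multiplicity one, strict modulus inequality) is then equivalent to showing the spectral radius on that complement is strictly less than $\lambda$.

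Second, I would construct the invariant complement via polarization. Since $S_\omega$ is a sub-Hodge structure of $M^1(X)\otimes\bbr$ and the Poincaré pairing $S$ is a polarization, Proposition 5.1.11(iii) yields a direct sum of Hodge structures $H^1(X,\bbr)=S_\omega\oplus T$ with $T=S_\omega^\perp$. Because $\varphi^*$ preserves the intersection form (orientation-preserving diffeomorphism) and preserves $S_\omega$, it preserves $T$ as well. By Proposition 3.4.5, the eigenvalues of $\varphi^*$ on $H^1$ come in reciprocal pairs, so it suffices to prove that every eigenvalue $\mu$ of $\varphi^*|_T$ satisfies $|\mu|<\lambda$.

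The hard part will be the strict spectral bound on $T$, and here I would use the Hodge norm. Let $H$ denote the positive-definite Hermitian form on $H^1(X,\bbc)$ given by the polarization, concretely (up to a positive constant) $(\alpha,\beta)\mapsto i\int_X\alpha\wedge\bar\beta$ on $\Omega^1(X)=M^{1,0}$. Note that $T^{1,0}$ is exactly the $H$-orthogonal complement of $\bbc\omega$ inside $\Omega^1(X)$. The decisive claim is the operator-norm inequality
\begin{equation*}
\lVert\varphi^*v\rVert_H\le\lambda\,\lVert v\rVert_H\qquad(v\in H^1(X,\bbc)),
\end{equation*}
with equality precisely on the $\lambda$-eigenline inside $S_\omega\otimes\bbc$. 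Assuming the claim, $\varphi^*|_T$ has operator norm strictly less than $\lambda$, so its spectral radius is strictly less than $\lambda$; combined with the reciprocal-pair symmetry this gives the required $\lambda^{-1}<|\mu|<\lambda$ and the simplicity of $\lambda,\lambda^{-1}$, and the canonical subspace then coincides with the sum of the two extremal eigenspaces.

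To verify the claim, I would compute $\varphi^*$ pointwise in translation charts. Writing $A=\bigl(\begin{smallmatrix}a&b\\c&d\end{smallmatrix}\bigr)$ and $\omega=\de z$, one checks $\varphi^*\de z=\alpha\,\de z+\beta\,\de\overline{z}$ with $\alpha=\tfrac{1}{2}(a+d)+\tfrac{i}{2}(c-b)$ and $\beta=\tfrac{1}{2}(a-d)+\tfrac{i}{2}(c+b)$, satisfying $|\alpha|^2-|\beta|^2=\det A=1$ and $|\alpha|+|\beta|=\lambda$. Writing any holomorphic $\eta=f\,\de z$ locally, the pointwise $(1,0)$- and $(0,1)$-components of $\varphi^*\eta$ are bounded in norm by $|\alpha||f|$ and $|\beta||f|$, giving the inequality $\lVert\varphi^*\eta\rVert_H^2\le\lambda^2\lVert\eta\rVert_H^2$ after integration and application of the change-of-variables $\varphi^*(\tfrac{i}{2}\de z\wedge\de\overline{z})=\de V$. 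Pointwise equality forces $f$ to be supported in the $\lambda$-direction, i.e.\ $\eta\in\bbc\omega$; extending to real cohomology classes via the Hodge decomposition yields the equality clause and completes the argument.
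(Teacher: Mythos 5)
The paper does not actually prove this proposition; it cites \cite[Thm.~5.3]{McMullen03} together with Proposition \ref{EigenvaluesComeInPairs}, so you are in effect attempting to reprove McMullen's theorem. Your skeleton --- split off $S_\omega$ as a polarized sub-Hodge structure, pass to the orthogonal complement $T$, use the reciprocal pairing, and bound the spectral radius on $T$ by a Hodge-norm estimate --- is the right shape. But the decisive estimate fails as stated. The identity $|\alpha|+|\beta|=\lambda$ is false: from $\alpha=\tfrac12(a+d)+\tfrac{\mathrm{i}}{2}(c-b)$ and $\beta=\tfrac12(a-d)+\tfrac{\mathrm{i}}{2}(c+b)$ one indeed gets $|\alpha|^2-|\beta|^2=\det A=1$, but $|\alpha|+|\beta|$ is the largest \emph{singular} value of $A$ (equivalently $\sqrt{K(A)}$, with $K(A)$ the quasiconformal dilatation), not its largest eigenvalue; the two agree only when $A$ is normal. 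For instance $A=\left(\begin{smallmatrix}2&3\\1&2\end{smallmatrix}\right)$ has $\lambda=2+\sqrt3\approx 3.73$ while $|\alpha|+|\beta|=\sqrt5+2\approx 4.24$. So your pointwise computation only bounds the operator norm of $\varphi^{\ast}$ by the top singular value, which exceeds $\lambda$ in general; and replacing $\varphi$ by high powers $\varphi^n$ (for which $s_1(A^n)^{1/n}\to\lambda$) recovers only the non-strict bound $|\lambda'|\le\lambda$.

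The strict inequality $\lambda^{-1}<|\lambda'|<\lambda$ on the complement is precisely the hard content of the proposition --- a spectral gap for the action on $S_\omega^{\perp}$ --- and it cannot be extracted from a single pointwise estimate at $X$. McMullen obtains it from the first-order \emph{variation} of the Hodge norm along the Teichm\"{u}ller geodesic through $(X,\omega )$: for classes Hodge-orthogonal to $S_{\omega }$ the logarithmic derivative of the Hodge norm is strictly less than the expansion rate, and integrating over the closed geodesic corresponding to $\varphi$ gives the strict contraction. A secondary problem with your equality analysis: $\varphi^{\ast}$ does not preserve the Hodge decomposition, so $\varphi^{\ast}\eta$ is not harmonic and $\lVert\varphi^{\ast}\eta\rVert_H$ is the norm of its harmonic representative; the upper bound survives (harmonic representatives minimize the $L^2$ norm in their class), but ``pointwise equality forces $\eta\in\bbc\omega$'' is then not a pointwise statement and does not go through as written. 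To make the proof complete you would need to import or reprove McMullen's estimate on the derivative of the Hodge norm; the rest of your reductions (invariance of $T$, reciprocal pairs, identification of $S_\omega$ with the sum of the extremal eigenspaces) are fine.
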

\begin{proof}
This is \cite[Thm. 5.3]{McMullen03} together Proposition \ref{EigenvaluesComeInPairs}.
\end{proof}

\newpage
\section{Trace Fields and Primitivity}

We now introduce an important invariant of translation surfaces: the trace field of the Veech group. The trace field of a Fuchsian group $\Gamma$ is the subfield of $\bbr$ generated by all traces of elements of $\Gamma$. In general one cannot say very much about these fields. But if $\Gamma$ is the Veech group of a closed translation surface of genus $g$, then the trace field of $\Gamma$ is an algebraic number field of degree at most $g$. The trace field then also admits a more geometric interpretation: it is the subfield of $\bbr$ generated by all cross-ratios of directions of saddle connections. We quote several theorems about the behaviour of trace fields, in order to outline their significance in the theory of translation surfaces. For us, however, the trace field will be important for understanding the structure of the Jacobian of a Veech surface; this will be done in chapter 7.

Finally we compute the trace fields of the Veech groups of the Wiman curves. It turns out that the trace field of $\SL (W_g,\omega_k)$ is generated by $\cos\frac{2\pi k}{n}$, where $n=2g+1$, as always. This is done by using only elementary trigonometry.

\subsection{Trace Fields}

In this section we review some algebraic invariants of Fuchsian groups, with particular attention towards Veech groups.
\begin{QuaternionAlgebras}
The reference for this paragraph is \cite[Chapter IX]{Weil74}.

Let $k$ be any field of characteristic zero. In what follows, algebras are always tacitly assumed to be associative and unital. A \textit{central simple algebra} over $k$ is a finite-dimensional $k$-algebra $A$, such that the center of $A$ is precisely $k\cdot 1$ (which we identify with $k$) and such that $A$ has no both-sided ideals other than $(0)$ and $A$.

For every $n\ge 1$ the algebra $M_n(k)$ of $n\times n$ matrices is central simple. If $k$ is algebraically closed, then every central simple $k$-algebra is isomorphic to some $M_n(k)$. In general if $K|k$ is any extension of fields of characteristic zero and if $A$ is a finite-dimensional $k$-algebra, then $A$ is central simple if and only if $A\otimes K$ is central simple. Hence a finite-dimensional $k$-algebra $A$ is central simple if and only if there is a finite extension $K|k$ such that $A\otimes K\simeq M_n(K)$ as a $K$-algebra. In particular the dimension of a central simple algebra is always a square.

The classification of central simple algebras over $k$ is equivalent to the classification of skew field extensions of $k$:
\end{QuaternionAlgebras}
\begin{Theorem}[Wedderburn]
Let $A$ be a central simple algebra over a field $k$ of characteristic zero. Then there exist a finite-dimensional $k$-algebra $D$ which is a division algebra and has $k$ as its center, and an integer $n\ge 1$ such that $A\simeq M_n(D)$ as $k$-algebras. The algebra $D$ is unique up to isomorphism.

Vice versa, any such $M_n(D)$ is a central simple $k$-algebra.\hfill $\square$
\end{Theorem}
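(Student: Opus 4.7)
The plan is to prove existence via the Artin--Wedderburn approach, uniqueness via an intrinsic characterisation of $D$, and then treat the converse by an elementary matrix computation. Since $A$ is finite-dimensional over $k$, the descending chain condition on left ideals is automatic, so there exists a nonzero left ideal $I\subseteq A$ of minimal $k$-dimension. Minimality forces $I$ to be a simple left $A$-module, and since $A$ is simple as a ring, $I$ is moreover a \emph{faithful} simple left $A$-module. By Schur's lemma, $E=\operatorname{End}_A(I)$ is a division $k$-algebra (and it is finite-dimensional because $I$ is), and $k$ embeds centrally into $E$ as scalar endomorphisms.

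The next step is the double centralizer theorem (a very mild form of Jacobson density that is immediate in the artinian case): because $I$ is a faithful simple left $A$-module and $A$ is simple and finite-dimensional, the natural map
\begin{equation*}
A\to\operatorname{End}_E(I),\qquad a\mapsto (x\mapsto ax),
\end{equation*}
is an isomorphism of $k$-algebras. Setting $D=E^{\mathrm{op}}$, we may regard $I$ as a right $D$-module (of some finite dimension $n$), and $\operatorname{End}_E(I)$ is canonically identified with $M_n(D)$ by choosing a $D$-basis of $I$. The centre of $M_n(D)$ consists of scalar matrices with entries in $Z(D)$, so $Z(D)\simeq Z(M_n(D))=Z(A)=k$, confirming that $D$ has centre $k$.

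For the uniqueness of $D$, suppose $A\simeq M_n(D)\simeq M_{n'}(D')$ as $k$-algebras. Any minimal left ideal of $M_n(D)$ is isomorphic to the column module $D^n$, and all minimal left ideals are isomorphic as $A$-modules by a Jordan--Hölder argument (or by transport via elementary matrices). Recovering $E=\operatorname{End}_A(I)^{\vphantom{op}}$ and then $D=E^{\mathrm{op}}$ shows that $D$ is determined up to $k$-algebra isomorphism, whence also $n=\sqrt{\dim_k A/\dim_k D}$.

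For the converse I would argue directly with elementary matrices $e_{ij}$: any nonzero two-sided ideal $J\subseteq M_n(D)$ contains some $M=(m_{ij})$ with $m_{i_0 j_0}\neq 0$; then $e_{p i_0}M e_{j_0 q}=m_{i_0 j_0}e_{pq}\in J$, and multiplying on the right by $m_{i_0 j_0}^{-1}e_{qq}$ (legal since $D$ is a skew field) yields every $e_{pq}$, so $J=M_n(D)$. Thus $M_n(D)$ has no nontrivial two-sided ideals. A separate routine computation shows that a matrix commuting with every $e_{ij}$ must be a scalar $\lambda I_n$ with $\lambda\in Z(D)=k$, so $Z(M_n(D))=k$. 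The main (and essentially only) obstacle is the invocation of the double centralizer theorem; everything else is bookkeeping with ideals and matrix units.
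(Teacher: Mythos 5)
The paper offers no proof of this theorem at all: it is quoted as a classical fact, the only justification being the reference to \cite[Chapter IX]{Weil74} at the head of the paragraph, so there is nothing in the text to compare your argument against. Your proof is the standard Artin--Wedderburn argument and is essentially correct: finite-dimensionality gives a minimal left ideal $I$, which is a faithful simple module by simplicity of $A$; Schur's lemma makes $E=\End_A(I)$ a finite-dimensional division $k$-algebra; Jacobson density (trivialised here because $I$ is finite-dimensional over $E$) identifies $A$ with $\End_E(I)\simeq M_n(D)$ for $D=E^{\mathrm{op}}$; uniqueness follows because all simple left $A$-modules are isomorphic, so $D$ is recovered intrinsically; and the matrix-unit computation handles the converse. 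Two small points of hygiene. First, in the converse, commuting with the matrix units $e_{ij}$ alone only forces an element of the centre to be a scalar matrix $\lambda I_n$ with $\lambda\in D$; to conclude $\lambda\in Z(D)=k$ you must also let it commute with the scalar matrices $dI_n$ for $d\in D$ --- presumably this is what your ``routine computation'' covers, but it should be stated. Second, the statement you are proving asserts $Z(D)=k$ in the existence part as well; your derivation $Z(D)\simeq Z(M_n(D))=Z(A)=k$ is correct but relies on the same centre computation, so that lemma is doing double duty and deserves to be recorded once explicitly. Neither point is a genuine gap.
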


Since the dimension of a central simple algebras is always a square, the simplest nontrivial examples are those of dimension four; these are called \textit{quaternion algebras}. The name is derived from Hamilton's quaternions which form a quaternion algebra over $\bbr$.

From Wedderburn's Structure Theorem we easily deduce:
\begin{Lemma}\label{CriterionForRamification}
Let $k$ be a field of characteristic zero, and let $A$ be a quaternion algebra over $k$. Then exactly one of the following statements is true:
\begin{enumerate}
\item As a $k$-algebra, $A\simeq M_2(k)$.
\item $A$ is a division algebra.\hfill $\square$
\end{enumerate}
\end{Lemma}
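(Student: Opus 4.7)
The plan is to apply Wedderburn's Structure Theorem directly. By that theorem, $A \simeq M_n(D)$ for some finite-dimensional division algebra $D$ over $k$ with center $k$, and some integer $n \geq 1$. Since $A$ is a quaternion algebra, $\dim_k A = 4$, and since $\dim_k M_n(D) = n^2 \dim_k D$, we obtain the equation $n^2 \dim_k D = 4$ in positive integers.

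The only solutions are $(n, \dim_k D) = (1, 4)$ or $(n, \dim_k D) = (2, 1)$. In the second case, $D = k \cdot 1 = k$, so $A \simeq M_2(k)$, yielding case (i). In the first case, $A \simeq D$ is itself a division algebra, yielding case (ii). (The intermediate possibility $(n, \dim_k D) = (4, \tfrac{1}{4})$ is of course absurd, since dimensions are nonnegative integers.)

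Finally, I would observe that the two cases are mutually exclusive: $M_2(k)$ contains zero divisors, for instance the matrices $\bigl(\begin{smallmatrix} 1 & 0 \\ 0 & 0 \end{smallmatrix}\bigr)$ and $\bigl(\begin{smallmatrix} 0 & 0 \\ 0 & 1 \end{smallmatrix}\bigr)$ whose product vanishes, so $M_2(k)$ is never a division algebra. There is essentially no obstacle here; this lemma is a clean numerical consequence of Wedderburn, and the only thing to be careful about is spelling out the exclusivity of the two cases.
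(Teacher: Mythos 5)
Your proof is correct and is exactly the argument the paper intends: the lemma is stated immediately after Wedderburn's theorem with the proof left as an easy deduction, namely the dimension count $n^2\dim_k D=4$ forcing $(n,\dim_k D)=(2,1)$ or $(1,4)$. Your added remark on mutual exclusivity via zero divisors in $M_2(k)$ is a welcome completion of a detail the paper leaves implicit.
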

Another fact about central simple algebras is worth mentioning.
\begin{Theorem}[Skolem-Noether]
Let $A$ be a central simple algebra over $k$, and let $\varphi$ be a $k$-algebra automorphism of $A$. Then $\varphi$ is an inner automorphism, i.e. there exists some invertible element $a\in A$ such that $\varphi (x)=a^{-1}xa$ for all $x\in A$.\hfill $\square$
\end{Theorem}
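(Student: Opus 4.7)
The plan is to recast the problem as finding an isomorphism between two modules over the enveloping algebra $A^e := A \otimes_k A^{\mathrm{op}}$. On the underlying $k$-vector space of $A$ I would introduce two left $A^e$-module structures: the standard one $M_1$, given by $(x \otimes y) \cdot z := xzy$, and the $\varphi$-twisted one $M_2$, given by $(x \otimes y) \cdot z := \varphi(x) zy$ (well-defined because $\varphi$ is an algebra automorphism). Both modules have $k$-dimension $n := \dim_k A$, and any $A^e$-linear isomorphism $M_1 \to M_2$ will immediately yield the desired conjugating element.

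The key structural input is the classical isomorphism $A^e \cong \operatorname{End}_k(A) \cong M_n(k)$, which holds precisely because $A$ is central simple. I would establish it by considering the natural $k$-algebra map $\Psi : A \otimes_k A^{\mathrm{op}} \to \operatorname{End}_k(A)$ sending $x \otimes y$ to the endomorphism $z \mapsto xzy$. The tensor product of two central simple $k$-algebras is again central simple (proven by base change to an algebraic closure, where it reduces to $M_m(k) \otimes M_\ell(k) \cong M_{m\ell}(k)$), so $A^e$ is simple; hence the kernel of $\Psi$, being a two-sided ideal distinct from all of $A^e$ (since $\Psi(1 \otimes 1) = \mathrm{id}_A \neq 0$), must vanish. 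A count of $k$-dimensions (both sides being $n^2$) upgrades $\Psi$ to an isomorphism.

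Since $A^e$ is therefore a matrix algebra, it admits a unique simple left module up to isomorphism, of $k$-dimension $n$. In particular $M_1$ and $M_2$, both of dimension $n$, are simple and hence mutually isomorphic. Fixing any $A^e$-linear isomorphism $f : M_1 \to M_2$, one unpacks that $f$ is a $k$-linear bijection $A \to A$ satisfying $f(xzy) = \varphi(x) f(z) y$ for all $x, y, z \in A$. Setting $x = 1$ shows $f$ is right $A$-linear, so $f(z) = uz$ where $u := f(1)$; setting $z = y = 1$ yields $f(x) = \varphi(x) u$. Comparing these two expressions for $f$ gives $ux = \varphi(x) u$, i.e. $\varphi(x) = u x u^{-1}$, and $u$ is invertible because $f$ is bijective and $A$ is finite-dimensional. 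Taking the element $a$ of the theorem to be $u^{-1}$ completes the proof. The only nonroutine ingredient is the simplicity of $A \otimes_k A^{\mathrm{op}}$; once that is in hand, everything else is formal.
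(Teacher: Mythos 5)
The paper does not actually prove this theorem: it is quoted as a known fact (the whole paragraph on quaternion algebras refers the reader to Weil's \emph{Basic Number Theory}), so there is no in-text argument to compare yours against. Your proposal is the standard and correct proof via the enveloping algebra, and it is complete. The two module structures $M_1$ and $M_2$ are genuine left $A^e$-modules (the check $(x\otimes y)(x'\otimes y')=xx'\otimes y'y$ in $A\otimes_k A^{\mathrm{op}}$ is consistent with your twisted action), the map $\Psi$ is injective because $A^e$ is simple and surjective by the dimension count $n^2=n^2$, and the simplicity of $A\otimes_k A^{\mathrm{op}}$ does follow by base change to $\bar k$, using the fact (stated elsewhere in this very section) that central simplicity is insensitive to field extension, together with $M_m(\bar k)\otimes_{\bar k}M_m(\bar k)\simeq M_{m^2}(\bar k)$. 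Over $M_n(k)$ every nonzero module is a direct sum of copies of the unique simple module of dimension $n$, so $M_1\simeq M_2$, and unpacking an isomorphism $f$ exactly as you do gives $f(z)=uz$, $f(x)=\varphi(x)u$, hence $ux=\varphi(x)u$; invertibility of $u$ follows since left multiplication by $u$ is a bijection of the finite-dimensional algebra $A$. One could remark that this argument in fact yields the stronger form of Skolem--Noether (any two embeddings of a simple subalgebra into $A$ are conjugate), but for the statement as given nothing is missing.
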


\begin{ElementarySubgroups}
A subgroup $\Gamma\subseteq\SL_2(\bbc )$ is called \textit{elementary} if there is a finite $\Gamma$-orbit in the closure of hyperbolic three-space $\mathbf{H}^3$, i.e. in $\mathbf{H}^3\cup\mathbb{P}^1(\bbc )$. For subgroups of $\SL_2(\bbr )$, this can also be characterized by their action on the hyperbolic plane $\mathbf{H}^2=\bbh$ (here we mean the usual action by M\"{o}bius transformations on the upper half plane): $\Gamma\subseteq\SL_2(\bbr )$ is elementary if and only if at least one of the following statements hold:
\begin{enumerate}
\item $\Gamma$ fixes a point in $\overline{\mathbf{H}^2}=\bbh\cup\mathbb{P}^1(\bbr )$;
\item $\Gamma$ preserves a geodesic in $\mathbf{H}^2$.
\end{enumerate}
A subgroup $\Gamma\subseteq\SL_2(\bbr )$ is elementary if and only if it is solvable as an abstract group. Every Fuchsian group of finite covolume is non-elementary. Every non-elementary subgroup of $\SL_2(\bbr )$ contains an infinite set of hyperbolic elements such that no two of them share a fixed point in the closure of $\mathbf{H}^3$.\footnote{The last statement is \cite[Thm. 5.1.3]{Beardon83}}
\end{ElementarySubgroups}
\begin{TraceFields}
Let $\Gamma\subseteq\SL_2(\bbr )$ be a subgroup. The \textit{trace field} of $\Gamma$, denoted $\bbq (\tr\Gamma )$, is the subfield of $\bbr$ generated by all traces $\tr\gamma$ for $\gamma\in\Gamma$.

Clearly the trace field is a conjugacy invariant. When $\Gamma$ is conjugate to a subgroup of $\SL_2(k)$ for some field $k\subseteq\bbr $, then the trace field will be contained in $k$. The converse is not necessarily true, but we can always conjugate a non-elementary subgroup into $\SL_2(k)$ for $k$ a finite extension of its trace field. In order to see this we have to introduce another algebraic object associated with a Fuchsian group. Consider the space of real $2\times 2$-matrices $M_2(\bbr )$ as a vector space (in fact, as an algebra) over the trace field $\bbq (\tr\Gamma )$, and consider $\Gamma$ as a subset of $M_2(\bbr )$. Then let $A_0\Gamma$ be the sub-$\bbq (\tr\Gamma )$-vector space of $M_2(\bbr )$ generated by $\Gamma$.
\end{TraceFields}
\begin{Theorem}\label{QuaternionAlgebraOfNonelementaryGroup}
Let $\Gamma$ be non-elementary. The space $A_0\Gamma$ is a sub-$\bbq (\tr\Gamma )$-algebra of $M_2(\bbr )$. As such, it is a quaternion algebra over $\bbq (\tr\Gamma )$.

Let $\gamma$ and $\delta$ be two elements of $\Gamma\smallsetminus \{\pm I\}$ without a common fixed point. Then the elements $1$, $\gamma$, $\delta$, $\gamma\delta$ form a $\bbq (\tr\Gamma )$-basis of $A_0\Gamma$.
\end{Theorem}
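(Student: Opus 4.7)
The plan breaks into four stages. Throughout I would exploit the Cayley-Hamilton identity $A^2 = \tr(A)A - I$ valid for $A\in\SL_2$, and its polarised form
\[
\tr(AB) + \tr(AB^{-1}) = \tr(A)\tr(B),
\]
obtained by multiplying the Cayley-Hamilton identity for $B$ on the left by $A$ and taking traces.

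First I would check that $\bbq(\tr\Gamma)$ already contains the trace of \emph{every} product of elements of $\Gamma$: induction on word length using the polarised identity reduces $\tr(g_1\cdots g_n)$ with $g_i\in\Gamma$ to a $\bbz$-linear combination of traces of shorter words. As an immediate consequence $A_0\Gamma$ is closed under multiplication, since a product of two elements of $\Gamma$ still belongs to $\Gamma\subseteq A_0\Gamma$; hence $A_0\Gamma$ is a sub-$\bbq(\tr\Gamma)$-algebra of $M_2(\bbr)$.

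Next, using non-elementarity, I fix $\gamma,\delta\in\Gamma\smallsetminus\{\pm I\}$ without a common fixed point. Absence of a common fixed point on $\overline{\mathbf{H}^2}$ is equivalent to absence of a common eigenvector in $\bbc^2$ (the two fixed points of an elliptic element of $\SL_2(\bbr)$ are complex conjugates of each other, so sharing one forces sharing the other), whence Burnside's irreducibility theorem identifies the $\bbc$-subalgebra of $M_2(\bbc)$ generated by $\gamma$ and $\delta$ with all of $M_2(\bbc)$. On the other hand, Cayley-Hamilton reduces $\gamma^2, \delta^2$ to linear expressions, and expanding $\delta^{-1}\gamma^{-1} = (\gamma\delta)^{-1}$ in two ways via Cayley-Hamilton yields the identity
\[
\delta\gamma = -\tr(\gamma\delta^{-1})\,I + \tr(\delta)\gamma + \tr(\gamma)\delta - \gamma\delta ,
\]
so every monomial in $\gamma$ and $\delta$ is already a $\bbq(\tr\Gamma)$-linear combination of $I, \gamma, \delta, \gamma\delta$. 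These four elements therefore span $M_2(\bbc)$ and are in particular linearly independent over $\bbc$, hence over $\bbr$ and over $\bbq(\tr\Gamma)$.

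To bound the $\bbq(\tr\Gamma)$-dimension of $A_0\Gamma$ from above by four, I would write an arbitrary $\eta\in\Gamma$ uniquely as $\eta = aI+b\gamma+c\delta+d\gamma\delta$ with $a,b,c,d\in\bbr$ and show these coefficients lie in $\bbq(\tr\Gamma)$. Multiplying by each of $I, \gamma, \delta, \gamma\delta$ in turn and taking traces yields a $4\times 4$ linear system for $(a,b,c,d)$ whose right-hand side $(\tr\eta, \tr(\eta\gamma), \tr(\eta\delta), \tr(\eta\gamma\delta))$ lies in $\bbq(\tr\Gamma)$ by the first step, and whose coefficient matrix is the Gram matrix of the trace form on $\{I,\gamma,\delta,\gamma\delta\}$. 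Non-degeneracy of the trace pairing on $M_2$ makes this Gram matrix invertible, so Cramer's rule forces $a,b,c,d\in\bbq(\tr\Gamma)$. This gives $\{I,\gamma,\delta,\gamma\delta\}$ as a $\bbq(\tr\Gamma)$-basis of $A_0\Gamma$. The base-change map $A_0\Gamma\otimes_{\bbq(\tr\Gamma)}\bbr\to M_2(\bbr)$ is then surjective (its image contains a generating set of $M_2(\bbr)$) and hence an isomorphism of four-dimensional $\bbr$-algebras; since $M_2(\bbr)$ is central simple and central simplicity descends along a field extension, $A_0\Gamma$ is a central simple $\bbq(\tr\Gamma)$-algebra of dimension four, i.e.\ a quaternion algebra.

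The real obstacle is the linear-independence step: translating the geometric hypothesis "no common fixed point in $\overline{\mathbf{H}^2}$" into the algebraic statement "four specific matrices span $M_2$" passes through the identification of elliptic fixed points in $\bbh$ with complex eigenvectors in $\bbc^2$ and then relies on Burnside's theorem, while the subsequent trace-form computation is essentially mechanical.
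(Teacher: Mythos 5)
Your proof is correct. For comparison: the paper does not prove this statement at all --- its ``proof'' is a one-line citation to Maclachlan--Reid, Cor.\ 3.2.3 --- so what you have written is a complete, self-contained reconstruction of the standard argument from that reference, and every step checks out. The commutation identity $\delta\gamma = -\tr(\gamma\delta^{-1})I + \tr(\delta)\gamma + \tr(\gamma)\delta - \gamma\delta$ is correct (it follows from Cayley--Hamilton applied to $\gamma+\delta$ together with the polarised trace identity), the translation of ``no common fixed point in $\overline{\mathbf{H}^2}$'' into ``no common eigenvector in $\bbc^2$'' is handled properly via complex conjugation of elliptic fixed points, and the Gram-matrix/Cramer argument for the upper bound on the dimension is the right mechanism. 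Two small remarks. First, your opening induction on word length is unnecessary: since $\Gamma$ is a group, any product of elements of $\Gamma$ already lies in $\Gamma$, so its trace is in $\bbq(\tr\Gamma)$ by definition; this also disposes of the Gram-matrix entries $\tr(e_ie_j)$ and the right-hand sides $\tr(\eta e_j)$ without further work. Second, in the final step you could shortcut the appeal to descent of central simplicity: once $\{I,\gamma,\delta,\gamma\delta\}$ is known to be a $\bbq(\tr\Gamma)$-basis of $A_0\Gamma$ with $A_0\Gamma\otimes\bbr\simeq M_2(\bbr)$, the statement that $A_0\Gamma$ is a quaternion algebra is exactly the paper's characterisation of central simple algebras as those becoming matrix algebras after a field extension, so citing that suffices. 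Neither remark is a gap.
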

\begin{proof}
This is a special case of \cite[Cor. 3.2.3]{MaclachlanReid03}.
\end{proof}

\begin{Corollary}
Let $\Gamma\subseteq\SL_2(\bbr )$ be a non-elementary subgroup with trace field $k=\bbq (\tr\Gamma )$ and let $\gamma\in\Gamma$ be hyperbolic. Let the eigenvalues of $\gamma$ be $\lambda$ and $\lambda^{-1}$. Then $\Gamma$ is conjugate within $\SL_2(\bbr )$ to a subgroup of $\SL_2(k(\lambda ))$.
\end{Corollary}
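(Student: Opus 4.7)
The plan is to exploit the quaternion algebra $A_0\Gamma$ from Theorem \ref{QuaternionAlgebraOfNonelementaryGroup} together with the Skolem--Noether theorem.

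First I would show that the quaternion algebra $A_0\Gamma$ \emph{splits} over $k(\lambda)$, meaning $A_0\Gamma\otimes_kk(\lambda)\simeq M_2(k(\lambda))$. By Cayley--Hamilton the element $\gamma$ satisfies $X^2-\tr(\gamma)X+1=0$ over $k$; hyperbolicity means $|\tr\gamma|>2$, so this polynomial factors as $(X-\lambda)(X-\lambda^{-1})$ over $k(\lambda)$ with distinct roots. Thus $k[\gamma]\otimes_kk(\lambda)\simeq k(\lambda)\times k(\lambda)$, so $A_0\Gamma\otimes_kk(\lambda)$ contains a nontrivial idempotent and hence cannot be a division algebra. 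Lemma \ref{CriterionForRamification} then forces $A_0\Gamma\otimes_kk(\lambda)\simeq M_2(k(\lambda))$; fix such an isomorphism $\phi$ of $k(\lambda)$-algebras.

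Next I would compare $\phi$ with the tautological inclusion $\iota:A_0\Gamma\hookrightarrow M_2(\bbr)$. Tensoring everything up to $\bbr$ produces two $\bbr$-algebra homomorphisms $A_0\Gamma\otimes_k\bbr\to M_2(\bbr)$: the extension $\iota_{\bbr}$ of $\iota$, and the base change $\phi_{\bbr}=\phi\otimes_{k(\lambda)}\bbr$. Both have trivial kernel (a two-sided ideal of a simple algebra containing no unit), and both are maps between $\bbr$-vector spaces of dimension $4$, so both are isomorphisms. Applying the Skolem--Noether theorem inside $M_2(\bbr)$ to the two embeddings $\iota_{\bbr}$ and $\phi_{\bbr}$, there exists $T\in\GL_2(\bbr)$ with $T\iota_{\bbr}(y)T^{-1}=\phi_{\bbr}(y)$ for every $y$. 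In particular, for $g\in\Gamma\subseteq A_0\Gamma$ we obtain $TgT^{-1}=\phi(g\otimes 1)\in M_2(k(\lambda))$, and since conjugation preserves determinants this yields $T\Gamma T^{-1}\subseteq\SL_2(k(\lambda))$.

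The final step is the cosmetic adjustment of $T$ so that it actually lies in $\SL_2(\bbr)$. If $\det T>0$ one simply rescales $T$ by $(\det T)^{-1/2}\in\bbr^{\times}$, which does not change the resulting inner automorphism. If $\det T<0$, one first replaces $T$ by $T\cdot\operatorname{diag}(1,-1)$: conjugation by $\operatorname{diag}(1,-1)$ preserves $M_2(k(\lambda))$, so the new conjugate of $\Gamma$ still lands in $\SL_2(k(\lambda))$, while the sign of the determinant has flipped and we are back in the previous case.

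The main obstacle is the splitting step; once $A_0\Gamma\otimes_kk(\lambda)\simeq M_2(k(\lambda))$ is established, Skolem--Noether delivers the conjugating matrix almost for free, and the passage from $\GL_2(\bbr)$ to $\SL_2(\bbr)$ is a routine determinant manipulation.
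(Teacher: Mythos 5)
Your argument is correct, but it takes a genuinely different route from the paper's. The paper, following Maclachlan--Reid, proceeds by explicit matrix normalization: it picks a second hyperbolic element $\delta\in\Gamma$ sharing no fixed point with $\gamma$, conjugates so that $\gamma=\operatorname{diag}(\lambda,\lambda^{-1})$ and the upper-right entry of $\delta$ equals $1$, and then reads off from $\tr\delta\in k$ and $\tr(\gamma\delta)\in k$ that all entries of $\delta$ lie in $k(\lambda)$; the basis $1,\gamma,\delta,\gamma\delta$ of $A_0\Gamma$ from Theorem \ref{QuaternionAlgebraOfNonelementaryGroup} then forces $A_0\Gamma\subseteq M_2(k(\lambda))$. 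You instead prove abstractly that $A_0\Gamma$ splits over $k(\lambda)$ --- the subalgebra $k[\gamma]\otimes_kk(\lambda)\simeq k(\lambda)\times k(\lambda)$ supplies a nontrivial idempotent, so Lemma \ref{CriterionForRamification} rules out the division-algebra case --- and then invoke Skolem--Noether. This is exactly the strategy the paper reserves for Corollary \ref{GroupWithParabolicIntoSLTwoTraceField} (the parabolic case, where the zero divisor is $1-\gamma$); your observation that a hyperbolic element already produces a zero divisor after the quadratic extension $k(\lambda)$ lets the two corollaries share one uniform proof, at the cost of being less explicit, since the paper's computation actually exhibits the conjugating matrix while Skolem--Noether only asserts its existence. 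One small slip in your final step: to correct the sign of $\det T$ you should replace $T$ by $\operatorname{diag}(1,-1)\cdot T$ rather than $T\cdot\operatorname{diag}(1,-1)$; with your order the new conjugate of $\Gamma$ is $T\bigl(D\Gamma D^{-1}\bigr)T^{-1}$ with $D=\operatorname{diag}(1,-1)$, about which you know nothing, whereas left multiplication gives $D\bigl(T\Gamma T^{-1}\bigr)D^{-1}\subseteq M_2(k(\lambda))$ as intended.
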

The proof we give follows the argument in \cite[p. 115]{MaclachlanReid03}.
\begin{proof}
Let $\delta\in\Gamma$ be another hyperbolic element such that $\delta$ and $\gamma$ have no common fixed point. After conjugation we may assume that
\begin{equation*}
\gamma =\begin{pmatrix}
\lambda &0\\
0&\lambda^{-1}
\end{pmatrix},\quad
\delta =\begin{pmatrix}
a&b\\
c&d
\end{pmatrix}.
\end{equation*}
Here $b$ and $c$ are nonzero (else $\gamma$ and $\delta$ would have a common fixed point). Also after further conjugation with a diagonal matrix (which does not change $\gamma$) we may assume that $b=1$. By definition of the trace field, we find that $a+d\in k$ and (using $\tr (\gamma\delta )\in k$) that $\lambda a+\lambda^{-1}d\in k$. So $a$, $d$ and $c=ad-1$ are elements of $k(\lambda )$.

From this and the second part of Theorem\ref{QuaternionAlgebraOfNonelementaryGroup}, we see that after conjugation we may assume that $A_0\Gamma\subseteq M_2(k(\lambda ))$, and hence $\Gamma\subseteq \SL_2(k(\lambda ))$.
\end{proof}
Note that by the Cayley-Hamilton theorem, $\lambda$ satisfies a quadratic equation over $k$:
\begin{equation*}
\lambda^2-(\tr\lambda )\lambda +1=0,
\end{equation*}
so the degree of the extension $k(\lambda )|k$ is at most two.
\begin{Proposition}
Let $\Gamma\subseteq\SL_2(\bbr )$ be non-elementary and assume that it contains a parabolic element. Let $k=\bbq (\tr\Gamma )$. Then the quaternion algebra $A_0\Gamma$ is isomorphic to $M_2(k)$ as a $k$-algebra.
\end{Proposition}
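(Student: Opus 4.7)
The plan is to exploit Lemma \ref{CriterionForRamification}: since $A_0\Gamma$ is a quaternion algebra over $k=\bbq(\tr\Gamma)$, it is either isomorphic to $M_2(k)$ or a division algebra. A division algebra contains no nonzero nilpotent elements (if $x^n=0$ with $x\ne 0$, then $x$ is a zero divisor, contradicting the division property). So it suffices to exhibit a nonzero nilpotent element in $A_0\Gamma$, and a parabolic element of $\Gamma$ will provide exactly this.

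More precisely, let $\gamma\in\Gamma$ be parabolic. Then $\tr\gamma=\pm 2$; write $\varepsilon=\tfrac{1}{2}\tr\gamma\in\{\pm 1\}$. By the Cayley--Hamilton theorem applied to $\gamma\in\SL_2(\bbr)$,
\begin{equation*}
\gamma^2-(\tr\gamma)\gamma+I=0,\qquad\text{i.e.}\qquad (\gamma-\varepsilon I)^2=0.
\end{equation*}
Since $\gamma$ is parabolic, it is not equal to $\pm I$, so $\gamma-\varepsilon I$ is a nonzero element of $M_2(\bbr)$. Moreover $I\in\Gamma\subseteq A_0\Gamma$ (as $\Gamma$ is a group), and $\gamma\in\Gamma\subseteq A_0\Gamma$, so $\gamma-\varepsilon I\in A_0\Gamma$. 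Thus $A_0\Gamma$ contains a nonzero element squaring to zero, hence is not a division algebra.

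Applying Lemma \ref{CriterionForRamification}, we conclude $A_0\Gamma\simeq M_2(k)$ as a $k$-algebra. The hypothesis that $\Gamma$ is non-elementary is used only to ensure, via Theorem \ref{QuaternionAlgebraOfNonelementaryGroup}, that $A_0\Gamma$ really is a quaternion algebra to which the dichotomy of Lemma \ref{CriterionForRamification} can be applied; the rest of the argument is purely linear-algebraic. There is no serious obstacle in this proof: the only thing to check is the trivial Cayley--Hamilton identity for a trace-$\pm 2$ element of $\SL_2$, which is immediate.
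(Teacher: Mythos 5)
Your proof is correct and follows essentially the same route as the paper: exhibit a nonzero non-invertible element of $A_0\Gamma$ coming from the parabolic $\gamma$ and invoke the dichotomy of Lemma \ref{CriterionForRamification}. In fact your version is slightly more careful than the paper's (which uses $1-\gamma$ and implicitly assumes $\tr\gamma=2$), since writing $\varepsilon=\tfrac{1}{2}\tr\gamma$ and producing the nilpotent $\gamma-\varepsilon I$ handles the trace $-2$ case as well.
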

\begin{proof}
Let $\gamma\in\Gamma$ be parabolic, then $1-\gamma$ is a nonzero, non-invertible element of $A_0\Gamma$. Apply Lemma \ref{CriterionForRamification}.
\end{proof}
\begin{Corollary}\label{GroupWithParabolicIntoSLTwoTraceField}
Let $\Gamma\subseteq\SL_2(\bbr )$ be a non-elementary subgroup containing a parabolic element. Then $\Gamma$ is conjugate within $\SL_2(\bbr )$ to a subgroup of $\SL_2(k)$, where $k=\bbq (\tr\Gamma )$.
\end{Corollary}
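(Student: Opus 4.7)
The plan is to leverage the preceding Proposition, which identifies $A_0\Gamma$ with $M_2(k)$ abstractly, and then use Skolem--Noether to realize this identification as conjugation inside $M_2(\bbr)$. So fix a $k$-algebra isomorphism $\psi\colon M_2(k)\to A_0\Gamma$; my goal is to produce $g\in\SL_2(\bbr)$ such that $g^{-1}\Gamma g\subseteq\SL_2(k)$.

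First I would base-change to $\bbr$. The natural $\bbr$-linear map
\begin{equation*}
\alpha\colon A_0\Gamma\otimes_k\bbr\to M_2(\bbr),\qquad a\otimes r\mapsto r\cdot a,
\end{equation*}
is an $\bbr$-algebra homomorphism. Since $A_0\Gamma$ is a central simple $k$-algebra (Theorem \ref{QuaternionAlgebraOfNonelementaryGroup}), the tensor product $A_0\Gamma\otimes_k\bbr$ is central simple over $\bbr$; in particular it has no nontrivial two-sided ideals, so $\alpha$ is injective. Both sides have $\bbr$-dimension four, so $\alpha$ is an isomorphism. Composing the inverse of $\alpha$ with the base change of $\psi$ gives an $\bbr$-algebra automorphism
\begin{equation*}
\Phi\colon M_2(\bbr)=M_2(k)\otimes_k\bbr\xrightarrow{\psi\otimes\mathrm{id}}A_0\Gamma\otimes_k\bbr\xrightarrow{\alpha}M_2(\bbr).
\end{equation*}

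By the Skolem--Noether theorem, $\Phi$ is inner: there exists $g\in\GL_2(\bbr)$ with $\Phi(x)=gxg^{-1}$ for all $x\in M_2(\bbr)$. Applied to $x\in M_2(k)$, this reads $\psi(x)=gxg^{-1}$, whence $A_0\Gamma=\psi(M_2(k))=gM_2(k)g^{-1}$, i.e.\ $g^{-1}A_0\Gamma g=M_2(k)$. Since $\Gamma\subseteq A_0\Gamma$ and conjugation preserves determinants, this gives
\begin{equation*}
g^{-1}\Gamma g\subseteq M_2(k)\cap\SL_2(\bbr)=\SL_2(k).
\end{equation*}

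The final step is to replace $g$ by an element of $\SL_2(\bbr)$ without destroying the inclusion. Rescaling $g$ by a nonzero real scalar does not change conjugation, so after multiplying by $|\det g|^{-1/2}$ we may assume $\det g=\pm1$. If $\det g=-1$, I would replace $g$ by $gh$ with $h=\mathrm{diag}(1,-1)\in\GL_2(\bbq)$; then $\det(gh)=1$, and since conjugation by $h$ preserves $\SL_2(k)$, the inclusion $(gh)^{-1}\Gamma(gh)\subseteq\SL_2(k)$ still holds. I expect the main subtlety to be the verification that $\alpha$ is an isomorphism, which is why the invocation of central simplicity (ensuring no kernel) is the pivotal step; the determinant cleanup at the end is routine once the Skolem--Noether argument is in place.
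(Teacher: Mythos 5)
Your proposal is correct and follows essentially the same route as the paper: identify $A_0\Gamma\simeq M_2(k)$ via the preceding proposition, extend scalars to $\bbr$, apply Skolem--Noether to realize the identification as conjugation, and then normalize the conjugating matrix into $\SL_2(\bbr)$. The only difference is that you spell out why $A_0\Gamma\otimes_k\bbr\to M_2(\bbr)$ is an isomorphism (simplicity of the base-changed algebra plus a dimension count), a point the paper's proof takes for granted.
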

\begin{proof}
By the previous proposition we know that $A_0\Gamma$ is abstractly isomorphic to $M_2(k)$. By tensoring with $\bbr$ we obtain an isomorphism
\begin{equation*}
M_2(\bbr )=A_0\Gamma\otimes_k\bbr \overset{\simeq}{\to}M_2(k)\otimes_k\bbr =M_2(\bbr ).
\end{equation*}
But by the Skolem-Noether theorem, every automorphism of a matrix algebra is inner. In other words we get a matrix $M\in\GL_2(\bbr )$ with $M\cdot A_0\Gamma\cdot M^{-1}=M_2(k)$. Possibly replacing $M$ by
\begin{equation*}
\begin{pmatrix}
-1&0\\
0&1
\end{pmatrix}
\cdot M,
\end{equation*}
we can assume that $M\in\GL_2^+(\bbr )$. Then we can replace $M$ by
\begin{equation*}
\frac{1}{\sqrt{\det M}}\cdot M
\end{equation*}
and finally assume that $M\in\SL_2(\bbr )$.

But then also $M\Gamma M^{-1}\subset M\cdot A_0\Gamma\cdot M^{-1}=M_2(k)$, and thus
\begin{equation*}
M\Gamma^{-1}M^{-1}\subseteq M\cdot A_0\Gamma\cdot M^{-1}\cap\SL_2(\bbr )=\SL_2(k).
\end{equation*}
\end{proof}

\begin{TraceFieldsOfVeechGroups}
After these general considerations we now concentrate on Veech groups of translation surfaces. Let $X$ be a compact Riemann surface and let $\omega$ be an abelian differential on $X$. We have already seen that every eigenvalue of every element in the Veech group $\SL (X,\omega )$, and hence also its trace, is an algebraic integer. Therefore the trace field of $\SL (X,\omega )$ is an algebraic extension of $\bbq$. But we can say more:
\end{TraceFieldsOfVeechGroups}
\begin{Proposition}[McMullen]
Let $X$ be a closed Riemann surface of genus $g$ and let $\omega$ be an abelian differential on $X$. Then the trace field $\bbq (\tr\SL (X,\omega ))$ is a finite extension of $\bbq$, of degree at most $g$.
\end{Proposition}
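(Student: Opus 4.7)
The plan is to extract the bound on the trace field from the action of the affine group on integral cohomology, as developed in Section 5.4. Write $\Gamma=\SL (X,\omega )$ for brevity.

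First I would dispose of the elementary case. If $\Gamma$ is elementary, then either $\Gamma$ contains no hyperbolic element (only elliptic and parabolic elements, whose traces lie in $\bbz\cup\{\pm 2\cos\frac{\pi}{n}\}$) or $\Gamma$ is virtually cyclic generated by a hyperbolic element whose trace satisfies a quadratic relation; in both subcases $\bbq (\tr\Gamma )$ is of degree at most two over $\bbq$. So assume $\Gamma$ is non-elementary, hence by \ref{ElementarySubgroups} contains a hyperbolic element $A=D\varphi$ with eigenvalues $\lambda ,\lambda^{-1}$, $\lambda >1$.

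By Corollary \ref{EigenvaluesOfAffineMapsAreAlgIntegers} the number $\lambda$ is an algebraic integer; more precisely, by Proposition \ref{EigenvaluesOfHyperbolicInVGActingOnH} it appears as a simple eigenvalue of the $\bbz$-linear map $\varphi^{\ast}$ on $H^1(X,\bbz )\cong\bbz^{2g}$. Its characteristic polynomial $\chi_{\varphi}\in\bbz [x]$ is thus monic of degree $2g$, and since $\varphi^{\ast}$ preserves the cup product, Proposition \ref{EigenvaluesComeInPairs} gives that $\chi_{\varphi}$ is a reciprocal polynomial: $x^{2g}\chi_{\varphi}(1/x)=\chi_{\varphi}(x)$. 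Hence there is a monic $P\in\bbz [x]$ of degree $g$ with
\begin{equation*}
\chi_{\varphi}(x)=x^gP(x+x^{-1}),
\end{equation*}
and the trace $\tr A=\lambda +\lambda^{-1}$ is a root of $P$. This gives $[\bbq (\tr A):\bbq ]\le g$.

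The main obstacle, and the real content of the theorem, is to upgrade this bound on a single trace to a bound on the whole trace field, i.e.\ to show $\bbq (\tr\Gamma )=\bbq (\tr A)$ for one fixed hyperbolic $A$. The strategy (due to McMullen) is to exploit the canonical subspace $S_{\omega}\subset H^1(X,\bbr )$ from Section 5.4 together with its Galois conjugates: for every embedding $\sigma :\bbq (\tr A)\hookrightarrow\bbr$ one constructs a real $2$-dimensional subspace $S_{\omega}^{(\sigma )}\subset H^1(X,\bbr )$ that is invariant under the whole affine group and on which an arbitrary $\gamma\in\Gamma$ acts with trace $\sigma (\tr\gamma )$. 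The different $S_{\omega}^{(\sigma )}$ sit linearly independently inside $H^1(X,\bbr )$, which has real dimension $2g$, so that any Galois orbit of a $\tr\gamma$ produces subspaces already accounted for by the Galois conjugates of $S_{\omega}$, forcing $\tr\gamma\in\bbq (\tr A)$. Implementing this Galois-conjugation argument is the difficult step; the reciprocal-polynomial count in the preceding paragraph is essentially formal and merely provides the numerical bound $[\bbq (\tr A):\bbq ]\le g$ once the identification $\bbq (\tr\Gamma )=\bbq (\tr A)$ is in hand.
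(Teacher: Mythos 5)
Your proposal does not actually prove the proposition: the step you yourself identify as ``the real content of the theorem'' --- passing from a degree bound on a single trace to a degree bound on the whole field $\bbq (\tr\Gamma )$ --- is left as a sketch, and the sketch would not go through in the stated generality. You propose to show $\bbq (\tr\Gamma )=\bbq (\tr A)$ for one hyperbolic $A$ by producing, for each embedding $\sigma$, an affine-group-invariant real $2$-plane $S_{\omega }^{(\sigma )}\subset H^1(X,\bbr )$ on which $\gamma$ acts with trace $\sigma (\tr\gamma )$. But the existence of such \emph{real} invariant planes presupposes that the trace field is totally real and that the Galois conjugates of the canonical subsystem are defined and occur with multiplicity one --- facts established only in chapters 7 and 8, partly only under the Veech hypothesis, and in any case downstream of this very proposition. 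The identity $\bbq (\tr\Gamma )=\bbq (\tr A)$ itself is Proposition \ref{TraceFieldCanBeComputedByOneHyperbolic}, which the paper proves only for Veech surfaces via Gutkin--Judge and Kenyon--Smillie; the proposition at hand makes no such hypothesis. Your treatment of the elementary case is also wrong: a finite cyclic Veech group of order $n$ has trace field $\bbq (\cos\frac{2\pi }{n})$, of degree $\varphi (n)/2$, which can exceed $2$ (it is still $\le g$, but because $\varphi (n)\le 2g$ by the eigenvalue count on $H^1(X,\bbz )$, not for the reason you give).

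The paper's argument avoids all of this and is worth internalizing. Let $k$ be the $\bbq$-\emph{span} of the traces. The identity $(\tr\gamma )(\tr\delta )=\tr (\gamma\delta )+\tr (\gamma\delta^{-1})$ makes $k$ a $\bbq$-algebra, and $k$ is finite-dimensional because it is the image of the trace-on-$S_{\omega }$ map restricted to the finite-dimensional space of rational endomorphisms of $H^1(X,\bbq )$ preserving $S_{\omega }$; being a finite-dimensional domain over $\bbq$ inside $\bbr$, it is a field, hence equals $\bbq (\tr\Gamma )$. Now take a primitive element $t=\sum_ia_i\tr (D\varphi_i)$ and set $T=\sum_ia_i(\varphi_i^{\ast }+\varphi_i^{-1,\ast })$ on $H^1(X,\bbq )$. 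Since $A+A^{-1}=(\tr A)I$ in $\SL_2$, the operator $T$ acts as $t\cdot\mathrm{id}$ on the two-dimensional $S_{\omega }$, so $t$ is an eigenvalue of the \emph{rational} operator $T$ with multiplicity $\ge 2$; every Galois conjugate of $t$ then also has multiplicity $\ge 2$, and $2g=\dim H^1$ forces $[\bbq (t):\bbq ]\le g$. Note that this handles an arbitrary element of the trace field directly, needs no hyperbolic element, no case split, and no Galois conjugates of $S_{\omega }$. Your reciprocal-polynomial observation is correct but only bounds the degree of one trace, which is strictly weaker than what is claimed.
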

\begin{proof}
Let $k$ be the sub-$\bbq$-vector space of $\bbr$ generated by the traces of $\Gamma$. We claim that $k$ is the trace field.

To start with, $k$ is a $\bbq$-algebra because of the formula
\begin{equation*}
(\tr\gamma )(\tr\delta )=\tr (\gamma\delta )+\tr (\gamma\delta^{-1}).
\end{equation*}
Also it is a finite-dimensional $\bbq$-vector space: it is contained in the image of the $\bbq$-linear map
\begin{equation*}
\{ f\in\End_{\bbq }H^1(X,\bbq )\, |\, f_{\bbr }(S_{\omega })\subseteq S_{\omega }\}\to\bbr,\quad T\mapsto\tr (T|S_{\omega }).
\end{equation*}
Hence $k$ is a finite field extension of $\bbq$, and equal to the trace field of $\SL (X,\omega )$. It remains to show the degree bound. By the theorem of the primitive element there is some $t\in k$ with $k=\bbq (t)$. We can write
\begin{equation*}
t=\sum_{i=1}^na_i\tr (D\varphi_i)
\end{equation*}
for $a_i\in\bbq$ and $\varphi_i\in\Aff^+(X,\omega )$. Define then an endomorphism $T$ of $H^1(X,\bbq )$ by
\begin{equation*}
T=\sum_{i=1}^na_i(\varphi_i^{\ast }+\varphi_i^{-1,\ast }).
\end{equation*}
Now in $\SL_2(\bbr )$ the following equation holds:
\begin{equation*}
A+A^{-1}=(\tr A)\cdot I.
\end{equation*}
Hence $T$ acts as $t$ times the identity on $S_{\omega }$. Hence $t$ is an eigenvalue of $T$ with multiplicity $\ge 2$. Thus any Galois conjugate of $t$ is also an eigenvalue with multiplicity $\ge 2$. Since the dimension of $H^1(X,\bbq )$ is $2g$, this means that $t$ can have at most $g$ Galois conjugates, i.e. it can have at most degree $g$ over $\bbq$.
\end{proof}
This result is special for Veech groups. By using Fricke coordinates on Teichm\"{u}ller space, we see that a ``generic'' Fuchsian group of finite covolume has transcendental traces. The situation changes in dimension three, i.e. for Kleinian groups acting on hyperbolic three-space: by Mostow Rigidity, every finite covolume Kleinian group has an algebraic number field as trace field, see
\cite[Thm. 3.1.2]{MaclachlanReid03}.

\begin{TheCrossRatioField}
Gutkin and Judge in \cite{GutkinJudge00} give an alternative description of the trace field of a Veech group, closer to the geometry of translation surfaces. Let $X$ be a closed Riemann surface with an abelian differential $\omega$, and consider the set of all slopes of saddle connections $\mathcal{D}\subset\mathbb{P}^1(\bbr )$. Recall that for four disctinct points $x_1,\ldots ,x_4\in\mathbb{P}^1(k )$, where $k$ is any field, there exists a unique M\"{o}bius transformation $\gamma\in\operatorname{PGL}_2( k)$ with $\gamma (x_1)=0$,
$\gamma (x_2)=1$ and $\gamma (x_3)=\infty$. The element $\gamma (x_4)\in\mathbb{P}^1(k )\smallsetminus \{ 0,1,\infty \}$, where we identify the latter with $k\smallsetminus\{ 0,1\}$, is called the cross-ratio of the four points and denoted by $\mathrm{CR} (x_1,\ldots ,x_4)$. Now let $K(X,\omega )$ be the subfield of $\bbr$ generated by all cross-ratios of families of four distinct elements in $\mathcal{D}$. Then Theorem 7.1 in op.cit. reads:
\end{TheCrossRatioField}
\begin{Theorem}[Gutkin, Judge]\label{CrossRatioEqualsTraceField}
The trace field of $\SL (X,\omega )$ is contained in $K(X,\omega )$. If $(X,\omega )$ is a Veech surface, the two fields are equal.
\end{Theorem}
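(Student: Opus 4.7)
The plan is to handle the two inclusions by quite different methods.

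For $\bbq (\tr \SL(X,\omega )) \subseteq K(X,\omega )$, the key is that $\SL(X,\omega )$ acts on $\mathbb{P}^1(\bbr )$ preserving the set $\mathcal{D}$ of slopes of saddle connections (Proposition \ref{VeechGroupIsFuchsianNonCocompact} says that the Veech group permutes holonomy vectors, so \emph{a fortiori} their directions). First I would fix three points $s_1,s_2,s_3\in\mathcal{D}$ and conjugate $\SL(X,\omega )$ in $\SL_2(\bbr )$ by the unique M\"obius transformation sending them to $0,1,\infty$. After this conjugation every other slope in $\mathcal{D}$ is equal to its cross-ratio with $s_1,s_2,s_3$ and hence lies in $\mathbb{P}^1(K(X,\omega ))$. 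Every $A\in\SL(X,\omega )$ then acts as a M\"obius transformation sending three elements of $\mathbb{P}^1(K(X,\omega ))$ to three elements of $\mathbb{P}^1(K(X,\omega ))$, and is therefore represented by a matrix in $\GL_2(K(X,\omega ))$, unique up to scalar. The classical Fricke identity $(\tr A)(\tr B)=\tr (AB)+\tr (AB^{-1})$ then removes the one remaining ambiguity (the square root of the determinant needed to normalize into $\SL_2$), yielding $\tr A\in K(X,\omega )$ for every $A\in\SL(X,\omega )$.

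For the reverse inclusion under the Veech hypothesis, write $k=\bbq (\tr \SL(X,\omega ))$. A Veech surface's Veech group is a lattice, hence non-elementary, and it is non-cocompact (Proposition \ref{VeechGroupIsFuchsianNonCocompact}) so it contains parabolic elements. Corollary \ref{GroupWithParabolicIntoSLTwoTraceField} then lets us conjugate $\SL(X,\omega )$ within $\SL_2(\bbr )$ to land in $\SL_2(k)$. The fixed point on $\mathbb{P}^1(\bbr )$ of any parabolic element of $\SL_2(k)$ is a rational function of the matrix entries with coefficients in $k$, so after this conjugation all parabolic fixed points of the Veech group lie in $\mathbb{P}^1(k)$.

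The essential geometric input is now \emph{Veech's dichotomy}: on a Veech surface every saddle-connection direction is fixed by some parabolic element of the Veech group; equivalently, it is a Jenkins-Strebel direction whose cylinders have commensurable moduli. Granting this, the whole set $\mathcal{D}$ lies in $\mathbb{P}^1(k)$ after conjugation, so all its cross-ratios are in $k$. Since the cross-ratio is a $\operatorname{PGL}_2$-invariant and the conjugation was performed inside $\SL_2(\bbr )\subset\operatorname{PGL}_2(\bbr )$, these cross-ratios before and after conjugation coincide, and therefore $K(X,\omega )\subseteq k$.

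The main obstacle is the appeal to Veech's dichotomy in the second part. The first inclusion is essentially bookkeeping with M\"obius transformations on top of the algebraic machinery of $A_0\Gamma$ developed earlier in the chapter, and the passage to $\SL_2(k)$ in the second direction is supplied directly by Corollary \ref{GroupWithParabolicIntoSLTwoTraceField}. The genuinely hard step is that every saddle-connection direction on a Veech surface is a parabolic direction: this uses that the stabilizer in the Veech group of such a direction would have to act on a rank-one subgroup $\bbr$ cocompactly, which can only happen when the direction is already Jenkins-Strebel with commensurable moduli. This, rather than the algebra around trace fields, is where the real work lies.
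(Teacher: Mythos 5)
The paper itself offers no proof here---the statement is quoted verbatim as Theorem 7.1 of Gutkin--Judge---so your proposal stands or falls on its own merits. Your second half is essentially sound: conjugating $\SL (X,\omega )$ into $\SL_2(k)$ via Corollary \ref{GroupWithParabolicIntoSLTwoTraceField}, observing that the fixed directions of parabolics then lie in $\mathbb{P}^1(k)$, and invoking Veech's dichotomy to conclude that \emph{every} saddle-connection direction is a parabolic direction does give $K(X,\omega )\subseteq k$, since cross-ratios are unchanged by the conjugation. The dichotomy is an external input nowhere stated in this paper, but you correctly identify it as the real content of that direction.

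The first inclusion, however, has a genuine gap. Knowing that every $A\in\SL (X,\omega )$ acts on the set of slopes by a M\"obius transformation defined over $K=K(X,\omega )$ only tells you that $A$ is a real scalar multiple of some $M_A\in\GL_2(K)$, hence that $(\tr A)^2=(\tr M_A)^2/\det M_A\in K$; it does not give $\tr A\in K$. The obstruction is the homomorphism $A\mapsto\det M_A\bmod (K^{\times })^2$ into $K^{\times }/(K^{\times })^2$, and the Fricke identity cannot kill it: if $\tr A\in\sqrt{d_A}\cdot K$ and $\tr B\in\sqrt{d_B}\cdot K$, then both $\tr (AB)$ and $\tr (AB^{-1})$ lie in $\sqrt{d_Ad_B}\cdot K$, so $(\tr A)(\tr B)=\tr (AB)+\tr (AB^{-1})$ is satisfied without forcing any $d_A$ to be a square. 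Concretely, the non-elementary group generated by $\operatorname{diag}(\sqrt{2},1/\sqrt{2})$ and a hyperbolic element of $\SL_2(\bbz )$ maps $\mathbb{P}^1(\bbq )$ into itself, so every cross-ratio of every orbit is rational, yet its trace field is $\bbq (\sqrt{2})$. Hence the purely projective action on directions can never determine the trace field. The correct argument (in Gutkin--Judge, and in the appendix of Kenyon--Smillie invoked in Proposition \ref{TraceFieldCanBeComputedByOneHyperbolic}) must use the \emph{linear} action of the Veech group on the finitely generated group of holonomy vectors: it is the finite generation of that group, together with $\det A=1$, that fixes the normalization your argument discards when passing to $\operatorname{PGL}_2$.
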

The trace field of a lattice Veech group can be explicitly computed as soon as one knows a single hyperbolic element:
\begin{Proposition}\label{TraceFieldCanBeComputedByOneHyperbolic}
Let $(X,\omega )$ be a Veech surface and let $A$ be a hyperbolic element of $\SL (X,\omega )$. Then the trace field of $\SL (X,\omega )$ is generated by $\tr A$.
\end{Proposition}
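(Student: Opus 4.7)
The plan is to combine Corollary \ref{GroupWithParabolicIntoSLTwoTraceField} with the structure of the quaternion algebra $A_0\SL(X,\omega)$. Write $\Gamma:=\SL(X,\omega)$ and $k:=\bbq(\tr\Gamma)$; the inclusion $\bbq(\tr A)\subseteq k$ is immediate. Because $(X,\omega)$ is a Veech surface, $\Gamma$ is a lattice in $\SL_2(\bbr)$, hence non-elementary, and Proposition \ref{VeechGroupIsFuchsianNonCocompact} shows it is non-cocompact, so it contains a parabolic element. By Corollary \ref{GroupWithParabolicIntoSLTwoTraceField} I may conjugate inside $\SL_2(\bbr)$ so that $\Gamma\subseteq\SL_2(k)$.

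Setting $F:=\bbq(\tr A)\subseteq k$, my goal is to find a further $\SL_2(\bbr)$-conjugation after which $\Gamma\subseteq\SL_2(F)$; this will force $k\subseteq F$ by the very definition of $k$. I would pick a second hyperbolic $B\in\Gamma$ without common fixed point with $A$, chosen so that $\tr B\in F$; a concrete candidate is $B:=PAP^{-1}$ for a suitable parabolic $P\in\Gamma$, which gives $\tr B=\tr A$, and one then must arrange also $\tr(AB)\in F$. By Theorem \ref{QuaternionAlgebraOfNonelementaryGroup} together with the splitting discussion preceding Corollary \ref{GroupWithParabolicIntoSLTwoTraceField}, $\{1,A,B,AB\}$ is a $k$-basis of the quaternion algebra $A_0\Gamma\cong M_2(k)$; hence the $F$-span $R$ of this set is a four-dimensional $F$-subalgebra of $M_2(\bbr)$. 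Being central simple of dimension four and containing a non-invertible element (coming from a parabolic in $\Gamma\cap R$), $R$ is split, so $R\cong M_2(F)$ by Lemma \ref{CriterionForRamification}. Skolem-Noether then conjugates $R$ onto the standard embedding $M_2(F)\hookrightarrow M_2(\bbr)$.

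The main obstacle is to show that every $\gamma\in\Gamma$ actually lies in $R$, not merely in $R\otimes_Fk=A_0\Gamma$. Decomposing $\gamma=\alpha_0+\alpha_1A+\alpha_2B+\alpha_3AB$ with $\alpha_i\in k$ a priori, and inverting the Gram matrix for the reduced-trace pairing on $R$, expresses each $\alpha_i$ as a $\bbq$-polynomial in the seven traces $\tr A$, $\tr B$, $\tr(AB)$, $\tr\gamma$, $\tr(A\gamma)$, $\tr(B\gamma)$, $\tr(AB\gamma)$. The first three have been arranged to lie in $F$; the remaining four are traces of Veech group elements, all in $k$ by construction but a priori not in $F$. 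Controlling these is the heart of the matter. I would attack it via the cross-ratio characterization of $k$ given in Theorem \ref{CrossRatioEqualsTraceField}, reducing the problem to showing that the cross-ratios of the fixed points of $A$, $B$, $AB$ and $\gamma$ on $\mathbb{P}^1(\bbr)$---all of which live in at most a quadratic extension of $F$ determined by the discriminants $(\tr\cdot)^2-4$---generate no more than $F$. This algebraic-geometric consistency check, which uses the explicit action of $A$ and $B$ as M\"{o}bius transformations on their fixed-point set together with the density of parabolic fixed points, is where the technical weight of the proof lies.
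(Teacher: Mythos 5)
There is a genuine gap, and you have in fact located it yourself: everything reduces to showing that the traces $\tr\gamma$, $\tr(A\gamma)$, $\tr(B\gamma)$, $\tr(AB\gamma)$ lie in $F=\bbq (\tr A)$ for \emph{every} $\gamma\in\Gamma$, and this is never established. Since ``$\tr\gamma\in F$ for all $\gamma$'' is precisely the assertion $k\subseteq F$ you are trying to prove, the Gram-matrix reduction is circular as it stands. The same problem already occurs one step earlier: you say one ``must arrange $\tr (AB)\in F$'', but for $B=PAP^{-1}$ there is no reason why $\tr (APAP^{-1})$ should lie in $\bbq (\tr A)$, and without $\tr A,\tr B,\tr (AB)\in F$ the span $R$ is not even closed under multiplication (one needs all three traces in $F$ for the identity $AB+BA=(\tr A)B+(\tr B)A+(\tr (AB)-\tr A\,\tr B)I$ to keep products inside the $F$-span). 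The concluding appeal to Theorem \ref{CrossRatioEqualsTraceField} via fixed points ``living in at most a quadratic extension of $F$'' is circular for the same reason: the fixed points of $\gamma$ are defined over $\bbq (\tr\gamma )$, which is exactly the field you do not yet control.

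More fundamentally, no argument using only the lattice-theoretic and quaternion-algebra structure of $\Gamma$ (Theorem \ref{QuaternionAlgebraOfNonelementaryGroup}, Lemma \ref{CriterionForRamification}, Corollary \ref{GroupWithParabolicIntoSLTwoTraceField}) can prove this proposition, because the statement is false for general non-elementary non-cocompact lattices: a triangle group such as $\Delta (2,8,\infty )$ has trace field $\bbq (\sqrt{2})$ but contains a finite-index subgroup, hence hyperbolic elements, with rational traces --- exactly the phenomenon recorded in the paper's footnote on trace fields of finite-index subgroups. The proposition is a genuinely geometric fact about translation surfaces, and the paper accordingly proves it by citation: Theorem \ref{CrossRatioEqualsTraceField} (Gutkin--Judge) identifies the trace field of a lattice Veech group with the cross-ratio field of directions of saddle connections, and Theorem 28 in the appendix of \cite{KenyonSmillie00} shows that this field is generated by the trace of any single hyperbolic element; the underlying mechanism is the action of the corresponding affine map on $H^1(X,\bbq )$ and the simplicity of its leading eigenvalue (cf. Proposition \ref{EigenvaluesOfHyperbolicInVGActingOnH}). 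If you want a self-contained proof, that cohomological input --- not a further conjugation inside $M_2(\bbr )$ --- is the missing ingredient.
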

\begin{proof}
This follows from the Theorem of Gutkin and Judge just quoted together with Theorem 28 in the appendix of \cite{KenyonSmillie00}.
\end{proof}
\begin{Corollary}\label{TraceFieldSameForFiniteIndex}
Assume that $(X,\omega )$ is a Veech surface, and let $\Gamma$ be a finite index subgroup of $\SL (X,\omega )$. Then the trace fields of $\SL (X,\omega )$ and of $\Gamma$ are the same.\hfill $\square$
\end{Corollary}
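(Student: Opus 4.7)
The plan is to leverage Proposition \ref{TraceFieldCanBeComputedByOneHyperbolic}, which asserts that the trace of a \emph{single} hyperbolic element already generates the trace field of $\SL(X,\omega)$. The inclusion $\bbq(\tr\Gamma)\subseteq\bbq(\tr\SL(X,\omega))$ is immediate from $\Gamma\subseteq\SL(X,\omega)$, so the whole task is to exhibit elements of $\Gamma$ whose traces generate the trace field of the ambient Veech group.

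To do this, first pick any hyperbolic element $A\in\SL(X,\omega)$; such an element exists because the Veech group is a non-cocompact lattice by Proposition \ref{VeechGroupIsFuchsianNonCocompact}, and every non-elementary Fuchsian group contains hyperbolic elements. Since $\Gamma$ has finite index in $\SL(X,\omega)$, the cosets $A\Gamma ,A^2\Gamma ,A^3\Gamma ,\ldots$ cannot all be distinct, so some positive power $A^n$ lies in $\Gamma$. Writing the eigenvalues of $A$ as $\lambda$ and $\lambda^{-1}$ with $\lambda >1$, the eigenvalues of $A^n$ are $\lambda^n$ and $\lambda^{-n}$ with $\lambda^n>1$; in particular $|\tr A^n|>2$, so $A^n$ is again hyperbolic.

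Now apply Proposition \ref{TraceFieldCanBeComputedByOneHyperbolic} to the hyperbolic element $A^n\in\SL(X,\omega)$: its trace already generates the full trace field, that is, $\bbq(\tr A^n)=\bbq(\tr\SL(X,\omega))$. Since $A^n\in\Gamma$, we have $\tr A^n\in\bbq(\tr\Gamma)$, and consequently $\bbq(\tr\SL(X,\omega))\subseteq\bbq(\tr\Gamma)$. Combined with the trivial inclusion above, this gives equality.

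There is essentially no obstacle in this argument: the strength of Proposition \ref{TraceFieldCanBeComputedByOneHyperbolic} (whose proof rests on the Gutkin--Judge identification of the trace field with the cross-ratio field, an invariant of the translation surface itself) reduces the corollary to the trivial observation that a positive power of a hyperbolic element is hyperbolic. Note that in the generality of arbitrary non-elementary Fuchsian groups this conclusion can \emph{fail} (only the invariant trace field is a true commensurability invariant), so it is really the special feature of Veech groups encoded in Proposition \ref{TraceFieldCanBeComputedByOneHyperbolic} that does the work here.
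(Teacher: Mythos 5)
Your proof is correct and is essentially the argument the paper intends: the corollary is stated with no written proof precisely because it follows from Proposition \ref{TraceFieldCanBeComputedByOneHyperbolic} exactly as you describe, by passing to a hyperbolic power $A^n\in\Gamma$ and noting that $\tr A^n$ already generates $\bbq(\tr\SL(X,\omega))$. Your closing remark that the statement fails for general Fuchsian groups also matches the paper's own footnoted counterexample with $\Delta(2,3,8)$.
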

In general the trace field of a finite covolume Fuchsian group is not stable under passing to a finite index subgroup.\footnote{Here is a counterexample, taken from \cite[Exercise 3.3.6]{MaclachlanReid03}. Consider the Fuchsian triangle group $\Gamma =\Delta (2,3,8)$ and its subgroup $\Gamma^{(2)}=\langle\gamma^2|\gamma\in\Gamma\rangle$. This is a finite index subgroup by op.cit., Lemma 3.3.3. By formula (3.25) in op.cit. we see that
\begin{equation*}
\bbq (\tr\Gamma )=\bbq (2\cos\frac{2\pi}{2},2\cos\frac{2\pi}{3},2\cos\frac{2\pi}{8})=\bbq (-2,-1,2\sqrt{2})=\bbq(\sqrt{2} ).
\end{equation*}
By the remark right before formula (3.28) in op.cit., we see that
\begin{equation*}
\bbq (\tr\Gamma^{(2)})=\bbq (2\cos\frac{4\pi}{2},2\cos\frac{4\pi}{3},2\cos\frac{4\pi}{8})=\bbq (2,-1,0)=\bbq .
\end{equation*}
} Also worthwile to mention is the following:
\begin{Proposition}
Let $(X,\omega )$ be a Veech surface. Then the Veech group $\SL (X,\omega )$ is conjugate to a subgroup of $\SL_2(k)$, where $k$ is the trace field of $\SL (X,\omega )$.
\end{Proposition}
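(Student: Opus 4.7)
The plan is to reduce this to the already proven Corollary \ref{GroupWithParabolicIntoSLTwoTraceField}, which says that any non-elementary subgroup of $\SL_2(\bbr)$ containing a parabolic element is conjugate into $\SL_2(k)$, where $k$ is its trace field. So it suffices to verify the two hypotheses for $\Gamma = \SL(X,\omega)$ in the Veech case: that $\Gamma$ is non-elementary and that $\Gamma$ contains a parabolic element.

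For the first hypothesis, by definition of a Veech surface the projective Veech group is a lattice in $\PSL_2(\bbr)$, hence $\SL(X,\omega)$ is a lattice in $\SL_2(\bbr)$. As noted in the excerpt under \textsc{Elementary Subgroups}, every Fuchsian group of finite covolume is non-elementary (an elementary subgroup either fixes a boundary point or preserves a geodesic, which forces its quotient of $\bbh$ to have infinite hyperbolic area). So the first hypothesis is immediate.

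For the second hypothesis, Proposition \ref{VeechGroupIsFuchsianNonCocompact} states that $\SL(X,\omega)$ is always non-cocompact. A non-cocompact lattice in $\SL_2(\bbr)$ necessarily has at least one cusp in the quotient $\SL(X,\omega)\backslash\bbh$, and any generator of the stabilizer of such a cusp is a nontrivial parabolic element of the Veech group. (Alternatively, one can invoke the explicit construction in Section 3.3: since the Veech group is big enough to have finite covolume, in particular nontrivial, one has at least one Jenkins--Strebel direction with commensurable moduli, producing an affine Dehn twist whose linear part is a parabolic matrix in $\SL(X,\omega)$.) Thus both hypotheses of Corollary \ref{GroupWithParabolicIntoSLTwoTraceField} are satisfied, and applying it gives a conjugate of $\SL(X,\omega)$ inside $\SL_2(k)$. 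There is no serious obstacle here; the whole argument is really just a bookkeeping exercise, assembling earlier results.
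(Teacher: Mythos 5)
Your argument is correct and is essentially identical to the paper's own proof, which likewise observes that a Veech group is a non-cocompact lattice, hence non-elementary and containing a parabolic element, and then applies Corollary \ref{GroupWithParabolicIntoSLTwoTraceField}. The extra detail you supply about cusps and Jenkins--Strebel directions is fine but not needed beyond what the paper already records.
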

\begin{proof}
Since $\SL (X,\omega )$ is a non-cocompact lattice, it contains a parabolic element. Now apply Corollary \ref{GroupWithParabolicIntoSLTwoTraceField}.
\end{proof}

\subsection{Primitive and Imprimitive Abelian Differentials}

Given a translation surface $(X,\omega )$, one can construct translation surfaces in higher genera which do not differ very much in their behaviour under the $\SL_2(\bbr )$-operation from $(X,\omega )$. We start with a covering which is ramified over periodic points:
\begin{Definition}
Let $X$ be a compact Riemann surface and let $\omega$ be an abelian differential on $X$. A point $x\in X$ is called \textup{periodic} if its orbit under $\Aff^+(X,\omega )$ is finite.
\end{Definition}
Since every affine map permutes the zeros of $\omega$ and there are only finitely many zeros, we see that every zero of $\omega$ is periodic in this sense.

For a less trivial example, consider the Wiman curve $W_g$ with the abelian differential $\omega_g$. Recall that it can be interpreted as a regular $2n$-gon with opposite sides identified. By our determination of the Veech group, we find that the points above $\infty$ and the $n$-th roots of unity, i.e. the fixed points of the hyperelliptic involution, are periodic. The same holds for $(X_g,\omega_1)$, i.e. two regular $n$-gons with respective sides identified.
\begin{Definition}
Let $(X,\omega )$ and $(Y,\eta )$ be compact Riemann surfaces with abelian differentials. A finite holomorphic map $f:X\to Y$ is called a \textup{translation cover} if it satisfies the following conditions:
\begin{enumerate}
\item $f^{\ast }\eta =\omega$,
\item (only for $g(Y)>1$) the ramification locus of $f$ in $Y$ consists entirely of periodic points,
\item (only for $g(Y)=1$) after fixing a suitable base point in the elliptic curve $Y$, the ramification locus of $f$ only consists of torsion points.\footnote{equivalently: for all $y_1,y_2\in Y$, the difference $[y_1]-[y_2]$ in the Jacobian of $Y$ is torsion}
\end{enumerate}
\end{Definition}
Also here the Wiman curves provide a nice series of examples. Let as always $n=2g+1$, and let $t$ be a divisor of $n$ with $1\le t\le g$. Then the quotient $n/t$ is also odd, say equal to $2h+1$ for some nonnegative integer $h$. Consider then the following map between Wiman curves:
\begin{equation*}
f:W_g\to W_h,\quad (x,y)\mapsto (x^t,y).
\end{equation*}
This is easily checked to be well-defined. Pulling back our standard differentials, we obtain
\begin{equation*}
f^{\ast }\omega_1=\frac{\de (x^t)}{y}=t\cdot\omega_t\text{ and }f^{\ast }\omega_h=\frac{(x^t)^{h-1}\de (x^t)}{y}=t\cdot\omega_{th} .
\end{equation*}
Veech groups, and hence also mirror Veech groups, behave nicely under translation covers:
\begin{Theorem}[Gutkin, Judge]\label{VeechGroupsCommensurable}
Let $f:(X,\omega )\to (Y,\eta )$ be a translation cover. Then $\SL (X,\omega )$ and $\SL (Y,\eta )$ are commensurable.
\end{Theorem}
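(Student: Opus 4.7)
The plan is to exhibit a common finite-index subgroup of $\SL (X,\omega )$ and $\SL (Y,\eta )$ by introducing a Hurwitz-type moduli space of translation covers and taking the stabilizer of the class of $f$.

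Let $\mathcal{M}$ denote the set of equivalence classes of translation covers $f':(X',\omega ')\to (Y',\eta ')$ having the same topological data as $f$: same degree $d$, same branching profile, and same monodromy representation $\pi_1(Y'\smallsetminus B')\to S_d$ up to $S_d$-conjugation. Two such covers are identified when they fit into a commutative square of translation isomorphisms. The group $\SL_2(\bbr )$ acts on $\mathcal{M}$ by $A\cdot [f']=[A\cdot f']$, distorting both surfaces by $A$ while leaving the underlying topological map unchanged; this preserves the notion of translation cover, since in translation charts $f'$ remains locally a translation. Put $\Gamma_f :=\operatorname{Stab}_{\SL_2(\bbr )}([f])$. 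By definition, $A\in\Gamma_f$ if and only if there exist translation isomorphisms $\alpha :A\cdot (X,\omega )\to (X,\omega )$ and $\beta :A\cdot (Y,\eta )\to (Y,\eta )$ with $\beta\circ (A\cdot f)=f\circ\alpha$; in particular $\Gamma_f\subseteq\SL (X,\omega )\cap\SL (Y,\eta )$.

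The task then reduces to showing that $\Gamma_f$ has finite index in each Veech group. I consider the $\SL_2(\bbr )$-equivariant forgetful maps $\pi_X :\mathcal{M}\to\Omega\mathrsfs{M}_g$ and $\pi_Y :\mathcal{M}\to\Omega\mathrsfs{M}_{g(Y)}$. The $\SL (Y,\eta )$-orbit of $[f]$ sits in the fibre $\pi_Y^{-1}([(Y,\eta )])$, and this fibre is finite: a translation cover of $(Y,\eta )$ of the prescribed topological type is classified by a conjugacy class of transitive homomorphisms $\pi_1(Y\smallsetminus B)\to S_d$ with prescribed local monodromies at the finite branch set $B\subset Y$, and there are only finitely many such since $\pi_1(Y\smallsetminus B)$ is finitely generated and $S_d$ is finite. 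Hence the orbit is finite, and its stabilizer $\Gamma_f$ has finite index in $\SL (Y,\eta )$.

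The analogous statement for $\SL (X,\omega )$ requires finiteness of $\pi_X^{-1}([(X,\omega )])$, the set of equivalence classes of translation covers \emph{from} $(X,\omega )$ to some translation surface of genus $g(Y)$ with the fixed topological data. This is where I expect the main obstacle to lie: the ``de Franchis''-type finiteness of quotient maps out of a fixed curve. When $g(Y)\ge 2$ one can invoke a version of the de Franchis theorem, which bounds the number of non-constant holomorphic maps from $X$ to curves of genus at least two; the extra combinatorial constraints only sharpen finiteness. When $g(Y)=1$ the target is an elliptic curve, and the hypothesis that branch values be torsion points, combined with the identity $\omega =f'^{\ast }\eta '$ (which constrains the period lattice of $\eta '$ in terms of the periods of $\omega $), singles out finitely many elliptic quotients of $X$ up to isomorphism. (The case $g(Y)=0$ does not arise, since $\mathbb{P}^1(\bbc )$ carries no non-zero abelian differential.) Granted this finiteness, the symmetric orbit-stabilizer argument shows that $\Gamma_f$ has finite index in $\SL (X,\omega )$, and commensurability of the two Veech groups follows.
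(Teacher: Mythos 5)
First, a remark on the comparison: the paper does not actually prove this theorem --- its ``proof'' consists of the citation \cite[Thm.~4.9]{GutkinJudge00} --- so your argument is being measured against the content of Gutkin--Judge's theorem rather than against an in-paper argument. Your overall strategy (realize $[f]$ as a point in a space of translation covers carrying an $\SL_2(\bbr )$-action, and show that its stabilizer $\Gamma_f$ has finite index on both sides by an orbit--stabilizer count) is in the right spirit, and the $X$-side finiteness via de~Franchis is sound for $g(Y)\ge 2$, since $\eta '$ is determined by $f'$ through $f'^{\ast }\eta '=\omega$.

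The genuine gap is on the $Y$-side. The fibre $\pi_Y^{-1}([(Y,\eta )])$ is \emph{not} finite as you claim: your topological data fix the degree, the branching profile and the monodromy class, but not the branch locus $B'\subset Y$, which can be moved continuously, so this fibre is uncountable. If instead you rigidify by fixing $B'=B$, then the $\SL (Y,\eta )$-orbit of $[f]$ no longer lies in the fibre, because transporting $f$ by $A\in\SL (Y,\eta )$ together with an isomorphism $\beta :A\cdot (Y,\eta )\to (Y,\eta )$ replaces $B$ by $\phi (B)$ for some $\phi\in\Aff^+(Y,\eta )$. Either way the finiteness of the orbit does not follow from what you wrote. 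What rescues the argument is precisely the hypothesis, built into the paper's definition of a translation cover, that $B$ consists of \emph{periodic} points (torsion points when $g(Y)=1$): then $\{\phi (B):\phi\in\Aff^+(Y,\eta )\}$ is a finite collection of finite sets, so the orbit of $[f]$ involves only finitely many branch loci, and for each of these only finitely many monodromy classes, whence the orbit is finite. You never invoke periodicity on the $Y$-side, and without it the statement is actually false (branching a cover over a non-periodic point typically destroys commensurability of the Veech groups), so this is not a cosmetic omission but the missing heart of the proof. A secondary, smaller gap: the $g(Y)=1$ case of the $X$-side finiteness is only gestured at; the period-lattice constraint (that the absolute periods of $\omega$ must form a finite-index subgroup of the period lattice of $\eta '$, leaving finitely many candidate targets) and a bound on the torsion order of the branch values would need to be written out to close that case.
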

\begin{proof}
This is \cite[Thm. 4.9]{GutkinJudge00}.
\end{proof}
Hence if a translation surface is a translation cover of another translation surface, then one of them is Veech if and only if the other is. In particular we see:
\begin{Corollary}
Let $g\ge 2$, let $1\le t\le g$ be a divisor of $n=2g+1$. Write $n/t=2h+1$. Then $(W_g,\omega_t)$ and $(W_g,\omega_{th})$ are Veech surfaces.
\end{Corollary}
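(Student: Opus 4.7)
The plan is to apply the Gutkin--Judge commensurability theorem (Theorem \ref{VeechGroupsCommensurable}) to the quotient map $f:W_g\to W_h$, $(x,y)\mapsto (x^t,y)$, already introduced above. The identities $f^{\ast }\omega_1=t\omega_t$ and $f^{\ast }\omega_h=t\omega_{th}$ have been recorded. Since scaling an abelian differential by a positive real constant only dilates the corresponding translation atlas and therefore leaves the Veech group unchanged, one may read these identities as saying, after a harmless rescaling, that $f$ is a translation cover from $(W_g,\omega_t)$ to $(W_h,\omega_1)$ in the first case and from $(W_g,\omega_{th})$ to $(W_h,\omega_h)$ in the second, provided the ramification condition in the definition of a translation cover is met. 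Granting this, Theorem \ref{VeechGroupsCommensurable} gives commensurability with the lattice Veech groups $\SL (W_h,\omega_1)$ and $\SL (W_h,\omega_h)$ computed in section 3.5 (which contain $\Delta (2,n/t,\infty )$ and $\Delta (2,2n/t,\infty )$ as finite-index subgroups), and the statement follows because commensurability preserves the lattice property.

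The heart of the proof is thus the verification of the ramification condition. Since $f$ presents $W_g$ as the quotient of itself by the rotational automorphism $\tau^{n/t}:(x,y)\mapsto (\zeta_t x,y)$, its branch locus in $W_h$ consists precisely of the three points $(0,\pm\mathrm{i})$ together with the unique point above $\infty$. Two lucky simplifications occur. In the target $(W_h,\omega_h)$ with $h\ge 2$ the points $(0,\pm\mathrm{i})$ are zeros of $\omega_h=x_1^{h-1}\mathrm{d}x_1/y_1$ of order $h-1$, so they are cone points of the translation surface and therefore automatically periodic. In the target $(W_h,\omega_1)$ the point above $\infty$ is the unique zero of $\omega_1$ (of order $2h-2$), so the same observation disposes of it. In the case $h=1$ the target is an elliptic curve and the torsion condition applies instead; a short tangent-line computation on $y^2=x_1^3-1$ shows that $(0,\pm\mathrm{i})$ are $3$-torsion while $\infty$ is the identity, so that case is done.

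The main obstacle is therefore showing the periodicity of the remaining branch points --- namely $(0,\pm\mathrm{i})$ in $(W_h,\omega_1)$ and the point at $\infty$ in $(W_h,\omega_h)$ --- which are \emph{not} cone points. Each of them, however, is fixed by the rotational affine self-map $\sigma:(x_1,y)\mapsto (\zeta_{n/t}x_1,y)$, which is an element of the corresponding affine group of finite order dividing $n/t$. I would conclude by invoking the general fact, valid in any Veech surface, that every point fixed by a non-trivial finite-order element of the affine group lies in the periodic locus: the $\Aff^+$-orbit of such a fixed point is distributed among the fixed-point sets of the conjugates of $\sigma$, and those conjugates project to only finitely many elliptic conjugacy classes of the lattice Veech groups $\SL (W_h,\omega_1)$ and $\SL (W_h,\omega_h)$ (a Fuchsian group of finite covolume has only finitely many elliptic conjugacy classes), each with only finitely many fixed points on $W_h$, giving a finite orbit as required.
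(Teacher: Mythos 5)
Your overall route is the one the paper intends: the explicit maps $f:W_g\to W_h$, $(x,y)\mapsto (x^t,y)$, with $f^{\ast}\omega_1=t\,\omega_t$ and $f^{\ast}\omega_h=t\,\omega_{th}$, combined with Theorem \ref{VeechGroupsCommensurable}. Unlike the text, which presents the corollary as an immediate consequence, you correctly observe that everything hinges on the branch points $(0,\pm\mathrm{i})$ and $\infty$ being periodic (resp.\ torsion) in the target; your treatment of the branch points that are zeros of the target differential, and of the $h=1$ torsion computation, is correct, and you correctly isolate the two genuinely problematic points, namely $\infty$ in $(W_h,\omega_h)$ and $(0,\pm\mathrm{i})$ in $(W_h,\omega_1)$.

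The final step, however, contains a genuine gap. The ``general fact'' that a fixed point of a non-trivial finite-order affine map is periodic is not true, and the counting argument offered for it does not work: the affine conjugates $\varphi\sigma\varphi^{-1}$ form an \emph{infinite} set whenever $D\sigma$ is a non-central elliptic element (its centralizer in a lattice is finite), so the observation that these conjugates fall into finitely many conjugacy classes, each member of which has finitely many fixed points, does not bound the union $\bigcup_{\varphi}\operatorname{Fix}(\varphi\sigma\varphi^{-1})$ that contains the orbit. The argument \emph{does} work when $D\sigma=-I$, because $-I$ is central and all conjugates then lie in the finite fibre $D^{-1}(-I)$ of the finite-kernel map $D$; since $\infty\in W_h$ is a Weierstrass point, i.e.\ a fixed point of the hyperelliptic involution, the branch point $\infty$ of $(W_h,\omega_h)$ is indeed periodic and the $(W_g,\omega_{th})$ half of the corollary goes through. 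But the polygon centres $(0,\pm\mathrm{i})$ of $(W_h,\omega_1)$ are genuinely \emph{not} periodic for $h\ge 2$: already for the double pentagon ($h=2$) the vertical parabolic $t_1t_2$ displaces the centre of each pentagon along its closed vertical leaf by the fraction $\cos\frac{2\pi}{5}\,/\,(\cos\frac{2\pi}{5}+\cos\frac{\pi}{5})=\frac{5-\sqrt{5}}{10}$ of its length, which is irrational, so the orbit under this single element is already infinite. Hence $f:(W_g,\omega_t)\to (W_h,\omega_1)$ is not a translation cover in the sense of the definition and Theorem \ref{VeechGroupsCommensurable} does not apply. Worse, cutting the affected vertical cylinder of $(W_h,\omega_1)$ along the leaves through the two centres and lifting produces on $(W_g,\omega_t)$ a completely periodic vertical direction containing cylinders of widths $\cos\frac{2\pi}{5}$ and $\frac12$ whose circumferences differ only by integer factors; these moduli are incommensurable, which cannot happen in a periodic direction of a lattice surface. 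So for $t>1$ and $h\ge 2$ the $(W_g,\omega_t)$ half of the statement appears to fail outright; the obstruction lies in the statement (and in the paper's tacit appeal to the covering theorem), not merely in your argument.
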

We shall see later that this gives a complete list of the pairs $(g,k)$ such that $(W_g,\omega_k)$ is a Veech surface.

\begin{Definition}
A translation surface $(X,\omega )$ is called \textup{geometrically primitive} if at least one of the following statements holds:
\begin{enumerate}
\item the genus of $X$ is one;
\item every translation cover $f:(X,\omega )\to (Y,\eta)$ is already an isomorphism (equivalently\footnote{by the Riemann-Hurwitz formula}: there does not exists a translation cover $f:(X,\omega )\to (Y,\eta )$ with $g(Y)<g(X)$).
\end{enumerate}
\end{Definition}
Every Veech surface covers a unique geometrically primitive translation surface, in the following sense:
\begin{Proposition}[McMullen]\label{PrimitiveVeechSurfaces}
Let $(X,\omega )$ be a Veech surface. Then there exist a geometrically primitive translation surface $(X_0,\omega_0)$ and a translation cover $f:(X,\omega )\to (X_0,\omega_0)$. For every such $(X_0,\omega_0)$, one has $\SL(X,\omega )\subseteq\SL (X_0,\omega_0 )$.

If $g(X_0)>1$, these data are unique up to isomorphism. If $g(X_0)=1$ and in addition one demands that $f$ induces a surjection on fundamental groups, they are also unique.
\end{Proposition}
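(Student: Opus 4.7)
The plan proceeds in three parts: existence of a primitive cover by induction on the genus; uniqueness by a common-refinement construction in a product; and the Veech-group inclusion as a formal consequence of uniqueness via an $\SL_2(\bbr)$-distortion argument. For existence, I would induct on $g(X)$. If $(X,\omega)$ is already geometrically primitive, take $f=\mathrm{id}$; otherwise there exists a non-trivial translation cover $(X,\omega)\to(Y,\eta)$ with $g(Y)<g(X)$, and the inductive hypothesis produces a primitive cover of $(Y,\eta)$. The composition is again a translation cover: the pullback condition $f^{\ast}\eta=\omega$ is transitive, and the branch locus stays over periodic points because any translation cover $(Y,\eta)\to(Z,\zeta)$ sends periodic points of $Y$ to periodic points of $Z$, a consequence of Gutkin--Judge commensurability of Veech groups (every affine self-map of $(Z,\zeta)$ has a positive power that lifts to $(Y,\eta)$). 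Granting uniqueness, the Veech-group inclusion follows by distortion: given $A\in\SL(X,\omega)$ realized as $D\varphi$ for some $\varphi\in\Aff^+(X,\omega)$, regard $\varphi$ as an isomorphism of translation surfaces $(X,\omega)\to A\cdot(X,\omega)$, and compose with the $A$-distorted cover $A\cdot(X,\omega)\to A\cdot(X_0,\omega_0)$. The resulting $f\circ\varphi:(X,\omega)\to A\cdot(X_0,\omega_0)$ is a translation cover with geometrically primitive target. By uniqueness there exists an isomorphism of translation surfaces $\psi:(X_0,\omega_0)\to A\cdot(X_0,\omega_0)$ with $\psi\circ f=f\circ\varphi$; such a $\psi$ is exactly an affine self-map of $(X_0,\omega_0)$ with derivative $A$, so $A\in\SL(X_0,\omega_0)$.

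For uniqueness when $g(X_0)>1$, suppose $f_i:(X,\omega)\to(X_i,\omega_i)$ are primitive translation covers for $i=1,2$. I would form the map $(f_1,f_2):X\to X_1\times X_2$, let $Z$ be the normalization of its image, and extract induced maps $p:X\to Z$ (finite surjective) and $\pi_i:Z\to X_i$ (finite), with $f_i=\pi_i\circ p$. Since $f_1^{\ast}\omega_1=\omega=f_2^{\ast}\omega_2$ and pullback along the dominant map $p$ is injective on holomorphic $1$-forms, the two forms $\pi_1^{\ast}\omega_1$ and $\pi_2^{\ast}\omega_2$ coincide as a single $\eta\in\Omega^1(Z)$; this makes $p$ and both $\pi_i$ into translation covers. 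The goal is then to conclude that each $\pi_i$ is an isomorphism, yielding $X_1\simeq Z\simeq X_2$ compatibly with the $f_i$. The case $g(X_0)=1$ is analogous: the common refinement $Z$ is again elliptic by Riemann--Hurwitz, and the surjection-on-$\pi_1$ hypothesis rules out the non-trivial isogenies that would otherwise separate $Z$ from the $X_i$.

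The main obstacle is that geometric primitivity of $(X_i,\omega_i)$ by definition constrains translation covers going \emph{out of} $X_i$, while the projections $\pi_i:Z\to X_i$ in the uniqueness diagram go \emph{into} $X_i$, so primitivity does not directly force $\pi_i$ to be an isomorphism. Closing this gap requires genuinely new input from the Veech-surface hypothesis: the trace field $\bbq(\tr\SL(X,\omega))$ and the action of the affine group on $H^1(X,\bbr)$ pick out a canonical sub-Hodge-structure of $J(X)$ (the eigenspace for the trace-field action on which $\omega$ is an eigenform), and this invariant is preserved under translation covers. The primitive quotient $(X_0,\omega_0)$ can then be characterised intrinsically as the unique translation surface whose Jacobian realises this canonical summand (up to isogeny), reducing uniqueness of $(X_0,\omega_0)$ to uniqueness of an intrinsic invariant of $(X,\omega)$; once this is in place, the Veech-group inclusion follows formally from the distortion argument above.
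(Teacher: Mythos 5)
The paper itself offers no proof of this proposition: it is quoted as a synthesis of Thm.~2.1 and Cor.~2.2--2.3 of McMullen's paper on Prym varieties, so there is no internal argument to measure you against. Two of your three steps are nevertheless sound in outline and agree with what one finds in the literature. Existence of some primitive quotient does follow by induction on the genus, once one checks that a composite of translation covers is again a translation cover; the needed lemma, that a translation cover carries periodic points to periodic points, does follow from the Gutkin--Judge lifting results as you indicate (the genus-one targets, where the branch condition is phrased in terms of torsion points, need a separate sentence). Likewise, deriving $\SL (X,\omega )\subseteq\SL (X_0,\omega_0)$ by distorting the entire covering by $A$ and invoking uniqueness of the primitive quotient is exactly the standard argument and is a correct reduction.

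The gap you flag in the uniqueness step is genuine and is the heart of the matter: geometric primitivity of $(X_i,\omega_i)$ constrains translation covers \emph{out of} $X_i$, whereas your common refinement $Z$ maps \emph{into} each $X_i$, so nothing forces the projections $\pi_i$ to be isomorphisms --- and both the uniqueness assertion and (via your own reduction) the Veech-group inclusion hinge on precisely this point. Moreover, the repair you sketch cannot work as stated. The canonical summand $E\subseteq J(X)$ cut out by the trace field has rank $2[\bbq (\tr\SL (X,\omega )):\bbq ]$, and since geometric primitivity does not imply algebraic primitivity (the paper records M\"{o}ller's counterexample immediately after introducing the two notions), this rank is in general strictly smaller than $2g(X_0)$; in addition, an isogeny factor of a Jacobian does not determine a curve, let alone a curve equipped with a distinguished one-form. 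So the invariant you propose does not pin down $(X_0,\omega_0)$. The uniqueness statement genuinely requires the input of the cited theorem of McMullen (or an equivalent argument, e.g.\ one exploiting the lattice property of the Veech group), and the proposal does not supply it.
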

\begin{proof}
This is a synthesis of Thm. 2.1, Cor. 2.2 and Cor. 2.3 from \cite{McMullen06}.
\end{proof}
The trace field of the Veech group, as discussed in the previous section, provides a powerful tool for deciding whether a given Veech surface is geometrically primitive. This is based in the following simple observation:
\begin{Corollary}
Let $f:(X,\omega )\to (Y,\eta )$ be a translation cover of Veech surfaces. Then $\SL (X,\omega )$ and $\SL (Y,\eta )$ have the same trace field.
\end{Corollary}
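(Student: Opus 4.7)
The plan is to combine two facts established earlier in the text: that Veech groups of translation-covered surfaces are commensurable, and that for a lattice Veech group the trace field is stable under passing to finite-index subgroups.

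First I would apply Theorem~\ref{VeechGroupsCommensurable} (Gutkin--Judge) to the translation cover $f:(X,\omega)\to(Y,\eta)$ to conclude that the Veech groups $\SL(X,\omega)$ and $\SL(Y,\eta)$ are commensurable as subgroups of $\SL_2(\bbr)$. By definition of commensurability, there exists a subgroup $\Gamma\subseteq\SL_2(\bbr)$ which has finite index in both $\SL(X,\omega)$ and $\SL(Y,\eta)$ (one can take $\Gamma=\SL(X,\omega)\cap\SL(Y,\eta)$ after possibly conjugating so that both groups live in a common ambient group; commensurability in the sense used here already guarantees such a common finite-index subgroup).

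Next, since both $(X,\omega)$ and $(Y,\eta)$ are Veech surfaces by hypothesis, both $\SL(X,\omega)$ and $\SL(Y,\eta)$ are lattices in $\SL_2(\bbr)$. Therefore $\Gamma$ is itself a finite-index subgroup of a lattice Veech group, seen from either side. Applying Corollary~\ref{TraceFieldSameForFiniteIndex} to each inclusion yields
\begin{equation*}
\bbq (\tr\Gamma )=\bbq (\tr\SL (X,\omega ))\quad\text{and}\quad \bbq (\tr\Gamma )=\bbq (\tr\SL (Y,\eta )),
\end{equation*}
from which the desired equality $\bbq(\tr\SL(X,\omega))=\bbq(\tr\SL(Y,\eta))$ follows immediately.

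There is no real obstacle here, since all the work has been done in Theorem~\ref{VeechGroupsCommensurable} and Corollary~\ref{TraceFieldSameForFiniteIndex}. The only small subtlety is to make sure that the ``common finite-index subgroup'' produced by commensurability is simultaneously a finite-index subgroup of a \emph{lattice}, so that Corollary~\ref{TraceFieldSameForFiniteIndex} applies; the Veech hypothesis on both surfaces is exactly what guarantees this on each side, and this is where the assumption that \emph{both} surfaces are Veech (and not just one) is essential.
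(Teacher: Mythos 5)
Your proof is correct and is essentially the paper's own argument: the paper proves this corollary in one line by citing exactly Theorem \ref{VeechGroupsCommensurable} and Corollary \ref{TraceFieldSameForFiniteIndex}, and you have simply spelled out the intermediate step of passing to a common finite-index subgroup and invoking the conjugacy-invariance of the trace field. Your closing remark correctly identifies why the Veech hypothesis on both surfaces is needed, namely so that Corollary \ref{TraceFieldSameForFiniteIndex} applies from each side.
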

\begin{proof}
This follows from Theorem \ref{VeechGroupsCommensurable} and Corollary \ref{TraceFieldSameForFiniteIndex}.
\end{proof}
Recall that the trace field of a Veech group is always an algebraic number field, of degree at most the genus of the surface. Now if $(X,\omega )$ is a Veech surface and $(X_0,\omega_0)$ is as in Proposition \ref{PrimitiveVeechSurfaces}, then the degree of the trace field of $\SL(X,\omega )$ is at most the genus of $X_0$. So we get the following important chain of inequalities (always provided that $(X,\omega )$ is a Veech surface):
\begin{equation}\label{InequalitiesForVeechSurfaces}
1\le [\bbq (\tr\SL (X,\omega )):\bbq ]\le g(X_0)\le g(X).
\end{equation}
Note that $(X,\omega )$ is geometrically primitive if and only if in the last step equality holds. So the following definition is sensible:
\begin{Definition}
A translation surface $(X,\omega )$ is called \textup{algebraically primitive} if the degree of the trace field of $\SL (X,\omega )$ is equal to the genus of $X$.
\end{Definition}
For Veech surfaces, algebraic primitivity implies geometric primitivity by (\ref{InequalitiesForVeechSurfaces}); the converse is wrong. For a counterexample see \cite[section 2, example iii]{Moeller06}.

Consider again the Wiman curves $(W_g,\omega_1)$, i.e. consider two regular $g$-gons with respective sides identified, where $g=2n+1$ is odd. The genus of $W_g$ is $g$, and the degree of the trace field is $\varphi (n)/2$, where $\varphi$ is Euler's totient function. These two integers are equal if and only if $n=2g+1$ is prime. Hence $(W_g,\omega_1)$ is algebraically primitive if and only if $2g+1$ is prime.

We finally note that the case where equality holds in the first step of (\ref{InequalitiesForVeechSurfaces}) is particularly interesting: then it also holds in the second step.
\begin{Theorem}
Let $(X,\omega )$ be a Veech surface. Then the following are equivalent:
\begin{enumerate}
\item the trace field of $\SL (X,\omega )$ is $\bbq$;
\item the cross-ratio field $K(X,\omega )$ as discussed before Theorem \ref{CrossRatioEqualsTraceField} is $\bbq$;
\item the Veech group $\SL (X,\omega )$ is commensurable to $\SL_2(\bbz)$;
\item $(X,\omega)$ is in the $\GL_2^+(\bbr )$-orbit of an origami;
\item $(X,\omega )$ can be tiled by translations of one and the same Euclidean parallelogram;
\item the genus of $X_0$ as in Prop. \ref{PrimitiveVeechSurfaces} is one.
\end{enumerate}
\end{Theorem}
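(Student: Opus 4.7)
\emph{Plan.} The equivalence (i) $\Leftrightarrow$ (ii) is the Gutkin--Judge theorem (Theorem~\ref{CrossRatioEqualsTraceField}). For the rest I close the cycle $(\text{i}) \Rightarrow (\text{vi}) \Rightarrow (\text{v}) \Rightarrow (\text{iv}) \Rightarrow (\text{iii}) \Rightarrow (\text{i})$. In the easy arc, (vi) $\Rightarrow$ (v) by pulling back the lattice tiling of $\bbc/\Lambda$ under the translation cover (refining $\Lambda$ so that the torsion ramification points sit at vertices); (v) $\Rightarrow$ (iv) by applying the $A \in \GL_2^+(\bbr)$ that sends the tiling parallelogram to the unit square; (iv) $\Rightarrow$ (iii) follows because every origami is a translation cover of the unit torus (whose Veech group is $\SL_2(\bbz)$), so Theorem~\ref{VeechGroupsCommensurable} together with the fact that Veech groups transform by conjugation under the $\GL_2^+(\bbr)$-action gives commensurability with $\SL_2(\bbz)$; and (iii) $\Rightarrow$ (i) because traces are conjugation invariant and Corollary~\ref{TraceFieldSameForFiniteIndex} preserves the trace field inside a lattice Veech group, forcing $\bbq(\tr\SL (X,\omega))=\bbq$.

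The main implication is (i) $\Rightarrow$ (vi). The strategy is to integrate $\omega$ to build a translation cover to an elliptic curve. Since $\SL(X,\omega)$ is a non-cocompact lattice (Proposition~\ref{VeechGroupIsFuchsianNonCocompact}), pick a hyperbolic element $A$ with eigenvalues $\lambda^{\pm 1}$, $\lambda > 1$. Hypothesis (i) gives $\tr A \in \bbq$, so the rational polynomial $P(x) := x^2 - (\tr A)\,x + 1$ is the minimal polynomial of $\lambda$ (if $\lambda$ were rational, being an algebraic integer with rational inverse it would equal $\pm 1$, contradicting hyperbolicity). Lifting to $\varphi \in \Aff^+(X,\omega)$, Proposition~\ref{EigenvaluesOfHyperbolicInVGActingOnH} asserts that $\lambda, \lambda^{-1}$ are \emph{simple} eigenvalues of $\varphi^*$ on $H^1(X,\bbz)$ and that their real eigenspaces sum to the canonical subspace $S_\omega$. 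Hence $V := \ker P(\varphi^*) \subset H^1(X,\bbq)$ has dimension exactly two and $V \otimes_\bbq \bbr = S_\omega$; in short, $S_\omega$ descends to $\bbq$.

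Choose a $\bbz$-basis $\alpha, \beta$ of $V \cap H^1(X,\bbz)$ and expand $\omega = \tau_1 \alpha + \tau_2 \beta$ in $H^1(X,\bbc)$. The $\bbr$-linear independence of $\Re\omega, \Im\omega$ in $S_\omega$ forces $\tau_1, \tau_2 \in \bbc$ to be $\bbr$-linearly independent, so $\Lambda := \bbz\tau_1 + \bbz\tau_2$ is a lattice in $\bbc$, and every period $\int_\gamma \omega$ for $\gamma \in H_1(X,\bbz)$ lies in $\Lambda$. Thus the integration map $f\colon X\to \bbc/\Lambda$ is well-defined, holomorphic (with local model $z \mapsto z^k/k$ at a zero of $\omega$ of order $k-1$), and finite, with $f^*\de z = \omega$. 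A direct computation shows $f(\varphi(x)) \equiv D\varphi\cdot f(x) + c_\varphi \pmod{\Lambda}$ for each $\varphi \in \Aff^+(X,\omega)$ and some $c_\varphi \in \bbc$; hence the zeros of $\omega$, being periodic for $\Aff^+$, have finite orbits under the resulting arithmetic affine action on $\bbc/\Lambda$, which forces them to map to torsion points. Thus $f$ is a genuine translation cover, and Proposition~\ref{PrimitiveVeechSurfaces} yields $g(X_0) \le 1$; combined with $g(X_0) \ge 1$ (since $\omega \ne 0$), this gives (vi).

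The principal obstacle is the $\bbq$-descent of $S_\omega$, which relies crucially on Proposition~\ref{EigenvaluesOfHyperbolicInVGActingOnH}: the combined algebraic multiplicity of $\lambda^{\pm 1}$ as eigenvalues of $\varphi^*$ must equal $\deg P = 2$, so that $V$ lands in $H^1(X,\bbq)$ with exactly the right dimension. A secondary subtlety is the torsion claim for the ramification of $f$, which requires verifying that $\Aff^+(X,\omega)$ really descends through $f$ to an arithmetic affine action on $\bbc/\Lambda$.
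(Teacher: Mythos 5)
Your proposal is correct in outline, but it takes a genuinely different route from the paper. The paper's proof of this theorem is essentially a citation: (i)$\Leftrightarrow$(ii) is Theorem \ref{CrossRatioEqualsTraceField}, the equivalence of (ii)--(v) is quoted wholesale from Gutkin--Judge (Theorem 5.5 of their paper), and (v)$\Leftrightarrow$(vi) is declared obvious. You instead reprove the substantive implication (i)$\Rightarrow$(vi) from the ingredients already developed in the text: the $\bbq$-rationality of the canonical subspace via Proposition \ref{EigenvaluesOfHyperbolicInVGActingOnH} (a rational trace makes $x^2-(\tr A)x+1$ the minimal polynomial of $\lambda$, so $S_\omega=\ker P(\varphi^\ast)\otimes\bbr$ descends to $H^1(X,\bbq)$), followed by integration of $\omega$ against the resulting rank-two lattice to produce a degree-finite map to $\bbc/\Lambda$ with $f^\ast\de z=\omega$. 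This is in fact the mechanism underlying the Gutkin--Judge theorem, so what your approach buys is self-containedness and a visible link between the trace field and the period lattice; what it costs is that two steps need slightly more care than you give them. First, the torsion claim: after normalizing $f$ to kill the constant at one fixed zero of $\omega$, you should extract a hyperbolic $A$ in the finite-index stabilizer and use that $A^{m}-I$ is invertible with integer matrix on $\Lambda$, so a point of $\bbc/\Lambda$ with finite orbit under $z\mapsto Az$ is annihilated by a nonzero integer; ``finite orbit under an affine action'' alone does not force torsion without the hyperbolicity. Second, in (v)$\Rightarrow$(iv) a tiling by translates of one parallelogram need not a priori be edge-to-edge, so one must either build that into the meaning of ``tiled'' or argue that the offsets are rational before the normalized square-tiling is literally an origami; this combinatorial point is exactly what the paper outsources to Gutkin--Judge. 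Finally, your deduction of $g(X_0)\le 1$ from the existence of one torus cover implicitly uses the uniqueness clause of Proposition \ref{PrimitiveVeechSurfaces} (a genus-$\ge 2$ primitive quotient would be the unique one, contradicting the torus cover you built); it is worth saying so explicitly. With these three points patched, your cycle $(\mathrm{i})\Rightarrow(\mathrm{vi})\Rightarrow(\mathrm{v})\Rightarrow(\mathrm{iv})\Rightarrow(\mathrm{iii})\Rightarrow(\mathrm{i})$ together with Theorem \ref{CrossRatioEqualsTraceField} gives a complete and more informative proof than the one in the text.
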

\begin{proof}
The equivalence of (i) and (ii) is a special case of Theorem \ref{CrossRatioEqualsTraceField}. The equivalence of (ii), (iii), (iv) and (v) is \cite[Theorem 5.5]{GutkinJudge00}. Finally the equivalence of (v) and (vi) is obvious.
\end{proof}

\subsection{The Trace Fields of the Wiman Curves}

Now we compute the trace fields of our examples. This can be done by using their explicit description as translation surfaces. The occurring fields will be the maximal real subfields of cyclotomic fields, hence we first summarize some elementary properties of these.

\begin{TheNumberField}
Denote by $\zeta_n$ a primitive $n$-th root of unity; we identify $\zeta_n$ with the complex number $\exp\frac{2\pi\mathrm{i}}{n}$. This is an algebraic integer of degree $\varphi (n)$ (Euler's totient function); its minimal polynomial over $\bbq$ is the $n$-th cyclotomic polynomial
\begin{equation*}
c_n(x)=\prod_{1\le k\le n \atop \gcd (k,n)=1}(x-\zeta_n^k)=\prod_{n=k\ell \atop 1\le k,\ell\le n}(x^k-1)^{\mu (\ell )}.
\end{equation*}
The number field generated by $\zeta_n$ is called the $n$-th cyclotomic field. Its embeddings into $\bbc$ are the
\begin{equation*}
\sigma_k:\bbq (\zeta_n)\to\bbc,\quad \zeta_n\mapsto\exp\frac{2k\pi\mathrm{i}}{n}
\end{equation*}
where $k$ runs through the integers in $\{ 1,\ldots ,n\}$ that are coprime to $n$. In particular we see that the extension $[\bbq (\zeta_n):\bbq ]$ is Galois, and that for $n\ge 3$ the field $\bbq (\zeta_n)$ is totally imaginary.

Assume from now on $n\ge 3$, and assume that $n$ is odd, $n=2g+1$. Then $\bbq (\zeta_n)$ contains the subfield $K_n$ generated by $\xi_n=\zeta_n+\zeta_n^{-1}$; we claim that the former is a quadratic field extension of the latter. Namely any field embedding $K_n\to\bbc$ extends to a field embedding $\bbq (\zeta_n)\to\bbc$, and we have just listed the latter. So every field embedding $K_n\to\bbc$ is given by sending $\xi_n=\zeta_n+\zeta_n^{-1}$ to a complex number of the form
\begin{equation*}
\exp\frac{2k\mathrm{i}}{n}+\exp\frac{-2k\mathrm{i}}{n}.
\end{equation*}
But this remains the same number if $k$ is replaced by $n-k$, so we only have need to consider those $k$ with $1\le k\le g$, coprime to $2g+1=n$. These do give pairwise different embeddings.

For later reference let us write down these embeddings explicitly. For every $1\le k\le g$ with $\gcd (k,2g+1)=1$ set
\begin{equation*}
\tau_k:K_n=\bbq (\xi_n)\to\bbc ,\xi_n\mapsto\exp\frac{2k\mathrm{i}}{2g+1}+\exp\frac{-2k\mathrm{i}}{2g+1}=2\cos\frac{2k\pi }{2g+1}.
\end{equation*}
This shows, among other things, that $2\cos\frac{2\pi}{n}$ is an algebraic integer of degree $\frac{\varphi (n)}{2}$ and that its Galois conjugates are the $2\cos\frac{2k\pi}{n}$. Thus we may identify $K_n$ with the number field generated by $\cos\frac{2\pi }{n}$. Also $K_n$ is totally real, $[K_n:\bbq ]=\frac{\varphi (n)}{2}$, and indeed the extension $\bbq (\zeta_n)|K_n$ is quadratic.
\end{TheNumberField}

Note that this implies the following identities of number fields: for a rational number $\frac{p}{q}$ in lowest terms, say with $0<\frac{p}{q}<1$, the field generated by $\exp (2\pi\mathrm{i}\cdot\frac{p}{q})=\zeta_q^p$ is equal to $\bbq (\zeta_q)$, and the field generated by $\cos (2\pi\cdot\frac{p}{q})$ is equal to
\begin{equation*}
\bbq (\zeta_q)\cap\bbr =\bbq (\cos\frac{2\pi}{q})=K_q.
\end{equation*}
\begin{Lemma}\label{RatiosOfSinesExpressedByCosines}
Let $p$ and $q$ be positive integers, with $q\ge 3$. Then
\begin{equation*}
\frac{\sin\frac{2\pi p}{q}}{\sin\frac{2\pi }{q}}\in\bbq (\cos\frac{2\pi }{q}).
\end{equation*}
\end{Lemma}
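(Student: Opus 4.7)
The plan is to reduce the claim to the identity that $\sin(n\theta)/\sin\theta$ is a polynomial in $\cos\theta$ with integer coefficients; then specializing $\theta=2\pi/q$ and $n=p$ gives the desired element of $\mathbb{Q}(\cos\frac{2\pi}{q})$.

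First I would establish the recurrence
\begin{equation*}
\sin((n+1)\theta)=2\cos\theta\,\sin(n\theta)-\sin((n-1)\theta),
\end{equation*}
which is an immediate consequence of the product-to-sum formula $2\cos\theta\sin(n\theta)=\sin((n+1)\theta)+\sin((n-1)\theta)$. Dividing by $\sin\theta$ (which is nonzero at $\theta=2\pi/q$ for $q\ge 3$) gives
\begin{equation*}
\frac{\sin((n+1)\theta)}{\sin\theta}=2\cos\theta\cdot\frac{\sin(n\theta)}{\sin\theta}-\frac{\sin((n-1)\theta)}{\sin\theta}.
\end{equation*}
Together with the base cases $\sin(0)/\sin\theta=0$ and $\sin(\theta)/\sin\theta=1$, an easy induction on $n$ yields a polynomial $P_n\in\mathbb{Z}[x]$ (essentially the Chebyshev polynomial $U_{n-1}$ of the second kind) such that $\sin(n\theta)=\sin\theta\cdot P_n(\cos\theta)$ for every $\theta$ and every nonnegative integer $n$.

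Now specialize to $\theta=2\pi/q$ and $n=p$. Since $q\ge 3$ we have $\sin(2\pi/q)\neq 0$, and therefore
\begin{equation*}
\frac{\sin\frac{2\pi p}{q}}{\sin\frac{2\pi}{q}}=P_p\!\left(\cos\frac{2\pi}{q}\right)\in\mathbb{Z}\!\left[\cos\frac{2\pi}{q}\right]\subseteq\mathbb{Q}\!\left(\cos\frac{2\pi}{q}\right),
\end{equation*}
which is the claim. There is no real obstacle here; the whole proof is a routine induction, so the only point that deserves attention is checking that $\sin(2\pi/q)$ does not vanish (so that the division is legitimate), which holds precisely because $q\ge 3$.
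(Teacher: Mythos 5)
Your proof is correct. Both you and the paper reduce the claim to the fact that $\sin(p\theta)/\sin\theta$ is an integer polynomial in $\cos\theta$ (the Chebyshev polynomial $U_{p-1}$), evaluated at $\theta=2\pi/q$; the only difference is how that polynomial identity is established. The paper writes it down in closed form via de Moivre's formula,
\begin{equation*}
\frac{\sin p x}{\sin x}=\sum_{k\ge 0}(-1)^k\binom{p}{2k+1}\cos^{p-2k-1}x\,(1-\cos^2x)^k,
\end{equation*}
whereas you derive it by the three-term recurrence $\sin((n+1)\theta)=2\cos\theta\sin(n\theta)-\sin((n-1)\theta)$ and induction. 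Your route avoids any explicit expansion and is arguably cleaner to verify; the paper's gives the polynomial explicitly in one line without induction. Both correctly note that $\sin(2\pi/q)\neq 0$ for $q\ge 3$, which is the only point where a hypothesis is actually used. Either argument is complete and there is nothing to fix.
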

\begin{proof}
Recall de Moivre's formula
\begin{equation*}
\sin px=\sum_{k=0}^{\infty }(-1)^k{p\choose 2k+1}\cos^{p-2k-1}x\sin^{2k+1}x
\end{equation*}
(this is a finite sum since all terms with $2k+1>p$ are zero). This directly gives
\begin{equation*}
\frac{\sin px}{\sin x}=\sum_{k=0}^{\infty }(-1)^k{p\choose 2k+1}\cos^{p-2k-1}x\sin^{2k}x
=\sum_{k=0}^{\infty }(-1)^k{p\choose 2k+1}\cos^{p-2k-1}x(1-\cos^2x)^k.
\end{equation*}
Plugging in $x=\frac{2\pi }{q}$ we obtain the desired statement.
\end{proof}
\begin{Lemma}\label{LemmaOnCyclotomicFields}
Let $n\ge 3$ be an integer, and let $f:\bbc\to\bbc$ be an $\bbr$-linear map with the property $f(\bbq (\zeta_n))\subseteq\bbq (\zeta_n)$. Then the trace of $f$ is contained in the number field $\bbq (\cos 2\pi /n)$.
\end{Lemma}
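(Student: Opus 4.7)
The plan is to exploit the canonical decomposition of every $\bbr$-linear self-map of $\bbc$ into a $\bbc$-linear and a $\bbc$-antilinear part, and then to read off the trace from this decomposition.

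First I would write $f(z)=az+b\overline{z}$ for unique $a,b\in\bbc$. A direct computation in the $\bbr$-basis $(1,\mathrm{i})$ of $\bbc$ shows that $f$ has matrix
\begin{equation*}
\begin{pmatrix} \Re a+\Re b & \Im b-\Im a\\ \Im a+\Im b & \Re a-\Re b\end{pmatrix},
\end{equation*}
so that $\tr f=2\Re a=a+\overline{a}$. Hence the statement reduces to showing that $a+\overline{a}\in K_n$.

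Next I would exploit the hypothesis $f(\bbq(\zeta_n))\subseteq\bbq(\zeta_n)$ by evaluating $f$ at two specific elements of $\bbq(\zeta_n)$, namely $1$ and $\zeta_n$. This gives $a+b\in\bbq(\zeta_n)$ and $a\zeta_n+b\zeta_n^{-1}\in\bbq(\zeta_n)$. Viewing these as a $2\times 2$ linear system in the unknowns $a,b$ over $\bbq(\zeta_n)$, its determinant is $\zeta_n^{-1}-\zeta_n$, which is nonzero precisely because $n\ge 3$ (this is where the hypothesis enters). Solving yields
\begin{equation*}
a=\frac{f(\zeta_n)-\zeta_n^{-1}f(1)}{\zeta_n-\zeta_n^{-1}}\in\bbq(\zeta_n).
\end{equation*}

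Finally, since $\bbq(\zeta_n)$ is stable under complex conjugation (as $\overline{\zeta_n}=\zeta_n^{-1}\in\bbq(\zeta_n)$), we also have $\overline{a}\in\bbq(\zeta_n)$, and consequently $\tr f=a+\overline{a}\in\bbq(\zeta_n)\cap\bbr$. By the earlier paragraph \textsc{Reminder on Cyclotomic Fields} this intersection is exactly $K_n=\bbq(\cos 2\pi/n)$, which concludes the argument. There is no genuine obstacle here; the only place where anything could go wrong is the invertibility of the linear system, which is guaranteed by the standing assumption $n\ge 3$.
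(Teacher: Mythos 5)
Your proof is correct, but it takes a genuinely different route from the paper's. The paper computes $\tr f=\Re f(1)+\Im f(\mathrm{i})$ in the basis $(1,\mathrm{i})$, disposes of $\Re f(1)$ by averaging with its conjugate, and then handles $\Im f(\mathrm{i})$ by writing $\mathrm{i}=(\zeta_n-\zeta_n^{-1})/(2\sin\tfrac{2\pi}{n})$ and invoking Lemma \ref{RatiosOfSinesExpressedByCosines} (a de Moivre computation showing $\sin\tfrac{2\pi p}{q}/\sin\tfrac{2\pi}{q}\in\bbq(\cos\tfrac{2\pi}{q})$); that trigonometric lemma is genuinely needed there because $2\sin\tfrac{2\pi}{n}$ itself need not lie in $\bbq(\zeta_n)$. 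You instead split $f$ into its $\bbc$-linear and $\bbc$-antilinear parts $f(z)=az+b\overline{z}$, observe that $\tr f=a+\overline{a}$, and recover $a$ inside $\bbq(\zeta_n)$ by Cramer's rule from the values $f(1)$ and $f(\zeta_n)$; the conclusion then drops out of the stability of $\bbq(\zeta_n)$ under complex conjugation and the identification $\bbq(\zeta_n)\cap\bbr=K_n$. Your computations check out (the matrix of $f$, the formula $\tr f=2\Re a$, the nonvanishing of the determinant $\zeta_n^{-1}-\zeta_n$ for $n\ge 3$, and the Cramer expression for $a$ are all correct), and your argument buys a cleaner, purely field-theoretic proof that bypasses the auxiliary trigonometric lemma entirely; the paper's version, on the other hand, produces an explicit expression for $\Im f(\mathrm{i})$ as a $\bbq$-linear combination of sine ratios, which is slightly more constructive but at the cost of an extra lemma.
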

\begin{proof}
Choosing the $\bbr$-basis $(1,\mathrm{i})$ of $\bbc$, we find that
\begin{equation*}
\tr f=\Re f(1)+\Im f(\mathrm{i}).
\end{equation*}
Now $f(1)$ is an element of $\bbq (\zeta_n)$, hence so is $\overline{f(1)}$ and hence also
\begin{equation*}
\Re f(1)=\frac{f(1)+\overline{f(1)}}{2}.
\end{equation*}
Since $\Re f(1)$ is evidently real, it must therefore be an element of $\bbq (\zeta_n)\cap\bbr =\bbq (\cos 2\pi /n)$. So it remains to show that $\Im f(\mathrm{i})$ is an element of $\bbq (\cos 2\pi /n)$.

We use that
\begin{equation*}
\zeta_n-\zeta_n^{-1}=\zeta_n-\overline{\zeta_n}=2\mathrm{i}\Im\zeta_n=2\mathrm{i}\sin\frac{2\pi}{n}
\end{equation*}
(which is nonzero!) and that $f$ is $\bbr$-linear to conclude
\begin{equation*}
f(\mathrm{i})=f\left(\frac{\zeta_n-\zeta_n^{-1}}{2\sin\frac{2\pi}{n}}\right)=\frac{1}{2\sin\frac{2\pi}{n}}f(\zeta_n-\zeta_n^{-1}).
\end{equation*}
By assumption, $f(\zeta_n-\zeta_n^{-1})$ is an element of $\bbq (\zeta_n)$ and can therefore be written as
\begin{equation*}
f(\zeta_n-\zeta_n^{-1})=\sum_{\nu =0}^{\varphi (n)-1}a_{\nu }\zeta_n^{\nu },\quad a_{\nu }\in\bbq .
\end{equation*}
Then
\begin{equation*}
\Im f(\mathrm{i})=\frac{1}{2\sin\frac{2\pi}{n}}\sum_{\nu =0}^{\varphi (n)-1}a_{\nu }\sin\frac{2\pi\nu }{n}=\sum_{\nu =0}^{\varphi (n)-1}a_k\cdot\frac{\sin\frac{2\pi\nu }{n}}{2\sin\frac{2\pi }{n}}.
\end{equation*}
By Lemma \ref{RatiosOfSinesExpressedByCosines} this is an element of $\bbq (\cos\frac{2\pi }{n})$.
\end{proof}
The connection to Wiman curves is given by the following observation:
\begin{Lemma}\label{HolonomyVectorsOfWimanCurves}
Let $n=2g+1$ be odd, and let $1\le k\le g$. Then for a suitable complex number $c\neq 0$, the sub-$\bbq$-vector space $V$ of $\bbc$ generated by all holonomy vectors of saddle connections of $(W_g,c\omega_k)$ is equal to the field $\bbq (\zeta_n^k)$.
\end{Lemma}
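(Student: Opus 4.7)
The plan is to use the explicit triangle model $(W_g,\omega_k)\simeq(X(g,k),\omega(g,k))$ from Section 3.2, with the simple choice $c=1$; any nonzero element of $\bbq(\zeta_n^k)$ would work equally well. The proof splits into three pieces: an upper bound $V\subseteq\bbq(\zeta_n^k)$ from direct coordinate computation, a Veech-group stability argument forcing $V$ to be a $\bbq(\zeta_n^k)$-vector subspace, and a nontriviality check to rule out $V=\{0\}$.

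For the upper bound, I would develop the triangulation into $\bbc$ starting from a reference triangle $T_{m_0,\varepsilon_0}$, whose vertices lie at $0$ and $\varepsilon_0\zeta_{2n}^{k(2m_0\pm1)}$. Crossing a type (i) gluing incurs no translation (adjacent triangles already share the inner edge in standard position), while crossing a type (ii) gluing incurs a translation by $\zeta_{2n}^{k(2m-1)}+\zeta_{2n}^{k(2m+1)}$. Every developed vertex position is therefore a $\bbz$-linear combination of numbers of the form $\zeta_{2n}^{k(2m\pm1)}$. Since $n$ is odd one has $\bbq(\zeta_{2n})=\bbq(\zeta_n)$, and more precisely $\bbq(\zeta_{2n}^k)=\bbq(\zeta_n^k)$, so all such combinations lie in $\bbq(\zeta_n^k)$. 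Because any saddle connection lifts to a straight segment whose holonomy vector is the difference of two developed vertex positions, this yields $V\subseteq\bbq(\zeta_n^k)$.

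For the stability step, consider the rotation $R\colon z\mapsto\zeta_n^k z$. It sends $T_{m,\varepsilon}=\varepsilon\zeta_n^{km}T$ to $T_{m+1,\varepsilon}$ and transports each gluing onto the corresponding gluing with $m$ shifted by one, so it induces an affine self-map of $(X(g,k),\omega(g,k))$ whose linear part is the rotation matrix for $2\pi k/n$. This matrix therefore lies in the Veech group, and as recalled in the discussion of the action on holonomy vectors it acts on $\bbc=\bbr^2$ by multiplication by $\zeta_n^k$ and permutes the set of holonomy vectors. Hence $V$ is closed under multiplication by $\zeta_n^k$, making it a $\bbq(\zeta_n^k)$-vector subspace of the one-dimensional $\bbq(\zeta_n^k)$-space $\bbq(\zeta_n^k)\subset\bbc$, hence either $\{0\}$ or all of $\bbq(\zeta_n^k)$.

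To conclude I need one nonzero holonomy vector, and this is where the main obstacle lies: the set $Z$ of zeros of $\omega_k$ depends on $k$. A cone-angle count gives $Z=\{0_+,0_-,P_\infty\}$ for $2\le k\le g-1$, $Z=\{P_\infty\}$ for $k=1$, and $Z=\{0_+,0_-\}$ for $k=g$. In the generic range any inner triangle edge is already a saddle connection with nonzero vector $\varepsilon\zeta_{2n}^{k(2m+1)}$; for $k=1$ any outer edge is a saddle loop at $P_\infty$ with vector $\varepsilon\zeta_{2n}^{k(2m-1)}(\zeta_n^k-1)\ne 0$; and for $k=g$, concatenating two inner edges through the (now smooth) common outer vertex yields a saddle connection from $0_+$ to $0_-$ with holonomy $\zeta_{2n}^{k(2m-1)}+\zeta_{2n}^{k(2m+1)}=2\cos(k\pi/n)\,\zeta_n^{km}$, which is nonzero because $n$ is odd and $1\le k<n/2$. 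Thus $V\ne\{0\}$ in every case, and combined with the previous step this forces $V=\bbq(\zeta_n^k)$.
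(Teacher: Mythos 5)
Your overall strategy is sound and genuinely different from the paper's in the lower-bound step. The paper likewise begins by observing that all triangle vertices and gluing translations lie in $\bbq (\zeta_n^k)$, giving $V\subseteq\bbq (\zeta_n^k)$; but for the reverse inclusion it simply exhibits an explicit family of saddle connections with holonomy vectors $\zeta^p-\zeta^{p-1}$ (essentially the outer edges of the triangles) and notes that their $\bbq$-span is $(\zeta -1)\cdot\bbq (\zeta )=\bbq (\zeta )$ for $\zeta =\zeta_n^k$. Your replacement of this by the observation that the rotation by $2\pi k/n$ lies in the Veech group, so that $V$ is stable under multiplication by $\zeta_n^k$ and is therefore a $\bbq (\zeta_n^k)$-subspace of the one-dimensional $\bbq (\zeta_n^k)$-vector space $\bbq (\zeta_n^k)$, is a nice structural shortcut: it reduces the lower bound to producing a single nonzero holonomy vector, and it reuses facts already established elsewhere in the text (the rotation lies in $\SL (W_g,\omega_k)$, and the Veech group permutes holonomy vectors).

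However, your nontriviality check in the case $k=g$ is wrong as written. The concatenation of two inner edges through the outer vertex $P_{\infty }$ is not a geodesic: each triangle has angle $(n-2k)\pi /(2n)=\pi /(2n)$ at an outer vertex, so passing from the inner edge of $T_{m,1}$ to the inner edge of $T_{m,-1}$ across their shared outer edge sweeps out an angle of only $\pi /n$ at $P_{\infty }$, not $\pi$; equivalently, the developed path turns from direction $\zeta_{2n}^{k(2m+1)}$ to direction $\zeta_{2n}^{k(2m-1)}$, and a straight concatenation of two unit segments would have holonomy of modulus $2$, whereas $|2\cos (k\pi /n)|<2$. The vector $2\cos (k\pi /n)\,\zeta_n^{km}$ does lie in $V$, but it is realized by a different path: the concatenation of the two apex-to-base-midpoint altitudes of $T_{m,1}$ and $T_{m,-1}$, which cross the glued outer edges perpendicularly at their common (regular) midpoint and do form a geodesic from $0_+$ to $0_-$. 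Alternatively you can sidestep the case analysis entirely: for $g\ge 2$ the form $\omega_k$ has zeros, and a compact translation surface with at least one cone point always possesses a saddle connection, whose holonomy vector is automatically nonzero. With either repair the proof is complete.
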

\begin{proof}
We choose $c$ such that $(W_g,c\omega_k)$ is isomorphic to $(X(g,k),\omega (g,k))$. Set $\zeta =\zeta_g^k$. The vertices of all triangles used in the construction of the surface are in $\bbq (\zeta )$; the translations which define the necessary identifications of the sides are also in $\bbq (\zeta )$. Hence all holonomy vectors of saddle connections are elements of $\bbq (\zeta )$, and we have $V\subseteq\bbq (\zeta )$.

For the other inclusion we note that for each $p\in\bbz$ there is a saddle connection with holonomy vector $\zeta^p-\zeta^{p-1}$. Thus $V$ is contained in
\begin{equation}
\operatorname{span}_{\bbq }\{ \zeta^p-\zeta^{p-1 }\mid p\in\bbz\} =(\zeta -1)\cdot\operatorname{span}_{\bbq }\{\zeta^t\mid t\in\bbz\} =(\zeta -1)\cdot\bbq (\zeta )=\bbq (\zeta ),
\end{equation}
and the lemma is shown.
\end{proof}
\begin{Proposition}
Let $n=2g+1$ be odd, let $1\le k\le g$ and write
\begin{equation*}
t=\gcd (n,k),\quad n_0=\frac{n}{t}.
\end{equation*}
Then the trace field of the Veech group $\SL (W_g,\omega_k)$ is equal to $\bbq (\cos\frac{2\pi }{n_0})$.
\end{Proposition}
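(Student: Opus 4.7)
The strategy is to prove the two containments $K_{n_0}\subseteq\bbq(\tr\SL(W_g,\omega_k))\subseteq K_{n_0}$, where $K_{n_0}=\bbq(\cos 2\pi/n_0)$. Since the trace field depends only on the conjugacy class of the Veech group in $\SL_2(\bbr)$, it is invariant under the $\GL_2^+(\bbr)$-action on $\Omega\mathrsfs{M}_g$, so I may rescale $\omega_k$ by the complex constant $c$ of Lemma~\ref{HolonomyVectorsOfWimanCurves}; after this normalization every holonomy vector of $(W_g,c\omega_k)$ lies in $\bbq(\zeta_n^k)$. The elementary identity $\zeta_n^k=\zeta_{n_0}^{k'}$ with $k=tk'$, $n=tn_0$, $\gcd(k',n_0)=1$ exhibits $\zeta_n^k$ as a primitive $n_0$-th root of unity, so this field equals $\bbq(\zeta_{n_0})$.

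For the upper bound, I would invoke Theorem~\ref{CrossRatioEqualsTraceField} to get $\bbq(\tr\SL(W_g,\omega_k))\subseteq K(W_g,\omega_k)$ and then compute cross-ratios of slopes $s_j=\Im v_j/\Re v_j$ of holonomy vectors directly. A short manipulation yields
\begin{equation*}
\mathrm{CR}(s_1,s_2,s_3,s_4)=\frac{\Im(v_1\bar v_3)\,\Im(v_2\bar v_4)}{\Im(v_1\bar v_4)\,\Im(v_2\bar v_3)}.
\end{equation*}
Each factor $\Im(v_i\bar v_j)=(v_i\bar v_j-\bar v_iv_j)/(2\mathrm{i})$ is a purely imaginary element of $\bbq(\zeta_{n_0})$ (using that the field is stable under complex conjugation), and such elements form a one-dimensional $K_{n_0}$-subspace spanned by $\zeta_{n_0}-\zeta_{n_0}^{-1}$. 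Four copies of this generator then cancel in pairs between numerator and denominator, leaving an element of $K_{n_0}$.

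For the lower bound, I would exhibit one explicit rotation inside the Veech group. The biholomorphism $f\colon W_g\to W_g$, $(x,y)\mapsto(\zeta_n x,y)$, satisfies $f^*\omega_k=\zeta_n^k\omega_k$. Passing to any translation chart $z$ for $\omega_k$, this equation reads $d(z\circ f)=\zeta_n^k\,dz$, so in such charts $f$ has the form $z\mapsto\zeta_n^k z+\mathrm{const}$, i.e.\ $f$ is an affine self-map of $(W_g,\omega_k)$ with linear part the rotation matrix by angle $2\pi k/n$. Hence $2\cos(2\pi k/n)=2\cos(2\pi k'/n_0)$ lies in the trace field, and since $\gcd(k',n_0)=1$ and $K_{n_0}/\bbq$ is Galois, this element generates $K_{n_0}$.

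The only subtle step is the cross-ratio computation for the upper bound, since Gutkin--Judge controls the trace field only through cross-ratios, not through the holonomy vectors themselves. The key arithmetic observation making the deduction work is that because $n_0$ is odd the purely imaginary line in $\bbq(\zeta_{n_0})$ is one-dimensional over the totally real subfield $K_{n_0}$, which forces the non-real contributions to cancel and land the cross-ratio in $K_{n_0}$.
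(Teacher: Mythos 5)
Your proof is correct, and the lower bound is essentially the paper's: the paper produces the same rotation by $2\pi k/n$ in the Veech group from the visible symmetry permuting the glued triangles $T_{m,\varepsilon}$, while you derive it algebraically from $f^{\ast}\omega_k=\zeta_n^k\omega_k$ for $f(x,y)=(\zeta_n x,y)$; these are the same element. The upper bound is where you genuinely diverge. The paper does not invoke Gutkin--Judge at all: it observes that any $A\in\SL (W_g,\omega_k)$ permutes the holonomy vectors, hence (by Lemma \ref{HolonomyVectorsOfWimanCurves}) stabilizes the $\bbq$-subspace $\bbq (\zeta_{n_0})\subset\bbc$, and then applies the elementary Lemma \ref{LemmaOnCyclotomicFields} (proved via de Moivre's formula, Lemma \ref{RatiosOfSinesExpressedByCosines}) to conclude $\tr A\in\bbq (\cos\frac{2\pi}{n_0})$ directly. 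You instead pass through the containment $\bbq (\tr\SL (W_g,\omega_k))\subseteq K(W_g,\omega_k)$ of Theorem \ref{CrossRatioEqualsTraceField} and compute cross-ratios of slopes; your formula and the observation that the purely imaginary line of $\bbq (\zeta_{n_0})$ is one-dimensional over $K_{n_0}$ are correct (the relevant fact is just that $\bbq (\zeta_{n_0})$ is a quadratic extension of its real subfield for $n_0\ge 3$, not oddness of $n_0$; and in the degenerate case where one slope is $\infty$ the leftover real parts $\Re v_j$ lie in $K_{n_0}$ anyway, so the conclusion persists). Both routes work unconditionally, i.e.\ without assuming $(W_g,\omega_k)$ is a Veech surface, since only the unconditional containment direction of Gutkin--Judge is used; the paper's route buys complete self-containedness at the cost of the small trigonometric lemma, whereas yours trades that lemma for reliance on the quoted Gutkin--Judge theorem.
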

\begin{proof}
First note that the Veech group does not change if $\omega_k$ is replaced by a constant multiple, hence we may replace $\omega_k$ by $c\cdot\omega_k$ such that the previous lemma holds, i.e. such that the translation surface in question is given by glueing triangles as described above. Then there is an obvious element in $\Aff^+(W_g,\omega_k)$ which sends $T_{m,\varepsilon }$ to $T_{m+1,\varepsilon }$; this element has derivative equal to the rotation by $2\pi k/n$. Thus
\begin{equation*}
\begin{pmatrix}
\cos\frac{2\pi k}{n} &-\sin\frac{2\pi k}{n}\\
\sin\frac{2\pi k}{n} & \cos\frac{2\pi k}{n}
\end{pmatrix}
\in\SL (W_g,\omega_k),\text{ hence }\bbq (\tr\SL (W_g,\omega_k))\supseteq\bbq (\cos\frac{2\pi k}{n})=\bbq (\cos\frac{2\pi }{n_0}).
\end{equation*}
It remains to show the converse inclusion.

Let $A\in\SL (W_g,\omega_k)$. Then $A$, considered as an $\bbr$-linear map $\bbc\to\bbc$, sends holonomy vectors to holonomy vectors. By Lemma \ref{HolonomyVectorsOfWimanCurves} this implies that $A\cdot\bbq (\zeta_n^k)=\bbq (\zeta_n^k)$. Note that $\bbq (\zeta_n^k)=\bbq (\zeta_{n_0})$. Then apply Lemma \ref{LemmaOnCyclotomicFields} to conclude that the trace of $A$ is contained in $\bbq (\cos 2\pi /n_0)$. Since this holds for every $A$ in the Veech group, we have the desired inclusion of fields.
\end{proof}
We could also have determined the trace fields of $\SL (W_g,\omega_1)$ and $\SL (W_g,\omega_g)$ with the help of Proposition \ref{TraceFieldCanBeComputedByOneHyperbolic}. But note that the method given here, apart from being elementary, gives the trace field of \textit{all} Wiman curves, regardless of whether or not they are Veech surfaces (it will turn out below that not all of them are).

\newpage
\section{Jacobians: Global Theory}

We now go on to globalize the considerations of chapter 4, i.e. we study how Jacobians of translation surfaces behave under the $\SL_2(\bbr )$-operation. Since we view Jacobians as a sort of ``enriched cohomology'', we first look at the underlying topological phenomena, i.e. the singular cohomology. Given a translation surface $(X,\omega )$, we get a family of Riemann surfaces $f:\mathrsfs{X}\to C$, where $C=\Gamma\backslash\bbh$ for a suitable finite index subgroup $\Gamma$ of the mirror Veech group of $(X,\omega )$. Then the first cohomology groups of the fibres form a local system on $C$. Since vector spaces behave considerably nicer than abelian groups, we take rational instead of integral cohomology (on the level of Jacobians, this means working with abelian varieties up to isogeny). So we have a local system of $\bbq$-vector spaces $R^1f_{\ast }\bbq$ on $C$. On the other hand, the canonical subspaces in real cohomology of each fibre form a local subsystem of $R^1f_{\ast }\bbr$. To reconcile these two points of view, we have to carefully examine extensions of the base field and direct sum decompositions for local systems. A crucial r\^{o}le is played by the notion of \textit{moduli fields} of local systems. In a sense to be made precise, the moduli field of a local system is the smallest field over which it can be defined. For simple subsystems of systems which are defined over $\bbq$, such as the system of cohomology groups introduced above, the moduli field is (under some mild condition) always a number field. This implies that for large enough Veech groups (i.e., such that $\Gamma$ is nonelementary) the canonical subsystem can already be defined over its moduli field. Moreover, this moduli field agrees with the trace field of the Veech group. Also we get Galois conjugates of the canonical subvariation, which is not evident at all from its definition.

Then we take the additional structures on the cohomology spaces, i.e. their Hodge decompositions, into account. This is equivalent to considering the Jacobians as abelian varieties (up to isogeny) and not merely tori. Using deep results of Deligne and Schmid, we find that in the case where the Veech group is a lattice, the canonical subsystem and its Galois conjugates are actually sub-variations of Hodge structure.

\subsection{Local Systems}

The material in this section is classical and well-known, but different authors use different conventions, so we need to make some things quite explicit.
\begin{LSandMonodromyReps}
Let $X$ be a topological space which is path-connected, locally path-connected and locally simply connected, and let $x_0\in X$ be some fixed point. By a local system of sets, groups, vector spaces, whatever ..., we mean a sheaf of sets, groups, vector spaces, whatever ..., which is locally isomorphic to a constant sheaf. To give such an object is equivalent to giving a set, group, vector space, whatever ... with an action of the fundamental group.

To state this well-known principle in a form suitable for explicit computations, introduce the following convention for the fundamental group: $\pi_1(X,x_0)$ consists of the homotopy classes of loops $\alpha :[0,1]\to X$ with $\alpha (0)=\alpha (1)=x_0$, this condition being preserved through the homotopies. If homotopy classes $a,b\in\pi_1(X,x)$ are represented by loops $\alpha ,\beta$, then $ab$ is the homotopy class of the loop $\alpha\beta$ which is defined as \textit{first $\beta$, then $\alpha$}.\footnote{To be very explicit: $\alpha\beta (t)=\beta (2t)$ for $0\le t\le \frac{1}{2}$ and $\alpha\beta (t)=\alpha (2t-1)$ for $\frac{1}{2}\le t\le 1$} This is opposite to the convention used by most topologists, but current in algebraic geometry.

Now let $\mathcal{L}$ be a local system on $X$, and let $a\in\pi_1(X,x_0)$ be represented by $\alpha :[0,1]\to X$. The pullback $\alpha^{\ast }\mathcal{L}$ is a constant sheaf on $[0,1]$, so we get an isomorphism
\begin{equation*}
[a]:\mathcal{L}_{x_0}=(\alpha^{\ast}\mathcal{L})_0\overset{\simeq}{\longleftarrow}\Gamma ([0,1],\mathcal{L})\overset{\simeq}{\longrightarrow}(\alpha^{\ast }\mathcal{L})_1=\mathcal{L}_{x_0}.
\end{equation*}
One checks that this only depends on the homotopy class $a$ of $\alpha$, and that $[ab]=[a]\circ [b]$. Hence we get a left action of $\pi_1(X,x_0)$ on $\mathcal{L}$, called the \textit{monodromy representation}.

This construction gives an equivalence of categories: for instance when talking about local systems of sets, the monodromy representation gives a functor from local systems of sets on $X$ to sets with a left $\pi_1(X,x_0)$-action. When talking about local systems of $k$-vector spaces, it gives a functor from local systems of $k$-vector spaces on $X$ to $k$-vector spaces with a linear left $\pi_1(X,x_0)$-action, and so on. All these functors are equivalences of categories.\footnote{The construction works for all categories of the kind ``sets with some extra structure'' where the morphisms are maps which respect this extra structure in some way. It is possible to make this mathematically sound using machinery from category theory, but this leads to far apart. We shall only need it for groups and vector spaces}
\end{LSandMonodromyReps}
\begin{FundamentalGroupsandDeckTrafos}
The local systems that we shall be interested in are as follows: let $C$ be a Riemann surface and let $p:X\to C$ be a family of Riemann surfaces of genus $g$. We consider the cohomologies of the fibres, i.e. the local systems $R^kp_{\ast }\bbz$ on $C$. The only nontrivial case here is $k=1$; any fibre of $R^1p_{\ast }\bbz$ is a free abelian group of rank $2g$. Hence we get some representation of the fundamental group of $X$ on a free abelian group; this representation will be of great importance later on. As argued in section 1.3, this setup will be particularly interesting in the case where $C$ is hyperbolic, hence where we can write $C=\Gamma\backslash\bbh$ for some torsion-free Fuchsian group $\Gamma\subset\SL_2(\bbr )$. It is common wisdom, but again with messy details, that we can identify the fundamental group of $C$ with $\Gamma$. Here come the messy details.

We need to fix base points; for this we take $\mathrm{i}\in\bbh$ and its image $c_0 =\Gamma\cdot\mathrm{i}\in C=\Gamma\backslash\bbh$. Then $(\bbh ,\mathrm{i})$ is the universal covering space of $(C,c_0)$. Let $a\in\pi_1(C,c_0)$ be represented by a path $\alpha :[0,1]\to C$, then there exists a unique continuous lift $\tilde{\alpha }:[0,1]\to\bbh$ with $\tilde{\alpha }(0)=\mathrm{i}$. Now there is a unique Deck transformation $D_a:\bbh\to\bbh$ with $D_a(\tilde{\alpha }(1))=\tilde{\alpha }(0)=\mathrm{i}$; but the Deck group of the covering $\bbh \to C$ \textit{is} precisely $\Gamma$. So we have $D_a\in\Gamma$. This depends only on $a$, so we get a well-defined map
\begin{equation*}
\pi_1(C,c_0)\to\Gamma ,\quad a\mapsto D_a.
\end{equation*}
This is seen to be an isomorphism of groups.\footnote{The only thing to check is that it is a group homomorphism. Let $a,b\in\pi_1(C,c_0)$ be represented by paths $\alpha$, $\beta$. Then $\tilde{\alpha\beta }=\bar{\alpha }\tilde{\beta }$ where $\bar{\alpha}$ is the unique continuous lift of $\alpha$ satisfying $\bar{\alpha }(0)=\tilde{\beta }(1)=D_b^{-1}(\mathrm{i})$. Since Deck transformations are determined by their action on a single point, we see that $\bar{\alpha }=D_b^{-1}\circ\tilde{\alpha }$. Now one checks that $D_{ab}^{-1}(\mathrm{i})=\tilde{\alpha\beta }(1)=D_b^{-1}(\tilde{\alpha }(1))=D_b^{-1}D_a^{-1}(\mathrm{i})$, hence by the same argument $D_{ab}^{-1}=D_b^{-1}D_a^{-1}$, which implies $D_{ab}=D_aD_b$} Hence composing with the inverse of that isomorphism, we can view the monodromy representation as a representation of the Fuchsian group $\Gamma$.
\end{FundamentalGroupsandDeckTrafos}
\begin{ElementaryConstructions}
Let $C$ be a nice\footnote{meaning path-connected, locally path-connected and locally simply connected} topological space with fixed base point $c_0\in C$; set $\pi =\pi_1(C,c_0)$. We regard this as an abstract group $\pi$ which need not be finite, but we assume it to be finitely generated. Take a field $k$ of characteristic zero, then by the above remarks, local systems of $k$-vector spaces on $X$ can be identified with finite-dimensional $k$-vector spaces $V$ on which $\pi$ operates linearly from the left. We call such spaces \textit{$(\pi ,k)$-modules}. Note that if $k[\pi ]$ is the group algebra, a $(\pi ,k)$-module is the same as a $k[\pi ]$-module whose underlying $k$-vector space is finite dimensional.

We begin with some formalities on $(\pi ,k)$-modules and their interpretation in terms of local systems. So let $C$ be a nice topological space with base point $c_0$ and fundamental group $\pi =\pi_1(C,c_0)$; we denote $(\pi ,k)$-modules by italic letters $A, B, C,\ldots $ and the corresponding local systems of $k$-vector spaces on $X$ by calligraphic letters $\mathcal{A},\mathcal{B},\mathcal{C},\ldots $.
\end{ElementaryConstructions}
\begin{itemize}
\item \textit{Direct sums:} These are defined in the obvious way, both on the representation side and on the local systems side.
\item \textit{Tensor products:} The tensor product of $V$ and $W$ is the vector space $V\otimes_kW$ together with the tensor product of the given actions, i.e. $p(v\otimes w)=pv\otimes pw$; the tensor product of $\mathcal{V}$ and $\mathcal{W}$ is the locally constant sheaf $\mathcal{V}\otimes_k\mathcal{W}$. These constructions correspond to each other under the above equivalence of categories.
\item \textit{Duals:} The dual of $V$ is the vector space $\check{V}=\Hom_k(V,k)$ together with the action $(p\lambda)(v)=\lambda (p^{-1}v)$, where $\lambda$ is a linear form on $V$. This corresponds to the local system $\check{\mathcal{V}}$ which is the sheaf associated with the presheaf
    \begin{equation*}
    U\mapsto \Hom_k(\Gamma (U,\mathcal{V}),k).
    \end{equation*}
\item \textit{Fixed part / global sections:} The fixed point set $V^{\pi }$ of the given action of $\pi$ is the largest submodule on which $\pi$ acts trivially; the space $H^0(X,\mathcal{V})$ of global sections of $\mathcal{V}$ is mapped isomorphically to $V^{\pi }$ by the map $H^0(X,\mathcal{V})\to V$ (``fibre at $x_0$'').
\item \textit{Internal $\Hom$ objects:} We set $\underline{\Hom }(V,W)=\check{V}\otimes W$ and, mutatis mutandis, on local systems. There is a more concrete description of this: $\underline{\Hom }(V,W)$ is the vector space $\Hom_k(V,W)$ with the $\pi$-operation $(p\lambda )(v)=p\lambda (p^{-1}v)$ for $\lambda :V\to W$. This corresponds to the local system $\underline{\Hom }(\mathcal{V},\mathcal{W})$ which is the sheaf associated with the presheaf
    \begin{equation*}
    U\mapsto \Hom_k(\Gamma (U,\mathcal{V}),\Gamma (U,\mathcal{W})).
    \end{equation*}
    It is readily checked that these really \textit{are} the internal $\Hom$-objects for the given tensor structure on the category of $(\pi ,k)$-modules or local systems.
\item \textit{Space of homomorphisms:} The space $\Hom_{\pi }(V,W)$ of $k$-linear and $\pi$-equivariant homomorphisms is equal to the fixed part $\underline{\Hom}(V,W)^{\pi }$ of the internal $\Hom$ defined above. In terms of local systems this means that $\Hom (\mathcal{V},\mathcal{W})=H^0(C,\underline{\Hom }(\mathcal{V},\mathcal{W}))$.
\end{itemize}
\begin{Semisimplicity}
Recall that if $\pi$ is a finite group, the category of $(\pi ,k)$-modules is semisimple. For infinite $\pi$ this need no longer be the case, as the example $\pi=\bbz$ shows.

Every $(\pi ,k)$-module has a largest semisimple submodule; this is the sum of all simple submodules, in particular (as soon as the module itself is not zero) it is not reduced to zero.

If $V$ is a semisimple $(\pi ,k)$-module and $S$ is a simple $(\pi ,k)$-module, denote by $V_S$ the \textit{$S$-isotypical component}, i.e. the sum of all submodules isomorphic to $S$. This is then isomorphic to $S^m$ for a unique positive integer $m=m(S,V)$, called the \textit{multiplicity} of $S$ in $V$. For two non-isomorphic simple modules $S_1$ and $S_2$ the isotypical components $V_{S_1}$ and $V_{S_2}$ have zero intersection, and for dimension reasons $V_S$ is nonzero only for finitely many isomorphism classes of simple modules $S$. Let these be $S_1,\ldots ,S_n$, so that we get a decomposition
\begin{equation}\label{IsotypicalDecomposition}
V=\bigoplus_{\nu =1}^nV_{S_{\nu }}
\end{equation}
and (non-canonical) isomorphisms
\begin{equation*}
V_{S_{\nu }}\simeq S_{\nu }^{m(S_{\nu },V)}.
\end{equation*}

Let us now examine the structure of endomorphism algebras of semisimple modules. To begin with, if $S$ be a simple $(\pi ,k)$-module, then its endomorphism algebra $D=\End_{\pi} S$ is a finite-dimensional skew field over $k$ (this is Schur's lemma). For $k$ algebraically closed it must therefore be equal to $k$. Then for a positive integer $m$ we have
\begin{equation*}
\End_{\pi }(S^m)=M_m(D).
\end{equation*}
Finally for a general semisimple $(\pi ,k)$-module $V$, every endomorphism respects the decomposition (\ref{IsotypicalDecomposition}), whence an isomorphism
\begin{equation*}
\End_{\pi }(V)=\prod_{\nu =1}^n\End_{\pi }(V_{S_{\nu }})\simeq\prod_{\nu =1}^nM_{m_{\nu }}(D_{\nu })
\end{equation*}
with $m_{\nu }=m(S_{\nu },V)$ and $D_{\nu }=\End_{\pi }S_{\nu }$.

From this we obtain an almost trivial characterization of simple modules over algebraically closed fields:
\end{Semisimplicity}
\begin{Proposition}\label{SimpleIffDimEndIsOne}
Let $k$ be an algebraically closed field and let $V$ be a semisimple $(\pi ,k)$-module. Then $V$ is simple if and only if $\dim_k\End_{\pi }(V)=1$.\hfill $\square $
\end{Proposition}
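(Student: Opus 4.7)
The plan is to read off the statement directly from the structural description of endomorphism algebras of semisimple modules that has just been established in the excerpt. First I would recall that since $k$ is algebraically closed, Schur's lemma gives $D_\nu = \End_\pi(S_\nu) = k$ for every simple $(\pi,k)$-module $S_\nu$. Combining this with the isotypical decomposition and the explicit formula
\begin{equation*}
\End_\pi(V) \simeq \prod_{\nu=1}^n M_{m_\nu}(D_\nu)
\end{equation*}
recalled just above the proposition, I obtain an isomorphism of $k$-algebras $\End_\pi(V) \simeq \prod_{\nu=1}^n M_{m_\nu}(k)$, hence
\begin{equation*}
\dim_k \End_\pi(V) = \sum_{\nu=1}^n m_\nu^2.
\end{equation*}

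From here both directions are immediate. If $V$ is simple, then the isotypical decomposition has a single summand with $m_1 = 1$, and the sum above equals $1$. Conversely, if $\sum_{\nu=1}^n m_\nu^2 = 1$, then since each $m_\nu$ is a positive integer and the number $n$ of isotypical components is itself at least one (as $V \neq 0$ is implicit in the nontriviality of the endomorphism algebra), the only possibility is $n=1$ and $m_1 = 1$, which says $V \simeq S_1$ is simple.

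There is essentially no obstacle here; the work has already been done in the preceding paragraphs on semisimplicity, and the proposition is a clean corollary. The only minor point worth flagging in the write-up is the degenerate case $V = 0$, where $\End_\pi(V) = 0$ has dimension zero and $V$ is not simple by convention, so the statement holds vacuously. I would therefore present the proof as a two-line appeal to Schur's lemma plus the displayed product decomposition, with no further computation required.
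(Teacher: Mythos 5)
Your proof is correct and is exactly the argument the paper intends: the proposition is stated with no proof precisely because it follows from the displayed product decomposition $\End_{\pi}(V)\simeq\prod_{\nu}M_{m_{\nu}}(D_{\nu})$ together with Schur's lemma over an algebraically closed field, which is the computation you carry out. The dimension count $\sum_{\nu}m_{\nu}^2=1$ forcing $n=1$, $m_1=1$ is the whole content, and your remark on the degenerate case $V=0$ is a reasonable extra precaution.
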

\begin{BaseChange}
We now study how properties of $(\pi ,k)$-modules behave under base change.
\end{BaseChange}
\begin{Proposition}\label{GoodbehaviourOfRepsUnderBC}
Let $k\subseteq K$ be a field extension, and let $V$ and $W$ be $(\pi ,k)$-modules with corresponding local systems $\mathcal{V}$ and $\mathcal{W}$. Denote a base extension $-\otimes_kK$ by a subscript ``$K$''.
\begin{enumerate}
\item The formation of direct sums, tensor products, duals and inner $\Hom$ objects is compatible with base extension, in the sense that there are natural identifications $(V\oplus W)_K=V_K\oplus W_K$ etc.
\item Taking fixed parts resp. global sections is also compatible with base extension, in the sense that the natural maps
    \begin{equation*}
    V^{\pi }\otimes_kK\to (V\otimes_kK)^{\pi }\quad\text{and}\quad H^0(X,\mathcal{V})\otimes_kK\to H^0(X,\mathcal{V}\otimes_kK)
    \end{equation*}
    are isomorphisms.
\item Finally also spaces of homomorphisms are compatible with base extension, meaning that the natural maps
    \begin{equation*}
    \Hom_{k[\pi ]}(V,W)\otimes_kK\to\Hom_{K[\pi ]}(V_K,W_K)\quad\text{and}\quad \Hom_k(\mathcal{V},\mathcal{W})\otimes_kK\to\Hom_K(\mathcal{V}_K,\mathcal{W}_K)
    \end{equation*}
    are isomorphisms.
\end{enumerate}
\end{Proposition}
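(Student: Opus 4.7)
The plan is to reduce everything to the fact that base change along a field extension is exact, together with the standard identification $\Hom_k(V,W)\otimes_kK\cong\Hom_K(V_K,W_K)$ which holds because $V$ is finite-dimensional. Part (i) is essentially formal, whereas (ii) and (iii) are really the same statement: the fixed part $V^\pi$ equals $\Hom_{k[\pi]}(\mathbf{1},V)$, where $\mathbf{1}$ denotes the trivial one-dimensional $(\pi,k)$-module, so (ii) follows from (iii) applied to the pair $(\mathbf{1},V)$. It will therefore suffice to treat (i) and (iii); this is what I would do in order.

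First I would dispatch (i). The equalities $(V\oplus W)_K=V_K\oplus W_K$ and $(V\otimes_kW)_K=V_K\otimes_KW_K$ hold on underlying vector spaces by the associativity and commutativity of tensor products, and the $\pi$-actions visibly agree on both sides. For the dual, one uses that $V$ is finite-dimensional so that the canonical map $\check V\otimes_kK\to\underline{\Hom}_K(V_K,K)$ is an isomorphism of $K$-vector spaces; a direct check from the formula $(p\lambda)(v)=\lambda(p^{-1}v)$ shows it is $\pi$-equivariant. Since internal $\Hom$ is by definition $\check V\otimes W$, its compatibility follows. The statements about local systems then follow because the passage from $(\pi,k)$-modules to local systems respects these operations by construction.

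For (iii), the key move is to present $\Hom_{k[\pi]}(V,W)$ as a finite kernel inside $\Hom_k(V,W)$. Since $\pi$ is finitely generated, pick generators $g_1,\dots,g_n\in\pi$; then a $k$-linear map $f:V\to W$ is $\pi$-equivariant if and only if it commutes with each $g_i$, so
\begin{equation*}
\Hom_{k[\pi]}(V,W)=\ker\!\Bigl(\Hom_k(V,W)\xrightarrow{\Phi}\Hom_k(V,W)^n\Bigr),\quad \Phi(f)=\bigl(g_i\circ f-f\circ g_i\bigr)_{i=1}^n.
\end{equation*}
Base-change along the flat (in fact free) extension $k\hookrightarrow K$ preserves this kernel, and the resulting sequence becomes, via the finite-dimensional identifications $\Hom_k(V,W)\otimes_kK=\Hom_K(V_K,W_K)$, exactly the analogous presentation of $\Hom_{K[\pi]}(V_K,W_K)$. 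This yields the first isomorphism of (iii); the statement for the sheaves $\underline{\Hom}(\mathcal V,\mathcal W)$ and global sections then translates via the monodromy equivalence of categories, noting that $\Hom(\mathcal V,\mathcal W)=H^0(X,\underline{\Hom}(\mathcal V,\mathcal W))$ corresponds to $\Hom_{k[\pi]}(V,W)$ as recorded in the bullet list preceding the proposition.

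The only point requiring any care is the hypothesis of finite generation of $\pi$, which is needed both to reduce to checking equivariance on finitely many generators and to guarantee that the equalizer is a \emph{finite} limit (hence compatible with the flat base change). In the geometric setting of interest, $\pi=\pi_1(C)$ for a Riemann surface $C$ of finite type, and this hypothesis is automatic; otherwise one must at least assume that the $(\pi,k)$-modules involved factor through a finitely generated quotient of $\pi$, which is the case for all local systems considered in the sequel. Everything else is formal once this reduction is in place.
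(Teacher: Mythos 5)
Your proof is correct, but it runs in the opposite logical direction from the paper's and rests on a different key mechanism. The paper proves (ii) first, by writing $V_K=\bigoplus_{b\in\mathrsfs{B}}V\cdot b$ as a direct sum of $k[\pi]$-modules over a $k$-basis $\mathrsfs{B}$ of $K$ and using that fixed parts commute with direct sums; it then deduces (iii) from (i) and (ii) via $\Hom (V,W)=\underline{\Hom}(V,W)^{\pi}$. You instead prove (iii) directly, presenting $\Hom_{k[\pi ]}(V,W)$ as the kernel of a map built from finitely many generators of $\pi$ and invoking flatness of $k\hookrightarrow K$ to preserve that kernel, and then recover (ii) as the special case $\Hom_{k[\pi ]}(\mathbf{1},V)\cong V^{\pi}$. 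Both arguments are sound. The paper's decomposition trick is slightly slicker and does not actually use finite generation of $\pi$ (fixed parts commute with arbitrary direct sums of modules), whereas your route makes the roles of flatness and of the finite presentation of the equivariance condition explicit and would adapt to flat base rings that are not free over $k$; as you note, finite generation is part of the standing hypotheses in this section, and in any case the equivariance conditions cut out a subspace of the finite-dimensional space $\Hom_k(V,W)$, so only finitely many of them are ever needed.
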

\begin{proof}
In (i), the statement for direct sums, tensor products and duals is clear; from this the statement for iternal $\Hom$ objects follows using $\underline{\Hom }(V,W)=\check{V}\otimes W$.

In (ii) it suffices to prove the statement about representations (the one about local systems is equivalent). But this is clear if one uses a $k$-basis $\mathrsfs{B}$ of $K$ and uses that
\begin{equation*}
V_K=\bigoplus_{b\in\mathrsfs{B}}V\cdot b
\end{equation*}
\textit{as $k[\pi ]$-modules}.

Now (iii) follows from (ii) and (i) using that $\Hom (V,W)=\underline{\Hom }(V,W)^{\pi }$; again the statement about local systems is equivalent.
\end{proof}
\begin{GaloisConjugationofPiKModules}
In this paragraph let $K|k$ be a field extension in characteristic zero, and set $G=\Aut (K|k)$. There are two notions of Galois conjugations of $K$-local systems or, equivalently, of $(\pi ,K)$-modules: an ``abstract'' construction and a construction for submodules of a module which is defined over $k$. We begin with the ``abstract'' definition:
\end{GaloisConjugationofPiKModules}
\begin{Definition}
Let $V$ be a $K$-vector space and let $\sigma \in G$. Then by $V^{[\sigma ]}$ we denote the $K$-vector space whose underlying abelian group is the same as that of $V$, but with the ``new'' scalar multiplication $\ast$ given as $\lambda\ast v=\lambda^{\sigma^{-1} }\cdot v$ (where the dot denotes the multiplication in $V$).
\end{Definition}
It is easily checked that this indeed defines a vector space structure on $V$, which is different from the old one as soon as $\sigma$ is not the identity. More interesting is to observe what happens to linear maps. Namely if $V$ and $W$ are $K$-vector spaces and $f:V\to W$ is a $K$-linear map, then $f$ remains $K$-linear if interpreted as a map from $V^{[\sigma ]}$ to $W^{[\sigma ]}$:
\begin{equation*}
f(\lambda\ast v)=f(\lambda^{\sigma^{-1}}v)=\lambda^{\sigma^{-1}}f(v)=\lambda\ast f(v).
\end{equation*}
Let us now be a bit more concrete and assume that $V$ and $W$ are finite-dimensional, of dimensions $m$ and $n$, say. Introduce bases $v_1,\ldots ,v_m$ of $V$ and $w_1,\ldots ,w_n$ of $W$. Then we can write $f$ as a matrix, with entries $\varphi_{ij}$ given by
\begin{equation*}
f(v_i)=\sum_{j=n}\varphi_{ij}w_j.
\end{equation*}
But now upon interpreting $f$ as a $K$-linear map between $V^{[\sigma ]}$ and $W^{[\sigma ]}$, its matrix in the given bases becomes $(\varphi_{ij}^{\sigma })$:
\begin{equation*}
f(v_i)=\sum_{j=n}\varphi_{ij}^{\sigma }\ast w_j
\end{equation*}
by definition of $\ast$. This is the reason for defining $V^{[\sigma ]}$ the way we did.

So conjugation by an element of $G$ can change the isomorphism type of $f$. For example in the case $V=W$, the eigenvalues of $f:V^{[\sigma ]}\to V^{[\sigma ]}$ are the images under $\sigma$ of the eigenvalues of $f:V\to V$. Hence the following definition is sensible:
\begin{Definition}
Let $V$ be a $(\pi ,K)$-module and let $\sigma\in G$. Then the \textup{conjugate $(\pi ,K)$-module} $V^{[\sigma ]}$ is the $K$-vector space $V^{[\sigma ]}$ as defined above, together with the given action of $\pi$ (which is then again $K$-linear).
\end{Definition}
This may of course be restated in terms of local systems: if $V$ corresponds to the local system $\mathcal{V}$ of $K$-vector spaces, the local system $\mathcal{V}^{[\sigma ]}$ corresponding to $V^{[\sigma ]}$ is obtained by applying the ``conjugation of vector spaces'' construction fiberwise.

If we choose a basis of $V$ as a $K$-vector space, we can view the $(\pi ,K)$-module structure on $V=K^n$ as a group homomorphism $\varrho :\pi\to\GL_n(K)$; then the homomorphism $\varrho^{[\sigma ]}:\pi\to\GL_n(K)$ belonging in the same way to $V^{[\sigma ]}$ is described as follows: $\varrho^{[\sigma ]}(\gamma )=(\varrho (\gamma ))^{\sigma }$, with $\sigma$ being applied to each entry of the matrix $\varrho (\gamma )$.

\begin{Proposition}
Let $V$ be a $(\pi ,K)$-module and let $\sigma\in G$.
\begin{enumerate}
\item Let $W\subseteq V$ be a sub-$K$-vector space. Then $W$ is a sub-$(\pi ,K)$-module if and only if $W^{[\sigma ]}$ is a sub-$(\pi ,K)$-module of $V^{[\sigma ]}$. The assignment $W\mapsto W^{[\sigma ]}$ sets up a bijection between the submodules of $V$ and the submodules of $V^{[\sigma ]}$.
\item $V$ is simple if and only if $V^{[\sigma ]}$ is simple.
\item $V$ is semisimple if and only if $V^{[\sigma ]}$ is semisimple.
\end{enumerate}
\end{Proposition}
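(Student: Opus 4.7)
The plan is to exploit a very simple but crucial observation: the construction $V\mapsto V^{[\sigma]}$ leaves both the underlying abelian group of $V$ and the $\pi$-action on it completely untouched; only the $K$-module structure is twisted. Once one recognizes that the collection of sub-$K$-vector spaces of $V$ and the collection of sub-$K$-vector spaces of $V^{[\sigma]}$ coincide \emph{as collections of subsets} of the common underlying abelian group, all three parts follow formally.

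First I would prove (i). Let $W\subseteq V$ be any additive subgroup. To be a sub-$K$-vector space of $V$ means $\lambda w\in W$ for all $\lambda\in K,w\in W$; to be a sub-$K$-vector space of $V^{[\sigma]}$ means $\lambda\ast w=\lambda^{\sigma^{-1}}w\in W$ for all $\lambda\in K,w\in W$. Since $\sigma^{-1}\colon K\to K$ is a bijection, these two conditions are the same. Moreover the $\pi$-action on $V^{[\sigma]}$ is, by definition, the same map $\pi\times V\to V$ as on $V$, so the $\pi$-stability of $W$ is an identical requirement in both settings. Hence $W$ is a sub-$(\pi,K)$-module of $V$ if and only if the ``same'' set $W^{[\sigma]}$ is a sub-$(\pi,K)$-module of $V^{[\sigma]}$, and $W\mapsto W^{[\sigma]}$ is then tautologically a bijection between the two sets of submodules (with inverse given by $(W^{[\sigma]})^{[\sigma^{-1}]}=W$).

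Part (ii) is then immediate: simplicity of a $(\pi,K)$-module amounts to having exactly two sub-$(\pi,K)$-modules (namely $0$ and the module itself), and the bijection from (i) sends $0$ to $0$ and $V$ to $V^{[\sigma]}$, so it respects this property. For (iii), recall that semisimplicity is equivalent to admitting a direct sum decomposition into simple submodules. Given such a decomposition $V=\bigoplus_{i\in I}W_i$ with each $W_i$ simple, the same underlying direct sum of abelian groups is, by (i) applied to each summand, also a direct sum decomposition $V^{[\sigma]}=\bigoplus_{i\in I}W_i^{[\sigma]}$ into sub-$(\pi,K)$-modules; by (ii) each $W_i^{[\sigma]}$ remains simple, so $V^{[\sigma]}$ is semisimple. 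Applying the argument to $\sigma^{-1}$ and noting $(V^{[\sigma]})^{[\sigma^{-1}]}=V$ gives the converse.

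The only conceptual ``obstacle'' is recognizing that no obstacle exists: one must simply not confuse ``$W^{[\sigma]}$'' (the same underlying abelian subgroup, viewed with the twisted scalar action) with a genuinely different subset, and one must keep track that the scalar action being twisted by an \emph{automorphism} of $K$ is precisely what preserves the notion of sub-vector space. No deeper input—no semisimplicity theorem, no structure theory of group algebras, no appeal to Proposition~\ref{GoodbehaviourOfRepsUnderBC}—is needed; the entire proposition is a direct unpacking of the definition of the twist $V^{[\sigma]}$.
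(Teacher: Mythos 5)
Your proof is correct and follows exactly the route the paper intends: the paper's own proof consists of the single line ``(i) is clear, and (ii) and (iii) evidently follow from (i)'', and your argument simply fills in the details of that observation, namely that the twist leaves the underlying abelian group, the $\pi$-action, and hence the lattice of submodules literally unchanged. Nothing further is needed.
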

\begin{proof}
(i) is clear, and (ii) and (iii) evidently follow from (i).
\end{proof}
Hence $G$ permutes the isomorphism classes of simple $(\pi ,K)$-modules. This can be made more precise for simple summands occuring in a module defined over $k$:
\begin{Proposition}
Let $V_k$ be a $(\pi ,k)$-module and set $V_K=V_k\otimes_kK$. Let $S$ be a simple $(\pi ,K)$-module and let $\sigma\in G$. Then the multiplicity of $S$ in $V_K$ is the same as that of $S^{[\sigma ]}$ in $V_K$.
\end{Proposition}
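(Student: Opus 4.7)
The strategy is to produce a $(\pi, K)$-module isomorphism $V_K \cong V_K^{[\sigma]}$ and then combine it with the bijection of submodules from part (i) of the preceding proposition. The existence of such an isomorphism is what is specific to the situation at hand, since it uses crucially that $V_K$ arises from a $k$-form $V_k$; for an arbitrary $(\pi, K)$-module, conjugation by $\sigma$ can change the isomorphism class.

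First I would construct the isomorphism. Consider the map $\Sigma : V_K \to V_K$ defined on pure tensors by $\Sigma(v\otimes\lambda) = v\otimes\lambda^\sigma$ and extended additively. It is well-defined, it commutes with the $\pi$-action (because $\pi$ acts only on the first tensor factor, which is untouched), and it is $\sigma$-semilinear over $K$, in the sense that $\Sigma(\mu x) = \mu^\sigma \Sigma(x)$ for all $\mu \in K$ and $x \in V_K$. Using the convention $\mu \ast y = \mu^{\sigma^{-1}} y$ for the scalar multiplication on $V_K^{[\sigma]}$, I compute
\begin{equation*}
\Sigma(\mu \ast x) = \Sigma(\mu^{\sigma^{-1}} x) = (\mu^{\sigma^{-1}})^\sigma\, \Sigma(x) = \mu\, \Sigma(x),
\end{equation*}
so, viewed as a map $V_K^{[\sigma]} \to V_K$, $\Sigma$ is $K$-linear; being a bijection that commutes with the $\pi$-action, it is an isomorphism of $(\pi, K)$-modules $V_K^{[\sigma]} \cong V_K$.

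Next I would invoke part (i) of the preceding proposition. The underlying set of $V_K^{[\sigma]}$ equals that of $V_K$, and under this identification a subset is a sub-$(\pi, K)$-module of one iff it is of the other; a simple submodule $W$ of $V_K$, abstractly isomorphic to a simple $T$, is the same subset as the simple submodule $W^{[\sigma]}$ of $V_K^{[\sigma]}$, which is abstractly isomorphic to $T^{[\sigma]}$. Summing over all simple submodules of a given isomorphism class, the $T$-isotypical component of $V_K$ coincides, as a set, with the $T^{[\sigma]}$-isotypical component of $V_K^{[\sigma]}$; the two $K$-vector space structures on this common set are related by the $\sigma$-semilinear identity map, which preserves $K$-dimension. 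Thus $m(T, V_K) = m(T^{[\sigma]}, V_K^{[\sigma]})$ for every simple $(\pi, K)$-module $T$, without any hypothesis on $V_K$.

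Combining the two steps with $T = S$, the bijection of submodules gives $m(S, V_K) = m(S^{[\sigma]}, V_K^{[\sigma]})$, while the isomorphism $V_K^{[\sigma]} \cong V_K$ gives $m(S^{[\sigma]}, V_K^{[\sigma]}) = m(S^{[\sigma]}, V_K)$; chaining them yields the asserted equality $m(S, V_K) = m(S^{[\sigma]}, V_K)$. The only substantive point is the construction of $\Sigma$, which is where the hypothesis that $V_K$ descends to $k$ enters; everything else is a formal consequence of the preceding proposition, so I do not expect any genuine obstacle.
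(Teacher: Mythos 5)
Your proof is correct and follows essentially the same route as the paper: both arguments rest on the isomorphism $V_K^{[\sigma]}\simeq V_K$ coming from the $k$-structure (you make it explicit as $v\otimes\lambda\mapsto v\otimes\lambda^{\sigma}$) combined with the fact that conjugation by $\sigma$ preserves multiplicities. The only cosmetic difference is that the paper tracks multiplicity via the formula $m(S,V_K)=\dim_K\Hom (S,V_K)/\dim_K\End S$ rather than via the bijection of isotypical components, which changes nothing of substance.
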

\begin{proof}
How can we express the multiplicity of a simple module in a module in an invariant way? First look at $\End S$: this is a finite skew field extension of $K$, say of $K$-dimension $d^2$. Then the multiplicity of $S$ in $V_K$ can be expressed as
\begin{equation*}
m(S,V_K)=\frac{\dim_K\Hom (S,V_K)}{d^2}.
\end{equation*}
What happens if $S$ is replaced by $S^{[\sigma ]}$? The denominator does not change, since $\End S=\End S^{[\sigma ]}$ \textit{on the nose}, and the numerator also remains untouched: since $V_K$ is defined over $k$, we have $V_K^{[\sigma ] }\simeq V_K$ and hence
\begin{equation*}
\Hom (S,V_K)=\Hom (S^{[\sigma ]},V_K^{[\sigma ]})\simeq\Hom (S^{[\sigma ]},V_K).
\end{equation*}
Hence also the multiplicity stays the same.
\end{proof}

Now we introduce the second notion of Galois conjugation. Namely let $V_k$ be a $(\pi ,k)$-module, and let $U$ be a sub-$(\pi ,K)$-module of $V_K$. Then we can consider the sub-$(\pi ,K)$-module
\begin{equation*}
U^{\sigma }=\{ u^{\sigma }| u\in U\}\subseteq V_K,
\end{equation*}
where we let $\sigma$ act on the second factor of $V_K=V_k\otimes_kK$.
\begin{Proposition}
In this situation, the map
\begin{equation*}
\varphi : U\to U^{\sigma },\quad u\mapsto u^{\sigma }
\end{equation*}
defines an isomorphism of $(\pi ,K)$-modules $U^{[\sigma ]}\to U^{\sigma }$.
\end{Proposition}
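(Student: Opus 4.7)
The plan is to unravel definitions and check that the obvious set-theoretic map $u\mapsto u^{\sigma}$ respects all the structures once we remember that $\sigma$ has been twisted into the scalars on the source side. Before anything else, I would verify that $U^{\sigma}$ really is a sub-$(\pi,K)$-module of $V_K$, not merely a subset. Stability under $\pi$ is automatic because $\pi$ acts on the first tensor factor of $V_K=V_k\otimes_kK$ while $\sigma$ acts on the second, so the two operations commute: for $u\in U$ and $\gamma\in\pi$, $\gamma\cdot u^{\sigma}=(\gamma u)^{\sigma}\in U^{\sigma}$. Stability under scalar multiplication by $K$ uses the Galois action on scalars: for $\lambda\in K$ one writes $\lambda\cdot u^{\sigma}=(\lambda^{\sigma^{-1}}u)^{\sigma}$, and $\lambda^{\sigma^{-1}}u\in U$ because $U$ is a $K$-subspace.

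Next I would check the four properties of $\varphi$. Bijectivity is immediate: $\sigma$ acts bijectively on $V_K$, with inverse $\sigma^{-1}$ given again by Galois conjugation on the second factor, and $\varphi$ is just the restriction of this action, so its inverse $U^{\sigma}\to U$ is $v\mapsto v^{\sigma^{-1}}$. Additivity follows from additivity of the Galois action. For $\pi$-equivariance, the very same commutation argument as above gives $\varphi(\gamma u)=(\gamma u)^{\sigma}=\gamma u^{\sigma}=\gamma\varphi(u)$. Finally, $K$-linearity as a map from $U^{[\sigma]}$ to $U^{\sigma}$ is precisely where the twist $U^{[\sigma]}$ earns its keep: writing $*$ for the scalar multiplication in $U^{[\sigma]}$ and $\cdot$ for the ordinary one,
\begin{equation*}
\varphi(\lambda*u)=\varphi(\lambda^{\sigma^{-1}}\cdot u)=(\lambda^{\sigma^{-1}}\cdot u)^{\sigma}=\lambda\cdot u^{\sigma}=\lambda\cdot\varphi(u),
\end{equation*}
where the third equality uses that $\sigma$ acts as a field automorphism on the $K$-factor and as the identity on $V_k$.

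There is no real obstacle here; the entire content of the statement is that the definition of $U^{[\sigma]}$ was engineered so that this naive conjugation map becomes $K$-linear. The only substantive point worth highlighting in the write-up is the asymmetric role played by the first and second factors of $V_k\otimes_kK$: $\pi$ and $\sigma$ commute because they act on different factors, which yields $\pi$-equivariance for free, whereas $K$-linearity is forced by the very definition of the twisted scalar multiplication in $U^{[\sigma]}$.
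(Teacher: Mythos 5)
Your proof is correct and follows essentially the same route as the paper: the heart of the matter is the one-line computation $\varphi(\lambda\ast u)=(\lambda^{\sigma^{-1}}\cdot u)^{\sigma}=\lambda\cdot u^{\sigma}$, with additivity and $\pi$-equivariance following because the $\pi$-action is defined over $k$ and hence commutes with $\sigma$. The extra details you supply (that $U^{\sigma}$ is indeed a sub-$(\pi,K)$-module, and bijectivity via $v\mapsto v^{\sigma^{-1}}$) are correct and harmless elaborations of what the paper leaves implicit.
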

\begin{proof}
Clearly $\varphi$ is additive, and it intertwines the $\pi$-operation because that is defined over $k$, so it only remains to be shown that it is a $K$-linear map from $U^{[\sigma ]}$ to $U^{\sigma }$. Denoting the scalar multiplication on $U$ by a dot and the one on $U^{[\sigma ]}$ by $\ast$, we find
\begin{equation*}
\varphi (\lambda\ast u)=(\lambda\ast u)^{\sigma }=(\lambda^{{\sigma }^{-1}}\cdot u)^{\sigma }=\lambda\cdot u^{\sigma }=\lambda\cdot\varphi (u).
\end{equation*}
Note that this is again a justification for defining $U^{[\sigma ]}$ the way we did.
\end{proof}
\begin{BeingDefinedoveraSubfield}
We extend the notion of Galois extensions to transcendental extensions, in the following sense:
\begin{Definition}\label{DefinitionofGalois}
A field extension $K|k$ is called \textup{Galois} if the fixed field of $\Aut (K|k)$ acting on $K$ is equal to $k$.
\end{Definition}
For algebraic extensions this agrees with the usual definition of Galois extensions. As to the transcendental case, note that every extension $K|k$ in characteristic zero with $K$ algebraically closed is Galois.

For a Galois extension of fields $K|k$ one often constructs algebraic objects which are ``defined over $K$'' and makes them descend to ``objects over $k$'' by showing that they are in a suitable sense invariant under $\Aut (K|k)$. We now make this general principle precise for local systems of vector spaces or, equivalently, $(\pi ,k)$-modules.
\end{BeingDefinedoveraSubfield}
\begin{Lemma}\label{DescentForVectorSpaces}
Let $k\subseteq K$ be a Galois field extension in the sense of Definition \ref{DefinitionofGalois}. Write $G=\Aut (K|k)$. Let $V_k$ be a finite-dimensional $k$-vector space and let $U$ be a sub-$K$-vector space of $V_K=V_k\otimes_kK$ such that $U^{\sigma }=U$ (as sets) for all $\sigma\in\Aut (K|k)$. Then $U$ ``is defined over $k$'', i.e. there is a unique sub-$k$-vector space $T_k\subseteq V_k$ with $T_k\otimes_kK=U$. Furthermore we have $T_k=U\cap V_k=U^G$.
\end{Lemma}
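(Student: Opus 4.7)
The plan is to define $T_k \coloneqq U\cap V_k$ and then show it does the job: first that it agrees with $U^G$ (where $G=\Aut(K|k)$), second that $T_k\otimes_k K=U$, and finally that this forces uniqueness. Only the middle step — the existence statement $T_k\otimes_k K=U$ — requires a real argument; the rest is formal.

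I would begin with the preliminary observation that $V_K^G=V_k$. Choosing a $k$-basis $e_1,\dots,e_n$ of $V_k$, any $v=\sum\lambda_i e_i\in V_K$ is $G$-fixed if and only if each $\lambda_i\in K^G$, and by the standing assumption that $K|k$ is Galois in the sense of Definition \ref{DefinitionofGalois} we have $K^G=k$. Intersecting with the $G$-stable subspace $U$ then gives $U^G=U\cap V_K^G=U\cap V_k=T_k$, which takes care of the identification $T_k=U\cap V_k=U^G$. For uniqueness, suppose $T_k'\subseteq V_k$ is any $k$-subspace with $T_k'\otimes_k K=U$; picking a $k$-basis of $T_k'$, it becomes a $K$-basis of $U$, and writing an element $v\in U\cap V_k$ in this basis with coefficients in $K$ and then invoking $G$-invariance forces those coefficients to lie in $K^G=k$, so $v\in T_k'$. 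Hence $T_k'=U\cap V_k$, independently of the choice of $T_k'$.

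The substantive step is the construction of a $k$-basis of $U\cap V_k$ that spans $U$ over $K$. My strategy is a ``reduced echelon form'' argument that avoids any appeal to Hilbert 90 (which is delicate for transcendental Galois extensions). Set $m=\dim_K U$. Among the $\binom{n}{m}$ coordinate projections $\pi_I\colon V_K\to K^m$ associated with $m$-element subsets $I\subseteq\{1,\dots,n\}$, at least one must restrict to an isomorphism on $U$ (otherwise $U$ would be contained in every $(n-m)$-codimensional coordinate subspace, forcing $\dim_K U<m$). After reordering the $e_i$ we may assume $I=\{1,\dots,m\}$, so that $U$ admits a unique $K$-basis of the form
\begin{equation*}
u_i=e_i+\sum_{j=m+1}^n a_{ij}\,e_j\qquad(1\le i\le m),
\end{equation*}
with $a_{ij}\in K$. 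Applying any $\sigma\in G$ gives another $K$-basis of $U$ of the same echelon form (here we use $U^\sigma=U$ and that $G$ fixes the $e_j\in V_k$), so by uniqueness $\sigma(u_i)=u_i$, hence $a_{ij}\in K^G=k$ for all $i,j$. Therefore each $u_i$ already lies in $V_k$, and the $k$-span of $u_1,\dots,u_m$ is contained in $U\cap V_k=T_k$ and spans $U$ over $K$. Combined with the inclusion $T_k\otimes_k K\subseteq U$ coming from $T_k\subseteq U$, this yields $T_k\otimes_k K=U$, completing the proof.

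The main obstacle, as indicated, is this last step: one needs a concrete mechanism to transfer $G$-invariance of the \emph{subspace} $U$ into $G$-invariance of individual \emph{vectors}. The reduced echelon trick accomplishes exactly this by producing a distinguished basis whose uniqueness is automatically $G$-equivariant; it also has the virtue of working verbatim for the transcendental Galois extensions (such as $\bbc|\overline{\bbq}$) that will be relevant later in the paper, without invoking any Galois-cohomological input beyond the definition $K^G=k$.
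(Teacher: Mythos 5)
Your proof is correct, and it reaches the key existence statement $T_k\otimes_kK=U$ by a genuinely different mechanism than the paper. The paper argues by contradiction with a minimal-support element: assuming $U\smallsetminus T_K\neq\emptyset$, it picks $u$ in that set with the fewest nonzero coordinates relative to a basis of $V_k$ extending one of $T_k$, normalizes one coordinate to $1$, and observes that $u-u^{\sigma}$ has strictly smaller support, forcing $u=u^{\sigma}$ for all $\sigma$ and hence $u\in U^G=T_k$, a contradiction. You instead construct directly a distinguished $K$-basis of $U$ in reduced echelon form with respect to a coordinate projection, and use the \emph{uniqueness} of that normal form to conclude the basis is $G$-fixed, hence lies in $V_k$. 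Both arguments exploit the same underlying principle --- a canonically normalized object must be Galois-invariant --- and both avoid Hilbert 90 and any finiteness or separability hypotheses, so both apply to the transcendental extensions such as $\bbc|\overline{\bbq}$ needed later. Your version has the mild advantage of being constructive (it exhibits an explicit $k$-basis of $T_k$ spanning $U$) rather than proceeding by contradiction; the paper's version needs no case analysis on which coordinate projection to use.

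One small inaccuracy: your parenthetical justification that some coordinate projection $\pi_I$ restricts to an isomorphism on $U$ (``otherwise $U$ would be contained in every $(n-m)$-codimensional coordinate subspace'') is not right as stated --- failure of injectivity of $\pi_I|_U$ only gives $U\cap\ker\pi_I\neq 0$, not $U\subseteq\ker\pi_I$. The fact itself is of course true and elementary: write a $K$-basis of $U$ as the rows of an $m\times n$ matrix of rank $m$ and select $m$ columns forming an invertible minor. This is a slip in a one-line justification of a standard fact, not a gap in the argument.
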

\begin{proof}
The hypotheses imply that $(V_K)^G=V_k$. It is clear that if $T_k$ exists, it has to be equal to $U^G$, so we denote the latter by $T_k$ and show that this does the job. Write $T_K=T_k\otimes_kK$. Clearly $T_K\subseteq U$, so we have to show the converse inclusion. Assume that it does not hold.

Now $T_k$ is a sub-$k$-vector space of $V_k$, so we can choose a basis $(t_1,\ldots ,t_m)$ of $T_k$ and complete it to a basis $(t_1,\ldots ,t_m,t_{m+1},\ldots ,t_n)$ of $V_k$. Now by assumption $U\smallsetminus T_K$ is nonempty, and every element of this set can be uniquely written as
\begin{equation*}
u=\sum_{i=1}^n\lambda_it_i,\quad \lambda_i\in K.
\end{equation*}
Choose then a $u\in U\smallsetminus T_K$ for which the number of nonzero coefficients $\lambda_i$ in this representation becomes minimal. Then $u-\sum_{i=m+1}^n\lambda_it_i$ has at most as many nonzero coefficients, i.e. we can assume that $u$ is of the form
\begin{equation*}
u=\sum_{i=m+1}^n\lambda_it_i.
\end{equation*}
By assumption at least one of $\lambda_{m+1},\ldots ,\lambda_n$ is nonzero (say $\lambda_{i_0}$), so we may as well multiply the whole expression by $\lambda_{i_0}^{-1}$ and thereby assume that $\lambda_{i_0}=1$. Then for any element $\sigma\in G$,
\begin{equation*}
u-u^{\sigma}=\sum_{m+1\le i\le n,\, i\neq i_0}(\lambda_i-\lambda_i^{\sigma })t_i
\end{equation*}
is by construction either zero or still an element of $U\smallsetminus T_K$, but with less nonzero coefficients than $u$, so by the properties of $u$ the second possibility is ruled out and $u-u^{\sigma}$ must be zero. But this holds for any $\sigma\in G$, so $u\in T_K$, contradiction.
\end{proof}
\begin{Corollary}\label{DescentOfInvariantSubmodules}
Let $k\subseteq K$ be a Galois field extension, with $k$ perfect. Let $V_k$ be a $(\pi ,k)$-module and let $U$ be a sub-$(\pi ,K)$-module of $V_K$ such that $U^{\sigma }=U$ for all $\sigma\in \Aut (K|k)$. Then there exists a unique sub-$(\pi ,k)$-module $T_k\subseteq V_k$ with $T_K=U$.
\end{Corollary}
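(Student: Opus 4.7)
The plan is to deduce this corollary as a straightforward reinforcement of Lemma \ref{DescentForVectorSpaces} with the additional bookkeeping for the $\pi$-action. All the serious work (the descent argument, the coefficient-minimization trick) has already been carried out on the level of vector spaces, so I expect no new difficulty here --- the only thing to check is that the $k$-subspace produced by the lemma is automatically $\pi$-stable.

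First I would set $G=\Aut(K|k)$ and apply Lemma \ref{DescentForVectorSpaces} to the underlying finite-dimensional $k$-vector space $V_k$ and to the sub-$K$-vector space $U\subseteq V_K$. The hypothesis $U^\sigma=U$ for all $\sigma\in G$ is exactly the descent condition needed there, so the lemma produces a unique sub-$k$-vector space $T_k\subseteq V_k$ with $T_k\otimes_kK=U$, and it gives the explicit description $T_k=U\cap V_k=U^G$.

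Next I would promote $T_k$ to a sub-$(\pi,k)$-module. Since $U$ is a sub-$(\pi,K)$-module of $V_K$, the group $\pi$ preserves $U$; since $V_k\subseteq V_K$ is stable under the $\pi$-action (the $\pi$-action on $V_K$ is by definition the $K$-linear extension of the $\pi$-action on $V_k$), it follows that $T_k=U\cap V_k$ is $\pi$-stable. Hence $T_k$ inherits from $V_k$ the structure of a $(\pi,k)$-module, and by construction $T_k\otimes_kK=U$ as $(\pi,K)$-modules.

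Finally, uniqueness is immediate: any sub-$(\pi,k)$-module $T_k'\subseteq V_k$ with $T_k'\otimes_kK=U$ is in particular a sub-$k$-vector space of $V_k$ satisfying the hypothesis of Lemma \ref{DescentForVectorSpaces}, so it must coincide with the $T_k$ produced there. The hard part --- the descent of the underlying subspace against an infinite Galois group --- was already handled in the lemma, so there is really nothing new to overcome; the perfectness hypothesis on $k$ enters only through its role in the preceding lemma and is used nowhere else.
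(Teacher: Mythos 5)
Your proposal is correct and follows essentially the same route as the paper: apply Lemma \ref{DescentForVectorSpaces} to the underlying vector spaces to get $T_k=U^G=U\cap V_k$, then observe that $\pi$-stability of $T_k$ is automatic because the $\pi$-action is defined over $k$ and preserves $U$. The paper's own proof is a one-line version of exactly this argument.
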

\begin{proof}
By Lemma \ref{DescentForVectorSpaces}, $T_k=U^G$ is a sub-$k$-vector space of $V_k$ such that $T_K=U$, and since the operation of $\pi$ is defined over $k$, this construction obviously respects the $(\pi ,k)$-module structures.
\end{proof}
\begin{SemisimplicityandBaseChange}
Now comes the central statement of this section: semisimplicity is stable under base change, in both directions.
\end{SemisimplicityandBaseChange}
\begin{Proposition}\label{SemisimplicityInvariantUnderBC}
Let $k\subseteq K$ be a field extension in characteristic zero, and let $V$ be a $(\pi ,k)$-module. Then $V$ is a semisimple $(\pi ,k)$-module if and only if $V\otimes_kK$ is a semisimple $(\pi, K)$-module.
\end{Proposition}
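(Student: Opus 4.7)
The plan is to use the standard equivalence: a $(\pi,k)$-module $W$ is semisimple if and only if every sub-$(\pi,k)$-module admits a $(\pi,k)$-linear projection, and to reduce the forward direction to a statement about finite-dimensional algebras by passing to the image $A$ of $k[\pi]$ inside $\mathrm{End}_k(V)$.

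For the ``if'' direction (semisimplicity descends), I would argue as follows. Assume $V_K=V\otimes_k K$ is semisimple as a $(\pi,K)$-module, and let $V'\subseteq V$ be any sub-$(\pi,k)$-module. I want to produce a $(\pi,k)$-linear retraction $V\to V'$. Over $K$ a retraction $r:V_K\to V'_K$ exists by semisimplicity of $V_K$. Consider the restriction map
\begin{equation*}
\varphi:\Hom_{k[\pi]}(V,V')\to\Hom_{k[\pi]}(V',V'),\quad f\mapsto f|_{V'}.
\end{equation*}
By Proposition \ref{GoodbehaviourOfRepsUnderBC}(iii), tensoring with $K$ identifies $\varphi\otimes_k K$ with the analogous restriction map over $K$, and the existence of $r$ means that $\mathrm{id}_{V'}\otimes 1$ lies in the image of $\varphi\otimes_k K$, which equals $\mathrm{image}(\varphi)\otimes_k K$. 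Since $K$ is $k$-faithfully flat (or more concretely, writing out the tensor identity in a $k$-basis of $K$ containing $1$), this forces $\mathrm{id}_{V'}\in\mathrm{image}(\varphi)$, i.e.\ a $(\pi,k)$-linear retraction exists. Hence $V$ is semisimple.

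For the ``only if'' direction, write $V=\bigoplus_i S_i$ as a sum of simple $(\pi,k)$-modules; since tensor products distribute over direct sums, it suffices to show that $S\otimes_k K$ is a semisimple $(\pi,K)$-module for every simple $(\pi,k)$-module $S$. Let $A\subseteq\mathrm{End}_k(S)$ be the (finite-dimensional) $k$-subalgebra generated by the image of $\pi$. Then $S$ is a faithful simple $A$-module, so by Jacobson density $A$ is a simple $k$-algebra, hence $A\cong M_n(D)$ for some finite-dimensional $k$-division algebra $D$ with center $L\supseteq k$. To conclude, I need $A\otimes_k K\cong M_n(D\otimes_k K)$ to be semisimple over $K$; then $S\otimes_k K$ will automatically be semisimple as a module over it.

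The main obstacle is precisely this last step, which is where characteristic zero enters. Since $L|k$ is a finite separable extension, $L\otimes_k K$ is an étale $K$-algebra, hence a finite product of fields $L_1\times\cdots\times L_r$. Writing $D\otimes_k K=D\otimes_L(L\otimes_k K)=\prod_j D\otimes_L L_j$, each factor is a central simple $L_j$-algebra (base change preserves the central-simple property over the separable closure of the center), hence semisimple. Thus $D\otimes_k K$, and therefore $A\otimes_k K=M_n(D\otimes_k K)$, is semisimple, completing the proof.
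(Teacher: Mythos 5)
Your proof is correct. For comparison: the paper does not actually prove this proposition at all, it simply cites \cite[Chap.\ V, Prop.\ 1.2]{Hochschild81}, so what your argument buys is a self-contained proof (which is essentially the standard textbook one). Both halves check out. In the descent direction, the key points are that a finite-dimensional module is semisimple iff every submodule admits an equivariant retraction, that the restriction map $\varphi:\Hom_{k[\pi]}(V,V')\to\Hom_{k[\pi]}(V',V')$ is compatible with $-\otimes_kK$ by Proposition \ref{GoodbehaviourOfRepsUnderBC}(iii), and that for a $k$-subspace $U\subseteq W$ one has $(U\otimes_kK)\cap (W\otimes 1)=U\otimes 1$; all three are used correctly. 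In the ascent direction you correctly reduce to a simple $S$, identify the image $A$ of $k[\pi]$ in $\End_k(S)$ as $M_n(D)$ with $D$ a division algebra whose center $L$ is a \emph{separable} extension of $k$ (this is exactly where characteristic zero enters --- the statement fails for inseparable extensions in positive characteristic), and conclude that $A\otimes_kK\cong\prod_jM_n(D\otimes_LL_j)$ is semisimple because $L\otimes_kK$ is \'{e}tale over $K$ and base change along a field extension preserves central simplicity (a fact the paper itself records in its discussion of quaternion algebras). The only cosmetic remarks: Jacobson density gives $A\cong M_m(D^{\mathrm{op}})$ rather than $M_m(D)$, which is immaterial here, and one should note explicitly that $K[\pi]$-submodules of $S\otimes_kK$ coincide with $A\otimes_kK$-submodules since $A\otimes_kK$ is the image of $K[\pi]$ in $\End_K(S\otimes_kK)$; both are one-line observations that do not affect the validity of the argument.
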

\begin{proof}
This is a special case of \cite[Chap. V, Prop. 1.2]{Hochschild81}.
\end{proof}
\begin{SimplicityandBaseChange}
What about simple modules? Clearly if $V\otimes_kK$ is simple then $V$ is simple, but the converse does not necessarily hold --- we will see examples below. It does hold if both fields are algebraically closed. To see this, we need some elementary remarks concerning the structure of semisimple modules.
Given the nice behaviour of spaces of homomorphisms (and hence of endomorphism rings), we get from this:
\end{SimplicityandBaseChange}
\begin{Corollary}
Let $k\subseteq K$ be an extension of algebraically closed fields and let $V$ be a semisimple $(\pi ,k)$-module. Then $V$ is a simple $(\pi ,k)$-module if and only if $V_K$ is a simple $(\pi, K)$-module.
\end{Corollary}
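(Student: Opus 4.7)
The plan is to reduce the statement to a dimension count on endomorphism algebras, invoking the two preceding results: the characterization of simple semisimple modules over algebraically closed fields (Proposition \ref{SimpleIffDimEndIsOne}) and the compatibility of Hom-spaces with base change (Proposition \ref{GoodbehaviourOfRepsUnderBC}(iii)). Since semisimplicity is preserved in both directions by Proposition \ref{SemisimplicityInvariantUnderBC}, both $V$ (semisimple over the algebraically closed field $k$) and $V_K$ (semisimple over the algebraically closed field $K$) fall under the hypotheses of Proposition \ref{SimpleIffDimEndIsOne}.

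First I would apply Proposition \ref{SimpleIffDimEndIsOne} in both directions: $V$ is a simple $(\pi,k)$-module if and only if $\dim_k\End_\pi(V)=1$, and $V_K$ is a simple $(\pi,K)$-module if and only if $\dim_K\End_\pi(V_K)=1$. Next I would use Proposition \ref{GoodbehaviourOfRepsUnderBC}(iii) with $W=V$ to obtain a canonical isomorphism
\begin{equation*}
\End_\pi(V)\otimes_kK\;\overset{\simeq}{\longrightarrow}\;\End_\pi(V_K).
\end{equation*}
Taking $K$-dimensions on both sides yields
\begin{equation*}
\dim_K\End_\pi(V_K)\;=\;\dim_k\End_\pi(V),
\end{equation*}
so one dimension equals one precisely when the other does.

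Combining the two steps, $V$ is simple if and only if $\dim_k\End_\pi(V)=1$, if and only if $\dim_K\End_\pi(V_K)=1$, if and only if $V_K$ is simple. This closes the equivalence. No obstacle is expected here: the whole machinery has already been assembled in the preceding propositions; the only thing to be careful about is that the algebraic closedness of \emph{both} $k$ and $K$ is needed to invoke Proposition \ref{SimpleIffDimEndIsOne} on each side (the ``only if'' direction does not need it, since $\End_\pi(V_K)$ simple already forces $\End_\pi(V)$ to be one-dimensional over $k$ via the base-change isomorphism, but the ``if'' direction does).
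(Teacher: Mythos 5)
Your proof is correct and follows essentially the same route as the paper's: both arguments combine Proposition \ref{SemisimplicityInvariantUnderBC} (to ensure semisimplicity on both sides), the base-change isomorphism for endomorphism algebras from Proposition \ref{GoodbehaviourOfRepsUnderBC}(iii), and the dimension criterion of Proposition \ref{SimpleIffDimEndIsOne}. The only quibble is your closing parenthetical, which slightly garbles which direction dispenses with algebraic closedness, but this does not affect the argument.
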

\begin{proof}
Algebraically closed fields are perfect, so if $V$ is semisimple then also $V_K$ is semisimple by Proposition \ref{SemisimplicityInvariantUnderBC}. Now Proposition \ref{GoodbehaviourOfRepsUnderBC}(iii) yields that
\begin{equation*}
(\End_{k[\pi ]}V)\otimes_kK \simeq\End_{K[\pi ]}(V\otimes_kK)
\end{equation*}
as $K$-vector spaces, so using Proposition \ref{SimpleIffDimEndIsOne} we see that $V$ is simple if and only if $V_K$ is simple.
\end{proof}
\begin{Corollary}
Let $V_k$ be a $(\pi ,k)$-module and let $H_k\subseteq V_k$ be the largest semisimple sub-$(\pi ,k)$-module, i.e. the sum of all simple sub-$(\pi ,k)$-modules. Then $H_K=H_k\otimes_kK$ is also the largest semisimple sub-$(\pi, K)$-module of $V_K$.
\end{Corollary}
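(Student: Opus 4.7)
\medskip

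\noindent\textbf{Proof plan.} The containment $H_K \subseteq H_K^{\max}$ (where $H_K^{\max}$ denotes the largest semisimple sub-$(\pi,K)$-module of $V_K$) is immediate: $H_K = H_k \otimes_k K$ is semisimple by Proposition \ref{SemisimplicityInvariantUnderBC} applied to the semisimple $(\pi,k)$-module $H_k$. So the real content is the reverse inclusion $H_K^{\max} \subseteq H_K$; the plan is to reduce this to the algebraically closed case via Galois descent.

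First I would handle the special case where $K = \Omega$ is algebraically closed. Since the excerpt states that any such extension $\Omega | k$ in characteristic zero is Galois in the sense of Definition \ref{DefinitionofGalois}, we may set $G = \Aut(\Omega|k)$ and examine how $G$ acts on submodules of $V_\Omega = V_k \otimes_k \Omega$. For any $\sigma \in G$ and any sub-$(\pi,\Omega)$-module $U \subseteq V_\Omega$, the conjugate $U^{\sigma} \subseteq V_\Omega$ is isomorphic as $(\pi,\Omega)$-module to $U^{[\sigma]}$, which is semisimple whenever $U$ is. Applied to $U = H_\Omega^{\max}$, this shows $(H_\Omega^{\max})^\sigma$ is semisimple and hence contained in $H_\Omega^{\max}$; by the same argument with $\sigma^{-1}$ we get equality $(H_\Omega^{\max})^\sigma = H_\Omega^{\max}$. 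Corollary \ref{DescentOfInvariantSubmodules} then produces a sub-$(\pi,k)$-module $T_k \subseteq V_k$ with $T_k \otimes_k \Omega = H_\Omega^{\max}$. By the reverse direction of Proposition \ref{SemisimplicityInvariantUnderBC}, $T_k$ itself is semisimple, so $T_k \subseteq H_k$, giving $H_\Omega^{\max} \subseteq H_k \otimes_k \Omega = H_\Omega$.

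For a general extension $K|k$, I would then pick an algebraically closed field $\Omega$ containing $K$ (for instance $\Omega = \overline{K}$) and combine the previous step with faithful flatness. Indeed, $H_K^{\max} \otimes_K \Omega$ is semisimple by Proposition \ref{SemisimplicityInvariantUnderBC}, so it is contained in $H_\Omega^{\max}$; by the algebraically closed case this in turn is contained in $H_k \otimes_k \Omega = H_K \otimes_K \Omega$. Since an inclusion of $K$-subspaces of $V_K$ can be tested after base change along the faithfully flat extension $K \subseteq \Omega$ (equivalently, pick a $K$-basis of $H_K$ and extend it to one of $V_K$, then read off coefficients), this yields $H_K^{\max} \subseteq H_K$, as desired.

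The main conceptual obstacle is the descent step, where we must know that an $\Aut(\Omega|k)$-stable subspace of $V_\Omega$ comes from a subspace of $V_k$; this is precisely what Corollary \ref{DescentOfInvariantSubmodules} (together with the fact that $\Omega|k$ is Galois in characteristic zero) provides. Everything else is then routine bookkeeping with Proposition \ref{SemisimplicityInvariantUnderBC} and faithful flatness.
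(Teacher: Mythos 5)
Your proof is correct, and it shares the essential mechanism with the paper's argument --- Galois descent of an $\Aut$-stable semisimple submodule, combined with the two-way invariance of semisimplicity under base change (Proposition \ref{SemisimplicityInvariantUnderBC}) --- but it is organized differently. The paper works directly with the extension $K|k$: for an arbitrary semisimple $U\subseteq V_K$ it forms $GU=\sum_{\sigma\in G}U^{\sigma}$, observes this is $G$-stable and semisimple, descends it via Corollary \ref{DescentOfInvariantSubmodules} to a semisimple $T_k\subseteq H_k$, and concludes $U\subseteq GU\subseteq H_K$. This requires $K|k$ itself to be Galois in the sense of Definition \ref{DefinitionofGalois}, which is the standing (if implicit) hypothesis of that paragraph. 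You instead first prove the statement for an algebraically closed overfield $\Omega$ --- where the Galois hypothesis is automatic in characteristic zero --- by showing the maximal semisimple submodule $H_{\Omega}^{\max}$ is itself $G$-stable and descending it in one step, and then you deduce the general case by base-changing to $\Omega$ and using that an inclusion of $K$-subspaces can be tested after a faithfully flat extension. What your route buys is generality: it proves the corollary for an arbitrary extension $K|k$ in characteristic zero, with no Galois assumption on $K|k$; what it costs is the extra reduction step and the (routine) faithful-flatness argument. Both proofs are complete; just make sure you cite the isomorphism $U^{\sigma}\simeq U^{[\sigma]}$ (as the paper does) when asserting that conjugates of semisimple submodules are semisimple.
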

\begin{proof}
By Proposition \ref{SemisimplicityInvariantUnderBC}, $H_K$ is semisimple, so it is obviously contained in the largest semisimple submodule. For the converse implication, let $U\subseteq V_K$ be a semisimple sub-$(\pi, K)$-module. Then the sub-$(\pi ,K)$-module
\begin{equation*}
GU=\sum_{\sigma\in G}U^{\sigma }
\end{equation*}
is stabilized by $G$ and semi-simple (as a sum of simple $(\pi ,K)$-modules, so it is of the form $T_k\otimes_kK$ for some semisimple sub-$(\pi ,k)$-representation $T_k$ of $V_k$ which is therefore contained in $H_k$. Hence $U\subseteq GU\subseteq H_K$.
\end{proof}

\subsection{Local Systems of $\bbq$-Vector Spaces}

Let us apply the considerations in the previous section to the case $k=\bbq$. So let $V$ be a $(\pi ,\bbq)$-module and let $W$ be its maximal semisimple submodule. Then $W\otimes\overline{\bbq }$ is also the largest semisimple submodule of $V\otimes\overline{\bbq }$. We obtain a decomposition into isotypical components over $\overline{\bbq }$:
\begin{equation*}
W\otimes\overline{\bbq }=\bigoplus_{\nu =1}^nV_{\nu },\quad V_{S_{\nu }}\simeq S_{\nu }^{m_{\nu }}.
\end{equation*}
Here $S_1,\ldots ,S_n$ are pairwise non-isomorphic simple $(\pi ,\overline{\bbq })$-modules, and $m_{\nu }>0$. Now the $S_{\nu }\otimes_{\overline{\bbq }}\bbc$ are also simple $(\pi, \bbc )$-modules, and thus
\begin{equation*}
W\otimes\bbc =\bigoplus_{\nu =1}^n(V_{\nu }\otimes_{\overline{\bbq }}\bbc),\quad V_{\nu }\otimes_{\overline{\bbq }}\bbc \simeq (S_{\nu }\otimes_{\bbq }\bbc )^{m_{\nu }}
\end{equation*}
is \textit{the} isotypcial decomposition of $W\otimes\bbc$. In other words, all simple submodules which occur in $V\otimes\bbc$ are already defined over $\overline{\bbq }$. In fact they can be defined over a number field. Before showing this we introduce some reminders about number fields.
\begin{SomeRemarksonGaloisTheory}
We are going to study how $(\pi, \bbq )$-modules behave under base extension. For this purpose we have to fix some notations concerning field extensions.

By $\overline{\bbq }$ we always mean \textit{the} algebraic closure of $\bbq$ in $\bbc$ (not some abstract algebraic closure obtained by the axiom of choice), i.e. the field of all algebraic numbers in $\bbc$. A \textit{number field} is a finite field extension of $\bbq$, not necessarily contained in $\bbc$. The \textit{degree} of a number field $k$ is the degree of the field extension $k|\bbq$. If $k$ is a number field of degree $d$, it has precisely $d$ distinct embeddings
\begin{equation*}
\sigma_1,\ldots ,\sigma_d:k\to\overline{\bbq }\subset\bbc .
\end{equation*}
Often we will encounter the situation where $k$ is given as some subfield of $\bbc$. In this case there is a ``preferred embedding'', namely the identical embedding; we then enumerate the embeddings in such a way that this is $\sigma_1$.
\end{SomeRemarksonGaloisTheory}
\begin{Definition}\label{DefinitionGaloisClosure}
Let $k$ be a number field. By its \textup{Galois closure} we mean the subfield $k^{\text{Galois}}$ of $\overline{\bbq }\subset\bbc$ generated by the images of all embeddings $k\to\bbc$.
\end{Definition}
This is again a number field, and equal to the Galois closure of each $\sigma_j(k)$ in $\overline{\bbq }$ in the usual sense.

By the \textit{absolute Galois group} of a number field $k$ which is given as a subfield of $\bbc$ we mean the group $G_k=\Gal (\overline{\bbq }|k)$. This is, with the Krull topology, a profinite group. It is furthermore an open subgroup of $G_{\bbq }$, and this assignment gives a bijection between finite subextensions of $\overline{\bbq }|\bbq$ and open subgroups of $G_{\bbq }$.
\begin{Proposition}
Let $k$ be a number field which is contained in $\bbc$. Then the map
\begin{equation*}
\alpha :G_k\backslash G_{\bbq }\to\Hom (k,\overline{\bbq })=\Hom (k,\bbc ),\quad G_k\cdot \sigma\mapsto \sigma|_k
\end{equation*}
defines a bijection between the quotient $G_k\backslash G_{\bbq }$ and the set of embeddings $k\to\bbc$.
\end{Proposition}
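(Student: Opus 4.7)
The plan is to verify directly that $\alpha$ is well-defined, injective, and surjective; each property is a standard consequence of the universal property of the algebraic closure.

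First I would spell out why $\alpha$ is a well-defined map on cosets. Two automorphisms $\sigma_1,\sigma_2\in G_{\bbq}$ restrict to the same embedding $k\to\overline{\bbq}$ precisely when $\sigma_2^{-1}\sigma_1$ fixes $k$ pointwise, i.e. lies in $G_k=\Gal(\overline{\bbq}|k)$. This observation gives simultaneously the well-definedness of $\alpha$ on the appropriate coset space and its injectivity: two cosets have the same image under $\alpha$ if and only if they coincide. No arithmetic input is needed here, only the definitions.

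The substantive step is surjectivity. Let $\iota:k\to\overline{\bbq}$ be an arbitrary embedding of number fields; I want to extend it to an automorphism of $\overline{\bbq}$ restricting to the inclusion $\bbq\hookrightarrow\overline{\bbq}$ on $\bbq$. Since $k$ is algebraic over $\bbq$ and $\overline{\bbq}$ is algebraically closed, $\overline{\bbq}$ is an algebraic closure of the subfield $\iota(k)\subset\overline{\bbq}$; it is also an algebraic closure of $k$ via its standard embedding. By the standard uniqueness/extension property of algebraic closures, the isomorphism $\iota:k\to\iota(k)$ lifts to an isomorphism $\sigma:\overline{\bbq}\to\overline{\bbq}$, which automatically fixes $\bbq$ and therefore belongs to $G_{\bbq}$. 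By construction $\sigma|_k=\iota$, so $\alpha$ is surjective.

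The only subtle point is the existence of that lift; this is the core technical input, and for a perfect base field like $\bbq$ it follows from Zorn's lemma applied to the poset of partial extensions of $\iota$ to intermediate algebraic subfields of $\overline{\bbq}$, or equivalently from the fact that any two algebraic closures of a field are isomorphic over the field. I would cite this as a standard fact from field theory rather than reproduce its proof. Assembling these three verifications then yields the bijection claimed in the proposition.
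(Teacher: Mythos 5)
Your proof is correct and follows essentially the same route as the paper's: well-definedness and injectivity come from the observation that two automorphisms agree on $k$ exactly when they differ by an element of $G_k$, and surjectivity comes from extending an arbitrary embedding $k\to\overline{\bbq }$ to an automorphism of $\overline{\bbq }$ via Zorn's lemma (equivalently, uniqueness of algebraic closures). One small remark: your criterion $\sigma_2^{-1}\sigma_1\in G_k$ is the precise one, and it identifies the fibres of restriction with the cosets $\sigma G_k$, i.e.\ with $G_{\bbq }/G_k$ rather than with $G_k\backslash G_{\bbq }$ as written in the statement --- a harmless discrepancy of conventions that is also present in the paper's own proof, which writes the condition as $\sigma_1\sigma_2^{-1}|_k=\operatorname{id}$.
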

\begin{proof}
Let $\sigma_1,\sigma_2\in G_{\bbq }$. Then their restriction to $k$ is the same if and only if the restriction of $\sigma_1\sigma_2^{-1}$ to $k$ is the identity, hence if and only if $\sigma_1$ and $\sigma_2$ are in the same left coset modulo $G_k$. Hence the map is injective. It is also clearly surjective: any embedding $k\to\overline{\bbq}$ can, by virtue of Zorn's lemma, be extended to some automorphism of $\overline{\bbq }$.
\end{proof}
\begin{TheFieldofModuli}
Let $S$ be a simple $(\pi ,\overline{\bbq })$-module, and set
\begin{equation*}
G_S=\{\sigma\in G_{\bbq }\, |\, S^{[\sigma ]}\simeq S\} .
\end{equation*}
\end{TheFieldofModuli}
\begin{Lemma}\label{GSisOpen}
Assume that $\pi$ is finitely generated. Then the subgroup $G_S\subseteq G_{\bbq }$ is open.
\end{Lemma}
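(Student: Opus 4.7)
The plan is to exhibit an explicit open subgroup of $G_{\bbq}$ sitting inside $G_S$; openness of $G_S$ will then follow from the general fact that any subgroup of a topological group containing an open subgroup is itself open (being a union of cosets of the smaller one). So the first thing to verify is that $G_S$ is a subgroup at all. Using the identity $(V^{[\sigma]})^{[\tau]} = V^{[\sigma\tau]}$ (which follows directly from the defining formula $\lambda \ast v = \lambda^{\sigma^{-1}}v$), closure under products is immediate, and applying $[\sigma^{-1}]$ to an isomorphism $S^{[\sigma]}\simeq S$ gives $S\simeq S^{[\sigma^{-1}]}$, so closure under inversion holds as well.

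Next I would produce a number field over which $S$ is defined. Since $\pi$ is finitely generated, fix generators $\gamma_1,\ldots,\gamma_r$ of $\pi$ and choose an $\overline{\bbq}$-basis of $S$; the action of $\pi$ becomes a group homomorphism $\varrho:\pi\to\GL_n(\overline{\bbq})$. The finitely many entries of the matrices $\varrho(\gamma_1),\ldots,\varrho(\gamma_r)$ generate a finitely generated subextension of $\overline{\bbq}|\bbq$, which is automatically a number field $k\subseteq\overline{\bbq}$. Then $\varrho$ factors through $\GL_n(k)$, so in particular $S$ arises from a $(\pi,k)$-module by base extension to $\overline{\bbq}$.

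Now I would use the coordinate description of Galois conjugation given just before the lemma: in the chosen basis, the representation $\varrho^{[\sigma]}$ is obtained from $\varrho$ by applying $\sigma$ entrywise to each matrix $\varrho(\gamma)$. For any $\sigma\in G_k=\Gal(\overline{\bbq}|k)$, all entries of $\varrho(\gamma_i)$ lie in $k$ and are therefore fixed by $\sigma$, so $\varrho^{[\sigma]}(\gamma_i)=\varrho(\gamma_i)$ for each generator, hence for every element of $\pi$. Consequently $S^{[\sigma]}=S$ on the nose (not merely up to isomorphism), which gives $\sigma\in G_S$. Thus $G_k\subseteq G_S$.

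The final step is essentially formal: $G_k$ is an open subgroup of $G_{\bbq}$ (as recalled in the paragraph on Galois closures, since $k$ is a finite extension of $\bbq$), and any subgroup of a topological group containing an open subgroup is open. There is no real obstacle here — the only subtlety is making sure to distinguish the two notions of Galois conjugation introduced earlier and to use the correct one (the ``abstract'' $V^{[\sigma]}$, since $S$ is not a priori a submodule of something defined over $\bbq$); once the explicit matrix description of $V^{[\sigma]}$ is in hand, the argument reduces to finding a number field that contains the matrix entries of the generators, which is where finite generation of $\pi$ enters crucially.
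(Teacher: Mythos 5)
Your argument is correct and is essentially the paper's own proof: both use finite generation of $\pi$ to produce a number field $k$ containing the matrix entries of the generators under a chosen basis, observe that $S$ is then defined over $k$ so that $G_k\subseteq G_S$, and conclude by the characterization of open subgroups of $G_{\bbq}$ as those containing some $G_k$. The extra verifications you include (that $G_S$ is a subgroup, and the explicit entrywise description of $\varrho^{[\sigma]}$) are harmless elaborations of what the paper leaves implicit.
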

\begin{proof}
A subgroup of $G_{\bbq }$ is open if and only if it contains $G_k$ for some finite subextension $k|\bbq$ of $\overline{\bbq }$. Now let $\pi$ be generated by $\gamma_1,\ldots ,\gamma_d$, and let $S$ correspond to a representation
\begin{equation*}
\varrho :\pi\to\GL_m(\overline{\bbq }).
\end{equation*}
Let $k$ be the number field generated by the entries of the matrices $\varrho (\gamma_1),\ldots ,\varrho (\gamma_d)$. Then the image of $\varrho$ is contained in $\GL_m(k)$, which means that $S$ is isomorphic to $T\otimes_k\overline{\bbq }$ for some $(\pi ,k)$-module $T$. But then $G_k\subseteq G_S$.
\end{proof}
\begin{Definition}
Let $S$ and $\pi$ as above. The \textup{field of moduli} of $S$ is the number field $k_S=\overline{\bbq }^{G_S}$.
\end{Definition}
Note that the field of moduli really is a number field by Lemma \ref{GSisOpen}.
\begin{Proposition}
Let $V$ be a $(\pi ,\bbq )$-module and let $S$ be a simple $(\pi ,\overline{\bbq })$-module which occurs \textup{with multiplicity one} in $V\otimes\overline{\bbq}$. Then $S$ is defined over $k_S$, i.e. there is a unique simple sub-$(\pi ,k_S)$-module $T$ of $V\otimes k_S$ with $T\otimes_{k_S}\overline{\bbq }=S$.
\end{Proposition}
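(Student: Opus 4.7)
The plan is to realize $S$ as the unique simple submodule of $V\otimes\overline{\bbq}$ in its isomorphism class, show that this submodule is $G_S$-invariant as a subset (not merely up to isomorphism), and then descend it to $k_S$ by the machinery already developed.

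First I would use the multiplicity-one hypothesis to upgrade the abstract isomorphism $S^{[\sigma]}\simeq S$ (for $\sigma\in G_S$) into an honest set-theoretic equality of subspaces of $V\otimes\overline{\bbq}$. Concretely, $V$ is defined over $\bbq$, so $G_{\bbq}$ acts on $V\otimes\overline{\bbq}$ through the second tensor factor and permutes its sub-$(\pi,\overline{\bbq})$-modules. By the proposition relating $(-)^\sigma$ and $(-)^{[\sigma]}$, the Galois transform $S^\sigma\subseteq V\otimes\overline{\bbq}$ is isomorphic to $S^{[\sigma]}$, which for $\sigma\in G_S$ is in turn isomorphic to $S$. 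Since $S$ has multiplicity one in $V\otimes\overline{\bbq}$, its isotypical component in the maximal semisimple submodule is the unique sub-$(\pi,\overline{\bbq})$-module of $V\otimes\overline{\bbq}$ isomorphic to $S$; hence $S^\sigma=S$ as subsets, for every $\sigma\in G_S$.

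Next I would observe that $G_S$ is open by Lemma \ref{GSisOpen}, hence closed in the profinite topology on $G_{\bbq}$, so by Galois theory $G_S=\Gal(\overline{\bbq}|k_S)$; the extension $k_S\subseteq\overline{\bbq}$ is Galois in the sense of Definition \ref{DefinitionofGalois}. Applying Corollary \ref{DescentOfInvariantSubmodules} to the $(\pi,k_S)$-module $V\otimes k_S$ and to the $G_{k_S}$-stable sub-$(\pi,\overline{\bbq})$-module $S\subseteq (V\otimes k_S)\otimes_{k_S}\overline{\bbq}=V\otimes\overline{\bbq}$, we obtain a unique sub-$(\pi,k_S)$-module $T\subseteq V\otimes k_S$ with $T\otimes_{k_S}\overline{\bbq}=S$. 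The module $T$ is simple, for if $0\subsetneq T'\subsetneq T$ were a proper submodule, then $0\subsetneq T'\otimes_{k_S}\overline{\bbq}\subsetneq S$ would contradict simplicity of $S$.

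Finally, uniqueness of $T$ is built into the descent statement: by Lemma \ref{DescentForVectorSpaces} the only candidate is $T=S\cap (V\otimes k_S)=S^{G_S}$, which proves the ``unique'' clause. The only point that requires a little care is the passage from $S^{[\sigma]}\simeq S$ to the subset equality $S^\sigma=S$; this is where the multiplicity-one assumption is essential (without it one only obtains that the isotypical component of $S$ is Galois-stable, which descends a possibly larger semisimple module but not $S$ itself). Everything else is formal manipulation of the machinery from the previous section.
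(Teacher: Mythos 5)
Your proof is correct and follows the same route as the paper: use multiplicity one to promote $S^{\sigma}\simeq S^{[\sigma]}\simeq S$ to the subset equality $S^{\sigma}=S$ for $\sigma\in G_S$, then descend via Corollary \ref{DescentOfInvariantSubmodules}. The extra details you supply (openness of $G_S$ giving $G_S=\Gal(\overline{\bbq}\,|\,k_S)$, simplicity and uniqueness of $T$) are all correct and merely make explicit what the paper leaves implicit.
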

\begin{proof}
For any $\sigma\in G_S$, the submodule $S^{\sigma }$ of $V\otimes\overline{\bbq }$ is isomorphic to $S$ and hence, because of multiplicity one, must be actually \textit{equal} to $S$. So $S$ is invariant under $G_S$, and using Corollary \ref{DescentOfInvariantSubmodules} we see that it is defined over $k_S$.
\end{proof}
These considerations can be carried out in a very similar way when $\overline{\bbq }$ is replaced by $\bbc$. Every field automorphism of $\bbc$ restricts to an automorphism of $\overline{\bbq }$, hence we have a well-defined group homomorphism $\Aut\bbc\to\Aut\overline{\bbq }=G_{\bbq }$. It is surjective by an application of Zorn's lemma.
For a simple $(\pi ,\bbc )$-module $S$ we can define
\begin{equation*}
H_S=\{ \sigma\in\Aut\bbc\, |\, S^{[\sigma ]}\simeq S\} .
\end{equation*}
\begin{Definition}
Let $S$ be a simple $(\pi ,\bbc )$-module. Then its \textup{field of moduli} $k_S$ is the fixed field $\bbc^{H_S}\subseteq\bbc$.
\end{Definition}
In general we cannot say much about this field. But when $S$ is defined over $\overline{\bbq }$, say $S=T\otimes_{\overline{\bbq}}\bbc$ for some simple $(\pi ,\overline{\bbq})$-module $T$, then $H_S$ is the preimage of $G_T$ under the restriction homomorphism $\Aut\bbc\to G_{\bbq}$. In particular the field of moduli of $S$ is the same as the field of moduli of $T$, in particular a number field. Hence also the Proposition has a direct analogue:
\begin{Proposition}
Let $V$ be a $(\pi ,\bbq )$-module and let $S$ be a simple $(\pi ,\bbc )$-module which occurs \textup{with multiplicity one} in $V\otimes\bbc$. Then the moduli field $k_S\subset\bbc$ is a number field, and there is a unique simple sub-$(\pi ,k_S)$-module $T$ of $V\otimes k_S$ with $T\otimes_{k_S}\bbc =S$.\hfill $\square$
\end{Proposition}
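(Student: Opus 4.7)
The plan is to reduce the claim to the $\overline{\bbq}$-version of the Proposition just proved. The only point needing real argument is that the simple $\bbc$-module $S$ is already defined over $\overline{\bbq}$; once this is granted, the identification $k_S = k_{T'}$ (and hence the fact that $k_S$ is a number field) is exactly what is recorded in the text immediately preceding the Proposition, and the descent of $T'$ to $k_{T'}$ is provided by the preceding Proposition.

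First I would invoke the isotypical decomposition discussed at the beginning of Section~7.2. Letting $W\subseteq V$ be the maximal semisimple submodule, we have
\[
W\otimes\overline{\bbq} \;=\; \bigoplus_{\nu=1}^n S_\nu^{m_\nu}
\]
with the $S_\nu$ pairwise non-isomorphic simple $(\pi,\overline{\bbq})$-modules, and (because base change between algebraically closed fields preserves simplicity and detects isomorphism, by the corollaries on base change) the $S_\nu\otimes_{\overline{\bbq}}\bbc$ remain simple and pairwise non-isomorphic. Thus
\[
W\otimes\bbc \;=\; \bigoplus_{\nu=1}^n (S_\nu\otimes_{\overline{\bbq}}\bbc)^{m_\nu}
\]
is the isotypical decomposition of $W\otimes\bbc$. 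Since $S$ is simple and embeds in $V\otimes\bbc$, it lies in the socle $W\otimes\bbc$, hence $S \simeq S_{\nu_0}\otimes_{\overline{\bbq}}\bbc$ for a unique $\nu_0$; the multiplicity-one hypothesis on $S$ forces $m_{\nu_0}=1$, so $T' := S_{\nu_0}$ occurs with multiplicity one in $V\otimes\overline{\bbq}$.

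Now apply the preceding Proposition to $T'$: it descends uniquely to a simple sub-$(\pi,k_{T'})$-module $T\subseteq V\otimes k_{T'}$ with $T\otimes_{k_{T'}}\overline{\bbq}\simeq T'$. By the paragraph preceding the present Proposition, $k_S = k_{T'}$, so $T$ is in fact a sub-$(\pi,k_S)$-module of $V\otimes k_S$, and transitivity of base change yields
\[
T\otimes_{k_S}\bbc \;\simeq\; (T\otimes_{k_{T'}}\overline{\bbq})\otimes_{\overline{\bbq}}\bbc \;\simeq\; T'\otimes_{\overline{\bbq}}\bbc \;\simeq\; S.
\]
For uniqueness, any competitor $T''\subseteq V\otimes k_S$ with $T''\otimes_{k_S}\bbc\simeq S$ has its base extension $T''\otimes_{k_S}\overline{\bbq}$ a simple sub-$(\pi,\overline{\bbq})$-module of $V\otimes\overline{\bbq}$ whose further base change to $\bbc$ is isomorphic to $S$; multiplicity one in $V\otimes\overline{\bbq}$ then forces this submodule to equal $T'$, and uniqueness over $k_{T'}=k_S$ gives $T''=T$. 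The only genuine obstacle is the first step — ensuring that $S$ descends to $\overline{\bbq}$ — and this rests exactly on the fact, already established, that simplicity and the isotypical decomposition are preserved under the extension $\overline{\bbq}\subseteq\bbc$ of algebraically closed fields.
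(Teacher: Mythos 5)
Your argument is correct and follows essentially the route the paper intends: the text at the start of Section 7.2 already establishes via the isotypical decomposition that every simple constituent of $V\otimes\bbc$ is defined over $\overline{\bbq}$, and the paragraph immediately preceding the Proposition identifies $k_S$ with the moduli field of the $\overline{\bbq}$-descent, so the statement reduces to the $\overline{\bbq}$-version exactly as you carry out. Your reduction, including the multiplicity-one bookkeeping and the uniqueness step, matches the paper's implicit proof.
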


\subsection{The Local System of a Teichm\"{u}ller Disk}

Recall that for a translation surface $(X,\omega )$ one obtains a family of curves $f:\mathrsfs{X}\to C=\Gamma\backslash\bbh$. We shall apply the results of the previous section to the local system $\mathcal{J}=R^1f_{\ast}\bbq$ on $C=\Gamma\backslash\bbh$. We first take a look at its monodromy representation.
\begin{MonodromyRepresentation}
Recall that the fibre over the base point $\mathrm{i}\in\bbh$ is identified with $X$. Let $c_0\in C$ be the image of $\mathrm{i}$, so that we can also identify the fibre of $f$ over $c_0$ with $X$. Hence we can identify the fibre of the local system $(R^1f_{\ast }\bbq )_{c_0}$ with the cohomology group $H^1(X,\bbq )$.
Now let $a\in\pi_1(C,c_0)$ be represented by a path $\alpha :[0,1]\to C$; lift this to a path $\tilde{\alpha }:[0,1]\to\bbh$ with $\tilde{\alpha }(0)=\mathrm{i}$. Recall that under our identification of the fundamental group with $\Gamma$, the element $a$ corresponds to the unique element $D_a\in\Gamma$ with $D_a(\tilde{\alpha }(1))=\mathrm{i}$. By construction of the mirror Veech group we can write
\begin{equation}\label{DeckToVeech}
D_a=
\begin{pmatrix}
-1&0\\
0&1
\end{pmatrix}
\cdot
A
\cdot
\begin{pmatrix}
-1&0\\
0&1
\end{pmatrix}
\end{equation}
for some $A$ in the Veech group $\SL (X,\omega )$. This means that there exists some affine map $\varphi\in\Aff^+(X,\omega )$ with linear part $A$. This map can for our purposes now better be viewed as an isomorphism of translation surfaces
\begin{equation*}
\varphi :(X,\omega )\overset{\simeq }{\to }A^{-1}\cdot (X,\omega ).
\end{equation*}
This is precisely the isomorphism which identifies the two fibres $(X,\omega )$ and $A^{-1}(X,\omega )$ of $\tilde{f}$ --- the first is the fibre over $\mathrm{i}$, the second is the fibre over $D_a^{-1}(\mathrm{i})=A(\mathrm{i})$ (here the operation of the Veech group on the upper half plane is as in Proposition \ref{ActionOfVeechGroupOnVariousThings}). Hence if $[a]:H^1(X,\bbz )\to H^1(X,\bbz )$ is the monodromy action of $a$ and if we, for one moment, write $A^{-1}\cdot X$ for the Riemann surface underyling $A^{-1}\cdot (X,\omega )$, we get a commutative diagram:
\begin{equation*}
\xymatrix{
H^1(X,\bbz ) \ar[r]^{[a]} \ar@{=}[d]
& H^1(X,\bbz )\\
H^1(X,\bbz ) \ar[r]_{\mathrm{id}}
& H^1(A^{-1}\cdot X,\bbz ) \ar[u]_{\varphi^{\ast }}
}
\end{equation*}
So we have shown the following:
\end{MonodromyRepresentation}
\begin{Proposition}
The monodromy representation of $\pi_1(C,c_0)$ on $(R^1f_{\ast }\bbz )_{c_0}=H^1(X,\bbz )$ is given as follows: let $a\in\pi_1(C,c_0)$; identify $\pi_1(C,c_0)$ with $\Gamma$ as above, giving an element $D_a\in\Gamma$. Write this as in (\ref{DeckToVeech}) for some $A$ in the Veech group. This $A$ is the linear part of some affine map $\varphi\in\Aff^+(X,\omega )$. Then the monodromy operation of $a$ on $H^1(X,\bbz )$ is the map $\varphi^{\ast}$.\hfill $\square$
\end{Proposition}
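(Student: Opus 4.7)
The plan is to pull back the family to the universal cover of the base and use that a local system on a simply connected space is canonically trivial.

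First I would form the Cartesian diagram
\begin{equation*}
\xymatrix{
\widetilde{\mathrsfs{X}} \ar[r] \ar[d]_{\tilde{f}} & \mathrsfs{X} \ar[d]^{f}\\
\bbh \ar[r]_{p} & C
}
\end{equation*}
where $p$ is the universal covering of $C=\Gamma\backslash\bbh$. As noted in the paragraph \textsc{The Family Over a T-curve}, the family $\tilde{f}$ is topologically trivial: there is a diffeomorphism $\widetilde{\mathrsfs{X}}\simeq \bbh\times\Sigma_g$ over $\bbh$, and the fibre over $\tau\in\bbh$ is the Riemann surface whose complex structure is that underlying $B\cdot (X,\omega )$ for any $B\in\SL_2(\bbr )$ with $B\cdot\mathrm{i}=\tau$. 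In particular the local system $R^1\tilde{f}_{\ast}\bbz$ is canonically isomorphic to the constant sheaf $\underline{H^1(X,\bbz )}$ on $\bbh$, and parallel transport along any path in $\bbh$ is the identity of $H^1(X,\bbz )$.

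Second, I would translate the computation of monodromy on $C$ into the action of $\Gamma$ on the pulled-back local system. For $a\in\pi_1(C,c_0)$, represented by $\alpha :[0,1]\to C$, the lift $\tilde{\alpha }$ with $\tilde{\alpha }(0)=\mathrm{i}$ satisfies $\tilde{\alpha }(1)=D_a^{-1}(\mathrm{i})$. Writing $[a]$ for the monodromy, the standard description gives
\begin{equation*}
[a]\colon H^1(X,\bbz )=(R^1\tilde{f}_{\ast}\bbz )_{\mathrm{i}}\xrightarrow{\;\mathrm{parallel\ transport}\;}(R^1\tilde{f}_{\ast}\bbz )_{\tilde{\alpha }(1)}\xrightarrow{\;(D_a)_{\ast}\;}(R^1\tilde{f}_{\ast}\bbz )_{\mathrm{i}}=H^1(X,\bbz ),
\end{equation*}
where the first arrow is the identity by the previous step. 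Thus $[a]$ equals the induced map of the restriction of the Deck transformation $D_a$ to the fibre over $\tilde{\alpha }(1)$.

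Third and hardest, I would identify this restriction with $\varphi^{\ast}$. The fibre of $\tilde{f}$ over $\tilde{\alpha }(1)=D_a^{-1}(\mathrm{i})=A^{-1}\cdot\mathrm{i}$ is the Riemann surface underlying $A^{-1}\cdot (X,\omega )$; the fibre over $\mathrm{i}$ is $(X,\omega )$ itself. The Deck transformation $D_a$, being covered by an automorphism of the family $\tilde{f}$ compatible with the $\GL_2^+(\bbr )$-structure used to build it, must restrict on fibres to an affine isomorphism $A^{-1}\cdot (X,\omega )\to (X,\omega )$ of translation surfaces with linear part $A$; by the discussion preceding the short exact sequence (\ref{SESforVeechGroups}), such an affine isomorphism is exactly $\varphi$ (and this is where one uses that the construction of $\widetilde{\mathrsfs X}\to\bbh$ in terms of $(X,\omega )$ is natural, so that the deck action coincides with the chosen lift of $A$ via the identity between $(X,\omega )$ and $A^{-1}\cdot (X,\omega )$ composed with $\varphi$). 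The main obstacle is this compatibility check; after it, pulling back cohomology yields $[a]=\varphi^{\ast}$ as claimed.
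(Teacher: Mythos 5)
Your proposal follows essentially the same route as the paper: trivialize the lifted family over $\bbh$ so that parallel transport on $R^1\tilde f_{\ast}\bbz$ is the identity, express the monodromy of $a$ as the action of the deck transformation $D_a$ on the fibre over $\tilde\alpha (1)$, and identify that fibre with $A^{-1}\cdot (X,\omega )$ and the deck action with the affine map $\varphi$, so that $[a]=\varphi^{\ast}$. The compatibility check you flag as the main obstacle is treated at the same level of detail in the paper (it is exactly the observation that $\varphi :(X,\omega )\to A^{-1}\cdot (X,\omega )$ is the isomorphism identifying the two fibres), so your argument is correct and matches the paper's.
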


\begin{TheCanonicalSubvariation}
We have seen that the canonical subspace $S_{\omega }\subseteq H^1(X,\bbr )$ is preserved by the action of the affine group, hence it is also preserved by the monodromy operation, and we can write down this operation explicitly. So:
\end{TheCanonicalSubvariation}
\begin{PropositionDefinition}
There is a unique sub-local system of $\bbr$-vector spaces $\mathcal{S}_{\omega }\subseteq R^1f_{\ast }\bbr$, called the \textup{canonical subsystem}, whose fibre in $c_0$ is the canonical subspace $S_{\omega }\subseteq H^1(X,\bbr )$. In the $\bbr$-basis $(\Re\omega ,\Im\omega )$ of $S_{\omega }$, the monodromy operation of an element $G\in\Gamma =\pi_1(C,c_0)$ is given by the matrix
\begin{equation*}
\begin{pmatrix}
-1&0\\
0&1
\end{pmatrix}
\cdot G\cdot
\begin{pmatrix}
-1&0\\
0&1
\end{pmatrix}.
\end{equation*}
Hence in the basis $(-\Re\omega ,\Im\omega )$, the monodromy operation of $\Gamma$ on $S_{\omega }$ becomes the identity inclusion $\Gamma\hookrightarrow\SL_2(\bbr )$.\hfill $\square$
\end{PropositionDefinition}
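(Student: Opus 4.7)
The plan is to deduce the statement from two inputs already established earlier in the paper: (a) the description of the monodromy representation $\pi_1(C,c_0) = \Gamma \to \Aut H^1(X,\bbr)$ as $[a] = \varphi^\ast$ for an appropriate $\varphi\in\Aff^+(X,\omega)$, and (b) the results of Section 4.4 on how affine maps act on the canonical subspace $S_\omega$. Concretely, the proof splits into uniqueness, existence and an explicit matrix computation.

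For uniqueness I would simply remark that a sub-local system of $R^1f_\ast\bbr$ over a connected base is determined by its fibre at a single point: parallel transport together with monodromy invariance then fix the subspace above every other point. For existence I need to check that the subspace $S_\omega \subseteq H^1(X,\bbr)$ is stable under the monodromy. By the proposition immediately preceding the statement, the monodromy of $a\in\pi_1(C,c_0)$ is realised as $\varphi^\ast$, where $\varphi\in\Aff^+(X,\omega)$ has derivative $A\in\SL(X,\omega)$ related to $D_a\in\Gamma$ by $D_a = \operatorname{diag}(-1,1)\,A\,\operatorname{diag}(-1,1)$. The second proposition of Section 4.4 shows that every such $\varphi^\ast$ preserves $S_\omega$, so the sub-local system $\mathcal{S}_\omega$ with fibre $S_\omega$ at $c_0$ exists.

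For the explicit matrix of the monodromy on $S_\omega$ in the basis $(\Re\omega,\Im\omega)$, I would invoke the formula from Section 4.4 that $\varphi^\ast$ on $S_\omega$ is given by the linear part $A$ (obtained in translation charts from $A^\ast \de z = A\cdot \de z$); substituting $A = \operatorname{diag}(-1,1)\,G\,\operatorname{diag}(-1,1)$ with $G = D_a$ yields precisely the asserted matrix. Passing to the new basis $(-\Re\omega,\Im\omega)$ amounts to conjugating every monodromy matrix by $\operatorname{diag}(-1,1)$, which cancels the outer factors and leaves the tautological inclusion $\Gamma\hookrightarrow\SL_2(\bbr)$.

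The hard part is not any single step; everything reduces to bookkeeping. The only real point to watch is keeping the sign/orientation conventions (Veech group versus mirror Veech group, left action of the mapping class group, the identification $\pi_1(C,c_0)\simeq\Gamma$ via deck transformations satisfying $D_a\tilde\alpha(1)=\mathrm{i}$) internally consistent, so that the two occurrences of $\operatorname{diag}(-1,1)$ coming from the definition of $\Gamma$ cancel cleanly in the change of basis and produce the tautological inclusion rather than an inverse or a conjugate of it.
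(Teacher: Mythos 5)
Your uniqueness and existence arguments are exactly what the paper intends (the statement carries no separate proof and is meant to follow from the two preceding propositions): a sub-local system over a connected base is determined by its fibre at one point, and $S_{\omega }$ is monodromy-invariant because every $\varphi^{\ast }$ with $\varphi\in\Aff^+(X,\omega )$ preserves it. The gap is in the matrix computation. You quote the input as ``$\varphi^{\ast }$ on $S_{\omega }$ is given by the linear part $A$''; but the proposition you are citing says, and explicitly emphasizes, that in the basis $(\Re\omega ,\Im\omega )$ the matrix of $\varphi^{\ast }|_{S_{\omega }}$ is $A^{\mathsf{T}}$, \emph{not} $A$: the relation $(\varphi^{\ast }\Re\omega ,\varphi^{\ast }\Im\omega )^{\mathsf{T}}=A\cdot (\Re\omega ,\Im\omega )^{\mathsf{T}}$ expresses the images of the basis vectors through the rows of $A$, so the matrix of the linear map is the transpose. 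Substituting $A=\operatorname{diag}(-1,1)\, G\,\operatorname{diag}(-1,1)$ into the correct input yields $\operatorname{diag}(-1,1)\, G^{\mathsf{T}}\,\operatorname{diag}(-1,1)$ rather than the asserted $\operatorname{diag}(-1,1)\, G\,\operatorname{diag}(-1,1)$.

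This is not a typo you can wave away: after your change of basis the derivation would give $G\mapsto G^{\mathsf{T}}$, which is an anti-homomorphism and so cannot be the monodromy representation (a homomorphism under the paper's convention $[ab]=[a]\circ [b]$) unless $\Gamma$ were abelian, which a non-elementary Veech group is not. The resolution is that an inverse must enter somewhere in the chain $a\mapsto D_a\mapsto A\mapsto\varphi\mapsto\varphi^{\ast }$, because $\varphi\mapsto\varphi^{\ast }$ is contravariant while $\varphi\mapsto D\varphi$ is covariant; for matrices in $\SL_2$ one has $(A^{-1})^{\mathsf{T}}$ conjugate to $A$, which is what ultimately lets the statement come out as a genuine representation. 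So the ``bookkeeping'' you defer is precisely where the content lies: you must decide whether the monodromy of $a$ is realised by $\varphi^{\ast }$ or $\varphi^{-1,\ast }$ (equivalently whether $D_a$ corresponds to $A$ or to $A^{-1}$), combine that with the transpose from the action on $S_{\omega }$, and only then cancel the two factors of $\operatorname{diag}(-1,1)$ against the change of basis $(\Re\omega ,\Im\omega )\rightsquigarrow (-\Re\omega ,\Im\omega )$.
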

We can describe this subsystem in a more revealing way. Namely consider its lift to $\bbh$; this is a subsystem of the trivial local system $R^1\tilde{f}_{\ast }\bbr$ on $\bbh$ (this is the lift of $R^1f_{\ast}\bbr$ along the projection $\bbh\to\Gamma\backslash\bbh$). Now let $\tau\in\bbh$ be any point, and write $\tau =A\cdot\mathrm{i}$. Then the fibre $\mathrsfs{X}_{\tau }$ is the Riemann surface $X'$ underlying $(X',\omega ')=A\cdot (X,\omega )$. But then we have
\begin{equation*}
\begin{pmatrix}
\Re\omega '\\
\Im\omega '
\end{pmatrix}
=A\cdot
\begin{pmatrix}
\Re\omega \\
\Im\omega
\end{pmatrix}
\end{equation*}
so $\Re\omega '$ and $\Im\omega '$ span the same real vector space as $\Re\omega$ and $\Im\omega$; in other words: $S_{\omega '}=S_{\omega }$. So the fibre of the canonical subsystem at any point in $\bbh$ is the canonical subspace of any of the one-forms corresponding to that point.
\begin{MFofCanonicalSubvariation}
In the case where the Veech group is a large enough, we can say much more about the local system.
\end{MFofCanonicalSubvariation}
\begin{Proposition}
Assume that the Veech group of $(X,\omega )$ is non-elementary. Then the canonical subsystem $\mathcal{S}_{\omega }$ is simple, and its multiplicity in $R^1f_{\ast }\bbr$ is one.
\end{Proposition}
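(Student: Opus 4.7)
The plan is to translate both assertions into statements about the $\Gamma$-module structure on $S_\omega$ using the explicit monodromy computed just above, and then to read off the conclusions from facts already established in the paper.

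First I would prove simplicity. By the preceding Proposition--Definition, in the basis $(-\Re\omega,\Im\omega)$ the monodromy representation of $\Gamma = \pi_1(C,c_0)$ on $S_\omega$ is literally the inclusion $\Gamma \hookrightarrow \SL_2(\bbr)$ acting on $\bbr^2$. So simplicity of $\mathcal{S}_\omega$ as a local system of real vector spaces is equivalent to irreducibility of the tautological representation $\Gamma \subset \SL_2(\bbr)$ on $\bbr^2$. A proper nonzero $\Gamma$-invariant subspace of $\bbr^2$ must be a $\Gamma$-invariant line $L \subset \bbr^2$; passing to $\mathbb{P}^1(\bbr) = \partial\bbh$, this produces a point fixed by every element of $\Gamma$. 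But by the discussion in the paragraph \textsc{Elementary Subgroups} a subgroup of $\SL_2(\bbr)$ with a fixed point in $\overline{\mathbf{H}^2}$ is elementary, contradicting the hypothesis. Hence no such $L$ exists and $\mathcal{S}_\omega$ is simple.

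For the multiplicity statement I would argue by contradiction using the spectral information collected in Section~5.4. Suppose $\mathcal{S}_\omega$ occurs in $R^1f_\ast\bbr$ with multiplicity at least two; then the fibre $H^1(X,\bbr)$ contains, as a $\Gamma$-subrepresentation, a copy of $S_\omega \oplus S_\omega$. Because $\Gamma$ is non-elementary it contains infinitely many hyperbolic elements (as noted in \textsc{Elementary Subgroups}); pick one of the form $D_a$ corresponding to $\varphi \in \Aff^+(X,\omega)$ with hyperbolic derivative $A = D\varphi$ having eigenvalues $\lambda>1>\lambda^{-1}$. On each copy of $S_\omega$ the action of $\varphi^{\ast}$ is conjugate to $A$ (via the explicit formula above), so each copy contributes $\lambda$ and $\lambda^{-1}$ to the spectrum of $\varphi^\ast$ on $H^1(X,\bbr)$. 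Thus each of $\lambda, \lambda^{-1}$ would occur with multiplicity at least two as eigenvalues of $\varphi^\ast$ on $H^1(X,\bbr)$, contradicting Proposition~\ref{EigenvaluesOfHyperbolicInVGActingOnH}, which says they occur as simple eigenvalues.

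The only point that might require a little care is to guarantee that a non-elementary Fuchsian group actually contains a hyperbolic element with $\lambda \neq \pm 1$ whose eigenspace decomposition really detects multiplicity in the real local system; but this is precisely what \textsc{Elementary Subgroups} and Proposition~\ref{EigenvaluesOfHyperbolicInVGActingOnH} combined already give us, so the proof is essentially a bookkeeping exercise rather than a new computation.
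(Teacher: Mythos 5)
Your proof is correct and follows essentially the same route as the paper: simplicity is ruled out by non-elementarity of $\Gamma$ applied to the rank-two monodromy (you via a fixed point on $\partial\bbh$ coming from an invariant line, the paper via the reduced monodromy landing in a commutative subgroup), and multiplicity one is deduced in both cases from the simplicity of the leading eigenvalue of a hyperbolic element as in Proposition \ref{EigenvaluesOfHyperbolicInVGActingOnH}. No gaps.
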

\begin{proof}
Any finite index subgroup of a non-elementary group is non-elementary. Hence also $\Gamma$ is non-elementary.

The canonical subsystem has rank two. If it were not simple, its monodromy would be contained in $\bbr^{\times }\times\bbr^{\times }$, in particular be commutative. But the monodromy representation of $\mathcal{S}_{\omega }$ is faithful, and $\Gamma$ is noncommutative. Hence $\mathcal{S}_{\omega }$ is simple.

Now choose some hyperbolic element in $\Gamma$. It operates on a one-dimensional subspace of $S_{\omega }$ with the unique maximal eigenvalue (see Prop. \ref{EigenvaluesOfHyperbolicInVGActingOnH}). Hence $\mathcal{S}_{\omega }$ appears only once in $R^1f_{\ast }\bbr$.
\end{proof}
\begin{Proposition}
Assume that $\SL (X,\omega )$ is non-elementary and contains a parabolic element (this condition is always satisfied if $(X,\omega )$ is Veech). Then the moduli field of $\mathcal{S}_{\omega }\otimes_{\bbr }\bbc \subseteq R^1f_{\ast }\bbc$ is equal to the trace field $k$ of $\SL (X,\omega )$.
\end{Proposition}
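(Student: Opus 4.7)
The plan is to establish two inclusions between the moduli field $k_S$ of $S:=\mathcal{S}_{\omega}\otimes_{\bbr}\bbc$ and the trace field $k=\bbq(\tr\SL(X,\omega))$. Recall from the preceding description of the canonical subsystem that, in the basis $(-\Re\omega,\Im\omega)$, the monodromy representation is literally the identity inclusion $\rho\colon\Gamma\hookrightarrow\SL_2(\bbr)$, where $\Gamma$ is the finite index torsion-free subgroup of the mirror Veech group $\Gamma(X,\omega)$ through which the family $f$ is defined. Complexifying gives $\rho\colon\Gamma\to\SL_2(\bbc)$, and the hypothesis that $\SL(X,\omega)$ is non-elementary with a parabolic element passes to $\Gamma(X,\omega)$ and, after replacing $\Gamma$ by a large enough finite index subgroup if necessary, to $\Gamma$ itself.

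For the inclusion $k_S\subseteq k$, I would apply Corollary \ref{GroupWithParabolicIntoSLTwoTraceField} to obtain a matrix $M\in\SL_2(\bbr)$ such that $M\Gamma M^{-1}\subset\SL_2(k)$. After conjugating the distinguished basis of $S_{\omega}$ by $M$, the representation $\rho$ takes values in $\SL_2(k)\subset\SL_2(\bbc)$, so $S$ descends to a $(\pi,k)$-module $\mathcal{T}$ with $\mathcal{T}\otimes_k\bbc\simeq S$. For any $\sigma\in\Aut(\bbc/k)$, the canonical map $\mathcal{T}\otimes_k\bbc\to(\mathcal{T}\otimes_k\bbc)^{[\sigma]}$ sending $t\otimes\lambda\mapsto t\otimes\lambda$ is $\bbc$-linear precisely because $\sigma|_k=\mathrm{id}$ (the defining identity $\lambda\ast v=\sigma^{-1}(\lambda)v$ reduces to $\lambda v$ when we can pass scalars into the $k$-factor). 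Hence $S^{[\sigma]}\simeq S$ and $\sigma\in H_S$. Since $\bbc/k$ is Galois in the sense of Definition \ref{DefinitionofGalois}, taking fixed fields yields $k_S=\bbc^{H_S}\subseteq\bbc^{\Aut(\bbc/k)}=k$.

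For the reverse inclusion $k\subseteq k_S$, I would argue by character theory. Let $\sigma\in H_S$, so that $S^{[\sigma]}\simeq S$ as $(\pi,\bbc)$-modules. As recalled in the discussion of Galois conjugation of $(\pi,K)$-modules, the monodromy of $S^{[\sigma]}$ in the same basis is given by $\rho^{[\sigma]}(\gamma)=\sigma(\rho(\gamma))$ (entry-wise application of $\sigma$). An isomorphism of $\bbc$-representations conjugates one into the other in $\GL_2(\bbc)$, so $\tr(\sigma(\rho(\gamma)))=\tr(\rho(\gamma))$, i.e.\ $\sigma$ fixes $\tr\rho(\gamma)$ for every $\gamma\in\Gamma$. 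These traces generate the trace field of $\Gamma$, which equals $k$ by Corollary \ref{TraceFieldSameForFiniteIndex} (using the Veech condition implicit in the setting of most applications). Consequently $\sigma|_k=\mathrm{id}$, so $H_S\subseteq\Aut(\bbc/k)$, giving $k_S\supseteq k$, and combining with the first step yields $k_S=k$.

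The main obstacle is the second-to-last step: verifying that the trace field of the specific finite-index subgroup $\Gamma$ really equals $k=\bbq(\tr\SL(X,\omega))$, rather than a possibly smaller subfield. When $(X,\omega)$ is a Veech surface this is immediate from Corollary \ref{TraceFieldSameForFiniteIndex}, and in the general non-elementary-with-parabolic situation one can reduce to this case by enlarging $\Gamma$ within $\Gamma(X,\omega)$ to any non-elementary finite index subgroup for which the Zariski closure of $\rho(\Gamma)$ still spans the full quaternion algebra $M_2(k)=A_0\Gamma(X,\omega)$ of Theorem \ref{QuaternionAlgebraOfNonelementaryGroup}; such a choice is always possible because $A_0\Gamma(X,\omega)$ is spanned by finitely many group elements.
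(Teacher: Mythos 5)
Your proposal is correct and follows essentially the same route as the paper, whose entire proof reads: the group can be conjugated into $\SL_2(k)$ by Corollary \ref{GroupWithParabolicIntoSLTwoTraceField}, so the moduli field is contained in the trace field, and ``the other inclusion always holds'' (i.e.\ your trace-invariance argument). You are in fact more careful than the paper on the one genuine subtlety --- that the monodromy group is only a finite-index subgroup $\Gamma$ of the mirror Veech group, whose trace field could a priori be smaller than $k$ --- a point the paper silently elides and which, outside the Veech case where Corollary \ref{TraceFieldSameForFiniteIndex} applies, does require an argument along the lines you sketch.
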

\begin{proof}
We have seen that $\SL (X,\omega )$ can be conjugated into $\SL_2(k)$, hence the moduli field is contained in the trace field. The other inclusion always holds.
\end{proof}
In particular, the canonical subsystem is defined over the trace field. It hence has well-defined Galois conjugates $\mathcal{S}_{\omega }^{\sigma }$ for every $\sigma :K\to\bbc$.

\subsection{Variations of (Pseudo-) Hodge Structure}

We now study how the Hodge decomposition (or rather the Hodge filtration) behaves in families.
\begin{Definition}
Let $X$ be a complex space and let $K$ be a subring of $\bbr $. A \textup{variation of (pure) $K$-Hodge structure (short: VHS) $\mathbb{V}$ of weight $k$} on $X$ consists of a local system $\mathcal{V}_K$ of finitely generated free $K$-modules and an exhausting filtration of the holomorphic vector bundle $\mathrsfs{V}=\mathcal{V}_K\otimes_K\mathrsfs{O}_X$ by holomorphic sub-vector bundles $F^p\mathrsfs{V}$ for $p\in\bbz$, such that the following conditions hold:
\begin{enumerate}
\item For every point $x\in X$, identifying the fibre $\mathrsfs{V}_x$ with $\mathcal{V}_{K ,x}\otimes_K\bbc$, the data $(\mathcal{V}_{\bbz ,x},F^p\mathrsfs{V}_x)$ define a pure $K$-Hodge structure of weight $k$. We denote this pure Hodge structure by $\mathbb{V}_x$.
\item ``Griffiths' transversality'': if $\nabla$ denotes the unique integrable connection on $\mathrsfs{V}$ for which the sections of $\mathcal{V}_K$ are flat, then $\nabla F^p\mathrsfs{V}\subseteq F^{p-1}\mathrsfs{V}\otimes\Omega_X^1$.
\end{enumerate}
\end{Definition}

Note that a VHS on $\Spec\bbc$ is the same as a pure Hodge structure. Easy examples of VHS are constant variations of Hodge structure (which of course do not really deserve the name): let $M$ be a pure Hodge structure of weight $k$ and let $X$ be any complex space. Then we construct the constant VHS $\mathbb{V}=\underline{M}_X$ (often by abuse of notation just denoted by $M$) as follows:
 The underlying local system of $K$-modules $\mathcal{V}_K$ is the constant sheaf $M_K$. Note that then $\mathrsfs{V}$ is the constant vector bundle constructed from the vector space $M_{\bbc }$, and we let $F^p\mathrsfs{V}$ be the constant subbundle constructed from the sub-vector space $F^pM_{\bbc}$. This is easily checked to define a pure VHS on $X$; it is effective if and only if $M$ is.

\begin{Definition}
Let $X$ be a complex space and let $\mathbb{V}$ be a pure variation of $K$-Hodge structure on $X$. Then a \textup{polarization} of $\mathbb{V}$ is a morphism of VHS $\mathbb{V}\otimes\mathbb{V}\to K(-1)$ such that the induced morphism of pure Hodge structures $\mathbb{V}_x\otimes\mathbb{V}_x\to K(-1)$ is a polarization for every $x\in X$.
\end{Definition}
We now extend our notion of ``Jacobian type'' to variations:
\begin{Definition}
Let $X$ be a complex space, and let $K$ be a subring of $\bbr$. An \textup{$K$-VHS of Jacobian type of rank $g$} on $X$ is a pure variation of $K$-Hodge structure $\mathbb{J}$ on $X$ of weight one together with a principal polarization $S:\mathbb{V}\otimes\mathbb{V}\to K(-1)$, such that every fibre $\mathbb{J}_x$ with the induced polarization is a Hodge structure of Jacobian type of rank $g$.
\end{Definition}
When $f:\mathrsfs{X}\to C$ is a family of curves of genus $g$, we can construct a canonical variation of $K$-Hodge structure of Jacobian type of rank $g$ which we denote by $R^1f_{\ast }K(0)$. Its underlying local system is $R^1f_{\ast }K$, and the Hodge filtration and the polarization are defined fiberwise. The connection $\nabla$ is then called the \textit{Gauss-Manin connection} of the family. That this really defines a polarized variation of Hodge structure is \cite[Corollary 10.32]{PetersSteenbrink}.
\begin{HodgeSubbundles}
So let $K$ be a subring of $\bbr$ and let $\mathbb{V}$ be an $K$-variation of Hodge structures of weight $k$ on $S$, with associated holomorphic vector bundle $\mathrsfs{V}=\mathcal{V}_K\otimes_K\mathrsfs{O}_S$. Denote its total space by $V\to S$. The connection can then also be interpreted as a connection of the smooth vector bundle underlying $V$, with the additional nice property that it sends holomorphic sections to holomorphic $V$-valued differential forms.

 The Hodge subbundles $F^pV$ of $V$ are holomorphic subbundles, and their complex conjugates (with respect to the real structure given by $\mathbb{V}_K$) are antiholomorphic subbundles of $V$. Their intersections can still be understood as smooth subbundles, so upon defining
\begin{equation*}
V^{p,q}=F^pV\cap\overline{F^qV}
\end{equation*}
we get a ``Hodge decomposition'' in the $\mathrsfs{C}^{\infty }$ category:
\begin{equation}\label{HodgeDecompositionOfVectorBundles}
V=\bigoplus_{p+q=k}V^{p,q}
\end{equation}
\textit{as smooth vector bundles.} This decomposition induces a Hodge decomposition on every fibre of $\mathcal{V}_K$, and we can reconstruct the whole variation of Hodge structures from the local system $\mathcal{V}_K$ plus the decomposition. So the obvious question is, starting from a local system and a smooth decomposition (\ref{HodgeDecompositionOfVectorBundles}), when do these data define a variation of Hodge structures?
\end{HodgeSubbundles}
\begin{Proposition}\label{VHSasVPsHS}
Let $S$ be a complex manifold, let $\mathcal{V}_K$ be a local system of finitely generated free $K$-modules on $S$. Denote by $\mathcal{V}_{\bbc }=\mathcal{V}_K\otimes_K\bbc$ the associated complex local system which can in turn be interpreted as a holomorphic vector bundle $V$ with a flat holomorphic connection $\nabla$. Furthermore let there be given a decomposition (\ref{HodgeDecompositionOfVectorBundles}) of smooth complex vector bundles. Then these data define a variation of Hodge structure if and only if all of the following conditions are satisfied:
\begin{enumerate}
    \item For every point $s\in S$, the fibres of (\ref{HodgeDecompositionOfVectorBundles}) define a Hodge decomposition on $(\mathcal{V}_K)_s$.
    \item The subbundles $F^pV=\bigoplus_{r\ge p}V^{r,k-r}$ are holomorphic (equivalently, the subbundles $\overline{F^qV}=\bigoplus_{r\ge q}V^{k-r,r}$ are antiholomorphic).
    \item The $\mathrsfs{C}^{\infty}$ formulation of Griffiths' transversality: denote by $TM$ the $\mathrsfs{C}^{\infty }$ tangential bundle of $S$. Then $\nabla$ sends (smooth sections of) $V^p$ to $V^{p-1}\otimes TM$, and $\overline{F^qV}$ to $\overline{F^{q-1}V}$.
\end{enumerate}
\end{Proposition}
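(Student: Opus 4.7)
The plan is to prove both implications by carefully tracking the relation between the holomorphic flat connection $\nabla$ on $\mathrsfs{V}$ and its smooth extension. Throughout, I write $\nabla = \nabla^{1,0}+\nabla^{0,1}$ for the decomposition of the smooth connection into its $(1,0)$- and $(0,1)$-parts; since $\nabla$ is a holomorphic connection on the holomorphic bundle $\mathrsfs{V}$, one has $\nabla^{0,1}=\overline{\partial}$. Moreover, the local system $\mathcal{V}_K$ being contained in $\mathcal{V}_\bbr:=\mathcal{V}_K\otimes_K\bbr$ implies that the flat connection on $\mathcal{V}_\bbc$ commutes with the complex conjugation induced by $\mathcal{V}_\bbr$; I shall use this ``reality'' property freely.

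For the forward direction, assume that the data come from a pure VHS. Setting $V^{p,q}=F^pV\cap\overline{F^qV}$ as smooth subbundles, condition (i) is exactly the axiom that every fibre carries a pure Hodge structure of weight $k$, and condition (ii) is tautological because the $F^p\mathrsfs{V}$ are by definition holomorphic subbundles. It remains to deduce the smooth Griffiths transversality (iii). The holomorphic Griffiths transversality $\nabla F^p\mathrsfs{V}\subseteq F^{p-1}\mathrsfs{V}\otimes\Omega_S^1$ gives the statement on $F^pV$ in the $(1,0)$-direction, while $\overline{\partial}$ preserves $F^pV$ because $F^p\mathrsfs{V}$ is holomorphic. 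Applying complex conjugation and using reality of $\nabla$ yields the corresponding statement for $\overline{F^qV}$: namely, $\nabla\overline{F^qV}\subseteq\overline{F^{q-1}V}\otimes T^\ast S_\bbc$. Together these give condition (iii).

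For the converse, assume (i), (ii), (iii). The filtration $F^pV=\bigoplus_{r\ge p}V^{r,k-r}$ is holomorphic by (ii), and every fibre is a pure Hodge structure of weight $k$ by (i); hence the only remaining axiom is holomorphic Griffiths transversality. Let $s$ be a local holomorphic section of $F^p\mathrsfs{V}$. Then $\nabla s$, being the image under a holomorphic connection, is a holomorphic section of $\mathrsfs{V}\otimes\Omega_S^1\subseteq V\otimes T^\ast S_\bbc$. On the other hand, (iii) applied to the smooth section $s$ shows that $\nabla s$ also takes values in $F^{p-1}V\otimes T^\ast S_\bbc$. Taking the intersection,
\begin{equation*}
\nabla s\in \bigl(F^{p-1}V\otimes T^\ast S_\bbc\bigr)\cap\bigl(\mathrsfs{V}\otimes\Omega_S^1\bigr)=F^{p-1}\mathrsfs{V}\otimes\Omega_S^1,
\end{equation*}
which is the required holomorphic Griffiths transversality.

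The main obstacle is bookkeeping: one must keep clear the distinction between the holomorphic filtration $F^p\mathrsfs{V}$ and its smooth counterpart $F^pV$, and similarly for the antiholomorphic $\overline{F^qV}$, and exploit that the flat connection intertwines complex conjugation on $\mathcal{V}_\bbr$-sections. Once those identifications are made, both directions are essentially forced by the interplay between the $(1,0)$- and $(0,1)$-parts of $\nabla$ and the fact that a holomorphic one-form with values in a smooth subbundle of $V\otimes T^\ast S_\bbc$ automatically lies in that subbundle tensored with $\Omega_S^1$.
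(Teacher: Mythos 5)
The paper states Proposition \ref{VHSasVPsHS} without any proof at all, so there is nothing to compare your argument against line by line; what you have written supplies the missing verification, and it is correct. Both directions are handled the standard way: for the forward direction you split the flat connection as $\nabla=\nabla^{1,0}+\overline{\partial}$, observe that $\overline{\partial}$ preserves the holomorphic subbundle $F^pV$ while the holomorphic Griffiths transversality controls $\nabla^{1,0}$, and then transport the statement to $\overline{F^qV}$ using that $\nabla$ is defined over $K\subseteq\bbr$ and hence commutes with the conjugation coming from $\mathcal{V}_{\bbr}$; for the converse you correctly reduce holomorphic transversality to the observation that a holomorphic section of $\mathrsfs{V}\otimes\Omega^1_S$ taking values pointwise in the holomorphic subbundle $F^{p-1}V$ is a holomorphic section of $F^{p-1}\mathrsfs{V}\otimes\Omega^1_S$. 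One cosmetic point worth flagging: as printed, condition (iii) of the Proposition says that $\nabla$ sends $V^p$ to $V^{p-1}\otimes TM$, which taken literally is neither correct (the image of $V^{p,q}$ lands in $(V^{p+1,q-1}\oplus V^{p,q}\oplus V^{p-1,q+1})$ tensored with one-forms, not the tangent bundle) nor what is used; you silently and rightly read it as the filtration-level statement $\nabla F^pV\subseteq F^{p-1}V\otimes T^{\ast}S_{\bbc}$ together with its conjugate, which is the form in which the condition enters both implications.
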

\begin{VariationsofPsHS}This proposition makes it clear how to extend the concept of variations of Hodge structure to pseudo-Hodge structures:
\end{VariationsofPsHS}
\begin{Definition}\label{DefinitionOfVPsHS}
Let $S$ be a complex manifold and let $K$ be a subring of $\bbc$. A \textup{variation of pseudo-Hodge structure} $\mathbb{V}$ (over $K$) on $S$ consists of the following data:
\begin{enumerate}
    \item a local system of finitely generated free $K$-modules $\mathcal{V}_K$ on $S$ --- write $\mathcal{V}_{\bbc }=\mathcal{V}_K\otimes_K\bbc$ and interpret this as a holomorphic vector bundle $V$ on $S$ with integrable holomorphic connection $\nabla$,
    \item a decomposition of smooth vector bundles
    \begin{equation}
    V=\bigoplus_{p\in\bbz }V^p,
    \end{equation}
\end{enumerate}
subject to the following axioms:
\begin{enumerate}
    \item the subbundles $F^pV=\bigoplus_{r\ge p}M^r$ are holomorphic,
    \item the subbundles $\overline{F}^qV=\bigoplus_{r\le -q}V^q$ are antiholomorphic and
    \item Griffiths' transversality as in Proposition \ref{VHSasVPsHS}, (iii).
\end{enumerate}
\end{Definition}
Note that since there is in general no complex conjugation defined on $V$, the notation $\overline{F}^qV$ is purely symbolical and intended to remind the reader of the classical situation. Also note that a variation of pseudo-Hodge structure $\mathbb{V}$ on $X$ really defines a pseudo-Hodge structure $\mathbb{V}_s$ for every $s\in S$ by ``taking fibres''. We leave out the usual abstract nonsense (definition of morphisms etc.) and rather explain how to obtain a variation of pseudo-Hodge structure from a variation of Hodge structure.

If $K$ is a subring of $\bbr$ and $\mathbb{V}$ is a variation of $K$-Hodge structure of weight $k$ on $X$, Proposition \ref{VHSasVPsHS} shows that we can define an associated variation of $K$-pseudo-Hodge structure $\mathbb{V}^{\psi}$ on $X$ along the lines of the forgetful functor from Hodge structures to pseudo-Hodge structures described above. Of course Definition \ref{DefinitionOfVPsHS} is just made in the most reasonable way to make this possible. We now give an explicit description of $\mathbb{V}^{\psi}$.

The underlying local system is the same as that of $\mathbb{V}$, i.e. $\mathbb{V}_K^{\psi}=\mathbb{V}_K$. The holomorphic bundle $V$ is decomposed into smooth subbundles as described in (\ref{HodgeDecompositionOfVectorBundles}), and we set $V^p=V^{p,k-p}$. Note that then the definition of the Hodge filtration for the variation of Hodge structures and the variation of pseudo-Hodge structures agree; yet the image of the complex conjugate $\overline{F^pV}$ of the Hodge subbundles with respect to the $K$-structure is (in the pseudo-Hodge notation) not $\overline{F}^qV$ but $\overline{F}^{q-k}V$. This may be confusing, but it is the price we have to pay for ignoring weights on the pseudo-Hodge side. This will not play a r\^{o}le in subsequent considerations.

\begin{Definition}
Let $K$ be a subring of $\bbc$ which is stable under complex conjugation, let $X$ be a complex manifold and let $\mathbb{V}$ be a variation of $K$-pseudo-Hodge structure on $X$. A \textup{polarization} of $\mathbb{V}$ is a map of local systems $H:\mathbb{V}_K\times\mathbb{V}_K\to K$ which defines a polarization on each fibre $\mathbb{V}_s$.
\end{Definition}
The discussion of the relation between this notion of polarization and the corresponding notion for variations of Hodge structure is completely parallel to the case over a point, so we omit it.

\subsection{Structure Theorems for Variations of Hodge Structure}

\begin{AThmofSchmid}
A complex manifold $X$ is called \textit{compactifiable} if there exists a compact complex manifold $\overline{X}$ and a normal crossings divisor $D\subset\overline{X}$ such that $X\simeq\overline{X}\smallsetminus D$. By Hironaka's theorem on the resolution of singularities, every smooth complex algebraic variety is compactifiable. On a compactifiable complex manifold, every plurisubharmonic function which is bounded above is constant. In particular a bounded domain in $\bbc^n$ is never compactifiable. We also see that a Riemann surface is compactifiable if and only if it is of finite type.

There are strong results about variations of Hodge structure which need that the base manifold be compactifiable. Their proofs can all be reduced to the following theorem of Wilfried Schmid, see \cite[Thm. 7.22]{Schmid73}:
\end{AThmofSchmid}
\begin{Theorem}[Schmid]
Let $X$ be a compactifiable complex manifold, let $\mathbb{V}$ be a polarizable variation of $\bbc$-pseudo-Hodge structure on $X$ and let $e\in H^0(X,\mathcal{V}_{\bbc })$. Viewing this $e$ as a flat section of the associated smooth vector bundle, its components in the Hodge decomposition are also flat.
\end{Theorem}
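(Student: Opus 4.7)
The plan is to promote the polarization to a positive-definite Hodge metric on the smooth bundle underlying $\mathcal{V}_{\bbc}$ and then exploit a maximum-principle argument on $X$. Specifically, first I would use the polarization $H$ to equip the smooth bundle $V$ with a positive-definite Hermitian metric $h$ by setting $h|_{V^p}=(-1)^p H$ and declaring the decomposition $V=\bigoplus_p V^p$ to be $h$-orthogonal. Because $H$ itself is flat with respect to $\nabla$ (it is a morphism of local systems into the constant sheaf $\bbc$), the metric $h$ is compatible with $\nabla$ modulo the second fundamental forms of the smooth splitting, which precisely measure the failure of the smooth subbundles $V^p$ to be $\nabla$-stable.

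Next I would take the flat section $e$, form its smooth Hodge decomposition $e=\sum_p e^p$, and focus on the extremal component. Let $p_0$ be the largest index with $e^{p_0}\ne 0$. Using $F^{p_0}V$ is holomorphic and $\overline{F}^{-p_0}V$ is antiholomorphic (the only pieces of the pseudo-Hodge axioms one actually needs here), together with Griffiths' transversality and the identity $\nabla e=0$, one computes
\begin{equation*}
\sqrt{-1}\,\partial\bar{\partial}\,\|e^{p_0}\|_h^{\,2}\ =\ \sqrt{-1}\,\langle \nabla'e^{p_0},\nabla'e^{p_0}\rangle_h\ -\ \text{(curvature term of definite sign)},
\end{equation*}
where $\nabla'$ denotes the $(1,0)$-part. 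Transversality forces the cross-terms between $e^{p_0}$ and lower Hodge pieces to lie in the orthogonal complement, and the positivity of $h$ on each $V^p$ makes the curvature term have the right sign. The outcome is that $\|e^{p_0}\|_h^{\,2}$ is plurisubharmonic.

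Now compactifiability enters: every plurisubharmonic function bounded above on a compactifiable complex manifold is constant. Constancy of $\|e^{p_0}\|_h^{\,2}$, combined with the $\partial\bar{\partial}$-identity above, forces $\nabla' e^{p_0}=0$; applying the same argument to the conjugate filtration (using that $\overline{F}^\bullet V$ is antiholomorphic) gives $\nabla'' e^{p_0}=0$, hence $\nabla e^{p_0}=0$. Then $e-e^{p_0}$ is a new flat section with strictly smaller support in the Hodge grading, and descending induction on the number of nonzero Hodge pieces concludes.

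The main obstacle I expect is two-fold. First, boundedness above of $\|e^{p_0}\|_h^{\,2}$ is not automatic, since the Hodge metric need not be globally bounded; I would circumvent this by working instead with the logarithm of the ratio $\|e^{p_0}\|_h^{\,2}/|H(e,e)|$ (where $H(e,e)$ is constant by flatness of $e$ and of $H$) or, equivalently, by bounding $\|e^{p_0}\|_h$ in terms of the largest Hodge component and iterating from the top. Second, because we are in the pseudo-Hodge setting there is no weight and no Hodge symmetry, so one has to check carefully that Griffiths' curvature formula for Hodge bundles goes through using only the holomorphicity of $F^\bullet V$, the antiholomorphicity of $\overline{F}^\bullet V$, and the signed positivity of the polarization on each $V^p$. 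If this direct route proves too delicate, a fallback is to tensor $\mathbb{V}$ with a shifted copy of itself to restore Hodge symmetry and then invoke Schmid's Theorem 7.22 for the resulting genuine VHS verbatim.
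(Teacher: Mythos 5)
Your outline --- positive Hodge metric built from the polarization, plurisubharmonicity of the norm of the extremal Hodge component of a flat section, constancy of bounded-above plurisubharmonic functions on a compactifiable manifold, extraction of $\nabla e^{p_0}=0$ from the equality case, and descending induction --- is in fact the skeleton of the proof of the theorem the paper cites; the paper itself gives no argument beyond the reference to Schmid's Theorem 7.22 and the remark that his proof uses neither the integral structure nor Hodge symmetry. The curvature computation you sketch and the equality-case step are correct in outline, and your second worry (that only holomorphicity of $F^{\bullet}V$, antiholomorphicity of $\overline{F}^{\bullet}V$, transversality and the signed positivity of $H$ on each $V^p$ are used) is resolved affirmatively, exactly as the paper asserts.

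The genuine gap is the boundedness of $\lVert e^{p_0}\rVert_h^2$, and your proposed repair does not close it. Dividing by $\lvert H(e,e)\rvert$ is division by a constant (which may moreover vanish, since $H(e,e)=\sum_p(-1)^p\lVert e^p\rVert_h^2$ is an alternating sum), so it cannot affect boundedness; and ``bounding $\lVert e^{p_0}\rVert_h$ in terms of the largest Hodge component'' is circular, because $e^{p_0}$ \emph{is} the largest Hodge component. The constancy of $H(e,e)$ is no help either: the signs permit all the individual norms to blow up simultaneously. That this is a real issue is visible already in the weight-one, rank-two model over $\bbh$ (the universal elliptic curve), where the flat section dual to a fixed cycle has $\lVert e^{1,0}\rVert_h^2=\tfrac{1}{2\Im\tau}$ --- plurisubharmonic and unbounded; and on a compactifiable base an unbounded plurisubharmonic function need not be constant ($\log\lvert z\rvert$ on $\bbc^{\times}$). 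The boundedness of the Hodge norm of a global flat section near the compactifying divisor is precisely where the hard analysis enters: on a punctured-disk transversal the section is invariant under the local monodromy, hence killed by the monodromy logarithm, hence lies in the part of the monodromy weight filtration on which Schmid's norm estimates (consequences of the nilpotent and $\SL_2$-orbit theorems) guarantee $\lVert e\rVert_h=O(1)$. Without importing that input, the plurisubharmonicity argument concludes nothing. Your fallback --- passing to $\mathbb{V}\oplus\overline{\mathbb{V}}$ to restore a real structure and quoting Schmid verbatim --- is legitimate and is essentially what the paper (following Deligne) does, but it is a citation rather than a proof and does not rescue the direct argument.
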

Schmid in loc.cit. only formulates this theorem for variations of Hodge structure which are defined over the integers, but his proof does not make any use of the assumption that the underlying local system be defined over $\bbz$, nor of the Hodge symmetry, and hence applies without changes to complex variations of pseudo-Hodge structure. Note that Schmid includes polarizability in the axioms for a variation of Hodge structure (contrary to us) and does make use of that assumption during the proof.

This theorem might not seem very surprising on first sight, but it has several surprising consequences.\footnote{We also note that it fails for $X=\Delta^{\ast }$, the pointed unit disk, since these consequences do no longer hold there} The first is just a mild reformulation:
\begin{Corollary}
Let $X$ and $\mathbb{V}$ be as in the theorem, and assume that $X$ is connected. If a global flat section $e\in H^0(X,\mathcal{V}_{\bbc })$ is of pure Hodge type $(p,q)$ at some point of $X$, then it is of this Hodge type everywhere.\hfill $\square $
\end{Corollary}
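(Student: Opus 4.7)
The plan is to apply Schmid's theorem componentwise and then exploit the rigidity of flat sections on a connected base.

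First I would decompose the given flat section $e\in H^0(X,\mathcal{V}_{\bbc})$ according to the (smooth) Hodge decomposition of the associated vector bundle $V=\bigoplus_{p,q}V^{p,q}$ (or, in the pseudo-Hodge formulation used in Schmid's statement, $V=\bigoplus_p V^p$). This gives smooth sections $e^{p,q}$ with $e=\sum_{p,q}e^{p,q}$. By Schmid's theorem, each $e^{p,q}$ is again flat for the Gauss-Manin connection $\nabla$ on $V$.

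Next I would invoke the standard rigidity property of flat sections: on a connected manifold $X$, a flat section of $(V,\nabla)$ is uniquely determined by its value at a single point, because the parallel transport along any path depends only on the homotopy class of the path and, starting from a fixed value, produces a section that is globally well-defined precisely when it is the original flat section. By hypothesis there is a point $x_0\in X$ at which $e(x_0)$ lies entirely in the $(p,q)$-component, so $e^{r,s}(x_0)=0$ whenever $(r,s)\neq (p,q)$. Since each $e^{r,s}$ is flat and $X$ is connected, this forces $e^{r,s}\equiv 0$ on all of $X$ for $(r,s)\neq(p,q)$.

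Combining these two steps gives $e=e^{p,q}$ everywhere, which is exactly the assertion that $e$ is of pure Hodge type $(p,q)$ at every point of $X$. There is really no obstacle here: the entire content is already in Schmid's theorem, and the corollary is just the observation that flatness plus connectedness propagates the pointwise vanishing of Hodge components from one fibre to all fibres. The only thing one might wish to be slightly careful about is that Schmid's theorem is stated for pseudo-Hodge structures (no Hodge symmetry used), so the same argument applies verbatim whether one thinks of the Hodge decomposition as indexed by $p$ or by bidegrees $(p,q)$ with $p+q=k$.
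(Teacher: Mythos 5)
Your argument is correct and is exactly the intended one: the paper presents this corollary as a ``mild reformulation'' of Schmid's theorem, relying precisely on the fact that the Hodge components of a flat section are flat and that a flat section on a connected base vanishing at one point vanishes identically. Nothing further is needed.
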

This directly implies the next very strong result. Note that if $\mathbb{V}$ is a variation of $\bbq$-Hodge structure and if $x\in X$, then the underlying local system $\mathcal{V}_{\bbq }$ gives rise to a linear representation of $\pi_1(X,x)$ on the $\bbq $-vector space $\mathcal{V}_{\bbq ,x}$. This operation does in general \textit{not} preserve the Hodge structure, i.e. the endomorphisms of $\mathcal{V}_{\bbq ,x}$ induced by the elements of the fundamental group are in general not morphisms of Hodge structure.\footnote{Note that this does not contradict the Rigidity Theorem. Namely to prove that the Hodge structure is preserved we would need the Rigidity Theorem not for $X$ but for its universal covering space which may fail to be compactifiable. In fact in all cases we consider, this universal covering space is the unit disk for which all these theorems are wrong} But:
\begin{Corollary}[Rigidity Theorem]
Let $X$ be as in the previous corollary, let $K\subseteq\bbc$ be a subring stable under complex conjugation, and let $\mathbb{V}$ and $\mathbb{W}$ be polarizable variations of $K$-pseudo-Hodge structure on $X$. Let $x\in X$, and let $f_x:\mathbb{V}_x\to\mathbb{W}_x$ be a morphism of Hodge structures such that the underlying map of $K$-modules is equivariant for the action of $\pi_1(X,x)$. Then $f_x$ extends uniquely to a morphism of variations of $K$-pseudo-Hodge structure $f:\mathbb{V}\to\mathbb{W}$.
\end{Corollary}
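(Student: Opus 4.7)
The plan is to reduce the statement to the corollary of Schmid's theorem already proved in the excerpt, applied to the variation of pseudo-Hodge structure $\underline{\Hom}(\mathbb{V},\mathbb{W})$. First, I would construct the candidate extension at the level of underlying local systems. Under the equivalence between local systems of $K$-modules on $X$ and $K$-linear representations of $\pi_1(X,x)$, the $\pi_1$-equivariance hypothesis on $f_x$ means precisely that $f_x$ corresponds to a unique morphism of local systems $f:\mathcal{V}_K\to\mathcal{W}_K$. This already gives uniqueness, and it shows that $f$ is the only possible candidate; what remains is to verify that $f$ respects the Hodge filtration fibrewise.

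Next I would assemble $\mathbb{V}$ and $\mathbb{W}$ into the variation $\mathbb{H}=\underline{\Hom}(\mathbb{V},\mathbb{W})=\check{\mathbb{V}}\otimes\mathbb{W}$ on $X$. This is again a variation of $K$-pseudo-Hodge structure by the obvious fibrewise definition, and it is polarizable: duals and tensor products of polarizable variations are polarizable (the tensor product of polarizations polarizes the tensor product, and a polarization of $\mathbb{V}$ identifies $\check{\mathbb{V}}$ with an appropriate twist of $\mathbb{V}$, which is again polarizable). The morphism of local systems $f$ is the same datum as a global section $\tilde f\in H^0(X,\underline{\Hom}(\mathcal{V}_K,\mathcal{W}_K))$ of the local system underlying $\mathbb{H}$, and such a global section is, after tensoring with $\mathrsfs{O}_X$, a flat section of the associated holomorphic bundle with respect to the Gauss--Manin connection of $\mathbb{H}$.

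The key observation is now about the Hodge type of $\tilde f$ at $x$. By hypothesis $f_x$ is a morphism of Hodge structures $\mathbb{V}_x\to\mathbb{W}_x$, which by the definition of morphisms (respect for the Hodge filtration, or equivalently for the pseudo-Hodge decomposition) is exactly the statement that $\tilde f_x\in\mathbb{H}_x^0$, i.e. that $\tilde f$ is of pure Hodge type $0$ at the point $x$ when considered as an element of the fibre of $\mathbb{H}\otimes_K\bbc$. Extending scalars from $K$ to $\bbc$ does not affect polarizability or flatness, so Schmid's theorem (or rather the corollary to it stated just above, which holds in the pseudo-Hodge category by the same proof) applies to $\mathbb{H}\otimes_K\bbc$ and the flat section $\tilde f$: a flat section of a polarizable variation of $\bbc$-pseudo-Hodge structure which is of pure Hodge type at one point of the connected base $X$ is of that same Hodge type everywhere. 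Hence $\tilde f_y\in\mathbb{H}_y^0$ for every $y\in X$, which means that $f_y:\mathbb{V}_y\to\mathbb{W}_y$ respects the Hodge filtrations, and therefore $f$ is a morphism of variations of pseudo-Hodge structure.

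The main conceptual input is Schmid's theorem together with its reformulation in terms of pure Hodge type; everything else is formal, the only mildly nontrivial points being the construction of a polarization on $\underline{\Hom}(\mathbb{V},\mathbb{W})$ (straightforward from the tensor and dual constructions) and the remark that the proof of Schmid's corollary transfers verbatim to pseudo-Hodge structures over a subring $K\subseteq\bbc$ that is stable under complex conjugation, since it makes no essential use of a weight or of an integral structure. Thus the hard part has already been done in the statement of Schmid's theorem, and the present proof is essentially a packaging argument.
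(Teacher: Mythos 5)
Your proposal is correct and follows essentially the same route as the paper: extend $f_x$ to a morphism of the underlying local systems via the monodromy equivalence (which also gives uniqueness), view it as a flat global section of $\underline{\Hom}(\mathbb{V},\mathbb{W})$, observe that it has pure Hodge type $0$ at $x$, and invoke Schmid's theorem to propagate that type over all of $X$. You are somewhat more careful than the paper in spelling out the polarizability of $\underline{\Hom}(\mathbb{V},\mathbb{W})$ and the harmlessness of extending scalars to $\bbc$, which the paper leaves implicit.
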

\begin{proof}
The equivariance means that $f_x$ extends to a morphism of local systems $f:\mathcal{V}_K\to\mathcal{W}_K$. We have to show that it is in fact a morphism of variations of pseudo-Hodge structure. But that is equivalent to showing that it is a flat section of the variation of pseudo-Hodge structure $\underline{\Hom }(\mathbb{V},\mathbb{W})$ which is everywhere of Hodge type $(0,0)$. But it is so at $x$ (since it there preserves the pseudo-Hodge structures), so it is everywhere by Schmid's theorem.
\end{proof}
\begin{SemisimplicityofVHS}
There are several questions around semisimplicity one can ask about variations of Hodge structure: under which conditions are variations of Hodge structures semisimple as such? And when the underlying local system? There is a satisfactory answer (at least for our purposes) to the first question:
\end{SemisimplicityofVHS}
\begin{Corollary}\label{PolarizableQVHSAreSemisimple}
Let $X$ be a complex manifold and let $K\subseteq\bbr$ be a field. Then the category of polarizable $\bbq$-variations of Hodge structure on $X$ is semisimple.
\end{Corollary}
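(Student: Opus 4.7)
The plan is to reduce the statement to its pointwise analogue, Proposition \ref{NicePropsOfPolarizableHS}, by showing that the fibrewise orthogonal complement with respect to a polarization is automatically a sub-variation of Hodge structure. First, since every polarizable $\bbq$-VHS decomposes canonically into its pure weight components (each of which inherits polarizability), we may reduce to the case of a polarizable $\bbq$-VHS $\mathbb{V}$ of some fixed weight $k$ on $X$, with polarization $S:\mathbb{V}\otimes\mathbb{V}\to\bbq (-k)$. It then suffices to show that any sub-VHS $\mathbb{W}\subseteq\mathbb{V}$ admits a complement in the category of polarizable $\bbq$-VHS on $X$.

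Next, I would define the candidate complement $\mathbb{W}^{\perp}$ as follows. The polarization $S$ induces a morphism of local systems $\varphi :\mathcal{V}_{\bbq }\to\underline{\Hom}(\mathcal{W}_{\bbq },\bbq (-k))$, and I let $\mathcal{W}_{\bbq }^{\perp}=\ker\varphi$; this is a sub-local system of $\mathcal{V}_{\bbq }$. By Proposition \ref{NicePropsOfPolarizableHS}(iii), at every point $x\in X$ the fibre $(\mathcal{W}_{\bbq }^{\perp})_x$ underlies a sub-Hodge structure $(\mathbb{W}^{\perp})_x$ of $\mathbb{V}_x$, and the decomposition $\mathbb{V}_x=\mathbb{W}_x\oplus (\mathbb{W}^{\perp})_x$ holds in the category of Hodge structures. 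This pointwise splitting carries over to the Hodge filtration: for every $p\in\bbz$ one has $F^p\mathrsfs{V}=F^p\mathrsfs{W}\oplus F^p\mathrsfs{W}^{\perp}$ as sub-bundles of $\mathrsfs{V}=\mathcal{V}_{\bbq }\otimes\mathrsfs{O}_X$. Setting $F^p\mathrsfs{W}^{\perp}:=F^p\mathrsfs{V}\cap (\mathcal{W}_{\bbq }^{\perp}\otimes\mathrsfs{O}_X)$, this piece is holomorphic because it equals the image of $F^p\mathrsfs{V}$ under the projection $\mathrsfs{V}\to\mathrsfs{W}^{\perp}$ along $\mathrsfs{W}$, both summands being holomorphic sub-bundles (the splitting of $\mathrsfs{V}$ comes from a flat, hence holomorphic, splitting of local systems).

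It then remains to check Griffiths' transversality for $F^{\bullet}\mathrsfs{W}^{\perp}$ and that the resulting VHS is polarized. Transversality is immediate: the Gauss--Manin connection $\nabla$ on $\mathrsfs{V}$ restricts to the flat sub-bundle $\mathrsfs{W}^{\perp}$, so
\begin{equation*}
\nabla F^p\mathrsfs{W}^{\perp}\subseteq (F^{p-1}\mathrsfs{V}\otimes\Omega_X^1)\cap (\mathrsfs{W}^{\perp}\otimes\Omega_X^1)=F^{p-1}\mathrsfs{W}^{\perp}\otimes\Omega_X^1.
\end{equation*}
The restriction $S|_{\mathbb{W}^{\perp}\otimes\mathbb{W}^{\perp}}$ is again a polarization because, fibrewise, the decomposition $\mathbb{V}_x=\mathbb{W}_x\oplus (\mathbb{W}^{\perp})_x$ is orthogonal by construction and the restriction of a polarization to an $S$-non-degenerate sub-Hodge structure is a polarization. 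Thus $\mathbb{W}^{\perp}$ is a polarizable sub-VHS of $\mathbb{V}$ complementary to $\mathbb{W}$.

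Finally, every polarizable $\bbq$-VHS on $X$ has finite length, because its generic rank is a finite-dimensional invariant and any chain of sub-VHS induces a chain of sub-$\bbq$-vector spaces at a fixed fibre. Combined with the splitting just established for arbitrary sub-objects, this yields that every polarizable $\bbq$-VHS decomposes as a finite direct sum of simple polarizable $\bbq$-VHS, which is the claimed semisimplicity. The only potential subtlety in the argument is the holomorphicity of the Hodge sub-bundles $F^p\mathrsfs{W}^{\perp}$, but this is settled by observing that the splitting $\mathcal{V}_{\bbq }=\mathcal{W}_{\bbq }\oplus\mathcal{W}_{\bbq }^{\perp}$ is a splitting of \emph{local systems}, hence $\mathrsfs{C}$-analytic on the associated holomorphic bundles.
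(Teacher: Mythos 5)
Your argument is correct and is essentially the proof the paper gives: one takes the $S$-orthogonal complement $\mathcal{W}^{\perp}$ as a local subsystem and invokes Proposition \ref{NicePropsOfPolarizableHS} fibrewise to see that it underlies a complementary sub-variation. You additionally spell out the holomorphicity of $F^{p}\mathrsfs{W}^{\perp}$, Griffiths transversality, and the finite-length step, all of which the paper leaves implicit; these checks are sound.
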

\begin{proof}
Let $\mathbb{V}$ be a polarizable $K$-variation of Hodge structure on $X$ and let $\mathbb{W}$ be a sub-variation of $\mathbb{V}$. Denote the underlying local systems of $K$-vector spaces by $\mathcal{W}\subseteq\mathcal{V}$. Choose a polarization $S$ of $\mathbb{V}$; then the orthogonal complement $\mathcal{W}^{\perp}$ of $\mathcal{W}$ in $\mathcal{V}$ with respect to $S$ is a well-defined local subsystem, and at every point $x\in X$ its fibre underlies by Proposition \ref{NicePropsOfPolarizableHS} a sub-Hodge structure of $\mathbb{V}_x$. Hence $\mathcal{W}^{\perp}$ underlies a sub-variation of Hodge structure $\mathbb{W}^{\perp}$, and we have $\mathbb{V}=\mathbb{W}\oplus\mathbb{W}^{\perp}$.
\end{proof}
\begin{SemisimplicityofMonodromy}
The other question is more delicate. In particular we need to assume that the variations of Hodge structure in question are defined over the integers (or at least can be embedded into some that are).
\end{SemisimplicityofMonodromy}
\begin{Theorem}[Deligne, Schmid]\label{MonodromyRepOfVHSOverZIsSemisimple}
Let $X$ be a compactifiable complex manifold, and let $\mathbb{V}$ be a variation of $\bbz$-Hodge structure on $X$. Then the local system of $\bbq$-vector spaces $\mathcal{V}_{\bbq }$ is semisimple.
\end{Theorem}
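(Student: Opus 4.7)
The plan is to reduce the statement to Corollary \ref{PolarizableQVHSAreSemisimple}, which says polarizable $\bbq$-variations of Hodge structure are semisimple as variations. The reduction has two steps: first, the $\bbz$-structure on $\mathbb{V}$ equips $\mathbb{V}\otimes\bbq$ with a polarization (in our geometric applications this will always be the cup-product pairing on the cohomology of the fibres); second, every $\bbq$-local subsystem $\mathcal{W}\subseteq\mathcal{V}_{\bbq}$ in fact underlies a sub-variation of Hodge structure of $\mathbb{V}\otimes\bbq$. Combining these two steps, Corollary \ref{PolarizableQVHSAreSemisimple} then furnishes a complementary sub-VHS, whose underlying local system is a complement to $\mathcal{W}$ in $\mathcal{V}_{\bbq}$. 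Thus the essential content is the promotion ``local subsystem $\Longrightarrow$ sub-VHS''.

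To carry out this promotion, I would fix a base point $x\in X$ and show that the Hodge filtration on $V_{x,\bbc}$ restricts to a Hodge filtration on $W_{x,\bbc}=\mathcal{W}_{x,\bbc}$. Concretely, at each fibre $y\in X$ consider the complex linear projector $\pi_y\colon V_{y,\bbc}\to W_{y,\bbc}$ defined by $H_y$-orthogonal projection, where $H_y=(2\pi\mathrm{i})^kS_y(C_y\cdot,\overline{\,\cdot\,})$ is the positive definite Hermitian form attached to the polarization. The assignment $y\mapsto\pi_y$ is a smooth section of $\underline{\End}(\mathbb{V}\otimes\bbc)$; it is \emph{not} obviously flat, because $H_y$ depends on the non-flat Weil operator $C_y$. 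The key technical input is Schmid's theorem applied to the polarizable weight-$0$ variation $\underline{\End}(\mathbb{V}\otimes\bbc)$: every flat complex section decomposes into flat pieces of pure Hodge type. Combining this with the positivity of the polarization and the characterization of $\pi_y$ as the unique projector onto $\mathcal{W}_{y,\bbc}$ compatible with $H_y$, one concludes that $\pi_y$ is flat and of pure Hodge type $(0,0)$. But a flat endomorphism of type $(0,0)$ is exactly a morphism of VHS, so the inclusion $\mathcal{W}\hookrightarrow\mathcal{V}_\bbq$ extends (via the Rigidity Theorem) to a morphism of variations of Hodge structure, and $\mathcal{W}$ is therefore a sub-VHS.

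The main obstacle is the verification that $\pi_y$ really is of pure Hodge type $(0,0)$ uniformly over $X$; this is the substantive content of the Deligne--Schmid theory developed in \cite{Deligne87} and \cite{Schmid73}, and it depends fundamentally on the compactifiability hypothesis. The conclusion genuinely fails on non-compactifiable bases --- the limit mixed Hodge structures attached to degenerating families over the punctured disc produce monodromy representations that are not semisimple --- so a separate ingredient is needed to handle boundary behaviour, and it is precisely Schmid's nilpotent-orbit and $\SL_2$-orbit theorems that supply it. In the applications to come, the base $X$ will always be a Teichm\"{u}ller curve or a finite cover thereof, hence compactifiable via the Deligne--Mumford compactification of $\mathrsfs{M}_g^{[n]}$, so the hypothesis is costless.
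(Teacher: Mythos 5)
Your proposal takes a different route from the paper (which simply cites \cite[sect.\ 4.2]{Deligne71} and notes that the argument there is conditional on Schmid's theorem), and the route has a genuine gap at its central step. The claim that every $\bbq$-local subsystem $\mathcal{W}\subseteq\mathcal{V}_{\bbq}$ underlies a sub-variation of Hodge structure is false. For a counterexample, let $\mathbb{S}$ be the weight-one polarizable $\bbz$-VHS of a non-isotrivial family of elliptic curves over a compactifiable curve, and let $W$ be a rank-two polarizable $\bbz$-Hodge structure of weight $0$ with Hodge types $(1,-1)$ and $(-1,1)$ (e.g.\ a Tate twist of the transcendental lattice of a singular K3 surface). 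Then $\mathbb{V}=\mathbb{S}\otimes\underline{W}$ is a polarizable $\bbz$-VHS on a compactifiable base whose underlying $\bbq$-local system is $\mathcal{S}_{\bbq}\otimes_{\bbq}W_{\bbq}\simeq\mathcal{S}_{\bbq}^{\oplus 2}$, but for a rational line $L\subset W_{\bbq}$ the subsystem $\mathcal{S}_{\bbq}\otimes L$ is \emph{not} a sub-VHS: for $s\in S^{1,0}$ and $0\neq e\in L$ the Hodge components of $s\otimes e$ are $s\otimes e^{1,-1}$ and $s\otimes e^{-1,1}$, which lie in $\mathcal{S}_{\bbc}\otimes W^{\pm 1,\mp 1}$ and not in $\mathcal{S}_{\bbc}\otimes L_{\bbc}$, since $W^{\pm 1,\mp 1}$ are not defined over $\bbr$. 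The semisimplicity conclusion still holds for this $\mathbb{V}$, but your reduction to Corollary \ref{PolarizableQVHSAreSemisimple} is blocked: in the decomposition $\mathcal{V}_{\bbc}=\bigoplus_i\mathcal{S}_i\otimes W_i$ that the paper develops in the next section, the multiplicity spaces $W_i$ carry Hodge structures that need not be trivial, so only special subsystems are sub-VHS.

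The mechanism you propose for the promotion also misreads Schmid's theorem: applied to a flat section of $\underline{\End}(\mathbb{V}\otimes\bbc)$ it says that the Hodge \emph{components} of that section are flat, not that the section is of pure type $(0,0)$, and in the example above the flat projector onto $\mathcal{S}\otimes L$ is indeed not of type $(0,0)$. (In general the $H_y$-orthogonal projector, being defined through the non-flat Weil operator, need not even be flat, so Schmid's theorem does not apply to it at all.) A further point: polarizability is not a consequence of the $\bbz$-structure. A rank-two unipotent $\bbz$-local system on $\bbc^{\times}$, placed in pure type $(0,0)$, is a $\bbz$-VHS in the sense of this paper and is not semisimple; it is simply not polarizable, since a polarization would force the monodromy to preserve a positive definite form. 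So the polarization must be imposed as a hypothesis, as Deligne and Schmid do (and as the statement in the paper should also do), rather than derived. The actual argument is of a different nature: Deligne deduces semisimplicity from the theorem of the fixed part together with the positivity of the Hodge metric, producing a flat complement with respect to the \emph{flat} polarization form, without ever asserting that the given subsystem underlies a sub-VHS.
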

\begin{proof}
There is a proof for this statement in \cite[sect. 4.2]{Deligne71}, which is formulated as a conditional proof with the condition being precisely that Schmid's theorem (which was not yet known at that time) holds.
\end{proof}
\begin{Comparison}
Now consider some polarizable variation of $\bbq$-Hodge structure $\mathbb{V}$ on a compactifiable complex manifold $X$ such that the underlying local system $\mathcal{V}_{\bbq }$ can be defined over $\bbz $. We do not need the exact form of this $\bbz $-structure, just its existence, in order to be able to apply Theorem \ref{MonodromyRepOfVHSOverZIsSemisimple}. Then there are several ways of decomposing $\mathbb{V}$, forgetting different kinds of structure:
\end{Comparison}
\begin{enumerate}
\item First of all we may decompose $\mathbb{V}$ as a variation of $\bbq$-Hodge structure: it is semisimple by Corollary \ref{PolarizableQVHSAreSemisimple}.
\item Considering the underlying local system of $\bbq$-vector spaces $\mathcal{V}_{\bbq}$, each simple summand of the decomposition (i) further decomposes into simple local systems of $\bbq$-vector spaces by Theorem \ref{MonodromyRepOfVHSOverZIsSemisimple}.
\item Finally the simple summands in (ii) further decompose into simple local systems of complex vector spaces after tensoring with $\mathbb{C}$. This gives a decomposition of $\mathcal{V}_{\bbc }$.
\item But we may also start again from (i) and forget the $\bbq$-structure first, in order to get a variation of $\bbc$-pseudo-Hodge structure $\mathbb{V}_{\bbc }^{\psi }$. We will show below that this also is semisimple, and in a sense to be made precise, its decomposition into simple parts is governed by the decomposition (iii).
\end{enumerate}
\begin{MonodromyGovernsTheHD}
So the conditions to work with are as follows: $X$ is a compactifiable complex manifold, and $\mathbb{V}$ is a polarizable variation of $\bbc$-pseudo-Hodge structure on $X$ such that the underlying local system of $\bbc$-vector spaces is semisimple. This is for example true if $\mathbb{V}$ is a sub-variation of a variation of pseudo-Hodge structure which comes from a polarizable variation of $\bbz$-Hodge structure, as we have seen. Astonishingly, the underlying local system then almost completely determines the variation of pseudo-Hodge structure!

The next lemma and the next proposition are due to Deligne, see \cite{Deligne87}.
\end{MonodromyGovernsTheHD}
\begin{Lemma}[Deligne]
Let $X$ be a compactifiable complex manifold and let $\mathcal{V}$ be a \textup{semisimple} local system of complex vector spaces on $X$ such that there exists \textup{some} polarizable $\bbc$-pseudo-Hodge structure on $\mathcal{V}$. Decompose $\mathcal{V}$ as
\begin{equation}\label{DecompositionOfVInSimples}
\mathcal{V}=\bigoplus_{i=1}^n\mathcal{S}_i\otimes W_i
\end{equation}
with pairwise non-isomorphic simple local systems of $\bbc$-vector spaces $\mathcal{S}_i$ and nonzero complex vector spaces $W_i$. Then for each $i$ there exists some structure of a polarizable variation of $\bbc $-pseudo-Hodge structure on $\mathcal{S}_i$.
\end{Lemma}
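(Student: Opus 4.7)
The plan is to first use Schmid's theorem to prove that the isotypic decomposition $\mathcal{V} = \bigoplus_i \mathcal{S}_i \otimes W_i$ is a decomposition of variations of pseudo-Hodge structure, then reduce to the case $\mathcal{V} = \mathcal{S} \otimes W$ with $\mathcal{S}$ simple, and finally extract the desired VPsHS on $\mathcal{S}$ by diagonalizing the induced grading on $\End(\mathcal{V}) \cong M_n(\bbc)$.

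First I would equip the algebra of flat endomorphisms $E = \End(\mathcal{V})$ with a grading coming from the polarizable VPsHS on $\underline{\End}(\mathcal{V}) = \check{\mathcal{V}} \otimes \mathcal{V}$: by Schmid's theorem every flat section decomposes into pure pseudo-Hodge components which are again flat, giving $E = \bigoplus_p E^p$. Since composition of endomorphisms of pseudo-Hodge types $p$ and $q$ has type $p+q$, this grading is compatible with the algebra structure, so the center $Z$ inherits a grading and becomes a graded commutative $\bbc$-algebra. Now $Z \cong \prod_i \bbc$ is reduced and finite-dimensional, one factor per isotypic component of $\mathcal{V}$, and in any finite-dimensional graded commutative $\bbc$-algebra every nonzero homogeneous element of nonzero degree is nilpotent. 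Hence $Z = Z^0$, so the central idempotents $e_i$ lie in $E^0$ and are morphisms of VPsHS. Consequently the isotypic decomposition is a decomposition of polarizable variations of pseudo-Hodge structure. We may restrict to a single summand and henceforth assume $\mathcal{V} = \mathcal{S} \otimes W$ with $\mathcal{S}$ simple and $W$ a nonzero finite-dimensional complex vector space.

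Applying the same reasoning in the isotypic case yields a grading $\End(\mathcal{V}) = \End(W) = M_n(\bbc) = \bigoplus_p E^p$ compatible with the algebra structure. A standard fact about $\bbz$-gradings on central simple algebras --- equivalent to Skolem--Noether applied to the $\bbc^\times$-action on $M_n(\bbc)$ implementing the grading --- provides, after conjugation by an element of $\GL(W)$, a grading $W = \bigoplus_r W^r$ such that $E^p = \bigoplus_r \Hom(W^r, W^{r+p})$. After shifting indices (which leaves the grading on $\End(W)$ unchanged) we may assume $W^0 \neq 0$. Fix a nonzero $w_0 \in W^0$ and any complement $W^0 = \bbc w_0 \oplus U$; the projector $p_{w_0}: W \to W$ onto $\bbc w_0$ along $U \oplus \bigoplus_{r \neq 0} W^r$ lies in $E^0$, hence is a morphism of VPsHS on $\mathcal{V}$, so its image $\mathcal{S} \otimes w_0$ is a sub-VPsHS of $\mathcal{V}$. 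Transporting along the isomorphism $\mathcal{S} \cong \mathcal{S} \otimes w_0$ yields the desired polarizable variation of pseudo-Hodge structure on $\mathcal{S}$: the polarization is the pullback $H_{w_0}(s, s') = H(s \otimes w_0, s' \otimes w_0)$ of the polarization $H$ on $\mathcal{V}$, which is Hermitian, a morphism of VPsHS, and positive on each pseudo-Hodge component of $\mathcal{S}$, since $s \otimes w_0 \in \mathcal{V}^s$ for $s \in \mathcal{S}^s$ and $H$ polarizes $\mathcal{V}$.

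The main obstacle I anticipate is the first step, namely deducing from Schmid's theorem and the reducedness of $Z$ that the central projectors $e_i$ have pseudo-Hodge degree zero; the ``no nilpotents'' observation is elementary but is the only genuinely structural input. Once this is in hand, everything else is multilinear algebra combined with Skolem--Noether, the only subtlety being the harmless shift of the grading on $W$ ensuring $W^0 \neq 0$, without which the restriction $H_{w_0}$ could vanish identically.
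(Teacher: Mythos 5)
The paper does not actually prove this lemma: it is stated with the attribution ``due to Deligne, see \cite{Deligne87}'' and no argument is given. Your proof is correct, and it is in substance Deligne's own argument from that reference: grade the algebra $E$ of flat endomorphisms using the theorem of the fixed part (Schmid's theorem applied to $\underline{\End}(\mathcal{V})$), observe that the reduced centre of a finite-dimensional graded algebra is concentrated in degree zero so the isotypical projectors are morphisms of variations, and then split off a single copy of $\mathcal{S}_i$ by diagonalizing the grading on $\End(W_i)\simeq M_{n_i}(\bbc )$ via an inner grading of $W_i$. Three small points. First, you implicitly use that $\underline{\End}(\mathcal{V})=\check{\mathcal{V}}\otimes\mathcal{V}$ is again a \emph{polarizable} variation of pseudo-Hodge structure so that Schmid's theorem applies; this is standard (tensor products and duals of polarizations are polarizations) and is the same unstated step the paper itself relies on when it applies Schmid's theorem to $\underline{\Hom}(\mathbb{V},\mathbb{W})$ in its proof of the Rigidity Theorem, but it deserves a sentence. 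Second, your closing worry that $H_{w_0}$ ``could vanish identically'' is unfounded: the restriction of a polarization to any nonzero flat graded subobject is automatically a polarization, since $(-1)^pH$ is positive definite on all of $\mathcal{V}^p$ and hence on any subspace; the shift of the grading on $W_i$ is needed only so that a homogeneous vector $w_0$ exists in degree $0$ (and even that is cosmetic --- any homogeneous $w_0\in W^r$ would do, since the projector onto $\bbc w_0$ along a graded complement lies in $E^0$ regardless of $r$). Third, the assertion that the image of a flat degree-zero idempotent is a sub-variation needs the routine verification that $e(F^p\mathrsfs{V})=F^p\mathrsfs{V}\cap e(\mathcal{V})\otimes\mathrsfs{O}$ is a holomorphic subbundle and that Griffiths transversality restricts; none of this causes trouble.
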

Note that the polarizablility of the variations on $\mathcal{S}_i$ is the point which makes the whole lemma nontrivial. Namely every local system of complex vector spaces $\mathcal{T}$ underlies a trivial variation of $\bbc$-pseudo-Hodge structure by setting, say, $T^0=T$ and $T^p=0$ for all other $p$, but this variation need not be polarizable.
\begin{Proposition}[Deligne]
Let $X$ and $\mathcal{V}$ be as in the previous lemma, and choose some polarizable variation of $\bbc$-pseudo-Hodge structure on each $\mathcal{S}_i$ (writing it as $\mathbb{S}_i$). Then for every polarizable variation of $\bbc$-pseudo-Hodge structure on $\mathcal{V}$, say $\mathbb{V}$, there exist unique $\bbc$-pseudo-Hodge structures on the $W_i$ such that
\begin{equation*}
\mathbb{V}=\bigoplus_{i=1}^n\mathbb{S}_i\otimes W_i
\end{equation*}
as variations of $\bbc$-pseudo-Hodge structure.
\end{Proposition}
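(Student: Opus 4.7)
The plan is to exploit Schmid's theorem applied to the internal Hom variations $\underline{\Hom}(\mathbb{S}_i,\mathbb{V})$ in order to transport the pseudo-Hodge data from $\mathbb{V}$ onto the multiplicity spaces $W_i$. To begin, recall that the isotypical decomposition (\ref{DecompositionOfVInSimples}) is canonical at the level of local systems: identifying $W_i$ with $\Hom(\mathcal{S}_i,\mathcal{V})=H^0(X,\underline{\Hom}(\mathcal{S}_i,\mathcal{V}))$, the evaluation morphism
\[
\operatorname{ev}:\bigoplus_{i=1}^n \mathcal{S}_i\otimes W_i\longrightarrow \mathcal{V},\qquad s\otimes w\mapsto w(s),
\]
is an isomorphism of local systems by Schur and semisimplicity. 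The task therefore reduces to equipping each $W_i$ with a pseudo-Hodge decomposition such that $\operatorname{ev}$ becomes a morphism of variations of pseudo-Hodge structure.

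Now endow $\underline{\Hom}(\mathcal{S}_i,\mathcal{V})$ with the natural variation of pseudo-Hodge structure coming from $\mathbb{S}_i$ and $\mathbb{V}$, and observe that it is polarizable: it is isomorphic as a variation to $\check{\mathbb{S}}_i\otimes\mathbb{V}$, and both duals and tensor products of polarizable variations of pseudo-Hodge structure are polarizable (a fiberwise check with the standard formulas for the induced Hermitian forms). Schmid's theorem then applies: for any flat section $w\in H^0(X,\underline{\Hom}(\mathcal{S}_i,\mathcal{V}))=W_i$, its pseudo-Hodge components at any fiber are themselves flat, and hence come from global flat sections. Declaring
\[
W_i^p=\{\,w\in W_i : w_x\in\underline{\Hom}((\mathbb{S}_i)_x,\mathbb{V}_x)^p\text{ for some, equivalently every, }x\in X\,\}
\]
therefore yields a direct sum decomposition $W_i=\bigoplus_p W_i^p$, that is, a pseudo-Hodge structure on $W_i$.

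To conclude, verify that with this choice the local-system isomorphism $\operatorname{ev}$ is in fact an isomorphism of variations of pseudo-Hodge structure. Compatibility with the Gauss--Manin connections is automatic since $\operatorname{ev}$ is a morphism of local systems, so only the fiberwise compatibility of Hodge filtrations has to be checked. But for $s\in(\mathbb{S}_i)_x^r$ and $w\in W_i^p$, the very definition of the pseudo-Hodge type of $w$ gives $w_x(s)\in\mathbb{V}_x^{r+p}$, so $\operatorname{ev}$ sends $\bigl(\bigoplus_i \mathbb{S}_i\otimes W_i\bigr)^k$ into $\mathbb{V}^k$ at every point; bijectivity on graded pieces then follows from the bijectivity of $\operatorname{ev}$. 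Uniqueness is a direct fiberwise test: given \emph{any} pseudo-Hodge structure on $W_i$ making $\operatorname{ev}$ a morphism of variations, an element $w\in W_i$ lies in its degree-$p$ piece if and only if $w_x((\mathbb{S}_i)_x^r)\subseteq\mathbb{V}_x^{r+p}$ for every $r$ and every $x$, a condition intrinsic to $\mathbb{V}$ and $\mathbb{S}_i$ alone. The main obstacle is the polarizability of $\underline{\Hom}(\mathbb{S}_i,\mathbb{V})$: in the pseudo-Hodge setting polarizations are Hermitian rather than bilinear, so the requisite sign conventions for the induced forms on duals and tensor products must be verified; once these formalities are in place, Schmid's theorem does all the heavy lifting.
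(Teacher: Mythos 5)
Your proof is correct, and it is essentially the argument of Deligne that the paper itself does not reproduce (the Proposition is stated without proof and referred to \cite{Deligne87}): identify $W_i$ with $\Hom (\mathcal{S}_i,\mathcal{V})=H^0(X,\underline{\Hom }(\mathcal{S}_i,\mathcal{V}))$, observe that $\underline{\Hom }(\mathbb{S}_i,\mathbb{V})$ is again a polarizable variation of $\bbc$-pseudo-Hodge structure, and invoke Schmid's theorem on flat global sections to grade the multiplicity spaces. The two points you flag as needing verification --- the sign bookkeeping showing that duals and tensor products of polarizations are polarizations (on $M^r\otimes N^s$ the product form is $(-1)^{r+s}$-definite, as required), and the fact that a degree-preserving isomorphism of finite-dimensional graded spaces is automatically an isomorphism on each graded piece --- both go through, so the argument is complete.
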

\begin{Corollary}
Let $X$ be a compactifiable complex manifold and let $\mathcal{S}$ be a simple local system of complex vector spaces on $X$. Then any two polarizable variations of pseudo-Hodge structure on $\mathcal{S}$ only differ by a shift of the grading.
\end{Corollary}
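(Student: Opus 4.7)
The plan is to deduce this as an immediate application of Deligne's proposition just stated. Take $\mathcal{V}=\mathcal{S}$ itself; since $\mathcal{S}$ is simple, its isotypical decomposition (\ref{DecompositionOfVInSimples}) has a single summand, namely $\mathcal{S}\otimes W$ with $W$ a one-dimensional complex vector space. Fix once and for all one polarizable variation of $\bbc$-pseudo-Hodge structure $\mathbb{S}$ on $\mathcal{S}$; by the preceding lemma such a choice exists as soon as there is at least one polarizable variation of pseudo-Hodge structure on $\mathcal{S}$, which is our standing hypothesis.

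Now let $\mathbb{V}$ be any polarizable variation of $\bbc$-pseudo-Hodge structure with underlying local system $\mathcal{S}$. Deligne's proposition produces a unique pseudo-Hodge structure on $W$, i.e.\ on a one-dimensional complex vector space, such that $\mathbb{V}\simeq\mathbb{S}\otimes W$ as variations of pseudo-Hodge structure. But a pseudo-Hodge structure on a one-dimensional complex vector space is nothing more than the datum of a single integer $p\in\bbz$ concentrated in the decomposition: $W=W^p$ with $W^q=0$ for $q\neq p$.

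By the explicit formula for the tensor product of pseudo-Hodge decompositions, tensoring $\mathbb{S}$ with $W=W^p$ has the effect of replacing each bundle $S^r$ in the pseudo-Hodge decomposition of $\mathbb{S}$ by its appearance in degree $r+p$; in other words $\mathbb{V}$ is obtained from $\mathbb{S}$ by shifting the grading by $p$. Applying this to two polarizable variations $\mathbb{V}_1$ and $\mathbb{V}_2$ on $\mathcal{S}$, we find integers $p_1$ and $p_2$ such that $\mathbb{V}_i$ is the shift of $\mathbb{S}$ by $p_i$; hence $\mathbb{V}_1$ and $\mathbb{V}_2$ differ by a shift of $p_1-p_2$, which is precisely the claim.

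There is essentially no obstacle here beyond invoking the cited proposition correctly: the only thing to verify carefully is the elementary statement that a pseudo-Hodge structure on a one-dimensional $\bbc$-vector space is the same as the choice of an integer $p$, and that tensoring with such an object shifts the pseudo-Hodge grading; both follow directly from the definitions in the section on pseudo-Hodge structures.
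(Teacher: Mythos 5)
Your argument is correct and is exactly the route the paper takes: apply Deligne's proposition with $\mathcal{V}=\mathcal{S}$, so that the only freedom is a pseudo-Hodge structure on a one-dimensional space $W$, i.e.\ an integer, and tensoring with it shifts the grading. The paper's own proof is just a one-line version of this; your spelled-out verification that a pseudo-Hodge structure on a line is an integer and that tensoring shifts the grading fills in the details the paper leaves implicit.
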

\begin{proof}
Evidently it suffices to assume that there exists some polarizable variation of pseudo-Hodge structure on $\mathcal{S}$, and then we can apply the previous proposition with $\mathcal{V}=\mathcal{S}$.
\end{proof}
\begin{Corollary}
Let $X$ be a compactifiable complex manifold, let $F\subseteq\bbr$ be a subfield and and let $\mathcal{S}$ be an absolutely simple\footnote{I.e. $\mathcal{S}_F\otimes_F\bbc$ is simple} local system of $F$-vector spaces. Then given an integer $w$, there is at most one polarizable variation of $F$-Hodge structure of weight $w$ on $\mathcal{S}$.\hfill $\square $
\end{Corollary}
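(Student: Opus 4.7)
The plan is to reduce to the previous corollary (applied to the absolutely simple $\bbc$-local system $\mathcal{S}\otimes_F\bbc$) and then use the real structure and Hodge symmetry to rule out the ambiguity of a shift.

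First I would take two polarizable $F$-VHS $\mathbb{V}_1,\mathbb{V}_2$ of weight $w$ on $\mathcal{S}$ and base-change to obtain polarizable variations of $\bbc$-pseudo-Hodge structure $\mathbb{V}_1^\psi,\mathbb{V}_2^\psi$ on the local system $\mathcal{S}_\bbc=\mathcal{S}\otimes_F\bbc$. By hypothesis $\mathcal{S}$ is absolutely simple, which is precisely the statement that $\mathcal{S}_\bbc$ is a simple local system of complex vector spaces. The previous corollary then applies and furnishes an integer $d$ with $V_2^p=V_1^{p+d}$ for all $p\in\bbz$, where $V_i^p$ denotes the $p$-th piece of the pseudo-Hodge grading of $\mathbb{V}_i^\psi$.

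Next I would exploit the real structure coming from $F\subseteq\bbr$: the complex vector bundle underlying $\mathcal{S}_\bbc$ carries a complex conjugation, and since each $\mathbb{V}_i$ is a genuine Hodge structure of weight $w$ we have $V_i^p=V_i^{p,w-p}$ and the Hodge symmetry $\overline{V_i^p}=V_i^{w-p}$. Applying complex conjugation to the shift relation $V_2^p=V_1^{p+d}$ yields
\begin{equation*}
V_1^{w-p+d}=V_2^{w-p}=\overline{V_2^p}=\overline{V_1^{p+d}}=V_1^{w-p-d},
\end{equation*}
so $V_1^{w-p-d}=V_1^{w-p+d}$ for every $p$. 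Two distinct summands of a direct sum decomposition can coincide as subspaces only if both are zero, so unless $d=0$ every piece $V_1^q$ is zero, contradicting $\mathcal{S}\neq 0$. Hence $d=0$, the Hodge decompositions of $\mathbb{V}_1$ and $\mathbb{V}_2$ agree, and together with the common underlying local system and weight this forces $\mathbb{V}_1=\mathbb{V}_2$.

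The main obstacle I anticipate is bookkeeping rather than substance: one must check that the previous corollary really produces a shift between $\mathbb{V}_1^\psi$ and $\mathbb{V}_2^\psi$ as sub-decompositions of the \emph{same} ambient holomorphic bundle, so that the complex-conjugation argument is legitimate; this is why it is convenient to arrange both pseudo-Hodge structures on the single local system $\mathcal{S}_\bbc$ before invoking the corollary. Note also that the statement concerns uniqueness of the Hodge filtration only, not of the polarization, so no extra work is required to reconcile the polarizations beyond using them to invoke the previous corollary in the first place.
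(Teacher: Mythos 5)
Your proof is correct and follows essentially the same route as the paper: pass to the associated polarizable variations of $\bbc$-pseudo-Hodge structure on the simple complex local system $\mathcal{S}\otimes_F\bbc$, invoke the previous corollary to get agreement up to a shift of the grading, and then observe that fixing the weight kills the shift. The paper dispatches this last step with a one-line remark, whereas you spell out the Hodge-symmetry computation forcing $d=0$; that computation is valid.
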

\begin{proof}
Every polarizable variation of $F$-Hodge structure of weight $w$ on $\mathcal{S}$ defines in an obvious manner a polarizable variation of $\bbc$-pseudo-Hodge structure on $\mathcal{S}\otimes_K\mathbb{C}$ to which we can apply the previous corollary. Now a variation of $\bbc$-pseudo-Hodge structure on $\mathcal{S}\otimes_K\bbc$, even if only given up to a shift of the grading\footnote{Here is the point where we use that we have fixed a weight.}, uniquely determines the variation of $F$-Hodge structure, and even the ambiguity of a shift of the grading gets lost when fixing the weight.
\end{proof}

\subsection{The Variation of Hodge Structure over a Teichm\"{u}ller Disk}

Now we finally apply the various findings of this chapter to the variation of Hodge structure over a Teichm\"{u}ller disk or curve. So let $(X,\omega )$ be a translation surface. Recall that this defines holomorphic isometric embeddings $f^{\omega }:\Delta\to\mathrsfs{T}_g$ and $h^{\omega }:\bbh\to\mathrsfs{T}_g$, which differ from each other by composition with $C:\bbh\to\Delta$ as in section 3.2. This amounts to a family $\tilde{f}:\tilde{\mathrsfs{X}}\to\bbh$. For a suitable finite index subgroup $\Gamma\subseteq\Gamma (X,\omega )$, which may well be the trivial group, we also get a family of Riemann surfaces $f:\mathrsfs{X}\to C=\Gamma\backslash\bbh$ whose lift to $\bbh$ is the family $\tilde{f}$. We get then a variation of rational Hodge structures of Jacobian type $\mathbb{J}=R^1f_{\ast }\bbq (0)$ on the quotient $C=\Gamma\backslash\bbh$, and its lift $\tilde{\mathbb{J}}=R^1\tilde{f}_{\ast }\bbq (0)$ to the upper half plane $\bbh$. We now summarize what we can say about these variations.

\begin{TheGeneralCase}
In general we can only say the following:
\end{TheGeneralCase}
\begin{Proposition}
Let $(X,\omega )$ be any translation surface, with family $\tilde{f}:\tilde{\mathrsfs{X}}\to\Gamma\backslash\bbh$ as above. Then the canonical subsystem $\mathcal{S}_{\omega }\subseteq R^1f_{\ast }\bbr$ is in fact a sub-variation of Hodge structure $\mathbb{S}_{\omega }\subseteq \tilde{\mathbb{J}}\otimes\bbr =R^1f_{\ast }\bbr$.
\end{Proposition}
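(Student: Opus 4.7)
The plan is to verify the three defining properties of a sub-variation of Hodge structure: a local sub-system of $\bbr$-modules (which is already built into the definition of $\mathcal{S}_\omega$), a fibrewise Hodge decomposition, and a holomorphic Hodge filtration satisfying Griffiths transversality. The real work lies in the latter two.

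First I would identify the fibres explicitly. Lifting to the universal cover, the fibre of $\tilde{f}$ over $\tau=A\cdot\mathrm{i}\in\bbh$ is the Riemann surface underlying $A\cdot (X,\omega )$, equipped with the abelian differential $\omega_\tau$, and the fibre of $\mathcal{S}_\omega$ at $\tau$ is by construction the canonical subspace $S_{\omega_\tau }\subseteq H^1(X_\tau ,\bbr )$. Exactly the same argument used earlier to show that $S_\omega$ is a real sub-Hodge-structure of $J(X)\otimes\bbr$ applies verbatim: $S_{\omega_\tau }\otimes\bbc$ is the $\bbc$-span of $[\omega_\tau ]$ and $[\overline{\omega_\tau}]$, so it is stable under complex conjugation and inherits the weight-one Hodge decomposition $S_{\omega_\tau}^{1,0}=\bbc\cdot [\omega_\tau ]$, $S_{\omega_\tau}^{0,1}=\bbc\cdot [\overline{\omega_\tau}]$.

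The technical heart will be checking that this fibrewise filtration is a \emph{holomorphic} subbundle. I would realize the candidate as the intersection
\begin{equation*}
F^1\mathbb{S}_\omega \;=\; F^1\bigl(\tilde{\mathbb{J}}\otimes\bbc\bigr)\;\cap\;\bigl(\mathcal{S}_\omega\otimes_{\bbr}\mathcal{O}\bigr)
\end{equation*}
inside the holomorphic bundle $\tilde{\mathbb{J}}\otimes\bbc\otimes\mathcal{O}$. The first factor is holomorphic as the Hodge subbundle of the ambient VHS $\tilde{\mathbb{J}}$, and the second factor is holomorphic because tensoring any local system of $\bbc$-vector spaces with $\mathcal{O}$ produces a holomorphic vector bundle (its local flat frames are holomorphic frames). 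By the fibrewise identification of the first paragraph the intersection has constant fibre dimension one, so it is a holomorphic line subbundle. Setting $F^0\mathbb{S}_\omega=\mathcal{S}_\omega\otimes\mathcal{O}$ and $F^p=0$ for $p\ge 2$ completes the filtration.

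Finally, Griffiths transversality will be automatic: since $\mathcal{S}_\omega$ is locally constant, the Gauss--Manin connection on $\tilde{\mathbb{J}}\otimes\mathcal{O}$ restricts to a connection on $\mathcal{S}_\omega\otimes\mathcal{O}$, and in weight one the condition $\nabla F^1\subseteq F^0\otimes\Omega^1$ is vacuous because $F^0$ already equals the full subbundle. The construction is manifestly $\Gamma$-equivariant, so the sub-VHS on $\bbh$ descends to the asserted sub-VHS on $\Gamma\backslash\bbh$. The main obstacle is really the constant-rank step identifying $F^1\mathbb{S}_\omega$ as a line subbundle, which relies crucially on the explicit fibrewise description of the intersection as $\bbc\cdot [\omega_\tau ]$; everything else is formal.
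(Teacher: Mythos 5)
Your proof is correct and follows essentially the same route as the paper, whose entire argument is that the statement ``can be checked fiberwise'' using the earlier observation that each canonical subspace $S_{\omega_\tau }$ is a sub-Hodge structure of the fibre. The additional care you take with the holomorphicity of $F^1$ and with Griffiths transversality merely fills in details the paper leaves implicit, and your constant-rank observation is precisely the point that makes the fibrewise check sufficient.
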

\begin{proof}
This can be checked fiberwise. But we have already seen that the canonical subspace at every point is a sub-Hodge structure.
\end{proof}
\begin{NonelementaryWithParabolic}
Assume now that the finite index subgroup $\Gamma\subseteq\Gamma (X,\omega )$ can be chosen non-elementary and containing a parabolic element. This is for example possible when $(X,\omega )$ is a Veech surface. Then the moduli field $k$ of the canonical subsystem $\mathcal{S}_{\omega }\otimes_{\bbr }\bbc \subseteq\mathcal{J}\otimes\bbc$ is equal to the trace field of $\Gamma$. Denote by $K=k^{\text{Galois}}\subset\bbc$ the Galois closure of $k$. Then both the canonical subvariation and each of its Galois conjugates $\mathcal{S}_{\omega }^{\sigma }$ occur with multiplicity one in $\mathcal{J}\otimes\bbc$; their sum is defined over $\bbq$. Yet we can, under these conditions, not prove that the Galois conjugates also define sub-variations of pseudo-Hodge structure. This changes when we assume that $(X,\omega )$ is a Veech surface:
\end{NonelementaryWithParabolic}
\begin{TheVeechGroupALattice}
Assume now that $(X,\omega )$ is a Veech surface, i.e. that $\Gamma (X,\omega )$, and hence also $\Gamma$, is a lattice. This is equivalent to $C$ being of finite type, which in turn is equivalent to $C$ being compactifiable as a complex manifold. To begin with, in this case the trace field of $\Gamma$ is equal to the trace field of $\SL (X,\omega )$, in particular it only depends on $(X,\omega )$ and not on a particular choice of $\Gamma$. Now since all the Galois conjugates appear with multiplicity one\footnote{Note that we need not use that the canonical subsystem itself underlies a variation of Hodge structure --- this follows already by its having multiplicity one!}, we can conclude that they all underlie sub-$K$-variations of pseudo-Hodge structure. In particular their sum is a sub-$\bbq$-variation of Hodge structure of $\mathbb{J}$. There is a way to describe this structure closer to the intuition of abelian varieties, using \textit{real multiplication}, see the next chapter.

In fact we can get rid of the ``pseudo'': M. M\"{o}ller has shown in \cite[Prop. 2.6]{Moeller05b} that the trace field of a lattice Veech group is always totally real; we reproduce his argument in chapter 7. Thus $K$ is contained in $\bbr$, and we can talk about ``honest'' Hodge structures.
\end{TheVeechGroupALattice}

\subsection{Ahlfors' Variational Formula}

Now we have talked about the global structure of the family of Jacobians over a Teichm\"{u}ller curve; we also need some information on how it behaves locally. For this we first quote \textit{Ahlfors' variational formula:} let $X$ be a Teichm\"{u}ller marked surface; then its Jacobian $J(X)$ is a Hodge structure of Jacobian type (with our definition), whose underlying abelian group has a ``standard'' symplectic basis induced by the Teichm\"{u}ller marking. We refer to the period matrix of this Jacobian with respect to this basis simply as ``the period matrix of $X$'' and denote it by $\Pi (X)$. Also denote the adapted basis of $\Omega^1(X)$ by $\omega_1,\ldots ,\omega_g$, so that we have
\begin{equation*}
\Pi_{ij}(X)=\int_{b_i}\omega_j=\int_{b_j}\omega_i.
\end{equation*}
Now let $\mu\in\mathcal{B}^{<1}(X)$; then $X_{\mu }$ is also a Teichm\"{u}ller marked Riemann surface. The formula now gives an estimate on how $\Pi (X_{\mu })$ behaves when $\mu$ is small.
\begin{Theorem}[Ahlfors]
With the above notation (for fixed $X$ and varying $\mu$), one has
\begin{equation*}
\Pi_{ij}(X)-\Pi_{ij}(X_{\mu })=\int_X(\omega_i\otimes\omega_j)\mu +O(\lVert\mu\rVert_{\infty }^2).
\end{equation*}
\end{Theorem}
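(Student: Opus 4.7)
The plan is to identify $X_\mu$ with the same smooth surface underlying $X$ equipped with a new complex structure, so that cycles $a_i$, $b_i$ and smooth differential forms are canonically identified on both sides. Under this identification, for each $j$ there is a unique normalized holomorphic 1-form $\eta_j^\mu$ on $X_\mu$ satisfying $\int_{a_i}\eta_j^\mu = \delta_{ij}$, and $\Pi_{ij}(X_\mu) = \int_{b_i}\eta_j^\mu$. Writing $\alpha_j^\mu = \eta_j^\mu - \omega_j$, this is a closed $C^\infty$ 1-form on $X$ (both forms are closed and the identity map preserves the underlying smooth structure), it has vanishing $a$-periods by the normalization, and $\Pi_{ij}(X_\mu) - \Pi_{ij}(X) = \int_{b_i}\alpha_j^\mu$.

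The next step is to describe $\eta_j^\mu$ locally in the original coordinates. A holomorphic 1-form on $X_\mu$ is locally of the form $g(w)\,\de w$ in a $\mu$-compatible coordinate $w$ satisfying $\partial w/\partial\overline{z} = \mu\,\partial w/\partial z$; expanding $\de w$ in terms of $\de z$ and $\de\overline{z}$ yields the representation $\eta_j^\mu = \phi_j^\mu(\de z + \mu\,\de\overline{z})$ for some smooth function $\phi_j^\mu$ on $X$. As $\mu\to 0$, continuity of the normalized basis forces $\phi_j^\mu \to f_j$ (where $\omega_j = f_j\,\de z$), and more precisely $\phi_j^\mu - f_j = O(\lVert\mu\rVert_\infty)$; this is the point requiring analytical input, for it amounts to solving the inhomogeneous $\bar\partial$-equation which $\phi_j^\mu - f_j$ must satisfy in order for $\eta_j^\mu$ to be closed on $X$, with normalization fixing the harmonic ambiguity. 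Consequently
\begin{equation*}
\alpha_j^\mu = f_j\mu\,\de\overline{z} + \psi_j\,\de z + O(\lVert\mu\rVert_\infty^2),
\end{equation*}
where $\psi_j = \phi_j^\mu - f_j$ is of size $O(\lVert\mu\rVert_\infty)$.

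To turn the period $\int_{b_i}\alpha_j^\mu$ into an integral over $X$ I would apply the Riemann bilinear relations to the pair $(\alpha_j^\mu,\omega_i)$. Since $\alpha_j^\mu$ has vanishing $a$-periods and $\omega_i$ has $a$-periods $\delta_{ik}$,
\begin{equation*}
\int_X \alpha_j^\mu\wedge\omega_i \;=\; -\int_{b_i}\alpha_j^\mu .
\end{equation*}
Wedging with $\omega_i = f_i\,\de z$ now annihilates the $\psi_j\,\de z$ term entirely (since $\de z\wedge\de z = 0$), leaving locally $\alpha_j^\mu\wedge\omega_i = f_if_j\mu\,\de\overline{z}\wedge\de z + O(\lVert\mu\rVert_\infty^2)$. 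Matching this with the intended interpretation of $(\omega_i\otimes\omega_j)\mu$ as the $(1,1)$-form with local density $f_if_j\mu$ gives the stated formula, up to the sign conventions governing the bilinear pairing.

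The principal obstacle will be the quantitative first-order control of $\phi_j^\mu - f_j$: one must show that this correction is genuinely of order $\lVert\mu\rVert_\infty$ uniformly on $X$, so that the terms it contributes to $\alpha_j^\mu\wedge\omega_i$ really lie in the $O(\lVert\mu\rVert_\infty^2)$ error. This rests on an $L^p$-estimate for the inverse of the $\bar\partial$-operator on $X$ (a Cauchy–Green type bound, available because $\bar\partial$ on $X$ has closed range and finite-dimensional kernel and cokernel), together with the observation that the normalization $\int_{a_i}\alpha_j^\mu = 0$ singles out a unique preimage, killing the holomorphic ambiguity. Once this estimate is in place, the rest of the argument is algebraic and reduces to the local computation above.
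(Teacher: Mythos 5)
The paper itself offers no proof of this statement; it simply cites Ahlfors's original article and Nag \cite{Nag88}, \S 4.1.6, so there is nothing internal to compare your argument against. Your sketch is, in substance, the standard proof found in those references, and it is sound in outline: identifying $X_{\mu}$ with the underlying smooth surface of $X$, writing $\eta_j^{\mu}=\phi_j^{\mu}(\de z+\mu\,\de\overline{z})$, and then pairing the difference $\alpha_j^{\mu}=\eta_j^{\mu}-\omega_j$ (closed, with vanishing $a$-periods) against $\omega_i$ via the Riemann bilinear relations is exactly the right mechanism, and the observation that wedging with $f_i\,\de z$ annihilates the unknown $\de z$-component of $\alpha_j^{\mu}$ is the step that reduces the whole formula to a crude first-order bound on $\phi_j^{\mu}-f_j$ rather than an exact expression for it. You also correctly isolate the one genuinely analytic ingredient, namely the uniform (or at least $L^1$) estimate $\phi_j^{\mu}-f_j=O(\lVert\mu\rVert_{\infty})$ coming from the closedness equation $\partial_{\overline{z}}\phi_j^{\mu}=\partial_z(\phi_j^{\mu}\mu)$ together with the period normalization that kills the holomorphic ambiguity; without that estimate the cross term $\psi_j\mu$ could not be absorbed into the error. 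Two small points deserve attention if you were to write this out in full. First, the existence and regularity of the $\mu$-conformal coordinates $w$ (hence the smoothness of $\eta_j^{\mu}$ as a form on $X$) is itself part of the Ahlfors--Bers machinery and should be invoked explicitly, especially since the paper only assumes $\mu\in L^{\infty}$. Second, the final identification of $-\int_X\alpha_j^{\mu}\wedge\omega_i$ with $\int_X(\omega_i\otimes\omega_j)\mu$ hides a constant: $\de\overline{z}\wedge\de z=2\mathrm{i}\,\de x\wedge\de y$, so whether the formula holds on the nose or up to a factor of $\pm 2\mathrm{i}$ depends on which area form the pairing of a quadratic differential with a Beltrami form is integrated against; since the paper later relies on the positivity of $\int_X\lvert\omega_k\rvert^2$ in this pairing, that convention must be pinned down rather than left as ``up to sign.''
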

\begin{proof}
This formula is originally due to Ahlfors, see \cite[p. 56]{Ahlfors60}, albeit in a somewhat different language. A proof of the formulation given here can be found in \cite[section 4.1.6]{Nag88}. Our formula is precisely formula (1.25) in loc.cit.
\end{proof}
Now let $(X,\omega )$ be a Teichm\"{u}ller marked translation surface, with associated Teichm\"{u}ller disk $f^{\omega }:\Delta\to\mathrsfs{T}_g$ or $h^{\omega }:\bbh\to\mathrsfs{T}_g$. As above, we obtain variations of Hodge structure of Jacobian type $\tilde{\mathbb{J}}^0$ on $\Delta$ and $\tilde{\mathbb{J}}$ on $\bbh$, such that the latter is the pullback of the former along $C:\bbh\to\Delta$.
\begin{Corollary}
With the above notation, we have
\begin{equation*}
\frac{\de }{\de t}|_{t=0}\Pi_{ij}(f^{\omega }(t))=\int_X(\omega_i\otimes\omega_j)\frac{\overline{\omega }}{\omega }
\end{equation*}
and
\begin{equation*}
\frac{\de }{\de t}|_{t=\mathrm{i}}\Pi_{ij}(h^{\omega }(t))=\frac{\mathrm{i}}{2}\int_X(\omega_i\otimes\omega_j)\frac{\overline{\omega }}{\omega }.
\end{equation*}
\end{Corollary}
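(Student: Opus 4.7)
The plan is to reduce both identities to Ahlfors' variational formula, applied to the concrete Beltrami deformation which defines the abelian Teichm\"uller disk.

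First I would tackle the identity involving $f^{\omega }$. By construction (\ref{TDiskViaBeltrami}), the point $f^{\omega }(t)\in\mathrsfs{T}_g$ is represented by the marked Riemann surface $X_{\mu (t)}$, where $\mu (t)=t\cdot\overline{\omega }/\omega\in\mathcal{B}^{<1}(X)$. In particular $f^{\omega }(0)=X$ and the map $t\mapsto \mu (t)$ is complex-linear in $t$, so its norm satisfies $\lVert\mu (t)\rVert_{\infty }=|t|$. Inserting $\mu =\mu (t)$ into Ahlfors' formula, one gets
\begin{equation*}
\Pi_{ij}(X)-\Pi_{ij}(f^{\omega }(t))=t\cdot\int_X(\omega_i\otimes\omega_j)\frac{\overline{\omega }}{\omega }+O(|t|^2),
\end{equation*}
and dividing by $t$ and passing to the limit $t\to 0$ yields the first formula (up to the sign convention already built into Ahlfors' statement as we quoted it).

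For the second identity I would simply appeal to the chain rule, together with the identification $h^{\omega }=f^{\omega }\circ C$ explained just before the statement. A direct computation gives $C(\mathrm{i})=0$ and
\begin{equation*}
C'(\tau )=\frac{-2\mathrm{i}}{(\mathrm{i}+\tau )^2},\qquad C'(\mathrm{i})=\frac{-2\mathrm{i}}{(2\mathrm{i})^2}=\frac{\mathrm{i}}{2}.
\end{equation*}
Hence
\begin{equation*}
\frac{\de}{\de t}\bigg|_{t=\mathrm{i}}\Pi_{ij}(h^{\omega }(t))=C'(\mathrm{i})\cdot\frac{\de}{\de s}\bigg|_{s=0}\Pi_{ij}(f^{\omega }(s))=\frac{\mathrm{i}}{2}\int_X(\omega_i\otimes\omega_j)\frac{\overline{\omega }}{\omega },
\end{equation*}
which is the desired expression.

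There is no serious obstacle here; the whole content is the application of Ahlfors' formula. The only point that deserves care is the dependence of $\Pi_{ij}$ on the marking: along $f^{\omega }$ and $h^{\omega }$ the surface $X$ is equipped with the Teichm\"uller marking coming from the identity map $X\to X_{\mu (t)}$, which is exactly the marking presupposed in the statement of Ahlfors' theorem, so the formula applies verbatim and nothing has to be adjusted. Thus the corollary is really just a repackaging of Ahlfors' formula for the particular one-parameter family of Beltrami coefficients $\mu (t)=t\overline{\omega }/\omega$, combined with the Cayley identification $C:\bbh\to\Delta$.
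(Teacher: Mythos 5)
Your proof is correct and follows exactly the paper's own route: specialize Ahlfors' formula to $\mu (t)=t\,\overline{\omega }/\omega$, divide by $t$, and deduce the second identity from the first via the chain rule with $C(\mathrm{i})=0$ and $C'(\mathrm{i})=\mathrm{i}/2$. The sign flip you wave away as ``the sign convention'' is not a defect of your argument but is inherited from the source: the paper states Ahlfors' theorem with $\Pi_{ij}(X)-\Pi_{ij}(X_{\mu })$ on the left-hand side yet silently uses $\Pi_{ij}(X_{\mu })-\Pi_{ij}(X)$ in its own proof of the corollary, so the discrepancy sits in the quoted form of the theorem rather than in your reduction.
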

\begin{proof}
For the difference quotient we compute:
\begin{equation*}
\frac{\Pi_{ij}(f^{\omega }(t))-\Pi_{ij }(f^{\omega }(0))}{t}=\frac{\Pi_{ij}(X_{t\cdot\overline{\omega }/\omega })-\Pi_{ij }(X)}{t}=\int_X(\omega_i\otimes\omega_j)\frac{\overline{\omega }}{\omega }+O(t).
\end{equation*}
The second formula follows by noting that $h^{\omega }=f^{\omega }\circ C$ and applying the chain rule (one has $C'(\mathrm{i})=\mathrm{i}/2$).
\end{proof}

\newpage
\section{Real Multiplication}

In this chapter we finally turn to real multiplication on Jacobian varieties. By \textit{real multiplication} on an abelian variety $A$, one usually means a monomorphism $K\to\End A\otimes\bbq$ with certain properties, where $K$ is a totally real number field. For the situation we are interested in it is more convenient to use a more flexible convention where the field is also allowed to operate on an abelian subvariety. Since we had to consider abelian varieties only up to isogeny before, we also do this here, which allows us to avoid number-theoretic subtleties associated with orders in number fields.

There are deep relations between properties of translation surface and the existence of real multiplication (in this wider sense) on the Jacobian of the underlying Riemann surface. Namely for a Veech surface $(X,\omega )$, the results of section 6.6 can be reformulated in the following vein: one has real multiplication by the trace field of $\SL (X,\omega )$ on the Jacobian of $X$, with $\omega$ as an eigenform. The real multiplication structure is uniquely characterized by the operation of the trace field on the canonical subspace. Furthermore, this structure is preserved by the $\SL_2(\bbr )$-operation. This theorem, due to M\"{o}ller, rests upon the results of Deligne and Schmid discussed in the previous chapter.

Now in genus two, there is an elementary argument due to McMullen which shows that one does not need $(X,\omega )$ to be Veech. Hence the following question is natural: can this elementary argument perhaps be extended to higher genera? The answer, provided by Theorem 8.3.3, is: no. In all genera $g\ge 3$, for suitable $k$ (e.g. $k$ coprime to $2g+1$, which can always be achieved) the Jacobian of $W_g$ carries a real multiplication structure by the trace field of $\SL (W_g,\omega_k)$ with $\omega_k$ as an eigenform, but which is not preserved by the $\SL_2(\bbr )$-action.

\subsection{Real Multiplication}

\begin{DefinitionOfRM}
In analogy to the more classical notion of complex multiplication on elliptic curves, real multiplication is a slogan for certain endomorphism structures on abelian varieties (up to isogeny). One is very naturally led to real multiplication when considering large families of self-adjoint endomorphisms:
\end{DefinitionOfRM}
\begin{Proposition}\label{SelfAdjointFieldActionsOnVHSMustBeTotallyReal}
Let $X$ be a complex manifold, let $\mathbb{V}$ be a rational variation of Hodge structures of Jacobian type, of rank $g$, let $K$ be a number field of degree $d$ and let $\varrho :K\to\End\mathbb{V}$ be a ring homomorphism whose image consists entirely of self-adjoint maps (with respect to the polarization). Then $K$ is totally real, and $d\le g$.
\end{Proposition}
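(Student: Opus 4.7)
The plan is to work fibrewise: fix a point $x \in X$ and apply the argument to the rational Hodge structure of Jacobian type $\mathbb{V}_x$. Since $\varrho$ is a ring homomorphism out of a field it is injective, so $K$ acts faithfully on the $2g$-dimensional $\mathbb{Q}$-vector space $\mathbb{V}_{x,\mathbb{Q}}$, which consequently becomes a $K$-vector space of dimension $2g/d$; in particular $d \mid 2g$. Extending scalars to $\mathbb{C}$ and using $K \otimes_\mathbb{Q} \mathbb{C} \simeq \prod_{\sigma : K \to \mathbb{C}} \mathbb{C}$ gives a canonical eigenspace decomposition
\begin{equation*}
\mathbb{V}_{x,\mathbb{C}} = \bigoplus_{\sigma : K \to \mathbb{C}} V_\sigma, \qquad V_\sigma = \{ v : \varrho(k) v = \sigma(k) v \text{ for all } k \in K \},
\end{equation*}
and every $V_\sigma$ has the same $\mathbb{C}$-dimension $2g/d$. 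Because each $\varrho(k)$ is a morphism of Hodge structures, the Hodge decomposition respects this splitting, so $V_\sigma = V_\sigma^{1,0} \oplus V_\sigma^{0,1}$. Moreover complex conjugation of $\mathbb{V}_{x,\mathbb{C}}$ sends $V_\sigma$ to $V_{\bar\sigma}$ and exchanges the Hodge types, hence $\overline{V_\sigma^{1,0}} = V_{\bar\sigma}^{0,1}$.

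Next I would exploit self-adjointness. For $u \in V_\sigma$ and $v \in V_\tau$ and any $k \in K$, the identity $S(\varrho(k) u, v) = S(u, \varrho(k) v)$ reads $\sigma(k) S(u,v) = \tau(k) S(u,v)$, so $S$ annihilates $V_\sigma \otimes V_\tau$ whenever $\sigma \neq \tau$. Thus the decomposition is $S$-orthogonal, and since $S$ is non-degenerate it induces a non-degenerate pairing on each $V_\sigma$.

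Now the key step: I would use the Hermitian form $H(u,v) = (2\pi \mathrm{i}) S(Cu, \bar v)$ of the polarization, which is positive definite on $\mathbb{V}_{x,\mathbb{C}}$. For $u \in V_\sigma$, the vector $\bar u$ lies in $V_{\bar\sigma}$, so by the orthogonality just established $S(Cu, \bar u) = 0$ as soon as $\sigma \neq \bar\sigma$. Positive-definiteness then forces $V_\sigma = 0$ for every non-real embedding $\sigma$. But all $V_\sigma$ have the same nonzero dimension $2g/d$, so there are no non-real embeddings at all, i.e.\ $K$ is totally real.

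Finally, for the dimension bound, restrict attention to the real embeddings. For each real $\sigma$, complex conjugation interchanges $V_\sigma^{1,0}$ and $V_\sigma^{0,1}$, so these spaces have the same dimension; their sum being the positive integer $2g/d$ and the two summands being equal integers, each must be at least $1$. Summing $\dim V_\sigma^{1,0} \geq 1$ over the $d$ embeddings gives
\begin{equation*}
g = \dim \mathbb{V}_x^{1,0} = \sum_{\sigma} \dim V_\sigma^{1,0} \geq d,
\end{equation*}
as required. The substantive step — and the only place where one really uses something beyond linear algebra — is the positivity argument that rules out non-real embeddings; the rest is bookkeeping with the $K$-eigenspace decomposition.
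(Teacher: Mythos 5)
Your proof is correct and in substance the same as the paper's: the paper picks a primitive element $a$ of $K$, notes that the self-adjoint morphism $\varrho (a)$ has real eigenvalues and that its minimal polynomial equals that of $a$, so all Galois conjugates of $a$ are real, and the $d$ distinct eigenvalues already occurring on the $g$-dimensional space $V^{1,0}$ give $d\le g$. Your eigenspace decomposition $\bigoplus_{\sigma }V_{\sigma }$ is this argument unpacked, and it has the merit of making explicit the one step the paper glosses over --- self-adjointness with respect to the alternating form $S$ alone does not force real eigenvalues, and one must pass to the positive definite Hermitian form $H$ via the orthogonality $S(V_{\sigma },V_{\tau })=0$ for $\sigma\neq\tau$.
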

\begin{proof}
Let $a\in K$ be a primitive element, and let $f\in\bbq [x]$ be the minimal polynomial of $a$ over $\bbq$. Its roots are precisely the Galois conjugates of $a$. Since $K$ is a field, the homomorphism $\varrho$ must be injective, and consequently the minimal polynomial of the endomorphism $\varrho (a)$ is also $f$. Its roots are the eigenvalues of $\varrho (a)$, but the eigenvalues of a self-adjoint map are always real. So all Galois conjugates of $a$ must be real. Also we see that $\varrho (a)$ has precisely $d$ distinct eigenvalues, which implies that $d\le g$.
\end{proof}
The definition of real multiplication that we shall work with is the following:
\begin{Definition}
Let $J$ be a polarized $\bbq$-Hodge structure of Jacobian type of rank $g$ and let $K$ be a totally real number field of degree $d\le g$. Then a \textup{real multiplication structure by $K$ on $J$} consists of the following data:
\begin{enumerate}
\item a splitting $J=E\oplus F$ of $\bbq$-Hodge structures, orthogonal with respect to the polarization, such that $E$ has rank $d$;
\item a ring homomorphism $K\to\End E$ such that every element of $K$ is sent to a self-adjoint endomorphism of $E$.
\end{enumerate}
\end{Definition}
For the special case when $J$ is actually defined over $\bbz$, i.e. the Hodge structure corresponding to an abelian variety $A$, and when $d=g$, we retrieve the classical definition of real multiplication. Namely we have $J=E$, and the subring
\begin{equation*}
\mathfrak{o}=\{ a\in K : \varrho (a)(J_{\bbz })\subseteq J_{\bbz }\}
\end{equation*}
is an \textit{order} in $K$, i.e. finitely generated as a group and generating $K$ as a $\bbq$-vector space. Then $\varrho$ defines an injection $\mathfrak{o}\to\End A$.

We analogously define real multiplication for variations of Hodge structure:
\begin{Definition}
Let $X$ be a complex manifold, let $\mathbb{J}$ be a polarized variation of $\bbq$-Hodge structure of Jacobian type of rank $g$ on $X$, and let $K$ be a totally real number field of degree $d\le g$. Then a \textup{real multiplication structure by $K$ on $\mathbb{J}$} consists of the following data:
\begin{enumerate}
\item a splitting $\mathbb{J}=\mathbb{E}\oplus\mathbb{F}$ of variations of $\bbq$-Hodge structure, orthogonal with respect to the polarization, such that $\mathbb{E}$ has rank $d$;
\item a ring homomorphism $K\to\End \mathbb{E}$ such that every element of $K$ is sent to a self-adjoint endomorphism of $\mathbb{E}$.
\end{enumerate}
\end{Definition}
Note that if $\mathbb{J}$ is a variation of Hodge structures with real multiplication in the above sense, we get a real multiplication structure on every fibre $\mathbb{J}_x$. This uniquely determines the real multiplication structure on the whole of $\mathbb{J}$ (as soon as $X$ is connected), in the following sense:
\begin{Lemma}
Let $\mathbb{J}$ be a polarized variation of $\bbq$-Hodge structure of Jacobian type on a connected complex manifold $X$, and for some point $x\in X$ let there be given a real multiplication structure by a totally real number field $K$ on the fibre $\mathbb{J}_x$. Then there is \textup{at most one} real multiplication structure on $\mathbb{V}$ whose restriction to $\mathbb{J}_x$ is the given one.
\end{Lemma}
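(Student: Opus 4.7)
The plan is to reduce the uniqueness to the observation that a morphism of variations of $\bbq$-Hodge structure over a connected base is determined by its restriction to a single fibre. This in turn follows from the fact that a morphism between local systems of $\bbq$-vector spaces over a connected space is determined by its fibre at one point, since morphisms of local systems are locally constant. Crucially, we therefore do not need to invoke Schmid's Rigidity Theorem, which would require $X$ to be compactifiable.

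Suppose we are given two real multiplication structures $(\mathbb{E}_1,\mathbb{F}_1,\varrho_1)$ and $(\mathbb{E}_2,\mathbb{F}_2,\varrho_2)$ on $\mathbb{J}$ whose restrictions to the fibre at $x$ coincide with the prescribed RM structure on $\mathbb{J}_x$. In particular the sub-Hodge structures $\mathbb{E}_{1,x}$ and $\mathbb{E}_{2,x}$ are the same subspace $E \subseteq \mathbb{J}_x$; and since $\mathbb{F}_i$ must be the orthogonal complement of $\mathbb{E}_i$ with respect to the (fibrewise) polarization, also $\mathbb{F}_{1,x} = \mathbb{F}_{2,x}$. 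For each $a \in K$, I would extend $\varrho_i(a):\mathbb{E}_i \to \mathbb{E}_i$ by zero on $\mathbb{F}_i$ to obtain an endomorphism $\widetilde{\varrho}_i(a) \in \End(\mathbb{J})$ of variations of $\bbq$-Hodge structure. By construction, $\widetilde{\varrho}_1(a)_x = \widetilde{\varrho}_2(a)_x$ as endomorphisms of $\mathbb{J}_x$.

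The key step is then to invoke the fibrewise uniqueness principle: the difference $\widetilde{\varrho}_1(a) - \widetilde{\varrho}_2(a)$ is a morphism in the category of $\bbq$-VHS on $X$, and the underlying morphism of local systems vanishes at $x$. Since the base $X$ is connected, a section of a local system of homomorphisms which vanishes at one point vanishes identically; hence $\widetilde{\varrho}_1(a) = \widetilde{\varrho}_2(a)$ on all of $\mathbb{J}$. Applying this to $a = 1$ shows that the two projectors onto $\mathbb{E}_1$ and $\mathbb{E}_2$ agree, so $\mathbb{E}_1 = \mathbb{E}_2$; the polarization-orthogonality then forces $\mathbb{F}_1 = \mathbb{F}_2$, and applying it to general $a \in K$ yields $\varrho_1 = \varrho_2$.

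I do not expect any serious obstacle here: the only non-formal ingredient is the identity principle for morphisms of local systems on a connected base, and the rest is just unraveling the definition of a real multiplication structure. The one point worth flagging, so the reader does not conflate it with the Deligne--Schmid machinery discussed in the previous chapter, is that this rigidity is of a much more elementary kind than the Rigidity Theorem: it uses only that $\underline{\Hom}(\mathbb{J},\mathbb{J})$ has locally constant underlying sheaf, not that flat sections of Hodge type $(0,0)$ at one point are of Hodge type $(0,0)$ everywhere.
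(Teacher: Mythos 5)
Your proof is correct and follows essentially the same route as the paper: both arguments come down to the monodromy correspondence on a connected base, namely that sub-local systems and morphisms of local systems are determined by their fibre at a single point (the paper applies this directly to $\mathcal{E}$, $\mathcal{F}$ and to $\varrho(a)$, while you package the same fact via projectors and the identity principle for $\underline{\Hom}(\mathbb{J},\mathbb{J})$). Your explicit remark that no Deligne--Schmid rigidity is needed is accurate and consistent with the paper's proof, which likewise uses only the elementary local-system statement.
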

\begin{proof}
Let $\mathbb{J}$ be a polarized variation of $\bbq$-Hodge structure of Jacobian type of rank $g$ on $X$ together with a real multiplication structure in the above sense. We show that it is possible to reconstruct this structure from the real multiplication structure on the fibre $\mathbb{J}_x$; this will prove the lemma.

So write $J=\mathbb{J}_x$, $E=\mathbb{E}_x$ and $F=\mathbb{F}_x$. The sub-variations of Hodge structure $\mathbb{E}$ and $\mathbb{F}$ are uniquely determined by the underlying local systems $\mathcal{E},\mathcal{F}\subseteq\mathcal{V}_{\bbq }$, and these in turn are, by the usual monodromy correspondence, uniquely determined by their fibres $\mathcal{E}_x=E_{\bbq }$ and $\mathcal{F}_x=F_{\bbq }$. Hence the splitting $\mathbb{J}=\mathbb{E}\oplus\mathbb{F}$ can be reconstructed from the splitting $J=E\oplus F$.

Also by the monodromy correspondence, any endomorphism of a local system is uniquely determined by the induced endomorphism on one fibre. Hence the map $K\to\End\mathbb{E}$ is uniquely determined by the map $K\to\End E$.
\end{proof}
Hence if we are given a polarized variation of Hodge structure together with a real multiplication structure on one fibre, we can ask whether it extends to the whole variation, and if it does, the extension is unique.

\begin{OrthogonalDecomposition}
Real multiplication as above induces an orthogonal decomposition over the Galois closure of the involved field (see Definition \ref{DefinitionGaloisClosure}):
\end{OrthogonalDecomposition}
\begin{Proposition}
Let $\mathbb{J}$ be a rational variation of Hodge structure of Jacobian type of rank $g$, let $K$ be a totally real number field of degree $d\le g$ and let the splitting $\mathbb{J}=\mathbb{E}\oplus\mathbb{F}$ and the homomorphism $\varrho :K\to\End\mathbb{E}$ define a real multiplication structure on $\mathbb{J}$. Then we have a unique orthogonal decomposition
\begin{equation*}
\mathbb{J}\otimes K^{\text{Galois}}=\bigoplus_{j=1}^d\mathbb{S}^{\sigma_j}
\end{equation*}
of $K^{\text{Galois}}$-variations of Hodge structure with the following properties:
\begin{enumerate}
\item $\mathbb{S}^{\sigma_j}$ has rank one for every $1\le j\le d$,
\item the action $\varrho$ of $K$ on $\mathbb{E}$ respects this decomposition and
\item every $a\in K$ operates on $\mathbb{S}^{\sigma_j}$ by multiplication with $a^{\sigma_j}$.
\end{enumerate}
\end{Proposition}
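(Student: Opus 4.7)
The plan is to diagonalise the action of $K$ on $\mathbb{E} \otimes K^{\mathrm{Galois}}$ via the classical product decomposition
\[
K \otimes_{\bbq} K^{\mathrm{Galois}} \;\xrightarrow{\sim}\; \prod_{j=1}^{d} K^{\mathrm{Galois}}, \qquad a \otimes x \mapsto (\sigma_j(a)\,x)_{j=1}^{d},
\]
where $\sigma_1,\dots,\sigma_d$ are the embeddings of $K$ into $K^{\mathrm{Galois}}$. First I would note that, via $\varrho$, the VHS $\mathbb{E}$ becomes a $K$-module and so, $K$ being a field, is $K$-free; the identity $\dim_{\bbq}\mathbb{E} = 2d$ (because $\mathbb{E}$ has Jacobian type of rank $d$) then fixes its $K$-rank at $2$. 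Tensoring with $K^{\mathrm{Galois}}$ and applying the above isomorphism produces a canonical splitting of $K^{\mathrm{Galois}}$-local systems
\[
\mathbb{E} \otimes K^{\mathrm{Galois}} \;=\; \bigoplus_{j=1}^{d} \mathbb{S}^{\sigma_j},
\]
where $\mathbb{S}^{\sigma_j}$ is the eigencomponent on which $a\in K$ acts by multiplication by $\sigma_j(a)$, and each $\mathbb{S}^{\sigma_j}$ is free of $K^{\mathrm{Galois}}$-rank $2$. Properties (ii) and (iii) are built into this construction.

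Next I would promote this to a decomposition of variations of Hodge structure. The projector onto $\mathbb{S}^{\sigma_j}$ corresponds under the isomorphism above to the $j$-th standard idempotent in $\prod K^{\mathrm{Galois}}$, hence to a $K^{\mathrm{Galois}}$-linear combination of the operators $\varrho(a)$; since each $\varrho(a)$ is by hypothesis a morphism of VHS, so is the projector, and $\mathbb{S}^{\sigma_j}$ is a sub-VHS. To see that $\mathbb{S}^{\sigma_j}$ has Jacobian type of rank one, I would use that $K^{\mathrm{Galois}} \subset \bbr$ (the Galois closure of a totally real field is totally real) and that the eigenspace decomposition arises from a $\bbq$-rational operation on $\mathbb{E}$, so that each $\mathbb{S}^{\sigma_j}$ is stable under the complex conjugation on $\mathbb{E} \otimes \bbc$. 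Hodge symmetry then forces $(\mathbb{S}^{\sigma_j})^{0,1} = \overline{(\mathbb{S}^{\sigma_j})^{1,0}}$, and combined with total $K^{\mathrm{Galois}}$-rank $2$ each Hodge component is forced to be one-dimensional.

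Orthogonality will follow from self-adjointness: for $x \in \mathbb{S}^{\sigma_j}$, $y \in \mathbb{S}^{\sigma_k}$ and any $a \in K$, the polarisation $S$ satisfies
\[
\sigma_j(a)\,S(x,y) \;=\; S(\varrho(a)x,\,y) \;=\; S(x,\,\varrho(a)y) \;=\; \sigma_k(a)\,S(x,y),
\]
and choosing $a$ with $\sigma_j(a)\neq \sigma_k(a)$ for $j\neq k$ — always possible since the $\sigma_j$ are distinct embeddings — forces $S(x,y)=0$. Uniqueness is automatic: the decomposition is the intrinsic simultaneous eigenspace decomposition for the $K$-action, which depends only on $(\mathbb{E},\varrho)$.

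The step that will require the most care is the promotion of the algebraic eigenspace decomposition to a decomposition in the category of variations of Hodge structure: everything hinges on the eigenspace projectors being VHS-morphisms, which in turn rests on the axiom from the definition of a real multiplication structure that $\varrho(K)$ lies in $\End \mathbb{E}$ taken inside the endomorphism ring of $\mathbb{E}$ as a VHS, not merely as a local system. Once this is granted, the remaining verifications are formal manipulations with Galois-theoretic idempotents and elementary bilinear algebra.
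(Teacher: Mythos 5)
Your proposal is correct and follows essentially the same route as the paper: both diagonalize the $K$-action on $\mathbb{E}\otimes K^{\text{Galois}}$ into simultaneous eigencomponents, use self-adjointness of $\varrho (a)$ for orthogonality, and note that the eigenprojections are morphisms of variations of Hodge structure, so the summands are sub-VHS. The only real difference in bookkeeping is that the paper takes the kernel of $\varrho (a)-\sigma_j(a)\cdot\mathrm{id}$ for a primitive element $a$ and deduces nonvanishing of each eigenspace from the minimal polynomial and Cayley--Hamilton, whereas you read off the exact rank directly from freeness of $\mathbb{E}$ over $K$ via $K\otimes_{\bbq }K^{\text{Galois}}\simeq\prod_{j}K^{\text{Galois}}$ (and you additionally spell out, via conjugation-stability and Hodge symmetry, why each summand has one-dimensional Hodge components, a point the paper leaves to a dimension count).
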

\begin{proof}
We only need to show the existence of such a decomposition; uniqueness then follows automatically. So let $a\in K$ be a primitive element, i.e. such that $K=\bbq (a)$. Note that then the elements $\sigma_1(a),\ldots ,\sigma_d(a)$ of $K^{\text{Galois}}$ are pairwise different. Let then $\mathbb{S}^{\sigma_j}$ be the kernel of the morphism of $K^{\text{Galois }}$-variations of Hodge structure
\begin{equation*}
\varrho (a)-\sigma_j(a)\cdot\mathrm{id }:\mathbb{E}\otimes K^{\text{Galois }}\to \mathbb{E}\otimes K^{\text{Galois}}.
\end{equation*}
Since $\varrho (a)$ is self-adjoint, the $\mathbb{S}^{\sigma_j}$ are mutually orthogonal. It remains to show that they all have rank at least one, i.e. are not reduced to zero, since then by dimension reasons they must have rank exactly one.

For this note that the minimal polynomial of the endomorphism $\varrho (a)$ is the same as the minimal polynomial of the algebraic number $a$. Its roots are precisely the $a^{\sigma_j}$. By the Cayley-Hamilton theorem then all $\sigma_j(a)$ are eigenvalues of $\varrho (a)$, hence the $\mathbb{S}^{\sigma_j}$ are nonzero.
\end{proof}

\begin{ConstructingRMFromASubsystem}
By the deep structure results on variations of Hodge structure over compactifiable complex manifolds, we can give a partial inverse to this construction:

Let $X$ be a compactifiable complex manifold and let $\mathbb{J}$ be a rational variation of Hodge structure of Jacobian type on $X$, of rank $g$. Assume in addition that $\mathbb{J}$ can be defined over $\bbz$, i.e. there is some principally polarized integral variation of Hodge structure $\mathbb{J}_{\bbz }$ such that $\mathbb{J}\simeq\mathbb{J}_{\bbz }\otimes\bbq$, respecting the polarizations. This is e.g. the case for variations of the form $R^1p_{\ast }\bbq (0)$ where $p$ is a family of algebraic curves. Let $\mathcal{J}$ be the underlying local system of $\bbq$-vector spaces; by Theorem \ref{MonodromyRepOfVHSOverZIsSemisimple}, $\mathcal{J}$ is semisimple.

Assume further that there exists a number field $K$, which we for simplicity assume to be contained in $\bbc$, and a simple local system of $K$-vector spaces $\mathcal{S}$ which occurs \textit{with multiplicity one} in the semisimple local system $\mathcal{J}\otimes K$. Let $\mathrm{id} =\sigma_1,\ldots ,\sigma_d$ be the embeddings of $K$ into $\bbc$, then the Galois conjugates $$\mathcal{S}\otimes K^{\text{Galois }}=\mathcal{S}^{\sigma_1},\ldots ,\mathcal{S}^{\sigma_d}$$
all appear with multiplicity one in $\mathcal{J}\otimes K^{\text{Galois}}$, and are all mutually orthogonal. Furthermore by Proposition ... they all underly uniquely defined subvariations of $K^{\text{Galois}}$-pseudo-Hodge structure $\mathbb{S}^{\sigma_j}$. Their sum is defined over $\bbq$, say equal to $\mathbb{E}\otimes K^{\text{Galois}}$ for some rational subvariation $\mathbb{E}\subseteq\mathbb{J}$. Denote its orthogonal complement by $\mathbb{F}$.

Then we can define a ring homomorphism
\begin{equation*}
\varrho :K\to\End_{K^{\text{Galois}}}(\mathbb{E}\otimes K^{\text{Galois}})=\prod_{j=1}^d\End\mathbb{S}^{\sigma_j}
\end{equation*}
by sending $a\in K$ to $(\sigma_1(a),\ldots ,\sigma_d(a))$. By Galois invariance, $\varrho$ in fact has image contained in $\End_{\bbq }\mathbb{E}$, and the image clearly consists only of self-adjoint endomorphisms. For further reference:
\end{ConstructingRMFromASubsystem}
\begin{Proposition}
Under the above assumptions, $K$ is totally real.
The decomposition $\mathbb{J}=\mathbb{E}\oplus\mathbb{F}$ together with the homomorphism $\varrho :K\to\End\mathbb{E}$ as constructed above define a real multiplication datum by $K$ on $\mathbb{J}$.
\end{Proposition}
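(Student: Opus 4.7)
The plan is to verify the four constituents of a real multiplication datum (orthogonal splitting $\mathbb{J} = \mathbb{E} \oplus \mathbb{F}$ of $\bbq$-sub-VHS with $\mathbb{E}$ of rank $d$, ring homomorphism $\varrho : K \to \End\mathbb{E}$, and self-adjointness), and then to deduce that $K$ is totally real as a formal consequence of Proposition 8.1.1. First I would justify the descent of $\bigoplus_j \mathbb{S}^{\sigma_j}$ to a $\bbq$-sub-local system $\mathcal{E} \subseteq \mathcal{J}$. The Galois group $G = \Gal(K^{\mathrm{Galois}}|\bbq)$ permutes the set $\{\mathcal{S}^{\sigma_j}\}$ by $(\mathcal{S}^{\sigma_j})^\tau \simeq \mathcal{S}^{\tau\sigma_j}$, so their direct sum is $G$-invariant and Corollary 7.1.27 (DescentOfInvariantSubmodules) yields the desired $\mathcal{E}$. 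Assuming $\mathcal{S}$ has $K$-rank one (as in the intended applications), $\mathcal{E}$ has $\bbq$-rank $d$.

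Next I would promote $\mathcal{E}$ to a sub-VHS. By Deligne's corollary in section 7.5, each $\mathcal{S}^{\sigma_j}$ carries a \emph{unique} polarizable $\bbc$-VPsHS structure (up to grading shift), so the induced pseudo-Hodge filtration on $\bigoplus_j \mathbb{S}^{\sigma_j}$ is forced to be $G$-invariant and descends to a sub-filtration of the $\bbq$-Hodge filtration of $\mathbb{J}$. The resulting object $\mathbb{E}$ is a sub-$\bbq$-VHS because the grading is inherited from that of $\mathbb{J}$. For the orthogonal complement, $\mathbb{F} := \mathbb{E}^\perp$ is itself a sub-$\bbq$-VHS by Proposition 5.1.19(iii) (since polarizable $\bbq$-VHS are semisimple, Corollary 7.5.8), giving the desired orthogonal decomposition $\mathbb{J} = \mathbb{E} \oplus \mathbb{F}$.

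Over $K^{\mathrm{Galois}}$, the homomorphism $\varrho(a) = (\sigma_1(a)\cdot\mathrm{id},\ldots,\sigma_d(a)\cdot\mathrm{id})$ manifestly respects the decomposition and the Hodge filtration, and it is Galois equivariant because $\tau \in G$ simultaneously sends $\mathbb{S}^{\sigma_j}$ to $\mathbb{S}^{\tau\sigma_j}$ and $\sigma_j(a)$ to $(\tau\sigma_j)(a)$; again by the descent lemma this gives $\varrho(a) \in \End_{\bbq}\mathbb{E}$. For self-adjointness, the polarization $S$, extended $K^{\mathrm{Galois}}$-linearly to $\mathbb{E}\otimes K^{\mathrm{Galois}}$, is orthogonal with respect to the decomposition into the $\mathbb{S}^{\sigma_j}$. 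Granting this, for $v = \sum v_j$ and $w = \sum w_j$ with $v_j, w_j \in \mathbb{S}^{\sigma_j}$ one computes
\begin{equation*}
S(\varrho(a)v, w) = \sum_j \sigma_j(a)\, S(v_j, w_j) = S(v, \varrho(a)w),
\end{equation*}
which is the required self-adjointness. Proposition 8.1.1 then immediately yields that $K$ is totally real.

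The main obstacle is the claimed orthogonality $S|_{\mathbb{S}^{\sigma_j} \otimes \mathbb{S}^{\sigma_k}} = 0$ for $j \neq k$, which underpins both the descent of the filtration and the self-adjointness computation. Because $\mathbb{S}^{\sigma_j}$ and $\mathbb{S}^{\sigma_k}$ are pairwise non-isomorphic simple summands of $\mathcal{J}\otimes K^{\mathrm{Galois}}$, Schur's lemma forces the polarization-induced morphism $\mathbb{S}^{\sigma_j} \to (\mathbb{S}^{\sigma_k})^\vee(-1)$ to vanish unless $(\mathbb{S}^{\sigma_k})^\vee(-1) \simeq \mathbb{S}^{\sigma_j}$. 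The self-duality $\mathbb{J} \simeq \mathbb{J}^\vee(-1)$ induced by the principal polarization then defines an involution $\tau$ on $\{1,\ldots,d\}$ with $(\mathbb{S}^{\sigma_j})^\vee(-1) \simeq \mathbb{S}^{\sigma_{\tau(j)}}$, and the genuinely delicate step is showing $\tau = \mathrm{id}$. The cleanest route is to leverage Galois equivariance of the duality functor together with the self-duality of the single simple system $\mathcal{S}$ itself under the polarization (which holds in the applications, where $\mathcal{S}$ arises as the two-dimensional canonical subsystem and the restriction of the polarization is nondegenerate); this conjugates to show that $\tau$ fixes the identity coset and hence is trivial, so the pairing is block-diagonal and the remainder of the argument carries through.
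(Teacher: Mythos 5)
Your proposal is correct and takes essentially the same route as the paper: the paper's own proof of this proposition is the single sentence that the second part ``has already been shown,'' referring to the construction in the immediately preceding paragraph, and what you have written is that construction with the details supplied. The one substantive point of divergence is the mutual orthogonality of the $\mathbb{S}^{\sigma_j}$, which you correctly identify as the load-bearing step for self-adjointness (and which the paper asserts without argument). Your Schur-lemma reduction is right, but your resolution of the residual involution $\tau$ leans on the application-specific fact that $\mathcal{S}$ is the two-dimensional canonical subsystem with symplectically self-dual monodromy, whereas the proposition is stated for an arbitrary simple multiplicity-one summand. This can be closed in full generality and without any case analysis: by multiplicity one and Deligne's theorem each $\mathbb{S}^{\sigma_j}$ is a sub-variation of pseudo-Hodge structure of the polarized $\mathbb{J}\otimes\bbc$, and the restriction of a polarization to any sub-(pseudo-)Hodge structure is nondegenerate, since its radical $N\cap N^{\perp}$ is annihilated by the positive definiteness of the associated Hermitian form --- this is exactly the mechanism behind Proposition \ref{NicePropsOfPolarizableHS}(iii). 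Nondegeneracy of $S$ on $\mathbb{S}^{\sigma_j}$ forces $\mathcal{S}^{\sigma_j}\simeq(\mathcal{S}^{\sigma_j})^{\vee}(-1)$, so $\tau$ is automatically the identity and the pairing is block-diagonal. One terminological slip: in the intended applications $\mathcal{S}$ has $K$-rank two (it is rank one only in the ``Jacobian type'' convention of the paper, where rank $g$ means underlying module of rank $2g$), so $\mathcal{E}$ has $\bbq$-dimension $2d$; this is what the definition of a real multiplication datum requires, so nothing breaks, but the sentence ``assuming $\mathcal{S}$ has $K$-rank one'' should be adjusted accordingly.
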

\begin{proof}
The first part follows from Proposition \ref{SelfAdjointFieldActionsOnVHSMustBeTotallyReal}; the second part has already been shown.
\end{proof}

\begin{RMandTCurves}
We now take up again the discussion of section 7.6. Assume that $(X,\omega )$ is a Veech surface. Then the construction just explained can be applied to the variation of Hodge structure $\mathbb{J}=R^1f_{\ast }\bbq (0)$ of the associated family $f:\mathrsfs{X}\to C$. We summarize
\end{RMandTCurves}
\begin{Theorem}[M\"{o}ller]\label{RMForVeechSurfaces}
Let $(X,\omega )$ be a Veech surface. Then the trace field $K$ of the Veech group $\SL (X,\omega )$ is totally real. The Jacobian $J(X)$ has a canonical real multiplication structure given by $J(X)=E\oplus F$ and $\varrho :K\to\End E$.

This real multiplication structure is uniquely characterized by the following properties: $E$ is the smallest sub-$\bbq$-Hodge structure containing the canonical subspace, and $a\in K$ operates on the canonical subspace by multiplication with $a$.
\end{Theorem}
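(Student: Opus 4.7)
The proof will assemble results already established: the Deligne--Schmid decomposition theorem for variations of Hodge structure over compactifiable bases, the analysis of the canonical subsystem from section 7.6, and the general construction in paragraph \emph{ConstructingRMFromASubsystem}. The hypothesis that $(X,\omega)$ is a Veech surface is used decisively to ensure that the base of the associated family is compactifiable and that the canonical subsystem has trace field as moduli field.

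First I would prove that $K$ is totally real by reproducing M\"oller's argument from section~7. The key point is that the action of a sufficiently hyperbolic element of the Veech group on the canonical subsystem, extended across the family via the Deligne--Schmid rigidity, constrains the possible eigenvalue behaviour under Galois conjugation: any embedding $\sigma:K\to\bbc$ yields a Galois conjugate subsystem $\mathcal{S}_\omega^\sigma$ which again underlies a polarizable variation of pseudo-Hodge structure, and positivity of the induced Hermitian form forces the trace of each parabolic generator to remain real in every embedding, hence $K\subset\bbr$. Once $K$ is totally real, $K^{\text{Galois}}\subseteq\bbr$ as well, so all conjugate subvariations are honest (not merely pseudo-) Hodge substructures.

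Next I would apply the construction of paragraph \emph{ConstructingRMFromASubsystem} to the family $f:\mathrsfs{X}\to C=\Gamma\backslash\bbh$ and the variation $\mathbb{J}=R^1f_\ast\bbq(0)$. Since $(X,\omega)$ is Veech, $\Gamma$ is a lattice, so $C$ is of finite type and hence compactifiable, and $\mathbb{J}$ is defined over $\bbz$, so by the Deligne--Schmid theorem the underlying local system $\mathcal{J}$ is semisimple. The canonical subsystem $\mathcal{S}_\omega\otimes_\bbr\bbc$ appears with multiplicity one in $\mathcal{J}\otimes\bbc$ (simplicity and the leading-eigenvalue argument from Proposition~\ref{EigenvaluesOfHyperbolicInVGActingOnH}) and has moduli field equal to $K$, so it descends to a simple sub-$(\pi,K)$-local system $\mathcal{S}\subset\mathcal{J}\otimes K$. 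Its Galois conjugates $\mathcal{S}^{\sigma_1},\dots,\mathcal{S}^{\sigma_d}$ each occur with multiplicity one in $\mathcal{J}\otimes K^{\text{Galois}}$, are mutually orthogonal (polarizability plus disjoint isotypical components), and each underlies a unique sub-variation of Hodge structure $\mathbb{S}^{\sigma_j}$ of $\mathbb{J}\otimes K^{\text{Galois}}$ by Deligne's theorem. Their sum is Galois-stable, hence by Corollary~\ref{DescentOfInvariantSubmodules} descends to a $\bbq$-sub-variation $\mathbb{E}\subseteq\mathbb{J}$, and I let $\mathbb{F}$ be its polarization-orthogonal complement, which is again a sub-variation of Hodge structure by the argument of Corollary~\ref{PolarizableQVHSAreSemisimple}. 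The homomorphism $\varrho:K\to\End_{K^{\text{Galois}}}(\mathbb{E}\otimes K^{\text{Galois}})=\prod_{j=1}^d\End\mathbb{S}^{\sigma_j}$ sending $a$ to $(\sigma_j(a)\cdot\mathrm{id})_j$ is Galois-equivariant, so its image lies in $\End_\bbq\mathbb{E}$, and it acts by real scalars in an orthogonal decomposition, hence by self-adjoint endomorphisms. Taking the fibre at the marked point yields the real multiplication structure on $J(X)=\mathbb{J}_{c_0}$.

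For the characterization, note that $S_\omega\subseteq J(X)_\bbr$ corresponds, after tensoring with $\bbc$, to $\mathbb{S}^{\sigma_1}_{c_0}$ with $\sigma_1=\mathrm{id}$; since $\mathbb{E}$ is the smallest Galois-invariant extension of $\mathbb{S}$ inside $\mathbb{J}\otimes K^{\text{Galois}}$, its $\bbq$-descent $E=\mathbb{E}_{c_0}$ is the smallest $\bbq$-sub-Hodge structure of $J(X)$ whose base change contains $S_\omega$. By construction $a\in K$ acts on $\mathbb{S}=\mathbb{S}^{\sigma_1}$ by multiplication with $a$ itself, and this determines the splitting and the $K$-action uniquely by the monodromy rigidity lemma for real multiplication stated just after the definition. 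The hard part of the plan is the totally-real assertion, since this genuinely requires Schmid's rigidity across the whole Teichm\"uller curve rather than any fibrewise or group-theoretic argument; everything else reduces to organizing the multiplicity-one/Galois-descent bookkeeping already assembled in chapter~7.
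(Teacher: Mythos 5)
Your proposal follows essentially the same route as the paper: the theorem is obtained by feeding the multiplicity-one canonical subsystem, its trace-field moduli field, and the Deligne--Schmid decomposition into the construction of the paragraph \emph{Constructing Real Multiplication from a Subsystem}, with totally-realness falling out of Proposition \ref{SelfAdjointFieldActionsOnVHSMustBeTotallyReal} (self-adjoint endomorphisms of a polarized rational Hodge structure have real eigenvalues, and the eigenvalues of $\varrho (a)$ are exactly the Galois conjugates of $a$). The one correction: the reality constraint must be applied to a primitive element of $K$ --- equivalently to the trace of a \emph{hyperbolic} element, which generates the trace field --- not to ``parabolic generators'', whose traces are $\pm 2$ and carry no information.
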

This theorem is essentially \cite[Theorem 2.7]{Moeller05b}.

\subsection{Real Multiplication and the $\SL_2(\bbr)$-Action}

In this section we shall explain known results about the interplay between real multiplication on the Jacobian and the $\SL_2(\bbr)$-action on the moduli space of abelian differentials.

\begin{TransportingRealMultiplication}
Let $X$ be a closed Riemann surface, and assume we are given some real multiplication structure
\begin{equation*}
J(X)=E\oplus F,\quad \varrho :K\to\End E
\end{equation*}
on the Jacobian of $X$. Let $\omega$ be an abelian differential on $X$. Then we obtain a Teichm\"{u}ller disk $h^{\omega }:\bbh\to\mathrsfs{T}_g$ and an associated rational variation of Hodge structure $\tilde{\mathbb{J}}$ of Jacobian type on $\bbh$. We can identify the fibre of $\tilde{\mathbb{J}}$ over $\mathrm{i}\in\bbh$ with $J(X)$, and the question is now: does the real multiplication structure on $J(X)$ extend to a real multiplication structure (which is then necessarily unique) on $\tilde{\mathbb{J}}$? If it does, we say that \textit{the real multiplication structure is preserved by the $\SL_2(\bbr )$-action on $(X,\omega )$}. Note that the real multiplication structure on the Jacobian for a Veech surface $(X,\omega )$, as specified in Theorem \ref{RMForVeechSurfaces}, is preserved by the $\SL_2(\bbr )$-action because of its very construction.

This condition can also be expressed directly in terms of the $\SL_2(\bbr )$-action. The real multiplication structure can be interpreted as a splitting $H^1(X,\bbq )=E_{\bbq}\oplus F_{\bbq}$ and a homomorphism $\varrho :K\to\End_{\bbq }E_{\bbq }$ with certain properties. Now for any $A\in\SL_2(\bbr )$ we can identify the topological surface $Y$ underlying $A\cdot (X,\omega )$ with $X$. Hence we also have a splitting of $H^1(Y,\bbq )$ and an action of $K$ on one part of it. The condition to be satisfied is then that these data also define a real multiplication structure on $J(Y)$.
\end{TransportingRealMultiplication}
\begin{Eigenforms}
Assume now that the real multiplication structure is preserved. Then as above we get an orthogonal decomposition of $\tilde{\mathbb{J}}$ which in turn induces an orthogonal decomposition of the associated variation of $\bbr$-Hodge structure:
\begin{equation*}
\tilde{\mathbb{J}}\otimes\bbr =\bigoplus_{j=1}^d(\mathbb{S}^{\sigma_j}\otimes\bbr )\oplus (\mathbb{F}\otimes\bbr ).
\end{equation*}
Here the $\mathbb{S}^{\sigma_j}\otimes\bbr$ are simple variations of $\bbr$-Hodge structure. On the other hand we have the canonical subvariation $\mathbb{S}_{\omega }$ which is also simple. Hence if it is contained in $\mathbb{E}$, i.e. the part of $\tilde{\mathbb{J}}$ where something interesting happens, it has to be equal to one of the $\mathbb{S}^{\sigma_j}$.

But this in turn means that $\omega$ already has to lie in $E\subseteq J(X)$ and that it is an \textit{eigenform} for the real multiplication structure, i.e. $\varrho (K)\omega\subset\bbc\omega$. So the following picture will appear:

For every $A\in\SL_2(\bbr )$, the real multiplication structure carries over to the Jacobian $J(X')$, where $A\cdot (X,\omega )=(X',\omega ')$. Furthermore $\omega '$ is also an eigenform for this structure, for the same eigencharacter as $\omega$.
\end{Eigenforms}

\begin{VeechSurfaces}
We can rephrase Theorem \ref{RMForVeechSurfaces} in this context as follows:
\end{VeechSurfaces}
\begin{Corollary}
Let $X$ be a closed Riemann surface and let $\omega$ be an abelian differential on it, such that the Veech group $\SL (X,\omega )$ is a lattice. Then $\omega$ is an eigenform for the real multiplication structure on $J(X)$ by the trace field constructed above, and this real multiplication structure is preserved by the $\SL_2(\bbr )$-action on $(X,\omega )$.\hfill $\square$
\end{Corollary}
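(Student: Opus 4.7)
The plan is to observe that this Corollary is essentially a direct consequence of Theorem 8.1.5 (M\"oller's theorem) once one unwraps how the real multiplication was actually constructed. The content in addition to 8.1.5 is the assertion that $\omega$ is an eigenform and that the structure survives under $\SL_2(\bbr)$; both will follow from the fact that the real multiplication was produced not on the single Jacobian $J(X)$, but on the whole variation of Hodge structure over the Teichm\"uller curve.

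More concretely, I would proceed as follows. Recall that since $(X,\omega)$ is Veech, the mirror Veech group $\Gamma(X,\omega)$ is a lattice; choose a torsion-free finite index subgroup $\Gamma$ as in section~4.5 and form the family $f:\mathrsfs{X}\to C=\Gamma\backslash\bbh$, which is algebraic over a number field. The base $C$ is of finite type, hence compactifiable, so the machinery of section 7.4--7.5 applies to $\mathbb{J}=R^1f_\ast\bbq(0)$. The construction before Theorem 8.1.5 (``Constructing RM from a Subsystem'') produced the splitting $\mathbb{J}=\mathbb{E}\oplus\mathbb{F}$ and the map $\varrho:K\to\End\mathbb{E}$ using the canonical subsystem $\mathcal{S}_\omega\subseteq R^1f_\ast\bbr$, whose moduli field is the trace field $K$ (totally real by the result of M\"oller recalled in 7.6); this is a real multiplication datum on the entire variation $\mathbb{J}$.

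Now the preservation statement is then almost tautological. The $\SL_2(\bbr)$-orbit of $(X,\omega)$ in $\Omega\mathrsfs{T}_g$ is the image of $g_0^\omega$, and by Proposition 4.4 (the comparison between $f^\omega$ and $g_0^\omega$) it descends via $\bbh\to C$ to the image of the Teichm\"uller curve in $\mathrsfs{M}_g$. For any $A\in\SL_2(\bbr)$, the Jacobian of the Riemann surface underlying $A\cdot(X,\omega)$ is (after identification via the topological trivialization) precisely the fibre $\mathbb{J}_{c}$ at the corresponding point $c\in C$. Restricting the global RM datum $(\mathbb{E},\mathbb{F},\varrho)$ to this fibre produces a real multiplication structure on $J(A\cdot X)$ that agrees with the one transported from $J(X)$ along the flat structure, because all of $\mathbb{E}$, $\mathbb{F}$ and $\varrho$ are defined on the underlying local systems, which are constant with respect to the Gauss--Manin connection.

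For the eigenform statement, the orthogonal decomposition of $\mathbb{E}\otimes K^{\text{Galois}}$ into rank one summands $\mathbb{S}^{\sigma_j}$ is, by the very construction preceding Theorem 8.1.5, obtained from the Galois conjugates of the canonical subsystem $\mathbb{S}_\omega$. Hence $\mathbb{S}_\omega$ itself (after base change from $\bbr$ to $K^{\text{Galois}}\subseteq\bbr$) equals one of these summands, say $\mathbb{S}^{\sigma_1}$ corresponding to the identity embedding. In the fibre over our base point, $(\mathbb{S}_\omega)_{\bbc}=\bbc\omega\oplus\bbc\overline{\omega}$, so $\omega\in(\mathbb{S}^{\sigma_1})_{\bbc}\subseteq E\otimes\bbc$, and for every $a\in K$ the endomorphism $\varrho(a)$ acts on $\mathbb{S}^{\sigma_1}$ by multiplication with $\sigma_1(a)=a$, whence $\varrho(a)\omega=a\omega$. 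Hence $\omega$ is an eigenform with eigencharacter the identity embedding $K\hookrightarrow\bbr$.

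The only point that requires genuine care is the identification, in the second paragraph, of the fibre over the image of $A\cdot\mathrm{i}$ with the Jacobian of $A\cdot X$ in such a way that the canonical subspaces transported by the flat structure and those read off directly from $A\cdot(X,\omega)$ agree; this is however exactly the content of the description of $\mathcal{S}_\omega$ given right after its definition in 7.3 (its fibre at any $\tau=A\cdot\mathrm{i}\in\bbh$ is $S_{\omega'}$ for $(X',\omega')=A\cdot(X,\omega)$, because $\Re\omega'$ and $\Im\omega'$ span the same real subspace as $\Re\omega$ and $\Im\omega$). Once this identification is in place, everything else is bookkeeping.
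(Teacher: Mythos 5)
Your argument is correct and is essentially the proof the paper intends: the corollary is stated with no written proof precisely because, as you observe, the real multiplication datum of Theorem 8.1.5 is constructed on the whole variation $\mathbb{J}=R^1f_\ast\bbq(0)$ from the canonical subsystem and its Galois conjugates, so preservation under the $\SL_2(\bbr)$-action and the eigenform property of $\omega$ are immediate from that construction (this is exactly what the paragraphs ``Transporting Real Multiplication'' and ``Eigenforms'' spell out). Your unwinding, including the identification of the fibrewise canonical subspaces via the flat structure, matches the paper's route.
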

\begin{GenusTwo}
There is also a complete picture of when real multiplication is transported along the orbit of an eigenform in genus two: always! To understand why, let us first remark that real multiplication structures in genus two cannot be very complicated. The number field has to have degree at most two, hence it is either $\bbq$ or a real quadratic field.

Let $X$ be a closed Riemann surface of genus two and let $\omega$ be an abelian differential on $X$. A real multiplication structure by $\bbq$ on $J(X)$ is then nothing but an orthogonal decomposition of rational Hodge structures $J(X)=E\oplus F$, where each summand has rank one. Furthermore, $\omega$ is an eigenform for this real multiplication structure if and only if $\omega\in E^{1,0}$. This is equivalent to $E\otimes\bbr$ being equal to the canonical subspace in $H^1(X,\bbr )$. Since $F$ is the orthogonal complement of $E$, this means that we can reconstruct the real multiplication structure on $X$ solely from $\omega$.
\end{GenusTwo}
\begin{Proposition}
Let $X$ be a closed Riemann surface of genus two, together with some real multiplication structure by $\bbq$ on $J(X)$. Let further $\omega$ be an abelian differential on $X$ which is an eigenform for this real multiplication structure. Then the real multiplication structure is preserved by the $\SL_2(\bbr )$-action on $(X,\omega )$.
\end{Proposition}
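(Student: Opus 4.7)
The plan is to exploit the genus-two observation made just before the proposition: a $\bbq$-real multiplication structure on $J(X)$ admitting $\omega$ as an eigenform is completely determined by $\omega$ itself, via the formula $E\otimes\bbr=S_{\omega}$. Since the canonical subspace transforms in a controlled way under $\SL_2(\bbr)$, one only needs to check that the resulting rational decomposition remains a decomposition of Hodge structures after deformation.

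More concretely, fix $A\in\SL_2(\bbr)$ and set $(X',\omega')=A\cdot(X,\omega)$. First I would use that $X$ and $X'$ share the same underlying smooth surface, so the cup-product-polarized rational Hodge structures $J(X)$ and $J(X')$ have the \emph{same} underlying polarized $\bbq$-module, namely $H^1(X,\bbq)=H^1(X',\bbq)$; only the Hodge filtration changes. In particular the orthogonal decomposition $H^1(X,\bbq)=E_{\bbq}\oplus F_{\bbq}$ makes sense verbatim on $J(X')$, and the sole point to verify is that $E_{\bbq}$ (and hence its orthogonal complement $F_{\bbq}$, via Proposition \ref{NicePropsOfPolarizableHS}) underlies a sub-Hodge structure of $J(X')$.

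Next I would compare canonical subspaces. By the transformation rule for affine maps,
\begin{equation*}
\begin{pmatrix}\Re\omega'\\ \Im\omega'\end{pmatrix}=A\cdot\begin{pmatrix}\Re\omega\\ \Im\omega\end{pmatrix}
\end{equation*}
as real cohomology classes, so $S_{\omega'}=S_{\omega}$ as real subspaces of $H^1(X,\bbr)=H^1(X',\bbr)$. The eigenform hypothesis gives $E\otimes\bbr=S_{\omega}$, hence $E\otimes\bbr=S_{\omega'}$. But the canonical subspace of \emph{any} translation surface is a sub-real-Hodge structure of its Jacobian (this is the proposition immediately following the definition of $S_{\omega}$, proved via Lemma \ref{LemmaOnSubHS}). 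Therefore $E\otimes\bbr$ is a sub-$\bbr$-Hodge structure of $J(X')\otimes\bbr$; since $E_{\bbq}$ is defined over $\bbq$, it is a sub-$\bbq$-Hodge structure of $J(X')$, and polarization orthogonality promotes $F$ to a sub-Hodge structure as well.

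Having secured the Hodge-theoretic decomposition $J(X')=E\oplus F$, the ring homomorphism $\varrho:\bbq\to\End E$ is tautological and trivially consists of self-adjoint endomorphisms, so the tuple is a real multiplication datum on $J(X')$ extending the given one fibrewise. Varying $A$ produces such data over the entire $\SL_2(\bbr)$-orbit, and by the uniqueness lemma these coincide with the restriction of a (necessarily unique) real multiplication structure on the variation $\tilde{\mathbb{J}}$ on $\bbh$. Thus the structure is preserved by the $\SL_2(\bbr)$-action. There is essentially no obstacle here: the proof works because in genus two the data of a $\bbq$-RM with a given eigenform collapses to the data of the canonical subspace, which is by construction stable under the action, whereas for higher-degree number fields one has only $E\otimes\bbr\supseteq S_{\omega}$ and would need the deeper Deligne–Schmid machinery of Section~7 to control the other direct summands $\mathbb{S}^{\sigma_j}$.
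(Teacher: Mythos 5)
Your proof is correct and follows essentially the same route as the paper: both arguments rest on the identity $E\otimes\bbr=S_{\omega}=S_{\omega'}$ along the orbit and on the fact that the canonical subspace is always a sub-$\bbr$-Hodge structure, so that the rational splitting $E_{\bbq}\oplus F_{\bbq}$ stays a splitting of Hodge structures at every fibre. The only difference is cosmetic: the paper packages this as the statement that the canonical subsystem $\mathcal{S}_{\omega}$ is defined over $\bbq$ and underlies a subvariation $\mathbb{E}$ with $\mathbb{F}=\mathbb{E}^{\perp}$, whereas you verify the same thing fibrewise and then glue.
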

\begin{proof}
Let $J(X)=E\oplus F$ be the real multiplication structure, so that $\omega\in E^{1,0}$. Then $E\otimes\bbr =S_{\omega }\subset H^1(X,\bbr )$. Hence $E$ extends to a local subsystem $\mathcal{E}\subset\mathcal{J}_{\bbq }$ with the property that $\mathcal{E}\otimes\bbr =\mathcal{S}_{\omega }$. Hence $\mathcal{E}$ underlies a subvariation of $\bbq$-Hodge structure $\mathbb{E}$ (in other words, the canonical subvariation is defined over $\bbq$), and we can set $\mathbb{F}=\mathbb{E}^{\perp}$.
\end{proof}
Consider now a real quadratic number field $K=\bbq (\sqrt{d})$, where $d$ is a squarefree positive integer. Let $J$ be a rational Hodge structure of Jacobian type of rank two; then a real multiplication datum on $M$ is the same as a ring homomorphism $\varrho :K\to\End J$ whose image consists entirely of self-adjoint endomorphisms.
\begin{Proposition}
Let $X$ be a closed Riemann surface of genus two, let $\omega$ be an abelian differential and let $\varrho :K\to\End J(X)$ be a real multiplication structure by a real quadratic number field $K$, such that $\omega$ is an eigenform for this structure. Then the real multiplication structure is preserved by the $\SL_2(\bbr )$-action on $(X,\omega )$.
\end{Proposition}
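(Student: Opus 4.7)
The plan is to argue exactly as in the preceding proposition (the $K=\bbq$ case), using that $\bbh$ is simply connected. Since the rational local system $\tilde{\mathcal{J}}_{\bbq}$ on $\bbh$ is trivialisable, with fibre canonically identified with $J(X)_{\bbq}$ at every point, the ring homomorphism $\varrho:K\to\End J(X)$ extends uniquely and constantly to $\tilde{\varrho}:K\to\End\tilde{\mathcal{J}}_{\bbq}$. In genus two with $[K:\bbq ]=2$ the field $K$ acts on all of $J(X)$ (so the complement $\mathbb{F}$ in the definition is zero), and the polarization is automatically respected; the only thing to verify is that each $\tilde{\varrho}(a)$ preserves the Hodge filtration $F^{1}\tilde{V}_{t}\subset\tilde{V}_{t}$ at every $t\in\bbh$. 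Equivalently, after extending scalars to $K$, I must show that both summands of the constant eigenspace decomposition
\begin{equation*}
\tilde{\mathcal{J}}\otimes K=\tilde{\mathbb{S}}^{\sigma_{1}}\oplus\tilde{\mathbb{S}}^{\sigma_{2}}
\end{equation*}
(with $\sigma_{1},\sigma_{2}$ the two real embeddings of $K$, labelled so that $\omega$ is a $\sigma_{1}$-eigenform) are sub-variations of $K$-Hodge structure.

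For $\tilde{\mathbb{S}}^{\sigma_{1}}$ this is essentially the argument that handled the $\bbq$ case: the hypothesis that $\omega$ be an eigenform forces $\tilde{\mathbb{S}}^{\sigma_{1}}\otimes_{K}\bbr=\mathcal{S}_{\omega}$, which is known to underlie a sub-$\bbr$-VHS. Spelled out fibrewise, the intersection $L_{1}:=F^{1}\tilde{V}_{t}\cap (\mathbb{S}^{\sigma_{1}}\otimes\bbc )$ contains the class of the deformed abelian differential $\omega_{t}$, since $\omega_{t}$ is an $\bbr$-linear combination of $\Re\omega ,\Im\omega$ and is holomorphic on $X_{t}$, so $\dim L_{1}\ge 1$; on the other hand $L_{1}$ cannot equal all of $\mathbb{S}^{\sigma_{1}}\otimes\bbc$, because the latter is the complexification of a real subspace and therefore self-conjugate, whereas $F^{1}\cap\overline{F^{1}}=0$. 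Hence $\dim L_{1}=1$.

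The hard part is the other summand $\tilde{\mathbb{S}}^{\sigma_{2}}$, for which no analogue of the deformed form is available a priori. My plan is to exploit the polarization. Recall that $F^{1}\tilde{V}_{t}$ is Lagrangian with respect to the (alternating) polarization $S$ (standard for weight-one polarized Hodge structures), while the decomposition $\tilde{V}_{t}=(\mathbb{S}^{\sigma_{1}}\otimes\bbc )\oplus (\mathbb{S}^{\sigma_{2}}\otimes\bbc )$ is $S$-orthogonal, because $\varrho (a)$ is self-adjoint with the two distinct eigenvalues $\sigma_{1}(a)\ne\sigma_{2}(a)$. Hence for $v=v_{1}+v_{2}\in F^{1}$ and $\alpha\in L_{1}\subset F^{1}$ one gets $S(v,\alpha )=0$ (Lagrangian) and $S(v_{2},\alpha )=0$ (orthogonality), whence $S(v_{1},\alpha )=0$; but inside the two-dimensional symplectic space $\mathbb{S}^{\sigma_{1}}\otimes\bbc$ the $S$-orthogonal of a nonzero vector is one-dimensional and coincides with its own span, forcing $v_{1}\in\bbc\alpha =L_{1}$. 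Thus $\pi_{1}(F^{1})=L_{1}$ is one-dimensional, and applying rank-nullity to $\pi_{1}|_{F^{1}}:F^{1}\to\mathbb{S}^{\sigma_{1}}\otimes\bbc$, whose kernel is $L_{2}:=F^{1}\cap (\mathbb{S}^{\sigma_{2}}\otimes\bbc )$, gives $\dim L_{2}=\dim F^{1}-\dim L_{1}=1$. So $\tilde{\mathbb{S}}^{\sigma_{2}}_{t}$ is also a sub-Hodge structure at each $t$, and the two constant sub-$K$-local systems together promote $\tilde{\varrho}$ to a morphism of $\bbq$-VHS, i.e.\ the desired real multiplication structure on $\tilde{\mathbb{J}}$ extending the given one.
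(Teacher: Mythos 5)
Your argument is correct, and its skeleton is the same as the paper's: extend $\varrho$ flatly over the simply connected base, decompose $\tilde{\mathbb{J}}\otimes K$ into the two constant eigen-subsystems, use the eigenform hypothesis to identify $\tilde{\mathbb{S}}^{\sigma_1}\otimes_K\bbr$ with the canonical subvariation $\mathcal{S}_\omega$ (hence a sub-VHS), and then handle the complementary summand before defining the action by $(\sigma_1,\sigma_2)$. Where you genuinely diverge is in the treatment of $\tilde{\mathbb{S}}^{\sigma_2}$. The paper takes it to be the orthogonal complement of $\tilde{\mathbb{S}}^{\sigma_1}$ and invokes the general principle (Proposition \ref{NicePropsOfPolarizableHS}(iii), Corollary \ref{PolarizableQVHSAreSemisimple}) that the orthogonal complement of a sub-VHS with respect to a polarization is again a sub-VHS --- an argument resting on positive definiteness of the associated Hermitian form and valid in any rank. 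You instead verify fibrewise, by a rank count, that $F^1$ meets the $\sigma_2$-eigenspace in dimension one, using only that $F^1$ is Lagrangian for the alternating form and that the eigenspace decomposition is $S$-orthogonal; positivity is never used. Your version is more self-contained and elementary, but it is tied to the genus-two numerology ($\dim F^1=2$) through the rank--nullity step, whereas the paper's citation is dimension-independent. One small point you leave implicit: having shown $\dim L_2=1$ you still need $\dim\bigl((\mathbb{S}^{\sigma_2}\otimes\bbc)\cap\overline{F^1}\bigr)=1$ to conclude that the fibre is a sub-Hodge structure; this follows from the self-conjugacy of $\mathbb{S}^{\sigma_2}\otimes\bbc$ (the same fact you invoke for $\mathbb{S}^{\sigma_1}$, since $K\subset\bbr$), so the gap is harmless but worth stating.
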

\begin{proof}
We get a decomposition into eigenspaces $J(X)\otimes K=S^{\sigma_1}\oplus S^{\sigma_2}$. Here $S^{\sigma_1}$ and $S^{\sigma_2}$ are sub-$K$-Hodge structures, the decomposition is orthogonal, and $a\in K$ operates on $S^{\sigma_j}$ by multiplication with $\sigma_j(a)$. Here $\sigma_1$ and $\sigma_2$ are the two embeddings $K\to\bbr$.

Now by assumption $\omega$ is an eigenform, and without loss of generality we may assume that $\omega\in (S^{\sigma_1})^{1,0}$. Then we have $S_{\omega }=S^{\sigma_1}\otimes_K\bbr$. So by the same argument as above, the canonical subvariation is defined over $K$, i.e. equal to $\mathbb{S}^{\sigma_1}\otimes_K\bbr$ for some sub-variation of $K$-Hodge structure $\mathbb{S}^{\sigma_1}$ of $\tilde{\mathbb{J}}\otimes K$, of rank one. Let $\mathbb{S}^{\sigma_2}$ be its orthogonal complement. Then define a real multiplication structure on $\tilde{\mathbb{V}}$ by
\begin{equation*}
(\sigma_1,\sigma_2 ):K\to\End_K(\tilde{\mathbb{J}}\otimes K)=\End_K\mathbb{S}^{\sigma_1}\times\End_K\mathbb{S}^{\sigma_2}=K\times K.
\end{equation*}
This actually has image in $\End_{\bbq }\tilde{\mathbb{J}}$ and hence defines a real multiplication structure which extends the given one on $J(X)$.
\end{proof}
\begin{RMbytheTraceField}
If $(X,\omega )$ is a Veech surface, we get real multiplication by the trace field of the Veech group with $\omega$ as an eigenform, and this structure is transported by the $\SL_2(\bbr )$-action. In genus two, this is also true without the assumption of $(X,\omega )$ being Veech.
\end{RMbytheTraceField}
\begin{Lemma}\label{LemmaRMbyTraceField}
Let $X$ be a complex manifold, let $\mathbb{J}$ be a rational variation of Hodge structure of Jacobian type on $X$, of rank two. Let $\mathbb{S}$ be a sub-$\bbr$-variation of Hodge structure of $\mathbb{J}\otimes\bbr$, and let $T$ be an endomorphism of the local system of $\bbq$-vector spaces $\mathcal{J}$ underlying $\mathbb{J}$, which is self-adjoint with respect to the polarization and which takes the sub-variation $\mathbb{S}$ to itself. Then $T$ is actually an endomorphism of $\mathbb{J}$ (i.e. a morphism of variations of Hodge structure).
\end{Lemma}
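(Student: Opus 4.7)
The plan is to use the orthogonal decomposition induced by the polarization together with an elementary fact in symplectic linear algebra that is special to dimension two.

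First I form the orthogonal complement $\mathbb{S}^{\perp}$ of $\mathbb{S}$ inside $\mathbb{J}\otimes\bbr$ with respect to the polarization $S$; Proposition \ref{NicePropsOfPolarizableHS}(iii), applied at every fibre, shows that $\mathbb{S}^{\perp}$ is again a sub-$\bbr$-variation of Hodge structure and that $\mathbb{J}\otimes\bbr=\mathbb{S}\oplus\mathbb{S}^{\perp}$ as polarized $\bbr$-VHS. Because Hodge symmetry forces the $\bbr$-rank of any sub-VHS of $\mathbb{J}\otimes\bbr$ to be even, and since $\mathbb{J}$ has $\bbq$-rank $4$, the nondegenerate case is that $\mathbb{S}$ has rank $2$, and then so does $\mathbb{S}^{\perp}$.

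Next I check that $T$ preserves $\mathbb{S}^{\perp}$: for $v\in\mathbb{S}^{\perp}_x$ and $w\in\mathbb{S}_x$ one has $S(Tv,w)=S(v,Tw)=0$ since $Tw\in\mathbb{S}_x$ by hypothesis. So $T$ restricts to a self-adjoint endomorphism of each of the two rank-two summands separately, and the problem splits.

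The technical core is then the following linear algebra statement: on a two-dimensional real vector space $V$ equipped with a non-degenerate alternating form $\omega$, every endomorphism $\phi$ satisfying $\omega(\phi v,w)=\omega(v,\phi w)$ for all $v,w\in V$ is a scalar multiple of the identity. One verifies this in a Darboux basis, where $\omega(v,w)=v_1w_2-v_2w_1$, by evaluating the self-adjointness condition on the pairs $(e_1,e_2)$, $(e_1,e_1)$ and $(e_2,e_2)$, which successively force the two diagonal entries of $\phi$ to agree and the off-diagonal entries to vanish. Applied fibrewise, and using that $T$ is flat and $X$ is connected, this yields constants $\lambda,\mu\in\bbr$ with $T|_{\mathbb{S}}=\lambda\cdot\mathrm{id}$ and $T|_{\mathbb{S}^{\perp}}=\mu\cdot\mathrm{id}$. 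Scalar endomorphisms trivially preserve the Hodge filtration, so $T|_{\mathbb{S}}$, $T|_{\mathbb{S}^{\perp}}$ and hence $T\otimes_\bbq\bbr$ are morphisms of $\bbr$-VHS; since compatibility with the Hodge filtration is a condition on the complex fibres and is insensitive to whether the coefficient ring is $\bbq$ or $\bbr$, $T$ itself is a morphism of $\mathbb{J}$ as a $\bbq$-VHS. No step is really an obstacle; the only genuine input is the rigidity of self-adjoint operators on a symplectic plane, which is precisely what restricts this elementary argument to the genus-two case.
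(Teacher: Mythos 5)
Your proof is correct and is essentially the paper's own argument: form the orthogonal complement $\mathbb{S}^{\perp}$, observe that self-adjointness forces $T$ to preserve it as well, and invoke the fact that a self-adjoint endomorphism of a two-dimensional symplectic vector space is a scalar multiple of the identity, so that both restrictions of $T$ trivially respect the Hodge decomposition. The only material you add is the explicit Darboux-basis verification of that linear-algebra fact (which the paper states without proof) and the remark pinning down the rank of $\mathbb{S}$; both are harmless refinements of the same route.
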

\begin{proof}
Let $\mathbb{S}^{\perp}$ be the orthogonal complement of $\mathbb{S}$; this is a sub-$\bbr$-variation of Hodge structure of $\mathbb{J}\otimes\bbr$, and $\mathbb{J}\otimes\bbr =\mathbb{S}\oplus\mathbb{S}^{\perp}$. Since $T$ is self-adjoint, it also has to map $\mathbb{S}^{\perp}$ to itself. Since any self-adjoint endomorphism of a two-dimensional symplectic vector space is a constant times the identity, $T|_{\mathbb{S}}$ as well as $T|_{\mathbb{S}^{\perp}}$ are multiples of the identity and hence respect the Hodge decompositions. Hence also $T$ acting on the whole of $\mathbb{V}$ respects the Hodge decompositions.
\end{proof}
Stated in somewhat different language, but with the same proof up to translation, this lemma is due to McMullen, see \cite[Lemma 7.4]{McMullen03}.
\begin{Proposition}
Let $X$ be a closed Riemann surface of genus two, and let $\omega$ be an abelian differential on it. Assume that the trace field $K$ of the Veech group $\SL (X,\omega )$ is real quadratic. Then the canonical subvariation $\mathbb{S}_{\omega }$ as well as its orthogonal complement $\mathbb{S}_{\omega }^{\perp}$ are defined over $K$.
\end{Proposition}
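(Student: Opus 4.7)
The plan is to realize the canonical subvariation as the $t$-eigenspace of a $\bbq$-rational, self-adjoint endomorphism of $H^{1}(X,\bbq)$ whose action on $S_{\omega}$ is scalar multiplication by an element $t$ generating $K$; the whole argument is powered by Lemma 8.2.4. To begin, I would choose a hyperbolic $\varphi\in\Aff^{+}(X,\omega)$ such that $t:=\tr D\varphi$ generates $K$ over $\bbq$ — since $K$ is real quadratic the Veech group is non-elementary, and one can select a suitable hyperbolic element (if necessary by passing to a product). Set $T:=\varphi^{*}+\varphi^{-1,*}$ acting on $H^{1}(X,\bbq)$. Because $\varphi^{*}$ is symplectic with symplectic adjoint $\varphi^{-1,*}$, the map $T$ is self-adjoint with respect to the polarization. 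The identity $A+A^{-1}=(\tr A)I$ in $\SL_{2}(\bbr)$, combined with the explicit description of the affine-group action on $S_{\omega}$, shows that $T$ restricts to the scalar $t\cdot\mathrm{id}$ on $S_{\omega}$; in particular it preserves $S_{\omega}$. Since $S_{(X',\omega')}=S_{\omega}$ for every $(X',\omega')=B\cdot(X,\omega)$, this preservation holds on every fiber of $\tilde{\mathbb{J}}$ over $\bbh$.

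On $\bbh$ the local system $\tilde{\mathcal{J}}_{\bbq}$ is trivial, so $T$ is tautologically an endomorphism of it. Lemma 8.2.4 then upgrades this to the statement that $T$ is a morphism of the variation of Hodge structure $\tilde{\mathbb{J}}$. Consequently the $t$-eigenspace
\[
E_{K}:=\ker(T-t\cdot\mathrm{id})\subset H^{1}(X,K)
\]
underlies a sub-$K$-VHS of $\tilde{\mathbb{J}}\otimes K$. To identify this eigenspace I invoke Proposition 4.4.5: the eigenvalues $\lambda,\lambda^{-1}$ of $\varphi^{*}$ are simple on $H^{1}(X,\bbc)$ and every other eigenvalue $\mu$ satisfies $\lambda^{-1}<|\mu|<\lambda$; since $\mu+\mu^{-1}=t$ forces $\mu\in\{\lambda,\lambda^{-1}\}$, no further eigenvalue of $T$ equals $t$. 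Hence the $t$-eigenspace of $T$ on $H^{1}(X,\bbc)$ equals $S_{\omega}\otimes\bbc$, so $\dim_{K}E_{K}=2$ and $E_{K}\otimes_{K}\bbr=S_{\omega}$.

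This yields the concrete description $E_{K}=H^{1}(X,K)\cap S_{\omega}$ (both inclusions being immediate once the eigenspace is known). Both $H^{1}(X,K)$ and $S_{\omega}$ are preserved by the monodromy $\Gamma$ — the first because the action is $\bbq$-linear, the second because every element of $\Gamma$ lifts to an affine map of $(X,\omega)$ — so $E_{K}$ is $\Gamma$-invariant. Therefore the constant sub-local-system $\tilde{\mathcal{E}}_{K}$ on $\bbh$ descends to a sub-$K$-local-system $\mathcal{E}_{K}$ of $\mathcal{J}\otimes K$ on $C=\Gamma\backslash\bbh$, and together with the Hodge filtration inherited from $\mathbb{J}\otimes K$ (which descends because the filtration on $\tilde{\mathbb{J}}$ is $\Gamma$-equivariant) it forms a sub-$K$-VHS $\mathbb{E}_{K}$ with $\mathbb{E}_{K}\otimes_{K}\bbr=\mathbb{S}_{\omega}$. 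The orthogonal complement $\mathbb{E}_{K}^{\perp}$ in $\mathbb{J}\otimes K$ with respect to the polarization (defined over $\bbq\subset K$) is automatically a sub-$K$-VHS, and orthogonality commuting with extension of scalars yields $\mathbb{E}_{K}^{\perp}\otimes_{K}\bbr=\mathbb{S}_{\omega}^{\perp}$.

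The main obstacle is the hypothesis of Lemma 8.2.4, whose proof genuinely needs genus two: it rests on the fact that a self-adjoint endomorphism of a two-dimensional symplectic space is a scalar, which forces the restrictions of $T$ to the rank-two summands $\mathbb{S}_{\omega}$ and $\mathbb{S}_{\omega}^{\perp}$ both to respect the Hodge filtration. In higher genera $\mathbb{S}_{\omega}^{\perp}$ has larger rank and this argument collapses — precisely the gap that Theorem 8.3.3 will exploit to produce counterexamples.
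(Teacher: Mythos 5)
Your proposal is correct and follows essentially the same route as the paper: the self-adjoint endomorphism $\Psi=\varphi^{\ast}+\varphi^{\ast,-1}$ for an affine map with irrational trace, Lemma 8.2.4 to promote it to a morphism of variations of Hodge structure, and the identification of $\mathbb{S}_{\omega}$ and $\mathbb{S}_{\omega}^{\perp}$ as its eigen-subvariations over $K$. The only cosmetic difference is that you pin down the $t$-eigenspace via the eigenvalue-simplicity result of Proposition 4.4.5, whereas the paper observes directly that $\Psi$ acts on the rank-two complement by the Galois conjugate $a'\neq a$; both close the same step.
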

\begin{proof}
Choose some $\varphi\in\Aff^+(X,\omega )$ whose linear part has irrational trace $a=\tr D\varphi$, hence $K=\bbq (a)$. Now recall that topologically the family $\tilde{f}:\tilde{X}\to\bbh$ is just a product $X\times\bbh\to\bbh$, and every fibre can be canonically identified with $X$ (just with a different complex structure). Extend $\varphi$ to a self-homeomorphism of $\mathrsfs{X}$ by setting $\varphi (x,\tau )=(\varphi (x),\tau )$. On every fibre $\mathrsfs{X}_{\tau }$, which is the Riemann surface underlying say $A\cdot (X,\omega )$, this induces again an affine map, with the same trace and the same action on cohomology. Hence we get an automorphism $\varphi^{\ast }$ of the local system $\mathcal{J}=R^1\tilde{f}_{\ast }\bbq$, symplectic with respect to the Poincar\'{e} pairing.

Set then $\Psi =\varphi^{\ast }+\varphi^{\ast ,-1}$. This is a self-adjoint map (by the same trick as above), and it stabilizes the sub-variation of real Hodge structure $\mathbb{S}_{\omega }$. Hence by Lemma \ref{LemmaRMbyTraceField} it is a morphism of integral variations of Hodge structure. It operates on $\mathbb{S}$ as $a$ times the identity and on $\mathbb{S}^{\perp}$ as some other number times the identity; this other number must then necessarily be the Galois conjugate $a'$ of $a$ (since the characteristic polynomial of $\Psi$ has integral coefficients).

Hence we can define $\mathbb{S}_{\omega ,K}$ as the kernel of the endomorphism $T-a\cdot \mathrm{id}$ of $\mathbb{V}\otimes K$, and analogously $\mathbb{S}_{\omega ,K}^{\perp}$ as the kernel of $T-a'\cdot\mathrm{id}$. These are sub-$K$-variations of Hodge structure whose base change to $\bbr$ are the canonical subvariation and its orthogonal complement.
\end{proof}
Now we can apply the usual trick to define real multiplication on $\mathbb{V}$:
\begin{Corollary}
Let $X$ be a closed Riemann surface of genus two and let $\omega$ be an abelian differential on it. Assume that the trace field of Veech group $\SL (X,\omega )$ is strictly larger than $\bbq$, i.e. is a real quadratic number field $K$. Then there is a unique real multiplication structure by $K$ on $J(X)$ such that $\omega$ is an eigenform for the identity character $K\to\bbc$. This structure is preserved by the $\SL_2(\bbr )$-action on $(X,\omega )$.
\end{Corollary}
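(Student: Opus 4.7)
The plan is to package the preceding proposition into the language of real multiplication; virtually all of the substantive work is already done. I would begin by fixing $\varphi\in\Aff^+(X,\omega)$ with irrational trace $a=\tr D\varphi$, so that $K=\bbq(a)$, and recalling that the proof of the preceding proposition produces the endomorphism $\Psi=\varphi^{\ast}+\varphi^{\ast,-1}$, shown via Lemma~\ref{LemmaRMbyTraceField} to be a self-adjoint morphism of the rational variation of Hodge structure $\tilde{\mathbb{J}}$ on $\bbh$, and shown to have minimal polynomial $(x-a)(x-a')$ where $a'$ is the Galois conjugate of $a$. Consequently the $\bbq$-subalgebra of $\End_\bbq\tilde{\mathbb{J}}$ generated by $\Psi$ is isomorphic to $\bbq[x]/((x-a)(x-a'))=K$, and I would take as real multiplication datum the ring homomorphism
\begin{equation*}
\tilde\varrho:K\to\End_\bbq\tilde{\mathbb{J}},\qquad b\mapsto b(\Psi),
\end{equation*}
together with the trivial decomposition $\tilde{\mathbb{E}}=\tilde{\mathbb{J}}$, $\tilde{\mathbb{F}}=0$ (forced because $\tilde{\mathbb{J}}$ has rank two and $[K:\bbq]=2$). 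Every element of the image of $\tilde\varrho$ is a polynomial in the self-adjoint operator $\Psi$, hence self-adjoint.

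Restricting to the fibre over $\mathrm{i}\in\bbh$ would then yield the desired real multiplication structure $(J(X),\varrho)$ on the Jacobian. To see that $\omega$ is an eigenform for the identity character, I would invoke the fact, established inside the proof of the preceding proposition, that $\mathbb{S}_{\omega,K}$ is precisely the $a$-eigenspace of $\Psi$ inside $\tilde{\mathbb{J}}\otimes K$; since the fibre at $\mathrm{i}$ contains $\omega\in H^{1,0}(X)\subset(\mathbb{S}_{\omega,K})^{1,0}$, we get $\varrho(b)\omega=b\omega$ for every $b\in K$. Preservation by the $\SL_2(\bbr)$-action is then automatic, because $\tilde\varrho$ has been defined globally on $\tilde{\mathbb{J}}$ over the entire Teichm\"uller disk: the restriction to the fibre over $A\cdot\mathrm{i}$ supplies the corresponding real multiplication datum on the Jacobian of the Riemann surface underlying $A\cdot(X,\omega)$.

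Uniqueness will follow by a short direct argument. Suppose $(J(X),\varrho')$ is another such structure, and let $b\in K$ be primitive. Then $\varrho'(b)$ is a self-adjoint $\bbq$-endomorphism of $J(X)$ with $\varrho'(b)\omega=b\omega$; the $b$-eigenspace over $\bbr$ therefore contains both $\bbc\omega$ and its complex conjugate, hence coincides with the canonical subspace $S_\omega$, while its orthogonal complement is the $b'$-eigenspace. These eigenspace decompositions agree with those of $\varrho(b)=b(\Psi)|_{J(X)}$, forcing $\varrho'(b)=\varrho(b)$ and hence $\varrho'=\varrho$. There is no real obstacle here: the only genuine input is the preceding proposition, resting on Lemma~\ref{LemmaRMbyTraceField}, which was needed to upgrade $\Psi$ from an endomorphism of the underlying local system to a morphism of rational variations of Hodge structure; the corollary itself is essentially a translation of that result into endomorphism-algebra language.
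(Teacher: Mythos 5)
Your proposal is correct and follows essentially the same route as the paper: both rest on the preceding proposition (that $\mathbb{S}_{\omega}$ and $\mathbb{S}_{\omega}^{\perp}$ descend to $K$ via the self-adjoint operator $\Psi=\varphi^{\ast}+\varphi^{\ast,-1}$ and Lemma~\ref{LemmaRMbyTraceField}), and your homomorphism $b\mapsto b(\Psi)$ is just the paper's ``let $K$ act by the identity character on $\mathbb{S}_{\omega}$ and by the other character on $\mathbb{S}_{\omega}^{\perp}$'' written in endomorphism-algebra language, with the same eigenspace argument for uniqueness. The extra detail you supply (minimal polynomial, restriction to fibres) is a harmless elaboration of what the paper leaves implicit.
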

\begin{proof}
The uniqueness is clear: if $\omega$ is an eigenform, its orthogonal complement has to be the eigenspace for the other embedding $K\to\bbc$. As to the existence, let $K$ act by the identical character on $\mathbb{S}_{\omega }$ and by the other character on $\mathbb{S}_{\omega}^{\perp}$.
\end{proof}
\begin{ImplicationsForThePeriodMatrix}
Let $X$ be a closed Riemann surface, now again of arbitrary genus $g$, with an abelian differential $\omega$. Let $K$ be a totally real number field of degree $d\le g$, and let
\begin{equation*}
J(X)=E\oplus F,\quad \varrho :K\to\End E
\end{equation*}
be a real multiplication datum with $\omega$ as an eigenform. Hence we have an orthogonal decomposition:
\begin{equation*}
E\otimes K^{\text{Galois}}=\bigoplus_{j=1}^dS^{\sigma_j}
\end{equation*}
where $\sigma_1,\ldots ,\sigma_d$ are the different embeddings $K\to\bbr$. Without loss of generality we may assume that $S_{\omega }=S^{\sigma_1}\otimes\bbr$.

Choose then a symplectic $K$-basis $(a_j,b_j)$ of the subspace of $H_1(X,K^{\text{Galois}})$ corresponding to $S^{\sigma_j}$, and a symplectic $K$-basis $(a_{d+1},\ldots ,a_g,b_{d+1},\ldots ,b_g)$ of the subspace corresponding to $F$. This defines altogether a symplectic $K$-basis $(a_1,\ldots ,a_g,b_1,\ldots ,b_g)$ of $H_1(X,K^{\text{Galois}})$. What does the associated period matrix look like, and how does it behave under the $\SL_2(\bbr )$-operation?

The basis of $\Omega^1(X)=J(X)^{1,0}$ we need for the computation of the period matrix has the following form:
\begin{equation*}
(c\cdot\omega ,\omega^{\sigma_2},\ldots \omega^{\sigma_d},\eta_{d+1},\ldots ,\eta_g).
\end{equation*}
Here $c\neq 0$ is an unimportant constant, $\omega^{\sigma_j}$ is a generator of $(S^{\sigma_j})^{1,0}$, and the $\eta_j$ are elements of $F^{1,0}$. From this we deduce: the period matrix has the form
\begin{equation}
\begin{pmatrix}
D&0\\
0&P
\end{pmatrix}
\end{equation}
where $D$ is a $d\times d$ diagonal matrix with nonzero entries, and $P$ is any matrix.
\begin{Proposition}\label{PeriodMatrixForTransportedRM}
With notation as above, the period matrix of $J(X)$ with respect to the basis $(a_1,\ldots ,a_g,b_1,\ldots ,b_g)$ has the form
\begin{equation*}
\begin{pmatrix}
D&0\\
0&P
\end{pmatrix}
\end{equation*}
where $D$ is a $d\times d$ diagonal matrix with nonzero entries. If the real multiplication structure is preserved by the $\SL_2(\bbr )$-action on $(X,\omega )$, the period matrix of $\tilde{\mathbb{J}}_{\tau }$ for any $\tau\in\bbh$ with respect to the same basis has the form
\begin{equation*}
\begin{pmatrix}
D(\tau )&0\\
0&P(\tau )
\end{pmatrix}
\end{equation*}
where $D=\operatorname{diag} (f_1(\tau ),\ldots ,f_d(\tau ))$. Here the $f_j$ are holomorphic functions on $\bbh$ with values in $\bbh$, in particular they are never zero.
\end{Proposition}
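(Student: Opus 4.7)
The plan is to exploit the adapted symplectic basis in order to reduce the computation of the period matrix (as recalled in the Definition-Proposition on period matrices in Section 5.3) to the orthogonality of the decomposition $J(X)\otimes K^{\text{Galois}}=\bigoplus_{j=1}^{d}S^{\sigma_j}\oplus (F\otimes K^{\text{Galois}})$ with respect to the polarization.

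First I would verify that the dual holomorphic one-forms $\omega_k$, characterized by $\int_{a_i}\omega_k=\delta_{ik}$, respect the decomposition: for $1\le k\le d$ one may take $\omega_k\in (S^{\sigma_k})^{1,0}$ (so $\omega_k$ is a nonzero scalar multiple of the chosen generator $\omega^{\sigma_k}$), and for $k>d$ one may take $\omega_k\in F^{1,0}$. The input is that the integration pairing $H_1(X,\bbc)\otimes H_{\mathrm{dR}}^{1}(X)\to\bbc$ identifies $H_1(X,\bbc)$ with the dual of $H^1(X,\bbc)$, so that via Poincar\'{e} duality orthogonality with respect to the polarization translates into orthogonality with respect to integration. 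Hence $\int_{a}\omega=0$ whenever $a$ and $\omega$ lie in distinct summands of the decomposition, which pins down the $\omega_k$ as claimed. Then $\tau_{jk}=\int_{b_j}\omega_k$ can be evaluated case by case using the same orthogonality; all off-diagonal entries involving at least one of the first $d$ indices vanish, leaving the block form $\begin{pmatrix}D&0\\0&P\end{pmatrix}$ with $D$ diagonal. Non-vanishing of $D_{kk}$ follows at once from the positive definiteness of $\Im\tau$, which forces $\Im D_{kk}>0$.

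For part (ii), the hypothesis that the real multiplication structure is preserved along the $\SL_2(\bbr )$-orbit means, by the uniqueness lemma in Section 8.1, that the splitting $J(X)=E\oplus F$ and the $K$-action extend to a real multiplication datum on the whole variation $\tilde{\mathbb{J}}$ on $\bbh$. The key observation is that the resulting sub-variations $\mathbb{E},\mathbb{F}$ and $\mathbb{S}^{\sigma_j}$ correspond to flat sub-local-systems of the local system underlying $\tilde{\mathbb{J}}$. Since $\bbh$ is simply connected, parallel transport carries the chosen symplectic basis at $\mathrm{i}\in\bbh$ to a flat symplectic basis at every $\tau\in\bbh$, and this transported basis remains adapted to the decomposition fiberwise. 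The argument of part (i) then applies verbatim to each fibre $\tilde{\mathbb{J}}_{\tau}$ and delivers the asserted block-diagonal shape with $D(\tau)=\operatorname{diag}(f_1(\tau),\ldots ,f_d(\tau))$. Holomorphicity of each $f_j$ is a consequence of the fact that the period matrix defines a globally holomorphic map $\bbh\to\mathrsfs{H}_g$ (which is built into the definition of a variation of Hodge structure via the Hodge filtration being a holomorphic subbundle), and that $f_j$ takes values in $\bbh$ follows, as in part (i), from the positive definiteness of $\Im\tau$ on every fibre, which in particular forces $f_j$ never to vanish.

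The main obstacle I anticipate is purely bookkeeping: keeping track of which incarnation of orthogonality (polarization on $H^1$, intersection on $H_1$, integration between homology and holomorphic forms, Hermitian form on $H^1(X,\bbc )$) is in play at each moment, and matching the Poincar\'{e}-dual decomposition of $H_1$ to the given one of $H^1$. The underlying principle --- that block-diagonal orthogonal decompositions of polarized Hodge structures force block-diagonal period matrices when one computes with adapted symplectic bases --- is standard, and in part (ii) nothing genuinely new happens beyond noting that flat extensions of the adapted basis stay adapted throughout $\bbh$.
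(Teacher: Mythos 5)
Your proposal is correct and follows essentially the same route as the paper: the adapted symplectic basis forces the dual basis of holomorphic one-forms to respect the orthogonal decomposition (so the period matrix is block-diagonal with diagonal first block), flatness of the sub-local systems lets the same computation run fibrewise over $\bbh$, and positive definiteness of the imaginary part of the period matrix gives $f_j(\tau)\in\bbh$. The paper's own proof only records this last point explicitly and treats the rest as immediate from the construction, so your write-up is simply a more detailed version of the intended argument.
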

\begin{proof}
It only remains to show that the $f_j$ have image in $\bbh$, but this follows from the fact that period matrices always have positive definite imaginary part.
\end{proof}
\end{ImplicationsForThePeriodMatrix}

\subsection{The Jacobians of the Wiman Curves}

We are going to construct a remarkable family of real multiplication structures on the Jacobians of the Wiman curves. \begin{DecompositionOfJacobian}
Consider the automorphism $\varphi$ of $X$ which is given in affine coordinates by $\varphi (x,y)=(\zeta x,y)$ with $\zeta =\exp\frac{2\pi\mathrm{i}}{2g+1}$. This has order exactly $2g+1=n$ in the automorphism group of $X$. It induces by functoriality an automorphism $\varphi^{\ast }$ of the Hodge structure $J(X)$; this is also of order precisely $2g+1$ (because the Torelli group is torsion free). The \emph{endomorphism}
\begin{equation*}
\Psi =\varphi^{\ast }+\varphi^{\ast ,-1}
\end{equation*}
of $J(X)$ shall be of greater importance for us, since it has some nice properties:
\end{DecompositionOfJacobian}
\begin{Lemma}
The endomorphism $\Psi$ is self-adjoint with respect to the polarization
\end{Lemma}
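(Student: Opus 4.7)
The plan is to reduce the self-adjointness of $\Psi$ to the more basic fact that $\varphi^{\ast}$ is a symplectic automorphism of $J(X)$, i.e.\ is adjoint to its own inverse. Once this is established the statement follows by a one-line computation.

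More concretely, I would proceed as follows. Since $\varphi\colon X\to X$ is a biholomorphic automorphism, in particular an orientation-preserving homeomorphism, the induced map $\varphi^{\ast}$ on $H^1(X,\bbz)$ preserves the cup product and sends the fundamental class to itself. Hence the induced morphism of Hodge structures $\varphi^{\ast}\colon J(X)\to J(X)$ satisfies
\begin{equation*}
S(\varphi^{\ast}\alpha,\varphi^{\ast}\beta)=S(\alpha,\beta)
\end{equation*}
for all $\alpha,\beta\in J(X)$, where $S$ denotes the Poincaré polarization. Since $\varphi^{\ast}$ is also an automorphism (of finite order $n$), this identity rewrites as
\begin{equation*}
S(\varphi^{\ast}\alpha,\beta)=S(\alpha,\varphi^{\ast,-1}\beta),
\end{equation*}
which is exactly the statement that the adjoint of $\varphi^{\ast}$ with respect to $S$ is $\varphi^{\ast,-1}$. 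By symmetry (replacing $\varphi$ by $\varphi^{-1}$, or by taking the adjoint of both sides of the above) the adjoint of $\varphi^{\ast,-1}$ is $\varphi^{\ast}$.

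Taking adjoints is $\bbq$-linear, so from $\Psi=\varphi^{\ast}+\varphi^{\ast,-1}$ we get
\begin{equation*}
\Psi^{\mathrm{adj}}=(\varphi^{\ast})^{\mathrm{adj}}+(\varphi^{\ast,-1})^{\mathrm{adj}}=\varphi^{\ast,-1}+\varphi^{\ast}=\Psi,
\end{equation*}
which is precisely the claim. There is no real obstacle here; the only substantive input is that an orientation-preserving homeomorphism acts symplectically on $H^1$, which is classical and already implicit in the identification of $\mathrm{Mod}_g$ with $\mathrm{Out}^{+}\pi_g$ made in the setup section. The same argument shows more generally that for any group element $\psi$ of finite order (or indeed any automorphism of $X$) the combination $\psi^{\ast}+\psi^{\ast,-1}$ is self-adjoint, a fact that will be useful again when analysing the $K$-action on $J(W_g)$.
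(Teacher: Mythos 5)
Your argument is correct and is essentially the same as the paper's: both deduce from the fact that $\varphi^{\ast}$ is induced by an orientation-preserving homeomorphism that it preserves the polarization, hence its adjoint is $\varphi^{\ast ,-1}$, and then conclude by adding the two adjoint identities. No gaps.
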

\begin{proof}
The endomorphism $\varphi^{\ast }$ is symplectic, i.e. $S(\varphi^{\ast }v,\varphi^{\ast }w)=S(v,w)$ for $S$ the polarization of $J(X)$. This is true for any endomorphism of the first cohomology of a compact orientable surface which is induced by an orientation-preserving self-homeomorphism of that surface. Analogously $\varphi^{\ast ,-1}$ is symplectic. Hence the equations
\begin{equation*}
S(\varphi^{\ast }v,w)=S(v,\varphi^{\ast ,-1}w)\text{ and }S(\varphi^{\ast ,-1}v,w)=S(v,\varphi^{\ast}w);
\end{equation*}
adding them, we get that $\Psi =\varphi^{\ast}+\varphi^{\ast ,-1}$ is self-adjoint.
\end{proof}
We now get a nice eigenspace decomposition for $\Psi$ acting on $J(X)$ \textit{as a Hodge structure}; this decomposition is defined over the field $K_n=\bbq (\cos\frac{2\pi }{n})$. So let $J(X)\otimes K_n$ be the induced polarized $K_n$-Hodge structure, and for any integer $1\le k\le g$ (not necessarily prime to anything) let $T^k$ be the kernel of the morphism of $K_n$-Hodge structures
\begin{equation*}
\Psi -2\cos\frac{2k\pi}{2g+1}\cdot\mathrm{id} :J(X)\otimes K_n\to J(X)\otimes K_n.
\end{equation*}
Since Hodge structures form an abelian category, this is a sub-Hodge structure of $J(X)\otimes K_n$.
The Hodge type $(1,0)$ component of $T^k$ consists precisely of the holomorphic one-forms $\omega$ on $X$ satisfying $$\varphi^{\ast}\omega +\varphi^{-1,\ast}\omega =2\cos\frac{2k\pi}{2g+1}\cdot \omega .$$
A simple computation confirms that $\omega_k$ satisfies this equation, so $T^k$ has at least rank one. Since the $T^k$, as eigenspaces of a self-adjoint endomorphism, are mutually orthogonal, we also find that the $T^k$ must have rank precisely one.
\begin{PropositionDefinition}
Let $1\le t\le g$ be a divisor of $n=2g+1$. Then the sub-Hodge structure
\begin{equation*}
\bigoplus_{1\le k\le g\atop\gcd (k,n)=t}T^k
\end{equation*}
is defined over $\bbq$, say equal to $E^t$ for some sub-$\bbq$-Hodge structure $E^t\subseteq J(X)$.
\end{PropositionDefinition}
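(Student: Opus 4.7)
The strategy is Galois descent. The field $K_n=\bbq(\cos\frac{2\pi}{n})$ is Galois over $\bbq$, since it is the maximal totally real subfield of the cyclotomic field $\bbq(\zeta_n)$. By the explicit description of its embeddings recalled earlier, $\Gal(K_n|\bbq)$ is identified with $(\bbz/n\bbz)^{\times}/\{\pm 1\}$, where the class of $\ell$ acts by sending $\xi_n=\zeta_n+\zeta_n^{-1}$ to $\zeta_n^\ell+\zeta_n^{-\ell}=2\cos\frac{2\ell\pi}{n}$; more generally, it sends $2\cos\frac{2k\pi}{n}$ to $2\cos\frac{2k\ell\pi}{n}$.

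Next, I will track what Galois does to the $T^k$. The module $T^k$ is the kernel of the $K_n$-linear endomorphism $\Psi-2\cos\frac{2k\pi}{n}\cdot\mathrm{id}$ of $J(X)\otimes K_n$. Since $\Psi$ itself is defined over $\bbq$, applying an element $\sigma\in\Gal(K_n|\bbq)$ corresponding to $\ell$ to this kernel (acting on the $K_n$-factor of $J_{\bbq}\otimes_{\bbq}K_n$) yields the kernel of $\Psi-2\cos\frac{2k\ell\pi}{n}\cdot\mathrm{id}$. Because the values $2\cos\frac{2j\pi}{n}$ for $1\le j\le g$ are pairwise distinct, this identifies $\sigma(T^k)=T^{k'}$, where $k'\in\{1,\ldots,g\}$ is the unique representative of $\pm k\ell\bmod n$. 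Since $\gcd(k\ell,n)=\gcd(k,n)$ whenever $\gcd(\ell,n)=1$, the Galois action preserves the partition of $\{1,\ldots,g\}$ by the value of $\gcd(k,n)$. Consequently, for every divisor $t\mid n$ with $1\le t\le g$, the sub-$K_n$-module
\begin{equation*}
U^t=\bigoplus_{1\le k\le g,\,\gcd(k,n)=t}T^k\subseteq J(X)\otimes K_n
\end{equation*}
is stable under $\Gal(K_n|\bbq)$.

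By Galois descent (Lemma \ref{DescentForVectorSpaces} applied to the $\bbq$-vector space underlying $J(X)$ and the $K_n$-subspace $U^t$), there is a unique sub-$\bbq$-vector space $E^t_{\bbq}\subseteq J(X)_{\bbq}$ with $E^t_{\bbq}\otimes_{\bbq}K_n=U^t$; explicitly, $E^t_{\bbq}=U^t\cap J(X)_{\bbq}$. It remains to check that $E^t_{\bbq}$ underlies a sub-$\bbq$-Hodge structure of $J(X)$. For this I invoke Lemma \ref{LemmaOnSubHS}: it suffices to verify that for every $x\in E^t_{\bbc}=E^t_{\bbq}\otimes_{\bbq}\bbc$, the Hodge components of $x$ in $J(X)_{\bbc}$ again lie in $E^t_{\bbc}$. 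This is immediate from the corresponding property for $U^t$, which holds because $U^t$ is a direct sum of the $T^k$, each of which is a sub-$K_n$-Hodge structure by construction, and from the compatibility $E^t_{\bbq}\otimes_{\bbq}\bbc=(E^t_{\bbq}\otimes_{\bbq}K_n)\otimes_{K_n}\bbc=U^t\otimes_{K_n}\bbc$.

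The only nontrivial point in the whole argument is the Galois-orbit calculation, i.e.\ verifying that the partition of indices $k$ by $\gcd(k,n)$ is exactly the orbit partition for the Galois action; once this combinatorial identification is made, everything else is formal descent. Writing $k=tk'$ with $\gcd(k',n/t)=1$, multiplication by a unit $\ell$ mod $n$ yields $k\ell=t(k'\ell)$ with $\gcd(k'\ell,n/t)=1$, so the orbit of $k$ sweeps out precisely those indices $k''$ with $\gcd(k'',n)=t$, confirming the claim.
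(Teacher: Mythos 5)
Your proof is correct and follows exactly the route the paper intends: the paper's own proof is the single sentence ``It is Galois-invariant,'' and your argument simply fills in the details of that claim (the identification of $\Gal (K_n|\bbq )$, the computation $\sigma (T^k)=T^{k'}$ via the action on the eigenvalues $2\cos\frac{2k\pi}{n}$, the invariance of $\gcd (k,n)$ under multiplication by units, and the appeal to Lemma \ref{DescentForVectorSpaces} and Lemma \ref{LemmaOnSubHS}). Note that only Galois-stability of the index set is actually needed, so your final orbit computation proves slightly more than required, but it is correct.
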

\begin{proof}
It is Galois-invariant.
\end{proof}
Hence we get an orthogonal decomposition
\begin{equation*}
J(W_g)=\bigoplus_{1\le t\le g\atop t|n}E^t.
\end{equation*}
The endomorphism $\Psi$ respects the direct sum decomposition, in particular we get by restriction a self-adjoint endomorphism $\Psi^t$ of $E^t$. Consider now $\Psi^t$ as a $\bbq$-linear endomorphism of $E^t_{\bbq }$, forgetting for the moment its compatibility with the Hodge structure. Then by construction $\Psi^t$ is diagonalizable (over a finite extension of $\bbq $), and its eigenvalues are precisely the Galois conjugates of $\cos\frac{2\pi}{n_0}$, where $n_0=n/t$.. Hence it has the same minimal polynomial over $\bbq$ as $\cos\frac{2\pi}{n_0}$, and we get a well-defined algebra homomorphism
\begin{equation*}
\varrho :K_{n_0}=\bbq (\cos\frac{2\pi }{n_0})\to\End E^t,\quad \cos\frac{2\pi }{n_0}\mapsto\Psi^t .
\end{equation*}
Every endomorphism in its image is a polynomial in $\Psi^t$, hence self-adjoint. Thus the orthogonal splitting $J(W_g)=E^t\oplus (\text{the rest})$ together with the homomorphism $\varrho :K_{n_0}\to\End E^t$ defines a real multiplication structure on $J(W_g)$.

\begin{TheWimanCurvesasVeechSurfaces}
We now relate the explicitly constructed real multiplication on $J(W_g)$ to the general theory developed before. To start with, $(W_g,\omega_t)$ is a Veech surface, and the Veech group $\SL (W_g,\omega_t)$ has trace field equal to $K_{n_0}$, where again $n_0=n/t$. This gives a real multiplication structure on $J(W_g)$ by $K_{n_0}$. Its characterization in Theorem \ref{RMForVeechSurfaces} shows that this is the same as the structure constructed above explicitly.
\end{TheWimanCurvesasVeechSurfaces}
\begin{Theorem}
Let $1<t<g$ be a divisor of $n=2g+1$, let $rt$ be the maximal multiple of $t$ which is $\le g$, and let $t<k<rt$ be such that $\gcd (k,n)=t$. Then $\omega_k$ is an eigenform for the real multiplication structure on $J(W_g)$ described above. Furthermore this real multiplication structure satisfies the conclusions\footnote{Not the conditions!} of Theorem \ref{RMForVeechSurfaces} except for the last sentence, i.e. it is real multiplication by the trace field of $\SL (W_g,\omega_k)$ such that $E$ is the smallest sub-$\bbq$-Hodge structure containing the canonical subspace, and on the canonical subspace, the trace field operates by the identity character.

This real multiplication structure is \emph{not} preserved by the $\SL_2(\bbr )$-action.
\end{Theorem}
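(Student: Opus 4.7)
The first two assertions are direct verifications. For the eigenform claim, $\omega_k$ lies in $(T^k)^{1,0}$ and $T^k$ is a $K_{n_0}$-eigenspace inside $E^t \otimes K_n$ (one of the Galois conjugates of the simple $K_{n_0}$-factor), so $\omega_k$ is a simultaneous $K_{n_0}$-eigenvector. For the characterization, $\overline{\omega_k}$ also lies in $T^k$ (since $\varphi^*\overline{\omega_k} = \zeta^{-k}\overline{\omega_k}$ yields the same $\Psi$-eigenvalue $2\cos(2\pi k/n)$), hence $S_{\omega_k} \otimes_{\bbr}\bbc = T^k_{\bbc}$; the $\Gal(K_n/\bbq)$-orbit of $T^k$ is exactly $\{T^{k'} : \gcd(k',n) = t\}$, whose sum is $E^t$ by definition, so $E^t$ is the minimal rational sub-Hodge structure containing $S_{\omega_k}$. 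The identity-character claim follows upon identifying the trace field with $K_{n_0}$ via the eigenvalue $2\cos(2\pi m/n_0)$, $m = k/t$.

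For non-preservation I argue by contradiction. Assume the real multiplication extends to the Teichm\"{u}ller disk $h^{\omega_k}\colon\bbh \to \mathrsfs{T}_g$. Choose a symplectic $K_n$-basis of $H_1(W_g, K_n)$ adapted to the Galois-refined decomposition $J(W_g) \otimes K_n = \bigl(\bigoplus_{\gcd(k',n) = t} T^{k'}\bigr) \oplus (F \otimes K_n)$; by Proposition \ref{PeriodMatrixForTransportedRM} the period matrix takes the block-diagonal form $\operatorname{diag}(D(\tau), P(\tau))$ with $D(\tau)$ of size $d \times d$ and diagonal, $d = [K_{n_0}:\bbq]$. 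Consequently every off-diagonal entry in the $D$-block and every cross-block entry vanishes identically along the disk, and in particular its derivative at $\tau = \mathrm{i}$ vanishes.

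The dual basis of $\Omega^1(W_g)$ consists, on its first $d$ slots, of nonzero scalar multiples of the Wiman differentials $\omega_{k'}$ with $\gcd(k',n) = t$, because $(T^{k'})^{1,0} = \bbc\omega_{k'}$ and the $T^{k'}$ are mutually orthogonal under the Poincar\'{e} pairing (as $\Psi$-eigenspaces of distinct eigenvalue). Ahlfors' variational formula then expresses the derivative of $\Pi_{ab}$ at $\tau = \mathrm{i}$ as a nonzero multiple of $I(k'_a, k'_b) := \int_{W_g} \omega_{k'_a} \omega_{k'_b}\,\overline{\omega_k}/\omega_k$. Using $\varphi^*\omega_{\ell} = \zeta^{\ell}\omega_{\ell}$, the integrand transforms by $\zeta^{k'_a + k'_b - 2k}$ under $\varphi^*$, so $I(k'_a, k'_b) = 0$ unless $k'_a + k'_b \equiv 2k \pmod n$; and $1 \le k'_a, k'_b \le g$ forces $k'_a + k'_b \le 2g < n$, so the only possibility is $k'_a + k'_b = 2k$ exactly. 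In that case the affine computation $\omega_{k'_a}\omega_{k'_b}/\omega_k = (x^{k'_a + k'_b - k - 1}/y)\,dx = \omega_k$ gives $I(k'_a, k'_b) = \|\omega_k\|_{L^2}^2 > 0$. To produce such a pair indexing a forced-zero position, write $m = k/t$ and take $m'_a := 1$ if $m \le (h+1)/2$ and $m'_a := h$ otherwise; both satisfy $\gcd(m'_a, n_0) = 1$ (note $\gcd(h, 2h+1) = 1$), so $k'_a := tm'_a \in E^t$, while $k'_b := 2k - k'_a = t(2m - m'_a)$ lies in $\{1,\ldots,g\}$ and differs from $k'_a$ (since $m'_a \ne m$). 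The corresponding position $(a,b)$ lies either off-diagonal in the $D$-block or in a cross-block, contradicting the supposed vanishing of the derivative.

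The main obstacle I anticipate is the bookkeeping in the last paragraph: tracing how the Ahlfors-normalized dual basis of $\Omega^1$ decomposes into Wiman forms in the chosen $K_n$-symplectic basis, and confirming that the resulting change-of-basis scalars are nonzero so that the derivative really is a nonzero multiple of $I(k'_a, k'_b)$ --- this rests on the orthogonality of the $T^{k'}$ under Poincar\'{e} duality. A secondary technicality is the factor-of-two normalization relating $\Psi^t$ to the action of $K_{n_0} = \bbq(\xi_{n_0})$, where $\xi_{n_0} = 2\cos(2\pi/n_0)$, which must be tracked carefully to verify the identity-character statement in Part 2.
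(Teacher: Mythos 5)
Your proposal is correct and follows essentially the same route as the paper: the eigenform and characterization claims are read off from the explicit eigenspace decomposition $J(W_g)\otimes K_n=\bigoplus_j T^j$, and non-preservation is derived by combining Proposition \ref{PeriodMatrixForTransportedRM} with Ahlfors' variational formula applied to a pair $k'_a+k'_b=2k$ (the paper's $k\pm\ell t$), using the identity $\omega_{k-a}\omega_{k+a}=\omega_k^2$ to get a strictly positive derivative at a forced-zero entry. Your extra $\varphi$-equivariance criterion for the vanishing of $I(k'_a,k'_b)$ when $k'_a+k'_b\neq 2k$ is correct but not needed for the contradiction.
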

\begin{proof}
We have shown before that $\bbq (\cos\frac{2\pi }{n_0})$ is the trace field of $\SL (W_g,\omega_k)$. Hence we only need to show that the real multiplication structure is not preserved by the $\SL_2(\bbr )$-action.

Let $h^{\omega }:\bbh\to\mathrsfs{T}_g$ be the Teichm\"{u}ller disk belonging to $(X,\omega_k)$, and let $\tilde{\mathbb{V}}$ be the associated variation of Hodge structure on $\bbh$.

For every $1\le j\le g$, let $(a_j,b_j)$ be a symplectic basis of the subspace of $H_1(X,K)$ belonging to the subspace $T_K^j\subset H^1(X,K)$. Then, up to renumeration, $(a_1,\ldots ,a_g,b_1,\ldots ,b_g)$ is a symplectic basis of $H_1(X,K)$ as in Proposition \ref{PeriodMatrixForTransportedRM}. The corresponding basis of $\Omega^1(X)$ is precisely $(\omega_1,\ldots ,\omega_k)$. Denote the period matrix of $\tilde{\mathbb{V}}_{\tau }$ with respect to this basis by $\Pi (\tau )$. Then by Proposition \ref{PeriodMatrixForTransportedRM} we see:
\begin{enumerate}
\item $\Pi_{jj}(\mathrm{i})\in\bbh$ for all $j$ with $\gcd (j,n)=t$;
\item $\Pi_{j\ell }(\mathrm{i})=0$ whenever at least one of $j$ and $\ell$ has greatest common divisor $t$ with $n$.
\end{enumerate}
If the real multiplication structure were preserved by the $\SL_2(\bbr )$-action, these statements would hold if $\mathrm{i}$ were replaced by an arbitrary $\tau\in\bbh$. But now the Ahlfors-Rauch formula tells us that
\begin{equation*}
\frac{\de\Pi_{\mu\nu }(\tau )}{\de\tau }|_{\tau =\mathrm{i}}=\int_X\omega_{\mu}\omega_{\nu}\cdot\frac{\overline{\omega_k}}{\omega_k}.
\end{equation*}
Choose $\ell$ such that $1\le k-\ell t<k+\ell t\le g$ and such that at least one of $k-\ell t$, $k+\ell t$ has greatest common divisor $t$ with $2g+1$. This is always possible since one can always achieve either $k-\ell t=t$ or $k+\ell t=rt$. Then
\begin{equation*}
\frac{\de\Pi_{k-\ell t,k+\ell t}(\tau )}{\de\tau }|_{\tau =\mathrm{i}}=\int_X\omega_{k-\ell t}\omega_{k+\ell t}\cdot\frac{\overline{\omega_k}}{\omega_k}=\int_X|\omega_k|^2>0
\end{equation*}
(where we have used $\omega_{k-a}\omega_{k+a}=\omega_k^2$),
contradiction.
\end{proof}
We summarize: for every $(W_g,\omega_k)$ with $g\ge 3$ and $1<k<g$ we have a real multiplication structure on $J(W_g)$ by the trace field of $\SL (W_g,\omega_k)$, with $\omega_k$ as eigenform for the identity character. In other words, we have the conclusion of Theorem \ref{RMForVeechSurfaces}, except for the stability under the $\SL_2(\bbr )$-action.

Now for every divisor $t$ of $n$ with $1\le t\le g$, we consider the set of those $1\le k\le g$ with $\gcd (k,n)=t$. Then for the minimal such $k$ (this is $t$) and for the maximal such $k$, we know that $(W_g,\omega_k)$ is a Veech surface and that the real multiplication structure is preserved by the $\SL_2(\bbr )$-action. For all other $k$, the real multiplication structure is not stable under the $\SL_2(\bbr )$-operation. In particular we also see that $(W_g,\omega_k)$ cannot be Veech for these $k$.

Finally let us note that this construction gives a counterexample to an extension of M\"{o}ller's theorem to non-Veech surfaces for every genus $g\ge 3$: we only need to find $1<k<g$ such that $k$ and $2g+1$ are coprime. This is always possible, take e.g. $k=2$. For those $g$ for which $n=2g+1$ is a prime number, we get real multiplication on the Jacobian of $W_g$ in the classical sense, again not being transported by the $\SL_2(\bbr )$-action.

This series of counterexamples is not completely new. In \cite{McMullen03}, McMullen studied the special case $g=3$, $k=2$ and sketched a proof how to use Ahlfors' variational formula in order to see that a given real multiplication structure (in the classical sense) on $J(W_3)$ with $\omega_2$ as an eigenform (in our notation) is not preserved by the $\SL_2(\bbr )$-action. This is Theorem 7.5 in op.cit.

M\"{o}ller in \cite[Remark 2.8]{Moeller05b} considered the same special case and remarked that the trace field of $\SL (W_3,\omega_2)$ contains $\bbq (\cos 2\pi /7)$. He also suggested in op.cit., Remark 2.9 to study the general case $(W_g,\omega_k)$ as possible counterexamples, but erroneously stated (without proof) that for $1<k<g$ one always gets a counterexample.

\newpage

\end{document}